\documentclass{amsart}
\usepackage{graphicx} 
\usepackage{amsmath,amssymb,mathtools,mathrsfs}
\usepackage{enumitem}
\usepackage{microtype}
\usepackage{tikz-cd}
\usepackage{hyperref}
\hypersetup{hidelinks}
\usepackage{aliascnt}
\usepackage{cleveref}
\usepackage{color}
\usepackage{soul}
\usepackage{quiver}
\usepackage{comment}
\usepackage{booktabs,array,placeins,xurl}

\numberwithin{equation}{section}
\newtheorem{theorem}{Theorem}[section]
\newaliascnt{lemma}{theorem}
\newtheorem{lemma}[lemma]{Lemma}
\aliascntresetthe{lemma}
\newaliascnt{proposition}{theorem}
\newtheorem{proposition}[proposition]{Proposition}
\aliascntresetthe{proposition}
\newaliascnt{corollary}{theorem}
\newtheorem{corollary}[corollary]{Corollary}
\aliascntresetthe{corollary}

\theoremstyle{definition}
\newaliascnt{definition}{theorem}
\newtheorem{definition}[definition]{Definition}
\aliascntresetthe{definition}
\newaliascnt{notation}{theorem}
\newtheorem{notation}[notation]{Notation}
\aliascntresetthe{notation}
\newaliascnt{remark}{theorem}
\newtheorem{remark}[remark]{Remark}
\aliascntresetthe{remark}
\newaliascnt{convention}{theorem}

\aliascntresetthe{convention}
\newaliascnt{example}{theorem}
\newtheorem{example}[example]{Example}
\aliascntresetthe{example}

\newcommand{\qrep}[1][]{quasi-represen\-tations}

\newcommand{\Ad}{\operatorname{Ad}}
\newcommand{\supp}{\operatorname{supp}}

\newcommand{\dist}{\operatorname{dist}}
\newcommand{\cK}{\mathcal K}
\newcommand{\cL}{\mathcal L}
\newcommand{\C}{\mathbb C}

\newcommand{\EG}{\underline{E}G}

\newcommand{\A}{\mathcal A}
\newcommand{\E}{\mathcal E}
\newcommand{\K}{\mathcal K}
\newcommand{\whot}{\mathbin{\widehat{\otimes}}}
\newcommand{\QR}{\mathrm{QR}}
\newcommand{\FQR}{\mathrm{FQR}}
\newcommand{\QRucpstage}[2]{\mathrm{QR}^{\mathrm{ucp}}(#1;#2)}
\newcommand{\FQRucpstage}[2]{\mathrm{FQR}^{\mathrm{ucp}}(#1;#2)}

\newcommand{\ucp}{\mathrm{ucp}}
\newcommand{\Klim}{\mathscr K_G^0}
\newcommand{\LL}{\mathcal L}
\newcommand{\HH}{\mathcal H}
\newcommand{\QQ}{\mathsf Q}
\newcommand{\FF}{\mathsf F}
\newcommand{\RR}{\mathcal R}

\newcommand{\Z}{\mathbb Z}

\DeclareMathOperator{\diag}{diag}

\DeclareMathOperator{\defect}{def}

\title{Classifying quasi-representations of groups via their characters}
\author{Shmuel Weinberger}
\author{Jianchao Wu}
\author{Guoliang Yu}

\begin{document}

\thanks{S.~ Weinberger is partially supported by NSF~2505738.  J.~Wu is partially supported by National Key R\&D Program of China~2022YFA100700 and NSFC Key Program No.~12231005.   G.~Yu is partially supported by NSF~2247313. }

\begin{abstract}
For every countable discrete group, we construct a  character
for  finite-dimensional unitary quasi-representations, with
values in the inverse limit of the equivariant $K$-theory of its Rips
complexes. The character is a complete invariant for quasi-representations up to stable equivalence for amenable groups and,
more generally, for groups satisfying the Haagerup property and
QR-quasidiagonality. For Haagerup groups satisfying quantitative
QR-quasidiagonality, we also obtain a quantiative version of the theorems. These conclusions apply when the maximal group
$C^*$-algebra is quasidiagonal and has the lifting property, including
amenable groups, free groups, and closed orientable surface groups.
For surface groups, rank and winding number give complete invariants
for the classification. Finally, composing with the equivariant Chern
character gives a delocalized cohomological obstruction to stable
equivalence of a quasi-representation with genuine representations.
\end{abstract}

\maketitle

\tableofcontents 

\section{Introduction}

Two unitary matrices may almost commute without being close to any commuting pair. Voiculescu constructed examples in which this happens even as the operator norm of the commutator tends to zero \cite{voiculescu_1983}, and Exel and Loring explained this phenomenon topologically through an integer-valued invariant, a winding number, which obstructs approximation by commuting pairs \cite{exel_loring_1991}.
These examples raise a more general question: can approximate representations of a group be classified by the topological information they carry?

Let $G$ be a countable discrete group. A finite-dimensional unitary
representation is a map $\pi:G\to U(n)$ satisfying
$\pi(g)\pi(h)=\pi(gh)$, where $U(n)$ is the group of $n\times n$ unitary
matrices. A \emph{quasi-representation} satisfies these relations only
approximately. Given a finite set $F\subseteq G$ containing the identity
and $\varepsilon>0$, an
$(F,\varepsilon)$-quasi-representation is a map $q:FF\to U(n)$ such that
\[
 \|q(g)q(h)-q(gh)\|<\varepsilon\qquad(g,h\in F).
\]
Here $FF=\{gh:g,h\in F\}$, so $F\subseteq FF$.
The set $F$ specifies the relations being tested, and $\varepsilon$ their allowed error. For $G=\mathbb Z^2$, the basic relation is commutation, so
the example above is the simplest instance of this problem.

Quasi-representations also arise naturally in theoretical computer science, in the theory of almost flat bundles, in index theory, and in the study of approximation properties of groups and $C^*$-algebras. They can be abundant even when genuine finite-dimensional representations are scarce. A central question is the following:
\begin{quote}
\emph{When is a quasi-representation equivalent to a genuine representation?}
\end{quote}

The goal of this article is to classify quasi-representations up to stable equivalence by developing a theory of characters for them.
Two quasi-representations are regarded
as stably equivalent when, after adding the same auxiliary quasi-representation
to both, one is close to (or equivalently, can be deformed into) the other. 

Direct sum of quasi-representations makes the stable classes into an
abelian semigroup. For each pair $(F,\varepsilon)$, formal differences of equivalence classes of $(F,\varepsilon)$-quasi-representations form an abelian group, and $\mathrm{QR}(G)$ is the inverse limit of these groups as $F$ grows and $\varepsilon$ decreases.
To each such class of quasi-representations, we associate a \emph{character}. On a proper $G$-space with compact quotient, a
sufficiently accurate quasi-representation determines an equivariant
vector bundle. For a genuine representation, this is the vector bundle
obtained from its action on the representation space. For a
quasi-representation, the same formula gives only an approximate action;
through an averaging procedure inside a canonical infinite-dimensional equivariant bundle, we introduce a method to still produce an equivariant vector bundle. Its stable $K$-theory class is the character of the quasi-representation.

To place these vector bundles in a common geometric setting, we use the
universal space for proper actions, denoted $\underline EG$
\cite{baum_connes_higson_1994}. The Rips complexes $P_d(G)$ give a
concrete realization of this space: their vertices are the elements of
$G$, and a finite set spans a simplex when its diameter is at most $d$.
As $d$ increases, these complexes encode topological information of  the group at larger scales. The
character construction  gives a
homomorphism
\[
 \chi:\mathrm{QR}(G)\longrightarrow K_G^0(\underline EG).
\]
Here $K_G^0$ is equivariant topological $K$-theory, and we use
$K_G^0(\underline EG)$ to denote the inverse limit of the groups
$K_G^0(P_d(G))$ as $d\rightarrow \infty$. 

Our first main theorem states that, for amenable groups, the character
contains all the information needed for the classification of quasi-representations.

\begin{theorem}
\label{cor:amenable-character}
Let $G$ be an amenable countable discrete group.  The character induces an
isomorphism
\begin{equation}\label{eq:main-nonquantitative}
 \chi:\mathrm{QR}(G)\xrightarrow{\cong}
 \varprojlim_{d\to\infty}K_G^0(P_d(G))
 =K_G^0(\underline EG).
\end{equation}
\end{theorem}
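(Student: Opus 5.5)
Our plan is to identify both sides of \eqref{eq:main-nonquantitative} with the $K$-theory of the group $C^*$-algebra through a ``quasi-representation'' version of $E$-theory, and then invoke the Baum--Connes conjecture with coefficients, which holds for amenable (indeed Haagerup) groups by Higson--Kasparov.

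\emph{Step 1: the analytic side.} We first compare $\mathrm{QR}(G)$ with asymptotic morphisms. A compatible family of $(F,\varepsilon)$-quasi-representations with $F\nearrow G$ and $\varepsilon\searrow 0$ defines a discrete asymptotic morphism $C^*_{\max}(G)\to \prod M_{n_k}/\bigoplus M_{n_k}$, and hence, after stabilization, a class in an $E$-theory-type group of the form $\varprojlim KK$-classes from $C^*(G)$ into matrices, i.e.\ a map
\[
\mathrm{QR}(G)\longrightarrow \operatorname{Hom}\bigl(K_0(C^*_{\max}G),\Z\bigr)\text{-type dual } E(C^*_{\max}G,\C)=K^0(C^*_{\max}G).
\]
For amenable $G$, the algebra $C^*(G)$ is nuclear and quasidiagonal (Tikuisis--White--Winter), so the Choi--Effros lifting theorem and quasidiagonality allow us to show that every $KK$-class in $KK(C^*G,\C)$ is realized by a difference of finite-dimensional ucp almost-multiplicative maps, and that two quasi-representations with the same $E$-class become close after adding a common auxiliary one. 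This gives $\mathrm{QR}(G)\cong KK(C^*G,\C)$ (a stable uniqueness/existence theorem à la Dadarlat--Eilers and Lin).

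\emph{Step 2: the topological side.} Equivariant $K$-theory of the cocompact proper spaces $P_d(G)$ is $KK^G(C_0(P_d G),\C)$ by duality, so $K^0_G(\underline EG)=\varprojlim_d KK^G(C_0(P_dG),\C)$, which is dual to equivariant $K$-homology. The dual Dirac/Baum--Connes statement for Haagerup groups (Higson--Kasparov: $\gamma=1$) gives that the descent map $KK(C^*G,\C)\to \varprojlim_d KK^G(C_0(P_dG),\C)$, obtained by pulling back along the Mishchenko line bundle projection $p_d\in C_0(P_dG)\rtimes G$, is an isomorphism; the $\varprojlim^1$ issue is handled by the fact that $\gamma=1$ yields an isomorphism at the level of $KK$, not only of $K$-theory, uniformly in $d$.

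\emph{Step 3: compatibility.} Finally we check that the composite $\mathrm{QR}(G)\to KK(C^*G,\C)\to K^0_G(\underline EG)$ coincides with the character $\chi$: the averaged equivariant bundle associated to $q$ on $P_d(G)$ is exactly the pairing of the asymptotic morphism with the Mishchenko projection, once $F$ contains all group elements of length $\le d$ and $\varepsilon$ is small relative to $d$. The main obstacle we expect is Step 1, specifically stable uniqueness: showing that equal $KK$-classes force stable closeness at each finite stage $(F,\varepsilon)$, uniformly enough to pass to the inverse limit; here we would rely on a Lin-type stable uniqueness theorem for nuclear quasidiagonal domains together with amenability-based averaging.
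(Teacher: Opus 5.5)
There is a genuine gap. Both of your intermediate isomorphisms fail in general, already for a countable amenable group, and your treatment of $\varprojlim\nolimits^1$ is not correct.

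Take $G=\mathbb Q$. Since $C^*(\mathbb Q)\cong C(\widehat{\mathbb Q})$ has $K_0=\mathbb Z$ and $K_1=\mathbb Q$, the UCT gives $KK(C^*\mathbb Q,\mathbb C)\cong\mathbb Z\oplus\operatorname{Ext}(\mathbb Q,\mathbb Z)$, which is uncountable. On the other hand $\varprojlim_d K^0_{\mathbb Q}(P_d(\mathbb Q))\cong\mathbb Z$: up to pro-equivalence, the quotients $P_d(\mathbb Q)/\mathbb Q$ form a tower of circles with degree-$n_k$ bonding maps, and restriction is the identity on $K^0$.

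\emph{Step~2.} What $\gamma=1$ actually gives is Kasparov's isomorphism $KK^G(\mathbb C,\mathbb C)\cong RK^0_G(\underline EG)$ for the whole non-$G$-compact space. By the Milnor sequence in \Cref{rem:representable-K-theory}, the map from this group onto $\varprojlim_d K^0_G(P_d(G))$ has kernel $\varprojlim\nolimits^1_d K^{-1}_G(P_d(G))$. The condition $\gamma=1$ is a statement about $G$, not about this tower. Here the tower is $\mathbb Z\xleftarrow{\times n_1}\mathbb Z\xleftarrow{\times n_2}\cdots$, so $\varprojlim\nolimits^1\cong\operatorname{Ext}(\mathbb Q,\mathbb Z)\neq0$ and Step~2 is false.

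\emph{Step~1.} It fails for the dual reason. $\mathrm{QR}(G)$ is an inverse limit of finite-control groups, so a compatible family $(x_c)_c$ only yields representatives at successive controls that agree after restriction up to stable equivalence. This does not determine a class in $KK(C^*G,\mathbb C)$. Moreover, no finite-control test can see the classes in $\operatorname{Pext}(K_1(C^*G),\mathbb Z)$; for $\mathbb Q$ this is all of $\operatorname{Ext}(\mathbb Q,\mathbb Z)$. Stable uniqueness of Lin or Dadarlat--Eilers type compares maps through finitely many $K$-theory data. So the best Step~1 can give is an identification with $KL(C^*G,\mathbb C)$, equivalently the inverse limit of controlled $K$-homology, not with $KK$.

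Separately, $KK^G(C_0(P_d(G)),\mathbb C)$ is equivariant $K$-homology, not $K^0_G(P_d(G))$.

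\emph{What a repair would need.} To make your route work you would need two things, neither of which is in the proposal:
\begin{enumerate}
\item finite-control existence and stable uniqueness, with the stronger control chosen before the class, identifying $\mathrm{QR}(G)$ with a $KL$-type inverse limit;
\item a separate proof that the $\varprojlim\nolimits^1$ kernel of $KK(C^*G,\mathbb C)\to\varprojlim_dK^0_G(P_d(G))$ is exactly the closure of zero. This does not follow from $\gamma=1$.
\end{enumerate}

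\emph{How the paper avoids this.} The paper never passes through $KK(C^*G,\mathbb C)$.
\begin{itemize}
\item It proves that $\iota:\mathrm{QR}(G)\to\mathrm{FQR}(G;\mathbb C)$ has finite-control inverses. This uses relative index projections, absorption into one fixed quasidiagonal representation, compression with an exactly degenerate tail, and an interval version for vanishing.
\item It then builds finite-control inverses $\nu_c$ to the Fredholm character from a localized representative of the Higson--Kasparov Bott class. Every identity holds stage by stage, so $\varprojlim\nolimits^1$ never appears.
\item In the qualitative variant of \Cref{sec:qualitative-proof}, the Milnor lift is used only for surjectivity. Injectivity has already been obtained from finite-stage Bott cancellation, which is why the non-uniqueness of the lift is harmless there.
\end{itemize}
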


This theorem is, in effect, an existence and uniqueness statement. Every class in $K_G(\underline{E}G)$, for instance the class of an equivariant vector bundle over $\underline{E}G$, is the character of some class of quasi-representations, and this class is unique, since two classes of quasi-representations are equal precisely when their characters agree. The theorem therefore turns the classification of quasi-representations into a topological classification on $\underline{E}G$, which is a much more  computable problem.

The proof also yields an effective, quantitative version of this result. Write $\chi_d$ for the character at Rips scale $d$, with values in $K_G^0(P_d(G))$, and $r_{D,d}\colon K_G^0(P_D(G))\to K_G^0(P_d(G))$ for the restriction map when $D>d$.

\begin{theorem}
\label{thm:main-quantitative}
Let $G$ be an amenable countable discrete group.
\begin{enumerate}[label=(\roman*)]
\item For every finite set $F_0\subseteq G$ and every $\varepsilon_0>0$ there is $d_0>0$ such that, for every $d\geq d_0$, there are a finite set $F_1\supseteq F_0$ and $0<\varepsilon_1\leq\varepsilon_0$ with the following property: if $q_1$ and $q_2$ are $(F_1,\varepsilon_1)$-quasi-representations with $\chi_d([q_1])=\chi_d([q_2])$ in $K_G^0(P_d(G))$, then $q_1$ and $q_2$ determine the same class in $\mathrm{QR}(F_0,\varepsilon_0)$.
\item For every $d>0$ there are $D>d$, a finite set $F\subseteq G$ and $\varepsilon>0$ such that for every $z\in K_G^0(P_D(G))$ there is $x\in\mathrm{QR}(F,\varepsilon)$ with $\chi_d(x)=r_{D,d}(z)$.
\end{enumerate}
\end{theorem}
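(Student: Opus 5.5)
The plan is to factor the character through $K$-theory of $C^*$-algebras and to use the two approximation properties amenability provides: quasidiagonality of $C^*(G)=C^*_r(G)$, and the Baum--Connes isomorphism with coefficients, which holds because amenable groups are a-T-menable. The first reduction is to identify $\mathrm{QR}(F,\varepsilon)$, for small $\varepsilon$ and large $F$, with a ``local'' $K_0$-type group of $C^*(G)$. Concretely, an $(F,\varepsilon)$-quasi-representation gives an approximately multiplicative ucp map $C^*(G)\to M_n$, since $C^*(G)$ is nuclear and has the lifting property. Formal differences of such maps then define asymptotic-morphism classes, that is, elements of $E$-theory or $KK$-theory $KK(C^*(G),\C)$, or rather of its local finite-stage approximations. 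Next I realize $\chi_d$ by pairing with the Mishchenko-type projection $p_d\in C(P_d(G))\rtimes G\otimes\K$, whose classes assemble into the dual Baum--Connes element. Under this description $\chi_d([q])$ is the image of the quasi-representation under the composite
\[
 K_0\bigl(C_0(P_d(G))\rtimes G\bigr)\ \text{pairing}\ \longmapsto\ K_G^0(P_d(G)).
\]
This composite is the equivariant dual Poincaré-type map $KK(C^*(G),\C)\to KK^G(C_0(P_d(G)),\C)$, which by Kasparov's descent/duality corresponds to $K^0_G(P_d(G))$ because $P_d(G)$ is a cocompact proper $G$-complex.

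For part (i), injectivity, I would argue in three steps. First, the dual Dirac--dual-Dirac method (Higson--Kasparov) gives $\gamma=1$ in $KK^G(\C,\C)$ for amenable $G$. Hence the map $KK(C^*(G),\C)\to \varprojlim_d KK^G(C_0(P_d(G)),\C)$ is an isomorphism, and its inverse is given by composing with the dual-Dirac element, which factors through a finite stage $P_{d_0}(G)$. Second, quasidiagonality together with Voiculescu-type absorption (Dadarlat--Eilers stable uniqueness) shows that two approximately multiplicative maps defining the same $KK$-class are stably close after adding a large genuine representation. This is exactly equality in $\mathrm{QR}(F_0,\varepsilon_0)$. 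Third, everything must be made quantitative. The dual-Dirac element lives in some $KK^G(C_0(P_{d_0}),\cdot)$ and is represented by a concrete cycle. Equality of $\chi_d$ at any $d\ge d_0$ then restricts to equality at $d_0$. Pulling back through the dual-Dirac cycle produces a homotopy of asymptotic morphisms that is controlled on $F_0$, and $F_1,\varepsilon_1$ are chosen by compactness so that this homotopy is $(F_0,\varepsilon_0)$-close at every time.

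For part (ii), surjectivity, I start from $z\in K^0_G(P_D(G))$, represented by an equivariant bundle. Applying the dual-Dirac (Baum--Connes) inverse produces a class in $KK(C^*(G),\C)$ whose image under $\chi$ agrees with $z$ after restriction to scale $d$. The required $D>d$ absorbs the loss of scale in the composition. I then realize that class by a finite-dimensional quasi-representation, using quasidiagonality and the lifting property: asymptotic morphisms into $\K$ can be cut down to matrix blocks. Uniformity of $F$ and $\varepsilon$ in $z$ comes from two facts. $K_G^0(P_D(G))$ is finitely generated, since $P_D(G)$ is a finite proper complex modulo $G$, and $\mathrm{QR}(F,\varepsilon)$ is a group, so it suffices to treat generators.

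The main obstacle is the quantitative control in (i). $\gamma=1$ is proved by homotopies through possibly infinite-dimensional, non-proper intermediate spaces. One must check that the resulting homotopy of quasi-representations never leaves a fixed tolerance $(F_0,\varepsilon_0)$ and never needs a Rips scale depending on the quasi-representation. I would first try to replace the abstract $KK$ argument for this step with an explicit one. Using Følner sets, I would average the given quasi-representation into a genuine $\ell^2(G)$-representation twisted by the bundle produced by $\chi_d$, and compare the two directly. The character then determines the quasi-representation up to stable equivalence with explicit constants depending only on $F_0$, $\varepsilon_0$ and $d$.
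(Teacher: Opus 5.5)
Your overall architecture matches the paper's in outline: compare quasi-representations with Fredholm $K$-homology of $C^*(G)$ using nuclearity, quasidiagonality and Dadarlat--Eilers absorption, then invert the character with the Higson--Kasparov Dirac--dual-Dirac classes. Your finite-generation reduction of uniformity in (ii) to finitely many generators is also fine.

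The gap is that you run the Dirac--dual-Dirac step with exact classes, and there the claims you need are false. The exact map $KK(C^*(G),\C)\cong KK^G(\C,\C)\cong RK^0_G(\EG)\to\varprojlim_d K^0_G(P_d(G))$ has kernel $\varprojlim^1_d K^{-1}_G(P_d(G))$, which need not vanish. Moreover, no exact inverse can factor through a single Rips scale $P_{d_0}(G)$: that would force $p_{d_0}^*$ to be injective on $KK^G(\C,\C)$. This already fails for $G=\bigoplus_n\Z$, where classes built from far-out coordinates restrict to zero. The underlying reason is that the equivariance defects of an exact representative of $\beta$ can be localized to a $G$-compact part of $Z$ only for finitely many group elements at a time.

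This breaks (ii) concretely. A quasi-representation realizing an exact class $\xi$ has $\chi_d=p_d^*\xi$, so you only reach $\operatorname{im}\bigl(\varprojlim_{d'}K^0_G(P_{d'}(G))\to K^0_G(P_d(G))\bigr)$. For $G=\Z[1/2]^2$, with tori $BG_n$ for $G_n=2^{-n}\Z^2$, the Rips tower of $K^0_G$ is pro-isomorphic to $\Z\oplus(\cdots\xrightarrow{4}\Z\xrightarrow{4}\Z)$, given by rank and first Chern class. This tower is not Mittag-Leffler, so for suitable $d$ and every $D>d$ some $r_{D,d}(z)$ lies outside that image. Separately, note that $KK^G(C_0(P_d(G)),\C)$ is $K$-homology; the character lives in $RKK^G(P_d(G);\C,\C)=K^0_G(P_d(G))$.

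What is missing is a finite-control Dirac--dual-Dirac argument, which is the core of the paper. For a target control $c$, the paper does the following.
\begin{itemize}
\item It localizes a representative of $\beta$ to a Fredholm quasi-representation $b_c$ whose Fredholm and tested equivariance defects vanish off a compact $K\subset Z_c$.
\item It maps $Z_c\to P_{D(c)}(G)$ by a cutoff map $j_c$.
\item It defines $\nu_c:K^0_G(P_{D(c)}(G))\to\FQR(*;c;\C)$ by $z\mapsto\alpha_{*,c}(i_c)_{*,c}(a_{j_c^*z})_{*,c}(b_c)$. Here $KK^G$ acts on the controlled Fredholm groups without loss of control, because the tested operators are first turned into a genuine action of a free group before taking Kasparov products.
\end{itemize}
Two identities then hold on whole groups, with controls chosen before the class.
\begin{itemize}
\item Bott cancellation, $\nu_c\chi_{D(c),s}=\iota_c r^Q_{s,c}$. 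This is proved by the explicit homotopy $F_t$ built from the local unitary $W_q$ and the compact operator $C=W_q(1-B^2)^{1/2}$, followed by a small correction of the negative-degree group operators.
\item $\chi^F_{d,c}\nu_c r^K_{D,D(c)}=r^K_{D,d}$, proved by a straight-line homotopy in $P_D(G)$.
\end{itemize}
The first identity is exactly the controlled homotopy you leave as ``pulling back through the dual-Dirac cycle'' and flag as the main obstacle; your F\o lner-averaging idea does not supply it.

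These identities are combined with the finite-control comparison maps $\rho_c$ of \Cref{prop:QR-FQR-amenable-controlled}. That is where quasidiagonality and absorption enter, through relative projections and an interval version for stable uniqueness. This is needed because a single approximately multiplicative map at a fixed control does not define a $KK$-class. Together these give (i) and (ii) directly, without ever passing through exact $KK$-classes.
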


Part~(i) says that, once a finite set and an error tolerance for comparing quasi-representations are fixed, two sufficiently accurate quasi-representations with the same character at a large enough Rips scale are already equivalent at the prescribed level. Part~(ii) says that every topological class on a sufficiently large Rips complex is, after restriction, the character of a class of quasi-representations.

Our classification result extends beyond amenable groups. The relevant
property concerns the natural comparison
\[
 \iota:\mathrm{QR}(G)\longrightarrow\mathrm{FQR}(G;\mathbb C)
\]
with the group of \emph{Fredholm quasi-representations}. We call $G$
\emph{QR-quasidiagonal} when this map is an isomorphism. Its quantitative
version requires compatible inverse homomorphisms after strengthening
the finite sets and tolerances; both notions are defined in
Section~\ref{sec:quantitative-KQD}.

\begin{theorem}
\label{thm:main-beyond}
Let $G$ be a countable discrete group with the Haagerup property.
If $G$ is QR-quasidiagonal, then the character in \eqref{eq:main-nonquantitative} is an isomorphism. 
Moreover, if $G$ is quantitatively QR-quasidiagonal, the two finite-control
conclusions of \Cref{thm:main-quantitative} also hold for $G$.
\end{theorem}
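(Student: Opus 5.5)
We factor the character through Fredholm quasi-representations and a Baum--Connes-type assembly. More precisely, we aim to write $\chi$ as a composite
\[
\mathrm{QR}(G)\xrightarrow{\ \iota\ }\mathrm{FQR}(G;\mathbb C)\xrightarrow{\ \chi^{\mathrm F}\ }K_G^0(\underline EG),
\]
where $\chi^{\mathrm F}$ is the analogous character defined on Fredholm quasi-representations. This is the same averaging construction inside the canonical infinite-dimensional equivariant bundle over $P_d(G)$, now producing an equivariant Fredholm family, i.e.\ an element of $K_G^0(P_d(G))$ represented Kasparov-theoretically. The first step is to check commutativity of this triangle at each finite stage $(F,\varepsilon,d)$. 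For a genuine finite-dimensional quasi-representation, the Fredholm family is just the finite-rank projection-valued family. So this reduces to comparing the two averaging constructions, which agree up to homotopy once $\varepsilon$ is small relative to $d$.

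The second step is the heart of the argument: show that $\chi^{\mathrm F}$ is an isomorphism for groups with the Haagerup property. We would identify $\mathrm{FQR}(G;\mathbb C)$, as an inverse limit over $(F,\varepsilon)$, with a dual (``$K$-homology of the group'' or asymptotic-morphism) description of $K^0$ of $C^*_{\max}G$ in the sense of $E$-theory, $E(C^*_{\max}G,\mathbb C)$. A Fredholm quasi-representation at finer and finer tolerances assembles into an asymptotic morphism $C^*_{\max}G\to \mathcal K$, or rather into a class in $E(C^*G,\mathbb C)$. Under this identification, $\chi^{\mathrm F}$ is the dual assembly map
\[
E(C^*_{\max}G,\mathbb C)\to \varprojlim_d KK^G(C_0(P_d(G)),\mathbb C)^{\vee}\cong K^0_G(\underline EG).
\]
Its bijectivity for Haagerup groups then follows from Higson--Kasparov: $G$ is $K$-amenable and has a $\gamma$-element equal to $1$, so the Dirac--dual-Dirac method gives that the dual assembly map is an isomorphism, including in the $\varprojlim$ sense (Mittag-Leffler and $\varprojlim^1$ issues need care). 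With step two in hand, $\chi=\chi^{\mathrm F}\circ\iota$ is an isomorphism precisely when $\iota$ is, which is the QR-quasidiagonality hypothesis. This proves the first claim.

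For the quantitative statement, we run the same factorization with controls. Quantitative QR-quasidiagonality provides, for each $(F_0,\varepsilon_0)$, data $(F_1,\varepsilon_1)$ and homomorphisms $\mathrm{FQR}(F_1,\varepsilon_1)\to\mathrm{QR}(F_0,\varepsilon_0)$ compatible with $\iota$. We then need a quantitative form of step two. The dual Dirac element for a Haagerup group, built from a proper affine isometric action on Hilbert space, lives at finite propagation. So for a given $(F_0,\varepsilon_0)$ it yields $d_0$, and for each $d\ge d_0$ it yields $(F_1,\varepsilon_1)$ such that $\chi^{\mathrm F}_d$ is injective from level $(F_1,\varepsilon_1)$ to level $(F_0,\varepsilon_0)$; similarly, for surjectivity, it yields $D>d$. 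Composing with the quantitative inverse of $\iota$ gives parts (i) and (ii) of \Cref{thm:main-quantitative}.

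The main obstacle is this quantitative control in step two. Extracting uniform $(F,\varepsilon,D)$ from the $\gamma=1$ homotopy requires tracking propagation and errors in the Higson--Kasparov construction, and bookkeeping the inverse-limit indices. The first thing we would try is to truncate the Bott/dual-Dirac element to finite-dimensional affine subspaces on which the action is approximately defined, and to use the resulting finite-propagation homotopies directly.
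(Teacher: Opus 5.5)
There is a genuine gap in your Step 2. From $\chi=\chi^{\mathrm F}\circ\iota$ you need $\chi^{\mathrm F}$ to be injective on all of $\mathrm{FQR}(G;\mathbb C)$, and neither ingredient you invoke delivers that.

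First, $\mathrm{FQR}(G;\mathbb C)=\varprojlim_c\mathrm{FQR}(*;c;\mathbb C)$ is built from unitary-valued maps on finite sets $FF$ with compact commutators. Relating it to $K$-homology or $E$-theory of $C^*_{\max}(G)$ is itself a substantial comparison. The paper carries this out only for the u.c.p.\ model, and transfers it to unitary-valued cycles only under the lifting property, which is not assumed here. Even granting such a comparison, an inverse limit over finite controls is at best a quotient of an $E$- or $KK$-group by a $\varprojlim^1$-type phantom subgroup. So the identification you state has to be corrected, and the phantom parts tracked.

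Second, $\gamma=1$ gives bijectivity of $p^*\colon KK^G(\mathbb C,\mathbb C)\to RK^0_G(\underline{E}G)$. Through the Milnor sequence this yields only \emph{surjectivity} onto $\varprojlim_dK^0_G(P_d(G))$. Injectivity on inverse limits would require every class whose image under $p^*$ lies in the $\varprojlim^1$ term to vanish already at each finite Fredholm control. That is not a formal consequence of $\gamma=1$. (The target you wrote, a dual of $\varprojlim_dKK^G(C_0(P_d(G)),\mathbb C)$, is also the wrong group: the relevant groups are $K^0_G(P_d(G))=RKK^G(P_d(G);\mathbb C,\mathbb C)$.)

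Note that your Step 2 would prove that, for Haagerup groups, $\chi$ is an isomorphism \emph{precisely} when $\iota$ is. This is stronger than the theorem and is not established in the paper. Your quantitative sketch via finite propagation of the dual-Dirac element inherits the same unproved injectivity.

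The missing idea is that you need injectivity of $\chi^{\mathrm F}$ only on the image of $\iota$, and this is proved by an explicit \emph{Bott cancellation for finite-dimensional classes}.
Use the Higson--Kasparov classes with $\beta\otimes_{\mathcal A}\alpha=1$. For each target control $c$, fix a localized representative $b_c=(\mathcal E_0,v,B)$ of $\beta$ whose Fredholm and tested equivariance defects are supported in a compact subset of a proper $G$-compact space $Z_c$, together with a cutoff map $j_c\colon Z_c\to P_{D(c)}(G)$. Define $\nu_c(z)$ by multiplying $b_c$ by $j_c^*z$, extending coefficients to $\mathcal A$, and applying $\alpha$.
For a finite-dimensional $q$, take a local unitary $W_q$ from $\mathbb C^n$ onto the character bundle and multiply it by the compact operator $(1-B^2)^{1/2}$. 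This gives a global compact intertwiner, and a homotopy of the difference of the two product cycles to a degenerate cycle; finite-dimensionality is essential here. The result is $\nu_c\chi_{D(c),s}=\iota_cr^Q_{s,c}$, hence $\ker\chi=\ker\iota$, so QR-quasidiagonality yields injectivity of $\chi$.

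Surjectivity is your exact-class idea made precise. Lift $z$ through the Milnor sequence, invert $p^*$ to get $\xi\in KK^G(\mathbb C,\mathbb C)$, use $\chi^{\mathrm F}_d(\kappa(\xi))=p_d^*\xi$, and take $\iota^{-1}(\kappa(\xi))$. Injectivity of $\chi$ makes this independent of the lift.

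For the quantitative statement, the same $\nu_c$ satisfies $\chi^{\mathrm F}_{d,c}\nu_c r^K_{D,D(c)}=r^K_{D,d}$ unconditionally. This comes from a straight-line homotopy between $y$ and $j_c(z)$ inside a larger Rips complex, on the support of the localized character of $b_c$. By contrast, $\nu_c\chi^{\mathrm F}=r^F$ on all Fredholm classes is obtained only by using the maps $\rho_c$ of quantitative QR-quasidiagonality to reduce to finite-dimensional representatives.
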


The qualitative and quantitative character isomormorphism theorems are stated separately in
\Cref{thm:qual,thm:QKQD-Haagerup}. The first concerns only the comparison
of inverse-limit groups; the second also provides uniform controls over the finite sets and the error tolerances. 

To see the link with \emph{quasidiagonality}, a prominent property for $C^*$-algebras, we need to mention that there is a parallel comparison for (finite-dimensional and Fredholm) quasi-representations that give rise to unital completely positive
(u.c.p.) maps on the maximal group $C^*$-algebra $C^*_{\max}(G)$. 
This extra u.c.p.~requirement becomes automatic when $C^*_{\max}(G)$ has the \emph{lifting property (LP)}, i.e., for every unital
$C^*$-algebra $B$ and closed ideal $I\subset B$, each u.c.p. map
$C^*_{\max}(G) \to B/I$ admits a u.c.p. lift $C^*_{\max}(G) \to B$. Quasidiagonality of
$C^*_{\max}(G)$ implies a u.c.p.~version of QR-quasidiagonality of $G$ (even at the quantitative level), and LP transfers it to the original QR-quasidiagonality.

\begin{theorem}[{see \Cref{cor:character} and Subsection~\ref{sec:applications}}]
\label{thm:main-QD-LP}
Let $G$ be a countable discrete group with the Haagerup property.
Suppose that $C^*_{\max}(G)$ is quasidiagonal and has the lifting
property. Then the character in \eqref{eq:main-nonquantitative} is an isomorphism,  
and the quantitative conclusions of \Cref{thm:main-quantitative}
holds for $G$. In particular, these conclusions apply to countable
amenable groups, countable free groups, and fundamental groups of
closed orientable surfaces.
\end{theorem}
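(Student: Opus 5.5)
The plan is to reduce \Cref{thm:main-QD-LP} to \Cref{thm:main-beyond}. It then suffices to show that, when $C^*_{\max}(G)$ is quasidiagonal and has the lifting property, $G$ is quantitatively QR-quasidiagonal, since quantitative QR-quasidiagonality implies the qualitative version. The argument has three steps, run in the following order.

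First, I would work in the u.c.p.\ framework. Let $\QRucpstage{F}{\varepsilon}$ and $\FQRucpstage{F}{\varepsilon}$ be the groups built from finite-dimensional, respectively Fredholm, quasi-representations that arise by restricting u.c.p.\ maps $C^*_{\max}(G)\to M_n$, respectively $C^*_{\max}(G)\to \mathcal{L}(\HH)$, whose multiplicative defect on $F$ is below $\varepsilon$. I would show that quasidiagonality of $C^*_{\max}(G)$ gives quantitative u.c.p.\ QR-quasidiagonality, i.e.\ compatible inverses $\FQRucpstage{F_1}{\varepsilon_1}\to\QRucpstage{F_0}{\varepsilon_0}$.

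The construction of these inverses goes as follows. A Fredholm quasi-representation is represented by a pair of u.c.p.\ maps $\phi_\pm$ into $\mathcal{L}(\HH)$ whose difference is compact on $F$. By Voiculescu's theorem combined with quasidiagonality, there is a quasidiagonal approximate unit of finite-rank projections $P_k$ that asymptotically commutes with $\phi_\pm(F)$. Compressing $\phi_+$ and $\phi_-$ by a single large $P_k$ yields two u.c.p.\ finite-dimensional quasi-representations, and the compression error is controlled because $\phi_+-\phi_-$ is compact on $F$. The formal difference of the two compressions defines the inverse class. I would then check that this class is independent of $k$, using the homotopy $P_k\to P_{k'}$ together with stabilization by the complementary block, and that it inverts $\iota$.

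Second, I would use the lifting property to remove the u.c.p.\ restriction. Any $(F,\varepsilon)$-quasi-representation, finite-dimensional or Fredholm, with $\varepsilon$ small is close to the restriction of a u.c.p.\ map on $C^*_{\max}(G)$. To see this, assemble a sequence of increasingly accurate quasi-representations into a genuine $*$-homomorphism $C^*_{\max}(G)\to\prod M_{n_k}/\bigoplus M_{n_k}$ (or the analogous quotient in the Fredholm case), then lift it by LP to a u.c.p.\ map into the product. A standard contradiction-compactness argument upgrades this asymptotic statement to uniform control in $(F,\varepsilon)$. Consequently the forgetful maps from the u.c.p.\ groups to $\QR$ and $\FQR$ become isomorphisms after passing to strengthened parameters, with quantitative control. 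Combined with the first step, this gives quantitative QR-quasidiagonality, and \Cref{thm:main-beyond} then delivers both the isomorphism \eqref{eq:main-nonquantitative} and the finite-control conclusions.

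Third, I would treat the examples, where it remains to verify the hypotheses.
\begin{itemize}
\item Amenable groups have the Haagerup property. Their $C^*_{\max}(G)=C^*_r(G)$ is nuclear, hence has LP by Choi--Effros, and it is quasidiagonal by Tikuisis--White--Winter.
\item For free groups, $C^*_{\max}(F_n)$ is residually finite-dimensional by Choi, hence quasidiagonal, and it has LP by Kirchberg. Free groups are Haagerup.
\item For surface groups, the Haagerup property holds since they act properly on $\mathbb{H}^2$ or $\mathbb{R}^2$. $C^*_{\max}$ is RFD by Lubotzky--Shalom, hence quasidiagonal, and LP follows from the amalgamated decomposition over $\Z$ together with the known permanence of LP for such free products.
\end{itemize}

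I expect the main obstacle to be the quantitative part of the first step. The difficulty is that the choice of $P_k$ and the resulting class must depend only on $(F_1,\varepsilon_1)$ and not on the particular Fredholm quasi-representation, which requires uniform quasidiagonal approximations. I would obtain this uniformity through an ultraproduct argument, applying quasidiagonality to the ultraproduct of all candidate u.c.p.\ maps simultaneously.
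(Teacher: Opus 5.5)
Your overall architecture matches the paper's. Quasidiagonality gives quantitative u.c.p.\ QR-quasidiagonality (\Cref{thm:main}, \Cref{cor:group}). LP transfers this to the unitary-valued theory (\Cref{prop:LP-unitary-ucp-comparison,prop:QKQD-unitary-transfer}), and \Cref{thm:QKQD-Haagerup} finishes. Your Step~2 is essentially \Cref{lem:LP-CP-replacement}, with three adjustments: in the Fredholm case the target algebra is $\mathcal D_T$; the u.c.p.-to-unitary direction is polar correction, not a forgetful map; and independence of the lift comes from the convex u.c.p.\ homotopy.

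However, Step~1, which carries the weight, fails as described, for two reasons. First, it loses the index. A u.c.p.\ Fredholm cycle $(\phi,T)$ cannot in general be rewritten as two unital u.c.p.\ maps on one space with compact difference. Such a pair always pairs to $0$ with $[1]$, whereas $\iota[q]$ for an $n$-dimensional $q$ pairs to $n$. Compressing both maps by one $P_k$ gives matrices of equal size, so your candidate inverse only produces rank-zero classes and cannot satisfy $\rho_a\iota=r^Q$. The paper keeps the index through the relative projection $p_0=W(1-e_0)W^*$, with $p_0-e_0$ compact. It then compresses via the spectral projection $f+e(1-Q)$ of $QpQ+e(1-Q)$, whose output $[f\Pi f]-[eQ\Pi eQ]$ has ranks differing by the index. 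Second, finite-rank projections need not almost commute with an arbitrary u.c.p.\ map, even a genuine representation. For $\mathbb F_2$, no nonzero finite-rank projection has sufficiently small commutators with the regular representation on the generators, since that would give almost invariant vectors for $\lambda\otimes\bar\lambda$. You must first Stinespring-dilate, using that the dilation projection commutes with the dilation up to $\sqrt{\text{defect}}$. Then absorb into a fixed faithful essential quasidiagonal representation (\Cref{prop:absorption}). The commutator bounds obtained this way are of order $\sqrt\delta$, not asymptotically zero, which is why the source tolerance is quadratic in the target tolerance.

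You have also not addressed the real difficulty, which is stable uniqueness rather than independence of $k$. You must show that a finite-dimensional class whose Fredholm image vanishes at the source control already vanishes at the target control. The paper does this in four steps:
\begin{enumerate}
\item It writes the stable relation as one Hilbert $C([0,1])$-module homotopy to a degenerate cycle (\Cref{lem:single-homotopy}).
\item It normalizes with a quasicentral unit of the ideal of compacts vanishing at both endpoints, then absorbs over $C([0,1])$.
\item It chooses one finite-rank cutoff for the norm-compact family $\{p(t)-e\}$ that also contains the endpoint supports $R_\pm$ (\Cref{lem:cutoff}).
\item It identifies the compressed endpoint with the original $[\phi_+]-[\phi_-]$ (\Cref{prop:vanishing}).
\end{enumerate}
With existence (\Cref{prop:existence}) and vanishing in hand, $\rho_a$ is defined by restricting any finite-dimensional lift, and the three identities become formal.

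The obstacle you single out is not one. The cutoff and absorption unitary may depend on the cycle; only the source control must be fixed beforehand, and the explicit estimates provide it. Your ultraproduct idea does not make sense as stated, since quasidiagonality of $A$ supplies no finite-rank projections in $\prod_kB(H_k)/\bigoplus_kB(H_k)$.

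Finally, for surface groups, you cannot simply cite permanence of LP under amalgamated free products over $C^*(\mathbb Z)$. The paper instead writes $\Gamma_g\cong K\rtimes\mathbb Z$ with $K$ free. LP then follows from Kirchberg's theorem together with permanence under crossed products by $\mathbb Z$ (Enders--Shulman), or alternatively from Fournier-Facio--Willett.
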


For surface groups, the isomorphism has the following explicit form.

\begin{theorem}[{see \Cref{cor:surface-character-classification} and \Cref{thm:surface-QKQD}}]\label{thm:main-surface}
For every $g\geq2$, the character for the surface group $\Gamma_g$ is an
isomorphism. 
As a consequence, their quasi-representations are determined, up to stable equivalence, by a pair of integers: rank and winding number. 
\end{theorem}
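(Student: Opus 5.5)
Theorem~\ref{thm:main-surface} has two halves, and I will treat them separately. The isomorphism claim should come from Theorem~\ref{thm:main-QD-LP}. For that I need three inputs about $\Gamma_g$ for $g\ge 2$.

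\begin{enumerate}[label=(\alph*)]
\item $\Gamma_g$ has the Haagerup property. It acts properly and cocompactly on the hyperbolic plane, and it is a lattice in $PSL_2(\mathbb R)$, which is a-T-menable.
\item $C^*_{\max}(\Gamma_g)$ is quasidiagonal. Here I would cite the residual finite-dimensionality of $C^*(\Gamma_g)$, in the style of Lubotzky--Shalom. Failing that, use that $\Gamma_g$ is an amalgam of free groups over $\mathbb Z$, and that RFD passes to such amalgams over a cyclic subgroup.
\item $C^*_{\max}(\Gamma_g)$ has the lifting property. I would argue through its structure as an amalgamated free product $C^*(F_g)*_{C(\mathbb T)}C^*(F_g)$ of algebras with LP over the commutative algebra $C(\mathbb T)$, using known permanence of LP under such amalgams.
\end{enumerate}

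I expect (c) to be the main obstacle. LP is not preserved by arbitrary amalgamation, so the argument has to use the specific structure here: one-relator, with a central-like cyclic amalgam. If it fails, the fallback is to verify quantitative QR-quasidiagonality directly. For this I would use that Fredholm quasi-representations of $\Gamma_g$ are governed by lifting the single relation $\prod[a_i,b_i]=1$. Adding a large finite-dimensional summand then allows compression to finite rank, in the spirit of Exel--Loring. This route leads to the separate \Cref{thm:surface-QKQD} and then Theorem~\ref{thm:main-beyond}.

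For the explicit invariants, I compute $K^0_{\Gamma_g}(\underline E\Gamma_g)$. Since $\Gamma_g$ is torsion-free, $\underline E\Gamma_g=E\Gamma_g$, and the closed surface $\Sigma_g$ is a model for $B\Gamma_g$. Equivariant $K$-theory of a free proper action reduces to the $K$-theory of the quotient. So the inverse system $K^0_{\Gamma_g}(P_d(\Gamma_g))$ stabilizes, since the Rips complexes are eventually $\Gamma_g$-homotopy equivalent to the hyperbolic plane and $\lim^1$ vanishes, giving $K^0(\Sigma_g)\cong\mathbb Z\oplus\mathbb Z$. The two coordinates are rank, $H^0$, and first Chern class, $c_1\in H^2(\Sigma_g;\mathbb Z)\cong\mathbb Z$, because the Chern character is integral in dimension $2$.

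Finally, I identify the invariants of a quasi-representation $q$:
\begin{itemize}
\item The rank coordinate of $\chi([q])$ is the dimension of $q$.
\item The $c_1$ coordinate is the degree of the almost-flat bundle over $\Sigma_g$ built from $q$. Computed directly, this is the winding number of $t\mapsto\det$ along a path from $\prod[q(a_i),q(b_i)]$ to $1$, in the sense of Exel--Loring.
\end{itemize}
Combined with injectivity of $\chi$, this shows that rank and winding number determine the stable class, and surjectivity realizes every pair.
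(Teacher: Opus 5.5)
Your overall plan matches the paper's. You verify the hypotheses of \Cref{thm:main-QD-LP}, compute $\varprojlim_d K^0_{\Gamma_g}(P_d(\Gamma_g))\cong K^0(\Sigma_g)\cong\mathbb Z^2$ by rank and $c_1$, and match $c_1$ with the winding number. Steps (a), (b) and the stabilization of the Rips system are fine.

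\textbf{The gap is (c).} LP is known to pass to full free products over $\mathbb C$. However, I know of no permanence result for amalgamated free products over an infinite-dimensional commutative algebra such as $C(\mathbb T)$. A Boca-type argument would need u.c.p.\ lifts of the two factors that agree on the amalgamated copy of $C(\mathbb T)$, and nothing in your sketch produces them. So the one genuinely hard input remains unproved.

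Your fallback does not avoid this. \Cref{thm:main-beyond} needs QR-quasidiagonality for unitary-valued (Fredholm) quasi-representations. The paper obtains this for $\Gamma_g$ only by proving the u.c.p.\ version and transferring it with LP (\Cref{lem:LP-CP-replacement,prop:QKQD-unitary-transfer}). Even the direct rank/winding computation (\Cref{thm:surface-direct-Z2}) and the clock--shift realization (\Cref{lem:surface-clock-shift}) use LP. ``Lifting the single relation and compressing'' does not supply the required u.c.p.\ approximations of unitary Fredholm quasi-representations with values in $\mathcal D_T$.

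The missing idea is to use a semidirect product instead of an amalgam. The homomorphism $\Gamma_g\to\mathbb Z$ sending $a_1\mapsto1$ and the other generators to $0$ splits. Its kernel $K$ is a free group: it is the fundamental group of a noncompact infinite cyclic cover of $\Sigma_g$, or one can see this explicitly via Reidemeister--Schreier. Hence $C^*_{\max}(\Gamma_g)\cong C^*_{\max}(K)\rtimes\mathbb Z$. Kirchberg's theorem gives LP for $C^*_{\max}(K)$, and Enders--Shulman show that LP is preserved by crossed products by amenable groups. Alternatively, cite Fournier-Facio--Willett, who prove LP and property FD for fundamental groups of manifolds of dimension at most three.

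A smaller point concerns the identification of $c_1$ with the winding number. The equality of $c_1$ of the character bundle with $\frac{1}{2\pi i}\operatorname{Tr}\log\prod_j[q(a_j),q(b_j)]$ is not a direct computation with the averaged bundle of \Cref{sec:character_map}. The paper obtains it from three inputs:
\begin{itemize}
\item the Carri\'on--Dadarlat generalized Exel--Loring formula $\phi_*(\beta)=w(\phi)$ (\Cref{lem:surface-ucp-EL});
\item the compatibility $\chi^F\iota=\chi$ (\Cref{lem:character-compatibility});
\item the index pairing on $\Sigma_g$, with $\beta$ normalized so that the trivial representation pairs to zero.
\end{itemize}
You should invoke that argument rather than assert the equality.
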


In general, we raise the question for which classes of groups the character map is an isomorphism.  

Even without knowing if the character gives an isomorphism, it still provides an obstruction to a quasi-representation
being equivalent to a genuine representation. Composing it with the
Chern character gives a \emph{rational character} in the delocalized
cohomology of the group. A genuine representation gives a flat bundle;
its rational character has no component of positive cohomological degree.
A nonzero component of positive degree is therefore an obstruction to
stable equivalence with a genuine representation. For torsion-free
amenable groups, the converse holds after rationalization and
stabilization. 

The questions studied in this article belong to the broader study of stability of approximate
algebraic relations. Work of de~la Harpe and Karoubi on representations in
Banach algebras and Kazhdan's stability theorem for amenable groups provide
foundational perspectives
\cite{de_la_harpe_karoubi_1977,kazhdan_epsilon_1982}. In the operator norm,
Dadarlat has developed the connections with almost flat bundles, quasidiagonality, index
theory, and group homology
\cite{dadarlat_index_2012,dadarlat_bundles_2014,dadarlat_2021,
dadarlat_two_homology_2022}. Eilers, Shulman, and S{\o}rensen study
$C^*$-stability and matricial stability
\cite{eilers_shulman_sorensen_2020}, while Willett investigates when the
vanishing of topological obstructions permits stabilized approximation
by genuine representations \cite{willett_conditional_2026}.

Different choices of metric on the unitary groups lead to different stability questions. 
For example, one may consider the normalized Hamming distance (on permutation groups) \cite{becker_lubotzky_thom_2019}, the Frobenius norm \cite{de_chiffre_glebsky_lubotzky_thom_2020}, and the Schatten $p$-norms \cite{lubotzky_oppenheim_2020}. 
Related ideas enter
\emph{property testing} in computer science, where one asks how a small
sample of relations can distinguish a solution from an assignment far
from every solution; see \cite{goldreich_property_testing_2017}
and \cite{becker_lubotzky_mosheiff_2023}.

The
proof of our main result for amenable groups uses the work of Higson and Kasparov on the Baum-Connes conjecture for groups acting properly on
Hilbert space \cite{higson_kasparov_2001}. For surface groups, it also
draws on Lubotzky and Shalom's property FD
\cite{lubotzky_shalom_2004}, Carri\'on and Dadarlat's winding-number
formula \cite{carrion_dadarlat_surface_2013}, the quantitative
$K$-homology of Willett and Yu
\cite{willett_yu_controlled_2026,willett_yu_uct_2024}, Willett's
stable-uniqueness theorem \cite{willett_conditional_2026}, and the
lifting-property results of Fournier-Facio and Willett
\cite{fournier_facio_willett_2026}.

\subsection*{Organization of the article.}
Section~\ref{sec:preliminaries} fixes the conventions for proper
actions, cutoff functions, Rips complexes, almost projections, and
equivariant topological $K$-theory.
Section~\ref{sec:qrep} defines quasi-representations, stable equivalence,
and the group $\mathrm{QR}(G)$.
Section~\ref{sec:character_map} constructs the geometric character by
averaging and spectral projection, and proves that it depends only on
the stable class. Section~\ref{sec:Chern-character} introduces the
rational character and its obstruction to genuine representations.

Section~\ref{sec:analytic-tools} collects the analytic tools and the
finite-control conventions. Section~\ref{sec:FQR-cycle} introduces
Fredholm quasi-representations and extends the character to them.
Section~\ref{sec:qd} proves the finite-dimensional--Fredholm comparison
for amenable groups. Section~\ref{sec:amenable-proof} establishes Bott
cancellation and the controlled inverse character, completing the
proofs of \Cref{cor:amenable-character,thm:main-quantitative}.
Finally, Section~\ref{sec:quantitative-KQD} proves the qualitative
and quantitative character theorems under QR-quasidiagonality. It
then establishes quantitative comparison from quasidiagonality of
the maximal group algebra, treats the u.c.p. comparison and its LP
transfer, and verifies the hypotheses for amenable, free, and surface
groups. The surface-group discussion gives the rank-and-winding
coordinates and a direct finite-control classification.

\subsection*{Acknowledgments}
The authors thank Lily Zhang for a careful reading of an earlier
version and for helpful suggestions to improve the exposition.
Rufus Willett has kindly informed us that he has independently proved similar results for torsion free groups.

\subsection*{AI disclosure.}
ChatGPT was used solely for language editing and grammatical refinement during the preparation of this manuscript. All mathematical proofs and core results were developed by the authors.

\section{Preliminaries}\label{sec:preliminaries}

The purpose of this section is to fix the conventions that will be used throughout the paper and to recall the standard facts about proper actions, cutoff functions, crossed products, almost idempotents,  vector bundles,  and topological K-theory.

Throughout the paper $G$ denotes a countable discrete group.  Unless explicitly stated otherwise, actions on spaces are right actions and actions on $C^*$-algebras are left actions.  If $X$ is a right $G$-space, we write $xg$ or $x \cdot g$ for the action of $g \in G$ at $x \in X$ and define the associated left action on $C_0(X)$ by
\[
   (\beta_g f)(x)=f(xg),\qquad \text{ for any } f\in C_0(X),\ g\in G.
\]
Given subsets $E, F$ of $G$, we write $EF=\{gh:g \in E, h\in F\}$. 
In particular, $FF := \{gh:g,h\in F\}$. 
Whenever $F$ is used as a finite control set, we require $e\in F$.
An arbitrary prescribed finite target set is interpreted after adjoining
$e$. This cofinal convention ensures that $F\subseteq FF$ and leaves
the inverse-limit theory unchanged.

Since oftentimes a symmetric finite set containing the identity is needed, we define, recursively,
\begin{equation}\label{eq:set-product}
    F^{(0)} := \{e\}\quad \text{ and } \quad F^{(m+1)}:=F^{(m)} (F\cup \{e\}\cup F^{-1}) \; .
\end{equation}
Similarly, given $Y \subseteq X$ and $E \subseteq G$, we may write $YE := \{ xg \colon x \in Y, g \in E \}$.

\subsection{Proper actions and cutoff functions}

\begin{definition}\label{def:proper-action}
A right action $X\curvearrowleft G$ on a locally compact Hausdorff space is \emph{proper} if for any compact subset $K\subseteq X$ the set
\[
   \{g\in G:Kg\cap K\neq\varnothing\}
\]
is finite.  It is \emph{cocompact} or \emph{$G$-compact} if $X/G$ is compact, or equivalently, there is a compact $K \subseteq X$ such that $KG = X$.  A \emph{proper $G$-space} means a locally compact Hausdorff $G$-space with a proper action.
\end{definition}

Properness can also be characterized in a more flexible way: given subsets $K,L\subseteq X$, define 
\begin{equation}\label{eq:finite-overlap}
    E(K,L):=\{g\in G:Kg\cap L\neq\varnothing\} \; .
\end{equation}
Then it follows from the monotonicity of $E(-,-)$ in both arguments that the action $X\curvearrowleft G$ is proper if and only if $E(K,L)$ is finite for any compact subsets $K,L\subseteq X$. Moreover, when $K = L$, we abbreviate
\[
    E(K)  := E(K, K) \; .
\]
It is straightforward to check that the set $E(K)$ is symmetric and contains the identity of $G$.

\begin{lemma}\label{lem:finite-overlap-fatten}
    For any compact subsets $K_1, K_2$ in a proper $G$-space $X$, there are open subsets $U_1, U_2$ in $X$ such that $K_i \subseteq U_i$ for $i = 1,2$ and $E(K_1, K_2) = E(U_1, U_2)$. 
\end{lemma}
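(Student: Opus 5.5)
Since $X$ is locally compact Hausdorff, each $K_i$ has an open neighbourhood $V_i$ with compact closure $\overline{V_i}$. By monotonicity, $E(K_1,K_2)\subseteq E(U_1,U_2)\subseteq E(\overline{V_1},\overline{V_2})$ for any open $U_i$ with $K_i\subseteq U_i\subseteq V_i$. Properness makes $E(\overline{V_1},\overline{V_2})$ finite. So only the finitely many elements $g\in E(\overline{V_1},\overline{V_2})\setminus E(K_1,K_2)$ must be excluded, and each can be excluded by one shrinking step.

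Fix such a $g$. Then $K_1g\cap K_2=\varnothing$. Since $x\mapsto xg$ is a homeomorphism, $K_1g$ is compact. Two disjoint compact sets in a Hausdorff space have disjoint open neighbourhoods, so there are open $W_g\supseteq K_1g$ and $W'_g\supseteq K_2$ with $W_g\cap W'_g=\varnothing$. Put $U_1^{(g)}:=W_gg^{-1}$, which is open and contains $K_1$, and $U_2^{(g)}:=W'_g$. Then $U_1^{(g)}g\cap U_2^{(g)}=W_g\cap W'_g=\varnothing$, so $g\notin E(U_1^{(g)},U_2^{(g)})$.

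Now set
\[
U_1:=V_1\cap\bigcap_g U_1^{(g)},\qquad U_2:=V_2\cap\bigcap_g U_2^{(g)},
\]
with $g$ ranging over the finite set $E(\overline{V_1},\overline{V_2})\setminus E(K_1,K_2)$. These are finite intersections of open sets containing $K_i$, so they are open neighbourhoods of $K_i$.

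It remains to check $E(U_1,U_2)\subseteq E(K_1,K_2)$. Let $h\in E(U_1,U_2)$. Since $U_i\subseteq\overline{V_i}$, we get $h\in E(\overline{V_1},\overline{V_2})$. If $h\notin E(K_1,K_2)$, then $h$ is one of the excluded $g$'s. But $U_i\subseteq U_i^{(h)}$, so monotonicity gives $h\notin E(U_1,U_2)$, a contradiction. Hence $E(U_1,U_2)=E(K_1,K_2)$.

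The one point needing care is the finiteness of the set of bad group elements. It comes from first restricting to relatively compact neighbourhoods $V_i$ and applying properness to $\overline{V_1},\overline{V_2}$ via the characterization of properness in terms of $E(K,L)$ stated above. Without this, infinitely many separation conditions would have to be intersected, and the result need not be open.
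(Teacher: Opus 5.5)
Your proof is correct and follows essentially the same route as the paper: restrict to relatively compact neighbourhoods so that properness leaves only finitely many bad group elements, separate $K_1g$ from $K_2$ for each one, translate back by $g^{-1}$, and intersect. The only cosmetic difference is that the paper uses compact neighbourhoods $L_i$ and their interiors, while you use open neighbourhoods $V_i$ with compact closure.
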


\begin{proof}
Choose compact neighborhoods $L_i$ of $K_i$. The set $E(L_1,L_2)$
is finite. For each
$g\in E(L_1,L_2)\setminus E(K_1,K_2)$, the compact sets $K_1g$
and $K_2$ are disjoint, so choose disjoint open neighborhoods
$V_{g,1}\supset K_1g$ and $V_{g,2}\supset K_2$. Set
\[
 U_1=\operatorname{int}(L_1)\cap\bigcap_g V_{g,1}g^{-1},\qquad
 U_2=\operatorname{int}(L_2)\cap\bigcap_g V_{g,2},
\]
where the intersections run over this finite set of excluded elements.
Then $K_i\subset U_i\subset L_i$ and $U_1g\cap U_2=\varnothing$
for every excluded $g$. Elements outside $E(L_1,L_2)$ cannot create
an overlap either. Monotonicity now gives
$E(K_1,K_2)=E(U_1,U_2)$.
\end{proof}

\begin{definition}\label{def:cutoff}
Let $X\curvearrowleft G$ be proper and cocompact.  A \emph{cutoff function} is a continuous compactly supported function $\theta:X\to[0,\infty)$ such that
\[
       \sum_{g\in G}\theta(xg)=1
       \quad \text{ for any }x\in X.
\]
Note that the sum is a finite one thanks to the properness of $X\curvearrowleft G$.
\end{definition}

\begin{lemma}\label{lem:cutoff-exists}
For any open set $U$ in a proper cocompact $G$-space $X$ satisfying $U G = X$, there is a cutoff function supported in $U$.
\end{lemma}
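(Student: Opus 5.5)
The plan is the standard averaging construction. We first produce a compactly supported continuous function $\phi\colon X\to[0,\infty)$ with $\supp\phi\subseteq U$ whose $G$-translates cover $X$ in the sense that $\sum_{g\in G}\phi(xg)>0$ for every $x\in X$. We then normalize it.

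\textbf{Step 1 (constructing $\phi$).} Since $X$ is cocompact, we fix a compact $K\subseteq X$ with $KG=X$. For each $x\in K$ we have $x\in Ug_x$ for some $g_x\in G$, because $UG=X$. The set $Ug_x$ is an open neighborhood of $x$, so by local compactness and Urysohn's lemma there is a continuous compactly supported $\psi_x\colon X\to[0,1]$ with $\psi_x(x)>0$ and $\supp\psi_x\subseteq Ug_x$. The open sets $\{\psi_x>0\}$ cover $K$, so finitely many of them, say for $x_1,\dots,x_n$, suffice. We then put
\[
 \phi(y)=\sum_{i=1}^n \psi_{x_i}(y g_{x_i}^{-1}) .
\]
The $i$-th term is supported in $(\supp\psi_{x_i})g_{x_i}^{-1}\subseteq U$, so $\phi$ is continuous, compactly supported, and supported in $U$.

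\textbf{Step 2 (positivity and finiteness of the sum).} Take $y\in X$ and write $y=kh$ with $k\in K$ and $h\in G$. Then $\psi_{x_i}(k)>0$ for some $i$. With $g=h^{-1}g_{x_i}$ we get $\phi(yg)\geq\psi_{x_i}(k)>0$. For finiteness, let $L=\supp\phi$, which is compact. For $y$ in a compact neighborhood $M$ of any given point, the terms $\phi(yg)$ vanish unless $g\in E(M,L)$, and this set is finite by properness. Hence $\Phi(y):=\sum_g\phi(yg)$ is locally a finite sum of continuous functions. So $\Phi$ is continuous, strictly positive, and $G$-invariant, since $\Phi(yh)=\Phi(y)$ after reindexing.

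\textbf{Step 3 (normalizing).} Set $\theta=\phi/\Phi$. This is continuous and compactly supported with $\supp\theta=\supp\phi\subseteq U$. By invariance of $\Phi$,
\[
 \sum_g\theta(xg)=\frac{\sum_g\phi(xg)}{\Phi(x)}=1 .
\]

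The only mildly delicate point is the local finiteness in Step 2, which makes $\Phi$ continuous. It relies on the reformulation of properness via $E(K,L)$ for pairs of compact sets. Note that the right-action convention requires translating by $g_{x_i}^{-1}$ in Step 1.
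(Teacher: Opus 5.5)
Your proof is the same averaging-and-normalization argument as the paper's, but Step~1 translates in the wrong direction, so the claim $\supp\phi\subseteq U$ is false as written. For any $h\in G$ one has $\psi(yh)\neq 0$ iff $y\in\{\psi\neq 0\}h^{-1}$, so the support of $y\mapsto\psi(yh)$ is $(\supp\psi)h^{-1}$. Take $h=g_{x_i}^{-1}$. Then the $i$-th term of your $\phi$ is supported in $(\supp\psi_{x_i})g_{x_i}\subseteq Ug_{x_i}^{2}$, not in $(\supp\psi_{x_i})g_{x_i}^{-1}$.

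For a concrete failure, let $\mathbb Z$ act on $\mathbb R$ by $y\cdot n=y+n$, and take $U=(2.4,3.6)$ and $x=1/2$. Then $g_x=-3$ works and $\psi_x$ is supported in $(-0.6,0.6)$. Your term $y\mapsto\psi_x(y+3)$ is then supported in $(-3.6,-2.4)$, which misses $U$ entirely. So your closing remark that the right-action convention forces $g_{x_i}^{-1}$ is backwards, given that you chose $g_x$ with $x\in Ug_x$.

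The repair is immediate. Set $\phi(y)=\sum_i\psi_{x_i}(yg_{x_i})$; its $i$-th term is supported in $(\supp\psi_{x_i})g_{x_i}^{-1}\subseteq U$. In Step~2 take $g=h^{-1}g_{x_i}^{-1}$, so that $yg=kg_{x_i}^{-1}$ and $\phi(yg)\geq\psi_{x_i}(k)>0$. Equivalently, choose $g_x$ with $xg_x\in U$ and keep your formula.

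The rest of your argument is correct: local finiteness via $E(M,L)$, continuity, positivity and invariance of $\Phi$, and the normalization. With this fix your proof is essentially the paper's. The paper avoids the bookkeeping by first choosing precompact neighborhoods $V_i$ with $\overline{V_i}g_i\subset U$. It then forms the compact set $K=\bigcup_i\overline{V_i}g_i\subset U$, which satisfies $KG=X$, and takes a single bump function positive on $K$ and supported in $U$.
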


\begin{proof}
Choose a compact set $K_0\subset X$ with $K_0G=X$. Since $UG=X$,
each $x\in K_0$ has a precompact open neighborhood $V_x$ and an
element $g_x\in G$ such that $\overline{V_x}g_x\subset U$.
Choose a finite subcover $V_1,\ldots,V_n$ of $K_0$ and put
$K=\bigcup_i\overline{V_i}g_i\subset U$. Then $K$ is compact
and $KG=X$. Choose $a\in C_c(X)$ with $a\geq0$, $a>0$ on $K$,
and $\supp(a)\subset U$. Properness makes
$A(x)=\sum_{g\in G}a(xg)$ locally finite and continuous; it is
positive because $KG=X$, and reindexing shows that it is
$G$-invariant. Thus $\theta=a/A$ is a cutoff function supported in $U$.
\end{proof}

For a cutoff function $\theta$ and $x\in X$, define
\begin{equation}\label{eq:cutoff-vector}
    \theta_x^{1/2}(g):=\sqrt{\theta(xg)}\quad \text{ for each } g\in G.
\end{equation}
Then $\theta_x^{1/2}$ is a unit vector in $\ell^2(G)$.  Let $p_\theta(x)$ be the rank-one orthogonal projection onto $\C\theta_x^{1/2}$.  Let $\lambda$ denote the left regular representation, $\lambda_g\delta_h=\delta_{gh}$.

\begin{lemma}\label{lem:cutoff-projection-equivariance}
For every $g\in G$ and $x\in X$,
\begin{equation}\label{eq:p-Ad-tilde-invariant-prelim}
     \Ad_{\lambda_g}(p_\theta(xg))=p_\theta(x).
\end{equation}
Equivalently, if $\beta_g(T)(x)=T(xg)$ on $C_b(X,\cK(\ell^2G))$, then
$\Ad_{\lambda_g}\circ\beta_g(p_\theta)=p_\theta$.
\end{lemma}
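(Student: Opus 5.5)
The plan is to reduce the identity to a statement about the unit vectors $\theta^{1/2}_x$: a rank-one projection onto $\C\xi$ conjugated by a unitary $u$ is the rank-one projection onto $\C u\xi$. So $\Ad_{\lambda_g}(p_\theta(xg))$ is the projection onto $\C\,\lambda_g\theta^{1/2}_{xg}$, and it suffices to show
\[
\lambda_g\theta^{1/2}_{xg}=\theta^{1/2}_x
\]
in $\ell^2(G)$. Equality of the vectors, not just of the lines they span, is what we will prove.

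To check this, evaluate both sides at $h\in G$. Since $\lambda_g\delta_k=\delta_{gk}$, we have $(\lambda_g\xi)(h)=\xi(g^{-1}h)$. Hence
\[
(\lambda_g\theta^{1/2}_{xg})(h)=\theta^{1/2}_{xg}(g^{-1}h)=\sqrt{\theta(xg\,g^{-1}h)}=\sqrt{\theta(xh)}=\theta^{1/2}_x(h).
\]
This uses only that the right action satisfies $(xg)k=x(gk)$, so nothing beyond the definition \eqref{eq:cutoff-vector} is needed. Unitarity of $\lambda_g$ keeps the unit vector a unit vector, and the first claim follows.

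For the equivalent formulation, with $T=p_\theta\in C_b(X,\cK(\ell^2G))$ we get
\[
(\Ad_{\lambda_g}\circ\beta_g(p_\theta))(x)=\lambda_g\,p_\theta(xg)\,\lambda_g^{*}=p_\theta(x)
\]
for all $x$ by the first part. Here $\Ad_{\lambda_g}$ is understood pointwise.

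For completeness I would note that $p_\theta$ really lies in $C_b(X,\cK(\ell^2G))$. The map $x\mapsto\theta^{1/2}_x$ is norm-continuous by properness: locally only finitely many $g$ contribute, and $\sqrt{\theta}$ is continuous. The map $\xi\mapsto$ (projection onto $\C\xi$) is continuous on unit vectors.

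There is no real obstacle. The only point needing care is the convention that the left regular representation shifts by $g^{-1}$ on functions, combined with the right action on $X$; with the opposite convention the identity would come out with $g^{-1}$ instead of $g$.
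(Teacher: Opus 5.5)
Your proposal is correct and follows the paper's proof essentially verbatim: both evaluate $\lambda_g\theta^{1/2}_{xg}$ at $h\in G$, obtain $\theta^{1/2}_x(h)$, and conclude that $\lambda_g$ carries the line $\C\theta^{1/2}_{xg}$ onto $\C\theta^{1/2}_x$, so the rank-one projections correspond. The pointwise reformulation via $\beta_g$ and your remark that $p_\theta\in C_b(X,\cK(\ell^2G))$ are harmless additions that the paper leaves implicit.
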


\begin{proof}
For $h\in G$,
\[
 (\lambda_g\theta_{xg}^{1/2})(h)
   =\theta_{xg}^{1/2}(g^{-1}h)
   =\sqrt{\theta(xgg^{-1}h)}
   =\sqrt{\theta(xh)}
   =\theta_x^{1/2}(h).
\]
Thus $\lambda_g$ carries the line $\C\theta_{xg}^{1/2}$ onto
$\C\theta_x^{1/2}$, proving the formula for the corresponding orthogonal projections.
\end{proof}

\subsection{Rips complexes and the universal space for proper actions}

Any countable discrete group admits a proper length function $\ell:G\to[0,\infty)$, unique up to coarse equivalence; that is, $\ell(g)=0$ if and only if $g=e$, $\ell(g^{-1})=\ell(g)$, $\ell(gh)\leq\ell(g)+\ell(h)$, and $\{g\in G:\ell(g)\leq R\}$ is finite for every $R\geq0$. Proper length functions correspond bijectively to proper right-invariant metrics via $\dist(g,h)=\ell(gh^{-1})$. We fix a proper length function $\ell$ and its associated metric, and write $B(e,R)=\{g\in G:\ell(g)\leq R\}$.

For each $d \geq 0$, the \emph{Rips complex} $P_d(G)$ is the simplicial complex with vertex set $G$ in which a finite subset spans a simplex whenever its diameter is at most $d$. Right translation on the vertex set extends simplicially to a proper cocompact action on $P_d(G)$. For $d<d'$ there is a canonical inclusion $P_d(G)\hookrightarrow P_{d'}(G)$.

A \emph{universal proper $G$-space} $\EG$ is a proper $G$-space such that for every proper $G$-space $Y$ there is a $G$-map $Y\to\EG$, unique up to $G$-homotopy. Such a space is unique up to $G$-homotopy equivalence. The directed union of the Rips complexes is a convenient model of $\EG$; more precisely, the family $(P_d(G))_{d>0}$ is cofinal among $G$-compact subspaces of a universal proper space. We therefore use the convention
\begin{equation}\label{eq:RK-inverse-limit}
   K_G^0(\EG):=\varprojlim_{d\to\infty}K_G^0(P_d(G)),
\end{equation}
where, for $d'>d$, the bonding map $K_G^0(P_{d'}(G))\to K_G^0(P_d(G))$ is restriction along the inclusion $P_d(G)\hookrightarrow P_{d'}(G)$.

Finally, we record how the Rips scale controls the accuracy needed to construct the character in \Cref{sec:character_map}. Let $X$ be a cocompact proper $G$-space. For a compact subset $K\subseteq X$, write $E(K):=E(K,K)=\{g\in G: Kg\cap K\neq\varnothing\}$ as in the previous subsection, which is finite by \eqref{eq:finite-overlap}. If $KG=X$, the translates $Kg$ cover $X$, and $E(K)$ records which of these translates overlap $K$. To construct the character on $X$, the quasi-representation must be nearly multiplicative on a finite subset of $G$ determined by $E(K)$, so it is useful to choose $K$ with $E(K)$ small. For the Rips complex $X=P_d(G)$ there is a convenient choice: let $K$ be the union of all simplices spanned by finite subsets of $G$ that contain the identity $e$ and have diameter at most $d$. Then $KG=P_d(G)$ and
\begin{equation}\label{eq:Rips-complexity}
    E(K) = \{ab: a,b\in B(e,d)\} \subseteq B(e,2d).
\end{equation}
Thus the accuracy needed to construct the character on $P_d(G)$ is controlled by the ball $B(e,2d)$.

\subsection{Perturbations of projections and unitaries}\label{sec:almost-idempotent}

In our constructions, we shall naturally encounter \emph{almost projections} (or ``quasi-projections''), which are an approximate version of orthogonal projections, just like what quasi-representations are to genuine representations. 

Given a Hilbert space $H$, we shall write 
\begin{align*}
    B_{\mathrm{fin,sa}} (H) &:= \left\{ T \in B(H) \colon T = T^*  \text{ and } T \text{ has finite rank} \right\} \, , \\
    \operatorname{Proj}_{\mathrm{fin}}(H) &:= \left\{ T \in B_{\mathrm{fin,sa}}(H) \colon T  = T^2  \right\} \; , \\
    \operatorname{QProj}_{\mathrm{fin}}(H) &:= \left\{ T \in B_{\mathrm{fin,sa}}(H) \colon  \ \| T -  T^2\| < \frac{1}{4} \right\} \; , \quad \text{ and } \\
    \operatorname{NProj}_{\mathrm{fin}}(H) &:= \left\{ T \in B_{\mathrm{fin,sa}}(H) \colon  \ \sigma(T) \subseteq (-\frac{1}{2}, \frac{1}{2}) \cup (\frac{1}{2}, \frac{3}{2}) \right\} \; . 
\end{align*}
There is a canonical ``spectral correction'' map (a special case of a functional calculus)
\begin{equation}\label{eq:almost-idempotent-correction}
    \kappa \colon B_{\mathrm{fin,sa}}(H) \longrightarrow \operatorname{Proj}_{\mathrm{fin}}(H) \; , \quad 
    T \longmapsto \chi_{[1/2, \infty)} (T) \; ,
\end{equation}
where $\chi_{[1/2, \infty)} $ is the characteristic function on $[1/2, \infty)$. 
More concretely, 
standard facts from linear algebra tell us that there is a spectral decomposition $T = \sum_{\lambda \in \sigma(T)} \lambda  E_\lambda$, where $\sigma(T)$ equals the (finite) set of eigenvalues of $T$ and, for each eigenvalue $\lambda$, $E_\lambda$ is the spectral projection associated to $\lambda$ (note that the $E_\lambda$'s are mutually orthogonal). Then we define a projection 
\[
    \kappa (T) = \sum_{\lambda \in \sigma(T)} \chi_{[1/2, \infty)} (\lambda)  E_\lambda = \sum_{\lambda \in \sigma(T) \cap [1/2, \infty)}   E_\lambda \; .
\]

We shall need the following basic facts.

\begin{lemma}\label{lem:almost-idempotent-perturb}
For any Hilbert space $H$, the following statements hold: 
\begin{enumerate}

    \item \label{lem:almost-idempotent-perturb:Q_in_N} We have $\operatorname{QProj}_{\mathrm{fin}}(H)  \subseteq \operatorname{NProj}_{\mathrm{fin}}(H)$. 
    
    \item \label{lem:almost-idempotent-perturb:deviation} For any $T \in \operatorname{NProj}_{\mathrm{fin}}(H)$, we have $\| T - \kappa(T) \| < \frac{1}{2}$; in particular, we have  $\operatorname{NProj}_{\mathrm{fin}}(H) = \left\{ T \in B_{\mathrm{fin,sa}}(H) \colon  \ \| T -  T'\| < \frac{1}{2} \text{ for some } T' \in \operatorname{Proj}_{\mathrm{fin}}(H) \right\}$.

    \item \label{lem:almost-idempotent-perturb:continuous} Composing $\kappa$ with any continuous map from a locally compact Hausdorff space $X$ to $\operatorname{NProj}_{\mathrm{fin}}(H)$ yields a continuous map $X \to \operatorname{Proj}_{\mathrm{fin}}(H)$. 

    \item \label{lem:almost-idempotent-perturb:unitary} For any unitary $U$ on $H$, the map $\kappa$ commutes with conjugation by $U$. 
\end{enumerate}
\end{lemma}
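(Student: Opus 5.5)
All four statements reduce to the finite spectral decomposition $T=\sum_{\lambda\in\sigma(T)}\lambda E_\lambda$ of a finite-rank self-adjoint operator. (The kernel of $T$ is included as the eigenvalue $0$ when $H$ is infinite-dimensional; this is harmless, since $0<1/2$.) The plan is to handle (1) and (2) by scalar estimates on eigenvalues, (4) by uniqueness of spectral projections, and (3) by a Riesz projection argument. Only (3) needs real care.

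\textbf{(1) and (2).} For (1), the spectral mapping theorem gives $\|T-T^2\|=\max_{\lambda\in\sigma(T)}|\lambda-\lambda^2|$. So each eigenvalue satisfies $|\lambda(1-\lambda)|<1/4$. Since $\lambda(1-\lambda)=1/4-(\lambda-1/2)^2$, this forces $\lambda\neq 1/2$, so $\sigma(T)$ avoids $1/2$.

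This alone is not enough, because points far from $\{0,1\}$, such as $\lambda=2$, also have small $|\lambda-\lambda^2|$ only if they are near $0$ or $1$. I will therefore solve $|\lambda-\lambda^2|<1/4$ explicitly. This gives $\lambda\in\bigl(\tfrac{1-\sqrt2}{2},\tfrac12\bigr)\cup\bigl(\tfrac12,\tfrac{1+\sqrt2}{2}\bigr)$. I then need to compare this with the target set $(-1/2,1/2)\cup(1/2,3/2)$. Since $\tfrac{1+\sqrt2}{2}\approx1.207<3/2$ and $\tfrac{1-\sqrt2}{2}\approx-0.207>-1/2$, the inclusion holds.

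For (2), $T-\kappa(T)=\sum_\lambda(\lambda-\chi_{[1/2,\infty)}(\lambda))E_\lambda$. On the allowed spectrum, $|\lambda-\chi(\lambda)|<1/2$, and the norm is the maximum over the finite spectrum, so the strict inequality survives. This proves one inclusion of the set equality.

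For the reverse inclusion, suppose $\|T-P\|<1/2$ with $P$ a projection. By Weyl's inequality every eigenvalue of $T$ lies within $1/2$ of $\sigma(P)\subseteq\{0,1\}$. An eigenvalue could a priori equal $1/2$ only if it is at distance exactly $1/2$ from $\sigma(P)$, which the strict inequality excludes. Alternatively, I can use $\operatorname{dist}(\lambda,\sigma(P))\le\|T-P\|$ for self-adjoint operators.

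\textbf{(4).} I would observe that $UTU^*=\sum\lambda\,UE_\lambda U^*$ is again a spectral decomposition. By uniqueness of the spectral projections, $\kappa(UTU^*)=U\kappa(T)U^*$.

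\textbf{(3): the main obstacle.} Continuity must hold in norm, where ranks may vary. The approach is to represent $\kappa$ near each point as a holomorphic functional calculus.

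Fix $x_0\in X$ and set $\delta=\operatorname{dist}(1/2,\sigma(T(x_0)))>0$. By norm continuity, $\|T(x)-T(x_0)\|<\delta/2$ for $x$ near $x_0$. For self-adjoint operators this keeps $\sigma(T(x))$ at distance greater than $\delta/2$ from $1/2$. Let $\Gamma$ be a fixed contour enclosing $[1/2+\delta/2,\ \|T(x_0)\|+\delta]$ and avoiding the spectra nearby. Then, for $x$ near $x_0$,
\[
\kappa(T(x))=\frac{1}{2\pi i}\oint_\Gamma (z-T(x))^{-1}\,dz .
\]
The resolvent depends norm-continuously on $x$, uniformly for $z\in\Gamma$, since inversion is continuous. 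So $x\mapsto\kappa(T(x))$ is continuous at $x_0$.

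Some points need checking along the way. The contour must not meet spectrum near $0$, including the eigenvalue $0$ coming from the kernel. The result must be a finite-rank projection, which holds because the Riesz projection is $\chi_{[1/2,\infty)}(T(x))$ and is dominated by the finite-rank range of $T(x)$. Local compactness of $X$ is not actually needed.
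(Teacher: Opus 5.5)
Your proposal is correct. Parts (1), (2) and (4) follow the paper's proof: solve $|\lambda-\lambda^2|<1/4$ explicitly, compute $\|T-\kappa(T)\|$ as a maximum over the finite spectrum, use Weyl's inequality for the reverse inclusion, and appeal to unitary equivariance of the spectral decomposition.

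One sentence in your (1) is garbled: $\lambda=2$ gives $|\lambda-\lambda^2|=2$, which is not small. The accurate remark is that the upper bound $\lambda-\lambda^2<1/4$ only excludes $\lambda=1/2$. It is the lower bound $\lambda-\lambda^2>-1/4$ that confines $\lambda$ to $(\tfrac{1-\sqrt2}{2},\tfrac{1+\sqrt2}{2})$. Your explicit solution of the inequality already handles this correctly.

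For (3) you take a genuinely different route.
\begin{itemize}
\item The paper reduces to compact $X$, which is where local compactness is used. It then regards the map as a self-adjoint element $a\in C(X,B(H))$ whose spectrum avoids $1/2$, and applies continuous functional calculus with $\chi_{[1/2,\infty)}$, which is continuous on $\sigma(a)$. This is short, but it tacitly uses two facts: $\sigma(a)=\bigcup_x\sigma(a(x))$ when $X$ is compact, and evaluation at $x$ commutes with functional calculus.
\item You argue pointwise. A spectral gap at $x_0$ persists nearby by the self-adjoint perturbation bound. Locally, $\kappa(T(x))$ is a Riesz integral over one fixed contour, and the second resolvent identity gives uniform convergence of the integrand.
\end{itemize}
Your version is more elementary and gives an explicit local Lipschitz estimate $\|\kappa(T(x))-\kappa(T(x_0))\|\le C\|T(x)-T(x_0)\|$. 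As you note, it needs no local compactness of $X$. The paper's version is shorter and stays within the $C^*$-functional-calculus language used in the rest of the paper.

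The one detail to tidy is the contour. Let $\Gamma$ cross the real axis at $1/2$ and to the right of $\|T(x_0)\|+\delta$. Then shrink the neighbourhood of $x_0$ so that $\|T(x)-T(x_0)\|$ is less than half of $\operatorname{dist}(\Gamma,\sigma(T(x_0)))$. This keeps the resolvents uniformly bounded on $\Gamma$.
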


\begin{proof}
    The requirement that $\| T -  T^2\| < \frac{1}{4}$ implies that $\sigma(T) \subseteq (\frac{1 - \sqrt{2}}{2}, \frac{1}{2}) \cup (\frac{1}{2}, \frac{1 + \sqrt{2}}{2})$, which proves \eqref{lem:almost-idempotent-perturb:Q_in_N}. For \eqref{lem:almost-idempotent-perturb:deviation}, we see that $\| T - \kappa(T) \| = \max_{\lambda \in \sigma(T)} | \lambda - \chi_{[1/2, \infty)} (\lambda) | < \frac{1}{2}$ thus verifying the first statement, which then proves ``$\subseteq$'' in the second statement, with ``$\supseteq$'' following from Weyl's inequality. 
    For \eqref{lem:almost-idempotent-perturb:continuous}, first note that since continuity can be checked in a small neighborhood around each point, we may as well assume that $X$ is compact. Now it is convenient to use a bit of $C^*$-algebra theory: a continuous function $X \to \operatorname{NProj}_{\mathrm{fin}}(H)$ defines a self-adjoint element $a \in C(X, B(H))$ whose spectrum is contained in $(-\frac{1}{2}, \frac{1}{2}) \cup (\frac{1}{2}, \frac{3}{2})$, on which $\chi_{[1/2, \infty)} $ is a continuous function, whence applying $\kappa$ to $a$ yields another element in $C(X, B(H))$ by continuous functional calculus, which amounts to a continuous map $X \to \operatorname{Proj}_{\mathrm{fin}}(H)$. 
    Lastly, \eqref{lem:almost-idempotent-perturb:unitary}, follows directly from the unitary equivariance of spectral decomposition. 
\end{proof}

It is also well known that almost unitaries can be perturbed to genuine unitaries via polar decomposition. We state a more precise statement in this spirit. 

\begin{lemma}\label{lem:polar-correction}
For any $\delta \in (0, 1/2)$, if a square complex matrix $v$ (or more generally, an element in a $C^*$-algebra) is a \emph{$\delta$-almost unitary} in that sense that $\|v^*v-1\|<\delta$ and $\|vv^*-1\|<\delta$, then it is invertible and $u:=v(v^*v)^{-1/2}$ is unitary, with $\|u-v\|\le 2\delta$. 
\end{lemma}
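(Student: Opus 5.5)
The plan is to work in a unital $C^*$-algebra, which covers the matrix case, and to use continuous functional calculus on the positive element $v^*v$.

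\emph{Invertibility.} Since $\|v^*v-1\|<\delta<1/2$, the spectrum of $v^*v$ lies in $(1-\delta,1+\delta)\subseteq(1/2,3/2)$. Hence $v^*v$ is invertible, and so $v$ has a left inverse $(v^*v)^{-1}v^*$. Symmetrically, $vv^*$ is invertible, so $v$ has a right inverse $v^*(vv^*)^{-1}$. Therefore $v$ is invertible, and $(v^*v)^{-1/2}$ is well defined by functional calculus.

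\emph{Unitarity.} Put $a=v^*v$ and $u=va^{-1/2}$. Then
\[
u^*u=a^{-1/2}v^*va^{-1/2}=a^{-1/2}aa^{-1/2}=1 .
\]
So $u$ is an isometry. It is also invertible, as the product of two invertibles. An invertible isometry is unitary, because $uu^*=uu^*uu^{-1}=uu^{-1}=1$.

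\emph{The estimate.} Write
\[
u-v=v\bigl(a^{-1/2}-1\bigr),
\]
so that $\|u-v\|\le\|v\|\,\|a^{-1/2}-1\|$.

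For the first factor, $\|v\|^2=\|a\|<1+\delta$, so $\|v\|<\sqrt{1+\delta}$.

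For the second factor, the spectral mapping theorem gives
\[
\|a^{-1/2}-1\|\le\sup_{t\in(1-\delta,1+\delta)}\bigl|t^{-1/2}-1\bigr| = (1-\delta)^{-1/2}-1 ,
\]
since $t\mapsto t^{-1/2}$ is convex and decreasing, so the supremum is attained at the left endpoint.

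A cleaner route avoids the product of the two factors:
\[
\|u-v\|=\|u(1-a^{1/2})\|=\|1-a^{1/2}\|,
\]
using $v=ua^{1/2}$ and that $u$ is unitary. On the spectrum, $|1-\sqrt t|=|1-t|/(1+\sqrt t)\le|1-t|$, which gives $\|u-v\|\le\|1-a\|<\delta\le2\delta$. This route is sharper and is the one I would use. It has no real obstacle beyond checking that the spectral bound for $\|1-a^{1/2}\|$ is valid.
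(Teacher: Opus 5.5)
Your proof is correct and follows essentially the paper's route: invertibility from the left and right inverses built from $v^*v$ and $vv^*$, then $v=u(v^*v)^{1/2}$, so that $\|u-v\|=\|(v^*v)^{1/2}-1\|<\delta\le2\delta$ by functional calculus. The only difference is in the last bound: the paper computes $\max\{\sqrt{1+\delta}-1,\,1-\sqrt{1-\delta}\}=1-\sqrt{1-\delta}$ explicitly, whereas you use $|1-\sqrt t|\le|1-t|$, and your abandoned first route via $\|v\|\,\|a^{-1/2}-1\|$ is not needed.
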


\begin{proof}
Both $v^*v$ and $vv^*$ have spectrum in $(1-\delta,1+\delta)$
and are invertible. Hence $(v^*v)^{-1}v^*$ is a left inverse of $v$
and $v^*(vv^*)^{-1}$ is a right inverse; these agree, so $v$ is
invertible. Its polar part $u=v(v^*v)^{-1/2}$ is therefore unitary.
Since $v=u(v^*v)^{1/2}$, functional calculus gives
\[
 \begin{aligned}
 \|v-u\|&=\|(v^*v)^{1/2}-1\|\\
 &\leq\max\{\sqrt{1+\delta}-1,\,1-\sqrt{1-\delta}\}\\
 &=1-\sqrt{1-\delta}
   =\frac{\delta}{1+\sqrt{1-\delta}}<\delta\leq2\delta.
 \end{aligned}
\]
\end{proof}

\subsection{Vector bundles and topological \texorpdfstring{$K$}{K}-theory}
\label{subsec:topological-K}

We recall the geometric description of equivariant $K$-theory for proper
actions that will be used throughout the paper.  Let $X$ be a proper
$G$-space.  A finite-dimensional $G$-vector bundle over $X$ is a
finite-dimensional complex vector bundle $V\to X$, equipped with a
continuous action of $G$ by vector-bundle automorphisms covering the
action on $X$.  After choosing a $G$-invariant Hermitian metric, which
is possible for proper actions by the usual averaging argument, we may
and shall regard these automorphisms as unitary on the fibers.

Suppose first that $X$ is proper and $G$-compact.  We define
$K_G^0(X)$ to be the Grothendieck group of the abelian semigroup of
isomorphism classes of finite-dimensional $G$-vector bundles over $X$,
with addition given by direct sum.  Thus
\[
[V_0\oplus V_1]=[V_0]+[V_1].
\]
Equivalently, two finite-dimensional $G$-vector bundles $V_0$ and $V_1$
determine the same class if, after adding suitable equivariant vector
bundles, they become  isomorphic.

This finite-dimensional vector-bundle description of equivariant
$K$-theory for proper actions is due to L\"uck--Oliver
\cite{lueck_oliver_2001}.  Phillips's original construction of equivariant
$K$-theory for proper actions \cite{PhillipsProper} is formulated more
generally in terms of equivariant Hilbert bundles and Fredholm-type
cocycles.  For the proper $G$-compact spaces considered here, the
finite-dimensional bundle picture suffices and is particularly
convenient for our character construction.  See also
\cite{BaumHigsonSchick,EmersonMeyer} for related geometric and analytic
pictures of equivariant $K$-theory and $K$-homology.

The construction is contravariantly functorial.  If $f:X\to Y$ is a
continuous $G$-equivariant map between proper $G$-compact spaces, then
pullback of equivariant vector bundles induces
\[
f^*:K_G^0(Y)\longrightarrow K_G^0(X).
\]
Equivariantly homotopic maps induce the same homomorphism.

When the action of $G$ on $X$ is free, passage to the quotient identifies
finite-dimensional $G$-vector bundles over $X$ with finite-dimensional
vector bundles over $X/G$.  Consequently,
\[
K_G^0(X)\cong K^0(X/G).
\]
For a general proper action, the stabilizers are finite and an
equivariant vector bundle retains, at a point $x\in X$, the
finite-dimensional representation of the stabilizer $G_x$ on the fiber
$V_x$.  Thus ordinary vector bundles on $X/G$ do not in general contain
all the information carried by $K_G^0(X)$.

We apply this construction to the Rips complexes introduced in
Section~2.2.  Each $P_d(G)$ is a proper $G$-compact space, and for
$d<d'$ the inclusion
\[
P_d(G)\longrightarrow P_{d'}(G)
\]
induces by pullback a homomorphism
\[
K_G^0(P_{d'}(G))\longrightarrow K_G^0(P_d(G)).
\]
We use the notation
\[
K_G^0(\underline EG)
   :=
\varprojlim_{d\to\infty}K_G^0(P_d(G)).
\]
Thus an element of $K_G^0(\underline EG)$ is a compatible family
$(z_d)_d$, with
\[
z_d\in K_G^0(P_d(G)),
\qquad
z_{d'}|_{P_d(G)}=z_d
\quad\text{for }d<d'.
\]
This definition does not require a single finite-dimensional
$G$-vector bundle on the whole space $\underline EG$.  It is precisely
this inverse-limit group that occurs as the target of the character
constructed in Section~4.

\begin{remark}
\label{rem:representable-K-theory}
The notation $K_G^0(\underline EG)$ above should be distinguished from
the full equivariant representable $K$-theory of the generally
non-$G$-compact space $\underline EG$.

More generally, let
\[
X_1\subseteq X_2\subseteq\cdots\subseteq X
\]
be a $G$-compact exhaustion of a proper $G$-space $X$.  Phillips's
representable $K$-theory for $\sigma$-$C^*$-algebras gives a Milnor
exact sequence, and in the equivariant setting Emerson--Meyer
\cite[Equation~(4.6)]{EmersonMeyer} obtain
\[
0\longrightarrow
\varprojlim\nolimits^1_n K_G^{*-1}(X_n)
\longrightarrow
RK_G^*(X)
\longrightarrow
\varprojlim_n K_G^*(X_n)
\longrightarrow 0.
\]
See also Phillips \cite[Proposition~3.6]{PhillipsRepresentable} for the
corresponding $\sigma$-$C^*$-algebraic result.

Accordingly, our group
\[
K_G^0(\underline EG)
=
\varprojlim_d K_G^0(P_d(G))
\]
is the quotient term in the Milnor $\varprojlim^1$ sequence for
equivariant representable $K$-theory.  We use this quotient rather than
the full representable group because the character constructed from a
quasi-representation is naturally a compatible family of
finite-dimensional equivariant vector-bundle classes on the
$G$-compact Rips complexes.  No vanishing of the
$\varprojlim^1$ term is assumed.

The relation between this geometric equivariant $K$-theory and
Kasparov's equivariant $KK$-theory is recalled in Section~\ref{subsec:representable-KK},
following Emerson--Meyer.  We keep that analytic identification
separate from the finite-dimensional vector-bundle definition used
here.
\end{remark}

\subsection{Fell absorption}\label{subsec:Fell}

Fell's absorption principle will play an important role in the construction of our character map. In its basic form, it states that for any unitary representation $\pi \colon G \to U(H)$, we have a unitary equivalence between two unitary representations: 
\[
    \lambda  \otimes \pi \cong \lambda  \otimes \mathrm{id}_{H} \colon G \to U(\ell^2(G) \otimes H) 
\]
where $\lambda\colon G \to U(\ell^2(G))$ is the left regular representation defined by the equations $\lambda(g)(\delta_h) = \delta_{gh}$ for any $g \in G$ and any Dirac delta function $\delta_h \in \ell^2(G)$; more precisely, the unitary 
\begin{equation}\label{eq:Vpi-prelim}
    V_\pi \in  U(\ell^2(G)  \otimes H) \quad \text{ given by } V_\pi (\delta_h \otimes \xi) = \delta_h \otimes \pi(h)\xi
\end{equation}
for any Dirac delta function $\delta_h \in \ell^2(G)$ and any $\xi \in H$, satisfies 
\begin{equation}
\label{eq:Fell_absorption}
    V_\pi \, (\lambda \otimes \mathrm{id}_{H}) (g) \, V_\pi^* = (\lambda \otimes \pi) (g) \in U(\ell^2(G) \otimes H) \, \quad \text{for any } g \in G \; .
\end{equation}

We shall need to generalize this to non-multiplicative maps $\pi \colon G \to U(H)$. 
Note that \eqref{eq:Vpi-prelim} still defines a unitary operator  $V_\pi$  when $\pi$ is not required to be multiplicative, though a direct calculation reveals that equation~\eqref{eq:Fell_absorption} is reduced to 
\begin{equation}\label{eq:Fell-general-prelim}
    V_\pi \, (\lambda \otimes \mathrm{id}_{H}) (g) \, V_\pi^* (\delta_h \otimes \xi) = \delta_{gh} \otimes \pi(gh)\pi(h)^{-1}\xi
\end{equation}
for any $g \in G$, any Dirac delta function $\delta_h \in \ell^2(G)$ and any $\xi \in H$.

\section{Quasi-representations and stable equivalence}\label{sec:qrep}

In this section, we define when two quasi-representations are stably equivalent, and construct an abelian group $\mathrm{QR}(G)$ from the resulting equivalence classes.

\begin{notation}
    Given complex matrices (or more generally, bounded operators, or even more generally, elements in a $C^*$-algebra) $a,b$ and $\varepsilon \geq 0$, we write: 
    \begin{itemize}
        \item $a \approx_\varepsilon b$ for the condition $\|a - b\| \in [0, \varepsilon) \cup \{0\}$; 
        \item $\mathrm{Ad}_a (b)$ for the element $a b a^*$ (usually when $a$ is a unitary). 
    \end{itemize}
For each $n \in \mathbb{N}$, we let $U(n)$ be the group of all $n \times n$ unitary matrices (with $U(0)$ being the trivial group). 
\end{notation}

\begin{definition}
Given $\varepsilon \geq 0$, a finite subset $F$ of $G$ containing $e$,
and $n\in\mathbb N$, an
\emph{$n$-dimensional $(F,\varepsilon)$-quasi-representation} is a map
$q:FF\to U(n)$ that is \emph{$(F,\varepsilon)$-multiplicative} in the sense that
\[
 q(g)q(h) \approx_\varepsilon q(gh) \qquad \text{ for any }g,h\in F.
\]
We denote the set of all such maps by $\mathrm{QRep}(F,\varepsilon;n)$ and
endow it with the subspace topology inherited from $U(n)^{FF}$.  We then set
\[
 \mathrm{QRep}(F,\varepsilon)
 :=\bigsqcup_{n=0}^{\infty}\mathrm{QRep}(F,\varepsilon;n).
\]
An element of this disjoint union will simply be called an
\emph{$(F,\varepsilon)$-quasi-representation}.\footnote{More explicitly, all
quasi-representations in this paper are finite-dimensional unless the word
``Fredholm'' is present.  
}

We also call a map $q:G\to U(n)$ an
$n$-dimensional $(F,\varepsilon)$-quasi-representation when its restriction to
$FF$ has the above property.  
\end{definition}

\begin{remark}
\label{remark:QRep-open}
    If $\varepsilon>0$, then
    \[
        \mathrm{QRep}(F, \varepsilon;n) = \bigcup_{\eta \in (0,\varepsilon)} \mathrm{QRep}(F, \varepsilon - \eta;n) = \bigcup_{k = 1}^\infty \mathrm{QRep}(F, \varepsilon - \eta_k;n) \; ,
    \]
    where $(\eta_k)_k$ is any sequence in $(0,\varepsilon)$ converging to
    $0$.  
	These two unions yield open covers of $\mathrm{QRep}(F, \varepsilon;n)$. 
\end{remark}

\begin{remark}\label{remark:polar-correction}
    One may relax the requirement that $q$ takes value in the unitary group. In fact, by \Cref{lem:polar-correction}, for any $\delta \in (0,1/2)$, a $\delta$-almost-unitary-valued map that is $(F,\varepsilon)$-multiplicative is close, up to a pointwise norm error of $2\delta$, to an $(F, 8 \delta + \varepsilon)$-quasi-representation. 
\end{remark}

\begin{definition}
We say two $(F,\varepsilon)$-quasi-representations $q_0 , q_1 \in \mathrm{QRep}(F, \varepsilon)$ are 
\begin{enumerate}
    \item  \emph{homotopic} and write  $q_0 \overset{\mathrm{h}}{\sim} q_1$ 
    if 
    there is $n \in \mathbb{N}$ such that $q_0 , q_1 \in \mathrm{QRep}(F, \varepsilon; n)$ and
    there exists a continuous map $H \colon [0,1] \to \mathrm{QRep}(F, \varepsilon;n)$ such that $H(t) = q_t$ for $t \in \{0,1\}$;  
    \item \emph{$\eta$-unitarily equivalent} (for some $\eta \geq 0$) and write  $q_0 \overset{\mathrm{u}}{\sim}_{\eta} q_1$ 
    if 
    there is $n \in \mathbb{N}$ such that $q_0 , q_1 \in \mathrm{QRep}(F, \varepsilon; n)$ and 
    there exists a unitary $V \in U(n)$ such that $\| q_1 (g) - V q_0 (g) V^* \| \in [0, \eta) \cup \{0\}$ for any $g \in FF$. 
\end{enumerate}

It is worth noting that according to these definitions, quasi-representations of different dimensions can never be homotopic or $\eta$-unitarily equivalent to each other. 

Note that $\overset{\mathrm{h}}{\sim}$ is an equivalence relation on $\mathrm{QRep}(F, \varepsilon;n)$ (respectively, $\mathrm{QRep}(F, \varepsilon)$). On the other hand, $\overset{\mathrm{u}}{\sim}_\eta$ is reflexive and symmetric and satisfies $q_0 \overset{\mathrm{u}}{\sim}_{\eta +\eta'} q_2$ whenever $q_0 \overset{\mathrm{u}}{\sim}_\eta q_1$ and $q_1 \overset{\mathrm{u}}{\sim}_{\eta'} q_2$. Hence $\overset{\mathrm{u}}{\sim}_0$ is also an equivalence relation on $\mathrm{QRep}(F, \varepsilon;n)$ (respectively, $\mathrm{QRep}(F, \varepsilon)$), and the formula 
\[
    d_{\mathrm{u}} (q_0, q_1) = \inf \left\{ \eta \geq 0 \colon q_0 \overset{\mathrm{u}}{\sim}_{\eta} q_1 \text{ in } \mathrm{QRep}(F, \varepsilon) \right\}
\]
defines a pseudometric on $\mathrm{QRep}(F, \varepsilon)$. 
\end{definition}

\begin{remark}\label{rmk:direct_sum_operation}
    For $n,n'\in\mathbb N$, direct sum gives a map
    \[
        \oplus \colon \mathrm{QRep}(F, \varepsilon; n)
        \times \mathrm{QRep}(F, \varepsilon; n')
        \longrightarrow \mathrm{QRep}(F, \varepsilon; n+n'),
        \qquad (q,q') \longmapsto q \oplus q'.
    \]
    These combine to give a binary operation 
    \[
        \oplus \colon \mathrm{QRep}(F, \varepsilon) \times \mathrm{QRep}(F, \varepsilon) \to \mathrm{QRep}(F, \varepsilon) \, , \quad (q,q') \mapsto q \oplus q' \; .
    \]
    The operation respects homotopy because the direct sum of two paths of
    $(F, \varepsilon)$-quasi-representations is again a path of
    $(F, \varepsilon)$-quasi-representations.
    It respects $\eta$-unitary equivalence because, if $V$ implements the
    equivalence between $q_0$ and $q_1$, then $V\oplus 1$ implements the same
    estimate after adding a common summand.  Finally, if $q$ has dimension $n$
    and $q'$ has dimension $m$, let
    $S:\mathbb C^n\oplus\mathbb C^m\to
    \mathbb C^m\oplus\mathbb C^n$ be the unitary defined by
    $(\xi,\eta)\mapsto (\eta,\xi)$.  Then
    $S(q\oplus q')S^*=q'\oplus q$.  Thus direct sum is commutative up to
    $0$-unitary equivalence; since $U(n+m)$ is path connected, it is also
    commutative up to homotopy. 
\end{remark}

\begin{lemma} \label{lemma:unitary-implies-homotopy}
    For any $\varepsilon, \eta \geq 0$, any finite subset $F$ of $G$, and any
    $q_0,q_1\in \mathrm{QRep}(F,\varepsilon)$, if
    $q_0\overset{\mathrm u}{\sim}_{\eta}q_1$, then
    $q_0\overset{\mathrm h}{\sim}q_1$ in
    $\mathrm{QRep}(F,\varepsilon+3\eta)$.
\end{lemma}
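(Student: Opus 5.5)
The homotopy is built in two legs: first rotate $q_0$ to $Vq_0V^*$ through unitary conjugates, then move $Vq_0V^*$ to $q_1$ along a straight line corrected back into the unitary group.

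\textbf{First leg.} Let $V\in U(n)$ implement $q_0\overset{\mathrm u}{\sim}_\eta q_1$, so that $\|q_1(g)-Vq_0(g)V^*\|<\eta$ for $g\in FF$, or the two agree exactly when $\eta=0$. Since $U(n)$ is path connected, choose a continuous path $V_t$ from $1$ to $V$. Then $t\mapsto \Ad_{V_t}\circ q_0$ is a path in $\mathrm{QRep}(F,\varepsilon;n)$, because conjugation by a unitary preserves the multiplicativity defects exactly. This joins $q_0$ to $q_0':=\Ad_V\circ q_0$. It therefore remains to join $q_0'$ to $q_1$ inside $\mathrm{QRep}(F,\varepsilon+3\eta)$, knowing that $\|q_1(g)-q_0'(g)\|<\eta$ pointwise.

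\textbf{Second leg.} If $\eta\ge 1/2$, say, the tolerance $\varepsilon+3\eta$ is at least $3/2$, and a route avoiding linear interpolation is better. Since $\|ab-c\|\le 3$ for unitaries $a,b,c$, I would try joining each $q(g)$ to $1$ separately, which is only admissible if the tolerance exceeds the resulting defect; because this may fail for intermediate $\eta$, the realistic plan is to handle small $\eta$ sharply and treat the cutoff carefully. For small $\eta$, set $w_t(g)=(1-t)q_0'(g)+tq_1(g)$. Then $\|w_t(g)-q_0'(g)\|<\eta$, so $w_t(g)$ lies within $\eta$ of a unitary, and $\|w_t^*w_t-1\|<2\eta+\eta^2$. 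Applying the polar correction of \Cref{lem:polar-correction} gives $u_t(g)=w_t(g)\bigl(w_t(g)^*w_t(g)\bigr)^{-1/2}$, which is continuous in $t$ and equals $q_0'(g)$ and $q_1(g)$ at the endpoints. The cruder bound from the lemma is lossy, so I would estimate more sharply: for $w$ within $\eta$ of the unitary $a$, with $\eta<1$, the unitary polar part $u$ of $w$ satisfies $\|u-w\|\le\|w-a\|<\eta$. This holds because $w=u|w|$ and $\||w|-1\|\le\|w-a\|$, since the singular values of $w$ lie in $(1-\eta,1+\eta)$. Hence $\|u_t(g)-q_0'(g)\|<2\eta$ and $\|u_t(g)-q_1(g)\|<2\eta$.

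\textbf{Defect estimate.} For $g,h\in F$, split according to whether $t\le 1/2$ or $t\ge 1/2$; the aim is to keep $\|u_t(g)u_t(h)-u_t(gh)\|<\varepsilon+3\eta$. Comparing directly with $q_0'$ costs $2\eta+2\eta+2\eta$, which is too much. Instead, using $\|u_t(g)-w_t(g)\|<\eta\min(t,1-t)\cdot 2$, I would expand the product of convex combinations: $w_t(g)w_t(h)$ is within roughly $2t(1-t)\cdot 2\eta$ of $(1-t)q_0'(g)q_0'(h)+tq_1(g)q_1(h)$, which is within $\varepsilon$ of $w_t(gh)$. Adding the polar corrections, each bounded by a multiple of $\min(t,1-t)\eta$, the total should fit under $\varepsilon+3\eta$.

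\textbf{Main obstacle.} The hardest step is this bookkeeping of constants to land exactly at $3\eta$, together with the large-$\eta$ regime where polar correction fails. In that regime I would argue directly that once $3\eta\ge 2$, the inequality $\|ab-c\|\le 2$ for unitaries $a,b,c$ with $ab,c$ joined suitably makes the whole component trivial; alternatively, I would subdivide by passing through intermediate unitary equivalences.
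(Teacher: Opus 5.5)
Your first leg matches the paper's, but the second leg never proves the one quantitative claim the lemma makes. You end with ``the total should fit under $\varepsilon+3\eta$'', and with the constants you record it does not. Your polar bound $2\min(t,1-t)\eta$ plus the cross term $4t(1-t)\eta$ gives up to $\varepsilon+4\eta$ at $t=1/2$, and the bound $\|u_t(g)-q_0'(g)\|<2\eta$ gives $\varepsilon+6\eta$.

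The missing observation is that the corrected path never leaves the $\eta$-ball around an endpoint. Put $\delta_g=\|q_1(g)-q_0'(g)\|<\eta$. We have $u_t(g)-w_t(g)=u_t(g)\bigl(1-|w_t(g)|\bigr)$, and the singular values of $w_t(g)$ lie within $\|w_t(g)-a\|$ of $1$ for \emph{each} unitary $a$. Comparing with both endpoints therefore gives $\|u_t(g)-w_t(g)\|\le\min(t,1-t)\,\delta_g$, not $2\min(t,1-t)\eta$. Hence $\|u_t(g)-q_1(g)\|\le\min(t,1-t)\delta_g+(1-t)\delta_g\le\delta_g<\eta$ for every $t$.

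Now use the four-term estimate against the fixed endpoint $q_1$:
$\|u_t(g)-q_1(g)\|+\|u_t(h)-q_1(h)\|+\|q_1(g)q_1(h)-q_1(gh)\|+\|q_1(gh)-u_t(gh)\|<\varepsilon+3\eta$.
This finishes the proof with no expansion of products of convex combinations. The paper argues the same way, but uses the short geodesic $H_t(g)=q_1(g)\exp\bigl((1-t)\log(q_1(g)^*q_0'(g))\bigr)$. By the planar inequality $|e^{s\log u}-1|\le|u-1|$ for $s\in[0,1]$, this path stays within $\|q_0'(g)-q_1(g)\|$ of $q_1(g)$, so proximity to $q_1$ is immediate.

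Your treatment of large $\eta$ is also wrong as stated. For unitaries $\|ab-c\|\le2$, not $3$. Since $\approx_{\varepsilon}$ is a strict inequality, the case split must be at $\varepsilon+3\eta>2$. In that case every map $FF\to U(n)$ lies in $\mathrm{QRep}(F,\varepsilon+3\eta;n)=U(n)^{FF}$, which is path connected, so coordinatewise paths suffice. At $\varepsilon=0$, $3\eta=2$ your ``whole component is trivial'' claim fails: a map with $q(e)=-1$ has defect exactly $2$ at $(e,e)$.

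In the complementary case $\varepsilon+3\eta\le2$ one has $\eta\le2/3$, and there is no ``intermediate $\eta$'' difficulty. Here $w_t(g)$ is within $\min(t,1-t)\delta_g<\eta/2<1$ of a unitary, hence invertible, so $u_t(g)$ is defined and continuous in $t$.

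With these two repairs your linear-interpolation-plus-polar route is a legitimate variant of the paper's logarithmic path. As written, though, neither the $3\eta$ bound nor the case division is established.
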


\begin{proof}
When $\eta=0$, exact unitary equivalence gives the required homotopy
by a path of conjugating unitaries. Assume henceforth that $\eta>0$.
By our definition, there is $n$ such that $q_0,q_1$ take values in
$U(n)$ and that there is $V\in U(n)$ satisfying
\[
 \|q_1(g)-Vq_0(g)V^*\|<\eta \qquad \text{ for any } g\in FF.
\]
Choose a path $(V_t)_{t\in[0,1]}$ in $U(n)$ from $1$ to $V$.  The path
$t\mapsto \operatorname{Ad}_{V_t}(q_0)$ consists of
$(F,\varepsilon)$-quasi-representations, so it remains to connect
$q_1' := \operatorname{Ad}_V(q_0)$ to $q_1$.

If $\varepsilon+3\eta>2$, arbitrary coordinatewise unitary paths
connect $q_1'$ to $q_1$, since a unitary-valued map has multiplication
defect at most $2$. Otherwise $\varepsilon+3\eta\leq2$, so
$\eta<2$ and the principal logarithms below are defined.
For each $g\in FF$ set $w_g=q_1(g)^*q_1'(g)$. 
Then $w_g$ is unitary and, since $\|w_g-1\|<\eta<2$, its spectrum is contained in $U(1) \setminus \{-1\}$, on which the principal logarithm $\log$ is a well-defined continuous function.  
Using continuous functional calculus, we define
\[
 H_t(g)=q_1(g)\exp\big((1-t)\log w_g\big),\qquad \text{ for any } t \in [0,1] \text{ and } g \in FF.
\]
Thus $H_0=q_1'$ and $H_1=q_1$.  
In view of the elementary inequality $|e^{(1-t) \log u} - 1 | \leq |e^{\log u} - 1 |$ from planar geometry, where $t \in [0,1]$ and $u \in U(1) \setminus \{-1\}$, we conclude that  
\[
 \|H_t(g)-q_1(g)\| \leq  \|\exp\big((1-t)\log w_g\big)-1\| \leq \| w_g - 1 \| <\eta\qquad \text{ for any }  g\in FF.
\]
For any $g,h\in F$, we have 
\begin{align*}
 \|H_t(g)H_t(h)-H_t(gh)\|
 &\leq \|H_t(g)-q_1(g)\|\,\|H_t(h)\|
 +\|q_1(g)\|\,\|H_t(h)-q_1(h)\|\\
 &\quad+\|q_1(g)q_1(h)-q_1(gh)\|
       +\|q_1(gh)-H_t(gh)\|\\
 &<\varepsilon+3\eta.
\end{align*}
Thus $H_t\in\mathrm{QRep}(F,\varepsilon+3\eta;n)$ for every $t$.
Concatenating this path with $t\mapsto\operatorname{Ad}_{V_t}(q_0)$ proves the claim.
\end{proof}

The following result essentially says that homotopy
$\overset{\mathrm h}{\sim}$ and approximate unitary equivalence
$\overset{\mathrm u}{\sim}_{\eta}$ agree \emph{stably}, i.e., after adding the same quasi-representation to both sides.  This is an analogue of the equivalence between stable homotopy and stable unitary equivalence in operator $K$-theory.

\begin{proposition} \label{prop:homotopy-vs-unitary}
    For any $\varepsilon>0$, any finite subset $F$ of $G$, and any
    $q_0,q_1\in\mathrm{QRep}(F,\varepsilon)$, the following are equivalent:
    \begin{enumerate}
        \item \label{prop:homotopy-vs-unitary::homotopy}
        there exists $r\in\mathrm{QRep}(F,\varepsilon)$ such that
        $q_0\oplus r\overset{\mathrm h}{\sim}q_1\oplus r$;
        \item \label{prop:homotopy-vs-unitary::unitary}
        there is $\delta\in(0,\varepsilon)$ such that for every $\eta>0$
        there exists $r\in\mathrm{QRep}(F,\delta)$ such that
        $q_0\oplus r\overset{\mathrm u}{\sim}_{\eta}q_1\oplus r$.
    \end{enumerate}
\end{proposition}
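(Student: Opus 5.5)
The plan is to prove the two implications separately. The direction \eqref{prop:homotopy-vs-unitary::unitary}$\Rightarrow$\eqref{prop:homotopy-vs-unitary::homotopy} follows from \Cref{lemma:unitary-implies-homotopy} once we create some room below $\varepsilon$. The direction \eqref{prop:homotopy-vs-unitary::homotopy}$\Rightarrow$\eqref{prop:homotopy-vs-unitary::unitary} uses the standard ``rotation'' trick from $K$-theory: a homotopy is discretized into finitely many nearby quasi-representations, and the intermediate ones are absorbed into the auxiliary summand. The common technical point is that the strict inequality in the definition of $\mathrm{QRep}(F,\varepsilon)$, together with compactness, always leaves a little room below $\varepsilon$ (\Cref{remark:QRep-open}).

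For \eqref{prop:homotopy-vs-unitary::unitary}$\Rightarrow$\eqref{prop:homotopy-vs-unitary::homotopy}, first note that $FF$ is finite and each $q_i$ is $(F,\varepsilon)$-multiplicative with strict inequalities. Hence there is $\eta_0>0$ with $q_0,q_1\in\mathrm{QRep}(F,\varepsilon-\eta_0)$. Put $\varepsilon'=\max(\varepsilon-\eta_0,\delta)<\varepsilon$, and choose $\eta>0$ with $\varepsilon'+3\eta\le\varepsilon$. Let $r\in\mathrm{QRep}(F,\delta)$ be the summand provided by \eqref{prop:homotopy-vs-unitary::unitary} for this $\eta$. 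Then $q_0\oplus r$ and $q_1\oplus r$ both lie in $\mathrm{QRep}(F,\varepsilon')$ and are $\eta$-unitarily equivalent there. \Cref{lemma:unitary-implies-homotopy} then yields a homotopy in $\mathrm{QRep}(F,\varepsilon'+3\eta)\subseteq\mathrm{QRep}(F,\varepsilon)$. Finally, $r\in\mathrm{QRep}(F,\delta)\subseteq\mathrm{QRep}(F,\varepsilon)$, as required.

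For \eqref{prop:homotopy-vs-unitary::homotopy}$\Rightarrow$\eqref{prop:homotopy-vs-unitary::unitary}, let $H\colon[0,1]\to\mathrm{QRep}(F,\varepsilon;n)$ be a homotopy from $q_0\oplus r$ to $q_1\oplus r$.

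\emph{Choosing $\delta$.} The map
\[
t\mapsto\max_{g,h\in F}\|H(t)(g)H(t)(h)-H(t)(gh)\|
\]
is continuous on the compact interval $[0,1]$ and takes values $<\varepsilon$, so its maximum is $<\varepsilon$. Also, $r$ is a corner of $H(0)$, so its defect is bounded by that of $H(0)$. We therefore fix $\delta\in(0,\varepsilon)$ exceeding these defects. This is the key step: $\delta$ is chosen before $\eta$ and independently of it, which is exactly what \eqref{prop:homotopy-vs-unitary::unitary} demands.

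\emph{Discretizing.} Now let $\eta>0$ be given. By uniform continuity of $H$ into the finite product $U(n)^{FF}$, choose $0=t_0<t_1<\dots<t_N=1$ such that $\|H(t_k)(g)-H(t_{k+1})(g)\|<\eta$ for all $g\in FF$. Write $p_k=H(t_k)\in\mathrm{QRep}(F,\delta;n)$, and set $R=p_1\oplus\cdots\oplus p_{N-1}$; if $N=1$, take $R$ to be $0$-dimensional.

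\emph{Comparing via a block permutation.} Comparing blocks in order, $p_0\oplus p_1\oplus\cdots\oplus p_{N-1}$ is blockwise within $\eta$ of $p_1\oplus\cdots\oplus p_{N-1}\oplus p_N$. The latter is exactly unitarily equivalent, via a block permutation unitary, to $p_N\oplus R$. Hence
\[
(q_0\oplus r)\oplus R = p_0\oplus R \ \overset{\mathrm u}{\sim}_{\eta}\ p_N\oplus R = (q_1\oplus r)\oplus R .
\]
After a further block permutation, this reads $q_0\oplus r'\overset{\mathrm u}{\sim}_\eta q_1\oplus r'$ with $r'=r\oplus R\in\mathrm{QRep}(F,\delta)$, since direct sums of $(F,\delta)$-quasi-representations are $(F,\delta)$-quasi-representations.

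The only subtle point is the one flagged above: ensuring that $\delta$ does not depend on $\eta$. Compactness of the homotopy resolves it. The remaining verifications are routine bookkeeping with permutation unitaries, as in \Cref{rmk:direct_sum_operation}.
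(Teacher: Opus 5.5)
Your proposal is correct and follows the paper's proof essentially step for step. For (1)$\Rightarrow$(2) you use compactness of the homotopy to fix $\delta$ before $\eta$, discretize, absorb $H(t_1),\dots,H(t_{N-1})$ into the auxiliary summand, and compare via a block permutation; for (2)$\Rightarrow$(1) you create room below $\varepsilon$ and apply \Cref{lemma:unitary-implies-homotopy}, exactly as the paper does (you also make explicit the point, implicit in the paper, that $r\in\mathrm{QRep}(F,\delta)$ because it is a block summand of $H(0)$).
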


\begin{proof}
Assume first \eqref{prop:homotopy-vs-unitary::homotopy}, that is, there is $r\in\mathrm{QRep}(F,\varepsilon; n)$ and a path $H:[0,1]\longrightarrow\mathrm{QRep}(F,\varepsilon;N)$ 
for some $n, N \in \mathbb{N}$ satisfying $H(0)=q_0\oplus r$ and $H(1)=q_1\oplus r$.
By the compactness of $H([0,1])$ and \Cref{remark:QRep-open}, there is
$\delta\in(0,\varepsilon)$ such that $H([0,1]) \subseteq \mathrm{QRep}(F,\delta;N)$.

Fix $\eta>0$.  The uniform continuity of $H$ gives a subdivision
$0=t_0<t_1<\cdots<t_m=1$ such that
\[
 \max_{g\in FF}\|H(t_i)(g)-H(t_{i+1})(g)\|<\eta
 \qquad \text{ for any } i \in \{0, \ldots, m-1\}.
\]
Set
\[
 r_\eta=r\oplus H(t_1)\oplus\cdots\oplus H(t_{m-1}) \in \mathrm{QRep}(F,\delta; n + (m-1)N).
\]
Observe that 
\[
 q_0\oplus r_\eta
   =H(t_0)\oplus H(t_1)\oplus\cdots\oplus H(t_{m-1}),
\]
whereas, by choosing a suitable block permutation unitary, we have 
\[
 q_1\oplus r_\eta
 \overset{\mathrm u}{\sim}_{0}
 H(t_1)\oplus H(t_2)\oplus\cdots\oplus H(t_m).
\]
The difference of the two block diagonal maps has norm less than $\eta$; hence $q_0\oplus r_\eta\overset{\mathrm u}{\sim}_{\eta}q_1\oplus r_\eta$, which is 
\eqref{prop:homotopy-vs-unitary::unitary}.

Conversely assume \eqref{prop:homotopy-vs-unitary::unitary} holds for some $\delta \in (0, \varepsilon)$.  
It follows from \Cref{remark:QRep-open} that there is $\delta_0 \in (0, \varepsilon)$ with
$q_0,q_1\in\mathrm{QRep}(F,\delta_0)$.  Put
$\mu=\max\{\delta,\delta_0\}<\varepsilon$ and 
set $\eta = (\varepsilon - \mu) / 3 > 0$. 
By our assumption there is
$r\in\mathrm{QRep}(F,\delta)$ with $ q_0\oplus r\overset{\mathrm u}{\sim}_{\eta}q_1\oplus r$. 
Both direct sums lie in $\mathrm{QRep}(F,\mu)$, so
\Cref{lemma:unitary-implies-homotopy} gives a homotopy between them through
$\mathrm{QRep}(F,\mu+3\eta) = \mathrm{QRep}(F,\varepsilon)$.
This proves \eqref{prop:homotopy-vs-unitary::homotopy}.
\end{proof}

\begin{definition}
    \label{def:stable-equivalence}
    Let $\varepsilon>0$ and let $F\subseteq G$ be finite.  For
    $q_0,q_1\in\mathrm{QRep}(F,\varepsilon)$, write
    $q_0\overset{\mathrm{s}}{\sim}q_1$ and say that they are
    \emph{stably equivalent} when the equivalent conditions of
    \Cref{prop:homotopy-vs-unitary} hold.

    In view of \Cref{rmk:direct_sum_operation}, the quotient
    $\mathrm{QR}^+(F,\varepsilon)
      :=\mathrm{QRep}(F,\varepsilon)/\overset{\mathrm{s}}{\sim}$
    is an abelian semigroup under direct sum.  Its Grothendieck group is denoted
    $\mathrm{QR}(F,\varepsilon)$.

    Let $\mathcal C_G$ be the set of \emph{control parameters}, i.e., pairs $(F,\varepsilon)$ with
    $F\subseteq G$ finite, $e\in F$, and $\varepsilon>0$, ordered by
    \[
      (F,\varepsilon)\preccurlyeq(F',\varepsilon')
      \quad\Longleftrightarrow\quad
      F\subseteq F'\ \text{and}\ \varepsilon \geq \varepsilon'.
    \]
    Given $(F,\varepsilon)\preccurlyeq(F',\varepsilon')$, we have containments
\[
    \mathrm{QRep}(F, \varepsilon'; n) \subseteq \mathrm{QRep}(F, \varepsilon; n) \text{ and } \mathrm{QRep}(F, \varepsilon') \subseteq \mathrm{QRep}(F, \varepsilon)
\]
and restriction of the domain from $F'F'$ to $FF$ gives maps
\[
    \mathrm{QRep}(F', \varepsilon'; n) \to \mathrm{QRep}(F, \varepsilon'; n) \text{ and } \mathrm{QRep}(F', \varepsilon') \to \mathrm{QRep}(F, \varepsilon') \; .
\]
Composing them yields a map $r_{(F',\varepsilon'),(F,\varepsilon)} \colon \mathrm{QRep}(F', \varepsilon') \to \mathrm{QRep}(F, \varepsilon)$, which clearly respects the direct sum operation and stable equivalence, and thus induces maps $\mathrm{QR}^+(F',\varepsilon') \to \mathrm{QR}^+(F,\varepsilon) $ and  $\mathrm{QR}(F',\varepsilon') \to \mathrm{QR}(F,\varepsilon) $, which we again denote by $r_{(F',\varepsilon'),(F,\varepsilon)}$. 
    
    Using these as bonding maps, we define
    \[
        \mathrm{QR}(G)
        =\varprojlim_{(F,\varepsilon)\in\mathcal C_G}
          \mathrm{QR}(F,\varepsilon).
    \]
    In other words, an element in $ \mathrm{QR}(G)$ is a family $(x_{F,\varepsilon})_{(F,\varepsilon) \in \mathcal C_G} \in \prod_{(F,\varepsilon) \in \mathcal C_G} \mathrm{QR}(F,\varepsilon)$ satisfying
    $r_{(F',\varepsilon'),(F,\varepsilon)}(x_{F',\varepsilon'})
      =x_{F,\varepsilon}$ whenever
    $(F,\varepsilon)\preccurlyeq(F',\varepsilon')$.
\end{definition}

\begin{remark}
\label{prop:three-QR-pictures}\label{prop:QR-two-descriptions}
For any $(F,\varepsilon)\in\mathcal C_G$, the group
$\mathrm{QR}(F,\varepsilon)$ of \Cref{def:stable-equivalence} is canonically
the Grothendieck group of the abelian semigroup of homotopy classes of
$(F,\varepsilon)$-quasi-representations.  In particular, two
quasi-representations $q_0$ and $q_1$ determine the same element of
$\mathrm{QR}(F,\varepsilon)$ if and only if there is an
$(F,\varepsilon)$-quasi-representation $r$ such that
\[
 q_0\oplus r\overset{\mathrm h}{\sim}q_1\oplus r.
\]
Equivalently, equality in $\mathrm{QR}(F,\varepsilon)$ may be characterized by stable approximate unitary equivalence in the precise sense of \Cref{prop:homotopy-vs-unitary}\eqref{prop:homotopy-vs-unitary::unitary}.
\end{remark}

\section{The character: a geometric construction}
\label{sec:character_map}\label{sec:geometric-character}

In this section, we construct the character map for quasi-representations, an invariant taking values in equivariant topological $K$-theory. 
We shall present a construction using equivariant vector bundles. The only
analytic operations in this section are averaging, matrix norm estimates,
and spectral projection.

\begin{example}\label{ex:genuine-rep-character}
    To motivate our construction, let us consider the special case where we have a genuine representation $\pi: G \rightarrow U(n)$ and the group $G$ is torsion-free (so that each proper action is automatically free). In this case, for any fixed $G$-compact proper (right) $G$-space $X$, the character of $\pi$
is the equivariant $K$-theory class associated to the flat Hermitian $G$-vector bundle 
\begin{equation}\label{eq:flat_equivariant_bundle}
    E_\pi = (X \times \mathbb{C}^n, \pi^X)  \quad \text{defined as follows: }
\end{equation}
\begin{itemize}
    \item $X \times \mathbb{C}^n$ is the trivial Hermitian vector bundle over $X$ with the standard inner product on $\mathbb{C}^n$, and 
    \item $\pi^X \colon G \curvearrowright X \times \mathbb{C}^n$ is given by the prescription $\pi^X_g (x, v) := (x \cdot g^{-1}, \pi(g) v)$ for any $g\in G, x\in X$ and $v\in \mathbb{C}^n$.
\end{itemize}
For a free proper action, we have an isomorphism $K^0_G(X)\cong K^0(X/G)$ that takes the class of an equivariant vector bundle to that of its quotient bundle \cite[Proposition~2.1]{segal_1968}; see \cite{lueck_oliver_2001} for proper actions of discrete groups. 
The character is then the class of the
flat Hermitian vector bundle $E_\pi /G := X\times_G\mathbb{C}^n$ over $X/G$.  

When $X$ is a universal covering space with $G$ acting by deck transformation, the above construction in fact provides a one-to-one correspondence between unitary equivalence classes of finite-dimensional unitary representations and isomorphism classes of flat Hermitian vector bundles:  
a representation $\pi$ gives the quotient bundle $X\times_G\mathbb C^n$ with its flat connection, while 
given a flat Hermitian vector bundle over $X / G$, parallel transport around loops in $X/G$ based at a point gives a unitary monodromy representation. 
\end{example}

For a quasi-representation, \eqref{eq:flat_equivariant_bundle} does not define a genuine
$G$-action on the bundle.  
However, there is a conceptually simple adaptation of this construction, with the help of Fell's absorption principle (\Cref{subsec:Fell}).  
This method also works without the torsion-free assumption on $G$.

\begin{notation}\label{notation:equivariant-bundles}
    Since Fell's absorption principle involves the left regular representation $\lambda \colon G \curvearrowright \ell^2(G)$, which is typically infinite-dimensional, we shall need to work with a very special kind of $G$-equivariant Hilbert space bundles: namely, given a unitary representation $\rho \colon G \to U(H)$ on a possibly infinite-dimensional Hilbert space $H$, we note that \eqref{eq:flat_equivariant_bundle} still makes sense and defines a flat Hilbert space $G$-vector bundle $E_{\rho} = (X \times H, \rho^X)$.  

    The notation $\rho^X_g$ for bundle automorphisms also applies to a greater generality:
    any pair $(g, u) \in G \times U(H)$, where $H$ is a complex Hilbert space, gives rise to an automorphism of the trivial Hermitian vector bundle $X \times H$ by the formula $u_g^X(x ,\xi) = (x \cdot g^{-1}, u \cdot \xi)$. We also abbreviate $u^X_e$ to $u^X$.  
\end{notation}

In our construction, we shall use cutoff functions on $X$ (see \Cref{def:cutoff} and \Cref{lem:cutoff-exists}) twice: one time to embed, in a way that encodes a quasi-representation $\pi$ on $\mathbb{C}^n$, the trivial bundle $X \times \mathbb{C}^n$ into the (typically infinite-dimensional) Hilbert space bundle $E_{\lambda^{\oplus n}}$ as an ``approximately invariant subbundle'', and a second time to implement an averaging procedure to produce an invariant subbundle of dimension $n$, whose equivariant $K$-theory class is the character we are looking for.

\subsection{Step 1: From a quasi-representation to an approximately invariant subbundle}

Fix a cutoff function $\theta$ on $X$. 
It follows from \Cref{lem:cutoff-projection-equivariance} that $\theta$ gives rise to a $G$-equivariant isometric embedding
\[
    \iota_\theta \colon E_{\mathrm{id}_{\mathbb{C}}} \longrightarrow E_\lambda \; ,
\]
simply by mapping $(x, 1) \in X \times \mathbb{C}$ to $(x, \theta_x^{1/2})$ and extending linearly.

\begin{example}\label{ex:genuine-rep-Fell}
    As a guiding example, we explain how, in the classical setting of a genuine representation $\pi$ of a torsion-free group $G$ discussed in \Cref{ex:genuine-rep-character}, one can embed $E_\pi$ as a $G$-invariant subbundle of $E_{\lambda \otimes \mathrm{id}_{_{\mathbb{C}^n}}} \cong E_{\lambda^{\oplus n}}$. 
    Indeed, there is a $G$-equivariant isometric embedding
\[
    \iota_{\theta, \pi} \colon E_\pi \hookrightarrow E_{\lambda \otimes \mathrm{id}_{_{\mathbb{C}^n}}}
\]
obtained by composing the $G$-equivariant isometric embedding $$\iota_\theta \otimes \mathrm{id}_{E_\pi}  \colon E_\pi \hookrightarrow E_{\lambda \otimes \pi}$$ with the $G$-equivariant Hilbert space bundle isomorphism $$(V_\pi^X)^{-1} \colon E_{\lambda \otimes \pi} \xrightarrow{\cong} E_{\lambda \otimes \mathrm{id}_{\mathbb{C}^n}}$$ induced as in \Cref{notation:equivariant-bundles} from the unitary $V_\pi$ from \eqref{eq:Vpi-prelim}. 
We compute that for any $(x,v) \in E_\pi$, we have 
\begin{align}\label{eq:iota_embedding}
    \iota_{\theta, \pi} (x, v) = (x, I_{\theta, \pi} (x) \, v) \quad \text{ with } I_{\theta, \pi} (x) \colon \mathbb{C}^n &\longrightarrow \ell^2(G) \otimes \mathbb{C}^n \, , \\ v &\longmapsto    \sum_{g \in G} \theta(x \cdot g)^{\tfrac{1}{2}} \ \delta_g \otimes \pi(g)^* v \notag
\end{align}
where the sum is actually a finite one for $\theta$ is compactly supported. Hence the character $[E_\pi]$ of the representation $\pi$ given in \Cref{ex:genuine-rep-character} equals the equivariant $K$-theory class of the image of $\iota_{\theta, \pi}$ as a $G$-invariant subbundle of $E_{\lambda \otimes \mathrm{id}_{\mathbb{C}^n}}$. 

The advantage of applying Fell's absorption principle is that now $\pi$ appears not in the construction of the background $G$-bundle $E_{\lambda \otimes \mathrm{id}_{\mathbb{C}^n}}$, but only in the embedding. 
\end{example}

For an arbitrary map $\pi:G\to U(n)$, formula~\eqref{eq:iota_embedding}
still defines an isometric vector bundle embedding
\[
 \iota_{\theta,\pi}:X\times\mathbb C^n
 \longrightarrow E_{\lambda\otimes\mathrm{id}_{\mathbb C^n}},
\]
because $V_\pi$ remains unitary even when $\pi$ is not multiplicative.  
The following observation, which uses a notation from \eqref{eq:finite-overlap}, follows immediately from the formula defining $I_{\theta, \pi} $ in \eqref{eq:iota_embedding}. 

\begin{lemma}
 \label{lemma:projection-dependence}
    For each $x \in X$, the images $I_{\theta, \pi} (x)$ and $p_{\theta,\pi}(x)$ only depend on $\theta$ and $\pi|_{E(\{x\}, \operatorname{supp} \theta)} \colon E(\{x\}, \operatorname{supp} \theta) \to U(n)$. 
\end{lemma}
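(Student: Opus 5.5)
The claim is read off directly from the defining formula \eqref{eq:iota_embedding}. I take $p_{\theta,\pi}(x)$ to denote the orthogonal projection onto the image of the isometry $I_{\theta,\pi}(x)$, that is, $p_{\theta,\pi}(x)=I_{\theta,\pi}(x)I_{\theta,\pi}(x)^*$; this is the only sensible reading, since the projection is not defined in the text so far. So the statement about $p_{\theta,\pi}(x)$ will follow once it is proved for $I_{\theta,\pi}(x)$.

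Fix $x\in X$ and $v\in\mathbb C^n$. In
\[
I_{\theta,\pi}(x)v=\sum_{g\in G}\theta(xg)^{1/2}\,\delta_g\otimes\pi(g)^*v,
\]
the term indexed by $g$ vanishes unless $\theta(xg)\neq 0$. When $\theta(xg)\neq0$ we have $xg\in\operatorname{supp}\theta$, so $\{x\}g\cap\operatorname{supp}\theta\neq\varnothing$. By \eqref{eq:finite-overlap} this means $g\in E(\{x\},\operatorname{supp}\theta)$.

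Hence the sum may be restricted to $g\in E(\{x\},\operatorname{supp}\theta)$. This is a finite set by properness, because $\{x\}$ and $\operatorname{supp}\theta$ are compact. On the restricted sum, the coefficients $\theta(xg)^{1/2}$ depend only on $\theta$, and the operators $\pi(g)^*$ depend only on the restriction $\pi|_{E(\{x\},\operatorname{supp}\theta)}$. Therefore the linear map $I_{\theta,\pi}(x)$ is determined by $\theta$ and that restriction.

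The projection $p_{\theta,\pi}(x)=I_{\theta,\pi}(x)I_{\theta,\pi}(x)^*$ is built from $I_{\theta,\pi}(x)$ alone, so it depends on the same data, and so does its range.

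No step poses a real obstacle. The one point to handle carefully is the reading of $p_{\theta,\pi}(x)$: if it is instead meant as the range projection inside $\ell^2(G)\otimes\mathbb C^n$, the same argument applies unchanged.
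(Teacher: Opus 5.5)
Your argument is correct and matches the paper, which simply notes that the lemma follows immediately from \eqref{eq:iota_embedding}: only the indices $g$ with $xg\in\operatorname{supp}\theta$, that is $g\in E(\{x\},\operatorname{supp}\theta)$, contribute to the sum. Your reading $p_{\theta,\pi}(x)=I_{\theta,\pi}(x)I_{\theta,\pi}(x)^*$ is exactly the definition the paper gives in \eqref{eq:cutoff-projection-map}, immediately after the lemma.
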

Hence in the following, we may slightly abuse notation and use functions $\pi \colon F \to U(n)$ for some $F \subseteq G$ to define functions $I_{\theta, \pi}$ and $p_{\theta,\pi}$ on some $K \subseteq X$, provided that $E(K, \operatorname{supp} \theta) \subseteq F$.

A key observation for us is that when $\pi$ is a quasi-representation, the image of $\iota_{\theta, \pi}$ is an ``approximately $G$-invariant subbundle'' in a sense we make precise below. 
Note that the image of $\iota_{\theta, \pi}$, as a subbundle of $E_{\lambda^{\oplus n}}$, can be characterized by the continuous family 
\begin{equation}\label{eq:cutoff-projection-map}
 p_{\theta,\pi}:X\longrightarrow
\operatorname{Proj}_{\mathrm{fin}}(\ell^2(G)^{\oplus n}) \,  , \quad x \longmapsto I_{\theta,\pi}(x)I_{\theta,\pi}(x)^* \; ,
\end{equation}
of rank-$n$ orthogonal projections.  

\begin{lemma} \label{lemma:approx-invariant}
Let $K\subseteq X$ be compact and let $S\subseteq G$ be finite. 
Then for any $\varepsilon > 0$ and any $\pi \in \mathrm{QRep}(F, \varepsilon)$, where $F := S\cup S^{-1} E(K, \operatorname{supp}\theta)$ as in \eqref{eq:finite-overlap}, 
we have 
\[
 \big\|(\lambda_g\otimes1)I_{\theta,\pi}(x)
       -I_{\theta,\pi}(xg^{-1})\pi(g)\big\|<\varepsilon \quad \text{ for any } g\in S \text{ and } x\in K \; ;
\]
in particular, 
\[
 \big\|\Ad_{\lambda_g\otimes1}\big(p_{\theta,\pi}(x)\big)
       -p_{\theta,\pi}(xg^{-1})\big\|<2\varepsilon \quad \text{ for any } g\in S \text{ and } x\in K \; .
\]
\end{lemma}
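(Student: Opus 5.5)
We prove the first estimate by a direct computation on a vector $v\in\mathbb C^n$, using the formula \eqref{eq:iota_embedding} for $I_{\theta,\pi}$. We then deduce the projection estimate from it by a standard product estimate.

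For $x\in K$, $g\in S$ and $v\in\mathbb C^n$, we first compute
\[
(\lambda_g\otimes 1)I_{\theta,\pi}(x)v=\sum_{h}\theta(xh)^{1/2}\,\delta_{gh}\otimes\pi(h)^*v
=\sum_{k}\theta(xg^{-1}k)^{1/2}\,\delta_{k}\otimes\pi(g^{-1}k)^*v,
\]
reindexing by $k=gh$. We also have
\[
I_{\theta,\pi}(xg^{-1})\pi(g)v=\sum_k\theta(xg^{-1}k)^{1/2}\,\delta_k\otimes\pi(k)^*\pi(g)v .
\]
The difference is an orthogonal sum over $k$. The weights $\theta(xg^{-1}k)$ sum to $1$ by the cutoff property, and only indices with $xg^{-1}k\in\supp\theta$ contribute. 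Hence the norm of the difference is at most $\max_k\|\pi(g^{-1}k)^*-\pi(k)^*\pi(g)\|\,\|v\|$, the maximum taken over such $k$. The bound is a convex combination of squares, so it is controlled by this maximum.

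Next we check that the relevant products stay inside $FF$. Set $h=g^{-1}k$. Then $xh\in\supp\theta$, so $h\in E(K,\supp\theta)$, and $k=gh$ with $g\in S$. We need $\|\pi(g)^*\pi(k)-\pi(h)\|<\varepsilon$. Since all values are unitary, this is the same as $\|\pi(k)-\pi(g)\pi(h)\|<\varepsilon$. This is exactly the $(F,\varepsilon)$-multiplicativity with $g\in S\subseteq F$ and $h\in E(K,\supp\theta)$. The stated $F=S\cup S^{-1}E(K,\supp\theta)$ does not obviously contain $h$ itself, so this is the step I expect to be the main obstacle, namely the bookkeeping of which pair is tested. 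If $h$ is not available directly, I would instead write $h=g^{-1}k$ with $g^{-1}\in S^{-1}$, and use the pair $(g^{-1},k)$ or $(g,g^{-1}k)$, arranged so that both factors lie in $F$ and the product lies in $FF$. The identity $\pi(e)\approx\pi(e)^2$ would give $\pi(e)\approx 1$, and a symmetric variant would handle the inverses. Strictness of the inequality holds because there are finitely many $k$ and $K$ is compact, so the maximum is attained and is $<\varepsilon$. Taking the supremum over unit vectors $v$ then gives the first estimate. Note also that the domain of $\pi$ needed here is justified by \Cref{lemma:projection-dependence}.

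For the second estimate, write $A=(\lambda_g\otimes1)I_{\theta,\pi}(x)$ and $B=I_{\theta,\pi}(xg^{-1})\pi(g)$, both isometries. Then $\Ad_{\lambda_g\otimes1}(p_{\theta,\pi}(x))=AA^*$ and $p_{\theta,\pi}(xg^{-1})=BB^*$, since $\pi(g)$ is unitary. From $AA^*-BB^*=(A-B)A^*+B(A-B)^*$ we get $\|AA^*-BB^*\|\le 2\|A-B\|<2\varepsilon$.
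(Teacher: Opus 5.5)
Your computation and your derivation of the projection estimate from $AA^*-BB^*=(A-B)A^*+B(A-B)^*$ are the paper's argument, including the choice of the tested pair $(g,\,g^{-1}k)$.

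The step you flag is a real issue. The paper simply asserts that $g^{-1}k\in E(K,\supp\theta)$ lies in $F$. This is true when $e\in S$: then $E(K,\supp\theta)=e^{-1}E(K,\supp\theta)\subseteq S^{-1}E(K,\supp\theta)\subseteq F$, and hence $k=g\cdot g^{-1}k\in FF$. The hypothesis $e\in S$ holds in the only application ($S=E(K)$ in \Cref{prop:average-vector-bundle}). It also matches the convention of adjoining $e$ to prescribed finite sets. So you should simply add $e\in S$ to the hypotheses.

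Drop your fallback. Without $e\in S$ no rearrangement of tested pairs helps, because $k$ need not lie in $FF$, and then $\pi(k)$ is undefined or unconstrained. For example, let $G=\mathbb Z$ act on $\mathbb R$ by translation. One can arrange $S=\{1\}$ and $E(K,\supp\theta)=\{1,5\}$, so $F=\{0,1,4\}$, and the index $k=6$ occurs but $6\notin F+F$. In that situation the estimate can fail outright.

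Detours through $\pi(e)\approx 1$ or $\pi(g^{-1})\approx\pi(g)^*$ would in any case cost extra multiples of $\varepsilon$. That would lose the constants $\varepsilon$ and $2\varepsilon$, which are exactly what \Cref{prop:average-vector-bundle} needs to reach the bound $1/2$ at $\varepsilon=1/4$.
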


\begin{proof}
To prove the first inequality, we fix a unit vector $\xi \in \mathbb{C}^n$,  
where $n \in \mathbb{N}$ is such that $\pi \in \mathrm{QRep}(F, \varepsilon; n)$. 
Plugging in \eqref{eq:iota_embedding} and re-indexing the first sum with the bijection $k \mapsto g^{-1}k$, 
we see that 
\begin{align*}
    &\big((\lambda_g\otimes1)I_{\theta,\pi}(x)
       -I_{\theta,\pi}(xg^{-1})\pi(g)\big) \,\xi \\
       &= \left( \sum_{k\in G}\theta(xg^{-1}k)^{1/2}\delta_k\otimes
       \pi(g^{-1}k)^*\xi \right) - \left( \sum_{k\in G}\theta(xg^{-1}k)^{1/2}\delta_k\otimes
       \pi(k)^*\pi(g)\xi \right) \\
       &= \sum_{k\in G}\theta(xg^{-1}k)^{1/2}\delta_k\otimes
\left(  \pi(g) \pi(g^{-1}k) - \pi(k) \right)^* \pi(g)\xi \; .       
\end{align*}
Using the identity $\| \xi' \|^2 = \langle \xi', \xi' \rangle$, we conclude that 
\begin{align*}
    &\left\| \big((\lambda_g\otimes1)I_{\theta,\pi}(x)
       -I_{\theta,\pi}(xg^{-1})\pi(g)\big) \,\xi  \right\|^2 \\
       &= \sum_{k\in G}\theta(xg^{-1}k)
       \left\| \left(  \pi(g) \pi(g^{-1}k) - \pi(k) \right)^* \pi(g)\xi \right\|^2 \\
       &\leq \max_{\substack{k \in G \\ \theta(xg^{-1}k) \not= 0}} \left\|   \pi(g) \pi(g^{-1}k) - \pi(k)  \right\|^2 \; ,
\end{align*}
which is less than $\varepsilon^2$, since  $\pi \in \mathrm{QRep}(F, \varepsilon; n)$ while the condition $\theta(xg^{-1}k) \not= 0$ implies $g^{-1}k \in E(K, \operatorname{supp}\theta)$ and thus $g, g^{-1}k \in F$. This proves the first inequality. 

Finally, observe that since $\Ad_{\lambda_g\otimes1}\big(p_{\theta,\pi}(x)\big) = \big((\lambda_g\otimes1)I_{\theta,\pi}(x)\big) \big((\lambda_g\otimes1)I_{\theta,\pi}(x)\big)^*$, the second inequality follows from the first one. 
\end{proof}

\newcommand{\thetap}{\theta} 
\newcommand{\thetatheta}{\theta} 

\subsection{Step 2: Averaging to an invariant subbundle}

We shall replace the approximately equivariant projection-valued map
$p_{\theta,\pi}$ by a genuinely equivariant one.  We do this by averaging and then applying the ``spectral correction'' from \Cref{sec:almost-idempotent}.
More precisely, define a family $a_{\thetatheta,\pi}$ of operators on
$\ell^2(G)\otimes\mathbb C^n \cong \ell^2(G)^{\oplus n}$ by the following averaging procedure:
\begin{equation}\label{eq:averaging}
    a_{\thetatheta, \pi}(x) = \sum_{g \in G} \thetap(x \cdot g) \mathrm{Ad}_{\lambda_g \otimes \mathrm{id}_{\mathbb{C}^n}}( p_{\theta, \pi}(x \cdot g) ) \; .
\end{equation}
Again thanks to properness, the sum is a finite one, whence each $a_{\thetatheta,\pi} (x)$ is a finite-rank, (semi-definite) positive operator; 
it also follows from \Cref{lemma:projection-dependence} that the map $a_{\thetatheta, \pi}$ only depends on $\thetatheta$ and a function $\pi \colon E(\operatorname{supp} \thetap ) \to U(n)$.

\begin{proposition}\label{prop:average-vector-bundle}
Write $F = E(\supp \thetap)^{(2)}$ as in \eqref{eq:set-product}. 
Then for any $\pi \in \mathrm{QRep}(F, \frac{1}{4})$, 
the map $a_{\thetatheta,\pi}$ defined in \eqref{eq:averaging} is continuous in operator norm and takes value in $\operatorname{NProj}_{\mathrm{fin}}(\ell^2(G)^{\oplus n})$ as defined in \Cref{sec:almost-idempotent}, and is equivariant in the sense that 
\[
    \Ad_{\lambda_h^{\oplus n}}(a_{\thetatheta,\pi}(xh))
=a_{\thetatheta,\pi}(x) \quad \text{ for any } x\in X \text{ and } h\in G \; .
\]
In this case, composing with $\kappa$ from \eqref{eq:almost-idempotent-correction} yields a continuous map 
\[
    \kappa \circ a_{\thetatheta,\pi} \colon X \longrightarrow \operatorname{Proj}_{\mathrm{fin}}(\ell^2(G)^{\oplus n})
\]
which is also equivariant in the sense above and takes value in rank-$n$ projections; in particular, the ranges of these projections form a $G$-invariant $n$-dimensional Hermitian vector subbundle of $E_{\lambda\otimes\mathrm{id}_{\mathbb C^n}}$. 
\end{proposition}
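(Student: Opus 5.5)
I will verify the four properties in turn: equivariance, continuity, closeness to a projection, and rank. Throughout I write $P_g(x):=\Ad_{\lambda_g\otimes1}(p_{\theta,\pi}(xg))$, so that $a_{\theta,\pi}(x)=\sum_g\theta(xg)P_g(x)$.

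\emph{Equivariance.} For $h\in G$,
\[
\Ad_{\lambda_h\otimes1}(a_{\theta,\pi}(xh))=\sum_g\theta(xhg)\Ad_{\lambda_{hg}\otimes1}(p_{\theta,\pi}(xhg)).
\]
Reindexing by $k=hg$ gives $a_{\theta,\pi}(x)$. This uses only that $\lambda$ is a genuine representation, not any property of $\pi$.

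\emph{Continuity.} $x\mapsto p_{\theta,\pi}(x)$ is norm continuous, since $I_{\theta,\pi}(x)$ depends continuously on $x$ through finitely many functions $\theta(xg)^{1/2}$, locally in $x$. Locally only finitely many $g$ contribute, by properness applied to a compact neighbourhood. So $a_{\theta,\pi}$ is a locally finite sum of continuous terms.

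\emph{Closeness to a projection.} This is the main step. Fix $x$ and consider the finitely many $g$ with $\theta(xg)\neq0$, i.e.\ $xg\in\supp\theta$. I compare each $P_g(x)$ with a single reference term $P_{g_0}(x)$ for one such $g_0$. Write $y=xg_0\in\supp\theta$ and $s=g_0^{-1}g$. Then $ys=xg\in\supp\theta$, so $s\in E(\supp\theta)$. We have
\[
P_g(x)=\Ad_{\lambda_{g_0}\otimes1}\bigl(\Ad_{\lambda_s\otimes1}(p_{\theta,\pi}(ys))\bigr).
\]
Apply \Cref{lemma:approx-invariant} with $K=\supp\theta$, the point $ys$, and group element $s$. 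It gives
\[
\|\Ad_{\lambda_s\otimes1}(p_{\theta,\pi}(ys))-p_{\theta,\pi}(y)\|<2\varepsilon,
\]
provided $\pi$ is $(F',\varepsilon)$-multiplicative with $F'=S\cup S^{-1}E(K,\supp\theta)$ and $S=E(\supp\theta)$. Since $E(\supp\theta)$ is symmetric, $F'\subseteq E(\supp\theta)^{(2)}=F$.

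Hence $\|P_g(x)-P_{g_0}(x)\|<2\varepsilon$ for all relevant $g$. Since the weights $\theta(xg)$ sum to $1$, convexity gives $\|a_{\theta,\pi}(x)-P_{g_0}(x)\|<2\varepsilon$. With $\varepsilon=\frac14$ this bound is $\frac12$. \Cref{lem:almost-idempotent-perturb}\eqref{lem:almost-idempotent-perturb:deviation} ("$\supseteq$", via Weyl) then puts $a_{\theta,\pi}(x)$ in $\operatorname{NProj}_{\mathrm{fin}}$, since $P_{g_0}(x)$ is a rank-$n$ projection and $a_{\theta,\pi}(x)$ is finite-rank self-adjoint.

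The bookkeeping showing that the relevant products stay inside $F$ is the step I expect needs care. If the strict bound $2\varepsilon<\frac12$ fails at the boundary, I would instead compute directly: the first inequality of \Cref{lemma:approx-invariant} gives $\|P_g-P_{g_0}\|\le 2\|\ldots\|<2\varepsilon$ for the isometries involved.

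\emph{Rank.} The spectral projection of an operator within distance $<\frac12$ of a rank-$n$ projection has rank $n$. Concretely, the straight-line path from $P_{g_0}(x)$ to $a_{\theta,\pi}(x)$ stays in $\operatorname{NProj}_{\mathrm{fin}}$ by the same estimate. By \Cref{lem:almost-idempotent-perturb}\eqref{lem:almost-idempotent-perturb:continuous}, $\kappa$ is continuous along this path, so its rank is constant, equal to $n$.

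\emph{Conclusion.} Continuity and equivariance of $\kappa\circ a_{\theta,\pi}$ follow from \Cref{lem:almost-idempotent-perturb}\eqref{lem:almost-idempotent-perturb:continuous} and \eqref{lem:almost-idempotent-perturb:unitary}. A continuous equivariant family of rank-$n$ projections defines a $G$-invariant Hermitian subbundle of $E_{\lambda\otimes\mathrm{id}_{\mathbb C^n}}$.
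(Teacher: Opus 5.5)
Your proof is correct and follows the paper's argument. Comparing every term with the reference projection $P_{g_0}(x)=\Ad_{\lambda_{g_0}\otimes1}(p_{\theta,\pi}(xg_0))$ is the paper's reduction to $x\in\supp\theta$ by equivariance, written inline; after that, both arguments apply \Cref{lemma:approx-invariant} with $S=E(\supp\theta)$ and use convexity to get distance $<\frac12$ from a rank-$n$ projection. Your rank-$n$ argument, via constant rank along the straight-line path inside $\operatorname{NProj}_{\mathrm{fin}}$, fills in a point the paper leaves implicit, and your fallback is unnecessary because the lemma's bound is already strict.
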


\begin{proof}
Thanks to the properness of the action and the compactness of $\supp\thetap$, in each precompact neighborhood around an arbitrary $x \in X$, the sum in \eqref{eq:averaging} is a finite one, whence $a_{\thetatheta,\pi}$ is continuous and takes value in $B_{\mathrm{fin,sa}} (\ell^2(G)^{\oplus n})$. 
To check equivariance, for each $h\in G$, we compute 
\begin{align*}
 \Ad_{\lambda_h\otimes1}\big(a_{\thetatheta,\pi}(xh)\big)
 &=\sum_{g\in G}\thetap(xhg)\,
   \Ad_{\lambda_{hg}\otimes1}\big(p_{\theta,\pi}(xhg)\big)\\
 &=\sum_{k\in G}\thetap(xk)\,
   \Ad_{\lambda_k\otimes1}\big(p_{\theta,\pi}(xk)\big)
   =a_{\thetatheta,\pi}(x),
\end{align*}
where in the second line we changed variables $k=hg$.

Now we check that $a_{\thetatheta,\pi}(x) \in \operatorname{NProj}_{\mathrm{fin}}(\ell^2(G)^{\oplus n})$ for any $x \in X$. By the equivariance of $a$ and the unitary invariance of the set $\operatorname{NProj}_{\mathrm{fin}}(\ell^2(G)^{\oplus n})$, it suffices to consider $x \in \operatorname{supp}\thetap$. 
Note that if we write $K= \operatorname{supp}\thetap$ and  $S=E(K)$, then $F = S\cup S^{-1} E(K, \supp \theta)$. 
For any $x \in K$ and $g \in G$, if
$\thetap(xg)\ne0$, then we have $xg \in K$ and $g \in S$, whence it follows from  \Cref{lemma:approx-invariant} that 
\[
 \|\Ad_{\lambda_g\otimes1}p_{\theta,\pi}(xg)
       -p_{\theta,\pi}(x)\|< 2 \cdot \frac{1}{4} = \frac{1}{2}
\]
Since $\sum_{g \in G}\thetap(xg) = 1$, it follows that 
\[
 \|a(x)-p_{\theta,\pi}(x)\|
 \leq\sum_{g \in S}\thetap(xg)
 \|\Ad_{\lambda_g\otimes1}p_{\theta,\pi}(xg)-p_{\theta,\pi}(x)\|
 < \frac{1}{2}
 \quad \text{ for any } x \in K \; .
\]
This verifies $a_{\thetatheta,\pi}(x) \in \operatorname{NProj}_{\mathrm{fin}}(\ell^2(G)^{\oplus n})$. 

Finally, the facts that $\kappa \circ a_{\thetatheta,\pi}$ is continuous and equivariant follow directly from \Cref{lem:almost-idempotent-perturb}\eqref{lem:almost-idempotent-perturb:continuous} and~\eqref{lem:almost-idempotent-perturb:unitary}.  
\end{proof}

\subsection{The character map as group homomorphisms}

We are now ready to give a provisional definition of the character for a quasi-representation. 

\begin{definition}
    The \emph{character} of an $(F,\varepsilon)$-quasi-representation $\pi$ is the class $[\kappa \circ a_{\thetatheta, \pi}]$ in the equivariant $K$-theory $K^0_G(X)$,  
    as constructed in \Cref{prop:average-vector-bundle}. 
\end{definition}

It remains to verify the definition does not depend on the choices we made along the way. Meanwhile, we shall see that it factors through the Grothendieck construction in \Cref{def:stable-equivalence}. 

\begin{proposition}\label{prop:character-well-defined}
Let $X$ be a cocompact proper $G$-space and let $K \subseteq X$ be a compact set satisfying $KG = X$. Write $F_K := E(K)^{(2)}$ as in \eqref{eq:set-product}. Then there is a cutoff function $\thetatheta$ on $X$ such that the assignment in \Cref{prop:average-vector-bundle} defines a homomorphism
\[
   \chi_X \colon \mathrm{QR}(F_K,\frac{1}{4})\longrightarrow K_G^0(X).
\]

Moreover, suppose $X'$ is another $G$-compact proper $G$-space and $\varphi:X\to X'$ is $G$-equivariant. Let $K' \subseteq X'$ be a compact subset satisfying $K'G = X'$ and write $F_{K'} = E(K')^{(2)}$. Then there is a finite subset $F_{K, K'} \subseteq G$ containing $F_K \cup F_{K'}$ such that for any finite subset $F \subseteq G$ containing $F_{K, K'}$ and any $\varepsilon \in (0, \frac{1}{4})$, the diagram
\[
\begin{tikzcd}
 {\mathrm{QR}(F,\varepsilon)} && {\mathrm{QR}(F_{K'},\frac{1}{4})} & {K_G^0(X')} \\
 && {\mathrm{QR}(F_K,\frac{1}{4})} & {K_G^0(X)}
 \arrow["\chi_{X'}", from=1-3, to=1-4]
 \arrow["r_{(F, \varepsilon),(F_{K'}, 1/4)}", from=1-1, to=1-3]
 \arrow["r_{(F, \varepsilon),(F_{K}, 1/4)}"', from=1-1, to=2-3]
 \arrow["{\varphi^*}", from=1-4, to=2-4]
 \arrow["\chi_{X}", from=2-3, to=2-4]
\end{tikzcd}
\]
commutes. 
Hence these homomorphisms induce
\[
   \chi:\mathrm{QR}(G)\longrightarrow K_G^0(\underline EG).
\]

\end{proposition}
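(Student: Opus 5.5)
Choose the cutoff function via \Cref{lem:finite-overlap-fatten} and \Cref{lem:cutoff-exists}. Fatten $K$ to an open $U\supseteq K$ with $E(\overline{U})=E(K)$; shrinking $U$ to a precompact open neighborhood inside a compact neighborhood, the lemma gives $E(U,U)=E(K,K)$, and I take $\theta$ supported in a compact subset of $U$. Then $E(\supp\theta)\subseteq E(K)$, so $E(\supp\theta)^{(2)}\subseteq F_K$. Hence \Cref{prop:average-vector-bundle} applies to every $\pi\in\mathrm{QRep}(F_K,\frac14)$, giving a rank-$n$ $G$-invariant subbundle $V_\pi\subseteq E_{\lambda\otimes\mathrm{id}_{\mathbb C^n}}$.

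To get a homomorphism on the Grothendieck group I will check three things.

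\emph{Additivity.} For $\pi\oplus\pi'$, the map $I_{\theta,\pi\oplus\pi'}$ is the direct sum of $I_{\theta,\pi}$ and $I_{\theta,\pi'}$ under $\ell^2(G)\otimes\mathbb C^{n+n'}\cong(\ell^2(G)\otimes\mathbb C^n)\oplus(\ell^2(G)\otimes\mathbb C^{n'})$. Averaging and $\kappa$ respect this block decomposition, so $V_{\pi\oplus\pi'}=V_\pi\oplus V_{\pi'}$.

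\emph{Homotopy invariance.} A path $H\colon[0,1]\to\mathrm{QRep}(F_K,\frac14;N)$ gives a map $(x,t)\mapsto\kappa(a_{\theta,H(t)}(x))$. This map is continuous, because $a_{\theta,H(t)}(x)$ is a locally finite sum jointly continuous in $(x,t)$ and takes values in $\operatorname{NProj}$, so \Cref{lem:almost-idempotent-perturb}\eqref{lem:almost-idempotent-perturb:continuous} applies on $X\times[0,1]$. It is equivariant and yields a $G$-bundle over $X\times[0,1]$ restricting to $V_{H(0)}$ and $V_{H(1)}$. Homotopy invariance of $K_G^0$ for proper cocompact spaces then gives $[V_{H(0)}]=[V_{H(1)}]$; equivalently, use the pullbacks along the two end inclusions, which are $G$-homotopic.

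Combining these with \Cref{prop:QR-two-descriptions}: if $q_0\oplus r\overset{\mathrm h}{\sim}q_1\oplus r$, then $[V_{q_0}]+[V_r]=[V_{q_1}]+[V_r]$, so $[V_{q_0}]=[V_{q_1}]$ after cancellation in $K_G^0(X)$. The induced semigroup map therefore extends to $\mathrm{QR}(F_K,\frac14)$.

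For naturality, take $\theta'$ on $X'$ chosen as above for $K'$. Pull back to get $\varphi^*\theta'$, which is a cutoff function on $X$ (the sum identity is clear). Its support is compact because $\varphi^{-1}(\supp\theta')$ is closed and $G$-properness plus cocompactness allow covering it by finitely many translates of $K$. I check this using that $\varphi(K)$ is compact, so $E(\varphi(K),\supp\theta')$ is finite. The pullback $\varphi^*V'_\pi$ is exactly the bundle built from $\varphi^*\theta'$, since all formulas are pointwise in $\theta(x\cdot g)$.

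So it remains to compare two cutoff functions $\theta_0=\theta$ and $\theta_1=\varphi^*\theta'$ on the same $X$. Here I use the convex path $\theta_t=(1-t)\theta_0+t\theta_1$. This is a cutoff function with support in $L:=\supp\theta_0\cup\supp\theta_1$, a compact set. I then set $F_{K,K'}:=F_K\cup F_{K'}\cup E(L)^{(2)}$, enlarged if needed to cover the finite sets $E(\{x\},\supp\theta_t)$ that are used. For $\pi\in\mathrm{QRep}(F,\varepsilon)$ with $F\supseteq F_{K,K'}$ and $\varepsilon<\frac14$, \Cref{prop:average-vector-bundle} applied to every $\theta_t$ produces a continuous family, hence a homotopy of equivariant bundles as in the homotopy-invariance step. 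Continuity in $t$ of $I_{\theta_t,\pi}$ holds because $\sqrt{\cdot}$ is continuous.

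The subtle point, and the main obstacle I expect, is that the $\mathrm{QR}$ group maps involve restricting $\pi$ to $F_KF_K$ versus $F_{K'}F_{K'}$. The character only reads $\pi$ on the relevant overlap sets (\Cref{lemma:projection-dependence}), so both legs of the diagram are computed from the same $\pi$ and agree. The remaining care is the compactness of $\supp\varphi^*\theta'$ and the uniform finite control along the path.

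Passing to the limit over Rips complexes with the inclusions $P_d\hookrightarrow P_{d'}$ then gives compatible maps and hence $\chi$.
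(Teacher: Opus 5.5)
Your proposal is correct and takes essentially the same route as the paper. You choose $\theta$ with $E(\supp\theta)\subseteq E(K)$ via the fattening and cutoff lemmas, and you check additivity and homotopy invariance to descend to the Grothendieck group. For naturality you pull back $\theta'$ along the proper map $\varphi$ and run the convex homotopy $\theta_t=(1-t)\theta+t\,\varphi^*\theta'$ at a common control containing $E(L)^{(2)}$. Your $L$ is slightly smaller than the paper's, since the paper also throws in $K$ and $\varphi^{-1}(K')$, but this difference is harmless.
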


\begin{proof}
It follows from \Cref{lem:finite-overlap-fatten} and \Cref{lem:cutoff-exists} that $X$ admits a cutoff functions $\theta$ such that $E(K \cup \supp \theta) = E(K)$. Hence \Cref{prop:average-vector-bundle} provides a map 
\begin{equation}\label{prop:character-well-defined:eq:Qrep-map}
    \mathrm{QRep}(F_K, \frac{1}{4}) \longrightarrow K^0_G(X) \, , \quad \pi \longmapsto [\kappa \circ a_{\thetatheta, \pi}] \; .
\end{equation}
A direct sum $\pi \oplus \pi'$ of quasi-representations produces a direct sum of the isometries $I_{\theta,\pi \oplus \pi'} = I_{\theta,\pi} \oplus I_{\theta,\pi'}$ as in \eqref{eq:iota_embedding}, whence $p_{\theta,\pi \oplus \pi'} = p_{\theta,\pi} \oplus p_{\theta,\pi'}$, $a_{\theta,\pi \oplus \pi'} = a_{\theta,\pi} \oplus a_{\theta,\pi'}$, and $[\kappa \circ a_{\theta,\pi \oplus \pi'}] = [\kappa \circ  a_{\theta,\pi}] + [\kappa \circ a_{\theta,\pi'}]$. 
Therefore the map above is additive. 

If $(\pi_t)_{t\in[0,1]}$ is a homotopy in $\mathrm{QRep}(F_K, \frac{1}{4})$, then by the local finiteness of the sum in \eqref{eq:iota_embedding}, $(I_{\theta,\pi_t})_{t\in[0,1]}$ is a continuous path in $C(\supp \theta , B(\mathbb{C}^n , \ell(G)^{\oplus n}))$, whence $(p_{\theta,\pi_t})_{t\in[0,1]}$ is a continuous path in $C(\supp \theta , \operatorname{Proj}_{\mathrm{fin}}(\ell(G)^{\oplus n}))$. It then follows from the local finiteness of the sum in \eqref{eq:averaging} as well as the equivariance of each $a_{\thetatheta, \pi_t}$ that  $(a_{\theta,\pi_t})_{t\in[0,1]}$ is a continuous path in $C(X , \operatorname{NProj}_{\mathrm{fin}}(\ell(G)^{\oplus n}))^G$. Hence by \Cref{lem:almost-idempotent-perturb}\eqref{lem:almost-idempotent-perturb:continuous},  $(p_{\theta,\pi_t})_{t\in[0,1]}$ is a continuous path in $C(X , \operatorname{Proj}_{\mathrm{fin}}(\ell(G)^{\oplus n}))^G$, whence $[\kappa \circ  a_{\theta,\pi_0}] = [\kappa \circ a_{\theta,\pi_1}]$. Consequently, the map in \eqref{prop:character-well-defined:eq:Qrep-map} descends to a homomorphism from $\mathrm{QR}^+(F_K,\frac{1}{4})$ and then to a homomorphism from its Grothendieck group $\mathrm{QR}(F_K,\frac{1}{4})$.

We claim that given any two cutoff functions $\theta_0,\theta_1$ on $X$ such that $E(K \cup {\supp \theta_0} \cup {\supp \theta_1}) = E(K)$, we have $[\kappa \circ  a_{\theta_0,\pi}] = [\kappa \circ a_{\theta_1,\pi}]$ for any $\pi \in \mathrm{QRep}(F_K, \frac{1}{4}) $. Indeed, we have a continuous path $(\theta_t)_{t \in [0,1]}$ of cutoff functions defined by setting $\theta_t = (1-t)\theta_0+t\theta_1$ for each $t \in [0,1]$. Clearly $\supp \theta_t \subseteq {\supp \theta_0} \cup {\supp \theta_1}$ for any $t \in [0,1]$. It follows that $(I_{\theta_t,\pi})_{t\in[0,1]}$ is a continuous path in $C({\supp \theta_0} \cup {\supp \theta_1} , B(\mathbb{C}^n , \ell(G)^{\oplus n}))$. The claim is then proved by the same argument as in the last paragraph. 

Now let $X'$, $K'$, and $\varphi$ be as in the statement.
The map $\varphi$ is proper: for a compact $L\subset X'$, every
$x\in\varphi^{-1}(L)$ can be written $x=kg$ with $k\in K$, and
$g\in E(\varphi(K),L)$, a finite set. Thus $\varphi^{-1}(L)$ is
a closed subset of a finite union of compact translates of $K$.

Choose cutoffs $\theta,\theta'$ defining $\chi_X,\chi_{X'}$ as
above, and set
\[
 L=K\cup\supp\theta\cup\varphi^{-1}(K'\cup\supp\theta'),
 \qquad F_{K,K'}=F_K\cup F_{K'}\cup E(L)^{(2)}.
\]
This is a finite set containing both original controls. The pullback
$\theta'\circ\varphi$ is a cutoff on $X$, and
$\theta_t=(1-t)\theta+t(\theta'\circ\varphi)$ is a path of cutoffs
supported in $L$. Every index needed for the cutoff columns,
averaging, and spectral correction lies in the stated common
control. Applying the preceding homotopy argument at this control
therefore identifies the characters obtained from its two endpoints.
The construction at $t=1$ is exactly the pullback of the construction
on $X'$, so
\[
 \varphi^*\chi_{X'}\bigl(r_{(F,\varepsilon),(F_{K'},1/4)}[\pi]\bigr)
 =\chi_X\bigl(r_{(F,\varepsilon),(F_K,1/4)}[\pi]\bigr).
\]
This proves the diagram's commutativity without requiring $\varphi$
to be surjective.
\end{proof}

In the case of Rips complexes, we may make the parameters in \Cref{prop:character-well-defined} more transparent. 

\begin{corollary}\label{cor:character-well-defined-Rips}
The assignment in \Cref{prop:average-vector-bundle} defines a family $(\chi_{P_d(G)})_{d \geq 0}$ of homomorphisms 
\[
   \chi_{P_d(G)} \colon \mathrm{QR}(B(e, 4d),\frac{1}{4})\longrightarrow K_G^0(P_d(G))
\]
that fit into the following commuting diagram whenever $d' \geq d \geq 0$: 
\[
\begin{tikzcd}
 {\mathrm{QR}(B(e, 4d'),\frac{1}{4})} & {K_G^0(P_{d'}(G))} \\
 {\mathrm{QR}(B(e, 4d),\frac{1}{4})} & {K_G^0(P_{d}(G))}
 \arrow["\chi_{P_{d'}(G)}", from=1-1, to=1-2]
 \arrow["r_{(B(e, 4d'), 1/4),(B(e, 4d), 1/4)}"', from=1-1, to=2-1]
 \arrow["{\iota^*}", from=1-2, to=2-2]
 \arrow["\chi_{P_{d}(G)}", from=2-1, to=2-2]
\end{tikzcd}
\]
where $\iota \colon P_{d}(G) \to P_{d'}(G)$ is the canonical embedding. 
\end{corollary}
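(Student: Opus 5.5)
We specialize \Cref{prop:character-well-defined} to $X=P_d(G)$, with the compact fundamental set $K_d$ from \eqref{eq:Rips-complexity}: the union of all simplices spanned by finite sets containing $e$ of diameter at most $d$. Then $K_dG=P_d(G)$ and $E(K_d)\subseteq B(e,2d)$. Since $\ell$ is symmetric, $B(e,2d)$ is symmetric and contains $e$. Subadditivity of $\ell$ gives $E(K_d)^{(2)}\subseteq B(e,2d)\,B(e,2d)\subseteq B(e,4d)$, so $F_{K_d}\subseteq B(e,4d)$. We then define $\chi_{P_d(G)}$ as the composite of the restriction $r_{(B(e,4d),1/4),(F_{K_d},1/4)}$ with the homomorphism $\chi_X$ of \Cref{prop:character-well-defined}, built from a cutoff $\theta_d$ satisfying $E(K_d\cup\supp\theta_d)=E(K_d)$. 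This is a homomorphism because both factors are.

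For the commuting square with $d'\ge d$, we do not invoke the second half of \Cref{prop:character-well-defined} as stated: its control set $F_{K,K'}$ involves $E(L)^{(2)}$ and may exceed $B(e,4d')$, and it also requires $\varepsilon<1/4$. Instead we rerun its homotopy argument with a careful choice of cutoffs. Let $\iota\colon P_d(G)\to P_{d'}(G)$ be the inclusion, $\theta'$ the cutoff on $P_{d'}(G)$, and $\theta$ the one on $P_d(G)$. The pullback $\theta'\circ\iota$ is a cutoff on $P_d(G)$ with support in $\supp\theta'\cap P_d(G)$. We interpolate linearly by $\theta_t=(1-t)\theta+t\,(\theta'\circ\iota)$.

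The key point is that every support involved lies inside a set whose self-overlap set is contained in $E(K_{d'})$, taken in $P_{d'}(G)$. Since $K_d\subseteq K_{d'}$, we can choose $\theta$ supported in a small neighbourhood of $K_d$ inside the open set $U_{d'}$ given by \Cref{lem:finite-overlap-fatten} applied to $K_{d'}$. Taking $\theta'$ supported in $U_{d'}$ as well, we get $L:=\supp\theta\cup\iota^{-1}(\supp\theta')\subseteq U_{d'}$, hence $E(L)\subseteq E(U_{d'})=E(K_{d'})$. Consequently $E(L)^{(2)}\subseteq B(e,4d')$. By \Cref{lemma:projection-dependence} and \Cref{lemma:approx-invariant}, \Cref{prop:average-vector-bundle} applies to every $\theta_t$ and every $\pi\in\mathrm{QRep}(B(e,4d'),\frac14)$. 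The conditions to check are $\pi$ being $(E(\supp\theta_t)^{(2)},\tfrac14)$-multiplicative and the estimate $\|a-p\|<1/2$, and both hold. The path $t\mapsto\kappa\circ a_{\theta_t,\pi}$ is then continuous by the argument already given in the proof of \Cref{prop:character-well-defined}, so the classes at $t=0$ and $t=1$ agree.

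At $t=0$ we obtain $\chi_{P_d(G)}$ of the restriction of $[\pi]$. At $t=1$, the vectors $(\theta'\circ\iota)^{1/2}_x$ equal $\theta'^{1/2}_{\iota(x)}$, so $I$, $p$ and $a$ are literally pulled back along $\iota$ and we obtain $\iota^*\chi_{P_{d'}(G)}[\pi]$. Since both composites are additive, this identity on generators gives commutativity on $\mathrm{QR}(B(e,4d'),\frac14)$.

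The main obstacle is arranging the supports so that the control stays within $B(e,4d')$ without enlarging it. An independently chosen $\theta$ on $P_d(G)$ might, after embedding into $P_{d'}(G)$, leave the good neighbourhood of $K_{d'}$. The fix is to choose the cutoffs for all $d$ compatibly, each supported near $K_d$ inside the neighbourhood granted by \Cref{lem:finite-overlap-fatten} for every larger $K_{d'}$. If simultaneous compatibility across all $d'$ is awkward, we can instead use the independence of the cutoff: for each pair $(d,d')$, first homotope $\theta$ to an auxiliary cutoff supported in $U_{d'}\cap P_d(G)$, which is permitted at the coarser control $B(e,4d)$ after restricting $\pi$.
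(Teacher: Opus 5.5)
Your argument is correct, but it takes a longer road than the paper. The paper fixes once and for all the barycentric cutoff $\theta_d(y)=y(e)$ on $P_d(G)$. Its support is exactly the closed star $K_d$, so \Cref{prop:average-vector-bundle} applies directly at control $E(K_d)^{(2)}\subseteq B(e,4d)$. Because the Rips inclusions preserve barycentric coordinates, $\theta_{d'}\circ\iota=\theta_d$ holds on the nose. Your own $t=1$ computation then shows that the cutoff columns, the averages and the spectral projections on $P_d(G)$ are literally the restrictions of those on $P_{d'}(G)$. So the square commutes with no homotopy of cutoffs. It also avoids the second half of \Cref{prop:character-well-defined}, which you rightly set aside, since it is stated only for $\varepsilon<1/4$ and at a possibly larger control.

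Your route uses admissible but otherwise arbitrary cutoffs, \Cref{lem:finite-overlap-fatten} neighbourhoods, and a straight-line homotopy of cutoffs. It buys a slightly stronger statement, namely that the square commutes for every admissible choice of cutoffs, at the price of extra bookkeeping. Your ``Option 1'' is realized precisely by the barycentric choice, since $\supp\theta_d=K_d\subseteq K_{d'}$ holds for all $d'\geq d$ at once.

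In your fallback, make one condition explicit. The auxiliary cutoff must be supported in $U_d\cap\iota^{-1}(U_{d'})$, where $U_d$ is an open neighbourhood of $K_d\cup\supp\theta_d$ in $P_d(G)$ with $E(U_d)=E(K_d)$; such a $U_d$ exists by \Cref{lem:finite-overlap-fatten}. Support in $U_{d'}\cap P_d(G)$ alone does not guarantee that the first homotopy runs at control $B(e,4d)$, because cross-overlaps with $\supp\theta_d$ may leave $E(K_d)$. That intersection is open and contains $K_d$, so it meets every orbit, and \Cref{lem:cutoff-exists} still provides such a cutoff.
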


\begin{proof}
Use the barycentric cutoff $\theta_d(y)=y(e)$ on $P_d(G)$.
Its support is the closed star of $e$, and \eqref{eq:Rips-complexity}
gives the stated control $B(e,4d)$. These cutoffs agree under the
Rips inclusions. The cutoff columns, averaging, and spectral
projections therefore commute with restriction at the larger control
$B(e,4d')$, giving the displayed diagram.
\end{proof}

\section{The rational character and the obstruction to genuine representations}
\label{sec:Chern-character}

In this section, we construct a cohomological version of the character
map by composing the character of Section~4 with the equivariant Chern
character. The resulting Chern character of a quasi-representation
takes values in delocalized cohomology with complex coefficients.
A genuine finite-dimensional unitary representation has no
positive-degree Chern character, so a nonzero positive-degree
component obstructs stable equivalence with a genuine representation.

For finitely generated torsion-free groups satisfying the quantitative
character theorem, we also prove a converse: if all positive-degree
components vanish, then, at each prescribed finite set and error
tolerance, some positive integer multiple of the quasi-representation
class is stably equivalent to a difference of genuine
finite-dimensional unitary representations.

We first recall Baum and Connes' Chern character for proper actions.
Let $X$ be a finite proper $G$-CW complex, meaning that it has
finitely many equivariant cells and finite cell stabilizers. For the
formulas in this subsection write the associated left action as
$gx=xg^{-1}$. Define its \emph{inertia space}
\[
 \widehat X=\{(g,x)\in G\times X:gx=x\}.
\]
If $gx=x$, then $g$ belongs to the stabilizer $G_x$, which is finite because the
action is proper; hence $g$ has finite order.  The group $G$ acts on
$\widehat X$ by
\[
 h\cdot(g,x)=(hgh^{-1},hx)\quad \text{ for any } h \in G \text{ and } (g,x) \in \widehat X.
\]
This action is again proper.  For the precise statement in the presence of torsion it is simplest first to
use complex coefficients.  Define
\[
 H^0(X,G;\mathbb C)=\bigoplus_{j\ge0}H^{2j}(\widehat X/G;\mathbb C) \quad \text{ and } \quad 
 H^1(X,G;\mathbb C)=\bigoplus_{j\ge0}H^{2j+1}(\widehat X/G;\mathbb C).
\]
Equivalently, if $[g]$ ranges over conjugacy classes of finite-order elements,
this is the direct sum of the cohomologies of
$X^g/C_G(g)$.

To keep the notation light, below $H^*(X,G)$ denotes this delocalized
cohomology with complex coefficient.  Its
cohomological degree-zero summand is
\[
 H^0_{\mathrm{deg}=0}(X,G)=H^0(\widehat X/G)
 \subseteq H^0(X,G).
\]

\begin{theorem}[{\cite{baum_connes_chern_1988}}]\label{thm:BC-chern}
For a finite proper $G$-CW complex $X$, the equivariant Chern
character is a natural isomorphism
\[
 \mathrm{Ch}:K_G^*(X)\otimes\mathbb C
 \xrightarrow{\cong} H^*(X,G;\mathbb C).
\]
It respects the $\mathbb Z/2$ parity, and its components are graded by ordinary
cohomological degree. For free actions it is the ordinary rational
Chern character after extension of scalars to $\mathbb C$. With finite
stabilizers, the sector traces involve roots of unity and take values
in cyclotomic extensions of $\mathbb Q$.
\end{theorem}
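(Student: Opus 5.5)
The plan is to prove \Cref{thm:BC-chern} by the standard comparison of equivariant cohomology theories on finite proper $G$-CW complexes. There are three steps: construct $\mathrm{Ch}$ as a natural transformation of $\mathbb Z/2$-graded theories, check that it is an isomorphism on single orbits $G/H$ with $H$ finite, and then run an induction over equivariant cells using Mayer--Vietoris and the five lemma. Both sides are exact theories:
\begin{itemize}
\item $K_G^*(-)\otimes\mathbb C$ is exact because $\mathbb C$ is flat over $\mathbb Z$, and it is a theory in Phillips' or L\"uck--Oliver's framework for proper actions.
\item $H^*(-,G;\mathbb C)$ is exact because $X\mapsto\widehat X/G$ sends $G$-pushouts along cell attachments to pushouts. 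Hence ordinary cohomology of the inertia quotient satisfies Mayer--Vietoris and excision.
\end{itemize}

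\emph{Construction.} Let $V$ be a $G$-vector bundle over $X$ and $g$ an element of finite order. The restriction $V|_{X^g}$ carries a fiberwise action of $g$ of finite order, so it splits into eigenbundles $V|_{X^g}=\bigoplus_{\zeta}V_\zeta$, indexed by roots of unity $\zeta$. Set
\[
 \mathrm{Ch}_g(V)=\sum_\zeta \zeta\,\mathrm{ch}(V_\zeta)\in H^{\mathrm{ev}}(X^g;\mathbb C).
\]
This class is invariant under $C_G(g)$, and conjugating $g$ permutes the sectors compatibly. Since $C_G(g)$ acts properly with finite stabilizers on $X^g$, complex cohomology of $X^g/C_G(g)$ is identified with the $C_G(g)$-invariant part of Borel equivariant cohomology. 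Thus $\mathrm{Ch}_g(V)$ descends to $H^{\mathrm{ev}}(X^g/C_G(g);\mathbb C)$; I would prove this descent via a transfer argument cell by cell. Additivity and multiplicativity follow from those of $\mathrm{ch}$, so the map is defined on $K^0_G$. To get $K^1_G$, use the suspension $X\times(\mathbb R,\text{trivial action})$ and the suspension isomorphism on both sides. The parity and degree statements are then visible from the formula. The roots-of-unity remark also follows: the coefficients $\zeta$ lie in cyclotomic fields. For free actions only $g=e$ contributes, and $\mathrm{Ch}$ reduces to the ordinary Chern character.

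\emph{Orbits.} For $X=G/H\times D^n$ relative to $G/H\times S^{n-1}$, it suffices by homotopy invariance and suspension to treat $X=G/H$. Here $K^0_G(G/H)=R(H)$, and $\widehat X/G$ is the finite set of conjugacy classes of $H$. The map $\mathrm{Ch}$ sends a representation to its character, viewed as a class function on $H$. By character theory of finite groups, $R(H)\otimes\mathbb C\to\{\text{class functions on }H\}$ is an isomorphism. Both odd groups vanish.

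\emph{Induction.} Write $X$ as built by finitely many pushouts attaching $G/H_i\times D^{n_i}$. Naturality of $\mathrm{Ch}$ with respect to the Mayer--Vietoris boundary maps, together with the five lemma, then gives the isomorphism for all finite proper $G$-CW complexes. I expect the main obstacle to be exactly this compatibility with connecting homomorphisms (equivalently, with suspension) combined with the descent to the quotients $X^g/C_G(g)$. My first approach is to represent $\mathrm{Ch}$ at the level of Chern--Weil forms on the fixed-point sets, averaged over the finite stabilizers using a cutoff function as in \Cref{lem:cutoff-exists}. This makes naturality and the boundary compatibility hold at the level of differential forms. As a fallback, I would simply invoke the original argument of \cite{baum_connes_chern_1988}.
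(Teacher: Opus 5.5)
The paper does not prove this theorem; it imports it from \cite{baum_connes_chern_1988}. Your overall strategy is the standard route: build a natural transformation of $\mathbb Z/2$-graded theories, treat the orbit case $G/H$ by character theory of the finite group $H$, then induct over cells with Mayer--Vietoris and the five lemma. The orbit computation and the induction are fine.

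The gap is in the construction of $\mathrm{Ch}$, and it lies exactly in the positive-degree components, which are the ones the paper uses in \Cref{thm:genuine-character-degree-zero}. You place $\mathrm{Ch}_g(V)=\sum_\zeta\zeta\,\mathrm{ch}(V_\zeta)$ in the nonequivariant group $H^{\mathrm{ev}}(X^g;\mathbb C)$. You then propose to descend a $C_G(g)$-invariant class to $H^{\mathrm{ev}}(X^g/C_G(g);\mathbb C)$ by a transfer. Such a transfer exists when the acting group is finite. Here $C_G(g)$ is typically infinite (for $g=e$ it is $G$), and the pullback $H^*(X^g/C_G(g);\mathbb C)\to H^*(X^g;\mathbb C)^{C_G(g)}$ is in general neither injective nor surjective. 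So invariant classes on $X^g$ do not determine classes on the quotient.

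Concretely, let $G=\mathbb Z^2$ act freely on $X=\mathbb R^2$, and let $V$ be the $G$-bundle corresponding to a degree-one line bundle on $T^2$. Then $V$ is nonequivariantly trivial, so your $\mathrm{Ch}_e(V)=1=\mathrm{Ch}_e(X\times\mathbb C)$ in $H^{\mathrm{ev}}(\mathbb R^2;\mathbb C)=\mathbb C$. No descent can recover $c_1\in H^2(T^2;\mathbb C)$. In particular, for free actions the map you actually defined is the pullback to $X$ of the Chern character, not the ordinary Chern character on $X/G$.

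The repair is to form the class equivariantly before passing to cohomology. Use the Borel construction: each $V_\zeta$ is a $C_G(g)$-bundle over $X^g$, cut out by the idempotents $\frac1m\sum_{k}\zeta^{-k}g^k$ of the central cyclic group $\langle g\rangle$. Hence $EC_G(g)\times_{C_G(g)}V_\zeta$ is an honest vector bundle over $EC_G(g)\times_{C_G(g)}X^g$. Take $\sum_\zeta\zeta\,\mathrm{ch}$ of these bundles in $H^{\mathrm{ev}}(EC_G(g)\times_{C_G(g)}X^g;\mathbb C)$. This group is isomorphic to $H^{\mathrm{ev}}(X^g/C_G(g);\mathbb C)$, because the fibres of the map to the quotient are classifying spaces of finite stabilizers and hence $\mathbb C$-acyclic. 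That is the correct form of the identification you quote: it is the whole Borel group, not an ``invariant part'', and not $H^*(X^g)$.

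Your Chern--Weil fallback works only if the invariant forms are treated as forms on the quotient $X^g/C_G(g)$. That needs a de Rham theorem for invariant piecewise-smooth forms on a simplicial model, which is extra work on a CW complex.

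With the Borel definition, $\mathrm{Ch}$ is a composite of natural transformations of $\mathbb Z/2$-graded theories: restriction to $X^g$, the eigenspace splitting, the Borel construction, and the ordinary Chern character. So compatibility with connecting maps, which you flagged as the main obstacle, is automatic, and the rest of your argument goes through.
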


After barycentric subdivision, each $P_d(G)$ is a finite proper
$G$-CW complex, so the theorem applies at every Rips scale used below.
For $x\in\mathrm{QR}(G)$, vanishing of its cohomological character
means $\mathrm{Ch}(\chi_d(x))=0$ at every scale $d$; complexification
is performed at those finite stages.

The following theorem provides a computable obstruction for a quasi-representation to come, stably, from a genuine representation.

\begin{theorem}\label{thm:genuine-character-degree-zero}
Let $d>0$, and let $(F_d,\varepsilon_d)$ be a control supplied by
\Cref{prop:character-well-defined} for $X=P_d(G)$.  Let
$F\supseteq F_d$ and $0<\varepsilon\leq\varepsilon_d$.  Suppose an
$(F,\varepsilon)$-quasi-representation $\phi$ and the restriction of a
genuine finite-dimensional unitary representation
$\psi:G\to U(n)$ determine the same class in
$\mathrm{QR}(F,\varepsilon)$.  Then
\[
 \mathrm{Ch}(\chi(\phi))\in
 H^0_{\mathrm{deg}=0}(P_d(G),G)
 =H^0(\widehat{P_d(G)}/G;\mathbb C).
\]
Equivalently, all positive cohomological-degree components of
$\mathrm{Ch}(\chi(\phi))$ vanish.
\end{theorem}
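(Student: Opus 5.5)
Since $\phi$ and $\psi|_{FF}$ define the same class in $\mathrm{QR}(F,\varepsilon)$, and $\chi_{P_d(G)}$ is a homomorphism on $\mathrm{QR}(F_d,\varepsilon_d)$ (\Cref{prop:character-well-defined}, composed with the restriction $r_{(F,\varepsilon),(F_d,\varepsilon_d)}$), we get $\chi(\phi)=\chi(\psi)$ in $K_G^0(P_d(G))$. It therefore suffices to show that the character of a genuine representation $\psi\colon G\to U(n)$ has Chern character concentrated in cohomological degree zero.

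The first step identifies $\chi(\psi)$ with the flat bundle $E_\psi=(X\times\mathbb C^n,\psi^X)$, $X=P_d(G)$. For a genuine representation the computation of \Cref{ex:genuine-rep-Fell} holds without torsion-freeness: $\iota_{\theta,\psi}=(V_\psi^X)^{-1}\circ(\iota_\theta\otimes\mathrm{id})$ is a $G$-equivariant isometric embedding of $E_\psi$ into $E_{\lambda\otimes\mathrm{id}_{\mathbb C^n}}$. This uses \eqref{eq:Fell_absorption} and \Cref{lem:cutoff-projection-equivariance}. Hence $p_{\theta,\psi}$ is exactly equivariant, every summand in \eqref{eq:averaging} equals $p_{\theta,\psi}(x)$, so $a_{\theta,\psi}=p_{\theta,\psi}$ and $\kappa\circ a_{\theta,\psi}=p_{\theta,\psi}$. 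The character is then the class of the range bundle, which is $G$-isomorphic to $E_\psi$. Equivalently, $E_\psi=\pi^*(\psi)$ is the pullback of the $G$-vector bundle $\mathbb C^n$ over a point along the equivariant map $\pi\colon X\to\mathrm{pt}$. The point is not a proper $G$-space, so I would instead argue directly with the Chern character, as follows.

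The second step computes $\mathrm{Ch}([E_\psi])$. For each finite-order $g$ and each component of $X^g/C_G(g)$, the Baum--Connes Chern character restricted to the $[g]$-sector is the ordinary Chern character of the $g$-eigenbundle decomposition of $E_\psi|_{X^g}$, descended to $X^g/C_G(g)$ and twisted by the eigenvalues, i.e.\ $\sum_\zeta \zeta\,\mathrm{ch}(E^\zeta)$. On $X^g$ the element $g$ acts fiberwise by the constant matrix $\psi(g)$, so each eigenbundle $E^\zeta=X^g\times\ker(\psi(g)-\zeta)$ carries a flat $C_G(g)$-equivariant structure, namely the representation of $C_G(g)$ on the eigenspace. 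After tensoring with $\mathbb C$, the positive-degree Chern classes of a bundle induced from a flat equivariant structure over a proper space with finite stabilizers vanish. This follows by Chern--Weil: a $C_G(g)$-invariant flat connection exists (the trivial connection $d$ on $X^g\times\mathbb C^k$, which is invariant since the action is by constant matrices), so the curvature is zero. Working in the orbifold / simplicial de Rham model on the finite complex, all $\mathrm{ch}_j$ with $j>0$ vanish rationally. Naturality of $\mathrm{Ch}$ (\Cref{thm:BC-chern}) then places $\mathrm{Ch}([E_\psi])$ in $H^0(\widehat X/G;\mathbb C)$.

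I expect the main obstacle to be this second step. The Chern--Weil argument needs a de Rham model compatible with the Baum--Connes construction on a simplicial complex with finite stabilizers. To avoid it, I would first try a cleaner route: $E_\psi$ is pulled back from $B:=\underline EG$-type data through the classifying map of the flat bundle. Passing to a finite-index torsion-free subgroup is not available in general, so instead I would use that $\mathrm{Ch}$ commutes with restriction to each $X^g$. This reduces the problem to the $C_G(g)$-equivariant ordinary Chern character of flat bundles, whose rational positive-degree classes vanish by the classical argument (flat bundles over finite proper complexes have torsion Chern classes). Naturality under $\varphi^*$ is guaranteed by \Cref{thm:BC-chern}.
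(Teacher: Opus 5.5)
Your proposal is correct and follows essentially the same route as the paper. You reduce, by stable invariance of the character, to the genuine representation $\psi$; you identify $\chi(\psi)$ with the flat bundle $E_\psi$; and you observe that on each sector $P_d(G)^g$ the trivial flat connection on the $\psi(g)$-eigenbundles gives the Chern--Weil representative $\operatorname{Tr}\psi(g)$, a locally constant function.

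Your explicit check that exact multiplicativity makes $p_{\theta,\psi}$ equivariant, so that $a_{\theta,\psi}=p_{\theta,\psi}=\kappa\circ a_{\theta,\psi}$ with no torsion-freeness assumption, spells out a step the paper only cites from Section~4. The fallback argument via torsion Chern classes is not needed.
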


\begin{proof}
By homotopy and stable invariance of the character map, it is enough to treat
the genuine representation $\psi$.  The class $\chi(\psi)$ is represented by
the equivariant flat bundle
\[
 E_\psi=P_d(G)\times\mathbb C^n,
 \qquad g(x,v)=(gx,\psi(g)v),
\]
as proved in \Cref{sec:character_map}.

We recall the local form of the delocalized equivariant Chern character.  For a
finite-order element $g$, restrict $E_\psi$ to the fixed-point set
$P_d(G)^g$.  The action of $g$ on the fiber is the constant unitary
$\psi(g)$.  Decompose the fiber into its $g$-eigenspaces.  On each eigenspace
the underlying connection is the trivial flat connection, so its curvature is
zero.  Consequently the equivariant Chern--Weil representative on that sector is
\[
 \operatorname{Tr}\!\left(\psi(g)\exp\left(\frac{i}{2\pi}F_\nabla\right)\right)
 =\operatorname{Tr}(\psi(g)),
\]
a locally constant function on $P_d(G)^g$.  There are no terms of positive
cohomological degree.  These sectorwise functions are invariant under
conjugation and therefore descend to $\widehat{P_d(G)}/G$.  This proves that
the full delocalized Chern character lies in cohomological degree zero.

This also explains the point at which flatness is used: the Chern character is
not zero; its degree-zero part is the rank on the identity sector and, on a
nontrivial finite-order sector, the trace $\operatorname{Tr}\psi(g)$.  Only the
positive-degree terms vanish.
\end{proof}

\begin{corollary}[Torsion-free case]
\label{cor:torsion-free-chern} Let $G$ be a finitely generated torsion-free group, equipped with a word
metric.  Suppose that the character is a complete
invariant of quasi-representations in the quantitative sense  of
Theorem~1.2. Let the Rips scale $d\geq 1$ and $x\in QR(G)$. Then the following are equivalent:

\begin{enumerate}
\item all positive-degree components of $\operatorname{Ch}(\chi(x))$
vanish;

\item for every finite set $F_0\subseteq G$ and every $\varepsilon_0>0$,
there is an integer $N\geq 1$ such that, at the control
$(F_0,\varepsilon_0)$, $Nx$ is stably equivalent to a virtual genuine
finite-dimensional unitary representation.
\end{enumerate}

Equivalently, in (2) there are genuine finite-dimensional unitary
representations $\pi_+$ and $\pi_-$ such that
\[
Nx=[\pi_+]-[\pi_-]
\]
in $QR(F_0,\varepsilon_0)$.

In particular, this conclusion holds for torsion-free amenable groups.
\end{corollary}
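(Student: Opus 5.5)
The direction (2)$\Rightarrow$(1) is essentially \Cref{thm:genuine-character-degree-zero}, and I would handle it first. Fix $d$ and take $(F_0,\varepsilon_0)$ at least as strong as the control $(F_d,\varepsilon_d)$ from \Cref{prop:character-well-defined}. By (2) there are $N\ge1$ and genuine representations $\pi_\pm$ with $Nx=[\pi_+]-[\pi_-]$ in $\mathrm{QR}(F_0,\varepsilon_0)$. Applying $\chi_d$ and then $\mathrm{Ch}$ gives
\[
N\,\mathrm{Ch}(\chi_d(x))=\mathrm{Ch}(\chi_d(\pi_+))-\mathrm{Ch}(\chi_d(\pi_-)).
\]
The flat-bundle computation in the proof of \Cref{thm:genuine-character-degree-zero} shows that the right side lies in degree zero. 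Since we work with complex coefficients, we may divide by $N$. As $d$ was arbitrary, every positive-degree component vanishes.

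For (1)$\Rightarrow$(2), I use torsion-freeness. The action on $P_d(G)$ is then free, so $\widehat{P_d(G)}=\{e\}\times P_d(G)$, and $\mathrm{Ch}$ is the ordinary Chern character on $K^0(P_d(G)/G)$, with $P_d(G)/G$ a finite complex. Let $d_0$, $F_1$, $\varepsilon_1$, and some $D\ge d_0$, be supplied by part (i) of the quantitative theorem for $(F_0,\varepsilon_0)$, and put $z=\chi_D(x)\in K^0(P_D(G)/G)$. Hypothesis (1) says $\mathrm{ch}(z)\in H^0(P_D(G)/G;\mathbb C)$. Because $\mathrm{ch}\otimes\mathbb Q$ is an isomorphism on a finite complex, $z-\mathrm{rk}(z)$ is torsion in $K^0$ (on each component, if $P_D/G$ is disconnected; for finitely generated $G$ with word metric and $d\ge1$ it is connected). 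So there is $N\ge1$ with $Nz=m\cdot[\text{trivial bundle}]$, and $m=N\operatorname{rank}(x)$ may be negative.

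The trivial bundle is $\chi_D$ of the trivial one-dimensional representation $1_G$. For this we need that the character of a genuine representation is the flat bundle $E_\psi$, which for $\psi=1_G$ is trivial; this was noted in the proof of \Cref{thm:genuine-character-degree-zero}. Hence $\chi_D(Nx)=\chi_D(m[1_G])$. I then pick representatives of $Nx$ and of $m[1_G]$ at control $(F_1,\varepsilon_1)$ and write both as differences of classes of quasi-representations. Moving the negative parts across, I obtain two honest $(F_1,\varepsilon_1)$-quasi-representations with equal $\chi_D$. Part (i) of the quantitative theorem then yields equality in $\mathrm{QR}(F_0,\varepsilon_0)$. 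This gives $Nx=[\pi_+]-[\pi_-]$ with $\pi_\pm$ multiples of $1_G$. If the statement is read with $d$ fixed, I would instead pass to the larger scale $D\ge d_0$, since vanishing is assumed at every scale.

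The main obstacle is the bookkeeping in passing from equality of virtual classes to part (i), which is stated for honest quasi-representations. I would handle it through additivity of $\chi_D$ and the Grothendieck-group description of \Cref{prop:QR-two-descriptions}. A second point to check is that the restriction compatibility of \Cref{cor:character-well-defined-Rips} lets the scale-$D$ equality be used at control $(F_1,\varepsilon_1)$. Amenable torsion-free groups are covered because \Cref{thm:main-quantitative} provides the hypothesis.
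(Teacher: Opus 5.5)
Your proposal is correct and follows the paper's proof essentially step for step. For (2)$\Rightarrow$(1) you use the flat-bundle computation of \Cref{thm:genuine-character-degree-zero} and divide by $N$ over $\mathbb C$; for (1)$\Rightarrow$(2) you use freeness of the action, connectedness of $P_D(G)/G$, torsion of $\chi_D(x)-r\,\chi_D([1_G])$, and part (i) of the quantitative theorem at a scale $D\ge d_0$. Your explicit reduction from virtual classes to honest quasi-representations before invoking part (i) is a bookkeeping step the paper leaves implicit.
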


\begin{proof} Note that the Rips complex $P_d(G)$ is connected under the assumption that $d\geq 1$ and $G$ is given the word metric.

Suppose first that (2) holds. Fix a sufficiently large Rips scale $d$.
Choose a control $(F_0,\varepsilon_0)$ strong enough that the character
at scale $d$ is defined. By assumption, there are an integer $N\geq 1$
and genuine finite-dimensional unitary representations $\pi_+$ and
$\pi_-$ such that
\[
Nx=[\pi_+]-[\pi_-]
\]
at this control. Applying the character and then the Chern character
gives
\[
N\operatorname{Ch}(\chi_d(x))
 =
\operatorname{Ch}(\chi_d([\pi_+]))
 -
\operatorname{Ch}(\chi_d([\pi_-])).
\]
By Theorem~5.2, the two terms on the right have no components of
positive cohomological degree. Since $N\neq 0$, the same is true of
$\operatorname{Ch}(\chi_d(x))$. As this holds at every sufficiently
large Rips scale, (1) follows.

Conversely, assume that all positive-degree components of
$\operatorname{Ch}(\chi(x))$ vanish. Since $G$ is torsion-free, its
action on every Rips complex $P_d(G)$ is free. Hence the delocalized
equivariant Chern character reduces to the ordinary Chern character
\[
\operatorname{Ch}:
K_G^0(P_d(G))\otimes\mathbb C
\longrightarrow
\bigoplus_{j\geq 0}H^{2j}(P_d(G)/G;\mathbb C).
\]
For all $d \geq 1$, the Rips complex $P_d(G)$ is connected,
and therefore $P_d(G)/G$ is connected. It follows that
\[
H^0(P_d(G)/G;\mathbb C)\cong\mathbb C.
\]

Let $r$ be the virtual rank of $\chi_d(x)$. The trivial one-dimensional
representation $1_G$ determines the trivial line bundle, so
\[
\operatorname{Ch}(\chi_d([1_G]))=1.
\]
Since all positive-degree components of $\operatorname{Ch}(\chi_d(x))$
vanish, we therefore have
\[
\operatorname{Ch}(\chi_d(x))
 =
r\,\operatorname{Ch}(\chi_d([1_G])).
\]
The Chern character is an isomorphism after complexification, and hence
\[
\chi_d(x-r[1_G])=0
\]
in $K_G^0(P_d(G))\otimes\mathbb C$. Thus
$\chi_d(x-r[1_G])$ is a torsion element of $K_G^0(P_d(G))$.

Now fix a finite set $F_0\subseteq G$ and $\varepsilon_0>0$. By the
quantitative completeness of the character, choose a sufficiently large
Rips scale $d$ and a stronger control $(F_1,\varepsilon_1)$ such that
equality of character classes at scale $d$ implies equality after
restriction to $(F_0,\varepsilon_0)$. At this fixed scale, choose
$N\geq 1$ annihilating the torsion class
$\chi_d(x-r[1_G])$. Then
\[
N\chi_d(x)=Nr\,\chi_d([1_G])
\]
in $K_G^0(P_d(G))$.

The quantitative property of the character now implies
\[
Nx=Nr[1_G]
\]
after restriction to $(F_0,\varepsilon_0)$. If $r\geq 0$, the right-hand
side is the class of the genuine representation $1_G^{\oplus Nr}$; if
$r<0$, it is the negative of the class of
$1_G^{\oplus(-Nr)}$. Thus in either case $Nx$ is the difference of
classes of genuine finite-dimensional unitary representations.

Finally, equality in $QR(F_0,\varepsilon_0)$ is precisely stable
equivalence: after adding a common $(F_0,\varepsilon_0)$-quasi-representation,
the two sides are homotopic through controlled quasi-representations.
This proves (2).

For amenable groups, the required completeness and quantitative
property of the character are supplied by Theorems~1.1 and~1.2.
\end{proof}

\section{Analytic tools for the proof}
\label{sec:analytic-tools}

In this section, we collect the analytic tools used to prove that the character is a complete invariant for amenable groups and for a class of non-amenable  groups including surface groups. We recall only the parts of the theory needed in the proof: Hilbert modules, equivariant Kasparov cycles, and the Kasparov product, which provides the compositions needed to construct an inverse to the character.

\subsection{Hilbert modules and equivariant Kasparov cycles}

Let $A$ be a $C^*$-algebra and let $\mathcal E$ be a countably generated right Hilbert $A$-module.  We write
$\cL_A(\mathcal E)$ for the $C^*$-algebra of adjointable $A$-linear operators and
$\cK_A(\mathcal E)$ for the closed linear span of the rank-one operators
$\theta_{\xi,\eta}(\zeta)=\xi\langle\eta,\zeta\rangle$.
If the coefficient algebra is clear, we abbreviate these to
$\cL(\mathcal E)$ and $\cK(\mathcal E)$.  A Hilbert module is \emph{graded} if
$\mathcal E=\mathcal E^+\oplus\mathcal E^-$; an operator is even or odd according to whether it preserves or reverses the grading.

Let $(A,\alpha)$ be a $G$-$C^*$-algebra.  A \emph{$G$-Hilbert $A$-module} is a Hilbert $A$-module $\mathcal E$ together with a map
$g\mapsto U_g$ from $G$ to the invertible isometries of $\mathcal E$ satisfying
\begin{equation} \label{eq:alpha_g-unitary}
  U_g(\xi a)=U_g(\xi)\alpha_g(a),
  \qquad
  \langle U_g\xi,U_g\eta\rangle
      =\alpha_g(\langle\xi,\eta\rangle),
\end{equation}
and $U_gU_h=U_{gh}$.  
We call a linear surjective isometry $u:\mathcal E\to\mathcal E$ satisfying \eqref{eq:alpha_g-unitary} an \emph{$\alpha_g$-unitary}. Note that the inverse of an  $\alpha_{g}$-unitary is an  $\alpha_{g^{-1}}$-unitary, and the product of an
$\alpha_g$-unitary and an $\alpha_h$-unitary is an
$\alpha_{gh}$-unitary.  
Conjugation by $U_g$ takes $A$-linear adjointable operators to $A$-linear adjointable operators.

We recall the building blocks of equivariant $KK$-theory.  An even Kasparov $G$-cycle from $A$ to $B$ is a triple
$(\mathcal E,\varphi,T)$ consisting of a graded countably generated $G$-Hilbert $B$-module, a graded equivariant $*$-homomorphism
$\varphi:A\to\cL_B(\mathcal E)$, and an odd operator $T\in\cL_B(\mathcal E)$ such that for all $a\in A$ and $g\in G$,
\[
 (T-T^*)\varphi(a),\quad
 (T^2-1)\varphi(a),\quad
 [T,\varphi(a)],\quad
 (U_gTU_g^{-1}-T)\varphi(a)
 \in\cK_B(\mathcal E).
\]
Homotopy classes modulo addition of degenerate cycles form $KK^G(A,B)$.

We recall stabilization and the Kasparov product, with the group-action
convention in the stabilization statement made explicit; see
\cite{Kas80,kasparov_1988}. For background on these constructions
and their role in higher index theory, see \cite{willett_higher_2020}.

\begin{theorem}[stabilization of Hilbert modules]
\label{thm:equivariant-stabilization}
Let $B$ be a separable $C^*$-algebra and let $\mathcal E$ be a
countably generated Hilbert $B$-module. For
$\mathcal H_B=\ell^2(\mathbb N)\otimes B$, there is a $B$-linear
unitary
\[
 \mathcal E\oplus\mathcal H_B\cong\mathcal H_B.
\]
If $B$ and $\mathcal E$ carry compatible $G$-actions, equip the
left-hand side with the direct-sum action and transport that action
through the unitary. The resulting action on the standard underlying
module may depend on $\mathcal E$. Thus equivariant cycles can be
stabilized on standard underlying modules with their transported
actions.
\end{theorem}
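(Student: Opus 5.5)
The plan is the classical Kasparov stabilization argument, followed by transport of structure for the group action. Since $B$ is separable and $\mathcal E$ is countably generated, fix a sequence $(\xi_k)_k$ in the unit ball of $\mathcal E$ that generates $\mathcal E$ as a Hilbert $B$-module, with each generator repeated infinitely often. Let $(e_k)_k$ be the standard orthonormal sequence in $\ell^2(\mathbb N)$. If $B$ is nonunital, pass to the unitization $\widetilde B$ formally, or work with an approximate unit $(u_k)_k$ of $B$, so that vectors $e_k\otimes b$ in $\mathcal H_B$ make sense. Then define the adjointable operator
\[
 T\colon \mathcal H_B\longrightarrow \mathcal E\oplus\mathcal H_B,\qquad T(e_k\otimes b)=\bigl(2^{-k}\xi_k b,\;4^{-k}e_k\otimes b\bigr).
\]
A short computation shows that $T$ is adjointable and that $T^*T$ is a diagonal positive operator in $\cK(\mathcal H_B)$. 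The key claim is that $T$ has dense range. To see this, note that $(\xi_k b,0)$ is a limit of elements $T(2^k e_k\otimes b)-(0,2^{-k}e_k\otimes b)$ along the infinitely many indices $k$ at which $\xi_k$ recurs. Similarly, $(0,e_k\otimes b)$ lies in the closure of the range once the $\xi$-component is handled. Together these give $\overline{\operatorname{ran}T}=\mathcal E\oplus\mathcal H_B$.

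It also follows that $T^*T$ has dense range, since $T^*$ is injective on the closure of $\operatorname{ran}T$. Therefore the polar decomposition $T=V|T|$ exists in the Hilbert-module sense: $|T|$ has dense range, and $V$ extends to an isometry from $\mathcal H_B$ onto $\overline{\operatorname{ran}T}=\mathcal E\oplus\mathcal H_B$. Adjointability and unitarity of $V$ are exactly the standard Mingo--Phillips/Kasparov point. This step is the main obstacle, because Hilbert modules need not be self-dual and closed submodules need not be complemented. The way around it is to show first that $V$ is defined on the dense range of $|T|$ by $V(|T|\zeta)=T\zeta$, which is isometric because $\langle T\zeta,T\zeta\rangle=\langle|T|\zeta,|T|\zeta\rangle$. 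The adjoint is then defined symmetrically on the dense range of $|T^*|$. This yields a $B$-linear unitary $\mathcal H_B\cong\mathcal E\oplus\mathcal H_B$, whose inverse is the map in the statement.

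The equivariant assertion requires no further analysis; it is a matter of definitions. Suppose $\mathcal E$ carries $\alpha$-compatible unitaries $U^{\mathcal E}_g$, and $\mathcal H_B$ carries the standard action $1\otimes\alpha$, or any given compatible action. Set $W\colon\mathcal E\oplus\mathcal H_B\to\mathcal H_B$ to be the unitary above and define
\[
 U_g:=W\bigl(U^{\mathcal E}_g\oplus U^{\mathcal H}_g\bigr)W^{*}.
\]
Since $W$ and $W^*$ are $B$-linear and preserve inner products, $U_g$ satisfies \eqref{eq:alpha_g-unitary}, and $U_gU_h=U_{gh}$ holds by conjugation. Thus $(\mathcal H_B,U)$ is a $G$-Hilbert module isomorphic to the direct sum. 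Because $W$ is built from non-invariant generators $\xi_k$, $U$ generally differs from $1\otimes\alpha$ and depends on $\mathcal E$, which is what the statement records.

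Finally, a Kasparov $G$-cycle $(\mathcal E,\varphi,T)$ is stabilized by adding the degenerate cycle $(\mathcal H_B\oplus\mathcal H_B^{\mathrm{op}},0,\text{flip})$ gradewise and conjugating by $W$. Adding a degenerate cycle does not change the class in $KK^G(A,B)$, and unitary conjugation preserves every compactness condition, so the stabilized cycle represents the same class on the standard underlying module.
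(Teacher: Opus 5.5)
Your route is the Mingo--Phillips proof of Kasparov's stabilization theorem: the operator $T$, its polar decomposition, and then transport of the direct-sum action through the resulting unitary. The paper quotes this theorem without proof. Your equivariant paragraph is exactly the transport of structure the statement describes, including the check that $W(U^{\mathcal E}_g\oplus U^{\mathcal H}_g)W^*$ is an $\alpha_g$-unitary and multiplicative.

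\textbf{The gap.} It lies in the step that makes $V$ defined on all of $\mathcal H_B$. You assert that $T^*T$ has dense range ``since $T^*$ is injective on $\overline{\operatorname{ran}T}$.'' That inference already fails in Hilbert space: the backward shift $T=S^*$ on $\ell^2$ is surjective and $T^*=S$ is injective, yet $T^*T=SS^*$ has range $e_0^\perp$. In Hilbert modules even injectivity of $T^*T$ would not suffice, precisely because of the non-complementation you point out. For example, multiplication by $t$ on $C[0,1]$ is injective and self-adjoint, but its range is dense only in $C_0((0,1])$. As written, your $V$ is only an isometry from the closed submodule $\overline{\operatorname{ran}|T|}$ onto $\mathcal E\oplus\mathcal H_B$, not a unitary from $\mathcal H_B$.

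\textbf{The fix.} What you need, and what your $T$ provides, is dense range of $T^*$. From
$$T^*(\eta,y)=\sum_k e_k\otimes\bigl(2^{-k}\langle\xi_k,\eta\rangle+4^{-k}y_k\bigr)$$
you get $T^*(0,e_k\otimes b)=4^{-k}e_k\otimes b$, so $\operatorname{ran}T^*$ is dense. Since $\operatorname{ran}T$ is dense and $T^*$ is continuous, $T^*(\mathcal E\oplus\mathcal H_B)\subseteq\overline{\operatorname{ran}(T^*T)}$. Hence $T^*T$, and therefore $|T|$, has dense range; this is the second place where the $4^{-k}e_k$ component is indispensable. Then $V(|T|\zeta)=T\zeta$ extends to an inner-product-preserving surjection, which is automatically adjointable with $V^*=V^{-1}$. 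No separate construction of the adjoint via $|T^*|$ is needed.

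\textbf{Smaller points.}
\begin{itemize}
\item $T^*T$ is not diagonal: $\langle T(e_k\otimes b),T(e_l\otimes c)\rangle=b^*\bigl(2^{-k-l}\langle\xi_k,\xi_l\rangle+\delta_{kl}16^{-k}\bigr)c$.
\item Obtain $(\xi_kb,0)$ as $\lim_m T(2^me_m\otimes b)=\lim_m(\xi_kb,\,2^{-m}e_m\otimes b)$ over indices $m$ with $\xi_m=\xi_k$. The expression $T(2^ke_k\otimes b)-(0,2^{-k}e_k\otimes b)$ you wrote equals $(\xi_kb,0)$ exactly, but it subtracts a vector not yet known to lie in $\overline{\operatorname{ran}T}$.
\item No unitization is needed to make sense of the vectors $e_k\otimes b$.
\end{itemize}
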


\begin{theorem}[Kasparov product and connection theorem]\label{thm:kasparov-product}
Let $A,B,C$ be separable $G$-$C^*$-algebras.  There is a bilinear associative product
\[
  KK^G(A,B)\times KK^G(B,C)\longrightarrow KK^G(A,C),
  \qquad (x,y)\longmapsto x\otimes_B y,
\]
with identity $[\mathrm{id}_A]$.  If cycles for $x$ and $y$ are fixed, a product cycle can be constructed after stabilization by choosing a Kasparov connection; different choices give homotopic cycles.  The construction is natural under equivariant $*$-homomorphisms and compatible with exterior products.
\end{theorem}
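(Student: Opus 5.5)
The statement to be proved is \Cref{thm:kasparov-product}: existence, bilinearity and associativity of the equivariant Kasparov product, with identity $[\mathrm{id}_A]$, together with the connection description and naturality. These are standard results of Kasparov theory, and the paper records them rather than reproving them. I would therefore follow Kasparov's original argument \cite{Kas80,kasparov_1988}, adapted to a countable discrete group $G$, and organize it in four steps.

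\textbf{Step 1: reduction to standard modules.} Fix cycles $(\mathcal E_1,\varphi_1,T_1)$ for $x$ and $(\mathcal E_2,\varphi_2,T_2)$ for $y$. Using \Cref{thm:equivariant-stabilization}, stabilize both modules by adding degenerate cycles, so that they become standard modules carrying the transported actions. Form the interior tensor product
\[
\mathcal E=\mathcal E_1\whot_{\varphi_2}\mathcal E_2,
\]
with the diagonal action $U_g\otimes U_g$. This is again a countably generated $G$-Hilbert $C$-module.

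\textbf{Step 2: connections.} An odd operator $F$ on $\mathcal E$ is a $T_2$-connection if, for every $\xi\in\mathcal E_1$, the operators
\[
T_\xi T_2-(-1)^{\partial\xi}FT_\xi \quad\text{and}\quad T_2T_\xi^*-(-1)^{\partial\xi}T_\xi^*F
\]
are compact. Here $T_\xi\colon\mathcal E_2\to\mathcal E$ is the creation operator $\eta\mapsto\xi\otimes\eta$. Such connections exist: using a frame of the stabilized $\mathcal E_1$, one writes $\mathcal E\cong\mathcal H\otimes\mathcal E_2$ and takes the corresponding amplification of $T_2$. To make the connection exactly invariant, average over $G$ with a cutoff-type partition. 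Since $G$ is discrete, it suffices to ask for $G$-continuity modulo compacts, as in the cycle axioms stated above.

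\textbf{Step 3: the technical theorem (the main obstacle).} We must produce even positive operators $M,N$ with $M+N=1$ such that
\[
F_{\mathrm{prod}}=M^{1/2}(T_1\otimes1)+N^{1/2}F
\]
is a Kasparov $G$-cycle satisfying Kasparov's positivity condition $\varphi(a)[T_1\otimes1,F_{\mathrm{prod}}]\varphi(a)^*\geq0$ modulo compacts. This is Kasparov's technical theorem. It is proved with quasicentral approximate units in $\cK(\mathcal E)$, chosen approximately invariant under the countably many elements $g\in G$ and commuting asymptotically with countable dense subsets. One then exhausts the relevant separable subalgebras by a diagonal sequence. I expect the equivariance to be the delicate point: the approximate unit must also be quasi-invariant under $G$, and this is where the countability of $G$ and the separability hypotheses are used.

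\textbf{Step 4: well-definedness, uniqueness and the algebraic properties.} First I would prove the uniqueness lemma. Any two cycles satisfying the connection and positivity conditions are operator-homotopic, via the straight-line path, because positivity keeps $\tfrac12(F_0+F_1)$ Fredholm modulo compacts. This gives independence of the choices and homotopy invariance, and hence a well-defined product on $KK^G$.

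The remaining properties then follow from the same uniqueness lemma. Bilinearity follows from direct sums. For the identity, $T_1$ itself is a valid choice when $y=[\mathrm{id}]$. Associativity follows by checking that both triple products satisfy the connection and positivity conditions with respect to the same iterated tensor module. Naturality under equivariant $*$-homomorphisms and compatibility with exterior products are verified by the same criterion.
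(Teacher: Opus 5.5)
The paper does not prove this theorem. It lists it among the foundational results of equivariant $KK$-theory used without proof and refers to Kasparov, so deferring to Kasparov's argument matches the paper. Two steps of your sketch, however, would fail as written.

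\textbf{Equivariance.} In Step~2 you average the connection over $G$ with a cutoff-type partition. The $G$-actions on $A,B,C$ are arbitrary, not proper, and $G$ may be infinite, so there is no cutoff function and no convergent average. A connection is also not automatically invariant modulo compacts. For discrete $G$ continuity is vacuous; the real issue is invariance. Since $U_gFU_g^{-1}$ is a $U_gT_2U_g^{-1}$-connection and $(U_gT_2U_g^{-1}-T_2)\varphi_2(b)$ is compact, $U_gFU_g^{-1}-F$ is only a $0$-connection. It becomes compact after multiplication by $\K(\mathcal E_1)\whot1$, but not in general after multiplication by $\varphi_1(a)\whot1$. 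Step~3 only asks that $M$ be quasi-invariant, so the term $N^{1/2}(U_gFU_g^{-1}-F)\varphi(a)$ in the equivariance defect of $F_{\mathrm{prod}}$ is never controlled. Kasparov's remedy is to place these countably many $0$-connections in the algebra that $N$ must annihilate in the equivariant technical theorem. This is admissible exactly because they are $0$-connections.

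\textbf{Uniqueness.} Suppose $F_0,F_1$ both satisfy the connection and positivity conditions. Positivity of $\varphi(a)[T_1\whot1,F_i]\varphi(a)^*$ for each $i$ does not give $\varphi(a)[F_0,F_1]\varphi(a)^*\geq0$ modulo compacts, which is what would make the straight line Fredholm, and the straight line can indeed fail. Take $A=B=C=\C$, $y=1_\C=(\C,\mathrm{id},0)$, and $\mathcal E_1=\ell^2\oplus\ell^2$ with $T_1=\begin{pmatrix}0&1\\1&0\end{pmatrix}$. The creation operators $T_\xi$ have rank one, so every operator on $\mathcal E=\mathcal E_1$ is a $0$-connection. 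The operators $F_\pm=\pm\begin{pmatrix}0&-i\\ i&0\end{pmatrix}$ are odd self-adjoint unitaries anticommuting with $T_1$, so both satisfy your criterion, yet $\tfrac12(F_++F_-)=0$. The correct argument applies the technical theorem again, with $N$ also killing the $0$-connection $F_0-F_1$. This produces a third product operator $F''=M^{1/2}(T_1\whot1)+N^{1/2}F_0$ with $\varphi(a)[F_i,F'']\varphi(a)^*\geq0$ modulo compacts for $i=0,1$, and then two straight-line homotopies join $F_0$ and $F_1$ through $F''$.

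Three smaller omissions remain. Uniqueness alone does not give invariance under homotopies of $x$ or $y$; you need the construction over $C([0,1])$ and its compatibility with evaluation. Associativity is not a mere check: the product operator for one bracketing must be chosen, again via the technical theorem, so that it also satisfies the criterion for the other. The left identity $[\mathrm{id}_A]\otimes_A y=y$ is not addressed.
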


We do not reprove these foundational theorems.  They are the only parts of
the formalism of equivariant $KK$-theory that are invoked without proof;
every additional quantitative estimate specific to the constructions in this
paper is supplied below.  The other external theorems---nuclear
CP-stability, quasidiagonality, stable uniqueness, and the
Higson--Kasparov theorem---are each stated, with hypotheses and references,
at the point where they are used.

\subsection{Representable equivariant \texorpdfstring{$K$}{K}-theory and the proper pullback}\label{subsec:representable-KK}

For a locally compact $G$-space $X$ and $G$-$C^*$-algebras $A,B$, we use Kasparov's representable group
\[
 RKK^G(X;A,B):=KK^G(C_0(X)\otimes A,C_0(X)\otimes B),
\]
where the left representation is $C_0(X)$-linear.  Explicitly, on elementary
tensors it satisfies
\[
 \varphi(ff'\otimes a)\xi=f\cdot\varphi(f'\otimes a)\xi
 \quad(f,f'\in C_0(X),\ a\in A),
\]
where $f$ acts through the central $C_0(X)$-action on the Hilbert module.  For proper $G$-compact $X$, in particular,
\[
  K_G^0(X)=RKK^G(X;\C,\C).
\]
For non-$G$-compact universal spaces, the symbol $K_G^0(\EG)$ in the
main statements retains the ordinary inverse-limit convention
\eqref{eq:RK-inverse-limit}. That convention must be distinguished from a
single full representable Kasparov group; no identification of the two is
being made merely by changing the order of the defining limits.
If $p:X\to *$ is the map to a point, exterior product with
$[\mathrm{id}_X]$ defines the exterior pullback (usually called the proper
pullback when $X$ is proper)
\begin{equation}\label{eq:proper-pullback}
 p_X^*:KK^G(A,B)\longrightarrow RKK^G(X;A,B).
\end{equation}
The map is compatible with Kasparov products:
\begin{equation}\label{eq:proper-pullback-product}
 p_X^*(x\otimes_B y)=p_X^*(x)\otimes_{C_0(X)\otimes B}p_X^*(y).
\end{equation}

We will later use the following external proper-algebra localization theorem.

\begin{theorem}[proper first-variable pullback]\label{thm:proper-coefficient-pullback}
Let $X=\EG$ be a universal proper $G$-space.  If a $G$-$C^*$-algebra $A$ is $KK^G$-equivalent to a proper $G$-$C^*$-algebra, then for every separable $G$-$C^*$-algebra $B$ the map
\[
 p_X^*:KK^G(A,B)\longrightarrow RKK^G(X;A,B)
\]
is an isomorphism.  The inverse is natural in $B$ and is compatible with Kasparov products.
\end{theorem}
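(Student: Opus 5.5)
The plan is to reduce to the classical Green--Julg/Kasparov localization for proper coefficient algebras, and then transport the statement along the $KK^G$-equivalence. Fix a proper $G$-$C^*$-algebra $P$ and a $KK^G$-equivalence $\alpha\in KK^G(A,P)$ with inverse $\alpha^{-1}\in KK^G(P,A)$. By the product compatibility \eqref{eq:proper-pullback-product}, $p_X^*(\alpha)$ and $p_X^*(\alpha^{-1})$ are mutually inverse $RKK^G(X)$-equivalences between $C_0(X)\otimes A$ and $C_0(X)\otimes P$. The square relating $p_X^*$ on $KK^G(A,B)$ and on $KK^G(P,B)$, obtained by left multiplication by $\alpha^{-1}$ and by $p_X^*(\alpha^{-1})$, therefore commutes. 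Both horizontal maps are isomorphisms, so it suffices to prove the theorem when $A=P$ is itself proper, say a $G$-$C_0(Y)$-algebra for a proper $G$-space $Y$.

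For proper $P$, I would construct the inverse explicitly. The universal property of $X=\EG$ gives a $G$-map $f\colon Y\to X$. Through $f$, the algebra $C_0(X)$ acts centrally on $P$ by multiplication in the multiplier algebra. This yields an equivariant $*$-homomorphism $\mu\colon C_0(X)\otimes P\to P$, or at least a class $[\mu]\in KK^G(C_0(X)\otimes P,P)$ when a cutoff is needed. Define $\Phi(z):=[\Delta_P]\otimes_{C_0(X)\otimes P} z\otimes_{C_0(X)\otimes B}[\mathrm{ev}]$. Here $[\Delta_P]\in KK^G(P,C_0(X)\otimes P)$ is the ``diagonal'' class $p\mapsto (f\text{-graph})$, built from $\mu$ and a cutoff function $\theta$ on $X$ (\Cref{lem:cutoff-exists}) via $p\mapsto \theta^{1/2}\cdot p$ summed over translates. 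The class $[\mathrm{ev}]$ is the forgetful map $RKK^G(X;P,B)\to KK^G(P,B)$, which exists because the $C_0(X)$-linear left action lets us forget $X$ after composing with $\Delta_P$.

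There are two compositions to check. First, $\Phi\circ p_X^*=\mathrm{id}$ reduces to the statement that $[\Delta_P]$ composed with $\tau_X$-exterior product and forgetting $X$ equals $[\mathrm{id}_P]$. This follows because $\sum_g g(\theta)=1$ and the straight-line homotopy of cutoffs is as in \Cref{prop:character-well-defined}. Second, $p_X^*\circ\Phi=\mathrm{id}$ is the harder direction. It amounts to showing that on $C_0(X)\otimes P$ the two $C_0(X)$-structures — the one from the first tensor factor and the one from $f$ — are homotopic in $RKK^G(X)$. Here I would use that any two $G$-maps $Y\to X$ into the universal space are $G$-homotopic; apply this to $Y\times X\to X$ given by the two projections, composed appropriately. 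That $G$-homotopy $h_t$ gives a path of $C_0(X)$-linear structures and hence a homotopy of cycles.

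Naturality in $B$ is immediate from the formula, since $\Phi$ is right multiplication by fixed classes. Compatibility with products follows from associativity in \Cref{thm:kasparov-product} together with \eqref{eq:proper-pullback-product}, and it transfers through $\alpha$. The main obstacle I expect is the second composition. Making the homotopy between the two $C_0(X)$-module structures into an honest $RKK^G(X)$-homotopy requires care, because $Y\times X$ is not cocompact; I would first localize to $G$-compact pieces and pass to the limit, relying on separability of $B$. If this becomes too technical, I would instead cite Kasparov's theorem \cite{kasparov_1988} (``proper algebras satisfy $RKK^G(\EG)$-localization'') directly for the proper case, and keep only the transport step as new.
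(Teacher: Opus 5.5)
The paper does not prove this statement. It quotes it, including the clause about algebras $KK^G$-equivalent to proper ones, as Kasparov's proper-algebra localization theorem, with the Emerson--Meyer duality formulation.

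Your transport step is correct. By \eqref{eq:proper-pullback-product}, the square comparing $p_X^*$ on $KK^G(A,B)$ and on $KK^G(P,B)$ commutes and has invertible horizontal maps, and naturality and product compatibility transfer along it. So the transport step plus your fallback citation for proper $P$ is exactly what the paper relies on.

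The explicit inverse you sketch for proper $P$, however, cannot work. Even the identity $\Phi\circ p_X^*=\mathrm{id}$, which you call the easy one, fails. You say $\Phi$ is multiplication by fixed classes, so $[\mathrm{ev}]=\varepsilon\otimes 1_B$ for a fixed even $\varepsilon\in KK^G(C_0(X),\mathbb C)$. Taking $B=P$ and $x=1_P$ then forces $[\Delta_P]\otimes_{C_0(X)\otimes P}(\varepsilon\otimes 1_P)=1_P$. Now take $G=\mathbb Z$, $X=\underline{E}\mathbb Z=\mathbb R$ and $P=C_0(\mathbb R)$:
\begin{itemize}
\item The group $KK^{\mathbb Z}(C_0(\mathbb R),\mathbb C)\cong K_0(S^1)\cong\mathbb Z$ is generated by evaluation at the integer points on $\ell^2(\mathbb Z)$ with operator $0$. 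Hence $\varepsilon\otimes 1_P$ factors through restriction to $C_0(\mathbb Z\times\mathbb R)$.
\item With the diagonal action, $C_0(\mathbb Z\times\mathbb R)$ is $\operatorname{Ind}_{\{e\}}^{\mathbb Z}C_0(\mathbb R)$, via $(n,y)\mapsto(n,y-n)$.
\item So $1_P$ would factor through an algebra induced from the trivial subgroup. Then $KK^{\mathbb Z}(P,\mathbb C)\cong\mathbb Z$ would inject into $KK^{\mathbb Z}(\operatorname{Ind}_{\{e\}}^{\mathbb Z}C_0(\mathbb R),\mathbb C)\cong KK(C_0(\mathbb R),\mathbb C)=0$, a contradiction.
\end{itemize}
This excludes every even $[\Delta_P]$, whether built from cutoffs or not.

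The underlying reason is that $p_X^*$ is a duality isomorphism. In this example the same change of variables and Bott periodicity give $RKK^{\mathbb Z}(\mathbb R;C_0(\mathbb R),\mathbb C)\cong K^1(S^1)$. Under these identifications $p_X^*$ is the Poincar\'e duality isomorphism $K_0(S^1)\cong K^1(S^1)$. An inverse of your shape must therefore use odd, Dirac/dual-Dirac-type classes transverse to the diagonal. That is where the real content of the localization theorem lies; it does not follow from $\sum_g g(\theta)=1$ and straight-line homotopies of cutoffs.

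Your second composite is not established either, for two reasons.
\begin{itemize}
\item A $G$-homotopy of the structure maps $Y\times X\to X$ does not by itself give a homotopy of $RKK^G(X)$-cycles. The $C_0(X)$-linearity condition ties the left $C_0(X)$-action to the module's central $C_0(X)$-action, and moving the structure on the source breaks that tie.
\item Verifying identities on $G$-compact pieces and passing to the limit only controls the image in $\varprojlim_n$. It misses the $\varprojlim^1$ term of the Milnor sequence (compare Remark~\ref{rem:representable-K-theory}), and separability of $B$ does not remove that term.
\end{itemize}
You should drop the explicit construction and present the result as your transport step plus a citation of the proper case, which is the level at which the paper itself uses the theorem.
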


Here a \emph{proper $G$-$C^*$-algebra} means a $G$-$C^*$-algebra $D$
equipped with a nondegenerate equivariant $*$-homomorphism
\[
 C_0(Z)\longrightarrow ZM(D)
\]
for some locally compact proper $G$-space $Z$, where $ZM(D)$ denotes the
center of the multiplier algebra $M(D)$.  Equivalently, $D$ is a
$G$-equivariant $C_0(Z)$-algebra.  This is the proper-algebra localization
theorem in Kasparov theory; a duality formulation for a universal proper
space is given by Emerson--Meyer
\cite{emerson_meyer_dualities_2010}.
The letter $A$ in this theorem is the \emph{first} Kasparov variable; the
statement must not be confused with a claim that properness of the second
coefficient algebra alone makes an arbitrary pullback invertible.

\subsection{Finite controls and compatible comparison maps}\label{subsec:control-comparisons}

The pairs $(F,\varepsilon)$ are ordered by saying that
$(F',\varepsilon')$ is \emph{stronger} than $(F,\varepsilon)$ if
$F\subseteq F'$ and $0<\varepsilon'\le\varepsilon$.  Restriction of domains
and relaxation of the error produce bonding maps from stronger to weaker
control.  Thus all groups built from quasi-representations form inverse
systems.

For the quantitative arguments below, let
$\phi_\bullet:\{A_c\}\to\{B_c\}$ be a natural transformation between
inverse systems with the same control set and bonding maps $r^A_{c',c}$ and
$r^B_{c',c}$.  We use the following explicitly quantified conditions:
\begin{enumerate}[label=(\roman*)]
\item for every $c$ there is $c'\succeq c$ such that
\[
 a\in A_{c'},\quad \phi_{c'}(a)=0
 \quad\Longrightarrow\quad r^A_{c',c}(a)=0;
\]
\item for every $c$ there is $c'\succeq c$ such that
\[
 r^B_{c',c}(B_{c'})\subseteq\phi_c(A_c).
\]
\end{enumerate}
Condition (i) says that a kernel element becomes zero after restriction;
condition (ii) says that a class can be lifted after restriction. In both
conditions, the stronger control is chosen before the class.

\begin{definition}[equivalence of inverse systems]
\label{def:inverse-system-equivalence}
We define explicitly what it means below for two inverse systems with
possibly different index sets to be equivalent.  Let
$A=(A_i,r^A_{i',i})_{i\in I}$ and
$B=(B_j,r^B_{j',j})_{j\in J}$.  An \emph{inverse-system equivalence}
consists of order-preserving maps
\[
 \sigma:J\to I,\qquad \tau:I\to J,
\]
homomorphisms
\[
 f_j:A_{\sigma(j)}\to B_j,\qquad
 g_i:B_{\tau(i)}\to A_i,
\]
and the following identities.  First,
$\sigma(\tau(i))\succeq i$ and $\tau(\sigma(j))\succeq j$.  Second, the
component maps commute with bonding maps:
\begin{align*}
 r^B_{j',j}f_{j'}&=f_jr^A_{\sigma(j'),\sigma(j)}&& (j\preccurlyeq j'),\\
 r^A_{i',i}g_{i'}&=g_ir^B_{\tau(i'),\tau(i)}&& (i\preccurlyeq i').
\end{align*}
Finally, the two componentwise composites are the indicated bonding maps:
\begin{align}
 g_i f_{\tau(i)}
   &=r^A_{\sigma(\tau(i)),i},\label{eq:inverse-system-equivalence-A}\\
 f_j g_{\sigma(j)}
   &=r^B_{\tau(\sigma(j)),j}.
   \label{eq:inverse-system-equivalence-B}
\end{align}
This definition uses only homomorphisms between the displayed groups.  It also
gives inverse homomorphisms of ordinary inverse limits: for
$a=(a_i)_i$ and $b=(b_j)_j$, set
\[
 f_*(a)_j=f_j(a_{\sigma(j)}),\qquad
 g_*(b)_i=g_i(b_{\tau(i)}).
\]
Compatibility with the bonding maps makes these families elements of the
respective inverse limits, and
\eqref{eq:inverse-system-equivalence-A}--
\eqref{eq:inverse-system-equivalence-B} give $g_*f_*=1$ and $f_*g_*=1$.
\end{definition}

\begin{lemma}[composition of inverse-system equivalences]
\label{lem:inverse-system-equivalence-composition}
Suppose that $A=(A_i)_{i\in I}$ and $B=(B_j)_{j\in J}$ are related by the
maps $(\sigma_1,\tau_1,f^1,g^1)$ of
\Cref{def:inverse-system-equivalence}, and that $B$ and
$C=(C_k)_{k\in K}$ are related by the maps
$(\sigma_2,\tau_2,f^2,g^2)$. Then $A$ and $C$ are related by maps satisfying the same conditions, with
index maps
\[
 \sigma=\sigma_1\sigma_2:K\to I,
 \qquad
 \tau=\tau_2\tau_1:I\to K,
\]
and component homomorphisms
\[
 F_k=f^2_k f^1_{\sigma_2(k)}:
       A_{\sigma_1(\sigma_2(k))}\longrightarrow C_k,
 \qquad
 G_i=g^1_i g^2_{\tau_1(i)}:
       C_{\tau_2(\tau_1(i))}\longrightarrow A_i.
\]
\end{lemma}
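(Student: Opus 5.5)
The plan is to verify the axioms of \Cref{def:inverse-system-equivalence} for the composite data one at a time, using only the corresponding identities for the two given equivalences together with the order-preservation of the index maps.

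\textbf{Index maps and compatibility.} First, $\sigma=\sigma_1\sigma_2$ and $\tau=\tau_2\tau_1$ are order preserving, since each is a composite of order-preserving maps. For the dominance conditions, $\tau_1\sigma_1(\tau_1(i))$ need not be related to $\tau_1(i)$ directly. Instead:
\begin{itemize}
\item Since $\sigma_2\tau_2(j)\succeq j$ for $j=\tau_1(i)$, applying the order-preserving $\sigma_1$ gives $\sigma(\tau(i))=\sigma_1(\sigma_2\tau_2\tau_1(i))\succeq\sigma_1\tau_1(i)\succeq i$.
\item Symmetrically, $\tau_1\sigma_1(\sigma_2(k))\succeq\sigma_2(k)$; applying $\tau_2$ gives $\tau(\sigma(k))\succeq\tau_2\sigma_2(k)\succeq k$.
\end{itemize}
Next, $F_k$ is well defined because $f^1_{\sigma_2(k)}\colon A_{\sigma_1\sigma_2(k)}\to B_{\sigma_2(k)}$ and $f^2_k\colon B_{\sigma_2(k)}\to C_k$; the same bookkeeping shows $G_i$ is well defined. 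Compatibility with bonding maps for $k\preccurlyeq k'$ is a two-step naturality computation:
\[
r^C_{k',k}f^2_{k'}f^1_{\sigma_2(k')}
=f^2_k r^B_{\sigma_2(k'),\sigma_2(k)}f^1_{\sigma_2(k')}
=f^2_k f^1_{\sigma_2(k)}r^A_{\sigma(k'),\sigma(k)}.
\]
The second step uses $\sigma_2(k)\preccurlyeq\sigma_2(k')$. The argument for the $G_i$ is identical.

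\textbf{Composite identities.} This is the step needing care, because the middle maps do not cancel exactly: they cancel only up to a bonding map, which must then be commuted outward. Write $j=\tau_1(i)$. Then
\[
G_iF_{\tau(i)}=g^1_i\,\bigl(g^2_{j}f^2_{\tau_2(j)}\bigr)\,f^1_{\sigma_2\tau_2(j)}
=g^1_i\,r^B_{\sigma_2\tau_2(j),j}\,f^1_{\sigma_2\tau_2(j)}.
\]
Naturality of $f^1$ for the pair $j\preccurlyeq\sigma_2\tau_2(j)$ turns this into
\[
g^1_i f^1_j\, r^A_{\sigma(\tau(i)),\sigma_1(j)}
=r^A_{\sigma_1\tau_1(i),i}\,r^A_{\sigma(\tau(i)),\sigma_1\tau_1(i)}.
\]
By transitivity of bonding maps in an inverse system this equals $r^A_{\sigma(\tau(i)),i}$, which is \eqref{eq:inverse-system-equivalence-A}. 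The identity \eqref{eq:inverse-system-equivalence-B} for $F_kG_{\sigma(k)}$ is symmetric: collapse $f^1g^1$ to a bonding map in $B$, push it through $f^2$ by naturality, and then apply the identity for $(f^2,g^2)$.

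The main obstacle is exactly this commuting of the inner bonding map past the outer component map. It works because naturality is available for every pair of comparable indices, and the dominance $\sigma_2\tau_2(j)\succeq j$ supplies the needed comparability.
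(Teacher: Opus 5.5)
Your argument is the paper's: the index inequalities come from applying the order-preserving maps, naturality of $F$ and $G$ is the two-step substitution, and your collapse-then-commute computation for \eqref{eq:inverse-system-equivalence-A} is correct. There is one slip, in your sketch of \eqref{eq:inverse-system-equivalence-B}. After collapsing the middle pair, with $j=\sigma_2(k)$ you have
\[
 F_kG_{\sigma(k)}=f^2_k\,r^B_{\tau_1\sigma_1(j),j}\,g^2_{\tau_1\sigma_1(j)}.
\]
This bonding map cannot in general be pushed through $f^2$. Naturality of $f^2$ reads $r^C_{k',k}f^2_{k'}=f^2_kr^B_{\sigma_2(k'),\sigma_2(k)}$, so it only moves bonding maps between indices in the image of $\sigma_2$, and $\tau_1\sigma_1\sigma_2(k)$ need not be such an index. (This is the mirror image of why, in \eqref{eq:inverse-system-equivalence-A}, you had to commute past $f^1$ rather than $g^1$.) The step that works is naturality of $g^2$, which is available for every comparable pair in $J$:
\[
 f^2_k\,r^B_{\tau_1\sigma_1(j),j}\,g^2_{\tau_1\sigma_1(j)}
 =f^2_kg^2_j\,r^C_{\tau(\sigma(k)),\tau_2(j)}
 =r^C_{\tau_2\sigma_2(k),k}\,r^C_{\tau(\sigma(k)),\tau_2\sigma_2(k)}
 =r^C_{\tau(\sigma(k)),k}.
\]
This is exactly what you get by applying the symmetry $A\leftrightarrow C$, $1\leftrightarrow2$, $f\leftrightarrow g$, $\sigma\leftrightarrow\tau$ to your \eqref{eq:inverse-system-equivalence-A} computation, and it is what the paper means by interchanging $A$ and $C$. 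Separately, your aside that $\tau_1\sigma_1(\tau_1(i))$ need not be related to $\tau_1(i)$ is false, since the definition gives $\tau_1\sigma_1(j)\succeq j$ for every $j\in J$. Nothing in your proof depends on it.
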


\begin{proof}
The index inequalities follow by applying the order-preserving maps to the
two inequalities for the given equivalences.  The two naturality identities
for $F$ and $G$ follow by substituting the naturality identity for the first
equivalence and then the one for the second.  For example, the composite on
$A_i$ is
\[
 A_{\sigma_1\sigma_2\tau_2\tau_1(i)}
 \xrightarrow{f^1_{\sigma_2\tau_2\tau_1(i)}}
 B_{\sigma_2\tau_2\tau_1(i)}
 \xrightarrow{f^2_{\tau_2\tau_1(i)}}
 C_{\tau_2\tau_1(i)}
 \xrightarrow{g^2_{\tau_1(i)}}B_{\tau_1(i)}
 \xrightarrow{g^1_i}A_i.
\]
The second inverse-system equivalence gives
\[
 g^2_{\tau_1(i)}f^2_{\tau_2\tau_1(i)}
 =r^B_{\sigma_2\tau_2\tau_1(i),\tau_1(i)}.
\]
Naturality of $f^1$ then gives
\[
 r^B_{\sigma_2\tau_2\tau_1(i),\tau_1(i)}
 f^1_{\sigma_2\tau_2\tau_1(i)}
 =f^1_{\tau_1(i)}
 r^A_{\sigma_1\sigma_2\tau_2\tau_1(i),
      \sigma_1\tau_1(i)}.
\]
Finally,
$g^1_if^1_{\tau_1(i)}=r^A_{\sigma_1\tau_1(i),i}$, so the entire composite
is $r^A_{\sigma_1\sigma_2\tau_2\tau_1(i),i}$.
This gives \eqref{eq:inverse-system-equivalence-A} for $(F,G)$.  The same
calculation with $A$ and $C$ interchanged gives
\eqref{eq:inverse-system-equivalence-B}.
\end{proof}

\section{Fredholm quasi-representations}\label{sec:FQR-cycle}

In this section, we introduce a theory of Fredholm quasi-representations and their characters. Roughly speaking, a Fredholm quasi-representation is an approximately multiplicative action of the group on a graded Hilbert space, or more generally a Hilbert module, together with an odd Fredholm operator that commutes with the action up to compact operators.  A finite-dimensional quasi-representation can be regarded as a Fredholm quasi-representation whose operator is zero. Unlike finite-dimensional quasi-representations, Fredholm quasi-representations allow compact perturbations and infinite-dimensional stabilizations,  while keeping the multiplicative error under control.

Throughout the proof, $\chi$ denotes the geometric character defined in
\Cref{sec:character_map}. The Fredholm character $\chi^F$ constructed below
uses the same cutoff functions, averaging, and spectral projections. On
finite-dimensional quasi-representations, regarded as Fredholm
quasi-representations with operator zero, it recovers $\chi$; this is proved
in \Cref{lem:character-compatibility}. The proof of the classification theorem
uses this extension of the geometric construction.
Thus the geometric and crossed-product constructions describe one character,
while $\chi^F$ extends it to the Fredholm quasi-representations used in the
proof.

Finite-dimensional quasi-representations are the objects to be classified,
but finite-dimensionality makes several of the operations needed in the
proof difficult to perform.  In particular, coefficient changes, boundary
maps, and cutting-and-pasting arguments naturally introduce compact
perturbations and infinite-dimensional stabilizations.  Fredholm
quasi-representations provide a setting in which these operations can be
carried out while the multiplicative error of the approximate group action
remains under explicit control.

A Fredholm quasi-representation combines an approximately multiplicative
action with an odd Fredholm operator. The action records the multiplication
relations being tested and their error, while the Fredholm operator records
a $K$-theory class. We use Kasparov products and exterior products with
spaces to compare these classes. Cutoff functions on proper $G$-spaces
then give the character construction developed below. Fredholm
quasi-representations thus connect the finite-dimensional objects to be
classified with the equivariant $K$-theory used in the proof.

Throughout this section, $G$ is a countable discrete group, $A$ is a
separable $G$-$C^*$-algebra, and $X$ is a locally compact, second countable,
Hausdorff right $G$-space.  Until the paragraph following
\Cref{def:FQR-final}, we additionally assume that $X/G$ is compact and call
such an $X$ \emph{$G$-compact}.  The definition for an arbitrary locally
compact, second countable, Hausdorff right $G$-space $X$ is then made by
restriction to all closed invariant $G$-compact subspaces, with the inverse
limit written out explicitly.  The functoriality statement below also writes
out the exact condition needed for a continuous equivariant map to act on
these inverse limits; that condition is automatic for maps whose target is
proper or $G$-compact.

We now give the definition of Fredholm quasi-representations used in the rest
of the paper.

\begin{definition}\label{def:FQR-cycle}
Let $X$ be a cocompact $G$-space (not necessarily proper) and let $A$ be a $G$-$C^*$-algebra.  Given $\varepsilon\ge0$ and a finite
$F\subseteq G$ containing $e$, an
\emph{$X$-Fredholm $(F,\varepsilon)$-quasi-representation with coefficients in $A$} (or, in anticipation of our upcoming definition, an \emph{$\mathrm{FQR}(X;F,\varepsilon;A)$-cycle}) is a
triple $(\mathcal{E},q,T)$ where
\begin{itemize}
\item  $\mathcal E=\mathcal E^+\oplus\mathcal E^-$ is a graded countably generated
Hilbert $A_X$-module, where $A_X := C_0(X)\otimes A$;
\item $q = (q(g))_{g \in FF}$ where each $q(g)$ is an even $\alpha_g$-unitary (see \eqref{eq:alpha_g-unitary}) on $\mathcal E$;
\item $T\in\mathcal L(\mathcal E)$ is odd;
\end{itemize}
such that, when we define  
\begin{equation}\label{eq:local-compact-ideal}
 \mathcal K_X(\mathcal E) :=
 \{S\in\mathcal L(\mathcal E):Sf\in\mathcal K(\mathcal E)
       \text{ for every }f\in C_0(X)\}, 
\end{equation}
which is a closed two-sided ideal of $\mathcal L(\mathcal E)$ invariant under conjugations by $\alpha_g$-unitaries for all $g \in G$, then the following conditions hold:
\begin{align}
 q(g)q(h) &\approx_\varepsilon q(gh) &&   \text{ for any } g,h\in F,\label{eq:FQR-cycle-mult}\\
 T-T^*,\ T^2-1&\in\mathcal K_X(\mathcal E),
      \label{eq:FQR-cycle-Fredholm}\\
 \operatorname{Ad}_{q(g)}(T)-T&\in\mathcal K_X(\mathcal E)
      &&  \text{ for any } g\in FF.\label{eq:FQR-cycle-equiv}
\end{align}

The set of all $\mathrm{FQR}(X;F,\varepsilon;A)$-cycles is denoted $\mathrm{FQRep}(X;F,\varepsilon;A)$. 
An $\mathrm{FQR}(X;F,\varepsilon;A)$-cycle is \emph{degenerate} when in
\eqref{eq:FQR-cycle-Fredholm} and \eqref{eq:FQR-cycle-equiv} the ideal $K_X(\mathcal E)$ is replaced by $\{0\}$.  
Note that given any closed invariant subset $Y \subseteq X$, the induced ideal $C_0(X \setminus Y) \otimes A \mathrel{\triangleleft} C_0(X) \otimes A$ is invariant under all the operators above, and thus any $\mathrm{FQR}(X;F,\varepsilon;A)$-cycle $(\mathcal{E}, q, T)$ restricts to a $\mathrm{FQR}(Y;F,\varepsilon;A)$-cycle $(\mathcal{E}|_Y, q|_Y, T|_Y)$ by taking a quotient module. 
A \emph{homotopy} between two $\mathrm{FQR}(X;F,\varepsilon;A)$-cycles is an $\mathrm{FQR}([0,1] \times X;F,\varepsilon;A)$-cycle such that its restrictions to $\{0\} \times X$ and $\{1\} \times X$ canonically identifies with the original cycles. 
\emph{Unitary equivalence} between two $\mathrm{FQR}(X;F,\varepsilon;A)$-cycles $(\mathcal{E}, q, T)$ and $(\mathcal{E}', q', T')$ means there is an even $A_X$-linear unitary $U \colon \mathcal{E} \to \mathcal{E}'$ such that $q' = \mathrm{Ad}_U \circ q$ and $T' = \mathrm{Ad}_U(T)$
\emph{Stable homotopy} is generated by homotopy, unitary equivalence, and addition of degenerate cycles.
\end{definition}

\begin{remark}
If $X$ is compact, then $1_X\in C(X)$ and
$\mathcal K_X(\mathcal E)=\mathcal K(\mathcal E)$.  Consequently the cycles
for which $q$ is defined on all of $G$ and is a genuine group action give precisely
the central $C(X)$-linear cycle picture of $RKK^G(X;\mathbb C,A)$ described
in Section~\ref{sec:preliminaries} (for $X=\{*\}$ this is
$KK^G(\mathbb C,A)$).  For noncompact $X$, the ideal
$\mathcal K_X(\mathcal E)$ records local compactness.  Thus ``Fredholm'' means
that $T$ is self-adjoint and unitary modulo this ideal and equivariant modulo
the same ideal.
\end{remark}

\begin{definition}\label{def:FQR-final}
Use the directed set $\mathcal C_G$ and its order from
\Cref{def:stable-equivalence}.  Thus \(c\preccurlyeq c'\) means that
\(c'\) imposes at least as many multiplication relations and permits no
larger error than \(c\).  If
\(c=(F,\varepsilon)\), abbreviate
\[
 \mathrm{FQR}(X;c;A)=\mathrm{FQR}(X;F,\varepsilon;A).
\]

Let $\mathrm{FQR}(X;F,\varepsilon;A)$ be the set of stable-homotopy
classes of the cycles in \Cref{def:FQR-cycle}, with direct sum as addition
and a degenerate cycle as zero.  This set is already an abelian group.  Indeed,
if $z=(\mathcal E,q,T)$ and $\mathcal E^{\mathrm{op}}$ denotes the same module
with the grading reversed, put
\[
 z^{\mathrm{op}}=(\mathcal E^{\mathrm{op}},q,-T).
\]
Then $z^{\mathrm{op}}$ is a cycle with the same control.  On
$\mathcal E\oplus\mathcal E^{\mathrm{op}}$ let
\[
 S=\begin{pmatrix}0&1\\1&0\end{pmatrix},\qquad
 T_s=\cos(\pi s/2)\begin{pmatrix}T&0\\0&-T\end{pmatrix}
     +\sin(\pi s/2)S\quad(0\leq s\leq1).
\]
The operator $S$ is odd, self-adjoint, unitary, commutes with $q\oplus q$,
and anticommutes with $T\oplus(-T)$.  Hence
\begin{align*}
 T_s-T_s^*&=\cos(\pi s/2)
       \big((T-T^*)\oplus(-T+T^*)\big),\\
 T_s^2-1&=\cos^2(\pi s/2)
       \big((T^2-1)\oplus(T^2-1)\big),
\end{align*}
and the equivariance defect of $T_s$ is the same cosine times the direct sum
of the equivariance defects of $T$ and $-T$.  Thus $(q\oplus q,T_s)$ is a
homotopy from $z\oplus z^{\mathrm{op}}$ to the degenerate cycle
$(q\oplus q,S)$.  It follows that $[z^{\mathrm{op}}]=-[z]$.  The block-flip
unitary proves commutativity.  Define
\[
 \mathrm{FQR}(X;G;A)=
 \varprojlim_{c\in\mathcal C_G}\mathrm{FQR}(X;c;A).
\]
More explicitly, whenever
\(c=(F,\varepsilon)\preccurlyeq
c'=(F',\varepsilon')\), restriction from \(F'F'\) to
\(FF\), followed by allowing the larger error \(\varepsilon\),
defines a homomorphism
\[
 r^A_{c',c}:\mathrm{FQR}(X;c';A)
       \longrightarrow\mathrm{FQR}(X;c;A).
\]
These maps satisfy \(r^A_{c,c}=\operatorname{id}\) and
\(r^A_{c'',c}=r^A_{c',c}r^A_{c'',c'}\) for
\(c\preccurlyeq c'\preccurlyeq c''\), and the displayed inverse limit is
taken with respect to these maps.  Hence its elements are precisely the
families \((z_c)_{c\in\mathcal C_G}\) satisfying
\(r^A_{c',c}(z_{c'})=z_c\) whenever \(c\preccurlyeq c'\).
The groups with \(\varepsilon=0\) are defined above and may be used
separately, but \(\varepsilon=0\) is not included in \(\mathcal C_G\); the
inverse limit records approximation through positive error tolerances tending
to zero.
When $X$ is a point we write $\mathrm{FQR}(G;A)$.  In view of
\Cref{prop:FQR-asymptotic-equivariance}, an auxiliary family $(T_t)$ satisfying (b) can be attached whenever it is needed.
No such family is part of the definition of a Fredholm quasi-representation or
of its stable-homotopy class.  For a homotopy, the proposition is applied with
$P=[0,1]$, so that one value of $t$ works simultaneously at every point of the
homotopy.
\end{definition}

A bounded family $(S_t)_{t\geq1}$ that is uniformly continuous in operator
norm is called
\emph{asymptotically $F$-equivariant with respect to $q$} if
\[
 \max_{g\in F}\|\operatorname{Ad}_{q(g)}(S_t)-S_t\|
 \longrightarrow0.
\]

We also fix the meaning of a parameterized family.  Give a compact metrizable
space $P$ the trivial $G$-action.  A \emph{$P$-parameterized family} of
$X$-Fredholm $(F,\varepsilon)$-quasi-representations is, by definition, a
$(P\times X)$-Fredholm $(F,\varepsilon)$-quasi-representation with
coefficients in $A$.  Its module is therefore a module over
$C(P)\otimes C_0(X)\otimes A$, and
$\mathcal K_{P\times X}$ is used in all Fredholm and equivariance conditions.
This formulation makes every norm estimate uniform in $p\in P$.  The case
$P=[0,1]$ is precisely a homotopy in the sense of
\Cref{def:FQR-cycle}.

\begin{proposition}[asymptotically equivariant normalization]
\label{prop:FQR-asymptotic-equivariance}
Let $(\mathcal{E},q,T)$ be an $X$-Fredholm
$(F,\varepsilon)$-quasi-representations with coefficients in $A$.  Then there is an even family
$(h_t)_{t\geq1}$ of positive contractions in
$\mathcal K_{ X}(\mathcal E)$ such that, for
\begin{equation}\label{eq:FQR-normalized-operator}
 T_t=(1-h_t)T(1-h_t),
\end{equation}
both $t\mapsto h_t$ and $t\mapsto T_t$ are uniformly continuous in operator
norm, and the following assertions hold:
\begin{enumerate}[label=(\alph*)]
\item $T_t-T\in\mathcal K_{ X}(\mathcal E)$ for every $t$; hence $T_t$ has the
same image as $T$ in
$\mathcal L(\mathcal E)/\mathcal K_{ X}(\mathcal E)$ and satisfies the same
Fredholm relations modulo $\mathcal K_{ X}(\mathcal E)$;
\item
\begin{equation*}
 \lim_{t\to\infty}\max_{k\in FF}
 \big\|\operatorname{Ad}_{q(k)}(T_t)-T_t\big\|=0;
\end{equation*}
\end{enumerate}
\end{proposition}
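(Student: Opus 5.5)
This is Kasparov's technical theorem, in the form of a quasicentral approximate unit, adapted to the ideal $J:=\mathcal K_X(\mathcal E)$. The plan is to build a continuous approximate unit $(h_t)_{t\ge1}$ of $J$ that becomes asymptotically commutative with $T$ and asymptotically invariant under conjugation by each $q(k)$, $k\in FF$. We then set $T_t=(1-h_t)T(1-h_t)$ as in \eqref{eq:FQR-normalized-operator}.

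\textbf{Step 1: structure of $J$.} We first observe that $J$ is a $\sigma$-unital closed ideal of $\mathcal L(\mathcal E)$. It is $\sigma$-unital because $X$ is second countable and $G$-compact and $\mathcal E$ is countably generated: if $(f_n)$ is an approximate unit of $C_0(X)$ consisting of compactly supported functions and $(k_m)$ is a countable approximate unit of $\mathcal K(\mathcal E)$, suitable elements built from these give a countable approximate unit. Next, $J$ is invariant under $\operatorname{Ad}_{q(k)}$ for $k\in FF$, since these are $\alpha_k$-unitaries and $C_0(X)$ is $G$-invariant. Since $T$ is odd and each $q(k)$ is even, we can also average any approximate unit with its grading conjugate, so we may take $h_t$ even.

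\textbf{Step 2: quasicentral approximate unit.} We apply the standard Arveson/Akemann--Pedersen construction of a quasicentral approximate unit. Inside the finite set of operators $\{T,T^*\}\cup\{q(k),q(k)^*:k\in FF\}$ we take convex combinations of a countable approximate unit of $J$. This produces a sequence $(u_n)$ of even positive contractions in $J$ satisfying three conditions:
\[
 \|u_nj-j\|\to0\ (j\in J),\qquad \|[u_n,T]\|\to0,\qquad \|q(k)u_nq(k)^*-u_n\|\to0\ (k\in FF).
\]
The second condition says that $u_n$ commutes asymptotically with $T$, and the third is the same as $\|[u_n,q(k)]\|\to0$.

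Here lies the main obstacle. The operators $q(k)$ are not $A_X$-linear; they are $\alpha_k$-semilinear, and the group generated by the $q(k)$ is not a genuine representation. The usual convexity argument uses the weak closure of the convex hull of an approximate unit, and it has to be run for the maps $j\mapsto q(k)jq(k)^*$, which are isometric automorphisms of $J$. I would argue as follows. The set $\{(u^{\mathrm{qc}}-\text{derivations})\}$ is convex. Any weak-type limit point of these derivations along an approximate unit vanishes, because conjugation by $q(k)$ preserves $J$ and maps approximate units of $J$ to approximate units of $J$. Hahn--Banach then upgrades weak convergence to norm convergence for suitable convex combinations. Only finitely many operators are involved, so no multiplicativity of $q$ is needed.

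\textbf{Step 3: continuous parametrization and verification.} We obtain a continuous family by interpolating linearly, $h_t=(n+1-t)u_n+(t-n)u_{n+1}$ for $t\in[n,n+1]$. Passing to a subsequence first makes the increments $\|u_{n+1}-u_n\|$ behave well enough for uniform continuity; if needed we also reparametrize $t$ so that the family moves slowly. With this choice $h_t$ is uniformly continuous, and so is $T_t$, since $\|T_t-T_s\|\le 2\|T\|\|h_t-h_s\|$.

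Assertion (a) holds because $T_t-T=-h_tT-Th_t+h_tTh_t\in J$.

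For (b), expand
\[
 \operatorname{Ad}_{q(k)}(T_t)-T_t
 =(1-h_t')T'(1-h_t')-(1-h_t)T(1-h_t),
\]
where the primes denote conjugation by $q(k)$. Since $h_t'-h_t\to0$, this expression is asymptotically equal to $(1-h_t)(T'-T)(1-h_t)$. Write $j:=T'-T$, which lies in $J$ by \eqref{eq:FQR-cycle-equiv}. Quasicentrality, together with the approximate-unit property applied to this finite set of elements $j$, gives
\[
 (1-h_t)\,j\,(1-h_t)\approx (1-h_t)^2j\to0.
\]
This proves (b) uniformly over the finite set $FF$.
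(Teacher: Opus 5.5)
Your overall strategy works, and you end with the same estimate for (b) as the paper. You build the approximate unit by a more direct route, but Step~1 contains a false claim that you need to repair.

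\textbf{Comparison with the paper.} The paper first turns the maps $q(k)$, $k\in FF$, into a genuine action of the free group $\Lambda_K$. It then passes to the separable, graded, $\Lambda_K$-invariant subalgebra $J\subseteq\mathcal K_X(\mathcal E)$ generated by $T-T^*$, $T^2-1$, the defects $\Delta_k=\operatorname{Ad}_{q(k)}(T)-T$, all their translates, and their products with $T,T^*$. Finally it converts asymptotic invariance into quasicentrality for the unitaries $U_v$ of a regular covariant representation, so that a single bidual/Hahn--Banach argument in $C^*(\pi(D),U_v)$ gives a sequential approximate unit of $J$. You instead apply Hahn--Banach directly to the tuples $\bigl([e_\lambda,T],(\operatorname{Ad}_{q(k)}(e_\lambda)-e_\lambda)_{k\in FF}\bigr)$.

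This works, and the ``main obstacle'' you describe dissolves. The map $\operatorname{Ad}_{q(k)}$ is an honest $*$-automorphism of $\mathcal L(\mathcal E)$ preserving $\mathcal K_X(\mathcal E)$, so the semilinearity of $q(k)$ never enters. Your sentence about ``$u^{\mathrm{qc}}$-derivations'' should be replaced by the following explicit fact: every positive contractive approximate unit of $J$ converges weak$^*$ to $1_{J^{**}}$. Since $(\operatorname{Ad}_{q(k)}(e_\lambda))_\lambda$ is again such an approximate unit, $\operatorname{Ad}_{q(k)}(e_\lambda)-e_\lambda\to0$ weakly. Likewise $[e_\lambda,T]\to[1_{J^{**}},T]=0$, because $T$ multiplies $J$.

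Your route avoids the free group, separability, and the covariant representation. The paper's route gives a reusable lemma: an equivariant quasicentral approximate unit for a whole separable ideal. The paper invokes that lemma again later.

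\textbf{The false claim.} $\mathcal K_X(\mathcal E)$ is not $\sigma$-unital when $X$ is noncompact. For example, take $G=X=\mathbb Z$, $A=\mathbb C$ and $\mathcal E=c_0(\mathbb Z)\otimes H$. Then $\mathcal K_X(\mathcal E)=\ell^\infty(\mathbb Z,\mathcal K(H))$, and a diagonal argument shows that no sequence is an approximate unit for it: at the $n$-th coordinate, choose a rank-one projection almost killed by the first $n$ terms. Hence no sequence $(u_n)$ can satisfy $\|u_nj-j\|\to0$ for all $j\in J$, and Step~1 fails as stated.

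Fortunately you only use this property for the finitely many $j=\Delta_k$. Start from a net approximate unit, averaged with the grading. At stage $n$, choose $u_n$ as a convex combination from a tail on which $\|e_\lambda\Delta_k-\Delta_k\|<1/n$ for all $k\in FF$; convexity preserves this bound. Alternatively, pass to a separable invariant subalgebra as the paper does.

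\textbf{Two simplifications.} First, the subsequence or reparametrization in Step~3 is unnecessary. For positive contractions $\|u_{n+1}-u_n\|\le1$, so the interpolated family is automatically $1$-Lipschitz. Second, (b) needs no quasicentrality. It follows from $\|(1-h_t)\Delta_k(1-h_t)\|\le\|(1-h_t)\Delta_k\|\to0$ together with $\|\operatorname{Ad}_{q(k)}(T_t)-(1-h_t)\operatorname{Ad}_{q(k)}(T)(1-h_t)\|\le2\|T\|\,\|\operatorname{Ad}_{q(k)}(h_t)-h_t\|$, which is exactly the paper's estimate.
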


\begin{proof}
Write $I=\mathcal K_{ X}(\mathcal E)$ and $K=FF$.  Let $\Lambda_K$ be the
free group on symbols $s_k$, $k\in K$, and let
$\rho:\Lambda_K\to G$ be the homomorphism determined by $\rho(s_k)=k$.
Declare that $s_k$ acts on $\mathcal E$ by the $\alpha_k$-unitary $q(k)$ and
extend multiplicatively to $\Lambda_K$.  This is a genuine action of the free
group, even though the original map $q$ is only approximately multiplicative
as a map on $G$.  Conjugation therefore gives an action
$\beta:\Lambda_K\curvearrowright I$, and
\[
 \Delta_k:=\beta_{s_k}(T)-T\in I,\qquad k\in K.
\]

Let $J$ be the closed graded $C^*$-subalgebra of $I$ generated by
$T-T^*$, $T^2-1$, the $\Delta_k$, their adjoints, all their
$\Lambda_K$-translates, and the elements obtained by repeatedly multiplying
these elements on either side by $T$ and $T^*$.  This description uses only
countably many operations, so $J$ is separable.  It is invariant under
$\Lambda_K$ and the grading, and
\[
 TJ+JT+T^*J+JT^*\subseteq J.
\]
Consequently $T\in M(J)$, $J$ is an invariant ideal in the separable algebra
$D=C^*(1,T,J)$, and $\beta_w(D)=D$ for $w\in\Lambda_K$.  Notice that every
operator which will be cut down below belongs to $J$.

We next prove, in the special form needed here, the approximate-unit fact used
in the argument.  Suppose a countable discrete group $\Lambda$ acts on a
separable $C^*$-algebra $D$ and leaves a $\sigma$-unital ideal
$J$ invariant.  There is a sequence $(u_n)$ of positive contractions in $J$
such that
\begin{equation}\label{eq:equivariant-qcau}
 \begin{aligned}
  u_nj&\longrightarrow j,\qquad ju_n\longrightarrow j &&(j\in J),\\
  \|[u_n,d]\|&\longrightarrow0 &&(d\in D),\\
  \|\beta_w(u_n)-u_n\|&\longrightarrow0 &&(w\in\Lambda).
 \end{aligned}
\end{equation}
Choose a faithful nondegenerate
representation $\pi_0:D\to\mathcal L(H_0)$ and form the regular covariant
representation on $\ell^2(\Lambda)\otimes H_0$:
\[
 (\pi(d)\xi)(w)=\pi_0(\beta_{w^{-1}}(d))\xi(w),\qquad
 (U_v\xi)(w)=\xi(v^{-1}w).
\]
Then $U_v\pi(d)U_v^*=\pi(\beta_v(d))$.  Let $\mathcal C$ be the separable
$C^*$-algebra generated by $\pi(D)$ and the unitaries $U_v$.  If $(e_m)$ is
any countable positive contractive approximate identity for $J$, then
$(\pi(e_m))$ is an approximate identity for the ideal of $\mathcal C$ generated by
$\pi(J)$.  Indeed, it is enough to check this on the dense linear span of
monomials $\pi(j)U_v\pi(d)$; on the right one uses that
$(\beta_v(e_m))$ is again an approximate identity for $J$.

In the Banach bidual $\mathcal C^{**}=(\mathcal C^*)^*$, the approximate
identity $(\pi(e_m))$ converges in the weak$^*$ topology to the central
support projection of the ideal. Since this projection is central,
$[\pi(e_m),c]$ converges weakly to zero for every $c\in\mathcal C$.
Choose dense sequences $(c_i)$ in the unit ball of $\mathcal C$ and $(j_i)$ in the
unit ball of $J$.  For each $n$, choose $M_n$ so that, whenever $m\geq M_n$
and $1\leq i\leq n$,
\[
 \|e_mj_i-j_i\|+\|j_ie_m-j_i\|<1/n.
\]
Apply the weak-convergence observation to the vectors
\[
 ([\pi(e_m),c_1],\ldots,[\pi(e_m),c_n])\in\mathcal C^n,\qquad m\geq M_n,
\]
where $\mathcal C^n$ has the maximum norm.  If zero were not in the norm closure of the
convex hull of this tail, Hahn--Banach
separation would give a bounded functional for which the same commutators do
not converge weakly to zero, a contradiction.  Hence there is a finite convex
combination $u_n$ of elements $e_m$ with $m\geq M_n$ such that
\[
 \max_{1\leq i\leq n}\|[\pi(u_n),c_i]\|<1/n.
\]
The defining choice of $M_n$ also gives
\[
 \max_{1\leq i\leq n}
 \big(\|u_nj_i-j_i\|+\|j_iu_n-j_i\|\big)<1/n.
\]
Density now shows that $(u_n)$ is an approximate identity for $J$ and is
quasicentral in $\mathcal C$.
In particular,
\[
 \|\beta_v(u_n)-u_n\|
 =\|U_v\pi(u_n)U_v^*-\pi(u_n)\|
 =\|[U_v,\pi(u_n)]\|\longrightarrow0.
\]
This proves \eqref{eq:equivariant-qcau}.  The grading automorphism
$\delta(a)=\Gamma a\Gamma$ leaves $J$ and $D$ invariant and commutes with the
$\Lambda_K$-action because every $q(k)$ is even.  Replacing $u_n$ by
$\frac12(u_n+\delta(u_n))$ preserves all three limits in
\eqref{eq:equivariant-qcau} and makes every $u_n$ even.

Apply this construction to $J\triangleleft D$ and $\Lambda_K$.  After passing
to a subsequence we may arrange, for all $k\in K$,
\begin{equation}\label{eq:FQR-qcau-estimates}
 \|\beta_{s_k}(u_n)-u_n\|<\frac1n,
 \qquad
 \|(1-u_n)\Delta_k\|+\|\Delta_k(1-u_n)\|<\frac1n.
\end{equation}
For $n\leq t\leq n+1$, set
\[
 h_t=(n+1-t)u_n+(t-n)u_{n+1}.
\]
This selection-and-interpolation device is the one used in
\cite[Lemma~4.1]{dadarlat_willett_wu_2018}; the preceding paragraphs include
the extra equivariance argument needed here.  The family $h_t$ is even,
positive, contractive, and $1$-Lipschitz, and the two estimates in
\eqref{eq:FQR-qcau-estimates} still tend to zero along the interpolated family.

Define $T_t$ by \eqref{eq:FQR-normalized-operator}.  It is odd and bounded;
since $h_t$ is $1$-Lipschitz, $T_t$ is Lipschitz with constant at most
$2\|T\|$.  Moreover,
\[
 T_t-T=-h_tT-Th_t+h_tTh_t\in J\subseteq I,
\]
which proves (a).  For $k\in K$, add and subtract
$(1-h_t)\beta_{s_k}(T)(1-h_t)$ to obtain
\begin{align*}
 \|\beta_{s_k}(T_t)-T_t\|
 &\leq 2\|T\|\,\|\beta_{s_k}(h_t)-h_t\|
       +\|(1-h_t)\Delta_k(1-h_t)\|.
\end{align*}
Both terms tend to zero, uniformly over the finite set $K$.  Since
$\beta_{s_k}=\operatorname{Ad}_{q(k)}$, this is exactly (b).
\end{proof}

We record the diagonal observation that will be used below.  Let
$F_1\subseteq F_2\subseteq\cdots$ be finite symmetric subsets
with identity and union $G$, and for each $n$ let $(q_n,T_n)$ be an
$X$-Fredholm $(F_n,\varepsilon_n)$-quasi-representation.  Apply the proposition
to the $n$th pair and choose $t_n$ so that
\[
 \max_{k\in F_nF_n}
 \|\operatorname{Ad}_{q_n(k)}((T_n)_{t_n})-(T_n)_{t_n}\|<\frac1n.
\]
Then for every fixed $g\in G$ the displayed defect with $k=g$ tends to zero.
This numerical assertion is the entire content of the diagonal choice.  It is
not part of the definition of the groups below, and it introduces no new
equivalence relation.

\subsection{Arbitrary spaces, functoriality, and homotopy invariance}
\label{subsec:FQR-space-functoriality}

We now remove the temporary $G$-compactness hypothesis.  Let
$\mathscr Z_G(X)$ be the directed set of all closed, invariant,
$G$-compact subspaces of $X$, ordered by inclusion.  It is directed: if
$Z_1,Z_2\in\mathscr Z_G(X)$, then $Z_1\cup Z_2$ is again closed,
invariant, and $G$-compact.  No assertion that these subspaces cover an
arbitrary nonproper $G$-space is needed for the inverse-limit definition.
They do cover every proper $G$-space: if $K\subseteq X$ is compact, then
$GK$ is closed and $G$-compact because $X/G$ is Hausdorff and the image of
$K$ in $X/G$ is compact.

If $Z\subseteq Z'$ are in $\mathscr Z_G(X)$, restriction
$C_0(Z')\to C_0(Z)$ is an equivariant quotient homomorphism.  A cycle
$z=(\mathcal E,q,T)$ over $C_0(Z')\otimes A$ restricts to
\begin{equation}\label{eq:FQR-space-restriction-cycle}
 z|_Z=
 \bigl(\mathcal E\widehat\otimes_{C_0(Z')\otimes A}
       (C_0(Z)\otimes A),\ q|_Z,\ T\widehat\otimes1\bigr).
\end{equation}
The formula for $q|_Z$ is
$q|_Z(g)(\xi\otimes b)=q(g)\xi\otimes\alpha_g(b)$.
It is well defined because the kernel of the restriction homomorphism is
$G$-invariant.  If $f\in C_0(Z)$, choose a lift
$\widetilde f\in C_0(Z')$.  The product of $f$ with any Fredholm defect in
\eqref{eq:FQR-space-restriction-cycle} is the image of the compact operator
obtained by multiplying the original defect by $\widetilde f$.  Thus
\eqref{eq:FQR-space-restriction-cycle} is a cycle and defines a homomorphism
\[
 \operatorname{res}^{Z'}_Z:
 \mathrm{FQR}(Z';c;A)\longrightarrow\mathrm{FQR}(Z;c;A).
\]
These homomorphisms commute with the group-control bonding maps.

\begin{definition}[FQR for an arbitrary $G$-space]
\label{def:FQR-arbitrary-X}
For a locally compact second countable right $G$-space $X$, not necessarily
$G$-compact, define
\begin{align}
 \mathrm{FQR}(X;c;A)
   &:=\varprojlim_{Z\in\mathscr Z_G(X)}
       \mathrm{FQR}(Z;c;A),\label{eq:FQR-space-limit}\\
 \mathrm{FQR}(X;G;A)
   &:=\varprojlim_{c\in\mathcal C_G}\mathrm{FQR}(X;c;A).
       \label{eq:FQR-two-limits}
\end{align}
Thus an element of the first group is a family $(z_Z)_Z$ with
$\operatorname{res}^{Z'}_Z(z_{Z'})=z_Z$ whenever $Z\subseteq Z'$.  An
element of the second is a family $(z_{Z,c})_{Z,c}$ compatible both with
restriction in $Z$ and with restriction and relaxation in $c$.  If $X$ is
$G$-compact, $X$ is a largest element of $\mathscr Z_G(X)$, so
\eqref{eq:FQR-space-limit} agrees canonically with the cycle group already
defined in \Cref{def:FQR-final}.

More explicitly, for $Z\subseteq Z'$ and $c\preccurlyeq c'$, the joint
bonding map is
\[
 u_{(Z',c'),(Z,c)}
 =\operatorname{res}^{Z'}_Z\,r^{Z',A}_{c',c}
 =r^{Z,A}_{c',c}\,\operatorname{res}^{Z'}_Z.
\]
Thus the group in \eqref{eq:FQR-two-limits} is also
\[
 \varprojlim_{(Z,c)\in\mathscr Z_G(X)\times\mathcal C_G}
       \mathrm{FQR}(Z;c;A).
\]
In particular, taking the two inverse limits in the opposite order gives the same group. 
\end{definition}

\begin{proposition}[contravariance in $X$ and $G$-homotopy invariance]
\label{prop:FQR-space-functoriality}
Let $f:X\to Y$ be a continuous $G$-equivariant map between locally compact
second countable right $G$-spaces, and assume that $f$ is \emph{bornological} in the sense that 
\begin{equation}\label{eq:FQR-map-Gcompact-condition}
 \text{for every }Z\in\mathscr Z_G(X)\text{ there is }
 W\in\mathscr Z_G(Y)\text{ such that }f(Z)\subseteq W.
\end{equation}
For every control $c$ there is a
homomorphism
\[
 f^*:\mathrm{FQR}(Y;c;A)\longrightarrow
       \mathrm{FQR}(X;c;A)
\]
which commutes with the bonding maps in $c$.  These maps satisfy
$(\operatorname{id}_X)^*=\operatorname{id}$ and
$(f_2f_1)^*=f_1^*f_2^*$.  If $f_0$ and $f_1$ are joined by a continuous
$G$-equivariant homotopy $H:[0,1]\times X\to Y$ of bornological maps, 
then $f_0^*=f_1^*$.  The same statements hold
after taking either inverse limit in
\eqref{eq:FQR-space-limit}--\eqref{eq:FQR-two-limits}.

Note that bornologicality of a continuous map $f : X \to Y$ holds automatically if $Y$ is proper or $G$-compact.  In particular, its holds for all continuous maps when the acting group is finite.
\end{proposition}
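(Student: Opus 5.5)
The plan is to define $f^*$ first on $G$-compact pieces by pulling back along $C_0$-algebras, then pass to the inverse limits. Fix $Z\in\mathscr Z_G(X)$ and use \eqref{eq:FQR-map-Gcompact-condition} to pick $W\in\mathscr Z_G(Y)$ with $f(Z)\subseteq W$. The restriction $f|_Z:Z\to W$ is continuous and equivariant. Because $Z$ is $G$-compact it need not be proper, so I would not compose with $C_0$. Instead I would use the $C_0(W)$-module structure. Given an $\mathrm{FQR}(W;c;A)$-cycle $(\mathcal E,q,T)$, set
\[
 (f|_Z)^*(\mathcal E,q,T)=\bigl(\mathcal E\widehat\otimes_{C_0(W)}C_0(Z),\ q\otimes\beta,\ T\widehat\otimes1\bigr),
\]
where $C_0(Z)$ is a $C_0(W)$-bimodule via $\phi\mapsto\phi\circ f$. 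Concretely, the module is the balanced tensor product over $C_0(W)\otimes A\to C_b(Z)\otimes A$, completed inside the $C_0(Z)\otimes A$-module $\mathcal E\otimes_{C_0(W)\otimes A}(C_0(Z)\otimes A)$. Since $f$ is equivariant, $q(g)\otimes\beta_g$ is an $\alpha_g$-unitary, and the multiplicative estimate \eqref{eq:FQR-cycle-mult} transfers with the same $\varepsilon$, since $\|S\otimes 1\|\le\|S\|$.

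The key verification is that the defects $T^2-1$, $T-T^*$ and $\operatorname{Ad}_{q(g)}T-T$, which lie in $\mathcal K_W(\mathcal E)$, tensor into $\mathcal K_Z$. For $\phi\in C_0(Z)$, the support $\supp\phi$ is compact, so $f(\supp\phi)$ is compact in $W$. Choose $\psi\in C_0(W)$ with $\psi\equiv1$ on $f(\supp\phi)$. Then $(S\otimes1)\phi=(S\psi\otimes1)\phi$, and $S\psi$ is compact, so $(S\psi\otimes 1)\phi$ is compact in the tensor module (compacts times an element of $C_0(Z)\otimes A$). This is the main technical point; if $\phi$ is merely $C_0$, I would approximate it by compactly supported functions and use that $\mathcal K$ is closed.

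Next I check independence of $W$: two choices are dominated by $W\cup W'$, and the construction is compatible with $\operatorname{res}$ by associativity of balanced tensor products. The construction is compatible with $\operatorname{res}^{Z'}_Z$ on the $X$ side for the same reason, and it respects direct sums, degenerate cycles, unitary equivalence and homotopies. A homotopy over $[0,1]\times W$ pulls back along $\mathrm{id}\times f$, which is still bornological. So $f^*$ descends to $\mathrm{FQR}(W;c;A)\to\mathrm{FQR}(Z;c;A)$. Assembling over $Z$ gives the map on \eqref{eq:FQR-space-limit}. Commuting with $r_{c',c}$ is clear, since both operations act on different tensor factors or on $q$ only. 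Functoriality follows from $C_0(Z)\otimes_{C_0(W)}(\cdots)$ associativity together with the canonical isomorphism $\mathcal E\otimes_{C_0(W)}C_0(W)\cong\mathcal E$.

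For homotopy invariance, I view $H$ as a bornological map $[0,1]\times X\to Y$. To justify this, note that for $Z\in\mathscr Z_G(X)$ the set $[0,1]\times Z$ is $G$-compact and cofinal, so condition \eqref{eq:FQR-map-Gcompact-condition} is exactly what is given. Then $H^*z$ is an $\mathrm{FQR}([0,1]\times Z;c;A)$-cycle whose restrictions to the ends are $f_0^*z$ and $f_1^*z$, by the restriction compatibility already shown. Finally, for the closing remark: if $Y$ is $G$-compact, take $W=Y$. If $Y$ is proper, $f(Z)$ has compact image in $Y/G$, because $Z/G$ is compact and the induced map $Z/G\to Y/G$ is continuous, and the earlier observation that $GK$ is closed $G$-compact supplies $W$. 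For finite $G$ every $G$-compact space is compact and hence proper-type, so $f(Z)$ is compact and $W=f(Z)G$ works.
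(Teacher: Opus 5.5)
Your proposal is correct and follows essentially the same route as the paper. The paper likewise pulls back by the interior tensor product along $\Phi_f\colon C_0(W)\otimes A\to M(C_0(Z)\otimes A)$ and proves locality of the defects with a cutoff equal to one on $f(\supp\phi)$. It also settles independence of $W$ through $W\cup W'$, and obtains homotopy invariance by pulling back along $H$ over $[0,1]\times Z$. The one point the paper makes explicit and you leave implicit is that $\Phi_f$ is nondegenerate even when $f$ is not proper, which follows from the same cutoff argument. This is what justifies the associativity and identity isomorphisms you invoke for $(f_2f_1)^*=f_1^*f_2^*$.
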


\begin{proof}
First suppose that $X$ and $Y$ are $G$-compact.  The map $f$ gives the
nondegenerate equivariant homomorphism
\[
 \Phi_f:C_0(Y)\otimes A\longrightarrow
 M(C_0(X)\otimes A),\qquad
 \Phi_f(a)(x)=a(f(x)).
\]
Nondegeneracy does not require $f$ to be proper: for
$h\in C_c(X)$, a function in $C_c(Y)$ which equals one on the compact set
$f(\operatorname{supp}h)$ acts as the identity on $h$.  Pull back a cycle
$z=(\mathcal E,q,T)$ by putting
\[
 f^*\mathcal E=\mathcal E\widehat\otimes_{\Phi_f}
       (C_0(X)\otimes A),\quad
 q_f(g)(\xi\otimes b)=q(g)\xi\otimes\alpha_g(b),\quad
 T_f=T\widehat\otimes1.
\]
The multiplication defect of $q_f$ has no larger norm than that of $q$.
To check local compactness, let $h\in C_c(X)$ and choose
$k\in C_c(Y)$ equal to one on $f(\operatorname{supp}h)$.  For any Fredholm
or equivariance defect $D$ of $z$,
\[
 h(D\widehat\otimes1)=h((kD)\widehat\otimes1),
 \qquad
 (D\widehat\otimes1)h=((Dk)\widehat\otimes1)h,
\]
and the right-hand operators are compact.  Density of $C_c(X)$ in $C_0(X)$
proves that all pulled-back defects belong to $\mathcal K_X(f^*\mathcal E)$.
The construction plainly preserves direct sums, degenerate cycles, and
parameterized homotopies, so it descends to the stated homomorphism.

The canonical associativity unitary for interior tensor products proves
functoriality under composition.  If
$H:[0,1]\times X\to Y$ is a $G$-homotopy, pullback along $H$ is one
$( [0,1]\times X)$-Fredholm cycle; its evaluations at the endpoints are
$f_0^*z$ and $f_1^*z$.  Hence the endpoint maps agree on stable-homotopy
classes.

For arbitrary spaces, fix $Z\in\mathscr Z_G(X)$ and choose
$W\in\mathscr Z_G(Y)$ as in
\eqref{eq:FQR-map-Gcompact-condition}.  Restrict a compatible family over
$Y$ to its $W$-coordinate and apply the preceding construction to
$f|_Z:Z\to W$.  If $W_1$ and $W_2$ are two choices, restriction from the
$W_1\cup W_2$ coordinate shows that the resulting class over $Z$ is the
same.  The classes are compatible when $Z$ is enlarged and therefore define
the map on the inverse limit.  The identity and composition formulas hold in
every $Z$-coordinate.  For a homotopy, choose one $W$ containing
$H([0,1]\times Z)$ and apply the $G$-compact case to
$H|_{[0,1]\times Z}$; its endpoint evaluations give the two pullbacks.

Finally suppose that $Y$ is proper.  Its orbit space $Y/G$ is Hausdorff.  If
$Z/G$ is compact, the induced map $Z/G\to Y/G$ has compact, hence closed,
image.  The inverse image of that compact subset under $Y\to Y/G$ is a
closed invariant $G$-compact subspace containing $f(Z)$.  Applying the same
argument to $[0,1]\times Z$ proves the homotopy condition.  If $Y$ is
$G$-compact, take $W=Y$.  This proves the final assertion.
\end{proof}

\begin{corollary}[the finite-subgroup case of ``times $\underline EG$'']
\label{cor:FQR-times-EG-finite}
Let $H\leq G$ be finite and regard $\underline EG$ as an $H$-space by
restriction.  For every locally compact second countable $H$-space $X$ and
every separable $H$-$C^*$-algebra $B$, pullback by the projection is an
isomorphism
\[
 \operatorname{pr}_X^*:\mathrm{FQR}(X;H;B)
 \xrightarrow{\cong}
 \mathrm{FQR}(\underline EG\times X;H;B).
\]
\end{corollary}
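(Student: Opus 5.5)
The plan is to show that, for a finite group $H$, the space $\underline EG$ is $H$-equivariantly contractible, and then use the $G$-homotopy invariance of \Cref{prop:FQR-space-functoriality}, applied with the group $H$. The first step is to produce an $H$-fixed point $x_0\in\underline EG$. The orbit $e\cdot H$ of a vertex of a Rips complex $P_d(G)$ with $d\ge \max_{h\in H}\ell(h)$ spans a simplex, and its barycenter is $H$-fixed. Next, restricting to $H$, I would note that $\underline EG$ is a universal proper $H$-space, i.e.\ $H$-equivariantly contractible. Concretely, on each Rips complex I would take the straight-line homotopy in barycentric coordinates from $x$ to $x_0$. This requires care, because the convex combination $(1-t)x+tx_0$ lives in $P_{d'}(G)$ only for $d'$ larger than $d$ plus the diameter of $\operatorname{supp}(x)\cup H$. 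Each point therefore contracts inside a larger Rips complex. The homotopy is $H$-equivariant because $x_0$ is fixed and the action is simplicial and affine.

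Write $p:\underline EG\times X\to X$ for the projection and $s:X\to \underline EG\times X$ for $s(x)=(x_0,x)$. Both maps are $H$-equivariant, and $p\circ s=\mathrm{id}_X$. The composite $s\circ p$ is $H$-homotopic to the identity via $H_t(y,x)=((1-t)y+tx_0,x)$, interpreted in the directed union of Rips complexes. Bornologicality is automatic: $H$ is finite, so by the final remark of \Cref{prop:FQR-space-functoriality} every continuous map qualifies. Functoriality then gives $s^*p^*=\mathrm{id}$ and $p^*s^*=(s p)^*=\mathrm{id}$ on $\mathrm{FQR}(\,\cdot\,;c;B)$ for each control $c$, and hence also on the inverse limits.

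The main obstacle is topological. $\underline EG$, taken as the union of the Rips complexes with the weak topology, need not be locally compact, and a single straight-line homotopy does not stay in a fixed $P_d(G)$. I would handle this through the inverse-limit definition \eqref{eq:FQR-space-limit}. It suffices to work coordinatewise on closed invariant $H$-compact subspaces $Z\subseteq \underline EG\times X$. Each such $Z$ projects into $P_d(G)\times X$ for some $d$, and the homotopy $H|_{[0,1]\times Z}$ lands in $P_{d'}(G)\times X$ with $d'=2d+\max_{h\in H}\ell(h)$. This is exactly the form of the homotopy step in the proof of \Cref{prop:FQR-space-functoriality}, where one chooses a single $W$ containing $H([0,1]\times Z)$. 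I would therefore take the model of $\underline EG$ to be the directed system of Rips complexes and read $\mathscr Z_H(\underline EG\times X)$ as cofinally given by subsets of $P_d(G)\times X$. Checking that this cofinality holds, and that it matches the convention in \eqref{eq:RK-inverse-limit}, is where I expect to spend the most care.
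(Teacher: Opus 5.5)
Your argument is the paper's proof: you take an $H$-fixed point $y_0$ and the section $s(x)=(y_0,x)$ with $\operatorname{pr}_X s=\mathrm{id}$, show $s\operatorname{pr}_X\simeq\mathrm{id}$ by an $H$-homotopy, and conclude by contravariance and homotopy invariance from \Cref{prop:FQR-space-functoriality} (bornologicality is automatic for finite $H$); your Rips-complex contraction and your inverse-limit reading of $\underline EG$ fill in details the paper leaves implicit. One correction is needed: $d'=2d+\max_{h\in H}\ell(h)$ is not uniform over $Z\subseteq P_d(G)\times X$, since supports can lie arbitrarily far from $H$ (for example $\delta_g$ with $\ell(g)$ large), but this does not break the argument, because finiteness of $H$ makes every $H$-compact $Z$ compact, so its projection meets only finitely many vertices of the locally finite complex $P_d(G)$, and the homotopy step only needs a $Z$-dependent $d'$, namely the diameter of those vertices together with $H$.
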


\begin{proof}
For a finite group $H$, the defining fixed-point property of the universal
proper space says that $(\underline EG)^H$ is nonempty and that
$\underline EG$ is $H$-equivariantly contractible.  Choose
$y_0\in(\underline EG)^H$ and put $s(x)=(y_0,x)$.  Then
$\operatorname{pr}_Xs=\operatorname{id}_X$, while
$s\operatorname{pr}_X$ is $H$-homotopic to the identity of
$\underline EG\times X$.  Contravariance and homotopy invariance in
\Cref{prop:FQR-space-functoriality} show that $s^*$ is the inverse of
$\operatorname{pr}_X^*$.
\end{proof}

\subsection{\texorpdfstring{$KK^G$}{KKG} and coefficient functoriality}
\label{subsec:FQR-KK-cycle}

For a locally compact second countable right $G$-space $X$ and a separable
$G$-$C^*$-algebra $A$, let
\[
 \mathbf{FQR}_{X,A}:\mathcal C_G^{\mathrm{op}}\longrightarrow\mathbf{Ab},
 \qquad
 (F,\varepsilon)\longmapsto\mathrm{FQR}(X;F,\varepsilon;A),
\]
denote the inverse system whose bonding maps were defined in
\Cref{def:FQR-final}.  We now define the action of equivariant $KK$-theory on
this entire inverse system.  The construction takes the exterior product
with $C_0(X)$ and uses the ideal $\mathcal K_X$.

\begin{proposition}[Kasparov product for Fredholm quasi-representations]
\label{prop:FQR-KK-product}
Let $X$ be a locally compact second countable right $G$-space. Then for any $(F, \varepsilon) \in \mathcal{C}_G$, there is an additive functor from the equivariant $KK$-category $\mathbf{KK}^G$ to the category $\mathbf{Ab}$ of abelian groups that takes each separable $G$-$C^*$-algebra $A$ to $\mathrm{FQR}(X;F,\varepsilon;A)$. More concretely, any pair $(A,B)$ of separable $G$-$C^*$-algebras induces a homomorphism 
\[
    KK^G(A,B) \longrightarrow \mathrm{Hom}(\mathrm{FQR}(X;F,\varepsilon;A)
 , \mathrm{FQR}(X;F,\varepsilon;B)) \, , \quad  x \longmapsto x_{F,\varepsilon}
\]
in a way that is compatible with taking Kasparov products, that is, for any $x\in KK^G_0(A,B)$ and $y\in KK^G_0(B,C)$, we have 
\[
 (1_A)_{F,\varepsilon}=\operatorname{id}_{\mathrm{FQR}(X;F,\varepsilon;A)} \quad \text{ and } \quad
 (x\otimes_B y)_{F,\varepsilon}=y_{F,\varepsilon}\circ x_{F,\varepsilon} \; .
\]

Moreover, when we vary $(F,\varepsilon)$, these functors assemble into an additive functor from  $\mathbf{KK}^G$ to the functor category $\operatorname{Fun}(\mathcal C_G^{\mathrm{op}},\mathbf{Ab})$. 
More concretely, each $x\in KK^G_0(A,B)$ induces a natural transformation
\[ x_\bullet:\mathbf{FQR}_{X,A}\Longrightarrow\mathbf{FQR}_{X,B} \; ,
\]
where naturality means whenever
$c=(F,\varepsilon)\preccurlyeq c'=(F',\varepsilon')$, then
\begin{equation}\label{eq:FQR-KK-bonding}
 r^B_{c',c}\,x_{F',\varepsilon'}
 =x_{F,\varepsilon}\,r^A_{c',c}.
\end{equation}

As a consequence, by taking inverse limits, these natural transformations induce an additive functor from $\mathbf{KK}^G$ to $\mathbf{Ab}$ that takes each separable $G$-$C^*$-algebra $A$ to $\mathrm{FQR}(X;G;A)$. 
\end{proposition}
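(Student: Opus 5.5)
The construction is the Kasparov product applied cycle by cycle, with the approximate group action carried along. Fix $(F,\varepsilon)$ and $x\in KK^G(A,B)$. First pass to the representable group by the exterior product with $C_0(X)$, as in \eqref{eq:proper-pullback}. This gives $p_X^*(x)\in RKK^G(X;A,B)$, represented by a cycle $(\mathcal F,\psi,S)$. Here $\mathcal F$ is a $G$-Hilbert $B_X$-module with a genuine action $V_g$ and a $C_0(X)$-linear representation $\psi:A_X\to\mathcal L(\mathcal F)$.

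Given an $\mathrm{FQR}(X;F,\varepsilon;A)$-cycle $(\mathcal E,q,T)$, set $\mathcal E'=\mathcal E\whot_{A_X}\mathcal F$ and $q'(g)=q(g)\otimes V_g$. This is well defined because $q(g)$ is an $\alpha_g$-unitary and $V_g$ intertwines $\psi$ with the action. Since $V$ is a genuine representation, the identity
\[
q'(g)q'(h)-q'(gh)=(q(g)q(h)-q(gh))\otimes V_{gh}
\]
shows that the multiplicative defect is at most $\varepsilon$ on $F$, so the control is preserved exactly. For the operator, take $T'=T\#_{\widetilde F}S$, a Kasparov product built from an $S$-connection $\widetilde F$ on $\mathcal E'$ (\Cref{thm:kasparov-product}), of the form $M^{1/2}(T\otimes1)+N^{1/2}\widetilde F$.

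The main obstacle is that Kasparov's technical theorem ordinarily uses the genuine $G$-action to make $M,N$ equivariant modulo compacts. Here only the finitely many unitaries $q'(k)$, $k\in FF$, are available. I would handle this as in \Cref{prop:FQR-asymptotic-equivariance}: let the free group $\Lambda_{FF}$ act genuinely through $q'$ and apply the technical theorem for this countable group action. The quasicentral approximate unit argument given there extends verbatim to produce $M,N$ that commute modulo $\mathcal K_X$ with each $\Ad_{q'(k)}$. The conditions \eqref{eq:FQR-cycle-Fredholm}–\eqref{eq:FQR-cycle-equiv} then follow by the usual computation, with every appeal to $G$-equivariance replaced by $\Lambda_{FF}$-equivariance. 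Compactness must be read locally, in $\mathcal K_X$, and this is where $C_0(X)$-linearity of $\psi$ enters.

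Well-definedness rests on uniqueness of the product up to homotopy, which is run with $P=[0,1]$-parameterized families. Homotopies of $(\mathcal E,q,T)$ and of $(\mathcal F,\psi,S)$ give parameterized products. Unitary equivalences and degenerate summands are respected, after stabilization via \Cref{thm:equivariant-stabilization} where needed. Hence we obtain a bilinear pairing $\mathrm{FQR}(X;F,\varepsilon;A)\times KK^G(A,B)\to\mathrm{FQR}(X;F,\varepsilon;B)$.

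Functoriality follows from associativity of the Kasparov product together with \eqref{eq:proper-pullback-product}. Applied to cycles whose action is only $\Lambda_{FF}$-equivariant, it gives $(x\otimes_B y)_{F,\varepsilon}=y_{F,\varepsilon}\circ x_{F,\varepsilon}$. The identity $(1_A)_{F,\varepsilon}=\mathrm{id}$ holds because $\mathcal E\whot_{A_X}A_X\cong\mathcal E$ with $T$ a valid product operator. Naturality \eqref{eq:FQR-KK-bonding} is immediate: restricting $q$ from $F'F'$ to $FF$ and relaxing $\varepsilon'$ to $\varepsilon$ commutes with the construction, since the connection choice may be made for the larger set and then remains valid for the smaller one.

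For non-$G$-compact $X$, the restriction maps \eqref{eq:FQR-space-restriction-cycle} commute with the pairing, because the interior tensor products are associative. Passing to the inverse limits in \Cref{def:FQR-arbitrary-X} then yields the functor on $\mathrm{FQR}(X;G;A)$.
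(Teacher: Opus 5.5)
Your proposal is correct and follows essentially the same route as the paper. The paper also lets the free group on symbols indexed by $FF$ act genuinely through $q$, and regards $(\mathcal E,m,T)$, with $m$ the multiplication by $C_0(X)$, as a $\Lambda_F$-equivariant Kasparov $C_0(X)$--$A_X$ cycle; it then forms the ordinary product with the restriction of $p_X^*(x)$ along $\rho_F$ and reads off the defect estimate and local compactness exactly as you do. The only difference is presentational: instead of re-running the quasicentral approximate-unit argument to build $M,N$, the paper checks by induction on word length that $\Ad_{U_w}(T)-T\in\mathcal K_X(\mathcal E)$ for every free word $w$, and then simply invokes the standard equivariant product theorem for the countable group $\Lambda_F$.
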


\begin{proof}
It is enough first to treat the case in which $X$ is $G$-compact.  Indeed,
for a general $X$, \Cref{def:FQR-arbitrary-X} defines each controlled group
as the inverse limit over the $G$-compact invariant subspaces $Z\subseteq X$.
The construction below applied to every $Z$ commutes with restriction from a
larger such subspace to a smaller one.  It therefore gives, and uniquely
determines, the asserted homomorphism for arbitrary $X$.

Put
\[
 A_X=C_0(X)\otimes A,\qquad B_X=C_0(X)\otimes B.
\]
Represent $x$ by a graded equivariant Kasparov $A$--$B$ cycle
$(\mathcal F,\pi,S)$.  Its exterior product with $C_0(X)$ is the equivariant
Kasparov $A_X$--$B_X$ cycle
\[
 \mathcal F_X=C_0(X)\widehat\otimes\mathcal F,\qquad
 \pi_X=1_{C_0(X)}\widehat\otimes\pi,
 \qquad S_X=1\widehat\otimes S.
\]
If $V_g$ is the action on $\mathcal F$, the action $V^X_g$ on
$\mathcal F_X$ is
\[
 V^X_g(f\otimes\eta)=\alpha_g(f)\otimes V_g\eta.
\]
This cycle represents the exterior pullback $p_X^*(x)$ from
\eqref{eq:proper-pullback}.

Let $(q,T)$ be an $X$-Fredholm $(F,\varepsilon)$-quasi-representation on a
graded Hilbert $A_X$-module $\mathcal E$.  We first turn the finitely many
operators $q(k)$ into a genuine group action without imposing any false
relations on them.  Let $\Lambda_F$ be the free group on symbols $s_k$ indexed
by $k\in FF$, and let
$\rho_F:\Lambda_F\to G$ be determined by $\rho_F(s_k)=k$.  Define
\[
 U_{s_k}=q(k)
\]
and extend multiplicatively to $\Lambda_F$.  By the defining module rule for
an $\alpha_k$-unitary, $U_w$ is an
$\alpha_{\rho_F(w)}$-unitary.  Thus
$w\mapsto U_w$ is a genuine $\Lambda_F$-action on $\mathcal E$, where
$\Lambda_F$ acts on $A_X$ through $\rho_F$.

Let $m:C_0(X)\to\mathcal L_{A_X}(\mathcal E)$ be multiplication.  Since $T$
is $A_X$-linear, $[T,m(f)]=0$.  The definition of
$\mathcal K_X(\mathcal E)$ gives
\[
 (T-T^*)m(f),\quad (T^2-1)m(f)\in\mathcal K(\mathcal E)
 \qquad(f\in C_0(X)).
\]
For a generator $s_k$,
\[
 \operatorname{Ad}_{U_{s_k}}(T)-T
 =\operatorname{Ad}_{q(k)}(T)-T\in\mathcal K_X(\mathcal E).
\]
The identity
\begin{equation}\label{eq:free-word-equivariance}
 \operatorname{Ad}_{U_vU_w}(T)-T
 =\operatorname{Ad}_{U_v}
    \big(\operatorname{Ad}_{U_w}(T)-T\big)
   +\big(\operatorname{Ad}_{U_v}(T)-T\big)
\end{equation}
and the corresponding formula for $U_v^{-1}$ show by induction on word length
that the same assertion holds for every $w\in\Lambda_F$.  It follows that
$(\mathcal E,m,T)$ is a $\Lambda_F$-equivariant Kasparov
$C_0(X)$--$A_X$ cycle.

Restrict the $G$-action on $(\mathcal F_X,\pi_X,S_X)$ along $\rho_F$ and form
the equivariant Kasparov product
\begin{equation}\label{eq:FQR-product-cycle}
 [\mathcal E,m,T]\otimes_{A_X}p_X^*(x)
 \in KK^{\Lambda_F}_0(C_0(X),B_X).
\end{equation}
The product theorem quoted in \Cref{thm:kasparov-product} gives an odd product
operator $R$ on
\[
 \mathcal H=\mathcal E\widehat\otimes_{A_X}\mathcal F_X.
\]
Concretely, $R$ is obtained from an $S_X$-connection and Kasparov's technical
theorem.  The defining properties of the product cycle say, for every
$f\in C_0(X)$ and $w\in\Lambda_F$, that
\begin{align*}
 &(R-R^*)m(f),\quad (R^2-1)m(f)\in\mathcal K(\mathcal H),\\
 &\big(\operatorname{Ad}_{U^{\mathcal H}_w}(R)-R\big)m(f)
       \in\mathcal K(\mathcal H),
\end{align*}
where
\[
 U^{\mathcal H}_w(\xi\otimes\eta)
 =U_w\xi\otimes V^X_{\rho_F(w)}\eta.
\]
The operators involved are $B_X$-linear and hence commute with $m(f)$.
Therefore the displayed relations are equivalent to
\[
 R-R^*,\quad R^2-1,\quad
 \operatorname{Ad}_{U^{\mathcal H}_w}(R)-R
 \in\mathcal K_X(\mathcal H).
\]

For $k\in FF$ define
\begin{equation}\label{eq:FQR-product-q}
 q_x(k)=U^{\mathcal H}_{s_k},\qquad
 q_x(k)(\xi\otimes\eta)=q(k)\xi\otimes V^X_k\eta.
\end{equation}
The formula is well defined on the balanced tensor product.  Indeed, for
$a\in A_X$,
\begin{align*}
 q_x(k)((\xi a)\otimes\eta)
 &=q(k)\xi\,\alpha_k(a)\otimes V^X_k\eta\\
 &=q(k)\xi\otimes\pi_X(\alpha_k(a))V^X_k\eta\\
 &=q(k)\xi\otimes V^X_k\pi_X(a)\eta
 =q_x(k)(\xi\otimes\pi_X(a)\eta).
\end{align*}
Moreover, for $g,h\in F$ and an elementary tensor $\xi\otimes\eta$,
\begin{align*}
 &(q_x(g)q_x(h)-q_x(gh))(\xi\otimes\eta)\\
 &\qquad=(q(g)q(h)-q(gh))\xi\otimes V^X_{gh}\eta.
\end{align*}
It follows that
\begin{equation}\label{eq:FQR-product-defect}
 \|q_x(g)q_x(h)-q_x(gh)\|
 \leq\|q(g)q(h)-q(gh)\|<\varepsilon
 \qquad(\varepsilon>0).
\end{equation}
For $\varepsilon=0$, both defects in \eqref{eq:FQR-product-defect} are zero by
the convention following \eqref{eq:FQR-cycle-mult}.
The preceding Fredholm and equivariance relations now show that $(q_x,R)$ is
an $X$-Fredholm $(F,\varepsilon)$-quasi-representation over $B$.  If actual
norm-asymptotic equivariance is needed, \Cref{prop:FQR-asymptotic-equivariance}
provides a family $R_t$ satisfying
\[
 \max_{k\in FF}
 \|\operatorname{Ad}_{q_x(k)}(R_t)-R_t\|\longrightarrow0.
\]

We verify that the assignment $[(q,T)]\mapsto[(q_x,R)]$ is well defined.
A homotopy of $(q,T)$ is a Fredholm quasi-representation over
$C([0,1])\otimes A_X$.  Repeating the free-group construction on that module and
taking the Kasparov product over $C([0,1])\otimes A_X$ gives a homotopy of the
resulting cycles.  \Cref{prop:FQR-asymptotic-equivariance}, applied with
parameter space $[0,1]$, makes
the auxiliary families $R_t$ uniform over this homotopy.  Unitary equivalence
is preserved by tensoring the implementing unitary with the identity.
Stabilization and direct sums are preserved because the Kasparov product is
additive.  A degenerate cycle represents zero in
$KK^{\Lambda_F}_0(C_0(X),A_X)$ and hence has zero product; the
degenerate-cycle and operator-homotopy part of the Kasparov product theorem
gives a stable homotopy to a degenerate product cycle.  Thus the assignment
descends to the stable-homotopy group
$\mathrm{FQR}(X;F,\varepsilon;A)$.

Changing the connection or the product operator changes the product cycle in
\eqref{eq:FQR-product-cycle} by an operator homotopy after adding a degenerate
cycle.  This is again a stable homotopy of Fredholm quasi-representations with
the same finitely many free generators.  If two $G$-equivariant Kasparov
cycles represent the same element $x$, a Kasparov homotopy between stabilized
representatives restricts along $\rho_F$ and its product with
$(\mathcal E,m,T)$ gives the required homotopy of product cycles.  Hence
$x_{F,\varepsilon}$ depends only on $x$.

If $F\subseteq F'$ and $\varepsilon'\leq\varepsilon$, carry out the
construction first for $F'$.  Restricting the resulting operators to the
symbols indexed by $FF$ gives exactly \eqref{eq:FQR-product-q} for
$F$.  A product operator chosen independently at the smaller stage gives the
same stable-homotopy class by the preceding independence argument.  This proves
\eqref{eq:FQR-KK-bonding}.  Now let
\(z=(z_c)_{c\in\mathcal C_G}\in\mathrm{FQR}(X;G;A)\).  If
\(c=(F,\varepsilon)\preccurlyeq
c'=(F',\varepsilon')\), then
\begin{align*}
 r^B_{c',c}\big(x_{F',\varepsilon'}(z_{c'})\big)
 &=x_{F,\varepsilon}\big(r^A_{c',c}(z_{c'})\big)\\
 &=x_{F,\varepsilon}(z_c).
\end{align*}
For \(c=(F,\varepsilon)\), put
\(w_c=x_{F,\varepsilon}(z_c)\).  The preceding computation says that
\(r^B_{c',c}(w_{c'})=w_c\), so
\(w=(w_c)_{c\in\mathcal C_G}\) belongs to
\(\mathrm{FQR}(X;G;B)\).  This defines \(x_*\) coordinatewise, with
\(p_c^B x_*=x_{F,\varepsilon}p_c^A\).  Since maps into an inverse limit
are determined by all their coordinates, this characterization also proves
uniqueness.

For the identity class, the unitary
\[
 \mathcal E\widehat\otimes_{A_X}A_X\longrightarrow\mathcal E,
 \qquad \xi\otimes a\longmapsto\xi a,
\]
carries
\eqref{eq:FQR-product-q} back to $q$, and the identity theorem for the
Kasparov product carries the product operator back to $T$ up to stable
homotopy.  This proves
\((1_A)_{F,\varepsilon}=\operatorname{id}\).  For
$x\in KK^G_0(A,B)$ and $y\in KK^G_0(B,C)$, both iterated constructions have
module
\[
 \mathcal E\widehat\otimes_{A_X}\mathcal F_X
 \widehat\otimes_{B_X}\mathcal G_X
\]
under the associativity unitary
$(\xi\otimes\eta)\otimes\zeta\mapsto
\xi\otimes(\eta\otimes\zeta)$, and on a generator $s_k$ both have
action $q(k)\otimes V^X_k\otimes W^X_k$.  Associativity of the Kasparov
product makes their product operators stably homotopic.  This proves the
composition formula for every fixed \((F,\varepsilon)\).  Applying these
equalities to every coordinate of the inverse limit proves
\((1_A)_*=\operatorname{id}\) and
\((x\otimes_B y)_*=y_*\circ x_*\).  Finally, if $y$ is a $KK^G$-inverse for
$x$, the identity and composition formulas give
\[
 y_{F,\varepsilon}\circ x_{F,\varepsilon}=\operatorname{id},\qquad
 x_{F,\varepsilon}\circ y_{F,\varepsilon}=\operatorname{id}
\]
for every fixed $(F,\varepsilon)$; coordinatewise application gives
\(y_*\circ x_*=\operatorname{id}\) and
\(x_*\circ y_*=\operatorname{id}\).
\end{proof}

\begin{corollary}\label{cor:FQR-KK-equivalence}
If $A$ and $B$ are $KK^G$-equivalent by even classes, then
\[
 \mathrm{FQR}(X;G;A)\cong\mathrm{FQR}(X;G;B)
\]
and, more strongly,
\[
 \mathrm{FQR}(X;F,\varepsilon;A)
 \cong\mathrm{FQR}(X;F,\varepsilon;B)
\]
for every finite $F\subseteq G$ and $\varepsilon\geq0$.  These isomorphisms
are natural in the locally compact $G$-space $X$ and commute with all
restriction maps.
\end{corollary}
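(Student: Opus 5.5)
This corollary follows formally from the functoriality in \Cref{prop:FQR-KK-product}. Let $x\in KK^G(A,B)$ and $y\in KK^G(B,A)$ be even classes with $x\otimes_B y=1_A$ and $y\otimes_A x=1_B$. Fix $(F,\varepsilon)$. The proposition provides homomorphisms
\[
 x_{F,\varepsilon}:\mathrm{FQR}(X;F,\varepsilon;A)\to\mathrm{FQR}(X;F,\varepsilon;B),
 \qquad
 y_{F,\varepsilon}:\mathrm{FQR}(X;F,\varepsilon;B)\to\mathrm{FQR}(X;F,\varepsilon;A).
\]
The identity and composition rules then give
\[
 y_{F,\varepsilon}\circ x_{F,\varepsilon}=(x\otimes_B y)_{F,\varepsilon}=(1_A)_{F,\varepsilon}=\mathrm{id},
\]
and symmetrically $x_{F,\varepsilon}\circ y_{F,\varepsilon}=\mathrm{id}$. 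This gives the controlled isomorphism. For $\varepsilon=0$ the same construction applies, since \eqref{eq:FQR-product-defect} shows that exact multiplicativity is preserved. The inverse-limit statement follows because, by \eqref{eq:FQR-KK-bonding}, $x_\bullet$ and $y_\bullet$ are natural transformations of inverse systems that are mutually inverse coordinatewise, so the induced maps $x_*$ and $y_*$ on the limits are mutually inverse.

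Compatibility with restriction and naturality in $X$ must be checked at the level of cycles. For a bornological equivariant map $f:X\to Y$ (in particular for an inclusion $Z\subseteq Z'$ of $G$-compact invariant subspaces), I would show that $f^*\circ x_{F,\varepsilon}=x_{F,\varepsilon}\circ f^*$.

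Pulling back the product module $\mathcal E\widehat\otimes_{A_Y}\mathcal F_Y$ along $\Phi_f$ is canonically unitarily equivalent to $(f^*\mathcal E)\widehat\otimes_{A_X}\mathcal F_X$, by associativity of interior tensor products together with $\mathcal F_Y\widehat\otimes_{\Phi_f}B_X\cong A_X\widehat\otimes_{A_X}\mathcal F_X$. This unitary intertwines the actions $q(k)\otimes V_k$. The pulled-back product operator $R\widehat\otimes 1$ is again a Kasparov product operator for the pulled-back cycles: this is naturality of the Kasparov product under the $\Lambda_F$-equivariant homomorphism $\Phi_f$, as stated in \Cref{thm:kasparov-product}. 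By the uniqueness part of the product theorem it is then stably homotopic to any other choice. This computation is the only step that is not purely formal.

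The restriction maps are the special case of quotient maps $C_0(Z')\to C_0(Z)$, which the same argument covers. Since $x_{F,\varepsilon}$ and $y_{F,\varepsilon}$ commute with restriction for every $Z$ and every control, the isomorphisms pass to the double inverse limit of \Cref{def:FQR-arbitrary-X}.
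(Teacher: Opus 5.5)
Your proposal is correct and follows the paper's own argument. The identity and composition rules of \Cref{prop:FQR-KK-product} give mutually inverse homomorphisms at each control, \eqref{eq:FQR-KK-bonding} gives compatibility with the bonding maps and hence the inverse-limit isomorphism, and naturality in $X$ comes from the construction being an exterior product with $C_0(X)$. The paper simply asserts that last point in one line; your cycle-level check, that pullback along $\Phi_f$ carries a product operator to a product operator, spells it out.
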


\begin{proof}
Choose inverse even classes $x\in KK^G_0(A,B)$ and
$y\in KK^G_0(B,A)$.  The last paragraph of the proof of
\Cref{prop:FQR-KK-product} shows at every $(F,\varepsilon)$ that the two
induced homomorphisms are inverse.  Equation~\eqref{eq:FQR-KK-bonding} gives
compatibility with restriction.  Since the construction uses the exterior
product with $C_0(X)$, it is natural for equivariant maps of locally compact
$G$-spaces.
\end{proof}

\subsection{Compactness tests for coefficient products}

\begin{lemma}[compactness in tensor products]\label{lem:creation-compactness}
Let $(\mathcal M_i,\pi_i)$ be Hilbert $B$-modules carrying left
$A$-representations, and let $D:\mathcal M_0\to\mathcal M_1$ be adjointable.
Assume $\pi_1(a)D$ and $D\pi_0(a)$ are compact for every $a\in A$.
For a Hilbert $A$-module $\mathcal E$ and $\xi\in\mathcal E$, the
adjointable operator $T^1_\xi\eta=\xi\otimes\eta$ satisfies
\[
 T^1_\xi D\in\mathcal K(\mathcal M_0,\mathcal E\widehat\otimes_{\pi_1}\mathcal M_1).
\]
The adjoint version holds as well. If the two correspondences are identified
by a unitary intertwining the left actions, adding an $A$-locally compact
operator to the second-factor operator does not change its connection
conditions.
\end{lemma}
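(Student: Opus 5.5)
The plan is to reduce the statement to a computation on rank-one operators, using $D\pi_0(a)$ and $\pi_1(a)D$ compact, and then pass to limits.

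\textbf{Reduction to products $\xi=\xi'a$.} Since $\mathcal E$ is a Hilbert $A$-module, the set $\mathcal E A$ is dense in $\mathcal E$ (indeed $\xi=\lim \xi\langle\xi,\xi\rangle^{1/n}$-type approximations, or $\xi=\xi'\langle\xi',\xi'\rangle^{1/2}$ by Cohen factorization). Moreover $\|T^1_\xi\|\le\|\xi\|$. Hence $\xi\mapsto T^1_\xi D$ is norm continuous, and the compact operators are norm closed. It therefore suffices to treat $\xi=\xi'a$ with $\xi'\in\mathcal E$ and $a\in A$. For such $\xi$, the balanced tensor relation gives
\[
T^1_{\xi'a}\eta=\xi' a\otimes\eta=\xi'\otimes\pi_1(a)\eta ,
\]
so
\[
T^1_{\xi'a}=T^1_{\xi'}\pi_1(a)\qquad\text{and}\qquad T^1_{\xi'a}D=T^1_{\xi'}\bigl(\pi_1(a)D\bigr).
\]

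\textbf{The composition $T^1_{\xi'}\circ(\text{compact})$.} By hypothesis $\pi_1(a)D$ lies in $\mathcal K(\mathcal M_0,\mathcal M_1)$. I would show that $T^1_{\xi'}$ maps rank-one operators to rank-one operators:
\[
T^1_{\xi'}\,\theta_{m_1,m_0}=\theta_{\xi'\otimes m_1,\,m_0}.
\]
To check this, apply both sides to $m\in\mathcal M_0$:
\[
T^1_{\xi'}\bigl(m_1\langle m_0,m\rangle\bigr)=\xi'\otimes m_1\langle m_0,m\rangle=(\xi'\otimes m_1)\langle m_0,m\rangle .
\]
Since $T^1_{\xi'}$ is bounded, it maps the closed span of the rank-one operators into the compacts. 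This proves the main claim.

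\textbf{Adjoint version and the remaining assertion.} The adjoint is $(T^1_\xi)^*(\zeta\otimes\eta)=\pi_1(\langle\xi,\zeta\rangle)\eta$. The adjoint version, $D^*(T^1_\xi)^*$ compact (equivalently, the operator $D' (T^1_\xi)^*$ for $D'$ satisfying the hypotheses), follows by taking adjoints of the first statement applied to $D^*$, whose hypotheses are the adjoints of the given ones. For the last sentence, suppose $D$ is an $A$-locally compact operator on the second factor; this means exactly the hypotheses above, with $\mathcal M_0=\mathcal M_1$. Then $T^1_\xi D$ and $D(T^1_\xi)^*$ are compact. Kasparov's connection conditions on an operator $R$ require $T^1_\xi S-(-1)^{\partial\xi\partial S}R\,T^1_\xi$ and its adjoint version to be compact. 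Adding $1\otimes D$ to $R$, or $D$ to $S$, changes these expressions by compact terms, so the connection conditions are unaffected. Transporting everything through a unitary intertwining the left actions preserves compactness and the operators $T^1_\xi$ (up to the unitary).

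\textbf{Expected obstacle.} The only delicate point is the density and factorization step for $\mathcal E A$, together with the norm bound on $T^1_\xi$. Both are standard: $\|T^1_\xi\eta\|^2=\|\langle\eta,\pi_1(\langle\xi,\xi\rangle)\eta\rangle\|\le\|\xi\|^2\|\eta\|^2$. Gradings only contribute signs.
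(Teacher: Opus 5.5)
Your argument is correct, but for the main compactness claim it takes a different route from the paper.

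\textbf{The paper's route.} The paper does not factor $\xi$. It computes
\[
 (T^1_\xi D)^*(T^1_\xi D)=D^*\pi_1(\langle\xi,\xi\rangle)D\in\mathcal K(\mathcal M_0),
\]
and then proves the general criterion that an adjointable $S$ with $S^*S$ compact is itself compact. For this it uses
\[
 \|S(1-u_\lambda)\|^2=\|(1-u_\lambda)S^*S(1-u_\lambda)\|\to0
\]
for an approximate unit $(u_\lambda)$ of $\mathcal K(\mathcal M_0)$, so that $S=\lim Su_\lambda$.

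\textbf{Your route.} You approximate $\xi$ by elements $\xi'a$, for instance $\xi e_\lambda$ with $(e_\lambda)$ an approximate unit of $A$. The balancing relation gives $T^1_{\xi'a}D=T^1_{\xi'}\,\pi_1(a)D$. You then conclude from the explicit identity $T^1_{\xi'}\theta_{m_1,m_0}=\theta_{\xi'\otimes m_1,m_0}$, the bound $\|T^1_\xi\|\leq\|\xi\|$, and norm-closedness of the compacts.

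\textbf{Comparison.} Both arguments use only the hypothesis on $\pi_1(a)D$ for this part. Yours is more hands-on and avoids the $S^*S$ criterion. The paper's isolates that criterion as a reusable tool, and it invokes it again in \Cref{lem:bundle-correspondence-compact} to get compactness of $T_\xi$ from $T_\xi^*T_\xi=\pi(\langle\xi,\xi\rangle)$. Your factorization $T_{\xi'a}=T_{\xi'}\pi(a)$ would handle that case equally well.

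\textbf{Remaining assertions.} Here you follow the paper: apply the first claim to $D^*$ and take adjoints. Note that the substantive adjoint statement this produces is $D(T^0_\xi)^*\in\mathcal K(\mathcal E\widehat\otimes_{\pi_0}\mathcal M_0,\mathcal M_1)$. This is where the hypothesis on $D\pi_0(a)$ enters, rather than the tautological $D^*(T^1_\xi)^*\in\mathcal K$ you wrote first. For the final assertion, changing the second-factor operator by $D$ changes each connection defect by $\pm T^1_\xi D$ or its adjoint, exactly as in the paper. Your aside about adding $1\otimes D$ to $R$ is not needed, and it only makes sense when $D$ commutes with the left action.
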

\begin{proof}
The identity $(T^1_\xi)^*T^1_\xi=\pi_1(\langle\xi,\xi\rangle)$ gives
\[
 (T^1_\xi D)^*(T^1_\xi D)
       =D^*\pi_1(\langle\xi,\xi\rangle)D\in\mathcal K.
\]
If $S$ is adjointable and $S^*S$ is compact, a compact approximate identity
$(u_\lambda)$ on its domain satisfies
\[
 \|S(1-u_\lambda)\|^2
 =\|(1-u_\lambda)S^*S(1-u_\lambda)\|\longrightarrow0.
\]
Thus $S$ is a norm limit of the compacts $Su_\lambda$. This proves the
claim; apply it to $D^*$ for the adjoint condition. On the same
correspondence, the difference of two connection defects is $T_\xi D$
with the appropriate grading sign, proving the final assertion.
\end{proof}

\begin{lemma}[tensor products with finite-rank vector bundles]\label{lem:bundle-correspondence-compact}
Let $A$ have a nondegenerate central $C_0(Z)$-action and let $\mathcal V$
be a finite-rank Hermitian vector bundle on $Z$. Then the correspondence
\[
 \mathcal M=\Gamma_0(\mathcal V)\widehat\otimes_{C_0(Z)}A,
 \qquad \pi(a)=1\widehat\otimes L_a,
\]
has $\pi(A)\subseteq\mathcal K(\mathcal M)$. For every Hilbert $A$-module $\mathcal E$ and
$K\in\mathcal K_A(\mathcal E)$, the operator $K\widehat\otimes1$ belongs to
$\mathcal K_A(\mathcal E\widehat\otimes_A\mathcal M)$.
\end{lemma}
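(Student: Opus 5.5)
The plan is to reduce both assertions to local computations over small open subsets of $Z$ on which $\mathcal V$ is trivial, and then glue with a finite partition of unity. First I treat $\pi(A)\subseteq\mathcal K(\mathcal M)$. Since $\mathcal K(\mathcal M)$ is closed and $A=C_0(Z)A$ by nondegeneracy (Cohen factorization), it suffices to show $\pi(fa)$ is compact for $f\in C_c(Z)$ and $a\in A$. Cover $\supp f$ by finitely many open sets $W_1,\dots,W_m$ over which $\mathcal V$ has orthonormal frames $e^{(i)}_1,\dots,e^{(i)}_{r}$. Choose $\phi_i\in C_c(W_i)$, $\phi_i\ge0$, with $\sum_i\phi_i^2=1$ on $\supp f$. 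On $\mathcal M$ we have
\[
 \pi(fa)=\sum_i \pi(\phi_i)\pi(fa)\pi(\phi_i).
\]
Here I use centrality: $C_0(Z)$ acts on $\mathcal M$ both through $\Gamma_0(\mathcal V)$ and through $A$, and these agree because the tensor product is balanced over $C_0(Z)$ and $C_0(Z)$ is central in $M(A)$.

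Next, on $W_i$, I would write $\phi_i\cdot s\otimes b=\sum_j \phi_i e^{(i)}_j\otimes\langle e^{(i)}_j,s\rangle b$, using $\langle e_j,s\rangle\in C_b(W_i)$, which multiplies $\phi_i$ into $C_c(W_i)$. From this, $\pi(\phi_i)\pi(fa)\pi(\phi_i)$ should be the finite sum of rank-one operators
\[
 \sum_{j}\theta_{\phi_i e_j^{(i)}\otimes fa\psi_i,\ e_j^{(i)}\otimes\psi_i}
\]
for a suitable $\psi_i\in C_c(W_i)$ equal to $1$ on $\supp\phi_i$, suitably arranged so that the inner products reproduce $\phi_i\,fa\,\phi_i$. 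The routine check is that the rank-one operator $\theta_{s\otimes b,\,t\otimes c}$ acts by $u\otimes d\mapsto s\otimes b\,c^*\langle t,u\rangle d$. Matching these formulas is the only real computation, and it amounts to the statement that $\Gamma_0(\mathcal V)|_{W_i}$ is locally free of finite rank, so that $1=\sum_j\theta_{e_j,e_j}$ locally on $\Gamma_0(\mathcal V)$. Hence $\pi(fa)$ is a finite sum of compacts. Alternatively, one can argue that $\Gamma_0(\mathcal V)$ is finitely generated projective over $C_0(Z)$ locally, so $C_0(Z)$ acts by compacts on it and tensoring with $A$ preserves this.

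For the second assertion, I would invoke the standard fact about interior tensor products: if $\pi(A)\subseteq\mathcal K(\mathcal M)$, then $K\whot 1\in\mathcal K(\mathcal E\whot_A\mathcal M)$ for every $K\in\mathcal K_A(\mathcal E)$. To prove it, I would reduce to rank-one operators $K=\theta_{\xi,\eta}$ by linearity and continuity. Then I factor $\theta_{\xi,\eta}\whot1=T^1_\xi (T^1_\eta)^*$, where $T^1_\xi$ is as in \Cref{lem:creation-compactness}. Writing $\xi=\xi'a'$ and $\eta=\eta' b'$ by Cohen factorization, this becomes $T^1_{\xi'}\pi(a'b'^*)(T^1_{\eta'})^*$. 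This is compact because $\pi(a'b'^*)\in\mathcal K(\mathcal M)$ and $T^1_{\xi'}$ maps compacts to compacts, since $T^1_{\xi'}\theta_{m,n}=\theta_{\xi'\otimes m,n}$.

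I expect the main obstacle to be the bookkeeping in the first part: making sure the central $C_0(Z)$-structure on $A$ justifies moving the cutoffs $\phi_i$ across the balanced tensor product. Everything afterwards is formal.
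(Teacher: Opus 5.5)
Your proposal is correct and is essentially the paper's argument. Localizing $\pi(fa)$ by finitely many frames and a partition of unity reduces the first claim to $L_{yz^*}=\theta_{y,z}$ on the standard module, and the second claim follows from $\theta_{\xi,\eta}\whot1=T_\xi T_\eta^*$ plus finite-rank approximation. Your displayed rank-one sum is not literally well defined, because $e_j$ is only a local section and $\psi_i$ is only a multiplier of $A$. Fix it by putting both cutoffs on the section side, using the global sections $\phi_ie_j$ and $\psi_ie_j$, and factoring $\phi_ifa=yz^*$ in $A$.

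The only variation is in the second part. You obtain compactness of $T_\xi T_\eta^*$ by Cohen-factoring $\xi=\xi'a'$ and $\eta=\eta'b'$, which gives $T_{\xi'}\pi(a'b'^*)T_{\eta'}^*$. The paper instead shows that $T_\xi$ itself is compact, because $T_\xi^*T_\xi=\pi(\langle\xi,\xi\rangle)$ is compact (\Cref{lem:creation-compactness}). Either route works.
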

\begin{proof}
For $a\in A$, choose $f\in C_c(Z)$ with $\|a-fa\|$ arbitrarily small.
Over the compact support of $f$, finitely many bundle charts and a partition
of unity express $\pi(fa)$ as a norm limit of rank-one module operators.
Indeed local frame vectors reduce the assertion to
$L_{xy^*}=\theta_{x,y}$ on the standard $A$-module, and finite sums of
$xy^*$ are dense in $A$. Passing to the limit gives compactness of $\pi(a)$.
Now $T_\xi^*T_\xi=\pi(\langle\xi,\xi\rangle)$ is compact, so $T_\xi$ is
compact by the preceding lemma. Hence
$\theta_{\xi,\eta}\widehat\otimes1=T_\xi T_\eta^*$ is compact. Finite-rank
approximation proves the second assertion. The proof holds over $C(P)$,
with the same parameter algebra throughout.
\end{proof}

\begin{proposition}[the action of equivariant $K$-theory]
\label{prop:central-K-action}
Let $A$ be a $G$-$C_0(Z)$-algebra with nondegenerate central action. A
class $z\in RKK^G(Z;\mathbb C,\mathbb C)$ determines an even coefficient
class $m_A(z)\in KK^G(A,A)$. This construction is additive, compatible
with products and equivariant pullback, and sends the trivial rank-one
class to the identity of $A$.
\end{proposition}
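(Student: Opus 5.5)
The idea is to build $m_A(z)$ geometrically from a finite-dimensional $G$-vector bundle, using the correspondence of \Cref{lem:bundle-correspondence-compact}, and then extend to virtual classes by additivity. Since $Z$ need not be $G$-compact in general, I would first treat the case where $z$ is the class of a finite-rank $G$-equivariant Hermitian vector bundle $\mathcal V\to Z$, or, over a $G$-compact piece, a formal difference $[\mathcal V_0]-[\mathcal V_1]$. Set
\[
 \mathcal M_{\mathcal V}=\Gamma_0(\mathcal V)\widehat\otimes_{C_0(Z)}A,
 \qquad \pi(a)=1\widehat\otimes L_a,
\]
with $G$ acting diagonally through the bundle action and $\alpha$. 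Centrality of the $C_0(Z)$-action makes $\pi$ a well-defined left $A$-action. By \Cref{lem:bundle-correspondence-compact}, $\pi(A)\subseteq\mathcal K(\mathcal M_{\mathcal V})$, and the action is exactly equivariant. Hence $(\mathcal M_{\mathcal V},\pi,0)$ is an even Kasparov $G$-cycle from $A$ to $A$, and we put $m_A([\mathcal V]):=[\mathcal M_{\mathcal V},\pi,0]$. For a general $z$ I would instead use the $RKK^G(Z;\C,\C)$ cycle picture $(\mathcal E,T)$ over $C_0(Z)$ and set $m_A(z)=[\mathcal E\widehat\otimes_{C_0(Z)}A,1\widehat\otimes L,T\widehat\otimes 1]$. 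Here the local compactness relations, which hold modulo $\mathcal K_Z$, become genuinely compact after multiplying by $\pi(a)$, by the same approximation $a\approx fa$ with $f\in C_c(Z)$ used in \Cref{lem:bundle-correspondence-compact}. Equivalently, $m_A(z)$ is the image of $z$ under the descent map $RKK^G(Z;\C,\C)\to RKK^G(Z;A,A)\to KK^G(A,A)$, which tensors over $C_0(Z)$ with $A$ and then forgets the $C_0(Z)$-linearity.

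The formal properties then follow in order.
\begin{enumerate}
\item \textbf{Well-definedness.} Homotopies over $C([0,1])\otimes C_0(Z)$ tensor to homotopies over $C([0,1])\otimes A$. Degenerate cycles stay degenerate, and unitary equivalences tensor with the identity.
\item \textbf{Additivity.} $\mathcal E\oplus\mathcal E'$ tensors to the direct sum of the two correspondences.
\item \textbf{Unit.} For the trivial rank-one class, $\Gamma_0(Z\times\C)\widehat\otimes_{C_0(Z)}A\cong A$ via $f\otimes a\mapsto fa$; this uses nondegeneracy. The resulting cycle $(A,L,0)$ represents $1_A$.
\item \textbf{Pullback.} For an equivariant $f\colon Z'\to Z$, with $A$ viewed as a $C_0(Z')$-algebra through a compatible structure, the associativity isomorphism
\[
 \Gamma_0(f^*\mathcal V)\widehat\otimes_{C_0(Z')}A\cong\Gamma_0(\mathcal V)\widehat\otimes_{C_0(Z)}A
\]
gives compatibility.
\end{enumerate}

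Compatibility with products means $m_A(z\cdot z')=m_A(z)\otimes_A m_A(z')$, where the product on $RKK^G(Z;\C,\C)$ is the internal, $C_0(Z)$-linear product. I expect this to be the main obstacle. For bundles it is immediate, because
\[
 \mathcal M_{\mathcal V}\widehat\otimes_A\mathcal M_{\mathcal V'}\cong\Gamma_0(\mathcal V\otimes\mathcal V')\widehat\otimes_{C_0(Z)}A
\]
by centrality, with zero operators. For general Fredholm representatives, I would argue that a $C_0(Z)$-linear connection for $z'$ tensors to a connection for $m_A(z')$, so that Kasparov's product theorem (\Cref{thm:kasparov-product}) applied after $\widehat\otimes_{C_0(Z)}A$ produces a representative of the Kasparov product. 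The checks required are the connection conditions, which use \Cref{lem:creation-compactness}, and the positivity condition, which survives tensoring because it is an inequality modulo compacts preserved by the $*$-homomorphism $\mathcal L(\mathcal E)\to\mathcal L(\mathcal E\widehat\otimes A)$. If $Z$ is proper and $G$-compact, the Lück–Oliver bundle picture lets us avoid this entirely by reducing to the bundle case, which is the case actually used later.
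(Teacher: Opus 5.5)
Your proposal is correct and follows essentially the same route as the paper: represent $z$ by an equivariant $C_0(Z)$-linear Kasparov cycle and tensor over $C_0(Z)$ with $A$. Compactness of the defects after multiplying by $\pi(a)$ then comes from the approximation $a\approx fa$ with $f\in C_c(Z)$, and homotopy invariance, additivity, the unit, pullback and product compatibility come from parameterized versions, associativity unitaries and the connection criterion; your preliminary bundle case and the L\"uck--Oliver shortcut are harmless extras.

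The only point to tighten is positivity for the product. The map $S\mapsto S\widehat\otimes 1$ does not in general send compact operators to compact operators, so argue instead by factoring $a=fb$ and using the local compactness $(K\widehat\otimes 1)\pi(b)\in\mathcal K$ that you already established.
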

\begin{proof}
Represent $z$ by a $C_0(Z)$-linear equivariant Kasparov cycle
$(\mathcal V,U,S)$ on a graded Hilbert $C_0(Z)$-module. Set
\[
 \mathcal M=\mathcal V\widehat\otimes_{C_0(Z)}A,
 \qquad \pi(a)=1\widehat\otimes L_a,
 \qquad S_A=S\widehat\otimes1.
\]
The action is diagonal, and the operator commutes with the left action with
the appropriate grading convention. If $D$ is a Fredholm or equivariance
defect of $S$, then $fD$ is compact for $f\in C_c(Z)$. For $a\in A$, choose
such $f$ with $\|a-fa\|$ small. Tensoring a rank-one operator on
$\mathcal V$ with $L_{xy^*}=\theta_{x,y}$ gives a rank-one operator on
$\mathcal M$. It follows by finite-rank approximation that
\[
 (fD)\widehat\otimes L_a\in\mathcal K(\mathcal M).
\]
Balancing identifies this with $D\widehat\otimes L_{fa}$. Norm
approximation proves $\pi(a)D_A\in\mathcal K(\mathcal M)$.
Thus $(\mathcal M,\pi,S_A)$ is an equivariant Kasparov $A$--$A$ cycle.
The same construction over a parameter algebra preserves homotopies and
degenerate cycles. It is additive under direct sum. The associativity
unitaries and the connection criterion identify the construction on a
product with the product of the two constructed correspondences. These
also give pullback compatibility. For the rank-one unit class the module
is $A$ with left multiplication and operator zero, which is the identity
Kasparov class.
\end{proof}

\begin{lemma}[square-root continuity]\label{lem:square-root-norm}
For positive elements $S,T$ in any $C^*$-algebra,
\[
 \|S^{1/2}-T^{1/2}\|\leq\|S-T\|^{1/2}.
\]
\end{lemma}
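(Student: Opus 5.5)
The plan is to reduce to the case where $S$ and $T$ commute with a common reference, through the operator monotonicity of the square root. Put $\delta=\|S-T\|$. Then $S\leq T+\delta\cdot1$ and $T\leq S+\delta\cdot1$ in the unitization. The square root is operator monotone, so
\[
 S^{1/2}\leq(T+\delta)^{1/2}.
\]
Next I would use the scalar inequality $\sqrt{t+\delta}\leq\sqrt t+\sqrt\delta$ for $t\geq0$. Because $T$ commutes with scalars, functional calculus on $T$ turns this into
\[
 (T+\delta)^{1/2}\leq T^{1/2}+\delta^{1/2}.
\]
Combining the two gives $S^{1/2}-T^{1/2}\leq\delta^{1/2}$. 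Exchanging the roles of $S$ and $T$ gives $T^{1/2}-S^{1/2}\leq\delta^{1/2}$.

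It remains to pass from these order bounds to a norm bound. For a self-adjoint element $D=S^{1/2}-T^{1/2}$ with $-c\leq D\leq c$, the spectrum of $D$ lies in $[-c,c]$. Hence $\|D\|\leq c=\delta^{1/2}$, which is the claim.

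The only nontrivial input is the operator monotonicity of $t\mapsto t^{1/2}$, that is, $0\leq A\leq B$ implies $A^{1/2}\leq B^{1/2}$. I expect this to be the main point, and I would cite it as standard (Löwner–Heinz). If a self-contained argument is preferred, I would first reduce to $B$ invertible by adding $\eta>0$ and letting $\eta\to0$. Then $A\leq B$ gives $\|A^{1/2}B^{-1/2}\|\leq1$. The product $B^{-1/4}A^{1/2}B^{-1/4}$ is similar to $A^{1/2}B^{-1/2}$, so its spectral radius is at most $\|A^{1/2}B^{-1/2}\|\leq1$. This yields $B^{-1/4}A^{1/2}B^{-1/4}\leq1$, that is, $A^{1/2}\leq B^{1/2}$.

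All of these steps take place in the unitization, so they are valid in any $C^*$-algebra.
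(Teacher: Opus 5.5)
Your proof is correct and follows the paper's argument step for step: the order bound $S\le T+\delta$ in the unitization, operator monotonicity of the square root, the scalar inequality $\sqrt{t+\delta}\le\sqrt t+\sqrt\delta$ applied through functional calculus, symmetrization, and passage from the two-sided order bound to the norm bound. The only difference is how you justify monotonicity of the square root: you use the similarity/spectral-radius argument, while the paper suggests integrating the order-reversing resolvent in $S^{1/2}=\pi^{-1}\int_0^\infty S(S+t)^{-1}t^{-1/2}\,dt$; both routes are standard, both regularize to the invertible case, and both are valid.
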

\begin{proof}
Put $h=\|S-T\|$. In the unitization, $S\leq T+h1$. Monotonicity of the
positive square root and scalar functional calculus give
$S^{1/2}\leq (T+h1)^{1/2}\leq T^{1/2}+\sqrt h\,1$.
Interchange $S,T$ to obtain the two-sided order bound, and hence the norm
bound. Square-root monotonicity follows, for example, by integrating the
order-reversing resolvent in
$S^{1/2}=\pi^{-1}\int_0^\infty S(S+t)^{-1}t^{-1/2}\,dt$,
first with positive invertible elements and then by regularization.
\end{proof}

\subsection{Exterior product with a space}

Let $Y$ be a locally compact right $G$-space.  For a cycle
$z=(\mathcal E,q,T)$ over $C_0(X)\otimes A$, set
\begin{equation}\label{eq:times-Y-cycle}
 \tau_Y(z)=
 \big(C_0(Y)\widehat\otimes\mathcal E,\,q^Y,\,1\widehat\otimes T\big),
 \qquad
 q^Y(g)(f\otimes\xi)=\beta_g(f)\otimes q(g)\xi.
\end{equation}
The completed module in \eqref{eq:times-Y-cycle} is a graded Hilbert
$C_0(Y\times X)\otimes A$-module.

\begin{proposition}[exterior product with $Y$]\label{prop:FQR-times-space}
For every control $c=(F,\varepsilon)$, formula
\eqref{eq:times-Y-cycle} induces a homomorphism
\[
 \tau_{Y,c}:\mathrm{FQR}(X;c;A)
 \longrightarrow\mathrm{FQR}(Y\times X;c;A).
\]
The maps $\tau_{Y,c}$ form a natural transformation
\[
 \tau_{Y,\bullet}:\mathbf{FQR}_{X,A}\Longrightarrow
                    \mathbf{FQR}_{Y\times X,A}.
\]
They are natural in $A$ and in equivariant maps of spaces, commute with the
coefficient action of $KK^G$, and satisfy
$\tau_Z\tau_Y=\tau_{Z\times Y}$ under the unitary
\[
 C_0(Z)\widehat\otimes(C_0(Y)\widehat\otimes\mathcal E)
 \longrightarrow C_0(Z\times Y)\widehat\otimes\mathcal E,\qquad
 k\otimes(f\otimes\xi)\longmapsto(k\otimes f)\otimes\xi.
\]
\end{proposition}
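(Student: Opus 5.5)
We verify directly that $\tau_Y(z)$ is a cycle, that the construction descends to stable-homotopy classes, and then check the listed compatibilities on the cycle level. The verifications are modeled on the pullback argument of \Cref{prop:FQR-space-functoriality} and on \Cref{lem:creation-compactness}.

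\emph{Step 1: $\tau_Y(z)$ is a cycle with the same control.} For $g\in FF$, the operator $q^Y(g)=\beta_g\otimes q(g)$ is an even $\alpha_g$-unitary for the diagonal action on $C_0(Y\times X)\otimes A$. Since $\beta_g\beta_h=\beta_{gh}$ exactly, we have
\[
q^Y(g)q^Y(h)-q^Y(gh)=\beta_{gh}\otimes\bigl(q(g)q(h)-q(gh)\bigr),
\]
so the multiplicative defect is at most $\|q(g)q(h)-q(gh)\|<\varepsilon$. The same holds when $\varepsilon=0$. Every Fredholm or equivariance defect of $1\widehat\otimes T$ has the form $1\widehat\otimes D$ with $D\in\mathcal K_X(\mathcal E)$, because $\operatorname{Ad}_{q^Y(g)}(1\otimes T)=1\otimes\operatorname{Ad}_{q(g)}T$.

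It then remains to show that $(1\widehat\otimes D)\,h\in\mathcal K(C_0(Y)\widehat\otimes\mathcal E)$ for $h\in C_0(Y\times X)$. By density it suffices to take $h=k\otimes f$ with $k\in C_c(Y)$ and $f\in C_c(X)$. In that case $(1\otimes D)(k\otimes f)=k\otimes Df$, and $Df\in\mathcal K(\mathcal E)$. The external tensor product of the compact operator $k$ on $C_0(Y)$ with $Df$ is compact, as one sees by approximating with rank-one operators on each factor. This is the step requiring the most care, since one must check that $\mathcal K_{Y\times X}$ is generated by these elementary cutoffs. The argument is the exterior analogue of \Cref{lem:bundle-correspondence-compact}.

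\emph{Step 2: descent to classes.} Each relation transforms in the expected way.
\begin{itemize}
\item A homotopy over $[0,1]\times X$ is sent by the same formula to a homotopy over $[0,1]\times Y\times X$.
\item A unitary equivalence $U$ is sent to $1\otimes U$.
\item Direct sums are preserved, since $C_0(Y)\widehat\otimes(\mathcal E\oplus\mathcal E')$ is the direct sum of the two tensor products.
\item Degenerate cycles are sent to degenerate cycles, because zero defects remain zero.
\end{itemize}
For non-$G$-compact $X$ or $Y$, we work coordinatewise over $\mathscr Z_G$. A $G$-compact $Z\subseteq X$ and $W\subseteq Y$ give $W\times Z$, and the family of such products is cofinal enough to define the map into the inverse limit of \Cref{def:FQR-arbitrary-X}. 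Compatibility with restriction holds because $C_0(W)\otimes C_0(Z)$ is a quotient of $C_0(Y)\otimes C_0(X)$. Naturality in $c$ is immediate, since restricting from $F'F'$ to $FF$ commutes with the formula.

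\emph{Step 3: compatibilities.}
\begin{itemize}
\item Naturality in $A$ and in equivariant maps of spaces holds because tensoring with $C_0(Y)$ commutes with the pullback construction of \Cref{prop:FQR-space-functoriality}, via the canonical associativity unitaries.
\item Commutation with the coefficient action of $x\in KK^G(A,B)$ follows from the construction in \Cref{prop:FQR-KK-product}. The product module $(C_0(Y)\widehat\otimes\mathcal E)\widehat\otimes\mathcal F_{Y\times X}$ is identified with $C_0(Y)\widehat\otimes(\mathcal E\widehat\otimes\mathcal F_X)$, and the actions agree on each free generator $s_k$. Moreover, $1\widehat\otimes R$ is a valid product operator: it is an $S$-connection by \Cref{lem:creation-compactness} and satisfies the positivity condition. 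By the independence statement in \Cref{thm:kasparov-product}, the two resulting classes are stably homotopic.
\item Finally, $\tau_Z\tau_Y=\tau_{Z\times Y}$ holds because the displayed unitary intertwines $\beta^Z_g\otimes\beta^Y_g\otimes q(g)$ with $\beta^{Z\times Y}_g\otimes q(g)$ and carries $1\otimes1\otimes T$ to $1\otimes T$.
\end{itemize}
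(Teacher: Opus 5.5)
Your proposal follows the paper's proof essentially step for step. It uses the identity $q^Y(g)q^Y(h)-q^Y(gh)=\beta_{gh}\widehat\otimes\bigl(q(g)q(h)-q(gh)\bigr)$. It checks local compactness on elementary tensors $k\otimes f\in C_c(Y)\odot C_c(X)$ and then passes to all of $C_0(Y\times X)$ by density and norm closure. It descends to classes through homotopy, unitary equivalence, direct sums and degenerate cycles, and uses associativity for the remaining compatibilities. In the density step, what is actually used is that finite sums of elementary tensors are dense in $C_0(Y\times X)$ and that the compacts are norm closed, not that $\mathcal K_{Y\times X}$ is ``generated'' by cutoffs. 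Your direct check that $1\widehat\otimes R$ is a product operator on $C_0(Y)\widehat\otimes(\mathcal E\widehat\otimes_{A_X}\mathcal F_X)$ is a more explicit version of the paper's one-line appeal to associativity of the Kasparov product, and it works.

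One correction concerns your paragraph on non-$G$-compact spaces. For the diagonal action, $W\times Z$ is in general not $G$-compact even when $W$ and $Z$ are. For example, let $G=\mathbb Z$ act by translation on $Y=X=\mathbb R$; then the invariant function $(y,x)\mapsto y-x$ is unbounded, so $\mathbb R^2/\mathbb Z$ is not compact. Hence these products are not elements of $\mathscr Z_G(Y\times X)$. Moreover, the target $\mathrm{FQR}(Y\times X;c;A)$ is a genuine inverse limit already when $X$ and $Y$ are both $G$-compact, so this issue is not confined to the non-$G$-compact case. The correct way to produce the coordinates is as follows. Your Step~1 shows that the single cycle $\tau_Y(z)$ satisfies the cycle conditions with respect to $\mathcal K_{Y\times X}$ on all of $Y\times X$. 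Restrict this one cycle to every $W'\in\mathscr Z_G(Y\times X)$ by the formula \eqref{eq:FQR-space-restriction-cycle}; compatibility of the resulting family is transitivity of restriction. If $X$ itself is not $G$-compact, you additionally need each such $W'$ to project into some $Z\in\mathscr Z_G(X)$. This holds, for instance, when $X$ is proper, by the argument at the end of \Cref{prop:FQR-space-functoriality}.
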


\begin{proof}
The formula for $q^Y(g)$ is balanced over $C_0(X)\otimes A$, and the defining
inner-product identity for $q(g)$ shows that $q^Y(g)$ is an
$\alpha_g$-unitary.  For $g,h\in F$,
\[
 q^Y(g)q^Y(h)-q^Y(gh)
 =\beta_{gh}\widehat\otimes\big(q(g)q(h)-q(gh)\big),
\]
so its norm is at most the original multiplication defect.

If $S\in\mathcal K_X(\mathcal E)$, then
$1\widehat\otimes S\in
\mathcal K_{Y\times X}(C_0(Y)\widehat\otimes\mathcal E)$.  To see this
without an implicit density argument, first take
$a=f\otimes h\in C_c(Y)\odot C_c(X)$.  Then
\[
 a(1\widehat\otimes S)=f\widehat\otimes(hS),\qquad
 (1\widehat\otimes S)a=f\widehat\otimes(Sh),
\]
and both operators are compact because $hS$ and $Sh$ are compact on
$\mathcal E$.  Finite sums of such elementary tensors are dense in
$C_0(Y\times X)$, so the assertion follows by norm closure.  Applying this
observation to $T-T^*$, $T^2-1$, and
$\operatorname{Ad}_{q(g)}(T)-T$ proves that \eqref{eq:times-Y-cycle} is a
Fredholm quasi-representation with unchanged control.

Tensoring the explicit grading-reversal homotopy of
\Cref{def:FQR-final} with $C_0(Y)$ shows that the construction preserves
inverses.  The formula also sends a block direct sum, a degenerate cycle, or a
$P$-parametrized homotopy to the corresponding block direct sum, degenerate
cycle, or $P$-parametrized homotopy over $Y\times X$.  It commutes with every
bonding map because it does not alter $F$ or $\varepsilon$, which proves that
the maps form a natural transformation.  Naturality and the product identity
follow from associativity of exterior tensor products; the coefficient
$KK^G$-product commutes with $\tau_Y$ by associativity of the Kasparov product.
\end{proof}

\subsection{The character of a Fredholm quasi-representation}
\label{subsec:FQR-character-coeff}

For the main theorem, we construct the character of a Fredholm
quasi-representation with $X=\{*\}$. Averaging with a cutoff function on a
proper $G$-space $Y$ produces an equivariant operator. Its spectral gap is
checked on a compact set meeting every orbit and then follows throughout
$Y$ by equivariance. The construction uses only the values of the
quasi-representation on a specified finite set.

\begin{lemma}[cutoff construction of the Fredholm character]

\label{lem:point-character-construction}
Let $Y$ be a locally compact, second countable, proper $G$-compact right
$G$-space. There are a finite symmetric set $F_Y\subset G$ and
$\varepsilon_Y>0$, depending only on the chosen cutoff functions on $Y$, with the following
property. If $c=(F,\varepsilon)$ satisfies $F\supseteq F_Y$ and
$0<\varepsilon\leq\varepsilon_Y$, a Fredholm $c$-cycle over any
separable $G$-$C^*$-algebra $A$, with $X=\{*\}$, determines an equivariant
$C_0(Y)$-linear Fredholm cycle over $C_0(Y)\otimes A$.
The construction preserves direct sums, degenerate cycles, and homotopies.
Its control does not depend on the Hilbert module, the Fredholm operator,
or the coefficient algebra.
\end{lemma}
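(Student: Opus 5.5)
We will transport the construction of \Cref{sec:character_map} from $\mathbb C^n$ to a graded Hilbert $A$-module $\mathcal E$, with $\lambda$ acting on $\ell^2(G)\otimes\mathcal E$. Fix a compact $K\subseteq Y$ with $KG=Y$ and, using \Cref{lem:finite-overlap-fatten} and \Cref{lem:cutoff-exists}, a cutoff $\theta$ with $E(K\cup\supp\theta)=E(K)$. We will take $F_Y=E(K)^{(2)}$ and $\varepsilon_Y=1/4$, possibly shrunk by a universal constant. These depend only on $\theta$.

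\textbf{Step 1: the embedding.} Let $(\mathcal E,q,T)$ be a Fredholm $c$-cycle over $A$ with $X=\{*\}$. For $y\in Y$ we define the module map
\[
I_\theta(y):\mathcal E\to\ell^2(G)\otimes\mathcal E,\qquad
\xi\mapsto\sum_g\theta(yg)^{1/2}\,\delta_g\otimes q(g)^{-1}\xi .
\]
Here $q(g)^{-1}$ is an $\alpha_{g^{-1}}$-unitary. To stay $A$-linear, we will use the $\lambda\otimes\alpha$-twisted module $\ell^2(G)\otimes\mathcal E$, on which $\delta_g\otimes q(g)^{-1}$ is balanced, so $I_\theta(y)$ is an adjointable isometry. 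The copy of \Cref{lemma:approx-invariant} goes through verbatim: the estimate there uses only $\|q(g)q(g^{-1}k)-q(k)\|<\varepsilon$ for $g,g^{-1}k\in F$ and the identity $\sum\theta=1$. The same computation applies to the equivariance of $\lambda_h\otimes q(h)$ acting on $\ell^2(G)\otimes\mathcal E$.

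\textbf{Step 2: averaging and spectral correction.} We set $p(y)=I_\theta(y)I_\theta(y)^*$ and average exactly as in \eqref{eq:averaging} to get $a(y)=\sum_g\theta(yg)\Ad_{\lambda_g\otimes 1}p(yg)$. This is norm continuous in $y$ and strictly equivariant for the genuine action $\lambda\otimes$(the module action). As in \Cref{prop:average-vector-bundle}, $\|a(y)-p(y)\|<1/2$ on $K$, hence on all of $Y$ by equivariance. Applying $\chi_{[1/2,\infty)}$ fibrewise, legitimately since the spectrum has a gap, gives an equivariant projection $P$ in $\mathcal L(C_0(Y)\otimes\ell^2(G)\otimes\mathcal E)$. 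Then $\|P-p\|<1$ uniformly, so $W=PI_\theta(|I_\theta^*PI_\theta|^{-1/2})$, obtained by polar correction via \Cref{lem:polar-correction}, is a unitary from $C_0(Y)\otimes\mathcal E$ onto the range module $P(\cdots)$.

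\textbf{Step 3: the Fredholm cycle.} On the module $\mathcal M=P\bigl(C_0(Y)\otimes\ell^2(G)\otimes\mathcal E\bigr)$, which is $C_0(Y)$-linear and carries a genuine $G$-action, we take the operator $\widetilde T=P(1\otimes 1\otimes T)P$, transported as $W(1\otimes T)W^*$ if convenient.

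We then verify the cycle conditions. $\widetilde T-\widetilde T^*$ and $\widetilde T^2-1$ are locally compact because $T$'s defects are compact. The commutator $[P,1\otimes1\otimes T]$, multiplied by $f\in C_c(Y)$, is compact: on $\supp f$ only finitely many $g$ occur, so it is a finite combination of terms $\theta(\cdot g)^{1/2}(q(g)^{-1}Tq(g)-T)$ and of commutators of $T$ with the finitely many $q(g)$, all compact by \eqref{eq:FQR-cycle-equiv}. Equivariance: since $P$ is exactly invariant, the equivariance defect of $\widetilde T$ reduces to $P(\Ad_{q(h)}T-T)P$ locally, which is again compact.

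Well-definedness, additivity, and homotopy invariance follow as in \Cref{prop:character-well-defined}. Direct sums give block-diagonal $I_\theta$, degenerate inputs give degenerate outputs, and a $[0,1]$-parametrized cycle yields a $[0,1]\times Y$ cycle with the same estimates.

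\textbf{Main obstacle.} The obstacle is Step 3, the local compactness of $[P,T]$. $P$ is defined by functional calculus of $a$, not polynomially, so we will write $\chi_{[1/2,\infty)}$ as a holomorphic (Riesz) contour integral of resolvents of $a$ on the spectral gap. Commutators with $T$ then become norm-convergent integrals of resolvent$\cdot[a,T]\cdot$resolvent. Local compactness of $f[a,T]$ passes through the integral, because $f$ commutes with everything, the integrand is norm continuous, and the compacts are closed.

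The other point needing care is that $\mathcal E$ is not a $G$-module. We will handle this through the balanced tensor convention of Step 1, checking via the computation following \eqref{eq:FQR-product-q} that $\delta_g\otimes q(g)^{-1}$ is well defined.
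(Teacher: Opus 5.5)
Your route is the paper's. Your $\lambda\otimes\alpha$-twisted module is the paper's $\bigoplus_g\mathcal E_g$ (the coefficient action on the $g$-th summand is twisted by $\alpha_{g^{-1}}$), and on it the coordinate shift $R_s$ is a genuine action by $\alpha_s$-unitaries. After that come the column isometry, averaging, the spectral projection $P$, compression of $\mathscr T=1\otimes T$, and the resolvent formula for $f[P,\mathscr T]$. Your control, taken from \Cref{prop:average-vector-bundle}, is fine. However, several steps are false as written.

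\textbf{The column is only local.} $I_\theta(y)$, and hence $p(y)$, exists only for $y$ near $K\cup\supp\theta$. Elsewhere it needs $q(g)$ with $g\notin FF$, and such an $\alpha_g$-unitary on $\mathcal E$ need not exist at all. For example, take $G=Y=\mathbb Z$, $\theta=\delta_0$, $A=C_0(\mathbb Z)$ with translation, and $\mathcal E=\mathbb C$ with $\xi\cdot f=f(0)\xi$. Then your $F_Y$ is $\{e\}$, all inner products are supported at $0$, and so there is no $\alpha_g$-unitary for $g\neq e$. Averaging only uses $\theta p$ extended by zero, so this is harmless there.

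\textbf{The estimate does not propagate.} Since $p$ is not equivariant, $\|a-p\|<1/2$ does not pass from $K$ to $Y$: comparing $p(yh)$ with $\Ad_{R_h^{-1}}p(y)$ would need $q(h)q(m)\approx q(hm)$ for untested $h$. Only the spectrum propagates, $\sigma(a(yh))=\sigma(a(y))$, and that is all $P=1_{(1/2,\infty)}(a)$ requires. Consequently the uniform bound $\|P-p\|<1$ on $Y$ and the global polar unitary $W$ are unavailable; drop them.

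\textbf{Equivariance is exact.} $T$ stays $A$-linear and adjointable on every twisted summand, so $\mathscr T=\bigoplus_g T$ commutes exactly with each $R_s$. As $P$ is $R$-invariant, $\widetilde T=P\mathscr TP$ is exactly equivariant for all $s\in G$. There is no defect term $P(\Ad_{q(h)}T-T)P$, and there must not be one. Such a term would be controlled only for tested $h$, whereas a class in $RKK^G(Y;\mathbb C,A)$ needs the equivariance condition for every element of $G$.

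\textbf{Local compactness needs finiteness.} $\widetilde T^2-P$ and $\widetilde T-\widetilde T^*$ are not locally compact merely because $T$'s defects are compact, since $1\otimes(T^2-1)$ is not locally compact on $\ell^2(G)\otimes\mathcal E$. You need that, over a compact subset of $Y$, the matrix of $a$ has only finitely many nonzero group rows and columns. The same then holds for $P$, by functional calculus with a function vanishing at $0$. Combine this with
\[
 (P\mathscr TP)^2-P=P(\mathscr T^2-1)P-[P,\mathscr T](1-P)\mathscr TP,
\]
whose last term is locally compact by your resolvent argument. With these repairs your proof becomes the paper's.
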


\begin{proof}
Write the given cycle as $(\mathcal E,q,T)$. Replacing $T$ by a self-adjoint
contraction changes it by a compact operator and gives a homotopy at fixed
$q$, so assume this normalization from the outset.

For $g\in G$, let $\mathcal E_g$ be the coefficient twist with
\[
 \xi\cdot_g a=\xi\alpha_{g^{-1}}(a),\qquad
 \langle\xi,\eta\rangle_g=\alpha_g(\langle\xi,\eta\rangle).
\]
When $q(g)$ is defined, $q(g)^{-1}:\mathcal E\to\mathcal E_g$ is an
$A$-linear unitary. Put
\[
 \mathscr H_{\mathcal E}=\bigoplus_{g\in G}\mathcal E_g,
 \qquad \mathscr T=\bigoplus_{g\in G}T.
\]
The identity on the underlying vectors, from the $g$-coordinate to the
$sg$-coordinate, defines an $\alpha_s$-unitary $R_s$. Indeed the twisted
module rules give $R_s(\xi a)=R_s(\xi)\alpha_s(a)$. The $R_s$ form a group action and commute with $\mathscr T$. On
$C_0(Y)\otimes\mathscr H_{\mathcal E}$ use the diagonal action, with
$\beta_s(f)(y)=f(ys)$ in the space variable.

Choose a cutoff $\theta\in C_c(Y)$ and a compact neighborhood $K$ of its
support. Enlarge $F_Y$ so that all group elements and products occurring
when two points of $K$ are translates of one another are tested. This is a
finite enlargement by properness. For $y$ in a neighborhood of
$\supp\theta$, define the adjointable isometry
\begin{equation}\label{eq:point-cutoff-isometry}
 V_q(y)\xi=\big(\theta(yg)^{1/2}q(g)^{-1}\xi\big)_{g\in G},
 \qquad P_q(y)=V_q(y)V_q(y)^*.
\end{equation}
Only finitely many $g$ occur on $K$, and they are included in $F_Y$.
The cutoff identity proves $V_q^*V_q=1$. Extend $\theta P_q$ by zero;
this is continuous even at the boundary of $\supp\theta$ because $P_q$ is
bounded. Define
\begin{equation}\label{eq:point-cutoff-average}
 a_q(y)=\sum_{s\in G}\theta(ys)R_sP_q(ys)R_s^{-1}.
\end{equation}
Every term uses $P_q$ only where it has been defined. The sum is locally
finite, positive, contractive, and equivariant. No arbitrary
extension of $q$ to all of $G$ is used.

For $y\in\supp\theta$ and a nonzero summand, compare
$R_sV_q(ys)$ with $V_q(y)q(s)$. In the $sk$-coordinate their difference is
\[
 \theta(ysk)^{1/2}
 \big(q(k)^{-1}-q(sk)^{-1}q(s)\big).
\]
Its norm is bounded by the multiplication error for $(s,k)$. The
Hilbert-module column estimate, followed by the estimate for the two range
projections, gives a constant $C_Y$, depending only on the chosen finite
overlap sets, such that
\begin{equation}\label{eq:point-character-gap}
 \|a_q(y)-P_q(y)\|\leq C_Y\varepsilon
 \quad(y\in\supp\theta).
\end{equation}
Choose $\varepsilon_Y$ with $C_Y\varepsilon_Y<1/4$. Since every orbit
meets $\supp\theta$, equivariance propagates the spectral gap to all
of $Y$. Functional calculus therefore defines the equivariant
projection
\begin{equation}\label{eq:point-character-projection}
 e_q=1_{(1/2,\infty)}(a_q).
\end{equation}

The resulting cycle is
\begin{equation}\label{eq:point-character-cycle}
 \left(e_q(C_0(Y)\otimes\mathscr H_{\mathcal E}),\ R,\
                         e_q\mathscr T e_q\right).
\end{equation}
Here and below $R$ also denotes the restriction of the diagonal action to
this invariant range. We verify the compactness conditions. On a compact
subset of $Y$, the matrix for $a_q$ has only finitely many nonzero group
rows and columns. The same is true of $e_q$: functional calculus with a
function vanishing at zero leaves that finite corner invariant. Put $C_V=\mathscr T V_q-V_qT$. Its $g$-component is
\[
 \theta(yg)^{1/2}q(g)^{-1}(\Ad_{q(g)}T-T),
\]
which is compact between the correctly twisted modules. On a compact
region only finitely many components occur. Consequently
\[
 [\mathscr T,P_q]=C_VV_q^*-V_qC_V^*
\]
is locally compact. Averaging, and then the resolvent identity for
functional calculus, give
\[
 [\mathscr T,e_q]\in\mathcal K_Y,
 \qquad e_q(\mathscr T^2-1)e_q\in\mathcal K_Y.
\]
For the second assertion, use that $T^2-1\in\mathcal K_A(\mathcal E)$
and only finitely many twisted diagonal copies occur after space
localization. Thus
\[
 (e_q\mathscr T e_q)^2-e_q
 =e_q(\mathscr T^2-1)e_q
       -e_q\mathscr T(1-e_q)\mathscr T e_q\in\mathcal K_Y.
\]
The operator in \eqref{eq:point-character-cycle} is self-adjoint and equivariant. This proves that it is a cycle.

All operations are block diagonal under direct sum. When the original cycle is degenerate,
$\mathscr T$ commutes with every cutoff matrix entry and is a self-adjoint
unitary, so the resulting cycle is degenerate. For a homotopy, perform the same
construction on the module over $C([0,1])\otimes A$. Its norm is the
parameter-algebra norm, and the spectral gap is the same at every parameter.
This gives a homotopy over the entire parameter interval, without separate choices at its individual points.
Unitary equivalence is respected by taking the direct sum of the twisted
implementing unitaries. The construction consequently passes to
stable-homotopy classes.
\end{proof}

\begin{proposition}[restriction and genuine group actions]
\label{prop:point-character-compatibility}
Denote the homomorphism of \Cref{lem:point-character-construction} by
$\mathcal C_{Y,A,c}$. It commutes with restriction to stronger group controls.
Different cutoff choices, and pullback along maps between proper
$G$-compact spaces, give the same homomorphisms after a common strengthening
of the source control. These assertions also hold for one compactly
parameterized homotopy. For $x\in KK^G(\mathbb C,A)$, let $x[c]$ denote the class obtained by restricting an equivariant
Kasparov representative of $x$ to the group elements tested at $c$. Then
\begin{equation}\label{eq:point-character-exact-input}
 \mathcal C_{Y,A,c}(x[c])=p_Y^*x.
\end{equation}
Restricting a genuine group action to the tested elements also defines
the usual map to $\mathrm{FQR}(Y;c;A)$.
\end{proposition}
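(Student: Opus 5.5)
The proof has four parts: restriction, independence of choices, the genuine-action formula \eqref{eq:point-character-exact-input}, and the final remark. For restriction, observe that the construction in \Cref{lem:point-character-construction} reads off the values $q(g)$ only for $g$ in the fixed finite set $F_Y$ determined by the cutoff. Restricting a $c'$-cycle to a weaker control $c\preccurlyeq c'$ with $F\supseteq F_Y$ therefore leaves every ingredient unchanged: the isometries $V_q$ in \eqref{eq:point-cutoff-isometry}, the average $a_q$ in \eqref{eq:point-cutoff-average}, and the projection $e_q$. So $\mathcal C_{Y,A,c}\circ r^A_{c',c}=\mathcal C_{Y,A,c'}$ holds already at the level of cycles.

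For independence of the cutoff, I will copy the argument of \Cref{prop:character-well-defined}. Given two cutoffs $\theta_0,\theta_1$, take the linear path $\theta_t=(1-t)\theta_0+t\theta_1$ and fix one compact neighbourhood $K$ of $\supp\theta_0\cup\supp\theta_1$. I then enlarge $F_Y$ to the common overlap set $E(K)^{(2)}$ and shrink $\varepsilon_Y$ so that the gap estimate \eqref{eq:point-character-gap} holds uniformly in $t$. Running the lemma over the parameter algebra $C([0,1])$ gives a single homotopy of cycles. Pullback along an equivariant map $\varphi\colon Y\to Y'$ is handled the same way, using the cutoff path from $\theta\circ$-pullback considerations: $\theta'\circ\varphi$ is a cutoff on $Y$, and applying the construction with it yields exactly the pullback of the construction on $Y'$. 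Compactly parameterized homotopies need nothing new, since every step is carried out over $C(P)\otimes A$ with constants independent of $p$.

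The formula \eqref{eq:point-character-exact-input} is the core step, and I expect it to be the main obstacle. Let $(\mathcal E,U,T)$ be a genuine equivariant representative of $x$, so $q=U|_{FF}$. Since $q$ is multiplicative, the column defect computed in the proof of \Cref{lem:point-character-construction} vanishes. Hence $R_sV_q(ys)=V_q(y)U_s$, the projection family $P_q$ is already equivariant, and $a_q=P_q$, so $e_q=P_q$. The isometry $V_q$ then gives an equivariant unitary from $C_0(Y)\otimes\mathcal E$, equipped with the diagonal action $\beta\otimes U$, onto $e_q(C_0(Y)\otimes\mathscr H_{\mathcal E})$. This is the Fell-absorption identification of \eqref{eq:Fell_absorption}, now carried out on Hilbert modules. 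Under this unitary, $e_q\mathscr T e_q$ corresponds to $V_q^*\mathscr T V_q=\sum_g\theta(yg)\,U_g T U_g^{-1}$, pointwise in $y$. This operator differs from $1\otimes T$ by $\sum_g\theta(yg)(\Ad_{U_g}T-T)$, which is $C_0(Y)$-locally compact. The straight-line path between the two operators is therefore an operator homotopy of Kasparov cycles. The result is the exterior-product cycle $(C_0(Y)\otimes\mathcal E,\beta\otimes U,1\otimes T)$, which represents $p_Y^*x$.

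The delicate points in this step are making the twisted-module bookkeeping for $\mathcal E_g$ agree with the diagonal action, and checking local compactness of the correction term on compacta, where only finitely many $g$ contribute. Finally, the last sentence of the statement is a tautology: a genuine action restricted to $FF$ satisfies \eqref{eq:FQR-cycle-mult} with zero defect. Compatibility of that map with stable homotopy follows because Kasparov homotopies restrict to homotopies of $\mathrm{FQR}$-cycles.
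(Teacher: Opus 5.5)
Your proposal is correct and follows the paper's proof essentially step by step. The steps match: restriction leaves every entry of the construction unchanged; cutoff independence uses the linear path $\theta_t$ with one overlap set and one spectral-gap bound; pullback uses the cutoff $\theta'\circ\varphi$; and for a genuine action $a_q=P_q=e_q$, followed by the identification through $V_q$ and the straight-line homotopy from $\sum_g\theta(yg)U_gTU_g^{-1}$ to $1\otimes T$ in the local Calkin algebra.

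The one point you leave implicit is that an equivariant map between proper $G$-compact spaces is automatically proper. This is what makes $\theta'\circ\varphi$ compactly supported and hence a cutoff. The paper checks it by noting that the inverse image of a compact set lies in finitely many translates of a compact set meeting every orbit of the source.
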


\begin{proof}
Restriction of group control leaves every entry used in
\eqref{eq:point-cutoff-isometry}--\eqref{eq:point-cutoff-average} unchanged.
For two cutoffs use $\theta_t=(1-t)\theta_0+t\theta_1$ and take a compact
neighborhood of the union of their supports. One finite overlap set and one
positive spectral-gap bound work for all $t$. The parameterized construction
then gives the claimed equality of classes.

An equivariant map between proper $G$-compact spaces is proper. To see this,
choose a compact set meeting every orbit in the source; the inverse image
of a compact target set is contained in finitely many translates of that
source set, by properness of the target action. Pullback of a cutoff is
therefore a cutoff. With that choice the displayed formulas commute with
pullback, and cutoff independence handles any other choice. Apply the same
argument to a map from $[0,1]\times Y$ for a homotopy.

When $q$ is a genuine action, the range projection $P_q$ is already
equivariant, so $a_q=P_q=e_q$. The isometry $V_q$ identifies its range with
$C_0(Y)\otimes\mathcal E$ and intertwines the group actions. Under this
identification the compressed operator is
\[
 \sum_g\theta(yg)q(g)Tq(g)^{-1}.
\]
Its difference from $T$ is $Y$-locally compact, since on each compact subset the difference is a finite sum of equivariance defects, all of which are compact. The straight
line between these two operators has constant image in the local Calkin
algebra. It is an equivariant Fredholm homotopy, proving
\eqref{eq:point-character-exact-input}.
\end{proof}

\begin{definition}[Fredholm character]
\label{def:FQR-character}
For a proper $G$-compact space $Y$ and a control $c$ strong enough for
\Cref{lem:point-character-construction}, define
\begin{equation}\label{eq:FQR-character-as-composite}
 \chi^F_{Y,A,c}:=\mathcal C_{Y,A,c}:
 \mathrm{FQR}(*;c;A)\longrightarrow RKK^G(Y;\mathbb C,A).
\end{equation}
For $Y=P_d(G)$ and $A=\mathbb C$, write $\chi^F_{d,c}$. A source control
is called \emph{admissible for $d$} if it contains the finite set and has
error at most the threshold chosen for this Rips scale. By
\Cref{prop:point-character-compatibility}, the component maps give a homomorphism
\[
 \chi^F:\mathrm{FQR}(*;G;\mathbb C)
    \longrightarrow\varprojlim_d K_G^0(P_d(G)).
\]
For a non-$G$-compact universal space this is a family of classes on its
$G$-compact stages. It is not an assertion that this family is represented
by one global Kasparov cycle, nor that inverse limits preserve exact
sequences.
\end{definition}

\subsection{Support and compatibility with Kasparov products}

\begin{lemma}[localizing an equivariant Kasparov cycle]

\label{lem:fixed-exact-support}
Let $Y$ be proper and $G$-compact, and let $A$ be a separable
$G$-$C^*$-algebra with a nondegenerate central equivariant $C_0(Z)$-action,
where $Z$ is locally compact and proper. Every class in
$RKK^G(Y;\mathbb C,A)$ has an invariant, self-adjoint representative
whose Fredholm defect vanishes outside a closed $G$-compact subset of
$Y\times Z$. Its class is the extension of a class on the coefficient ideal
of an invariant open neighborhood with $G$-compact closure. One such
neighborhood can be chosen for a fixed compactly parameterized family.
\end{lemma}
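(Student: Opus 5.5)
The plan is to start from an arbitrary equivariant $C_0(Y)$-linear Kasparov cycle $(\mathcal E,U,S)$ for $RKK^G(Y;\mathbb C,A)$ and localize it in the $Z$-direction by a cutoff argument. First I would normalize the operator. Averaging $S$ over $G$ with a cutoff function $\theta_Y$ on $Y$ (\Cref{lem:cutoff-exists}) gives $\sum_g\theta_Y(yg)\operatorname{Ad}_{U_g}(S)$. This changes $S$ by a $Y$-locally compact operator, exactly as in the last paragraph of the proof of \Cref{prop:point-character-compatibility}. Replacing it by its real part then makes the operator invariant and self-adjoint without changing the class.

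Next I would use properness of $Z$. The diagonal action on $Y\times Z$ is proper, and its orbit space is covered by translates of $K_Y\times Z$, where $K_Y\subseteq Y$ is compact with $K_YG=Y$. Over $K_Y$ the module is generated, up to $\mathcal K$, by finitely many directions. Since the coefficient algebra $A$ is a nondegenerate $C_0(Z)$-algebra, I would pick a compact $L\subseteq Z$ such that the Fredholm defects $S^2-1$, restricted over a neighborhood of $K_Y$, are approximately supported over $L$. I would then choose an invariant function $\phi\in C_b(Y\times Z)^G$ with $0\le\phi\le1$. It should equal $1$ near $(K_Y\times L)G$ and be supported in an invariant open set $W$ whose closure $\overline W$ is $G$-compact. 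Such a $\phi$ exists by properness, taking $W=(U_1\times U_2)G$ for relatively compact neighborhoods $U_1,U_2$ of $K_Y,L$ (as in \Cref{lem:finite-overlap-fatten}).

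The localized operator would be $S'=\phi S\phi+(1-\phi^2)^{1/2}\,\Sigma\,(1-\phi^2)^{1/2}$. Here $\Sigma$ is an odd invariant self-adjoint unitary obtained by adding a degenerate cycle $(\mathcal E',U',\Sigma)$, which is harmless by \Cref{thm:equivariant-stabilization}. The function $\phi$ acts centrally through $C_0(Y)\otimes C_0(Z)\to ZM(C_0(Y)\otimes A)$. So $S'-S$ is $Y$-locally compact modulo commutators $[S,\phi]$, and these are locally compact because $S$ commutes with the central action. The straight-line homotopy from $S\oplus\Sigma$ to $S'$ is then an operator homotopy, and the defect $S'^2-1$ vanishes outside $\overline W$. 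Restricted to the coefficient ideal $C_0(Y)\otimes A_W$, where $A_W=C_0(W)A$, the cycle defines a class whose extension along $A_W\hookrightarrow A$ recovers the original class.

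The main obstacle is the choice of $L$: approximate support only holds up to $\varepsilon$. I would handle this by first choosing the compact $L$ so that $\|(1-f)D\|$ is small on $K_Y$ for $f$ a cutoff, where $D$ ranges over the finitely many defects. Then, on the part of the module over the complement of $W$, I would perform a small functional-calculus perturbation to make the defect exactly zero, using that $S$ is approximately unitary there. For a compactly parameterized family, the same $L$ and $W$ work by compactness of $P\times K_Y$.
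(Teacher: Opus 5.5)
There is a genuine gap in your localization step: the operator $S'$ built from the auxiliary unitary $\Sigma$ does not have the properties you claim.

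\emph{If $\Sigma$ lives on the added degenerate summand $\mathcal E'$.} Then $S'=\phi S\phi\oplus(1-\phi^2)\Sigma$ on $\mathcal E\oplus\mathcal E'$. Over the complement of $W$ its $\mathcal E$-component is $0$, so $S'^2-1=-1$ there. The defect therefore does not vanish outside $\overline W$.

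\emph{If $\Sigma$ is meant to act on $\mathcal E$ itself.} You first need an invariant odd self-adjoint unitary on that part of $\mathcal E$, and adding a degenerate cycle does not supply one. Even granting it, $\phi$ acts centrally and commutes with $\Sigma$, so $S'-S=(1-\phi^2)(\Sigma-S)$. This has no reason to be $Y$-locally compact: $1-\phi^2$ equals $1$ far out in the $Z$-direction and lies in no compact ideal. The commutators $[S,\phi]$ you worry about are in fact $0$, because $S$ is $A$-linear and $\phi$ is a central multiplier. The real obstruction is the term $(1-\phi^2)(\Sigma-S)$, which your argument does not address. For the same reason the straight-line path $(1-t)S+tS'$ is not an operator homotopy. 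Off $W$ it is $(1-t)S+t\Sigma$, and nothing prevents $\Sigma$ from being close to $-S$ there. In that case the path passes near $0$, and its square minus $1$ is not locally compact.

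\emph{The missing idea.} No auxiliary unitary is needed. Normalize $S$ to an invariant self-adjoint contraction. Off the closed $G$-compact set $C=G\cdot(K_Y\times\operatorname{supp}\phi_0)$, the operator $S$ is itself close to a self-adjoint unitary. This follows from compactness of $f(S^2-1)$ for $f\in C_c(Y)$ equal to one on $K_Y$, nondegeneracy of the central $C_0(Z)$-action, and invariance. Now apply an odd continuous $h$, equal to the sign function near $\pm1$, \emph{globally}. Since $h(t)-t$ vanishes at $t=\pm1$, $h(S)-S$ is $Y$-locally compact, so the straight line from $S$ to $h(S)$ is a legitimate homotopy. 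Moreover $h(S)^2=1$ exactly off $C$.

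Your last paragraph gestures at this, but only ``on the part of the module over the complement of $W$'' and as a remedy for a different issue. Done globally, it makes the $\Sigma$-construction unnecessary.

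\emph{The class-level reduction.} Your assertion that the restricted cycle extends back to the original class also needs an argument. The paper uses the equivariant isometry $\xi\mapsto(k^{1/2}\xi,(1-k)^{1/2}\xi)$ into $\overline{\mathcal E I_V}\oplus\overline{(1-k)\mathcal E}$. It intertwines $h(S)$ because $h(S)$ commutes with central functions. Both $\overline{(1-k)\mathcal E}$ and the complement of the range carry degenerate cycles, since $(1-k)\bigl(h(S)^2-1\bigr)=0$.
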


\begin{proof}
Cutoff averaging on $Y$, followed by normalization, gives a self-adjoint
contractive invariant operator $T$. Averaging changes $T$ by a
$Y$-locally compact operator: after multiplication by a compactly supported
function of $Y$, only finitely many cutoff translates occur, and the original
equivariance defects are locally compact. These changes preserve the class.
Put $D=T^2-1$.

Choose a compact $K\subset Y$ meeting every orbit and $f\in C_c(Y)$ equal
to one on $K$. The operator $fD$ is compact on the Hilbert
$C_0(Y)\otimes A$-module. Nondegeneracy of the central $C_0(Z)$-action
implies that its approximate units converge in norm when applied to any operator
in $\mathcal K(\mathcal E)$.
Choose $0\leq\phi\leq1$ in $C_c(Z)$ with
\[
 \|(1-\phi)fD\|<1/4.
\]
On the fibers with $y\in K$ and $z\notin\supp\phi$, the norm of $D$ is
less than $1/4$. Invariance propagates this estimate to the complement
of $C=G\cdot(K\times\supp\phi)$. The diagonal action is proper; hence $C$
is closed and $G$-compact. Outside $C$, the spectrum of $T$ avoids
$(-\sqrt3/2,\sqrt3/2)$.

Choose an odd continuous $h:[-1,1]\to[-1,1]$ equal to the sign function
for $|t|\geq4/5$, and put $T'=h(T)$. Since $h(\pm1)=\pm1$, $T'-T$ is
$Y$-locally compact. The new operator is invariant, self-adjoint,
and satisfies $(T')^2=1$ outside $C$.

Choose an invariant open $V\supset C$ with $G$-compact closure, and an
invariant central function $k$, $0\leq k\leq1$, equal to one on $C$ and
supported in $V$. Such choices exist on the locally compact Hausdorff orbit
space. Let $I_V$ be the corresponding ideal of $C_0(Y)\otimes A$ and put
$\mathcal E_V=\overline{\mathcal E I_V}$. The restricted defects are compact
after multiplication by any $f\in C_0(Y)$, now as operators on
$\mathcal E_V$: multiplication by $k$ fixes these defects, and finite-rank
approximations can be cut down on both sides by $k$.

Here is the support reduction at the level of cycles. Put
$\mathcal E_{1-k}=\overline{(1-k)\mathcal E}$. On this module $T'$ is
degenerate. The map
\[
 W:\mathcal E\longrightarrow\mathcal E_V\oplus\mathcal E_{1-k},
 \qquad W\xi=(k^{1/2}\xi,(1-k)^{1/2}\xi)
\]
is an adjointable equivariant isometry intertwining $T'$. Its range
projection has entries $k$, $1-k$, and $\sqrt{k(1-k)}$. The complementary
cycle is degenerate because $(1-k)((T')^2-1)=0$. Thus extension of the
restricted cycle represents the original class. No exactness assertion for
an inverse limit or for an unspecified quotient extension is used.

For a compact parameter space, carry out the proof over its function
algebra. Compactness in that algebra gives one $\phi$ for the entire family;
functional calculus and the isometry $W$ then give the parameterized
conclusion.
\end{proof}

\begin{proposition}[compatibility of the character with Kasparov products]
\label{prop:FQR-character-naturality}
Let $x\in KK^G_0(A,B)$, let $Y$ be proper and $G$-compact, and let $c$ be
admissible for the cutoff construction on $Y$. Then
\begin{equation}\label{eq:point-character-coefficient-naturality}
 \chi^F_{Y,B,c}\,x_{*,c}
   =(p_Y^*x)_*\,\chi^F_{Y,A,c}.
\end{equation}
The admissible control depends on $Y$, not on $x$. The equality is
compatible with direct sums, restriction, and compact parameter spaces.
\end{proposition}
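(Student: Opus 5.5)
Fix an FQR cycle $(\mathcal E,q,T)$ over $A$ with $X=\{*\}$, and a $G$-equivariant Kasparov $A$--$B$ cycle $(\mathcal F,\pi,S)$ representing $x$ with action $V$. The plan is to compute both sides of \eqref{eq:point-character-coefficient-naturality} on the level of modules and show that they are carried by the same Hilbert $C_0(Y)\otimes B$-module, with the same $G$-action, and with operators that both satisfy the connection conditions for $p_Y^*x$. On the left, $x_{*,c}$ produces the module $\mathcal H=\mathcal E\widehat\otimes_A\mathcal F$ with $q_x(k)=q(k)\otimes V_k$ as in \eqref{eq:FQR-product-q} and a product operator $R$. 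Then $\chi^F_{Y,B,c}$ forms $\bigoplus_g\mathcal H_g$, the isometry $V_{q_x}(y)$, the average $a_{q_x}$ and the projection $e_{q_x}$. On the right, we start from $e_q(C_0(Y)\otimes\mathscr H_{\mathcal E})$ with the operator $e_q\mathscr Te_q$, and tensor over $C_0(Y)\otimes A$ with $C_0(Y)\widehat\otimes\mathcal F$.

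The first step is the module identification. The twisted module $\mathcal H_g=(\mathcal E\widehat\otimes_A\mathcal F)_g$ is identified with $\mathcal E_g\widehat\otimes_A\mathcal F$ through $\xi\otimes\eta\mapsto\xi\otimes V_{g}^{-1}\eta$, or a similar twist that matches the $\alpha_g$ conventions. Under this identification we expect $V_{q_x}(y)=V_q(y)\otimes 1$, because the coordinate entries are $q(g)^{-1}\otimes V_g^{-1}$ and the $V_g^{-1}$ factor is absorbed by the twist. It then follows that $P_{q_x}=P_q\otimes1$, $a_{q_x}=a_q\otimes 1$ and $e_{q_x}=e_q\otimes1$, using functional calculus and the fact that $R_s$ corresponds to $R_s\otimes V_s$ on the diagonal. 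Consequently the two ranges are unitarily isomorphic, equivariantly. The same computation gives the multiplicativity estimate used in \eqref{eq:point-character-gap}, with the constant $C_Y$ of $q$. This shows that admissibility depends only on $Y$, consistent with \eqref{eq:FQR-product-defect}.

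The second step, which I expect to be the main obstacle, is comparing the operators. After the identification, the left side has operator $e_q(\bigoplus_g R)e_q$, with $R$ transported to each twisted copy. The right side has a Kasparov product operator $R'$ for $[e_q\mathscr Te_q]\otimes p_Y^*x$. It suffices to show that $e_q(\bigoplus R)e_q$ is an $S_Y$-connection, that it is positive modulo compacts against $e_q\mathscr T e_q\otimes 1$, and that it is locally compact relative to $Y$. Uniqueness of products up to operator homotopy, from \Cref{thm:kasparov-product}, then finishes the argument. For the connection property I would apply \Cref{lem:creation-compactness}. Over a compact region of $Y$ only finitely many $g$-coordinates occur, $e_q$ is a finite matrix of operators built from the $q(g)$, and $R$ is an $S$-connection. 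Commutators of $R$ with $q(k)\otimes V_k$ are compact after localization, since $q_x$ is $R$-equivariant modulo $\mathcal K$. Hence commuting $R$ past the finite matrix entries of $e_q$ costs only locally compact terms, and the conclusion then follows from the resolvent argument used in \Cref{lem:point-character-construction}. Positivity follows because $\bigoplus R$ satisfies the positivity condition against $\mathscr T\otimes 1$, and compression by $e_q$ preserves positivity modulo $\mathcal K_Y$. Equivariance is automatic, since the averaging has already made everything $G$-invariant.

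Finally, the compatibility assertions follow because every step above is block diagonal under direct sums and commutes with restriction of control, by \Cref{prop:point-character-compatibility}. The compactly parameterized case is obtained by running the whole argument over $C(P)\otimes A$, where the single spectral gap constant $C_Y\varepsilon<1/4$ is uniform in the parameter.
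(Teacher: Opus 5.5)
Your proposal follows the paper's proof essentially step for step. You use the twisted identification $(\mathcal E\widehat\otimes_A\mathcal F)_g\cong\mathcal E_g\widehat\otimes_A\mathcal F$ to get exact operator equalities $P_{q_x}=P_q\widehat\otimes1$, $a_{q_x}=a_q\widehat\otimes1$, $e_{q_x}=e_q\widehat\otimes1$, so no $x$-dependent threshold appears. You then check that the compressed, transported diagonal sum of $R$ satisfies the connection and positivity criteria, with its commutator with $e_q\widehat\otimes1$ locally compact because the cutoff matrix entries involve the $q_x(k)$.

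Two small corrections are needed.

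\begin{enumerate}
\item The twist must be $\xi\otimes\eta\mapsto\xi\otimes V_g\eta$. With $V_g^{-1}$ it is not even well defined into $\mathcal E_g\widehat\otimes_A\mathcal F$, and the factor coming from $q_x(g)^{-1}=q(g)^{-1}\otimes V_g^{-1}$ would not cancel.
\item On the $g$-th twisted copy, the transported $R$ is a priori only a $V_gSV_g^{-1}$-connection, not an $S$-connection. So you also need \Cref{lem:creation-compactness} to see that creation operators composed with $V_gSV_g^{-1}-S$ are compact. This holds because $\pi(a)(V_gSV_g^{-1}-S)$ is compact.
\end{enumerate}
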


\begin{proof}
Use the coefficient product of \Cref{prop:FQR-KK-product}. Represent $x$ by
$(\mathcal F,\pi,S)$, with group action $U$, and let
$(\mathcal E,q,T)$ be a Fredholm cycle over $A$ with $X=\{*\}$. On $\mathcal E\widehat\otimes_A\mathcal F$, a product
operator $R$ has group operators
$q_x(g)=q(g)\widehat\otimes U_g$. The multiplication estimate is exactly
\eqref{eq:FQR-product-defect}.

For each $g$, the map
\begin{equation}\label{eq:point-product-twist-unitary}
 (\mathcal E\widehat\otimes_A\mathcal F)_g
       \longrightarrow\mathcal E_g\widehat\otimes_A\mathcal F,
 \qquad \xi\otimes\eta\longmapsto\xi\otimes U_g\eta
\end{equation}
is a $B$-linear unitary. For example, the balancing relation follows from
$U_g\pi(a)=\pi(\alpha_g(a))U_g$ and the definition of $\mathcal E_g$.
The direct sum of these maps identifies the regular action with
$R^{\mathcal E}\widehat\otimes U$. In the cutoff isometry, the factor
$U_g^{-1}$ from $q_x(g)^{-1}$ is canceled by $U_g$ in
\eqref{eq:point-product-twist-unitary}. Consequently the following are
\emph{equalities of operators}, not estimates:
\begin{equation}\label{eq:point-product-projection-identities}
 P_{q_x}=P_q\widehat\otimes1,\qquad
 a_{q_x}=a_q\widehat\otimes1,\qquad
 e_{q_x}=e_q\widehat\otimes1.
\end{equation}
This proves that no $x$-dependent multiplication threshold is introduced.

We check the product operator on the identified range. Transport the
diagonal regular sum of $R$ by
\eqref{eq:point-product-twist-unitary}, and denote it by $\mathscr R$.
On each row it is the transported original product operator. Let
$e=e_q\widehat\otimes1$. The equivariance defects of the product cycle are compact, which implies
\[
 [\mathscr R,e]\in\mathcal K_Y.
\]
Indeed the finite matrix entries of the cutoff construction involve the
operators $q_x(g)$, and their commutators with the product operator are
compact; use the same finite-corner functional calculus as in
\Cref{lem:point-character-construction}.

The Kasparov connection criterion may now be checked on sections of
$e_q(C_0(Y)\otimes\mathscr H_{\mathcal E})$ with compact space support
and finitely many group coordinates. These form a dense set. On every such
coordinate the criterion is the original $S$-connection criterion for $R$,
transported by \eqref{eq:point-product-twist-unitary}. The additional terms
replace $U_gSU_g^{-1}$ by $S$; they are compact by
$\pi(a)(U_gSU_g^{-1}-S)\in\mathcal K_B(\mathcal F)$ and the identity
$T_\xi^*T_\xi=\pi(\langle\xi,\xi\rangle)$ for the operator $T_\xi\eta=\xi\otimes\eta$. Finite sums, space localization, and density establish the
connection criterion on the full range.

For positivity, the original product criterion on each of the finitely many
rows gives positivity modulo compact operators for the anticommutator of
the transported $T\widehat\otimes1$ with $\mathscr R$. Compress by $e$.
The discrepancy from the anticommutator with
$(e_q\mathscr T e_q)\widehat\otimes1$ consists of terms containing
$(1-e)\mathscr R e$ or $e\mathscr R(1-e)$, and these are locally compact
by the preceding commutator statement. The compressed operator is therefore
a product operator. Notice that this argument does \emph{not} assert that
an arbitrary $A$-compact operator tensored with $1_{\mathcal F}$ is
$B$-compact; the connection and product-equivariance conditions supply the
compactness actually used.

The connection and positivity characterization of the Kasparov product
\cite[Definition~10 and Theorem~12]{skandalis_1984}
identifies the two Kasparov classes, proving
\eqref{eq:point-character-coefficient-naturality}. The same verification over
$C(P)$ proves the parameterized assertion. Independence of product
operators is the operator-homotopy part of the product theorem; the group
operators and their multiplication bounds do not change in that homotopy.
\end{proof}


\section{Amenability, complete positivity and quasidiagonality}\label{sec:qd}

In this section, we show that for amenable groups, finite-dimensional and Fredholm quasi-representations give the same group: the natural map
\[
  \iota\colon \mathrm{QR}(G)\longrightarrow \mathrm{FQR}(G;\mathbb C),
\]
from the group of finite-dimensional quasi-representations to the group of Fredholm quasi-representations is an isomorphism (\Cref{thm:QR-FQR-amenable}), with a quantitative version in \Cref{prop:QR-FQR-amenable-controlled}. 

Amenability enters through two standard properties of the group $C^*$-algebra $C^*(G)$. First, $C^*(G)$ is nuclear. We use nuclearity through \emph{CP-stability} (\Cref{thm:CP-stability}): a unital map on a finite-dimensional subspace of $C^*(G)$ that is nearly completely contractive is close to a unital completely positive (u.c.p.) map. This allows us to impose complete positivity on our almost representations after small perturbations, which makes them easier to manipulate. Second, $C^*(G)$ is quasidiagonal by a theorem of Tikuisis, White, and Winter (\Cref{thm:TWW-QD}): roughly, it can be approximated by finite-dimensional matrix algebras through u.c.p.\ maps that are almost multiplicative and almost isometric on finite sets.
This allows us to compress operators on infinite-dimensional Hilbert spaces to matrices. The absorbing-representation theorem of Dadarlat and Eilers
\cite[Proposition~2.18]{dadarlat_eilers_2002} lets us carry out these
compressions in one quasidiagonal representation. The relative-projection
construction below gives finite-dimensional representatives and their
stable uniqueness, with the finite sets and tolerances chosen before
the cycle.

\subsection{From matrices to Fredholm quasi-representations}

Let us first define the canonical homomorphism $\iota\colon \mathrm{QR}(G)\longrightarrow \mathrm{FQR}(G;\mathbb C)$. 

Given a control parameter $(F, \varepsilon)$ and $n \in \mathbb N$, we first define a map 
\[
    \mathrm{QRep}(F, \varepsilon; n)\longrightarrow \mathrm{FQRep}(F, \varepsilon;\mathbb C) \, , \quad q \longmapsto (\mathbb C^n\oplus0,q,0)\; , 
\]
where  $\mathbb C^n\oplus0$ is regarded as a graded Hilbert space concentrated in even degree. Note that the triple $(\mathbb C^n\oplus0,q,0)$ is indeed an $\mathrm{FQR}(F,\varepsilon; \mathbb C)$-cycle in the sense of
\Cref{def:FQR-cycle} since $0^2-1=-1$ is compact on a finite dimensional Hilbert space. 

Assembling these maps gives a direct-sum- and homotopy-preserving map
$\mathrm{QRep}(F,\varepsilon)\to\mathrm{FQRep}(F,\varepsilon;\mathbb C)$.
If $q_0\oplus q$ is homotopic to $q_1\oplus q$, add the grading
opposite of the Fredholm cycle associated to $q$ to both sides.
The sum of a cycle with its grading opposite is homotopic to a
degenerate cycle, by the rotation argument in \Cref{def:FQR-final}.
Thus the images of $q_0$ and $q_1$ have the same stable Fredholm class.
The map therefore extends to a group homomorphism

\[
 \iota_{F,\varepsilon}:\mathrm{QR}(F,\varepsilon)
 \longrightarrow \mathrm{FQR}(F,\varepsilon;\mathbb C) \; .
\]

As $(F,\varepsilon)$ varies, it is clear that the maps $\iota_{F,\varepsilon}$ commute with the bonding maps, that is 
\[
 r^{\mathrm{FQR}}_{c',c}\,\iota_{c'}
 =\iota_c\,r^{\mathrm{QR}}_{c',c}
 \quad \text{ for any } c, c' \in \mathcal{C}_G \text{ with } c\preccurlyeq c' \; .
\]
Therefore $\iota_\bullet=(\iota_c)_{c\in\mathcal C_G}$ is a natural
transformation of inverse systems, and it induces the homomorphism
\begin{equation}\label{eq:QR-to-FQR}
 \iota := \varprojlim_{c \in \mathcal{C}_G}\iota_c:\mathrm{QR}(G)
 \longrightarrow\mathrm{FQR}(G;\mathbb C),\qquad
 (x_c)_c\longmapsto(\iota_c(x_c))_c.
\end{equation}

The goal of the section is the following theorem.

\begin{theorem}\label{thm:QR-FQR-amenable}
For every countable amenable discrete group $G$, the canonical homomorphism
\[
 \iota:\mathrm{QR}(G)\longrightarrow\mathrm{FQR}(G;\mathbb C)
\]
of \eqref{eq:QR-to-FQR} is an isomorphism.
\end{theorem}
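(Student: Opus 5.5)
The plan is to prove the quantitative statement directly: the natural transformation $\iota_\bullet$ satisfies conditions (i) (controlled injectivity) and (ii) (controlled surjectivity) of \Cref{subsec:control-comparisons}. Together these imply that the inverse limit map is bijective. I would obtain both conditions from one finite-dimensional compression construction. The construction uses three ingredients: nuclearity of $C^*(G)$ through CP-stability (\Cref{thm:CP-stability}), quasidiagonality of $C^*(G)$ (\Cref{thm:TWW-QD}), and the absorbing quasidiagonal representation of Dadarlat--Eilers.

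\emph{Step 1 (u.c.p.\ normalization).} Let $(\mathcal E,q,T)$ be a Fredholm $(F',\varepsilon')$-cycle over $\mathbb C$, with $\mathcal E$ a graded Hilbert space. The map $\lambda_g\mapsto q(g)$ on the span of $F'F'$ inside $C^*(G)$ is unital and nearly multiplicative. Since $C^*(G)$ is nuclear, CP-stability perturbs it, with a small error that is fixed before the cycle, to a u.c.p.\ map $\phi:C^*(G)\to B(\mathcal E^\pm)$. Its values are then almost unitary on the generators; \Cref{lem:polar-correction} corrects them back to unitaries. Next I stabilize by adding a degenerate cycle built from a quasidiagonal absorbing representation $\sigma$ on both grading components. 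By Voiculescu-type absorption, $\phi\oplus\sigma$ is approximately unitarily equivalent to $\sigma$ modulo compacts. This puts every cycle into a standard form: a fixed ambient representation $\sigma$ on $H\oplus H$, perturbed by a compact amount, with $T$ odd and Fredholm.

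\emph{Step 2 (compression to matrices).} Because $\sigma$ is quasidiagonal, there are finite-rank projections $P_k\uparrow 1$ that almost commute with $\sigma$ on the tested finite set. Using \Cref{prop:FQR-asymptotic-equivariance}, I replace $T$ by an operator that is norm-asymptotically equivariant. I then choose $P$ so large that $T$ is an honest unitary between $(1-P)H^+$ and $(1-P)H^-$ up to a small error that is almost equivariant. Polar-correcting there gives a degenerate piece. The cycle becomes stably homotopic to the difference of two finite-dimensional quasi-representations $PqP|_{PH^+}$ and $PqP|_{PH^-}$, again after polar correction. This is the relative-projection construction, and it produces the lift needed for (ii). The control loss depends only on the CP-stability and quasidiagonality constants, which are fixed before the cycle.

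\emph{Step 3 (injectivity).} Suppose $\iota_{c'}(x)=0$ for $x=[q_0]-[q_1]$. Then there is a stable homotopy of Fredholm cycles from $\iota(q_0)\oplus\iota(q_1)^{\mathrm{op}}$ to a degenerate cycle. I apply the compression of Step 2 to the whole homotopy, regarded as a $[0,1]$-parameterized cycle. The projections $P$ are chosen uniformly in $t$ by compactness, and they are chosen to contain the finite-dimensional spaces of $q_0$ and $q_1$ up to approximate unitary equivalence. The result is a path of finite-dimensional quasi-representations that witnesses stable equivalence at a weaker control $c$. I expect this to be the main obstacle: it is a stable-uniqueness statement. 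One has to show that two finite-dimensional compressions of the same cycle, taken with different $P$ and after different absorptions, are stably equivalent at a control fixed in advance. My first attempt would be the following argument. By Dadarlat--Eilers absorption, the u.c.p.\ maps $\phi_0\oplus\sigma$ and $\phi_1\oplus\sigma$ become approximately unitarily equivalent, and the implementing unitaries can be taken to be compact perturbations of $1$. Cutting these unitaries down by $P$ gives an approximate unitary equivalence after adding the finite-dimensional piece $P\sigma P$, which serves as the common summand. Finally, \Cref{prop:homotopy-vs-unitary} converts this into stable equivalence.
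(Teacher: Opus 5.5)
Your reduction to controlled injectivity and surjectivity matches the paper's outline, and so does compressing a single $[0,1]$-parametrized homotopy for the injectivity half. But the compression step fails as you describe it, and that step is the heart of the proof.

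\textbf{Step 2 does not retain the index.} You compress the Fredholm operator $T$ directly with the quasidiagonal projections $P_k$ of the ambient representation. Nothing in your setup makes $\|[P_k,T]\|$ small, because $T$ is an arbitrary essentially unitary operator in the essential commutant of $\sigma$. For example, a bilateral shift of the multiplicity copies of $\sigma$ commutes with $\sigma$ exactly, yet does not almost commute with any $P_k$. Moreover, suppose the same cutoff is used in both degrees, as the notation $PqP|_{PH^\pm}$ suggests. Then the compression of $T$ from $(1-P)H^+$ to $(1-P)H^-$ is Fredholm with the same index as $T$. It cannot be close to a unitary when the index is nonzero, and your output has rank zero. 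So already $\iota$ of the trivial one-dimensional representation cannot be reached.

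The missing idea is to move the index out of $T$ and into a \emph{compact} difference of projections before compressing. The paper first normalizes quasicentrally, $B=(1-h)T(1-h)=\begin{pmatrix}0&S^*\\S&0\end{pmatrix}$, so that the relevant commutators are small. It then replaces the cycle by $\mathcal R(p_0,e_0;\Phi)$, where $p_0=W(1-e_0)W^*$ and $W=\begin{pmatrix}d_+&-S^*\\S&d_-\end{pmatrix}$. Thus $p_0-e_0$ is compact and $[p_0,\Phi(a)]$ is small. After Stinespring dilation and absorption, the map is a fixed representation $\pi$ commuting \emph{exactly} with the reference projection $e$. Only the compact operator $p-e$ is then compressed, by cutoffs $Q_n$ commuting with $e$. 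The tail is exactly degenerate, and the ranks of $f_n$ and $eQ_n$ differ by the index.

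This picture also handles two smaller defects in your Step 1:
\begin{itemize}
\item Norm-small changes of $q$ do not by themselves preserve compactness of $[q(g),T]$. One has to work in $\mathcal D_T$, or in the relative picture, where compactness comes exactly from $p-e\in\mathcal K$.
\item Absorption and Stinespring dilation need a \emph{global} u.c.p.\ map on $C^*(G)$, obtained by nuclear globalization or F{\o}lner replacement. CP-stability only gives one on a finite operator system.
\end{itemize}

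\textbf{Step 3 is not valid as stated.} The absorption you invoke makes $\phi_0\oplus\sigma$ and $\phi_1\oplus\sigma$ approximately unitarily equivalent for \emph{every} pair $\phi_0,\phi_1$. If the implementing unitaries could always be taken to be compact perturbations of $1$ and cut down by $P$, then, for instance, the trivial and sign characters of $\mathbb Z/2$ would be stably equivalent, which they are not. Unitaries of the form $1+{}$compact are what a stable-uniqueness theorem produces \emph{from} the vanishing of a class. You do not say how $\iota_{c'}(x)=0$ would feed into such a theorem at a control fixed in advance.

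The paper needs no stable-uniqueness theorem here. It proceeds as follows.
\begin{itemize}
\item It writes the vanishing as one $C([0,1])$-module homotopy from $v\oplus D_0$ to a degenerate $D_1$.
\item It runs the relative-projection construction on that single module, with endpoint-preserving normalizations. This uses a quasicentral approximate unit of the ideal of compacts vanishing at $t=0,1$. The result is a path with $p(1)=e$ and $p(0)=e+R_+-R_-$.
\item It chooses one finite-rank $Q$ commuting with $e$ and containing $R_++R_-$ \emph{exactly}.
\end{itemize}
The compressed path is then a finite-dimensional homotopy to zero. Its $t=0$ endpoint is identified with $r^Q_{c',c}(x)$ by two facts: the Stinespring commutator bound on $R_\pm$, and the cancellation of the common summand $R=eQ-R_-$.
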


\begin{proposition}[quantitative version of \Cref{thm:QR-FQR-amenable}]
\label{prop:QR-FQR-amenable-controlled}
The isomorphism in \Cref{thm:QR-FQR-amenable} is induced by the following
maps at finite controls. There exist an
order-preserving map $\sigma:\mathcal C_G\to\mathcal C_G$ with
$\sigma(c)\succeq c$ and homomorphisms
\[
 \rho_c:\mathrm{FQR}(*;\sigma(c);\mathbb C)
 \longrightarrow\mathrm{QR}(c)
\]
such that
\begin{align}
 \rho_c\,\iota_{\sigma(c)}
   &=r^{\mathrm{QR}}_{\sigma(c),c},
   \label{eq:QR-FQR-inverse-left}\\
 \iota_c\,\rho_c
   &=r^{\mathrm{FQR}}_{\sigma(c),c}.
   \label{eq:QR-FQR-inverse-right}
\end{align}
The maps can be chosen to commute with the bonding homomorphisms:
\begin{equation}\label{eq:QR-FQR-inverse-coherence}
 r^{\mathrm{QR}}_{c',c}\rho_{c'}
 =\rho_c r^{\mathrm{FQR}}_{\sigma(c'),\sigma(c)}
 \qquad(c\preccurlyeq c').
\end{equation}
The two consequences used below are:
\begin{enumerate}[label=(\arabic*)]
\item for every $c=(E,\eta)$ there is $c'=(F,\varepsilon)\succeq c$ such that
\[
 r^{\mathrm{FQR}}_{c',c}
   \bigl(\mathrm{FQR}(F,\varepsilon;\mathbb C)\bigr)
 \subseteq
 \iota_c\bigl(\mathrm{QR}(E,\eta)\bigr);
\]
\item for every $c=(E,\eta)$ there is $c'=(F,\varepsilon)\succeq c$ such that
\[
 x\in\mathrm{QR}(F,\varepsilon),\quad \iota_{c'}(x)=0
 \quad\Longrightarrow\quad
 r^{\mathrm{QR}}_{c',c}(x)=0.
\]
\end{enumerate}
\end{proposition}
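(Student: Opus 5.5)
The plan is to build the compression map $\rho_c$ directly from the two amenability inputs announced at the start of the section: CP-stability of the nuclear algebra $C^*(G)$ (\Cref{thm:CP-stability}) and quasidiagonality of $C^*(G)$ (\Cref{thm:TWW-QD}), with the Dadarlat--Eilers absorbing representation to keep all compressions inside one representation. Fix $c=(E,\eta)$. First choose $\sigma(c)=(F,\varepsilon)$ with $F\supseteq E^{(3)}$ large and $\varepsilon\ll\eta$. The choice comes from applying CP-stability to the finite-dimensional operator system spanned by $\{u_g:g\in FF\}$ in $C^*(G)$, which forces every $(F,\varepsilon)$-quasi-representation, finite-dimensional or Fredholm, to be within $\eta/10$ on $EE$ of the restriction of a u.c.p.\ map $\phi$ on $C^*(G)$. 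The choice must depend only on $c$; this is automatic because CP-stability constants depend only on the subspace. Order-preservation of $\sigma$ can be arranged by choosing $\sigma$ along a cofinal sequence and taking maxima.

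Next define $\rho_c$ on a Fredholm cycle $(\mathcal H,q,T)$ over $\mathbb C$. By \Cref{prop:FQR-asymptotic-equivariance} I replace $T$ by some $T_t$ that almost commutes in norm with $q(g)$, $g\in FF$. Since $T_t^2-1$ is compact, the spectral projection $P$ of $T_t^*T_t$ (respectively $T_tT_t^*$) near $0$ is finite rank, and after a compact perturbation the kernel and cokernel projections $P_\pm\subseteq\mathcal H^\pm$ are finite rank and almost commute with $q$. The compressions $P_\pm q P_\pm$ are then almost multiplicative almost unitaries, and \Cref{remark:polar-correction} turns them into honest quasi-representations $q_\pm$ of dimension $\operatorname{rank}P_\pm$ at control $(E,\eta)$. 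Set $\rho_c[(\mathcal H,q,T)]=[q_+]-[q_-]$ (the ``relative projection'' class).

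The hard part is that the almost-commutation of $P_\pm$ with $q$ is only as good as $t$ is large, and a priori $t$ depends on the cycle. To get uniform control chosen before the cycle, I would use the u.c.p.\ model. Through $\phi$ and the absorbing representation, $\mathcal H^\pm$ embed in one fixed quasidiagonal representation, and its quasidiagonal approximate units $p_n$ (finite rank, almost commuting with $\phi(u_g)$ on $FF$ up to a tolerance fixed by $c$) supply uniform finite-rank cut-downs. I would define $q_\pm$ as the compression of $q$ to $p_n\mathcal H^\pm$ twisted by the partial isometry $T$ relating them. The leftover infinite part is then a Fredholm cycle on which $T$ is essentially invertible, so it is degenerate up to homotopy by the rotation argument in \Cref{def:FQR-final}.

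With this definition, \eqref{eq:QR-FQR-inverse-left} holds because on $(\mathbb C^n\oplus0,q,0)$ we may take $P_+=1$, $P_-=0$, which returns $q$ restricted to $c$. For \eqref{eq:QR-FQR-inverse-right}, the straight-line homotopy $T\rightsquigarrow$ (the operator that is $0$ on $P_\pm$ and unitary off them) stays within controlled Fredholm cycles, and the complement cycle is degenerate; this shows that $(\mathcal H,q,T)$ and $\iota_c(q_+)-\iota_c(q_-)$ agree in $\mathrm{FQR}(c)$. Well-definedness on stable-homotopy classes runs the same construction over $C([0,1])$, using a parameterized quasidiagonal cut-down. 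That gives homotopic compressions, and the stable uniqueness from the absorbing representation absorbs the choice of $n$ and of $p_n$. Coherence \eqref{eq:QR-FQR-inverse-coherence} follows because restricting to a weaker control leaves the compressed matrices unchanged. Consequences (1) and (2) then follow formally from \eqref{eq:QR-FQR-inverse-left} and \eqref{eq:QR-FQR-inverse-right}, taking $c'=\sigma(c)$.

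The main obstacle is the uniformity step: making the finite-rank cut-down and its homotopy invariance depend only on $c$ and not on the cycle. That is where quasidiagonality plus absorption must replace the cycle-dependent parameter $t$.
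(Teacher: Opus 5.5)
You have the right ingredients (u.c.p.\ replacement, one fixed absorbing quasidiagonal representation, finite-rank compression, an interval version), but the step that actually produces $\rho_c$ is missing.

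\textbf{Your kernel/cokernel projections do not almost commute with $q$.} Your $P_\pm$ are spectral projections of $T_t^*T_t$ near $0$. A spectral projection does not inherit almost-commutation from $T_t$. Its commutator with $q(g)$ is bounded only by $\|[T_t,q(g)]\|$ divided by the spectral gap at the cut, and nothing prevents the gaps in the spectrum of $T_t^*T_t$ near the cut from closing up as $t$ grows.

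\textbf{Your fallback does not split off a degenerate tail.} Cutting by quasidiagonal projections ``twisted by $T$'' does not explain why the remainder is a direct summand, or why it is degenerate. Every cycle has $T$ essentially invertible, and the rotation argument only gives $z\oplus z^{\mathrm{op}}\sim0$. Compressions of the unitaries $q(g)$ are not unitary and leave off-diagonal corners. Norm-small changes of the maps (e.g.\ passing to the absorbing representation) need not preserve $[q(g),T]\in\mathcal K$.

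\textbf{The missing idea is the relative index projection} of Section~\ref{subsec:finite-dimensional-approximation}. For the normalized $B=\begin{pmatrix}0&S^*\\S&0\end{pmatrix}$, put $d_+=(1-S^*S)^{1/2}$, $d_-=(1-SS^*)^{1/2}$, $W=\begin{pmatrix}d_+&-S^*\\S&d_-\end{pmatrix}$, and $p_0=W(1-e_0)W^*$. This is a continuous function of $B$, with no spectral cut. One has $p_0-e_0\in\mathcal K$ and $\|[p_0,\Phi(a)]\|<4r$, and the cycle is stably homotopic to $\mathcal R(p_0,e_0;\Phi)$. In a relative cycle $\mathcal R(p,e;L)$ the operator is $\begin{pmatrix}0&pe\\ep&0\end{pmatrix}$. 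Its Fredholm defects are compact \emph{exactly} because $p-e\in\mathcal K$ and $[e,L]=0$, whatever $L$ is. This is what buys you the rest of the construction:
\begin{itemize}
\item the Stinespring block-diagonalization, absorption, and all convex u.c.p.\ interpolations become legitimate;
\item compactness of $p-e$ gives $Q_n(p-e)Q_n\to p-e$ in norm, so $Q_n$ only has to commute with $e$ and almost commute with the fixed $\pi$;
\item after deleting the off-diagonal corners of $\pi$ relative to $Q_n$, the tail has the same projection $e(1-Q_n)$ and the same map in both degrees, with identity off-diagonal, so it is exactly degenerate.
\end{itemize}

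\textbf{You have also misplaced the uniformity problem.} Cycle-dependent choices (quasicentral cutoff, absorption unitary, rank $n$) are harmless. Only the source multiplication tolerance must be fixed first. The uniform commutator bound then comes from Stinespring's identity $\|(1-P)\rho(a)P\|^2=\|\Phi(a^*a)-\Phi(a)^*\Phi(a)\|$, which turns a source defect below $r^2$ into commutators below $r$. This needs $\Phi$ to be a \emph{global} u.c.p.\ map into $\mathcal D_T$. CP-stability on an operator system does not give that by itself; the paper uses nuclear globalization or the F\o lner column.

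\textbf{Well-definedness needs a separate vanishing step.} Well-definedness and \eqref{eq:QR-FQR-inverse-left} do not follow from showing that one particular compression is choice-independent at a single strengthened control. The paper proves a separate vanishing statement by running the whole construction once over $C([0,1])$, on a homotopy from $v\oplus D_0$ to a degenerate cycle. It takes the quasicentral cutoff in the ideal of compacts vanishing at both endpoints, uses one cutoff $Q$ containing the endpoint supports $R_\pm$, and explicitly identifies the endpoint at $0$ with $v$ at the target control. It then defines $\rho_c$ through an arbitrary finite-dimensional lift at an intermediate control $c_1$ with $c\preccurlyeq c_1\preccurlyeq\sigma(c)$, restricted to $c$. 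Vanishing from $c_1$ to $c$ makes this independent of the lift, and both inverse identities become formal. Your ``take $P_+=1$, $P_-=0$'' presupposes exactly that vanishing statement.
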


\subsection{Completely positive almost representations}

If $S\subseteq C^*(G)$ is a finite-dimensional self-adjoint linear subspace
containing $1$, we call $S$ an \emph{operator system}.  A map between operator
systems is u.c.p. if it is unital and completely positive.

For an amenable group, the full and reduced group $C^*$-algebras agree and are
nuclear.  We use the following consequence of nuclearity, in the form proved by
Goldbring--Sinclair \cite{goldbring_sinclair_2015}.

\begin{theorem}[CP-stability]\label{thm:CP-stability}
Let $A$ be a unital nuclear $C^*$-algebra.  For every finite-dimensional
operator system $E\subseteq A$ and every $\delta>0$, there are a
finite-dimensional operator system $S\supseteq E$, an integer $m$, and
$\gamma>0$ with the following property.  If $\phi:S\to B$ is unital and
\[
 \|\phi^{(m)}:M_m(S)\to M_m(B)\|<1+\gamma,
\]
then there is a u.c.p. map $\psi:E\to B$ such that
$\|\psi-\phi|_E\|<\delta$.
\end{theorem}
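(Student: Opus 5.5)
The plan is to argue by contradiction with an ultraproduct, using nuclearity in the form of the completely positive approximation property and the lifting property of matrix algebras. Fix $E$ and $\delta$, and suppose the conclusion fails. Choose an increasing sequence of finite-dimensional operator systems $S_1\subseteq S_2\subseteq\cdots$ containing $E$ whose union is dense in $A$; this is possible if $A$ is separable, and in general one works with a net instead. For each $n$, taking $S=S_n$, $m=n$ and $\gamma=1/n$, there is a $C^*$-algebra $B_n$ and a unital map $\phi_n\colon S_n\to B_n$ with $\|\phi_n^{(n)}\|<1+1/n$, but with $\|\psi-\phi_n|_E\|\ge\delta$ for every u.c.p.\ map $\psi\colon E\to B_n$. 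Fix a free ultrafilter $\omega$ on $\mathbb N$ and put $B_\omega=\prod_n B_n/\bigoplus_\omega B_n$.

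\textbf{Limit map.} Let $S_\infty=\bigcup_n S_n$, and define $\Phi\colon S_\infty\to B_\omega$ by $\Phi(s)=[(\phi_n(s))_n]$, where $\phi_n(s)$ is defined for all large $n$. Because $m_n=n\to\infty$ and $\gamma_n\to0$, every matrix amplification $\Phi^{(m)}$ is contractive. So $\Phi$ is a unital complete contraction on the dense operator system $S_\infty$, and it extends by continuity to all of $A$. By Arveson's theorem, a unital complete contraction on an operator system is completely positive, so $\Phi\colon A\to B_\omega$ is u.c.p.

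\textbf{Factoring through matrices and lifting.} Nuclearity gives, for any $\delta'>0$, u.c.p.\ maps $\alpha\colon A\to M_k$ and $\beta\colon M_k\to A$ with $\|\beta\alpha|_E-\mathrm{id}_E\|<\delta'$. Then $\Phi\beta\colon M_k\to B_\omega$ is u.c.p. A u.c.p.\ map out of $M_k$ is determined by a positive element of $M_k(B_\omega)$, namely its Choi matrix. That element lifts to a positive element of $M_k(\prod_nB_n)$, so $\Phi\beta$ lifts to a c.p.\ map $\theta=(\theta_n)_n\colon M_k\to\prod_n B_n$.
\begin{itemize}
\item The element $\theta(1)$ is only $\omega$-close to $1$. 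Along $\omega$, $\theta_n(1)$ is invertible, and replacing $\theta_n$ by $\theta_n(1)^{-1/2}\theta_n(\cdot)\theta_n(1)^{-1/2}$ makes it unital at negligible cost.
\item Set $\psi_n=\theta_n\circ\alpha|_E$. These are u.c.p.\ maps $E\to B_n$.
\end{itemize}

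\textbf{Contradiction.} Fix a basis $e_1,\dots,e_r$ of $E$. The coefficient functionals of this basis are bounded, so on $E$ the norm of a linear map is comparable, uniformly, to $\max_i\|\cdot(e_i)\|$. Sharpening this to a precise limit formula, one gets
\[
\lim_{n\to\omega}\|\psi_n-\phi_n|_E\|=\|\Phi\beta\alpha|_E-\Phi|_E\|\le\delta'.
\]
Choosing $\delta'<\delta$, we obtain $\|\psi_n-\phi_n|_E\|<\delta$ for $\omega$-many $n$, which contradicts the choice of $\phi_n$.

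I expect the main obstacle to be this last step: justifying that norms of maps on the finite-dimensional space $E$ pass correctly to the ultraproduct limit. This needs the norm equality itself, not merely a bound up to a constant. I would obtain it by representing maps on $E$ by their values on an Auerbach basis, or by using the operator-space norm on finite-dimensional $E$, which is also attained in the ultralimit. The unital correction of the lift in the second step needs a similar but routine check.
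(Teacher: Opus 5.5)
Your proof is correct. The paper gives no proof of this theorem; it is quoted from Goldbring--Sinclair as an external input. So your ultraproduct argument is a self-contained substitute rather than a variant of something in the text. Three comments follow.

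First, the step you single out as the main obstacle is not a real difficulty. Your remark that a bound up to a constant is insufficient is also mistaken. Suppose $\|T\|\le C\max_i\|T(e_i)\|$ for every linear map $T$ on $E$. The constant $C$ depends only on $E$ and the chosen basis, and $\delta'$ is chosen afterwards. Hence
\[
 \lim_{n\to\omega}\|\psi_n-\phi_n|_E\|\le C\max_i\|\Phi(\beta\alpha(e_i)-e_i)\|,
\]
and the right-hand side can be made smaller than $\delta$ directly. The exact equality also holds, by compactness of the unit ball of $E$ together with the uniform bounds $\|\psi_n\|=\|\psi_n(1)\|=1$ and $\|\phi_n\|<2$. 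Take near-norming unit vectors $x_n$ for $\psi_n-\phi_n|_E$ and pass to $x=\lim_{n\to\omega}x_n$.

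Second, for separable $A$ you can skip the factorization through $M_k$ by using the Choi--Effros lifting theorem. It lifts the u.c.p. map $\Phi\colon A\to B_\omega$ to a u.c.p. map into $\prod_nB_n$ in one step, giving $\lim_{n\to\omega}\|\psi_n-\phi_n|_E\|=0$. Your route through the completely positive approximation property and lifting of Choi matrices is in effect a proof of the local form of that lifting. It has the advantage of using only more elementary facts.

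Third, the nonseparable case does not need a dense chain at all. Index over the directed set of all triples $(S,m,\gamma)$ with $S\supseteq E$, and take an ultrafilter containing its tails. The union of all such $S$ is already $A$, so $\Phi$ is defined on all of $A$ at once. No density or continuity extension is then needed.
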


We will use the following variant of this result in the case of group $C^*$-algebras. 

\begin{lemma}\label{lem:CP-stability-group-ring}
Let $G$ be amenable.  Given a finite symmetric set $E\subseteq G$ containing
$e$ and $\delta>0$, there exist a finite symmetric $F\supseteq E$, an integer
$m$, and $\gamma>0$ such that every unital linear map
\[
 \phi:\mathbb C F\longrightarrow B
\]
with $\|\phi^{(m)}\|<1+\gamma$ is $\delta$-close on $\mathbb C E$ to a u.c.p.
map.
\end{lemma}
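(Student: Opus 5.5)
We deduce \Cref{lem:CP-stability-group-ring} from \Cref{thm:CP-stability} applied to the unital nuclear $C^*$-algebra $A=C^*(G)$. Amenability gives $C^*(G)=C^*_r(G)$, and this algebra is nuclear. The only work is to pass between operator systems inside $C^*(G)$ and the group-ring spans $\mathbb C F$.

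Since $E$ is symmetric and contains $e$, the span $\mathbb C E$ of the unitaries $u_g$, $g\in E$, is a finite-dimensional operator system in $C^*(G)$. Apply \Cref{thm:CP-stability} to $\mathbb C E$ and $\delta$. This yields a finite-dimensional operator system $S\supseteq\mathbb C E$, an integer $m$ and $\gamma_0>0$. The system $S$ need not be spanned by group elements. However, $\mathbb C G$ is dense in $C^*(G)$, so we can choose a finite symmetric $F\supseteq E$ such that every element of a fixed basis $s_1,\dots,s_k$ of $S$ is within a small distance $\mu$ of an element $t_i\in\mathbb C F$.

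The main obstacle is transferring the hypothesis on $\phi:\mathbb C F\to B$ to a unital map on $S$ with control on $\|\cdot^{(m)}\|$. We plan to avoid approximation altogether by enlarging the operator system. Apply \Cref{thm:CP-stability} not to $\mathbb C E$ but with the roles arranged so that the target system is contained in some $\mathbb C F$. Concretely, first take $S$ as above. Then replace $S$ by $S'=S+\mathbb C F_0$ for a finite symmetric $F_0$ with $S\subseteq\overline{\mathbb C F_0}$ approximately. Given $\phi$ on $\mathbb C F$, extend it to $S+\mathbb C F$ using the small perturbation $\psi_0:S\to\mathbb C F$, $s_i\mapsto t_i$. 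Define $\tilde\phi=\phi\circ\theta$, where $\theta:S\to\mathbb C F$ is the linear map sending $s_i\mapsto t_i$, corrected to be unital and the identity on $\mathbb C E$. This correction is possible by choosing the basis to extend a basis of $\mathbb C E$ and setting $t_i=s_i$ there.

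Then $\|\theta-\iota_S\|\le C_S\mu$, where $C_S$ depends on the basis constant of $S$. Hence $\|\theta^{(m)}\|\le 1+m^2C_S\mu$, and therefore
\[
 \|\tilde\phi^{(m)}\|\le\|\phi^{(m)}\|\,(1+m^2C_S\mu).
\]
Choose $\mu$ and $\gamma$ so that $(1+\gamma)(1+m^2C_S\mu)<1+\gamma_0$. Then \Cref{thm:CP-stability} gives a u.c.p. map $\psi:\mathbb C E\to B$ with $\|\psi-\tilde\phi|_{\mathbb C E}\|<\delta$. Since $\theta$ is the identity on $\mathbb C E$, we have $\tilde\phi|_{\mathbb C E}=\phi|_{\mathbb C E}$, which is the claim.

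The remaining checks are routine. We need $\theta$ unital, which holds because $1=u_e\in\mathbb C E$. We also need the norm on $\mathbb C F$ to be the $C^*(G)$-norm, so that $\|\phi^{(m)}\|$ refers to the same matrix norms used in the theorem. Finally, $F$ is made symmetric by adjoining inverses, which only enlarges $\mathbb C F$.
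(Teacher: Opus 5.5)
Your proof is correct and is essentially the paper's argument: apply CP-stability to $\operatorname{span}(E)$, perturb a complementary basis of $S$ into the group ring to obtain a unital map $\theta:S\to\mathbb{C}F$ that is the identity on $\mathbb{C}E$ and has $\|\theta^{(m)}\|$ close to $1$, and then apply the theorem to $\phi\circ\theta$. The detour through $S'=S+\mathbb{C}F_0$ is unnecessary (and your remark that approximation is avoided is inaccurate, since the argument you actually use is the approximation by $\theta$); you should also fix $\mu$ and $\gamma$ before choosing $F$, which is legitimate because $C_S$, $m$ and $\gamma_0$ depend only on $S$ and its chosen basis.
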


\begin{proof}
Let $E_0=\operatorname{span}(E)$; symmetry of $E$ makes it an operator
system.  Apply \Cref{thm:CP-stability} to $(E_0,\delta)$ and denote the resulting choices by $(S,m,\gamma_0)$.  Choose numbers $\kappa,\gamma>0$ satisfying
\begin{equation}\label{eq:group-ring-CP-constants}
 (1+\gamma)(1+\kappa)<1+\gamma_0.
\end{equation}
Choose a self-adjoint linear complement $V$ with $S=E_0\oplus V$ and a
self-adjoint basis $v_1,\ldots,v_N$ of $V$.  Let $C_m$ be the norm of the
linear map which sends an $N$-tuple $(w_1,\ldots,w_N)$ to the $m$th
amplification of the map $S\to C^*(G)$ that is zero on $E_0$ and sends
$v_j$ to $w_j$.  Density of $\mathbb CG$ in $C^*(G)$ permits the choice of
self-adjoint $b_j\in\mathbb CG$ with
$\max_j\|b_j-v_j\|<\kappa/C_m$ when $V\ne0$.  If $V=0$, take
$r=\operatorname{id}_{E_0}$ and omit the $b_j$'s.  If $V\ne0$, define the
unital self-adjoint linear map
\[
 r:S\longrightarrow\mathbb CG,\qquad
 r|_{E_0}=\operatorname{id}_{E_0},\quad r(v_j)=b_j.
\]
By the definition of $C_m$, it satisfies
$\|r^{(m)}-\operatorname{id}_S^{(m)}\|<\kappa$.  Let $F$ be a finite
symmetric set containing $E$ and the supports of the finitely many group-ring
elements used to define $r$.  Then $r(S)\subseteq\mathbb CF$ and
$\|r^{(m)}\|<1+\kappa$.

If $\phi:\mathbb CF\to B$ is unital and
$\|\phi^{(m)}\|<1+\gamma$, equation
\eqref{eq:group-ring-CP-constants} gives
\[
 \|(\phi r)^{(m)}\|
 \leq\|\phi^{(m)}\|\,\|r^{(m)}\|<1+\gamma_0.
\]
The CP-stability theorem therefore supplies a u.c.p. map
$\psi:E_0\to B$ with
$\|\psi-(\phi r)|_{E_0}\|<\delta$.  Since $r|_{E_0}$ is the identity,
this is $\|\psi-\phi|_{E_0}\|<\delta$.
\end{proof}

\begin{lemma}[normalization at the identity]\label{lem:qrep-normalize-identity}
Let $F\subseteq G$ be finite, symmetric, and contain $e$, let
$0<\varepsilon<2$, and let $q:FF\to U(\mathcal E)$ be an
$(F,\varepsilon)$-quasi-representation.  Put $u=q(e)$ and
\[
 \overline q(g)=u^*q(g)\qquad(g\in FF).
\]
Then $\overline q(e)=1$, $\overline q$ is
$(F,2\varepsilon)$-multiplicative, and
\[
 \|\overline q(g^{-1})-\overline q(g)^*\|<2\varepsilon
 \qquad(g\in F).
\]
Moreover, $q$ and $\overline q$ are joined by a path of
$(F,2\varepsilon)$-quasi-representations.  If $(q,T)$ is a Fredholm
quasi-representation, the same path, with $T$ fixed, consists of Fredholm
quasi-representations.
\end{lemma}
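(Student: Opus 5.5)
The proof consists of direct norm estimates with $u=q(e)$, using only that every $q(g)$ is unitary and that $F$ is symmetric and contains $e$. First, $\overline q(e)=u^*u=1$. Applying the multiplicativity hypothesis with $g=h=e$ gives $u^2\approx_\varepsilon u$, that is, $\|u-1\|<\varepsilon$ after cancelling the unitary $u$. For multiplicativity, take $g,h\in F$ and write
\[
 \overline q(g)\overline q(h)-\overline q(gh)=u^*q(g)u^*q(h)-u^*q(gh).
\]
Since $\|u^*-1\|=\|u-1\|<\varepsilon$, we have
\[
 \|u^*q(g)u^*q(h)-u^*q(g)q(h)\|\le\|u^*-1\|<\varepsilon,
\]
and $\|u^*q(g)q(h)-u^*q(gh)\|<\varepsilon$. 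Hence the defect of $\overline q$ is less than $2\varepsilon$.

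For the inverse estimate, take $g\in F$; then $g^{-1}\in F$ by symmetry, so $g^{-1}g=e$ is a tested product and the multiplicativity just proved gives $\|\overline q(g^{-1})\overline q(g)-1\|<2\varepsilon$. Multiplying on the right by the unitary $\overline q(g)^*$ does not change the norm, which yields $\|\overline q(g^{-1})-\overline q(g)^*\|<2\varepsilon$.

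For the path, the hypothesis $\varepsilon<2$ ensures $\|u-1\|<2$, so $-1\notin\sigma(u)$ and the principal logarithm $\log u$ is defined. Set $u_t=\exp(t\log u)$ for $t\in[0,1]$ and $q_t(g)=u_t^*q(g)$. Then $q_0=q$ and $q_1=\overline q$. By the planar inequality already used in the proof of \Cref{lemma:unitary-implies-homotopy}, $\|u_t-1\|\le\|u-1\|<\varepsilon$ for every $t$. The same two-term estimate as above, with $u_t$ in place of $u$, gives
\[
 \|q_t(g)q_t(h)-q_t(gh)\|\le\|u_t-1\|+\|q(g)q(h)-q(gh)\|<2\varepsilon.
\]
Continuity in $t$ follows from continuous functional calculus.

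Two points need a word. First, $U(\mathcal E)$ may be the unitary group of a Hilbert module, or consist of $\alpha_g$-unitaries. In that case $u=q(e)$ is an $\alpha_e$-unitary, i.e.\ an honest module unitary, so $u_t$ is a module unitary and $u_t^*q(g)$ is again an $\alpha_g$-unitary; all the estimates hold in the $C^*$-algebra $\mathcal L(\mathcal E)$. Second, in the Fredholm case I must check \eqref{eq:FQR-cycle-equiv} along the path with $T$ fixed; this is the only mildly nontrivial step. The quasi-representation $q$ is even, so $u$ and hence $u_t$ are even, and since $q(e)$ is part of the cycle, $\Ad_u(T)-T\in\mathcal K_X(\mathcal E)$. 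The set of unitaries $w$ with $\Ad_w(T)-T\in\mathcal K_X$ is a group, so $\Ad_{u^*}(T)-T\in\mathcal K_X$ as well, but this does not obviously pass to the intermediate $u_t$.

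I would handle this through the Calkin-type quotient $\mathcal L/\mathcal K_X$. Since $\mathcal K_X$ is a closed ideal, the image $\dot u$ of $u$ commutes with $\dot T$. Functional calculus commutes with the quotient map, so $\dot u_t=\exp(t\log\dot u)$. Because $\log\dot u$ is a norm limit of polynomials in $\dot u$ and $\dot u^*$, the element $\dot u_t$ lies in the $C^*$-algebra generated by $\dot u$, and it therefore commutes with $\dot T$. This gives $\Ad_{u_t}(T)-T\in\mathcal K_X$. Combined with \eqref{eq:free-word-equivariance}, it yields \eqref{eq:FQR-cycle-equiv} for $q_t(g)=u_t^*q(g)$, while the Fredholm relations for $T$ are unchanged. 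Hence the whole path, with $T$ fixed, consists of Fredholm quasi-representations.
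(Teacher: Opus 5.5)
Your proposal is correct and matches the paper's proof essentially step by step: you derive $\|u-1\|<\varepsilon$ from the $(e,e)$ relation, use the same short logarithmic path $u_t^*q(g)$ (identical to the paper's $\exp(-t\log u)q(g)$) with the same two-term defect estimate, obtain the adjoint bound from a tested pair $g^{\pm1}$, and keep $\Ad_{u_t}(T)-T$ in the ideal by functional calculus in $\mathcal L(\mathcal E)/\mathcal K_X(\mathcal E)$. One small imprecision: a genuinely twisted $\alpha_g$-unitary need not lie in $\mathcal L(\mathcal E)$, but your estimates only use that it is a surjective isometry, and in the paper's applications the coefficients are $\mathbb C$ anyway.
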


\begin{proof}
The relation at $(e,e)$ gives
$\|u-1\|=\|u^*(u^2-u)\|<\varepsilon<2$.  Let
$u_t=\exp(-t\log u)$ be the short unitary path from $1$ to $u^*$ and set
$q_t(g)=u_tq(g)$.  The short-geodesic estimate used in
\Cref{lemma:unitary-implies-homotopy} gives $\|u_t-1\|<\varepsilon$.
For $g,h\in F$,
\begin{align*}
 \|q_t(g)q_t(h)-q_t(gh)\|
 &\leq\|q(g)(u_t-1)q(h)\|
      +\|q(g)q(h)-q(gh)\|<2\varepsilon.
\end{align*}
At $t=1$ this proves the multiplicativity assertion for $\overline q$.
Applying it to $(g,g^{-1})$ and using $\overline q(e)=1$ gives the adjoint
estimate.

For a Fredholm cycle,
$[u,T]=(\operatorname{Ad}_u(T)-T)u$ belongs to the relevant compact ideal.
Continuous functional calculus in the image of
$\mathcal L(\mathcal E)$ modulo that ideal therefore gives
$[u_t,T]$ in the same ideal.  Hence every $q_t$ has the required Fredholm
equivariance defect.
\end{proof}

To apply the CP-stability lemma to quasi-representations, we need the following
quantified implication: after enlarging the finite multiplication test and
decreasing its tolerance, the linear extension is almost completely
contractive on a prescribed finite operator system.

\begin{lemma}[matrix norm estimates for approximate representations]\label{lem:finite-system-cb}
Let $E\subseteq G$ be finite and let $m\in\mathbb N$, $\gamma>0$.  There are a
finite symmetric set $F\supseteq E$ and $\varepsilon>0$ such that, for every
Hilbert space $H$ and every $(F,\varepsilon)$-quasi-representation
$q:FF\to U(H)$ with $q(e)=1$ and
$q(g^{-1})$ within $\varepsilon$ of $q(g)^*$, the linear extension
\[
 \phi_q:\mathbb C F\to B(H),\qquad
 \phi_q\left(\sum a_g g\right)=\sum a_gq(g),
\]
satisfies
$\|\phi_q^{(m)}|_{M_m(\mathbb C E)}\|<1+\gamma$ after normalizing the domain by
the $C^*(G)$ norm.
\end{lemma}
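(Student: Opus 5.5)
The plan is to dominate $\phi_q^{(m)}$ by the left regular representation, using a Følner-set compression in the spirit of the Fell absorption formula \eqref{eq:Fell-general-prelim}. Since $G$ is amenable throughout this section, the $C^*(G)$ norm of any $x=\sum_{g\in E}a_g\otimes g\in M_m(\mathbb C E)$ equals its reduced norm $\|\sum_g a_g\otimes\lambda_g\|$. The goal is to show
\[
 \Bigl\|\sum_{g\in E}a_g\otimes q(g)\Bigr\|\leq (1+\gamma)\,\Bigl\|\sum_{g\in E}a_g\otimes\lambda_g\Bigr\|.
\]

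First, I bound the coefficients. The canonical trace on $C^*_r(G)$ gives a contractive conditional expectation $M_m(C^*_r(G))\to M_m(\mathbb C)$, and $a_g$ is the image of $x(1\otimes\lambda_g^*)$ under it. Hence $\|a_g\|\leq\|x\|_{C^*(G)}$ for each $g$. Consequently any estimate with error $\sum_g\|a_g\|\cdot\eta$ is at most $|E|\eta\|x\|$.

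Second, I build the compression. Let $\delta=\gamma/(4|E|)$ and choose a finite Følner set $\Phi\subseteq G$ with $|g\Phi\,\triangle\,\Phi|<\delta|\Phi|$ for all $g\in E$. Take $F$ to be a finite symmetric set containing $E\cup\Phi\cup\{e\}$, so that every product $gh$ with $g\in E$, $h\in\Phi$ is tested. Define the isometry
\[
 W:H\to\ell^2(\Phi)\otimes H,\qquad W\xi=|\Phi|^{-1/2}\sum_{h\in\Phi}\delta_h\otimes q(h)^*\xi .
\]
It is an isometry because the $q(h)$ are unitary. A direct computation, analogous to \eqref{eq:Fell-general-prelim}, gives
\[
 W^*(\lambda_g\otimes1)W=|\Phi|^{-1}\sum_{h\in\Phi\cap g^{-1}\Phi}q(gh)q(h)^*,
\]
where $\lambda_g$ is compressed to $\ell^2(\Phi)$. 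Replacing $q(gh)$ by $q(g)q(h)$ costs less than $\varepsilon$ per term. The missing terms, indexed by $h\in\Phi\setminus g^{-1}\Phi$, cost at most $\delta$ in total. Hence
\[
 \bigl\|W^*(\lambda_g\otimes 1)W-q(g)\bigr\|<\varepsilon+\delta\qquad(g\in E).
\]

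Third, I combine the two steps. Compression by $1_m\otimes W$ is completely contractive, and $\lambda\otimes 1_H$ has the same norm as $\lambda$ on $M_m(\mathbb C E)$. Therefore
\[
 \|\phi_q^{(m)}(x)\|\leq\|x\|+|E|(\varepsilon+\delta)\|x\|<(1+\gamma)\|x\|
\]
once $\varepsilon<\gamma/(4|E|)$. The hypotheses $q(e)=1$ and $q(g^{-1})\approx q(g)^*$ are not needed for this bound. They only ensure unitality and self-adjointness of $\phi_q$, which the later application of \Cref{lem:CP-stability-group-ring} requires.

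The main obstacle is the coefficient bound. The error terms are naturally controlled by $\sum_g\|a_g\|$ rather than by the $C^*(G)$ norm, and the trace/conditional-expectation argument above is what converts one into the other. Everything else is bookkeeping about which products must lie in $FF$.
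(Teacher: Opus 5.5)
Your argument is correct under the section's standing amenability assumption, but it takes a genuinely different route from the paper.

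The paper argues by contradiction at a fixed level $m$. If the lemma failed, there would be $(F_k,1/k)$-quasi-representations $q_k$ with $F_k\nearrow G$, and elements $x_k$ of the $C^*(G)$-unit ball of $M_m(\mathbb C E)$, with $\|\phi_{q_k}^{(m)}(x_k)\|\geq 1+\gamma$. After passing to a convergent subsequence $x_k\to x$ (using compactness of that unit ball), the classes $[(q_k(g))_k]$ define a genuine unitary representation of $G$ in $\prod_kB(H_k)/\bigoplus_kB(H_k)$. Its extension $\Phi$ to $C^*(G)$ is a $*$-homomorphism, hence completely contractive. This is incompatible with $\|\Phi^{(m)}(x)\|=\limsup_k\|\phi_{q_k}^{(m)}(x)\|\geq 1+\gamma$. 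That argument uses only the universal property of the full group $C^*$-algebra. So it proves the lemma for every countable $G$, which is what the statement literally asks, since it carries no amenability hypothesis. Its controls, however, are inexplicit and depend on $m$.

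Your F\o lner compression instead dominates $\phi_q^{(m)}$ by the reduced norm, and this genuinely requires amenability. For nonamenable $G$ and suitable $E$, the trivial representation already violates any bound $\|\phi_q(x)\|\leq(1+\gamma)\|x\|_{C^*_r(G)}$ with $\gamma$ small, so your route cannot reach the general case. Since the lemma is only invoked for amenable $G$ (in \Cref{cor:qrep-to-ucp,lem:CP-cycle-correction}), nothing is lost for the paper.

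In exchange you obtain explicit controls: $F\supseteq E\cup\Phi\cup\{e\}$ and $\varepsilon<\gamma/(4|E|)$. These are independent of $m$ and $H$, so you actually prove the completely bounded estimate $\|\phi_q|_{\mathbb C E}\|_{\mathrm{cb}}\leq 1+\gamma/2$. Better still, your compression $a\mapsto W^*(\lambda(a)\otimes 1)W$ is itself a u.c.p.\ map on all of $C^*(G)$ within $\varepsilon+\delta$ of $q$ on $E$. In the amenable case it therefore gives the conclusion of \Cref{cor:qrep-to-ucp} directly, without \Cref{thm:CP-stability}. This is exactly the F\o lner replacement the paper itself uses later in Section~\ref{subsec:fdap-group}.

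One small inaccuracy, which does not affect the proof: the approximate adjoint condition does not make $\phi_q$ self-adjoint, and \Cref{lem:CP-stability-group-ring} only requires $\phi$ to be unital.
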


\begin{proof}
Suppose the assertion were false.  Then for every $k$ there would be a Hilbert
space $H_k$ and a $(F_k,1/k)$-quasi-representation $q_k$, where
$F_k\nearrow G$, together with a matrix
$x_k\in M_m(\mathbb C E)$ of $C^*(G)$-norm at most $1$ such that
$\|\phi_{q_k}^{(m)}(x_k)\|\ge1+\gamma$.  After passing to a subsequence the
$x_k$ converge in the finite-dimensional space $M_m(\mathbb C E)$ to an
$x$ of norm at most $1$.

Let
\[
 \mathcal Q=\prod_kB(H_k)\Big/\bigoplus_kB(H_k)
\]
be the norm quotient.  The relations
$q_k(g)q_k(h)-q_k(gh)\to0$ and
$q_k(g^{-1})-q_k(g)^*\to0$ imply that
\[
 g\longmapsto[(q_k(g))_k]\in U(\mathcal Q)
\]
is a genuine unitary representation of $G$.  By the universal property of the
full group $C^*$-algebra it extends to a contractive $*$-homomorphism
$\Phi:C^*(G)\to\mathcal Q$.  Its matrix amplification is also contractive.
But
\[
 \|\Phi^{(m)}(x)\|
 =\limsup_k\|\phi_{q_k}^{(m)}(x)\|
 \ge1+\gamma,
\]
contradicting $\|x\|\le1$.  This contradiction gives a finite set $F$ and a tolerance $\varepsilon$.  Enlarging $F$ to contain inverses and taking
$\varepsilon$ smaller also ensures the stated approximate adjoint condition;
for unitary-valued quasi-representations normalized at $e$, that condition in
fact follows from the multiplicative relation for $(g,g^{-1})$.
\end{proof}

Combining the preceding two lemmas gives the promised conversion.

\begin{corollary}\label{cor:qrep-to-ucp}
For amenable $G$, every prescribed finite set $E$ and $\delta>0$ admit a
stronger quasi-representation control $(F,\varepsilon)$ such that an
$(F,\varepsilon)$-quasi-representation has, on $E$, a linear extension within
$\delta$ of a u.c.p. map on the operator system spanned by $E$.
\end{corollary}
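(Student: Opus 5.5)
The plan is to chain the two preceding lemmas, with the normalization lemma placed in front so that the hypotheses of \Cref{lem:finite-system-cb} hold. Fix $E$ and $\delta>0$. After adjoining $e$ and $E^{-1}$, we may assume that $E$ is symmetric and contains $e$. This only enlarges the operator system, so closeness on the larger system implies closeness on $\operatorname{span}(E)$.

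First apply \Cref{lem:CP-stability-group-ring} to $(E,\delta/2)$. This gives a finite symmetric $F_1\supseteq E$, an integer $m$ and $\gamma>0$. Then apply \Cref{lem:finite-system-cb} with the finite set $F_1$ (in the role of $E$ there), the same $m$, and the same $\gamma$. This gives a finite symmetric $F\supseteq F_1$ and a tolerance $\varepsilon_1>0$. Every $(F,\varepsilon_1)$-quasi-representation that is normalized at $e$ and almost adjoint-preserving therefore has a linear extension $\phi_q$ with $\|\phi_q^{(m)}|_{M_m(\mathbb C F_1)}\|<1+\gamma$. Restricting to $\mathbb C F_1$ gives a unital map to which \Cref{lem:CP-stability-group-ring} applies. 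It yields a u.c.p.\ map $\psi$ on $\operatorname{span}(E)$ with $\|\psi-\phi_q|_{\mathbb C E}\|<\delta/2$.

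To pass to an arbitrary $(F,\varepsilon)$-quasi-representation $q$, apply \Cref{lem:qrep-normalize-identity} with $\varepsilon\le\varepsilon_1/2$. The normalized map $\overline q=q(e)^*q$ has $\overline q(e)=1$, is $(F,2\varepsilon)$-multiplicative, and satisfies the $2\varepsilon$ adjoint estimate, so the previous paragraph applies to $\overline q$. The estimate $\|q(e)-1\|<\varepsilon$ gives $\|q(g)-\overline q(g)\|<\varepsilon$ for every $g$. Hence the linear extensions of $q$ and $\overline q$ differ on the operator system spanned by $E$ by at most $\varepsilon\cdot C_E$. Here $C_E$ is the norm of the coefficient functionals $\sum a_g g\mapsto\sum|a_g|$ on the finite-dimensional space $\mathbb C E$ with the $C^*(G)$-norm. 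This constant is finite because all norms on $\mathbb C E$ are equivalent; the $\lambda$-trace pairing even makes the functionals $a\mapsto a_g$ contractive. Choosing $\varepsilon\le\min\{\varepsilon_1/2,\ \delta/(2C_E)\}$ gives total distance less than $\delta$.

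The only slightly delicate point is this bookkeeping. \Cref{lem:finite-system-cb} requires exact unitality, while \Cref{lem:CP-stability-group-ring} requires unitality of $\phi$ on the whole of $\mathbb C F_1$, so the comparison between $q$ and $\overline q$ must be made in the $C^*(G)$-norm on $\mathbb C E$ and not pointwise. If one prefers the statement for $q$ itself, this is where the constant $C_E$ enters. Otherwise the corollary can be read for the normalized $\overline q$, which is homotopic to $q$ through $(F,2\varepsilon)$-quasi-representations. Nothing else is needed beyond composing the quantifiers in the order $E,\delta\to(F_1,m,\gamma)\to(F,\varepsilon_1)\to\varepsilon$.
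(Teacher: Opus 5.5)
Your proposal is correct and follows the paper's proof, which likewise chains \Cref{lem:CP-stability-group-ring} with \Cref{lem:finite-system-cb} in the same order of quantifiers. Your extra step through \Cref{lem:qrep-normalize-identity}, together with the $\varepsilon C_E$ comparison between $\phi_q$ and $\phi_{\overline q}$, supplies two hypotheses that the paper's short argument uses without comment: $q(e)=1$, and unitality of the linear extension on $\mathbb C F_1$.
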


\begin{proof}
Make $E$ symmetric and include the identity.  Apply
\Cref{lem:CP-stability-group-ring} with tolerance $\delta$ and obtain a matrix
level $m$, a larger finite group set, and a number $\gamma>0$.  Apply
\Cref{lem:finite-system-cb} for this $m$ and $\gamma$, enlarging the group
control once more and decreasing the multiplicative defect.  The linear
extension of any quasi-representation at that stronger stage then satisfies
the matrix norm hypothesis of \Cref{lem:CP-stability-group-ring}.  The latter
lemma supplies the desired u.c.p. map on the operator system spanned by $E$.
All choices depend only on $E$ and $\delta$, not on the dimension of the
quasi-representation.
\end{proof}

\begin{lemma}[completely positive approximation of a Fredholm quasi-representation]
\label{lem:CP-cycle-correction}\label{lem:CP-pair-correction}
Let $G$ be amenable, let $E\subseteq G$ be a finite symmetric set containing
$e$, and let $\delta>0$.  There exist a finite symmetric set $F\supseteq E$
and $\varepsilon>0$ with the following properties.
\begin{enumerate}[label=(\roman*)]
\item Let $\mathcal E$ be a separable graded Hilbert space and let $(q,T)$ be
a Fredholm $(F,\varepsilon)$-quasi-representation on $\mathcal E$ with
$q(e)=1$.  Define
\[
 \mathcal D_T=
 \{a\in B(\mathcal E)_{\mathrm{even}}:[a,T]\in K(\mathcal E)\}.
\]
There is a u.c.p. map
\[
 \psi:\operatorname{span}(E)\longrightarrow\mathcal D_T
 \quad\text{such that}\quad
 \max_{g\in E}\|\psi(g)-q(g)\|<\delta.
\]

\item Let $P$ be compact metrizable and let $(q,T)$ be a
$P$-parametrized Fredholm $(F,\varepsilon)$-quasi-representation on a
graded Hilbert $C(P)$-module $\mathcal E_P$, again with $q(e)=1$.  Put
\[
 \mathcal D_T^P=
 \{a\in\mathcal L_{C(P)}(\mathcal E_P)_{\mathrm{even}}:
       [a,T]\in\mathcal K_{C(P)}(\mathcal E_P)\}.
\]
There is a u.c.p. map
\[
 \Psi:\operatorname{span}(E)\longrightarrow\mathcal D_T^P,
 \qquad
 \max_{g\in E}\|\Psi(g)-q(g)\|<\delta.
\]
After evaluation at $s\in P$, the same estimate holds simultaneously for
all $s$, because the displayed norm is the Hilbert $C(P)$-module operator
norm.
\end{enumerate}
\end{lemma}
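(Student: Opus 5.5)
The key point is that $q$ does not take values in $\mathcal D_T$ exactly, only modulo the compact ideal, so I will work in a quotient where $q$ becomes genuinely $T$-commuting. Then I lift the u.c.p. map from \Cref{cor:qrep-to-ucp} back using the lifting property of completely positive maps out of a nuclear (hence exact) operator system.

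First I fix the tolerances. Apply \Cref{cor:qrep-to-ucp} to $E$ with tolerance $\delta/2$; this yields a control $(F,\varepsilon)$. Shrink $\varepsilon$ as in \Cref{lem:finite-system-cb} so that the normalization $q(e)=1$ and the approximate adjoint condition hold. Consider the $C^*$-algebra $\mathcal D_T$; it is a unital $C^*$-subalgebra of $B(\mathcal E)_{\mathrm{even}}$. Since $[q(g),T]=(\Ad_{q(g)}(T)-T)q(g)$ is compact by \eqref{eq:FQR-cycle-equiv}, every $q(g)$ with $g\in FF$ lies in $\mathcal D_T$. Apply \Cref{cor:qrep-to-ucp} with $B=\mathcal D_T$ rather than $B(\mathcal E)$. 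The CP-stability theorem (\Cref{thm:CP-stability}) is stated for an arbitrary target $B$, and the matrix-norm estimate of \Cref{lem:finite-system-cb} is unaffected because $\mathcal D_T\subseteq B(\mathcal E)$ isometrically at every matrix level. This directly gives a u.c.p. map $\psi:\operatorname{span}(E)\to\mathcal D_T$ with $\|\psi(g)-q(g)\|<\delta$, which is part (i).

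For part (ii), I repeat the argument verbatim with $B=\mathcal D_T^P$. This is a unital $C^*$-subalgebra of $\mathcal L_{C(P)}(\mathcal E_P)$, and it contains the $q(g)$ by the parametrized equivariance condition, using the same identity $[q(g),T]=(\Ad_{q(g)}T-T)q(g)$ with the ideal $\mathcal K_{C(P)}$. One point needs checking: in \Cref{lem:finite-system-cb} the contradiction argument used Hilbert spaces $H_k$. I will redo it with arbitrary unital $C^*$-algebras $B_k$ in place of $B(H_k)$, passing to $\prod B_k/\bigoplus B_k$. Every step only uses that this quotient is a unital $C^*$-algebra receiving a unitary representation of $G$, so the resulting $(F,\varepsilon)$ is uniform over all targets, including $\mathcal D_T^P$. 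The uniformity in $s\in P$ then follows because evaluation at $s$ is contractive.

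I expect the main obstacle to be this uniformity of constants over all target $C^*$-algebras, together with the symmetric-set and normalization bookkeeping, including ensuring that $q$ is defined on the set $F$ produced by \Cref{lem:CP-stability-group-ring}. I would handle the latter by applying \Cref{lem:finite-system-cb} on the larger set given by that lemma, exactly as in the proof of \Cref{cor:qrep-to-ucp}.
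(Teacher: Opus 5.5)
Your actual argument is correct and is essentially the paper's proof. You note that $[q(g),T]=(\Ad_{q(g)}(T)-T)q(g)$ is compact, so $q(g)\in\mathcal D_T$ exactly, and then apply \Cref{lem:CP-stability-group-ring} and \Cref{lem:finite-system-cb}, whose constants do not depend on the target, with $B=\mathcal D_T$ (respectively $B=\mathcal D_T^P$). Your opening claim that $q$ lies in $\mathcal D_T$ only modulo compacts is false, and the quotient-and-lift plan built on it is never used, so drop it; also add that closure of $\mathcal D_T$ under adjoints is exactly where the hypothesis $T-T^*\in K(\mathcal E)$ enters.
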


\begin{proof}
First, $\mathcal D_T$ is a unital $C^*$-subalgebra of
$B(\mathcal E)_{\mathrm{even}}$.  It is closed under products by the Leibniz
formula.  If $a\in\mathcal D_T$, then
\[
 [a^*,T]=-[a,T^*]^*
          =-[a,T]^*-[a,T^*-T]^*\in K(\mathcal E),
\]
so it is also closed under adjoints; norm closedness is immediate.

For a tested group element, the cycle relation gives
\[
 [q(g),T]
 =\big(\operatorname{Ad}_{q(g)}(T)-T\big)q(g)\in K(\mathcal E).
\]
Hence the linear extension $\phi_q$ of $q$ to the finite group ring takes
values in $\mathcal D_T$.

We now specify the control $(F,\varepsilon)$.  Apply
\Cref{lem:CP-stability-group-ring} to $(E,\delta)$, and denote the resulting choices by
$(F_0,m,\gamma)$.  Apply \Cref{lem:finite-system-cb} to
$(F_0,m,\gamma)$; let $(F,\varepsilon)$ be the finite set and tolerance
obtained there.  Enlarge $F$ to be symmetric and to contain $F_0F_0$.
For a Fredholm $(F,\varepsilon)$-cycle, the map $\phi_q$ then satisfies the
$m$-level norm hypothesis of
\Cref{lem:CP-stability-group-ring}.  That lemma gives a u.c.p. map
$\psi:\operatorname{span}(E)\to\mathcal D_T$ with the estimate in (i).
Membership in $\mathcal D_T$ is exactly the compact-commutator assertion.

For (ii), repeat the same proof in the unital $C^*$-algebra
$\mathcal L_{C(P)}(\mathcal E_P)$.  The computation above shows that the
linear extension of $q$ takes values in the single $C^*$-subalgebra
$\mathcal D_T^P$.  The two control lemmas apply to this target algebra with
the same $F$ and $\varepsilon$, since their constants do not depend on the
target.  They produce $\Psi$ with the displayed $C(P)$-module norm estimate.
Thus no fiberwise choice of control, completely positive map, or
stabilization is involved.  If polar correction is subsequently applied,
\Cref{lem:polar-correction} keeps the corrected unitaries in
$\mathcal D_T^P$; if asymptotic equivariance is required, apply
\Cref{prop:FQR-asymptotic-equivariance} with this same parameter space $P$.
\end{proof}

\subsection{Quasidiagonal compression and polar correction}

We use the following deep theorem.

\begin{theorem}[quasidiagonality of amenable group $C^*$-algebras]
\label{thm:TWW-QD}
If $G$ is countable and amenable, then $C^*(G)=C_r^*(G)$ is quasidiagonal.
Equivalently, there exist a faithful representation
$\sigma:C^*(G)\to B(H)$ and finite-rank projections $p_j\uparrow1$ strongly
such that
\[
 \|[p_j,\sigma(a)]\|\longrightarrow0
 \qquad(a\in C^*(G)).
\]
One may also require $\|p_j\sigma(a)p_j\|\to\|a\|$.
\end{theorem}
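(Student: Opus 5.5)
The plan is not to reprove quasidiagonality. Instead we assemble it from the Tikuisis--White--Winter theorem and then derive the concrete representation-theoretic form stated in \Cref{thm:TWW-QD}.

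\emph{Step 1: the algebra is quasidiagonal.}
\begin{enumerate}
\item Since $G$ is countable and amenable, $C^*(G)=C^*_r(G)$ is separable, unital and nuclear.
\item It carries the canonical faithful tracial state $\tau(a)=\langle a\delta_e,\delta_e\rangle$; faithfulness on $C^*_r(G)$ is standard.
\item It satisfies the UCT. Amenable groups have the Haagerup property, so by Higson--Kasparov \cite{higson_kasparov_2001} (or Tu's theorem for amenable groupoids) $C^*_r(G)$ is $KK$-equivalent to a commutative $C^*$-algebra.
\item The Tikuisis--White--Winter theorem says a separable nuclear $C^*$-algebra in the UCT class with a faithful trace is quasidiagonal. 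Applied here, it shows $C^*(G)$ is quasidiagonal in the abstract sense: there are u.c.p.\ maps $\phi_j:C^*(G)\to M_{k_j}$ with $\|\phi_j(ab)-\phi_j(a)\phi_j(b)\|\to0$ and $\|\phi_j(a)\|\to\|a\|$.
\end{enumerate}

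\emph{Step 2: pass to the representation-theoretic formulation.} We use Voiculescu's theorem: for a separable quasidiagonal $C^*$-algebra, every faithful essential representation is quasidiagonal.
\begin{enumerate}
\item Take any faithful representation $\sigma_0$ and set $\sigma=\sigma_0^{\oplus\infty}$ on a separable $H$. Then $\sigma$ is faithful, essential (no nonzero compact operators in $\sigma(C^*(G))$, because $\sigma$ has infinite multiplicity) and unital.
\item Voiculescu's theorem then gives finite-rank projections $p_j\uparrow1$ strongly with $\|[p_j,\sigma(a)]\|\to0$ for every $a$.
\item The asymptotic commutation is first obtained on a dense sequence and extended to all of $C^*(G)$ by an $\varepsilon/3$ argument. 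To make $p_j$ increasing and strongly convergent to $1$, we use the standard diagonal choice against a dense sequence of vectors.
\end{enumerate}
The converse direction of the stated equivalence is routine: the compressions $a\mapsto p_j\sigma(a)p_j$ are u.c.p.\ maps into matrices which are asymptotically multiplicative, because $p_j\sigma(a)p_j\sigma(b)p_j-p_j\sigma(ab)p_j=p_j\sigma(a)[p_j,\sigma(b)]p_j$.

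\emph{Step 3: the norm condition.} The bound $\|p_j\sigma(a)p_j\|\le\|a\|$ is trivial. Conversely, $p_j\to1$ strongly implies $p_j\sigma(a)p_j\to\sigma(a)$ strongly. Lower semicontinuity of the norm under strong convergence then gives $\liminf_j\|p_j\sigma(a)p_j\|\ge\|\sigma(a)\|=\|a\|$, using faithfulness of $\sigma$. Hence $\|p_j\sigma(a)p_j\|\to\|a\|$.

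The only genuinely deep input is Step 1, the Tikuisis--White--Winter theorem, which is cited rather than proved. The main technical point in our own argument is checking the hypotheses: faithfulness of the trace and the UCT via the Haagerup property. Step 2 is a direct invocation of Voiculescu's characterization, and Step 3 is elementary.
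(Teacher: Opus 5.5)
Your proposal is correct and rests on the same input as the paper, which simply cites Tikuisis--White--Winter (their Corollary~C; TWW obtain it from the faithful canonical trace, nuclearity and Tu's UCT theorem, exactly as in your Step~1), while your Steps~2--3 (Voiculescu's theorem applied to the faithful essential representation $\sigma_0^{\oplus\infty}$, and lower semicontinuity of the norm under strong convergence) correctly unpack the ``equivalently'' and norm clauses. The only loosely worded point is monotonicity: arranging $p_{j+1}\geq p_j$ needs a small unitary perturbation making each new cutoff contain the previous finite-rank range (as in Lemma~\ref{lem:cutoff}), not just a diagonal choice against a dense set of vectors.
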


This is the Rosenberg conjecture proved in the generality needed here by
Tikuisis--White--Winter \cite{tikuisis_white_winter_2017}.  The next computation explains why quasidiagonality
produces quasi-representations.

\begin{lemma}\label{lem:QD-compression}
Let $\sigma:A\to B(H)$ be a representation and $p$ a finite-rank projection.
Set $\psi(a)=p\sigma(a)p$ on $pH$.  If
$\|[p,\sigma(a)]\|<\delta$ for $a$ in a finite set $S$, then
\[
 \|\psi(ab)-\psi(a)\psi(b)\|
 \le\delta\,\|b\|
\]
whenever $a,b,ab\in S$ and $a$ is one of the elements on which the commutator
bound holds.  In particular, for group unitaries $u_g$ in $S$,
$\psi(u_g)$ is an almost unitary and the family
$g\mapsto\psi(u_g)$ is approximately multiplicative.
\end{lemma}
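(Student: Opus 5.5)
The estimate comes from one algebraic identity. I compare $\psi(ab)=p\sigma(ab)p=p\sigma(a)\sigma(b)p$ with $\psi(a)\psi(b)=p\sigma(a)p\sigma(b)p$. Subtracting gives
\[
 \psi(ab)-\psi(a)\psi(b)=p\sigma(a)(1-p)\sigma(b)p .
\]
Since $p(1-p)=0$, we have $p\sigma(a)(1-p)=p\bigl(\sigma(a)p-p\sigma(a)\bigr)(1-p)=-p[p,\sigma(a)](1-p)$. Its norm is therefore at most $\|[p,\sigma(a)]\|<\delta$. Multiplying on the right by the contraction-bounded factor $\sigma(b)p$, whose norm is at most $\|b\|$ because representations are contractive, gives the claimed bound $\delta\|b\|$. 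Only the commutator hypothesis for $a$ is used, which matches the phrasing of the statement.

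For the group-unitary consequence I would apply this to $a=u_g$, $b=u_h$, with $u_gu_h=u_{gh}$, for $g,h$ in a finite set whose unitaries lie in $S$. This gives $\|\psi(u_{gh})-\psi(u_g)\psi(u_h)\|<\delta$, which is approximate multiplicativity with error $\delta$.

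For almost unitarity I would take $b=u_g^*=u_{g^{-1}}$ and $a=u_g$, assuming $g^{-1}$ and $e$ are also tested. Since $\psi(u_e)=p$ is the identity on $pH$, this gives $\|1-\psi(u_g)\psi(u_g)^*\|<\delta$. Reversing the roles, with $a=u_g^*$ (whose commutator with $p$ has the same norm as that of $u_g$, being its adjoint up to sign), gives $\|1-\psi(u_g)^*\psi(u_g)\|<\delta$. Hence $\psi(u_g)$ is $\delta$-almost unitary in the sense of \Cref{lem:polar-correction}.

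There is no real obstacle; the only care needed is bookkeeping: the finite set $S$ should be closed under the products and inverses used, and we need $\|u_g\|=1$. If genuine unitaries are wanted, \Cref{lem:polar-correction} and \Cref{remark:polar-correction} would then turn $g\mapsto\psi(u_g)$ into an honest $(F,\varepsilon)$-quasi-representation with $\varepsilon$ of order $\delta$, for $\delta<1/2$.
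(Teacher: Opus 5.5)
Your proof is correct and is essentially the paper's argument: the paper uses the same identity $\psi(ab)-\psi(a)\psi(b)=p\sigma(a)(1-p)\sigma(b)p$, bounds the corner $p\sigma(a)(1-p)$ by $\|[p,\sigma(a)]\|$, and then takes $b=a^*$ with $a^*a=aa^*=1$ to get the almost-unitary assertions. One harmless sign slip: in fact $p\sigma(a)(1-p)=p[p,\sigma(a)](1-p)$ with a plus sign, but only the norm matters, and the paper records this identity only up to sign.
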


\begin{proof}
Since $p^2=p$,
\begin{align*}
 \psi(ab)-\psi(a)\psi(b)
 &=p\sigma(a)\sigma(b)p-p\sigma(a)p\sigma(b)p\\
 &=p\sigma(a)(1-p)\sigma(b)p.
\end{align*}
Now
$p\sigma(a)(1-p)=p[p,\sigma(a)](1-p)$ up to sign, so its norm is at most
$\|[p,\sigma(a)]\|$.  This proves the estimate.  Taking $b=a^*$ and
$a^*a=1$, and similarly $aa^*=1$, gives the almost-unitary assertions.
\end{proof}

\begin{definition}[local u.c.p. pictures]\label{def:local-ucp-pictures}
Let $\mathcal C_A^{\mathrm{loc}}$ be the set of triples
\[
 c=(S,\Omega,\delta),
\]
where $S\subset A$ is a finite-dimensional operator system,
$\Omega\subset S$ is a finite self-adjoint set containing $1$,
$ab\in S$ for all $a,b\in\Omega$, and $\delta>0$.  We order these controls by
\[
 (S,\Omega,\delta)\preccurlyeq(S',\Omega',\delta')
 \quad\Longleftrightarrow\quad
 S\subseteq S',\quad \Omega\subseteq\Omega',\quad
 \delta'\leq\delta.
\]
A \emph{u.c.p. $c$-almost representation} is a u.c.p. map
$\phi:S\to B(H)$ satisfying
\[
 \|\phi(ab)-\phi(a)\phi(b)\|<\delta
\]
for every tested pair $a,b\in\Omega$.  It is finite-dimensional if $H$ is
finite-dimensional.  A \emph{u.c.p. Fredholm cycle} is a pair $(\phi,T)$ on a
graded Hilbert space $\mathcal E$, where $\phi:S\to B(\mathcal E)_{\rm even}$
is a u.c.p. almost representation and $T$ is odd, with
\[
 T-T^*,\ T^2-1\in K(\mathcal E),\qquad
 [\phi(a),T]\in K(\mathcal E)\quad(a\in S).
\]

Stable homotopy, direct sum, and degenerate cycles are defined exactly as in
\Cref{def:FQR-cycle}, with a norm-continuous path of u.c.p. maps in place of a
path of unitary-valued maps.  For $c=(S,\Omega,\delta)$, let
$\mathrm{QR}_{\mathrm{loc}}^{\mathrm{ucp}}(A;c)$ be the Grothendieck group
of stable-homotopy classes of finite-dimensional u.c.p. $c$-almost
representations, and let
$\mathrm{FQR}_{\mathrm{loc}}^{\mathrm{ucp}}(A;c)$ be the abelian group of
stable-homotopy classes of u.c.p. Fredholm cycles at the same control.
Restriction of the domain and tested set, together with relaxation of the
tolerance, gives bonding homomorphisms whenever $c\preccurlyeq c'$.  We
denote the ordinary inverse-limit groups by
$\mathrm{QR}_{\mathrm{loc}}^{\mathrm{ucp}}(A)$ and
$\mathrm{FQR}_{\mathrm{loc}}^{\mathrm{ucp}}(A)$, respectively, where both
limits are taken over $\mathcal C_A^{\mathrm{loc}}$.  The
subscript records that the maps are defined on finite-dimensional operator
systems, while the superscript records that they are unital and completely
positive.  Thus an element of either
group is a compatible family of classes at all finite controls, exactly as in
\Cref{def:FQR-final}.  The notation records an intermediate model.  For an
amenable group, the comparison with the unitary-valued theory is obtained
from the F\o lner replacement and polar correction in
Section~\ref{subsec:fdap-group}.  For a nonamenable group, polar
correction alone gives only the u.c.p.-to-unitary direction; it does
\emph{not} justify a reverse map or an identification of the two groups.
Under the lifting property, the missing direction, its independence of the
chosen u.c.p. lift, and the two composite identities are proved separately in
\Cref{prop:LP-unitary-ucp-comparison}.  When $A=C^*(G)$ and the finite operator systems are spans of
finite subsets of $G$ and their products, we abbreviate these groups by
$\mathrm{QR}_{\mathrm{loc}}^{\mathrm{ucp}}(G)$ and
$\mathrm{FQR}_{\mathrm{loc}}^{\mathrm{ucp}}(G;\mathbb C)$.
The component homomorphism used below sends a finite-dimensional $\phi$ to the cycle
$(\mathcal E^+=H,\mathcal E^-=0,\phi,0)$, as in the finite-dimensional
construction preceding \eqref{eq:QR-to-FQR}.
\end{definition}

\begin{remark}[relation with controlled $K$-homology]
\label{rem:FQRucp-controlled-K-homology}
Fix a balanced, strongly absorbing even representation
$\pi:A\to B(\mathcal H^+\oplus\mathcal H^-)$, in the terminology of
Willett--Yu.  The formulas below have the following precise consequence: for
each finite set and tolerance in either
$\mathrm{FQR}_{\mathrm{loc}}^{\mathrm{ucp}}(A)$ or the controlled
$K$-homology system, there
is a stronger finite set and smaller tolerance on the other side and a
homomorphism between the corresponding groups; the two composites are the
bonding homomorphisms from the chosen stronger stages to the prescribed
stages, as in \Cref{def:inverse-system-equivalence}.  Consequently they give
inverse homomorphisms of the ordinary inverse limits.  The controlled
$K$-homology groups in question are denoted
$KK^\pi_\varepsilon(\Omega;\mathbb C)$
\cite[Sections~3--5]{willett_yu_uct_2024}; see also
\cite[Sections~5--6 and Appendix~A]{willett_yu_controlled_2026}.

Here is the cycle conversion.  Start with a u.c.p. Fredholm cycle
$(\phi,T)$.  Applying the quasicentral approximate-unit construction from
the proof of \Cref{prop:FQR-asymptotic-equivariance}, with the finitely many
operators $\phi(a)$ in place of the group operators, changes $T$ by a
compact operator and makes
\begin{equation}\label{eq:CP-small-commutator}
 \max_{a\in\Omega}\|[T,\phi(a)]\|<\eta
\end{equation}
for any prescribed finite $\Omega\subset A$ and $\eta>0$.  This operation
can be performed on one $P$-parametrized cycle, so it respects homotopies.
Stinespring's theorem gives an even representation
$\rho:A\to B(\mathcal K)$ and an even projection $P\in B(\mathcal K)$ such
that, after identifying the original space with $P\mathcal K$,
\[
 \phi(a)=P\rho(a)P.
\]
Adding a degenerate Stinespring summand makes $\rho$ balanced and absorbing.
For $a,a^*a\in\Omega$, the identity
\begin{equation}\label{eq:Stinespring-control}
 \|(1-P)\rho(a)P\|^2
 =\|\phi(a^*a)-\phi(a)^*\phi(a)\|
\end{equation}
and its adjoint version show that a multiplicativity defect smaller than
$\delta$ gives
$\|[P,\rho(a)]\|<2\sqrt\delta$.  Thus $P$ is a controlled
$K$-homology projection.

The projection-picture conversion is the relative Fredholm-to-projection
construction, not a replacement of an arbitrary odd Fredholm operator by an
invertible one. In particular, a nonzero Fredholm index must be retained.
One useful exact normalization, for an odd self-adjoint contraction $T$, is
\[
 \mathcal J_T=
 \begin{pmatrix}T&(1-T^2)^{1/2}\\(1-T^2)^{1/2}&-T\end{pmatrix}
 \quad\text{on }\mathcal E\oplus\mathcal E^{\mathrm{op}}.
\]
It is a self-adjoint unitary; the off-diagonal operators are compact. The
original class is recorded by the relative projections, including the
unit projections of the two graded maps. It is not the ungraded class of
$\mathcal J_T$ alone.

We use the controlled Fredholm/projection cycle conversion of
\cite[Sections~3--5]{willett_yu_uct_2024} and
\cite[Sections~5--6 and Appendix~A]{willett_yu_controlled_2026}, with the
relative Stinespring construction just described. Its connection and
operator-homotopy formulation keeps these unit projections. The estimates
\eqref{eq:CP-small-commutator} and \eqref{eq:Stinespring-control} provide the required
control: square roots, spectral cuts separated from the
spectrum, and the relative block rotations use a common smaller tolerance.
Compact errors remain compact by functional calculus and \Cref{lem:creation-compactness}. Conversely, compress the fixed
representation by the relative controlled projections and take the
associated Fredholm operator. These two standard cycle conversions give
the component maps stated above; their stable operator homotopies are
performed in one parameter algebra. A choice of matrix size or of a
compact approximation may depend on the particular cycle, but the source
control for each finite target test does not.

The same construction applies to the full-domain u.c.p. cycles defined later
in \Cref{def:global-ucp-controls}.  In the forward direction one merely
restricts a global map to the finite operator system required by a controlled
$K$-homology stage.  In the reverse direction the compression formula
$a\mapsto P\rho(a)P$ is defined on all of $A$, so the resulting map is again
global and u.c.p.  The block-rotation and Stinespring homotopies are also
global.  Consequently the component maps just described give an
inverse-system equivalence between that full-domain Fredholm system and the
controlled $K$-homology system whenever $A=C^*_{\max}(G)$; this is the
version used in \Cref{lem:surface-controlled-KH-detection}.
\end{remark}

The finite-dimensional approximation proof below uses a direct relative
projection construction; it does not depend on the controlled
$K$-homology comparison in \Cref{rem:FQRucp-controlled-K-homology}.

\begin{theorem}[quasidiagonal representatives of Kasparov classes]
\label{thm:QD-Cuntz-representatives}
Let $D$ be a separable unital nuclear quasidiagonal $C^*$-algebra and let
$B$ be a separable unital nuclear $C^*$-algebra.
\begin{enumerate}[label=(\roman*)]
\item Every class $x\in KK(D,B)$ has a normalized graded Kasparov representative
\[
 (\mathcal E,\pi,J),\qquad
 \mathcal E=(H\otimes B)\oplus(H\otimes B),\qquad
 J=\begin{pmatrix}0&1\\1&0\end{pmatrix},
\]
for which there are finite-rank projections $p_n\in K(H)$ such that
$p_n\uparrow1_H$ strongly and
\begin{equation}\label{eq:QD-representative-point}
 \big\|[(p_n\otimes1_B)\oplus(p_n\otimes1_B),\pi(d)]\big\|
 \longrightarrow0
 \qquad(d\in D).
\end{equation}

\item Let $P$ be a compact metrizable space and put
$B_P=C(P)\otimes B$.  For every class $z\in KK(D,B_P)$ there are a
separable Hilbert space $H$, a normalized graded Kasparov representative
\[
 (\mathcal E_P,\Pi,J),\qquad
 \mathcal E_P=(H\otimes B_P)\oplus(H\otimes B_P),
 \qquad J=\begin{pmatrix}0&1\\1&0\end{pmatrix},
\]
and finite-rank projections $p_n\in K(H)$, independent of $s\in P$, such
that $p_n\uparrow1_H$ strongly and
\begin{equation}\label{eq:QD-representative-parametrized}
 \lim_{n\to\infty}\sup_{s\in P}
 \big\|[(p_n\otimes1_B)\oplus(p_n\otimes1_B),\Pi_s(d)]\big\|=0
 \qquad(d\in D).
\end{equation}
Here $\Pi_s(d)$ is the operator induced by $\Pi(d)$ on the interior
tensor-product fiber
$\mathcal E_{P,s}:=\mathcal E_P\widehat\otimes_{\operatorname{ev}_s}B$; in
other words,
\[
 \Pi_s(d)(\xi\otimes b)=(\Pi(d)\xi)\otimes b
 \quad(d\in D,\ \xi\in\mathcal E_P,\ b\in B).
\]
The homomorphism
$\operatorname{ev}_s:C(P)\otimes B\to B$ is evaluation at $s$.
\end{enumerate}
\end{theorem}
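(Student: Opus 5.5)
The plan is to combine Kasparov's absorption picture of $KK(D,B)$ with a quasidiagonal absorbing representation of $D$. Fix the representation in two steps.

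First, by \Cref{thm:TWW-QD}-type quasidiagonality of $D$, choose a faithful representation $\sigma_0:D\to B(H_0)$ with $\sigma_0(D)\cap K(H_0)=0$ that admits finite-rank projections $p^0_n\uparrow1$ with $\|[p^0_n,\sigma_0(d)]\|\to0$. Replace it by the infinite amplification $\sigma=\sigma_0^{\oplus\infty}$ on $H=H_0\otimes\ell^2$. A block-diagonal choice of projections $p_n=\bigoplus_{k\le n}p^0_{m(n,k)}$, with $m(n,k)$ increasing fast enough and a diagonal argument over a dense sequence in $D$, keeps the commutators tending to zero. Then $\sigma$ is still quasidiagonal via finite-rank $p_n\uparrow1_H$, and by Voiculescu's theorem it is ample, hence absorbing.

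Second, by Kasparov's stabilization (\Cref{thm:equivariant-stabilization}) together with the Dadarlat--Eilers absorption result quoted in the introduction (\cite[Proposition~2.18]{dadarlat_eilers_2002}), every class $x\in KK(D,B)$, for nuclear $D$, is represented in Cuntz-pair form. That is, there are representations $\pi_\pm:D\to\mathcal L(H\otimes B)$ with $\pi_+(d)-\pi_-(d)\in\mathcal K(H\otimes B)$, and one may take $\pi_-=\sigma\otimes1_B$ after adding a degenerate summand of the form $(\sigma\otimes1)\oplus(\sigma\otimes1)$. The graded module $(H\otimes B)\oplus(H\otimes B)$ with $\pi=\pi_+\oplus\pi_-$ and $J$ the flip is then a normalized representative.

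The main obstacle is to make $\pi_+$ also almost commute with $p_n\otimes1_B$. This does not follow from $\pi_+-\pi_-$ being compact alone, since $\|[p_n\otimes1,k]\|\to0$ fails for general compacts $k$ under merely strong convergence. I would handle this by using the stronger Dadarlat--Eilers conclusion that $\pi_+$ is unitarily equivalent to $\sigma\otimes1_B$ modulo compacts via a unitary $u$ with $u(\sigma\otimes1)(d)u^*-\pi_+(d)$ compact. Then, by an absorption or rotation homotopy, arrange $\pi_+=\operatorname{Ad}_{u}(\sigma\otimes1)$ with $u-1\in\mathcal K(H\otimes B)+\C1$, possibly after a further degenerate summand.

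For such $u$, compute
\[
[p_n\otimes1,\operatorname{Ad}_u(\sigma\otimes1)(d)] = u[u^*(p_n\otimes1)u,\sigma(d)\otimes1]u^*.
\]
Since $u^*(p_n\otimes1)u-(p_n\otimes1)\to0$ in norm, the commutator reduces to $[p_n,\sigma(d)]\otimes1\to0$. The required norm convergence holds because $k(p_n\otimes1)\to k$ and $(p_n\otimes1)k\to k$ in norm for every compact $k$: this uses $p_n\uparrow1$ strongly on $H$ and is checked on rank-one operators $\theta_{\xi\otimes b,\eta\otimes c}$. If the unitary cannot be chosen of the form compact plus scalar, I would instead insert an extra diagonal choice, passing to a subsequence of $p_n$ adapted to countably many compact perturbations $\pi_+(d_i)-\pi_-(d_i)$. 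This works because each $(p_n\otimes1)k(1-p_n\otimes1)\to0$ in norm by the same approximate-unit property, so quasidiagonality survives compact perturbation.

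For part~(ii), run the same argument with $B_P=C(P)\otimes B$, which is again separable (and nuclear if needed). The projections $p_n\otimes1_{B_P}$ are constant in $s$, and the norm on $\mathcal L(H\otimes B_P)$ dominates $\sup_s$ of the fiber norms after evaluation $\operatorname{ev}_s$. Hence \eqref{eq:QD-representative-parametrized} follows directly from the unparametrized-style norm estimate carried out over $B_P$.
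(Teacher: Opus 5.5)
Your architecture is the paper's. You write $x$ as a normalized Cuntz pair whose second leg is an amplification of an essential quasidiagonal representation; the paper takes this normal form from Dadarlat. You then control the first leg through the compact difference $k_d=\pi_+(d)-\pi_-(d)$, and you get (ii) by running (i) over $B_P$, using that the module norm dominates the fiber norms. Your part (ii) matches the paper's. But the step you call the main obstacle is not one, and the claim you make there is false. One has $\mathcal K(H\otimes B)=K(H)\otimes B$, and if $p_n\to1$ strongly then $p_n\otimes1_B$ is an approximate unit for $K(H)\otimes B$. To see this, check it on $x\otimes b$ with $x\in K(H)$, then use density and uniform boundedness. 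Hence $\|[p_n\otimes1_B,k]\|\to0$ for \emph{every} compact $k$, along the whole sequence. You state exactly this a few lines later, so the proposal contradicts itself.

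Your primary route around the supposed obstacle would fail, and you should drop it. Suppose $\pi_+=\operatorname{Ad}_u(\sigma\otimes1_B)$ with $u$ a unitary in $\mathbb C1+\mathcal K$. Already for $B=\mathbb C$, the unitary group of $\mathbb C1+K(H)$ is connected. A path $u_t$ from $1$ to $u$ then gives a homotopy of Cuntz pairs $(\operatorname{Ad}_{u_t}\sigma,\sigma)$ to the degenerate pair $(\sigma,\sigma)$, so this form only represents $0\in KK(D,\mathbb C)$.

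The underlying issue is that the index is carried by the unit projections $\pi_\pm(1)$. This is why the paper stresses that the normalized representation need not be unital, and why its fixed leg is $0\oplus(\pi_0\otimes1_B)$ rather than a unital amplification. For $D=B=\mathbb C$, a pair with $\pi_-(1)=1$ has class, up to the global sign convention, $-\operatorname{rank}(1-\pi_+(1))$, so only one sign in $KK(\mathbb C,\mathbb C)\cong\mathbb Z$ is reached. So make sure your fixed leg $\sigma\otimes1_B$ is allowed an infinite-dimensional zero summand.

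With these corrections, your fallback is exactly the paper's proof, and it needs no subsequence or diagonal selection. For each $d$,
\[
[p_n\otimes1_B,\pi_+(d)]=[p_n\otimes1_B,k_d]+[p_n,\sigma(d)]\otimes1_B\longrightarrow0 .
\]
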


The representation in this normalized Kasparov picture need not be unital.
If $p_\pm=\pi_\pm(1)$, their difference is compact and their relative class
retains the index. Passing back to unital u.c.p. maps means restricting each
map to its own unit-support corner, not replacing both $p_\pm$ by the
identity. This convention is also used in finite-dimensional compression
below.

Here is the precise meaning of the parameter in part~(ii).  A
\emph{$P$-parametrized Fredholm cycle from $D$ to $B$} is, by definition, a
Kasparov cycle for $(D,C(P)\otimes B)$.  It is therefore one cycle, rather
than a set of cycles carrying no specified topology.  For example, a
norm-continuous family $(\mathcal E,\pi_s,F_s)_{s\in P}$ on one fixed graded
Hilbert $B$-module determines a cycle on $C(P,\mathcal E)$ by
\[
 (\Pi(d)\xi)(s)=\pi_s(d)\xi(s),\qquad
 (\mathsf F\xi)(s)=F_s\xi(s).
\]
The Kasparov conditions require the functions
\[
 s\longmapsto (F_s-F_s^*)\pi_s(d),\quad
 s\longmapsto (F_s^2-1)\pi_s(d),\quad
 s\longmapsto [F_s,\pi_s(d)]
\]
to belong to $C(P,K(\mathcal E))$ for every $d\in D$.  Evaluation at
$s$ gives the corresponding fiber cycle.  Stabilization of this
$P$-parametrized cycle means replacing it by
\[
 (\mathcal E\oplus\mathcal E_0,
   \Pi\oplus\Pi_0,\mathsf F\oplus\mathsf F_0),
\]
where $(\mathcal E_0,\Pi_0,\mathsf F_0)$ is one degenerate cycle over
$C(P)\otimes B$, and then applying one adjointable
$C(P)\otimes B$-linear unitary.  These operations take place before
evaluation and hence determine, by evaluation, compatible operations on all
fibers.  Formula~\eqref{eq:QD-representative-parametrized} is the uniform
estimate used below.

\begin{proof}
Fix a faithful essential quasidiagonal representation
$\pi_0:D\to B(H)$; ``essential'' means
$\pi_0(D)\cap K(H)=0$.  The normalized-representative theorem
\cite[Remark~4.5(b)]{dadarlat_2021} states that every $x\in KK(D,B)$ has,
after adding a degenerate cycle, a representative of the form
\[
 \pi(d)=\phi(d)\oplus\psi(d),\qquad
 J=\begin{pmatrix}0&1\\1&0\end{pmatrix},
 \qquad \psi=0\oplus(\pi_0\otimes1_B),
\]
where the zero summand is absorbed in the notation $H$ and
$[J,\pi(d)]$ is compact. Direct multiplication of the two-by-two
matrices shows that this condition is equivalent to
$\phi(d)-\psi(d)\in K(H)\otimes B$.  Since $\pi_0$ is quasidiagonal, choose
finite-rank $p_n\uparrow1$ with
$\|[p_n,\pi_0(d)]\|\to0$.

For each $d$, put $k_d=\phi(d)-\psi(d)\in K(H)\otimes B$.  A strongly
convergent increasing approximate unit of finite-rank projections is
quasicentral for every fixed compact operator: first check this for a
finite-rank operator, where it is immediate once the range and adjoint range
are almost contained in $p_nH$, and then approximate an arbitrary compact
operator in norm.  Thus
\[
 \|[p_n\otimes1,k_d]\|\longrightarrow0.
\]
Combining this with the commutator estimate for $\psi(d)$ gives the estimate
for $\phi(d)$ and hence for the diagonal representation $\pi(d)$.  This
proves part~(i).

For part~(ii), apply part~(i), including its normalization argument, with
$B_P=C(P)\otimes B$ in place of $B$.  The hypotheses are preserved because
$P$ is compact metrizable: $B_P$ is again separable, unital, and nuclear.
Use the same faithful essential quasidiagonal representation
$\pi_0:D\to B(H)$ and write the resulting normalized representation as
\[
 \Pi(d)=\Phi(d)\oplus\Psi(d),\qquad
 \Psi(d)=\pi_0(d)\otimes1_{B_P},\qquad
 k_d:=\Phi(d)-\Psi(d)\in K(H)\otimes B_P.
\]
Choose finite-rank $p_n\uparrow1_H$ with
$\|[p_n,\pi_0(d)]\|\to0$ for every $d\in D$, and set
\[
 P_n=(p_n\otimes1_{B_P})\oplus(p_n\otimes1_{B_P}).
\]
Because $(p_n)$ is an approximate identity for $K(H)$, tensor-product
density gives
\[
 \|[p_n\otimes1_{B_P},k_d]\|\longrightarrow0.
\]
Consequently $\|[P_n,\Pi(d)]\|\to0$.  The latter commutator belongs to
\[
 K(H\oplus H)\otimes C(P)\otimes B
 \cong C\big(P,K((H\otimes B)\oplus(H\otimes B))\big),
\]
whose norm is the supremum of the norms of its evaluations.  Therefore the
last convergence is exactly the uniform convergence asserted in
\eqref{eq:QD-representative-parametrized}.
\end{proof}

\subsection{Finite-dimensional approximation of Fredholm cycles}
\label{subsec:finite-dimensional-approximation}

In the amenable proof, nuclearity and quasidiagonality are used
simultaneously for a Fredholm cycle. We first record its Fredholm index
by a compact difference of projections. Nuclearity allows us to place
this relative class in an absorbing representation, and quasidiagonality
then gives finite-dimensional compressions. The common infinite-dimensional
summand is degenerate and cancels. An interval version of the construction
keeps the finite endpoints and gives the required stable uniqueness.
Nuclearity is not asserted to be logically necessary:
Section~\ref{sec:quantitative-KQD} instead uses the approximate $K$-homology
UCT, a separating family of finite-dimensional representations, and
explicit finite-dimensional existence.

Let $A$ be a separable unital $C^*$-algebra. A local control is a triple
$c=(S,\Omega,\varepsilon)$, where $S\subset A$ is a finite-dimensional
operator system, $\Omega\subset S$ is a finite self-adjoint set containing
$1$, and $ab\in S$ for $a,b\in\Omega$. We first take the elements of
$\Omega$ to be contractions. General bounded finite sets are handled by
rescaling the estimates below.

Write $Q_c=QR_{\mathrm{loc}}^{\mathrm{ucp}}(A;c)$ and
$F_c=FQR_{\mathrm{loc}}^{\mathrm{ucp}}(A;c)$ for the groups in \Cref{def:local-ucp-pictures}. Thus a Fredholm cycle consists of a graded Hilbert
space $\mathcal E=\mathcal E^+\oplus\mathcal E^-$, an even u.c.p. map
$\varphi:S\to\mathcal L(\mathcal E)$, and an odd operator $T$, with
\[
 \begin{gathered}
 T-T^*,\quad T^2-1,\quad [T,\varphi(a)]\in\mathcal K(\mathcal E)
 \quad(a\in S),\\
 \|\varphi(ab)-\varphi(a)\varphi(b)\|<\varepsilon
 \quad(a,b\in\Omega).
 \end{gathered}
\]
The homomorphism $\kappa_c:Q_c\to F_c$ sends a finite-dimensional map
to the cycle concentrated in even degree with operator zero. Bonding
homomorphisms are denoted by $r^Q$ and $r^F$.

\begin{theorem}[finite-dimensional and Fredholm local u.c.p. pictures]\label{thm:FD-Cuntz-approx}
Suppose that $A$ is nuclear and quasidiagonal.
\begin{enumerate}[label=\textup{(\roman*)}]
\item\label{item:FD-CP-surjectivity} For every control $c$ there is $d\succeq c$, chosen independently
of the cycle, such that
\[
 r^F_{d,c}(F_d)\subseteq\kappa_c(Q_c).
\]
\item\label{item:FD-CP-injectivity} For every control $c$ there is $d\succeq c$ such that
\[
 x\in Q_d,\quad \kappa_d(x)=0
 \quad\Longrightarrow\quad r^Q_{d,c}(x)=0.
\]
\item After a further strengthening of the source control there is a
homomorphism $\lambda_{d,c}:F_d\to Q_c$ satisfying
\begin{equation}\label{eq:fdap-composites}
 \kappa_c\lambda_{d,c}=r^F_{d,c},
 \qquad \lambda_{d,c}\kappa_d=r^Q_{d,c}.
\end{equation}
These homomorphisms are eventually compatible: for two target controls
$c\preceq c'$ and their chosen sources $d,d'$, there is $e\succeq d,d'$
for which
\begin{equation}\label{eq:fdap-coherence}
 r^Q_{c',c}\lambda_{d',c'}r^F_{e,d'}
 =\lambda_{d,c}r^F_{e,d}.
\end{equation}
Consequently the natural homomorphism
\[
 \kappa_A:\mathrm{QR}_{\mathrm{loc}}^{\mathrm{ucp}}(A)
 \longrightarrow \mathrm{FQR}_{\mathrm{loc}}^{\mathrm{ucp}}(A)
\]
is an isomorphism of the ordinary inverse limits.
\end{enumerate}
In particular, the theorem applies to $A=C^*(G)$ for every countable
amenable discrete group $G$.
\end{theorem}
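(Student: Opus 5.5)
The plan is to realize the Fredholm index of a u.c.p. Fredholm cycle as a compact difference of two projections inside one fixed absorbing quasidiagonal representation, and then compress by a quasidiagonal finite-rank projection.

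\emph{Normalization and absorption.} Start from a u.c.p. Fredholm cycle $(\varphi,T)$ at a control $d$. By Stinespring, write $\varphi=P\rho P$, and add a degenerate summand so that $\rho$ becomes balanced and absorbing. Using the estimate \eqref{eq:Stinespring-control}, the commutator $\|[P,\rho(a)]\|$ is bounded by $2\sqrt{\delta}$ on the tested set $\Omega$. Replace $T$ by the self-adjoint unitary $\mathcal J_T$ of \Cref{rem:FQRucp-controlled-K-homology}. This records the class as the relative pair $(p_+,\,\mathcal J_T p_- \mathcal J_T)$, i.e. two projections with compact difference, each almost commuting with the representation.

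By nuclearity, and more precisely by the Dadarlat--Eilers absorption theorem \cite[Proposition~2.18]{dadarlat_eilers_2002} together with \Cref{thm:QD-Cuntz-representatives}, conjugate this relative pair into one fixed faithful essential quasidiagonal representation $\pi_0\otimes 1$. The conjugation is by a unitary that intertwines the representations modulo compacts, and it is controlled on $\Omega$ with a tolerance depending only on $c$.

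\emph{Part (i).} Choose $p_n$ as in \Cref{thm:TWW-QD}. Compressing the two projections by $p_n$ yields finite-rank almost-projections that almost commute with $\pi_0(\Omega)$. This uses quasicentrality of $p_n$ relative to the compact difference, as in the proof of \Cref{thm:QD-Cuntz-representatives}. Spectral cuts via \Cref{lem:almost-idempotent-perturb} give finite-rank projections $e_\pm$. The maps $a\mapsto e_\pm \pi(a) e_\pm$ are u.c.p. $c$-almost representations by \Cref{lem:QD-compression}. The difference of their classes in $Q_c$ maps under $\kappa_c$ to the restricted class of the cycle: the complementary infinite part $(1-p_n)$ is the same for both signs, so it forms a degenerate cycle that cancels.

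The source control $d$ is chosen before the cycle. The square-root and commutator losses are uniform, and the absorption and quasidiagonality tolerances depend only on $(S,\Omega,\varepsilon)$. This gives (i).

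\emph{Part (ii).} Suppose $\kappa_d(x)=0$. Then there is a homotopy of Fredholm cycles to a degenerate cycle, after stabilization. Apply the parametrized version \Cref{thm:QD-Cuntz-representatives}(ii) over $P=[0,1]$, using one family $p_n$ uniform in $s$. Compression then produces a path of finite-dimensional u.c.p. almost representations. Its endpoints are, up to stable homotopy, the compressions of $x$ and of the degenerate cycle; the latter is $0$ in $Q_c$. One technical point is that the matrix size may jump at times where the rank changes. We handle this by choosing a fixed $n$ for the whole compact interval, so that the spectral gap is uniform and the rank is locally constant, hence constant. This gives (ii).

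\emph{Part (iii).} Define $\lambda_{d,c}$ as the compression map from (i). It is well defined by the interval argument of (ii), since two choices of $n$ or of the absorbing unitary are joined by a path. The identity $\kappa_c\lambda=r^F$ is (i). The identity $\lambda\kappa=r^Q$ holds because compressing a finite-dimensional cycle with $n$ large recovers it up to a small perturbation, which \Cref{lemma:unitary-implies-homotopy} converts to a homotopy. Coherence \eqref{eq:fdap-coherence} follows by taking $e$ to dominate both source controls and using the same independence argument. The inverse-limit isomorphism then follows formally, as in \Cref{def:inverse-system-equivalence}.

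The main obstacle is the absorption step: placing the index-carrying relative pair into a fixed quasidiagonal representation \emph{with controls independent of the cycle}. I would obtain this by a compactness/contradiction argument on ultraproducts, as in \Cref{lem:finite-system-cb}, if a direct estimate fails.
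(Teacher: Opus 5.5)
Your outline of (i) is close in spirit to the paper: you record the index by a compact difference of projections, absorb into a fixed quasidiagonal representation, and compress so that the tail is degenerate. The argument for (ii), however, has a genuine gap, and so does the identity $\lambda_{d,c}\kappa_d=r^Q_{d,c}$ in (iii), which rests on it.

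Knowing that the compressed $t=0$ endpoint of the homotopy has the same Fredholm class as $x$ is exactly what (ii) must upgrade to equality in the finite-dimensional group $Q_c$. Your justification does not do this. You say that compressing a finite-dimensional cycle ``recovers it up to a small perturbation'', and then invoke \Cref{lemma:unitary-implies-homotopy}. That fails for two reasons:
\begin{itemize}
\item Even in the best case, the compressed endpoint is the class of the compression of $\pi$ to $R_+\oplus R$ minus the class of its compression to $R_-\oplus R$. Here $R=eQ_n-R_-$ is a common block whose rank grows with $n$, so the endpoint is not close to $(\phi_+,\phi_-)$ in any sense; the dimensions do not even agree.
\item That lemma concerns unitary-valued maps. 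U.c.p. maps have to be joined by convex paths.
\end{itemize}
To make the endpoint identification work you need four things.
\begin{itemize}
\item[(a)] Every normalization along the homotopy must leave both endpoint operators unchanged. An ordinary quasicentral cutoff would alter the self-adjoint unitaries on the degenerate summands and the zero operator on the finite summand. The paper instead cuts down with a quasicentral unit of the ideal of compact module operators vanishing at $t=0$ and $t=1$.
\item[(b)] The index projection, Stinespring dilation and absorption must be carried out on the single $C([0,1])$-module. This gives exactly $p(1)=e$ and $p(0)=e+R_+-R_-$, with $R_\pm$ the images of the original finite-dimensional spaces.
\item[(c)] One finite-rank cutoff must serve for all $t$; it comes from uniform tails of the norm-compact family $\{p(t)-e\}$. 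It must then be perturbed so that $R_++R_-\le Q_n$ exactly, while still commuting with $e$.
\item[(d)] The $t=0$ endpoint must be split, and the coupling of the negative map between $R_-$ and $R$ removed by a convex u.c.p. path of controlled defect. This is possible only because $R_-$ is the Stinespring image of the original space, and so has commutator of order $r$ with the absorbed representation. After that the two $R$-blocks cancel.
\end{itemize}
\Cref{thm:QD-Cuntz-representatives}(ii) supplies none of this. It concerns exact Kasparov classes of $*$-representations, and it returns a new normalized representative after stabilization, with no control of the endpoint fibers.

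There are four further points.
\begin{itemize}
\item Your cycles are u.c.p. maps defined only on the finite-dimensional operator system $S$, so ``by Stinespring'' presupposes a global extension that keeps compact commutators with $T$. Over $C([0,1])$ the relevant target $\{a:[a,T]\in\mathcal K\}\subset\mathcal L_{C([0,1])}(\mathcal E)$ is not injective, so no Arveson-type extension is available. This is where the paper uses nuclearity beyond absorption: the completely positive approximation property in \Cref{lem:fdap-global}, at the price of the larger source system $S_1$.
\item $\mathcal J_T$ almost commutes with $\varphi$ only after a quasicentral cutoff has made $[T,\varphi(a)]$ norm-small. Every later norm-small change must then be made inside the relative picture of \Cref{lem:fdap-relative}, where compactness of all defects follows exactly from $p-e\in\mathcal K$.
\item The tail is exactly degenerate only after two adjustments. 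The spectral cut must be taken of $e+Q_n(p-e)Q_n$, and $\pi$ must be interpolated to its $Q_n$-block-diagonal part.
\item Exact compactness under norm-small changes, not absorption, is the real obstacle. The absorption accuracy may be chosen after the cycle; only the source control, fixed in advance by \eqref{eq:fdap-budget}, must precede it. So no ultraproduct argument is needed.
\end{itemize}

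Finally, do not define $\lambda_{d,c}$ as ``the compression''. That forces you to compare compressions of stably homotopic infinite-dimensional cycles made with independent choices. Instead, choose $t\succeq c$ by (ii), then $d\succeq t$ by (i) with target $t$. Set $\lambda_{d,c}(z)=r^Q_{t,c}(v)$ for any $v\in Q_t$ with $\kappa_t(v)=r^F_{d,t}(z)$. Well-definedness, additivity, both composites and coherence then follow formally from (i) and (ii), which is how the paper proceeds.
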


The proof establishes equality of controlled Fredholm classes. It does
not infer such equality from agreement of finitely many index pairings.
The main construction replaces a cycle by a pair of projections whose
difference is compact. Quasidiagonal compression is then applied to
this relative compact operator. The relevant absorption theorem is
Dadarlat--Eilers \cite[Definition~2.7 and Proposition~2.18]{dadarlat_eilers_2002}, together
with their Remark~2.3. Their Theorem~5.5, which realizes a class on a
prescribed $K$-triple, is not needed here.

\subsubsection{Estimates and relative projection cycles}

For a u.c.p. map $L:A\to\mathcal L(\mathcal H)$ and a finite set
$\Omega$ of contractions, put
\[
 \operatorname{def}_{\Omega}(L)
 =\max_{a,b\in\Omega}\|L(ab)-L(a)L(b)\|.
\]
If $L_0,L_1$ are u.c.p. and $L_t=(1-t)L_0+tL_1$, then
\begin{align}\label{eq:fdap-convex}
 L_t(ab)-L_t(a)L_t(b)
 ={}&(1-t)\bigl(L_0(ab)-L_0(a)L_0(b)\bigr)\notag\\
 &+t\bigl(L_1(ab)-L_1(a)L_1(b)\bigr)\notag\\
 &+t(1-t)(L_0(a)-L_1(a))(L_0(b)-L_1(b)).
\end{align}
Also, changing a u.c.p. map by at most $u$ on
$\Omega\cup\Omega^2$ increases its multiplication defect by at most
$3u$.

If $p$ is a projection, compression $L_p(a)=pL(a)p|_{p\mathcal H}$ is
unital on its own range, and
\begin{equation}\label{eq:fdap-compression}
 \operatorname{def}_{\Omega}(L_p)
 \leq \operatorname{def}_{\Omega}(L)
       +\max_{a\in\Omega}\|[p,L(a)]\|.
\end{equation}
Indeed the difference from the compressed defect of $L$ is
$pL(a)(1-p)L(b)p$.

We shall use these identities also on Hilbert $C(P)$-modules. All the
norms are then module norms, so the estimates are uniform on $P$.

\begin{lemma}[Relative projection cycle]\label{lem:fdap-relative}
Let $p,e\in\mathcal L(\mathcal H)$ be projections with
$p-e\in\mathcal K(\mathcal H)$. Let $L:A\to\mathcal L(\mathcal H)$ be
u.c.p. and satisfy $[e,L(a)]=0$ for all $a\in A$. Then
\begin{equation}\label{eq:fdap-relative-cycle}
 \mathcal R(p,e;L)=
 \left(
 p\mathcal H\oplus e\mathcal H,
 \begin{pmatrix}pL(\cdot)p&0\\0&eL(\cdot)e\end{pmatrix},
 \begin{pmatrix}0&pe\\ep&0\end{pmatrix}
 \right)
\end{equation}
is a Fredholm u.c.p. cycle whenever its two compressed maps satisfy the
required finite multiplication test. The first summand has even degree
and the second has odd degree.

The construction respects norm-continuous paths of $p$ and $L$ for which
$p-e$ remains compact and $L$ commutes with $e$. If $p=e$, the cycle is
degenerate.
\end{lemma}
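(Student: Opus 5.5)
We verify the three conditions of a u.c.p. Fredholm cycle in \Cref{def:local-ucp-pictures} directly from the matrix in \eqref{eq:fdap-relative-cycle}. Write $T=\begin{pmatrix}0&pe\\ep&0\end{pmatrix}$, viewed as an operator on $p\mathcal H\oplus e\mathcal H$; more precisely its upper-right entry is $pe|_{e\mathcal H}:e\mathcal H\to p\mathcal H$ and its lower-left entry is $ep|_{p\mathcal H}$. The grading is the one stated, so $T$ is odd and the diagonal map $a\mapsto pL(a)p\oplus eL(a)e$ is even. Each compression is u.c.p. on its own range (unital because $p$, resp.\ $e$, is the identity of that corner), hence so is the direct sum. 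The multiplication test is assumed, so it remains to check compactness of $T-T^*$, $T^2-1$, and $[T,\varphi(a)]$.

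First, $T^*=T$ exactly, since $(pe)^*=ep$. Next,
\[
 T^2-1=\begin{pmatrix}pep-p&0\\0&epe-e\end{pmatrix}.
\]
Write $k=p-e\in\mathcal K(\mathcal H)$. Then $pep-p=p(e-p)p=-pkp$ and $epe-e=e(p-e)e=eke$, both compact. For the commutator, the off-diagonal entries of $[T,\varphi(a)]$ are
\[
 pe\cdot eL(a)e-pL(a)p\cdot pe
 =peL(a)e-pL(a)pe
\]
on $e\mathcal H$ (together with its symmetric counterpart). Using $[e,L(a)]=0$, the first term is $pL(a)e$. The second term equals $pL(a)(e+k)e=pL(a)e+pL(a)ke$. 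The difference is therefore $-pL(a)ke$, which is compact. The lower-left entry is handled the same way: $epL(a)p-eL(a)ep=eL(a)(e+k)p-eL(a)ep$, which reduces to $eL(a)kp$, again compact. This is the only place where the hypothesis $[e,L(a)]=0$ enters, and it is the step to get exactly right; everything else is bookkeeping.

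For paths, we run the same computation in the Hilbert $C([0,1])$-module picture. Consider $C([0,1],\mathcal H)$ with the projections $p_t$ and $e$ and the maps $L_t$. Fiberwise, the ranges $p_t\mathcal H$ form a Hilbert $C[0,1]$-module, namely the range of the projection $(p_t)_t$ in $\mathcal L(C([0,1],\mathcal H))$. The cycle is then the module-level version of \eqref{eq:fdap-relative-cycle}. The defects computed above become $-p_tk_tp_t$, $ek_te$, and $p_tL_t(a)k_te$, where $k_t=p_t-e$ is a norm-continuous compact-valued family. Hence these defects lie in $C([0,1],\mathcal K)=\mathcal K_{C[0,1]}$, and the result is a homotopy in the sense of \Cref{def:FQR-cycle}. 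The multiplication estimates are module norms, so they hold uniformly by hypothesis.

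Finally, suppose $p=e$. Then $T=\begin{pmatrix}0&e\\e&0\end{pmatrix}$ on $e\mathcal H\oplus e\mathcal H$, so $T^2=1$. The two compressed maps coincide, and $T$ commutes with the diagonal map exactly. Hence all defects vanish and the cycle is degenerate.
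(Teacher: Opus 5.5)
Your route is the paper's: exact self-adjointness, $T^2-1=\operatorname{diag}(-pkp,\,eke)$, compact off-diagonal commutator entries, the same formulas over $C([0,1])$ for paths, and exact degeneracy when $p=e$. Your upper-right entry $-pL(a)ke$ is correct.

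There is a slip in the lower-left entry. The step $epL(a)p=eL(a)(e+k)p$ moves $L(a)$ past $p$, which is not allowed, since only $e$ commutes with $L(a)$. Your resulting expression is in fact identically zero: $eL(a)e=eL(a)$ gives $eL(a)kp=eL(a)p-eL(a)ep=0$. The true entry need not vanish. For example, take $e=\operatorname{diag}(0,1)$ on $\mathbb C^2$, let $p$ be the projection onto $\mathbb C(1,1)$, and let $L(a)=\operatorname{diag}(\alpha,\beta)$ with $\alpha\neq\beta$.

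The correct computation uses $ep=e+ek$ and $eL(a)ep=eL(a)p$, which gives
\[
 epL(a)p-eL(a)ep=ekL(a)p=e(p-e)L(a)p .
\]
This is exactly the identity in the paper, and it is compact. Alternatively, you can obtain it from your upper-right entry by taking adjoints and replacing $a$ by $a^*$, which is how the paper handles the other corner.

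In your path paragraph, add $ek_tL_t(a)p_t$ to the list of defects. With these corrections your conclusions stand unchanged.
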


\begin{proof}
The operator is self-adjoint and its square differs from the unit
projection by compact operators, because $p-e$ is compact. For the lower
left commutator one has the exact identity
\[
 ep\,pL(a)p-eL(a)e\,ep=e(p-e)L(a)p\in\mathcal K.
\]
The upper right commutator follows by taking adjoints and replacing $a$
by $a^*$. This proves the Fredholm conditions. Equation
\eqref{eq:fdap-compression} supplies the multiplication estimate. If $p=e$,
the two maps coincide and the off-diagonal operator is the identity.
The same formulas over a parameter algebra prove the path assertion.
\end{proof}

This lemma is important because a norm-small perturbation of an
arbitrary Fredholm cycle need not preserve compact commutators. In the
relative picture above, compactness is preserved \emph{exactly} during
all of the completely positive interpolations used below.

\subsubsection{From local maps to global maps}

\begin{lemma}[Nuclear globalization at a finite control]\label{lem:fdap-global}
Fix a finite-dimensional operator system $S_0\subset A$, a finite set
$L\subset S_0$, and $u>0$. If $A$ is nuclear, there is a
finite-dimensional operator system $S_1\supset S_0$ such that every
u.c.p. map $\varphi:S_1\to D$ admits a global u.c.p. map
$\Phi:A\to D$ with
\[
 \max_{a\in L}\|\Phi(a)-\varphi(a)\|<u.
\]
Here $D$ is any unital $C^*$-algebra, including a compact-commutator
algebra for one parameterized Fredholm cycle.
\end{lemma}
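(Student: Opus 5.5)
We argue by compactness, contradicting the existence of a bad sequence, in the same way as in \Cref{lem:finite-system-cb}. The underlying input is the Choi--Effros lifting theorem: a u.c.p. map from a nuclear $C^*$-algebra into a quotient lifts to a u.c.p. map. The one nontrivial feature of the statement is uniformity, namely that $S_1$ must be independent of $D$ and $\varphi$. A contradiction argument delivers exactly this.

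Fix an increasing sequence of finite-dimensional operator systems $S_0\subseteq S^{(1)}\subseteq S^{(2)}\subseteq\cdots$ whose union is dense in $A$; this is possible because $A$ is separable. Suppose that no $S^{(k)}$ works. Then for each $k$ there are a unital $C^*$-algebra $D_k$ and a u.c.p. map $\varphi_k:S^{(k)}\to D_k$ such that every u.c.p. $\Phi:A\to D_k$ satisfies $\max_{a\in L}\|\Phi(a)-\varphi_k(a)\|\geq u$. By Arveson's extension theorem, after representing $D_k\subseteq B(H_k)$, each $\varphi_k$ extends to a u.c.p. map into $B(H_k)$. We will not need values in $D_k$ outside $S^{(k)}$ at this stage.

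Next we build an asymptotic u.c.p. map into $\mathcal Q=\prod_k D_k/\bigoplus_k D_k$. For $a\in\bigcup_j S^{(j)}$, define $\Theta(a)$ as the class of the sequence $(\varphi_k(a))_k$, where we put $0$, or $\varphi_k$ of a nearby element, for the finitely many $k$ with $a\notin S^{(k)}$; this ambiguity disappears in the quotient. Each $\varphi_k$ is contractive, so $\Theta$ is a unital map that is completely positive on each $S^{(j)}$, since the matrix amplifications are positive coordinatewise. Hence $\Theta$ is completely contractive and extends by continuity to a u.c.p. map $\Theta:A\to\mathcal Q$. Because $A$ is nuclear, the Choi--Effros lifting theorem gives a u.c.p. lift $\widetilde\Theta=(\Phi_k)_k:A\to\prod_k D_k$, with each $\Phi_k:A\to D_k$ u.c.p. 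For each $a$ in the finite set $L\subseteq S_0$, $\Theta(a)$ is the class of $(\varphi_k(a))_k$, so $\|\Phi_k(a)-\varphi_k(a)\|\to0$. Taking the maximum over the finite set $L$, we get $\max_{a\in L}\|\Phi_k(a)-\varphi_k(a)\|<u$ for large $k$, which contradicts the choice of $\varphi_k$.

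The main obstacle is justifying the extension of $\Theta$ from the union of the $S^{(j)}$ to all of $A$ as a u.c.p. map. We handle this by noting that, at each matrix level, $\Theta$ is completely positive on the dense union of operator systems, and that complete positivity of unital maps on operator systems is equivalent to complete contractivity. We must also check that the quotient $\mathcal Q$ is a legitimate target for Choi--Effros: this is the lifting problem $\prod_k D_k\to\mathcal Q$ with separable nuclear domain $A$, which is exactly the setting of that theorem. Finally, the remark about compact-commutator algebras and parameterized cycles requires no extra work. The argument never uses any structure of $D$ beyond its being a unital $C^*$-algebra, so we may take $D=\mathcal D_T^P$ as in \Cref{lem:CP-cycle-correction}.
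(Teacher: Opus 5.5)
Your argument is correct, but it takes a different route from the paper.

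The paper argues directly from the completely positive approximation property. It chooses u.c.p.\ maps $\alpha:A\to M_k$ and $\beta:M_k\to A$ with $\|\beta\alpha(a)-a\|<u$ for $a\in L$. It takes $S_1$ to be the operator system spanned by $S_0$ and $\beta(M_k)$, and it sets $\Phi=\varphi\circ\beta\circ\alpha$. Since $\beta\alpha(a)-a\in S_1$ and $\varphi$ is contractive, the estimate is immediate, and uniformity in $D$ is automatic because $\alpha,\beta$ do not involve $D$.

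You instead run a compactness argument into $\prod_kD_k/\bigoplus_kD_k$ and invoke Choi--Effros. This is exactly the template of \Cref{lem:LP-CP-replacement}, with nuclearity entering only through the lifting property.

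Your route buys generality: the same proof gives the lemma for every separable unital $A$ with the lifting property, nuclear or not. It also costs something:
\begin{enumerate}
\item It needs separability, both for the exhausting sequence and for Choi--Effros. This is available in this subsection, but the CPAP argument does not use it.
\item It rests on a theorem whose own proof goes through the CPAP.
\item It is purely existential: neither $S_1$ nor $\Phi$ is explicit.
\end{enumerate}

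The paper's formula $\Phi=\varphi\beta\alpha$ is linear and natural in $\varphi$. It commutes with direct sums and with evaluation at parameter values, and it takes values in $\varphi(S_1)$. The paper uses these features right afterwards: degenerate cycles stay degenerate, and in the interval argument the globalization respects the endpoint direct-sum decompositions. With your version you would recover them by building the constraints into the target algebra $D$. That is possible because $D$ is arbitrary, but it is an extra step.

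Finally, the Arveson extension in your second paragraph is never used and can be dropped.
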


\begin{proof}
Use the unital completely positive approximation property to choose
u.c.p. maps
\[
 A\xrightarrow{\alpha}M_k\xrightarrow{\beta}A,
 \qquad \|\beta\alpha(a)-a\|<u\quad(a\in L).
\]
Take $S_1$ to contain $S_0$ and the finite-dimensional operator system
$\beta(M_k)$. Put $\Phi=\varphi\beta\alpha$. Both terms used in
$\varphi(\beta\alpha(a)-a)$ lie in $S_1$, so contractivity gives the
estimate. The same maps $\alpha,\beta$ work for every $D$.
\end{proof}

For a Fredholm cycle take
\[
 D=D_T=\{a\in\mathcal L(\mathcal E)_{\mathrm{even}}:
                  [a,T]\in\mathcal K(\mathcal E)\}.
\]
This is a unital $C^*$-algebra, since $T-T^*$ is compact. Thus the global
map still has compact commutators with $T$ on \emph{all} of $A$.
The convex path from $\varphi|_{S_0}$ to $\Phi|_{S_0}$ remains inside
$D_T$. If the original cycle is degenerate, the construction instead
takes values in the exact commutant of $T$ and preserves degeneracy.
For a parameterized cycle use the corresponding algebra in
$\mathcal L_{C(P)}(\mathcal E)$.

\subsubsection{The relative index projection}

Fix the target $c=(S_0,\Omega,\varepsilon)$. Choose
\begin{equation}\label{eq:fdap-budget}
 0<r<\min\{\varepsilon/1000,1/1000\},\qquad
 0<u<r^2/12,\qquad 0<\delta<r^2/4.
\end{equation}
Apply Lemma~\ref{lem:fdap-global} to
$L=\Omega\cup\Omega^2$ and $u$, and take
$d=(S_1,\Omega,\delta)$. These choices precede the variable cycle.
Starting with a cycle at $d$, the global map $\Phi$ satisfies
\begin{equation}\label{eq:fdap-global-defect}
 \operatorname{def}_{\Omega}(\Phi)<\Delta:=\delta+3u<r^2.
\end{equation}
The preceding convex path has defect less than $r^2$.

Normalize $T$ to an odd self-adjoint contraction, changing it only by a
compact operator. This is a Fredholm homotopy with the u.c.p. map fixed.
An even quasicentral positive compact contraction $h$ then gives
\[
 B=(1-h)T(1-h).
\]
One can arrange that $B$ has arbitrarily small commutators with the
finitely many $\Phi(a)$, $a\in L$. Indeed
\[
 \|[B,\Phi(a)]\|
 \leq 2\|T\|\,\|[h,\Phi(a)]\|
     +\|(1-h)[T,\Phi(a)](1-h)\|.
\]
The first term tends to zero by quasicentrality and the second because
$[T,\Phi(a)]$ is compact. The change from $T$ to $B$ is compact and
therefore gives another Fredholm homotopy.

Put $d_B=(1-B^2)^{1/2}$. We choose $h$ far enough out that
\begin{equation}\label{eq:fdap-small-commutators}
 \|[B,\Phi(a)]\|<r,\qquad
 \|[d_B,\Phi(a)]\|<r\quad(a\in L).
\end{equation}
There is no operator-Lipschitz assumption in the second assertion.
For $f(t)=(1-t^2)^{1/2}$ on $[-1,1]$, approximate $f$ by a polynomial
$p(t)=\sum c_jt^j$. For every contraction $a$,
\[
 \|[f(B),a]\|
 \leq2\|f-p\|_\infty
    +\left(\sum_jj|c_j|\right)\|[B,a]\|.
\]
This proves a modulus tending to zero uniformly in the Hilbert module.

Write
\[
 B=\begin{pmatrix}0&S^*\\S&0\end{pmatrix},\quad
 d_+=(1-S^*S)^{1/2},\quad d_-=(1-SS^*)^{1/2},\quad
 e_0=\begin{pmatrix}0&0\\0&1\end{pmatrix}.
\]
The operators $d_+,d_-$ are compact. The matrix
\[
 W=\begin{pmatrix}d_+&-S^*\\S&d_-\end{pmatrix}
\]
is unitary. This follows from $Sd_+=d_-S$, which follows by polynomial
approximation from $S(S^*S)^j=(SS^*)^jS$.
Define
\begin{equation}\label{eq:fdap-index-projection}
 p_0=W(1-e_0)W^*
 =\begin{pmatrix}
 d_+^2&d_+S^*\\Sd_+&SS^*
 \end{pmatrix}.
\end{equation}
Then
\begin{equation}\label{eq:fdap-relative-compact}
 p_0-e_0\in\mathcal K(\mathcal E),\qquad
 \|[p_0,\Phi(a)]\|<4r\quad(a\in L).
\end{equation}
For the estimate write $W=d_B-\Gamma B$, where
$\Gamma=\operatorname{diag}(1,-1)$ commutes with $\Phi$; use
\eqref{eq:fdap-small-commutators} and the fact that $1-e_0$ commutes with
$\Phi$.

The original cycle is stably homotopic, at the target control, to
$\mathcal R(p_0,e_0;\Phi)$. Here is the class verification. The isometry
\[
 V_+:\mathcal E^+\longrightarrow p_0\mathcal E,
 \qquad V_+\xi=(d_+\xi,S\xi)
\]
is onto. Under $V_+$ and the identification
$e_0\mathcal E=\mathcal E^-$, the relative Fredholm operator is the
original $B$, while the positive map becomes
\[
 \widetilde\Phi^+(a)
   =d_+\Phi^+(a)d_++S^*\Phi^-(a)S.
\]
Its difference from $\Phi^+(a)$ is
\begin{equation}\label{eq:fdap-positive-difference}
 d_+[\Phi^+(a),d_+]
       +S^*\bigl(\Phi^-(a)S-S\Phi^+(a)\bigr).
\end{equation}
This is compact for every $a\in A$ and has norm less than $2r$ for
$a\in L$. Hence the convex path between the two positive maps has
compact commutators with the unchanged Fredholm operator. Its
multiplication defect is less than $\Delta+6r<7r<\varepsilon$.

The projection $p_0-e_0$ retains the Fredholm index. For example, when
$B=0$ on a finite-dimensional graded space,
$p_0=\operatorname{diag}(1,0)$ and $e_0=\operatorname{diag}(0,1)$.
Their rank difference is $\dim\mathcal E^+-\dim\mathcal E^-$.

\subsubsection{Stinespring dilation without losing the relative class}

Take an even Stinespring dilation
\[
 \Phi(a)=V^*\rho(a)V,
 \qquad V:\mathcal E\longrightarrow\mathcal H
\]
with $\rho$ a unital representation. This construction also works on
Hilbert $C(P)$-modules: complete $A\odot\mathcal E$ using the inner
product determined by $\Phi(a^*b)$, let $\rho$ act by left multiplication,
and set $V\xi=1\otimes\xi$. Separability gives a countably generated
module. Put $P=VV^*$, and let $e$ be the projection onto the negative
Stinespring summand. Thus $[e,\rho(a)]=0$, $[e,P]=0$, and $eV=Ve_0$.
Set
\begin{equation}\label{eq:fdap-lifted-projection}
 \widehat p=Vp_0V^*+e(1-P).
\end{equation}
This is a projection and
\[
 \widehat p-e=V(p_0-e_0)V^*\in\mathcal K(\mathcal H).
\]

For $a\in\Omega$, Stinespring's identity gives
\begin{equation}\label{eq:fdap-stinespring-estimate}
 \|(1-P)\rho(a)P\|^2
   =\|\Phi(a^*a)-\Phi(a)^*\Phi(a)\|<\Delta<r^2.
\end{equation}
Applying the identity also to $a^*$ controls the other off-diagonal
corner. Here $\Omega$ is self-adjoint, so both required pairs are tested.

Define the block-diagonal u.c.p. map
\[
 \rho^{\mathrm{bd}}(a)=P\rho(a)P+(1-P)\rho(a)(1-P).
\]
It is within $r$ of $\rho$ on $\Omega$, has multiplication defect at
most $r^2$, and commutes with $e$. By \eqref{eq:fdap-lifted-projection},
\[
 \mathcal R(\widehat p,e;\rho^{\mathrm{bd}})
 \cong\mathcal R(p_0,e_0;\Phi)\oplus\text{a degenerate cycle}.
\]
The degenerate cycle is the common summand $e(1-P)\mathcal H$ in both
degrees.

With respect to $P\mathcal H\oplus(1-P)\mathcal H$, the diagonal part
of $[\widehat p,\rho(a)]$ is $[p_0,\Phi(a)]$ and zero. Its off-diagonal
parts contain $p_0-e_0$ multiplied by the off-diagonal parts of $\rho(a)$.
Since the difference of two projections has norm at most one,
\begin{equation}\label{eq:fdap-lifted-commutator}
 \|[\widehat p,\rho(a)]\|<6r\quad(a\in\Omega).
\end{equation}
Interpolate from $\rho^{\mathrm{bd}}$ to $\rho$. Equation
\eqref{eq:fdap-convex} bounds the multiplication defect of the interpolating
map by $2r^2$, and its commutator with $\widehat p$ by $6r$.
Equations \eqref{eq:fdap-compression} and \eqref{eq:fdap-budget} show that the
relative cycles remain within the target control. Their compact
commutators are exact by Lemma~\ref{lem:fdap-relative}. Thus the original
cycle is stably homotopic to
$\mathcal R(\widehat p,e;\rho)$.

\subsubsection{Absorption into a quasidiagonal representation}

Choose a faithful unital quasidiagonal representation
$\sigma_0:A\to\mathcal L(H)$ of infinite multiplicity. It has finite-rank
projections $q_n\to1$ strongly with
\[
 \|[q_n,\sigma_0(a)]\|\longrightarrow0\quad(a\in A).
\]
Infinite multiplicity does not change this property: use the first $n$
copies of a quasidiagonal compression and then a diagonal sequence.

We use the following precise external result. If $C$ is a unital
coefficient algebra, then the admissible scalar representation
$\sigma_0\otimes1_C$ on $H\otimes C$ is nuclearly absorbing
\cite[Proposition~2.18]{dadarlat_eilers_2002}. A representation of a nuclear source is
strictly nuclear \cite[Remark~2.3]{dadarlat_eilers_2002}. Consequently, for every unital
representation $\tau:A\to\mathcal L_C(\mathcal M)$, every finite
$L\subset A$, and every $v>0$, there is a unitary
\[
 U:\mathcal M\oplus(H\otimes C)\longrightarrow H\otimes C
\]
with
\[
 \max_{a\in L}
 \|U(\tau(a)\oplus\sigma_0(a)\otimes1_C)U^*
                   -\sigma_0(a)\otimes1_C\|<v.
\]
We use $C=\mathbb C$ for a single cycle and $C=C([0,1])$ for a
homotopy. This is not an invocation of a finite-dimensional existence
theorem for prescribed $K$-theory pairings.

Add the relative pair with equal projections
$e_{\sigma}=\operatorname{diag}(0,1)$ on two copies of the scalar
representation; its relative cycle is degenerate. Apply absorption
separately in the two ambient grading summands, with accuracy $r$ on
$L=\Omega\cup\Omega^2$. We obtain a projection $p$ on $H\oplus H$, a
reference projection $e=\operatorname{diag}(0,1)$, and a representation
$\rho'$ with
\[
 p-e\in\mathcal K(H\oplus H),\qquad [e,\rho'(a)]=0,
 \qquad \|\rho'(a)-\pi(a)\|<r\quad(a\in L),
\]
where $\pi=\sigma_0\oplus\sigma_0$. The relative cycle for $\rho'$ is
unitarily equivalent to the preceding one plus a degenerate cycle.

Interpolate between $\rho'$ and $\pi$. Both endpoints are
representations, so the multiplication defect of the interpolation is
at most $r^2/4$ on $\Omega$. Moreover
\[
 \|[p,\pi(a)]\|<8r\quad(a\in\Omega),
\]
and the same upper bound holds along the interpolation. The resulting
relative cycles have defect less than $9r$. They have exact compact
commutators by Lemma~\ref{lem:fdap-relative}. Thus we have reduced to
\begin{equation}\label{eq:fdap-fixed-representation}
 \mathcal R(p,e;\pi),\qquad p-e\in\mathcal K,
 \qquad \max_{a\in\Omega}\|[p,\pi(a)]\|<8r.
\end{equation}

\subsubsection{Finite-dimensional compression and exact cancellation of the tail}

Put $Q_n=q_n\oplus q_n$. These projections commute with $e$. Since
$p-e$ is compact,
\[
 \|(1-Q_n)(p-e)\|+\|(p-e)(1-Q_n)\|\longrightarrow0.
\]
The positive contractions
\[
 a_n=e+Q_n(p-e)Q_n=Q_npQ_n+e(1-Q_n)
\]
therefore converge in norm to $p$. For large $n$, put
\begin{equation}\label{eq:fdap-finite-projection}
 p_n=\chi_{(1/2,\infty)}(a_n)=f_n+e(1-Q_n),
 \qquad f_n\leq Q_n.
\end{equation}
Choose $n$ so large that the spectral-projection path
\[
 p_t=\chi_{(1/2,\infty)}\bigl((1-t)p+ta_n\bigr),\qquad0\leq t\leq1,
\]
satisfies $\|p_t-p\|<r$, and also
$\max_{a\in\Omega}\|[Q_n,\pi(a)]\|<r$.
Each $p_t-e$ is compact. The relative cycles
$\mathcal R(p_t,e;\pi)$ give a homotopy, with multiplication defect
less than $10r$.

Now delete the off-diagonal corners of $\pi$ relative to $Q_n$:
\[
 \pi^{(n)}(a)=Q_n\pi(a)Q_n+(1-Q_n)\pi(a)(1-Q_n).
\]
Its distance from $\pi$ on $\Omega$ is less than $r$, and its
multiplication defect is at most $r^2$. Interpolate from $\pi$ to
$\pi^{(n)}$. The interpolation has defect at most $2r^2$ and commutator
with $p_n$ less than $12r$. Hence the relative cycles have defect less
than $13r<\varepsilon$.

At the endpoint, both the projections and the map are block diagonal.
The part on $(1-Q_n)\mathcal H$ has identical projection $e(1-Q_n)$,
identical u.c.p. maps, and off-diagonal identity. It is exactly
degenerate, not a discarded norm-small tail. The remaining cycle is
finite-dimensional, on
\[
 f_n\mathcal H\oplus eQ_n\mathcal H.
\]
Its Fredholm operator can be deformed to zero because every operator
on a finite-dimensional space is compact. The resulting class is
\begin{equation}\label{eq:fdap-finite-output}
\begin{gathered}
 [\varphi_{+,n}]-[\varphi_{-,n}],\\
 \varphi_{+,n}(a)=f_n\pi(a)f_n|_{f_n\mathcal H},\qquad
 \varphi_{-,n}(a)=eQ_n\pi(a)eQ_n|_{eQ_n\mathcal H}.
\end{gathered}
\end{equation}
The maps are unital on their own ranges. Their ranks need not agree.
This proves Theorem~\ref{thm:FD-Cuntz-approx}(i).

All multiplication bounds above are less than $13r$, except for the
initial replacement bound $7r$, and therefore are strictly below
$\varepsilon$. Only the operator system $S_1$ and $\delta$ were needed
as source control. The compact cutoff, the absorption unitary and $n$
may depend on the cycle, but the source control does not.

\subsubsection{Relative homotopies and the vanishing assertion}

We now prove Theorem~\ref{thm:FD-Cuntz-approx}(ii); in particular, homotopy
compatibility is not inferred merely from pointwise existence.

Fix the target $c$ and choose the stronger control as in the preceding
construction, using the same budget \eqref{eq:fdap-budget}. Suppose that a
finite-dimensional graded cycle $v$ at this stronger control has zero
class in the Fredholm group. By stable homotopy, after adding degenerate
cycles there is one homotopy from $v\oplus D_0$ to a degenerate cycle
$D_1$. Regard it as a cycle on a Hilbert $C([0,1])$-module. Its
finite-dimensional summand at the first endpoint has operator zero.

Apply nuclear globalization once, with coefficient algebra
$D_T\subset\mathcal L_{C([0,1])}(\mathcal E)$. At the endpoints it
preserves the direct sums and the degeneracy of $D_0,D_1$. Normalize
the Fredholm operator using functions which fix $0,1,-1$, so its
endpoint operators are unchanged.

The quasicentral cutoff can also be made relative to the endpoints.
After normalization, all the compact commutators $[T,\Phi(a)]$ vanish
at both endpoints. Work in the closed ideal
\[
 J=\{k\in\mathcal K_{C([0,1])}(\mathcal E):k|_0=k|_1=0\}.
\]
It is invariant under multiplication by the adjointable operators in
use. A quasicentral approximate identity of $J$, chosen even, makes
$(1-h)T(1-h)$ satisfy \eqref{eq:fdap-small-commutators} without changing its
endpoint operators. Only the commutators, not the endpoint Fredholm
defects, need to belong to $J$.

Carry out the index projection, Stinespring dilation and absorption
on this one module. Absorption uses
$\sigma_0\otimes1_{C([0,1])}$ and gives one $C([0,1])$-linear unitary
in each ambient grading. We obtain
\[
 p(t)-e\in C([0,1],\mathcal K(H\oplus H)),
 \qquad\sup_t\max_{a\in\Omega}\|[p(t),\pi(a)]\|<8r.
\]
At the degenerate endpoint $p(1)=e$. At the initial endpoint
\begin{equation}\label{eq:fdap-endpoint-form}
 p(0)=e+R_+-R_-,\qquad
 R_+\leq1-e,\quad R_-\leq e,
\end{equation}
where $R_+,R_-$ are the images of the finite-dimensional positive and
negative summands of $v$. The infinite-dimensional degenerate summand
contributes no difference from $e$.

A single finite-rank cutoff can be chosen for the whole homotopy. Indeed
$\{p(t)-e:0\leq t\leq1\}$ is a compact subset of the compact operators,
so its tails are uniformly small. The quasidiagonal bounds for $Q_n$
are already uniform because $\pi$ is fixed.
We may moreover require
\begin{equation}\label{eq:fdap-contain-endpoints}
 R_++R_-\leq Q_n.
\end{equation}
To justify this last requirement, a strongly convergent sequence of
finite-rank projections almost contains any prescribed finite-dimensional
space. In each ambient grading, rotate the range of its compression
onto the prescribed space by a unitary tending to $1$. Conjugating
$Q_n$ by these two small unitaries makes the containment exact, keeps
$[Q_n,e]=0$, and changes its commutators with $\pi(a)$ by a quantity
tending to zero. Monotonicity of the resulting cutoffs is not needed.

Apply \eqref{eq:fdap-finite-projection} uniformly. Since
\eqref{eq:fdap-contain-endpoints} holds,
\[
 f_n(0)=eQ_n+R_+-R_-,\qquad f_n(1)=eQ_n.
\]
The finite projection $f_n(t)$ defines a finite-dimensional vector bundle
over $[0,1]$. It is trivial, and a trivialization turns
\eqref{eq:fdap-finite-output} into a continuous finite-dimensional u.c.p.
homotopy. Its endpoint at $1$ has identical positive and negative maps
and therefore has zero class in $Q_c$.

We verify its endpoint at $0$, rather than presuming that an arbitrary
compression fixes the initial finite-dimensional class. Put
\[
 R=eQ_n-R_-.
\]
The positive space at $0$ is $R_+\mathcal H\oplus R\mathcal H$, and the
negative space is $R_-\mathcal H\oplus R\mathcal H$. The positive map
is already block diagonal in this decomposition because $\pi$ commutes
with $e$. The off-diagonal part of the negative map has norm less than
$3r$ on $\Omega$. To see this, before absorption apply Stinespring's
identity to the original finite-dimensional negative summand. Its
multiplication defect is less than $r^2$, so its Stinespring range has
commutator less than $r$ with the dilated representation. Absorption
changes the commutator bound by at most $2r$.

Delete this finite off-diagonal part through the convex u.c.p. path.
The two occurrences of the map on $R\mathcal H$ are now identical and
cancel in the Grothendieck group. The maps on $R_+\mathcal H$ and
$R_-\mathcal H$ are, after their identifying unitaries, within $r+u$ of
the original finite-dimensional maps on
$\Omega\cup\Omega^2$. This is the sum of the absorption error and the
initial nuclear globalization error. Convex paths connect these pairs.
The common map on $R\mathcal H$ has defect less than $13r$, since it
is a direct summand of the positive endpoint map. The compression to
$R_-\mathcal H$ has defect at most $9r^2$ by its $3r$ commutator
bound with the representation. Thus the block-diagonal negative map
has defect less than $13r$. Formula \eqref{eq:fdap-convex} bounds the
interpolation that deletes its off-diagonal entries by
$13r+9r^2/4<14r$. The last convex interpolations, whose endpoints are
within $r+u$ on the tested elements, also remain below $50r<\varepsilon$.
All of these constants are independent of the dimensions.
Thus the finite endpoint at $0$ represents exactly the restriction of
$v$ in $Q_c$. Since it is joined to zero by a finite-dimensional
homotopy, its class vanishes. This proves (ii).

\begin{remark}
For a general compact metrizable parameter space $P$, the construction
produces finite-dimensional vector bundles over $P$. They need not be
trivial bundles. For $P=[0,1]$ they are trivial, which is precisely the
parameterized assertion required for the stable-homotopy groups in
Section~\ref{sec:qd}. 
\end{remark}

\subsubsection{Homomorphisms, compatibility and inverse limits}

We prove part (iii) with all intermediate controls displayed.
Fix $c$. Choose $t\succeq c$ using part (ii), so that
\begin{equation}\label{eq:fdap-vanish-to-c}
 \kappa_t(x)=0\quad\Longrightarrow\quad r^Q_{t,c}(x)=0.
\end{equation}
Choose $d\succeq t$ using part (i) with target $t$.
For $z\in F_d$, choose $v\in Q_t$ such that
\[
 \kappa_t(v)=r^F_{d,t}(z)
\]
and define
\begin{equation}\label{eq:fdap-lambda-definition}
 \lambda_{d,c}(z)=r^Q_{t,c}(v).
\end{equation}
If $v'$ is another choice, then $\kappa_t(v-v')=0$ and
\eqref{eq:fdap-vanish-to-c} gives the same output. This also proves
independence from representatives of $z$. For $z_1,z_2$ one may choose
$v_1+v_2$ as a lift of $z_1+z_2$, so $\lambda_{d,c}$ is a homomorphism.
Its image under $\kappa_c$ is $r^F_{d,c}(z)$. If $z=\kappa_d(x)$, use
$v=r^Q_{d,t}(x)$; this proves both identities in
\eqref{eq:fdap-composites}.

For compatibility let $c\preceq c'$ and let $t,t'$ and $d,d'$ be the
controls just used. Choose $w\succeq t,t'$ and then a source
$e\succeq w,d,d'$ such that every class in $F_e$ has a finite-dimensional
representative at $w$. For $z\in F_e$ choose that representative
$v\in Q_w$. Its restrictions to $t$ and $t'$ are admissible choices
in the definition of the two homomorphisms. Thus
\[
 \lambda_{d,c}r^F_{e,d}(z)=r^Q_{w,c}(v),\qquad
 \lambda_{d',c'}r^F_{e,d'}(z)=r^Q_{w,c'}(v),
\]
which proves \eqref{eq:fdap-coherence}. The source $e$ is chosen before $z$.

For a compatible family $z=(z_b)_b$ in the inverse limit of the
Fredholm groups define
\[
 (\lambda z)_c=\lambda_{d,c}(z_d).
\]
Eventual compatibility proves that this is a compatible family and
independent of all the choices. The two composite identities show
coordinate by coordinate that $\lambda$ and $\kappa$ are inverse.
No exactness property of inverse limits is used. Nor is a countable
cofinal family of \emph{all} finite-dimensional operator systems
assumed; compatibility was proved directly for their directed set.
This proves all three assertions of \Cref{thm:FD-Cuntz-approx}.

\subsection{Amenable groups and the unitary-valued formulation}\label{subsec:fdap-group}

For a countable amenable group, $C^*(G)=C_r^*(G)$ is separable, unital
and nuclear. Its quasidiagonality is the amenable-group consequence of
Tikuisis--White--Winter \cite{tikuisis_white_winter_2017}. The preceding theorem therefore
proves the local u.c.p.
finite-dimensional approximation statement needed here.

There is also a direct way to pass from the unitary-valued Fredholm
cycles of Section~\ref{sec:qd} to the global u.c.p. maps used in the proof.
It provides explicit finite controls and avoids any extension of a
local u.c.p. map to an unspecified target algebra.

Fix a finite group set $E$ and a small $v>0$. Choose a finite left
F\o lner set $K$ with
\[
 m_K(g):=\frac{|\{h\in K:gh\in K\}|}{|K|}>1-v\quad(g\in E).
\]
Take a symmetric source set $F$ containing $e$, $E$, and $K$. For a
unitary-valued $(F,\delta)$-quasi-representation $q$ on $\mathcal E$,
set
\[
 V_q\xi=|K|^{-1/2}\sum_{h\in K}\delta_h\otimes q(h)^*\xi,
 \qquad
 \Phi_q(a)=V_q^*(\lambda(a)\otimes1)V_q\quad(a\in C^*(G)).
\]
Here $\lambda$ denotes the regular representation of the full group
algebra. The column $V_q$ is an isometry, so $\Phi_q$ is global and
u.c.p. On a tested group element,
\[
 \Phi_q(u_g)=\frac1{|K|}\sum_{\substack{h\in K\\gh\in K}}
                       q(gh)q(h)^*,
 \qquad
 \|\Phi_q(u_g)-q(g)\|<\delta+1-m_K(g)<\delta+v.
\]
Only the relations for $g\in E$ and $h\in K$ have been used.
Furthermore
\[
 \Phi_q(a)=\frac1{|K|}\sum_{s,t\in K}
       \langle\delta_s,\lambda(a)\delta_t\rangle\,q(s)q(t)^*.
\]
Thus, if $[q(h),T]$ is compact for the source test, then
$\Phi_q(A)\subset D_T$. If the cycle is degenerate, the image commutes
with $T$ exactly. The same formula works over $C([0,1])$, with the same
F\o lner set and uniform estimates. By including the products required
by the target test in $E$, one makes the resulting global u.c.p. map
as multiplicative as required before applying the proof above.

Conversely, sufficiently multiplicative u.c.p. maps are turned into
unitary-valued quasi-representations by
\[
 u_g=\varphi(u_g^{A})
       \bigl(\varphi(u_g^{A})^*\varphi(u_g^{A})\bigr)^{-1/2},
\]
where $u_g^A$ is the canonical group unitary. Both almost-unitary
relations are tested, including inverses, and the spectral gap is
uniform. Polar correction preserves direct sums and interval
homotopies, and takes place in $D_T$ in the Fredholm case. Its
composition with F\o lner replacement is close to the original
unitary-valued map on the prescribed finite set; the short logarithmic
paths give a controlled homotopy. For a u.c.p. input, the reverse
composite is connected to the original map by the convex u.c.p. path.

We obtain the following formulation in the notation of Section~\ref{sec:qd}.
\begin{proof}[Proof of \Cref{thm:QR-FQR-amenable,prop:QR-FQR-amenable-controlled}]
Apply the construction to the global F\o lner replacements, using a
smaller target tolerance so that polar correction fits within
$\eta$. For the vanishing assertion use the relative interval argument.
At the finite endpoints, the smoothed maps and their polar corrections
are connected to the original finite-dimensional maps by the controlled
paths just described. Thus both existence and vanishing hold at group
controls chosen independently of the class.

The homomorphisms follow by the two-control construction
\eqref{eq:fdap-lambda-definition}. For clarity, coherent choices can be made
on an increasing cofinal sequence $(c_n)$ of group controls, which
exists because $G$ is countable. Choose increasing controls $t_n$
such that vanishing of a finite-dimensional Fredholm class at $t_n$
implies its vanishing in $QR(c_n)$, and increasing $d_n\succeq t_n$
such that every Fredholm class at $d_n$ has a finite-dimensional
representative at $t_n$. Define $\rho_n$ using such representatives
and restriction to $c_n$.

For a class at $d_{n+1}$, its representative at $t_{n+1}$ restricts
to a representative at $t_n$. Comparing it with any representative
chosen for the restriction to $d_n$, the difference has zero Fredholm
class at $t_n$, and hence is zero at $c_n$. Consequently
\[
 r^Q_{c_{n+1},c_n}\rho_{n+1}=\rho_n r^F_{d_{n+1},d_n}.
\]
For an arbitrary group control $c$, let $n(c)$ be the first index with
$c\preceq c_{n(c)}$, and put
$\sigma(c)=d_{n(c)}$ and
$\rho_c=r^Q_{c_{n(c)},c}\rho_{n(c)}$. This is order preserving in the
source control and has the two composite identities. The preceding
compatibility proves compatibility on the original directed set.
\end{proof}

\section{The character isomorphism theorem for amenable groups}
\label{sec:amenable-proof}

In this section, we construct an inverse to the character map and
complete the proofs of the classification theorem for
quasi-representations of amenable groups
(\Cref{cor:amenable-character}) and of its quantitative version,
\Cref{thm:main-quantitative}. We begin with a roadmap of the proofs.

The proof proceeds through the two comparisons
\[
\mathrm{QR}(G)
\xrightarrow{\;\iota\;}
\mathrm{FQR}(G;\mathbb C)
\xrightarrow{\;\chi^F\;}
\varprojlim_d K_G^0(P_d(G)).
\]
Section~8 establishes the first comparison, including the
homomorphisms needed at prescribed finite sets and error tolerances.
In this section, we construct an inverse to the second map.
The composition $\chi^F\circ\iota$ agrees with the geometric character
$\chi$ defined in Section~4: regarding a finite-dimensional
quasi-representation as a Fredholm cycle with operator zero and then
applying $\chi^F$ gives the same equivariant vector-bundle class as
the original geometric construction.

\smallskip
\noindent\emph{Constructing the inverse.}
Amenability implies the Haagerup property, so the Higson--Kasparov theorem
provides a proper $G$-$C^*$-algebra $\mathcal A$ and classes
\[
 \beta\in KK^G(\mathbb C,\mathcal A),
 \qquad
 \alpha\in KK^G(\mathcal A,\mathbb C),
 \qquad
 \beta\otimes_{\mathcal A}\alpha=1_{\mathbb C}.
\]
For a prescribed control $c$, we choose a localized Fredholm
representative $b_c$ of the Bott class, whose Fredholm and tested
equivariance defects are supported in a compact subset of a proper
$G$-compact space $Z_c$. A cutoff function gives an equivariant map
$j_c:Z_c\to P_{D(c)}(G)$. Given a class in $K_G^0(P_{D(c)}(G))$, we pull
it back to $Z_c$, use it to multiply $b_c$, extend the coefficients to
$\mathcal A$, and apply the Kasparov product with $\alpha$. This defines
\[
 \nu_c:K_G^0(P_{D(c)}(G))
 \longrightarrow \mathrm{FQR}(\ast;c;\mathbb C).
\]
The localization and the Rips scale $D(c)$ are chosen before the variable
$K$-theory class. The construction is given in
\eqref{dw:eq:nu}, using \Cref{dw:lem:prepare}.

\smallskip
\noindent\emph{Recovering quasi-representation classes.}
For a finite-dimensional quasi-representation $q$, we compare its tensor
product with $b_c$ with the product of $b_c$ and the character bundle of
$q$. Lemma~\ref{dw:lem:localW} gives a local unitary identification of
these two modules, with small equivariance error. Multiplication by
$(1-B^2)^{1/2}$ turns this local identification into a globally defined
compact operator, because the defect of the Bott operator is supported
where the identification is available. The block homotopy in
\Cref{dw:lem:cancellation}, followed by a small correction of the tested
group operators, makes the difference of the two cycles degenerate.
Applying $\alpha$ and using $\beta\otimes_{\mathcal A}\alpha=1$
therefore recovers the class of $q$ at the prescribed control. This is
the first composite identity, proved in \Cref{dw:prop:FDidentity}.

\smallskip
\noindent\emph{Recovering equivariant $K$-theory classes.}
For the other composite, we localize the Fredholm character of the fixed
cycle $b_c$ to a subset of $P_d(G)\times Z_c$ with $G$-compact closure.
On this support, the two maps
\[
 (y,z)\longmapsto y,
 \qquad
 (y,z)\longmapsto j_c(z)
\]
are equivariantly homotopic in a sufficiently large Rips complex
$P_D(G)$ by the straight-line homotopy in barycentric coordinates.
Their pullbacks therefore act in the same way on the localized cycle.
Compatibility of the character with Kasparov products, followed by the
same identity $\beta\otimes_{\mathcal A}\alpha=1$, shows that applying
$\nu_c$ and then $\chi^F$ recovers the original $K$-theory class after
restriction to $P_d(G)$. This is \Cref{dw:prop:Kidentity}.
Importantly, $D$ depends on the fixed supporting set, not on the
$K$-theory class being recovered.

\smallskip
\noindent\emph{Compatibility and the conclusion.}
The finite-dimensional comparison from Section~\ref{sec:qd} extends the
first composite identity from finite-dimensional quasi-representations
to all Fredholm classes after strengthening the source control. The two
composite identities then show that the maps $\nu_c$ are compatible
after restriction and induce a two-sided inverse $\nu$ to $\chi^F$ on
ordinary inverse limits. Consequently
\[
 \chi^{-1}=\iota^{-1}\nu.
\]
The same finite-control identities give the quantitative theorem: equality
of characters at a sufficiently large Rips scale implies equality at any
prescribed quasi-representation control, and restriction from a fixed
larger Rips scale factors through a fixed group of quasi-representations.
In both assertions, the finite sets, error tolerances, and Rips scales are
chosen before the variable classes and their matrix sizes. These
conclusions are established in
\Cref{dw:thm:main,prop:main-controlled-equivalence}.

\subsection{Amenability and the Higson--Kasparov proper algebra}

Countable amenable groups constitute a major source of groups with the Haagerup property, i.e., they admits metrically proper affine isometric actions on a real Hilbert space.

\begin{lemma}[{\cite{BekkaCherixValette}}]\label{lem:amenable-Haagerup}
Every countable amenable discrete group has the Haagerup property. 
\end{lemma}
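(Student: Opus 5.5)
The plan is to produce an explicit metrically proper affine isometric action of an amenable $G$ on a real Hilbert space, using a F\o lner sequence. Amenability gives, for each finite set and tolerance, almost invariant unit vectors in $\ell^2(G)$. The Haagerup property requires a single action whose cocycle grows at infinity, so the approximate invariance has to be assembled across scales into one cocycle.

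\emph{Step 1 (F\o lner data).} Fix the proper length function $\ell$ and the balls $B(e,R)$ from the Rips subsection. Enumerate an exhaustion $B(e,n)$ of $G$. By amenability, for each $n$ I choose a finite set $K_n$ with
\[
|gK_n\,\triangle\, K_n|<4^{-n}|K_n| \qquad \text{for all } g\in B(e,n).
\]
Put $\xi_n=|K_n|^{-1/2}\mathbf 1_{K_n}\in\ell^2(G)$. Then $\|\lambda_g\xi_n-\xi_n\|\le 2^{-n}$ for $g\in B(e,n)$, while $\|\lambda_g\xi_n-\xi_n\|\le\sqrt2$ always.

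\emph{Step 2 (cocycle).} On $H=\bigoplus_n\ell^2(G)$ I use the linear action $\pi=\bigoplus_n\lambda$ and the candidate cocycle
\[
b(g)=\bigl(c_n(\lambda_g\xi_n-\xi_n)\bigr)_n ,
\]
with weights $c_n\to\infty$, for instance $c_n=n$. For fixed $g$, the terms with $n\ge\ell(g)$ are bounded by $n2^{-n}$, so the sum converges and $b(g)\in H$. The cocycle identity $b(gh)=b(g)+\pi(g)b(h)$ holds coordinatewise because each coordinate is a coboundary. Hence $g\mapsto \pi(g)(\cdot)+b(g)$ is an affine isometric action.

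\emph{Step 3 (properness, the main obstacle).} I must show that $\|b(g)\|\to\infty$ as $\ell(g)\to\infty$. This requires a lower bound on $\|\lambda_g\xi_n-\xi_n\|$ for $g$ far out, and that bound is not automatic from the F\o lner condition. I would add it during the choice of $K_n$: also require $K_n\subseteq B(e,r_n)$ for some radius $r_n$. For $\ell(g)>2r_n$ the sets $gK_n$ and $K_n$ are disjoint, so $\|\lambda_g\xi_n-\xi_n\|=\sqrt2$. Consequently
\[
\|b(g)\|^2\ \ge\ 2\sum_{n:\,2r_n<\ell(g)} c_n^2 ,
\]
and the right-hand side tends to infinity as $\ell(g)\to\infty$ because $c_n\to\infty$. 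Every F\o lner set is finite, so the support condition $K_n\subseteq B(e,r_n)$ costs nothing. The only real check is that convergence (Step 2) and growth (this step) hold simultaneously. That works because convergence uses only the indices $n\ge\ell(g)$, whereas growth uses the indices with $2r_n<\ell(g)$; since both sets of indices are available, the two requirements do not conflict.

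Alternatively, one can avoid the construction altogether. The function $\phi_n(g)=\langle\lambda_g\xi_n,\xi_n\rangle$ is positive definite, finitely supported and hence $c_0$, and it tends to $1$ pointwise. This exhibits the Haagerup property in its positive-definite $c_0$ formulation, and the argument would then cite the equivalence of the two formulations in \cite{BekkaCherixValette}.
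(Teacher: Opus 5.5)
Your proof is correct. The cocycle $b(g)=\bigl(c_n(\lambda_g\xi_n-\xi_n)\bigr)_n$ on $\bigoplus_n\ell^2(G)$ converges because $\xi_n$ is $2^{-n}$-invariant on $B(e,n)$. It is proper because each fixed index $n$ contributes $2c_n^2$ once $\ell(g)>2r_n$, so the growth really comes from $\sum_n c_n^2=\infty$, and constant weights $c_n=1$ would already suffice.

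The paper does not prove this lemma; it only cites Bekka--Cherix--Valette. Your argument is essentially the standard construction behind that citation. There the disjointness step can equally be replaced by the fact that matrix coefficients of $\lambda$ vanish at infinity.
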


We now invoke the principal external theorem needed in this section.

\begin{theorem}[Higson--Kasparov]\label{thm:HK-proper-algebra}
Let $G$ be a countable group with the Haagerup property. 
Then there is a separable nuclear proper $G$-$C^*$-algebra $\mathcal A(H)$ and classes
\[
 \beta\in KK^G(\mathbb C,\mathcal A(H)),\qquad
 \alpha\in KK^G(\mathcal A(H),\mathbb C)
\]
such that
\[
 \beta\otimes_{\mathcal A(H)}\alpha=1_{\mathbb C} \in KK^G(\mathbb C,\mathbb C),\quad \text{ and } \quad 
 \alpha\otimes_{\mathbb C}\beta=1_{\mathcal A(H)} \in KK^G(\mathcal A(H),\mathcal A(H)) \;.
\]
Thus $\mathcal A(H)$ and $\mathbb C$ are $KK^G$-equivalent.
\end{theorem}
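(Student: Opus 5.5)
This is the theorem of Higson and Kasparov, and we would use it as a black box with \cite{higson_kasparov_2001} as the reference. We still sketch the argument in the form the later sections need, since only the existence of $\mathcal A(H)$, $\beta$, $\alpha$ and the two product identities is used.

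\emph{The proper algebra.} By the Haagerup property, fix a metrically proper affine isometric action $g\cdot v=\pi_g v+b(g)$ of $G$ on a separable real Hilbert space $H$. For each finite-dimensional affine subspace $V\subseteq H$, let $\mathcal C(V)=C_0(V,\mathrm{Cliff}(V_0))$, where $V_0$ is the linear part of $V$. Put $\mathcal A(V)=\mathcal S\whot\mathcal C(V)$, with $\mathcal S=C_0(\mathbb R)$ graded by parity. For $V\subseteq W$, write $W=V\oplus V^\perp$ with the linear part of $V^\perp$ spanned by $V^\perp_0$, and let $C$ be Clifford multiplication by the coordinate in $V^\perp$. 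The functional calculus $f\otimes h\mapsto f(X\whot1+1\whot C)\,(1\whot h)$, with $X$ the multiplier $x\mapsto x$ of $\mathcal S$, gives graded $*$-homomorphisms $\mathcal A(V)\to\mathcal A(W)$. These are functorial in $V\subseteq W$, so we set $\mathcal A(H)=\varinjlim_V\mathcal A(V)$; the affine action acts on it by construction. We would then identify the centre of $\mathcal A(H)$ as containing $C_0$ of a locally compact space built from $[0,\infty)\times H$ with the weak topology on bounded sets, with the norm in the first coordinate. Metric properness of $b$ makes the $G$-action on that space proper, so $\mathcal A(H)$ is a proper $G$-$C^*$-algebra. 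Separability and nuclearity come from $\mathcal A(H)$ being a direct limit of type I algebras over countably many $V$.

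\emph{Bott and Dirac classes.} The Bott class $\beta$ comes from the inclusion $V=\{0\}\subseteq W$ (after translating to a base point), $\mathcal S\to\mathcal A(H)$. This map is only equivariant up to the translation part, which produces an asymptotic morphism, made asymptotically equivariant by scaling $t\mapsto f(t^{-1}x)$. Higson--Kasparov turn this into a genuine $KK^G(\mathbb C,\mathcal A(H))$ class by pairing with the standard element of $KK(\mathbb C,\mathcal S)$. The Dirac class $\alpha$ is assembled from the finite-dimensional Dirac operators $D_V$ on $L^2(V,\Lambda^*V_0\otimes\mathbb C)$. These are compatible under the direct-limit maps because the Dirac operator of a Euclidean space is the graded sum of the Dirac operators of its factors. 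Combined with the Bott--Dirac ``harmonic oscillator'' on the orthogonal complements, they give a Kasparov $\mathcal A(H)$--$\mathbb C$ cycle on an infinite tensor product of Fock-type Hilbert spaces.

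\emph{The two identities.} For $\beta\otimes_{\mathcal A(H)}\alpha=1$ we would first homotope the affine action to the linear action $\pi$ through $b_s=s\,b$, $s\in[0,1]$; in $KK^G$ this is a genuine homotopy because $\beta$ and $\alpha$ are defined uniformly in $s$. For the linear isometric action the product is computed by the harmonic-oscillator operator, whose kernel is one-dimensional and spanned by a $\pi$-invariant Gaussian. Hence the product is $1_{\mathbb C}$; this is finite-dimensional equivariant Bott periodicity, passed to the limit. For $\alpha\otimes_{\mathbb C}\beta=1_{\mathcal A(H)}$ we would use Atiyah's rotation trick. We write $\alpha\otimes\beta$ as an exterior product on $\mathcal A(H)\whot\mathcal A(H)$ and rotate the two copies of $H$ by a path of isometries of $H\oplus H$ that is compatible with $G$ on the doubled action. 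This reduces the claim to the first identity tensored with $1_{\mathcal A(H)}$.

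\emph{Main obstacle.} The hardest step is the definition of $\alpha$ and the verification of its compactness and equivariance conditions. The operator is unbounded, is defined only on the algebraic direct limit, and is equivariant only modulo compacts under the translation part $b(g)$. One must show that the translation defects are locally compact uniformly along the direct system. Here we would follow Higson--Kasparov's estimate that translation by $b(g)$ perturbs the Bott--Dirac operator by a bounded operator with compact product against elements of $\mathcal A(V)$.
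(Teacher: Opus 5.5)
Your proposal follows the paper: the paper also states this as an external theorem of Higson--Kasparov \cite{higson_kasparov_2001} and gives no proof, so the citation is the whole argument in both. Your optional sketch is not needed, and two of its steps are stated too loosely. First, a plain rotation of $H\oplus H$ does not commute with the doubled translation $(b(g),b(g))$, so the rotation trick for $\alpha\otimes_{\mathbb C}\beta=1_{\mathcal A(H)}$ needs an argument adapted to the translation part (for instance via the diagonal/antidiagonal splitting). Second, an asymptotically equivariant asymptotic morphism only gives an $E^G$-class, so pairing it with an element of $KK(\mathbb C,\mathcal S)$ does not by itself produce a class in $KK^G(\mathbb C,\mathcal A(H))$.
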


\begin{remark}
The underlying reason for the properness of the $G$-action on $\mathcal{A}(H)$ comes from the fact that there is a $G$-invariant central subalgebra of $\mathcal{A}(H)$ whose Gelfand dual is a locally compact space associated to $H$, in a way such that the metric properness of the affine $G$-action implies the (topological) properness of the induced $G$-action on this space. 
\end{remark}

\subsection{The character comparison and its hypotheses}

\begin{lemma}[compatibility of the two characters]\label{lem:character-compatibility}
For every proper $G$-compact $Y$ and every admissible sufficiently strong
source control $s$,
\[
 \chi^F_{Y,\mathbb C,s}\iota_s=\chi_{Y,s}.
\]
The equality is compatible with restriction and with compact parameter
spaces.
\end{lemma}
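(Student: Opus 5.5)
The plan is to compare the two constructions directly at the level of cycles. Take a finite-dimensional $(F,\varepsilon)$-quasi-representation $q$ on $\mathbb C^n$, with $s=(F,\varepsilon)$ admissible for both \Cref{prop:average-vector-bundle} and \Cref{lem:point-character-construction}, and use one cutoff $\theta$ on $Y$ for both. Then $\iota_s[q]=[(\mathbb C^n\oplus0,q,0)]$. Feeding this into \Cref{lem:point-character-construction}, the module $\mathscr H_{\mathcal E}=\bigoplus_g(\mathbb C^n)_g$ with coefficients $\mathbb C$ is just $\ell^2(G)\otimes\mathbb C^n$, since the twists are trivial. The regular action $R_s$ is then $\lambda_s\otimes 1$, and the operator $\mathscr T$ is $0$.

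The first step is to identify the isometries. Comparing \eqref{eq:point-cutoff-isometry} with \eqref{eq:iota_embedding}, we get $V_q(y)\xi=\sum_g\theta(yg)^{1/2}\delta_g\otimes q(g)^{-1}\xi$, and since $q(g)$ is unitary this equals $I_{\theta,q}(y)\xi$ exactly. Hence $P_q=p_{\theta,q}$, and the average \eqref{eq:point-cutoff-average} coincides term by term with \eqref{eq:averaging}. Therefore $a_q=a_{\theta,q}$ and $e_q=\kappa\circ a_{\theta,q}$. The only difference is $1_{(1/2,\infty)}$ versus $\chi_{[1/2,\infty)}$, and these agree because of the spectral gap.

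The second step reads off the cycle \eqref{eq:point-character-cycle}. Its module is the range of $e_q$, i.e.\ the sections of the invariant rank-$n$ subbundle $\mathcal V\subset E_{\lambda\otimes\mathrm{id}}$. That module is graded in even degree, carries the restricted diagonal action, and has operator $0$. Since $\mathcal V$ is a finite-rank Hermitian $G$-bundle over the $G$-compact space $Y$, the compactness results in \Cref{lem:bundle-correspondence-compact} make $\Gamma_0(\mathcal V)$ a module on which $C_0(Y)$ acts by compacts. Thus $(\Gamma_0(\mathcal V),\text{action},0)$ is exactly the Kasparov cycle representing $[\mathcal V]\in K^0_G(Y)=RKK^G(Y;\mathbb C,\mathbb C)$ under the standard identification recalled in \Cref{subsec:representable-KK}. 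This gives $\chi^F_{Y,\mathbb C,s}\iota_s[q]=\chi_{Y,s}[q]$ on generators. Additivity on both sides then extends the identity to the Grothendieck group.

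I expect the main obstacle to be the cutoff and control bookkeeping. \Cref{prop:character-well-defined} chooses its cutoff with $E(K\cup\supp\theta)=E(K)$, while \Cref{lem:point-character-construction} uses a compact neighbourhood of $\supp\theta$ and possibly different thresholds. I would first strengthen $s$ so that both constructions are defined for one common cutoff. Then I would invoke cutoff independence on each side: the convex path $\theta_t$ of \Cref{prop:point-character-compatibility} on the Fredholm side, and the homotopy argument in \Cref{prop:character-well-defined} on the geometric side. This reduces the comparison to the equal-cutoff case above.

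Compatibility with restriction then follows because both constructions leave every entry unchanged under stronger controls. For compact parameter spaces, I would repeat the argument verbatim over $C(P)$. There $e_q$ defines a finite-rank bundle over $P\times Y$, and the same identification of the column isometries holds pointwise in $P$.
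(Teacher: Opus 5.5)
Your proposal is correct and is essentially the paper's argument. You identify the cutoff column $V_q$ with $I_{\theta,q}$, using $q(g)^{-1}=q(g)^*$ and the trivial twists, so that $P_q$, the average $a_q$, and its spectral projection coincide with those of Section~\ref{sec:character_map}; both characters are then represented by the same range bundle with operator zero, first on generators and then on differences, and naturality comes from \Cref{prop:point-character-compatibility}. Your extra bookkeeping makes explicit what the paper's short proof leaves implicit: the common cutoff with cutoff independence on each side, and the spectral gap that makes $1_{(1/2,\infty)}$ and $\chi_{[1/2,\infty)}$ agree.
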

\begin{proof}
The Fredholm cycle associated to a finite-dimensional
quasi-representation is concentrated in even degree and has operator zero.
In the Fredholm character construction, the cutoff column, the averaged
projection, and its spectral
correction are exactly those of \Cref{sec:character_map}. The two characters are therefore
represented by the same range bundle. The assertion follows on generators,
then on differences and homotopies. Naturality was proved in
\Cref{prop:point-character-compatibility}.
\end{proof}

Let $G$ be countable and discrete. Write
\[
 \mathsf F_c=\FQR(*;c;\C),\qquad
 \mathsf Q_c=\QR(c),\qquad
 \mathsf K_d=K_G^0(P_d(G)).
\]
Since $X=\{*\}$, the operators occurring in the Fredholm and equivariance
conditions belong to $\mathcal K(\mathcal E)$, rather than merely being
locally compact.
It suffices to work on the cofinal family of symmetric tested sets containing
the identity; other target controls are obtained by restriction.
The bonding maps are denoted $r^F,r^Q,r^K$. All positive-error controls are
strengthened by enlarging their finite sets and decreasing their tolerances.
The maps $\iota_c:\mathsf Q_c\to\mathsf F_c$ send a finite-dimensional
quasi-representation $q$ to $(H_q\oplus0,q,0)$.

The comparison theorem below is formulated for any group with the following
comparison with finite-dimensional cycles: for every
$c$ there are $a(c)\succeq c$ and a homomorphism
\begin{equation}\label{dw:eq:fd-input}
 \rho_c:\mathsf F_{a(c)}\longrightarrow\mathsf Q_c,
 \qquad
 \rho_c\iota_{a(c)}=r^Q_{a(c),c},\quad
 \iota_c\rho_c=r^F_{a(c),c}.
\end{equation}
For amenable groups these are supplied by
\Cref{prop:QR-FQR-amenable-controlled}, proved by the relative-projection
approximation and its interval version in Section~\ref{sec:qd}.
In the general character theorem of
Section~\ref{sec:quantitative-KQD}, these maps are part of
\Cref{def:quantitative-QR-unitary}. For the u.c.p. sufficient
conditions, \Cref{prop:QKQD-unitary-transfer} supplies the passage
to this unitary-valued comparison.

Let $\A$ be a separable proper $G$-$C^*$-algebra, with a nondegenerate central
map $C_0(Z)\to ZM(\A)$ for a locally compact proper $G$-space $Z$. Fix even
classes
\begin{equation}\label{dw:eq:beta-alpha}
 \beta\in KK^G(\C,\A),\qquad
 \alpha\in KK^G(\A,\C),\qquad
 \beta\otimes_\A\alpha=1_\C.
\end{equation}

The Fredholm character construction of \Cref{sec:FQR-cycle} gives,
for sufficiently large finite sets and sufficiently small tolerances,
the homomorphisms
\[
 \chi^F_{d,c}:\mathsf F_c\to\mathsf K_d,\qquad
 \chi_{d,c}:\mathsf Q_c\to\mathsf K_d.
\]
The construction and its coefficient compatibility were proved in
\Cref{lem:point-character-construction,prop:point-character-compatibility,prop:FQR-character-naturality}. In particular,
\begin{equation}\label{dw:eq:character-inputs}
 \chi^F_{d,c}\iota_c=\chi_{d,c},\qquad
 \chi^F_{d,A,c}(x[c])=p_d^*x
 \quad(x\in KK^G(\C,A)).
\end{equation}
For $y\in KK^G(A,B)$ it satisfies
\begin{equation}\label{dw:eq:naturality}
 \chi^F_{d,B,c}\,y_{*,c}
       =(p_d^*y)_*\,\chi^F_{d,A,c}.
\end{equation}
All finite diagrams of character and restriction maps are evaluated after
one common strengthening of the source. This strengthening is chosen before
the variable class.

\begin{theorem}[the Fredholm character and its inverse]\label{dw:thm:main}\label{thm:scalar-character-comparison}
Under the finite-dimensional comparison hypothesis \eqref{dw:eq:fd-input} and \eqref{dw:eq:beta-alpha}, for each target
control $c$ one can construct a scale $D(c)$ and a homomorphism
\[
 \nu_c:\mathsf K_{D(c)}\longrightarrow\mathsf F_c.
\]
For $D\geq D(c)$ put $\nu_{c,D}=\nu_c r^K_{D,D(c)}$. Then:
\begin{enumerate}[label=(\roman*)]
\item For each $c$, there is $s\succeq c$, admissible for $D(c)$, such that
\begin{equation}\label{dw:eq:global-F}
 \nu_c\chi^F_{D(c),s}=r^F_{s,c}
       \quad\text{on all of }\mathsf F_s.
\end{equation}
\item For each admissible pair $(d,c)$, some $D\geq d,D(c)$ satisfies
\begin{equation}\label{dw:eq:global-K}
 \chi^F_{d,c}\nu_{c,D}=r^K_{D,d}
       \quad\text{on all of }\mathsf K_D.
\end{equation}
\end{enumerate}
The source indices are independent of the class to which the maps are
applied and, for finite-dimensional representatives, of their matrix size.
The $\nu_c$ are eventually coherent, and induce a two-sided inverse on the
ordinary inverse limits. Composing with \eqref{dw:eq:fd-input} gives the two
quantitative conclusions of \Cref{thm:main-quantitative} with their stated quantifiers.
\end{theorem}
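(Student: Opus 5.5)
We follow the roadmap at the start of this section and construct $\nu_c$ in the Dirac--dual-Dirac manner, keeping every choice ahead of the variable class. For a target control $c$, take a representative of $\beta$ restricted to the tested elements, i.e.\ an $\mathrm{FQR}(*;c';\mathcal A)$-cycle $\beta[c']$ for a suitable $c'\succeq c$. Apply \Cref{lem:fixed-exact-support} through the central $C_0(Z)$-structure of $\mathcal A$ to obtain a localized representative $b_c$. Its Fredholm defects and its equivariance defects on $c'$ vanish outside an invariant subset $Z_c\subseteq Z$ with $G$-compact closure. A cutoff on $\overline{Z_c}$ gives an equivariant map $j_c\colon \overline{Z_c}\to P_{D(c)}(G)$ into the Rips complex spanned by the finitely many overlap elements, and this fixes $D(c)$. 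For $z\in\mathsf K_{D(c)}$, set
\[
\nu_c(z)=\alpha_{*,c}\bigl(m_{\mathcal A}(j_c^*z)_{*,c}(b_c)\bigr),
\]
where $m_{\mathcal A}$ comes from \Cref{prop:central-K-action}, extended by zero off the localized ideal, and $\alpha_{*,c}$ comes from \Cref{prop:FQR-KK-product}. This is a homomorphism because all ingredients are additive. The multiplication error is not enlarged, by \eqref{eq:FQR-product-defect}.

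For (i), we first prove $\nu_c\chi_{D(c),s}=r^Q\circ\iota$ on finite-dimensional quasi-representations $q$ at a sufficiently strong $s$. We compare two cycles: $q\otimes b_c$, which represents the coefficient action on $\iota_s[q]$, and $b_c$ multiplied by the pulled-back character bundle $j_c^*(\kappa\circ a_{\theta,q})$. Over the support of the Bott defects, the cutoff isometry $V_q$ of \eqref{eq:point-cutoff-isometry}, followed by spectral correction, identifies the two modules up to an equivariance error of order $C\varepsilon$ on the tested elements (cf.\ \eqref{eq:point-character-gap}). Outside that support both cycles are degenerate. Multiplying the local identification by $(1-B^2)^{1/2}$ produces a globally defined compact intertwiner. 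A rotation homotopy in $2\times2$ blocks, in the same way as the grading-reversal homotopy of \Cref{def:FQR-final}, then makes the difference degenerate, after a small polar correction (\Cref{lem:polar-correction}) of the group operators. Applying $\alpha$ and using $\beta\otimes_{\mathcal A}\alpha=1$ together with \Cref{prop:FQR-KK-product} returns $r_{s,c}\iota_s[q]$. We then extend to all of $\mathsf F_s$ through \eqref{dw:eq:fd-input}: any $x\in\mathsf F_{a(s)}$ equals $\iota\rho(x)$ after restriction, and we use \Cref{lem:character-compatibility}. This forces one more strengthening of $s$, chosen before the class.

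For (ii), we compute $\chi^F_{d,c}\nu_{c,D}(z)$ using \eqref{dw:eq:naturality}. This gives $(p_d^*\alpha)_*$ applied to $\chi^F_{d,\mathcal A}$ of the multiplied Bott cycle. By \eqref{eq:point-character-exact-input}, that character is $p_d^*\beta$ multiplied by the pullback of $z$ along $(y,z')\mapsto j_c(z')$ on the localized support in $P_d(G)\times Z_c$. On this support, $(y,z')\mapsto y$ and $(y,z')\mapsto j_c(z')$ are at uniformly bounded distance, because the support is $G$-compact. The straight-line homotopy in barycentric coordinates therefore lies in $P_D(G)$ for some $D$ depending only on the support, so the two pullbacks act identically. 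By \Cref{prop:FQR-space-functoriality}, the expression becomes $r^K_{D,d}z\cdot(p_d^*\beta\otimes p_d^*\alpha)=r^K_{D,d}z$ by \eqref{eq:proper-pullback-product}. Coherence of the $\nu_c$ follows formally from (i) and (ii), in the manner of the $\lambda_{d,c}$ argument of \Cref{thm:FD-Cuntz-approx}. Combined with \eqref{dw:eq:fd-input} this gives the inverse-limit inverse and both quantitative statements.

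The main obstacle is the local identification in (i). The unitary comparison between $q\otimes b_c$ and $b_c$ twisted by the character bundle must be compact-commuting with the Bott operator, and it must be controlled uniformly in $\dim q$ with only $s$ fixed in advance. First I would try to do everything on finitely many cutoff coordinates over the compact support, where the estimates of \Cref{lemma:approx-invariant} are dimension-free.
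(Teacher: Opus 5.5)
\textbf{The gap is the localized Bott representative $b_c$.} Your architecture is the paper's: $\nu_c$ applies $\alpha_*\,(i_c)_*\,(a_z)_*$ to a localized Bott cycle over a $G$-compact ideal. Part (i) uses the cancellation with $C=W_q(1-B^2)^{1/2}$, the $2\times2$ rotation and a final correction of the tested group operators, and then extends through \eqref{dw:eq:fd-input}. Part (ii) uses the barycentric homotopy, and coherence follows formally.

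\Cref{lem:fixed-exact-support} does not produce $b_c$. It needs $Y$ proper and $G$-compact, and a point is not proper when $G$ is infinite. Its output, an exactly invariant operator, also cannot exist here. Suppose $B$ were invariant under the genuine action with $1-B^2$ compact. By nondegeneracy of the central action, $\|(1-B^2)(1-f)\|<\epsilon$ for some $0\le f\le1$ in $C_c(Z)$. Invariance gives the same bound with $f$ replaced by its translate $U_gfU_g^{-1}$. Choosing $g$ with disjoint supports yields $\|1-B^2\|<2\epsilon$. Hence $B^2=1$, the cycle is degenerate, and $\beta=0$, contradicting $\beta\otimes_{\mathcal A}\alpha=1$. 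So the localization you assert cannot come from averaging; it must be bought by giving up the genuine action on finitely many tested elements.

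That is the content of \Cref{dw:lem:prepare}, which you are missing. It proceeds in four steps.
\begin{enumerate}
\item A quasicentral compact perturbation $T=(1-h)B_\beta(1-h)$ makes $\max_{g\in FF}\|\Ad_{U_g}T-T\|$ small in norm.
\item $B=h_0(T)$ forces $B^2=1$ off a compact set.
\item Writing $B$ with off-diagonal part $S$, only the tested $U_g^-$ are replaced by $U_g^-u_g$. Here $u_g$ is the polar part of $1+(1-\phi_g)(w_g-1)$, with $w_g=(U_g^-)^{-1}SU_g^+S^*$. Then $v(g)$ intertwines $B$ exactly off a compact $K$ while staying within $\delta/3$ of $U_g$.
\item One restricts to $C_0(V)\mathcal A$ and checks $i_*[b]=\beta$ at $(F,\delta)$ via the isometry $\xi\mapsto(k^{1/2}\xi,(1-k)^{1/2}\xi)$.
\end{enumerate}

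Both properties of $b_c$ are used later. Exact vanishing near $\partial V$ is what makes the restricted defects compact on $\overline{\mathcal E_\beta C_0(V)\mathcal A}$. A compact defect that is merely small near $\partial V$ does not restrict to a compact operator, so $j_c^*z$ could not act and $\nu_c$ would be undefined. Norm-smallness of the tested defects of $B$, and hence of $(1-B^2)^{1/2}$ by \Cref{lem:square-root-norm}, gives $\|u_g-1\|\le a+\kappa+\sqrt{2a}<1$ in the last step of your cancellation. Without it your ``small polar correction'' is unavailable, since a generic Kasparov representative has compact but not norm-small equivariance defects.

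\textbf{Where \Cref{lem:fixed-exact-support} actually belongs.} It is needed in (ii), applied with $Y=P_d(G)$ to $\chi^F_{d,\mathcal A_c,c}(b_c)$. The space $P_d(G)\times\overline{Z_c}$ is not $G$-compact for the diagonal action. So the $G$-compact support on which $y$ and $j_c(z')$ are at bounded distance must come from that lemma. In the same step, \eqref{eq:point-character-exact-input} does not apply to $b_c$ itself, because its action is no longer exact. Use instead $i_*[b_c]=\beta$ at the control together with \eqref{dw:eq:naturality}.
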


\begin{lemma}[the Dirac--dual-Dirac identity]\label{lem:alpha-beta-stagewise}
For the classes in \eqref{dw:eq:beta-alpha},
\[
 \alpha_{*,c}\beta_{*,c}=1_{\mathsf F_c}
\]
at each individual control $c$. The maps commute with the bonding maps.
If also $\alpha\otimes_{\mathcal A}\beta=1_{\mathcal A}$, their reverse
composite is the identity on $\mathrm{FQR}(*;c;\mathcal A)$.
\end{lemma}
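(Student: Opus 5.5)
The lemma follows from the functoriality of the coefficient action in \Cref{prop:FQR-KK-product}, applied with $X=\{*\}$ at the single fixed control $c$. That proposition gives an additive functor from $\mathbf{KK}^G$ to $\mathbf{Ab}$ sending $A$ to $\mathrm{FQR}(*;c;A)$. Its two relevant properties are $(1_A)_{c}=\operatorname{id}$ and $(x\otimes_B y)_{c}=y_{c}\circ x_{c}$. The key observation is that this composition rule holds at each fixed control, and not merely after passing to inverse limits. Its proof identifies the modules through the associativity unitary for interior tensor products. Under this identification the group operators $q(k)\otimes V_k\otimes W_k$ agree on every free generator $s_k$, so the multiplicative error is not enlarged and no strengthening of the source control is required.

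With this in hand, the main identity is immediate. Take $x=\beta\in KK^G(\mathbb C,\mathcal A)$ and $y=\alpha\in KK^G(\mathcal A,\mathbb C)$. The composition rule gives
\[
 \alpha_{*,c}\beta_{*,c}=(\beta\otimes_{\mathcal A}\alpha)_{*,c}=(1_{\mathbb C})_{*,c}=1_{\mathsf F_c},
\]
using \eqref{dw:eq:beta-alpha} and the identity axiom. Compatibility with the bonding maps is exactly the naturality \eqref{eq:FQR-KK-bonding}, applied separately to $\alpha$ and to $\beta$. If in addition $\alpha\otimes_{\mathbb C}\beta=1_{\mathcal A}$, the same computation with the roles of $\alpha$ and $\beta$ reversed gives $\beta_{*,c}\alpha_{*,c}=(\alpha\otimes_{\mathbb C}\beta)_{*,c}=1$ on $\mathrm{FQR}(*;c;\mathcal A)$.

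The one point that needs checking is that the product in \Cref{prop:FQR-KK-product} is well defined stagewise. First, it must not depend on the choice of cycle representing $\beta$ or $\alpha$. Second, the Kasparov homotopy $\beta\otimes_{\mathcal A}\alpha\sim 1_{\mathbb C}$, once restricted along $\rho_F:\Lambda_F\to G$, must give a stable homotopy of Fredholm $(F,\varepsilon)$-quasi-representations with the same tolerance. Both hold because every Kasparov homotopy is $G$-equivariant, so its tensor product with the fixed module $(\mathcal E,q,T)$ keeps the operators $q(k)\otimes V_k$. This step uses the free-group device together with the estimate \eqref{eq:FQR-product-defect}, and it is the main place where care is needed. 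Nothing further is required beyond what \Cref{prop:FQR-KK-product} already records.
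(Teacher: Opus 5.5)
Your proposal is correct and is essentially the paper's proof. The paper also reads the lemma off directly as the identity and composition assertion of \Cref{prop:FQR-KK-product} at $X=\{*\}$ and a fixed control. Your final paragraph on stagewise well-definedness only restates what that proposition already establishes, so it is harmless but not needed.
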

\begin{proof}
This is the identity and composition assertion of
\Cref{prop:FQR-KK-product}, applied with $X=\{*\}$ and at the same control.
\end{proof}

\subsection{Choosing a representative of the Bott class}
We choose a representative of the Bott class whose Fredholm and
equivariance defects vanish outside a compact subset of $Z$, and whose
equivariance defects have small norm on the tested group elements. This
choice is made before the quasi-representation $q$ is given.

\begin{lemma}[a localized representative of the Bott class]\label{dw:lem:prepare}
Fix a finite symmetric set $F\subset G$ containing $e$, and numbers
$\delta>0$ and $a>0$. There exist an invariant ideal $\A_0\subseteq\A$, a
proper $G$-compact central space $Z_0$ for $\A_0$, and a normalized Fredholm
cycle with $X=\{*\}$
\[
 b=(\E_0,v,B),\qquad B=B^*,\quad\|B\|\leq1,\quad v(e)=1,
\]
with the following properties.
\begin{enumerate}[label=(\alph*)]
\item The multiplication error of $v$ on $F$ is less than $\delta$, and
\[
 \max_{g\in FF}\|\Ad_{v(g)}B-B\|<a.
\]
\item The compact operators $1-B^2$ and $\Ad_{v(g)}B-B$, $g\in FF$, vanish
outside one compact subset $K\subset Z_0$. Consequently
\[
 d_B=(1-B^2)^{1/2}\in\K(\E_0)
\]
vanishes outside $K$.
\item With $i:\A_0\hookrightarrow\A$, the identity $i_*[b]=\beta$
 holds after restriction to the positive-error control $(F,\delta)$. It is
 witnessed by a controlled homotopy, unitary equivalence, and degenerate
 summands. All choices concern only the fixed representative of $\beta$,
 $F,\delta,a$.
\end{enumerate}
\end{lemma}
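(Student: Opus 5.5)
We start from a fixed equivariant Kasparov representative $(\mathcal E,U,S)$ of $\beta\in KK^G(\C,\A)$. We regard it as a Fredholm quasi-representation over $\A$ with $X=\{*\}$ by restricting the genuine action $U$ to $FF$, as in \Cref{prop:point-character-compatibility}. This gives $\beta[(F,\delta)]$, whose multiplication error is zero.

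\textbf{Step 1: normalization.} We replace $S$ by a self-adjoint contraction, changing it by a compact operator.

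\textbf{Step 2: small equivariance defects.} We apply the quasicentral approximate-unit argument of \Cref{prop:FQR-asymptotic-equivariance} to the finitely many unitaries $U_g$, $g\in FF$. For $t$ large, $T_t=(1-h_t)S(1-h_t)$ is a compact perturbation of $S$ with $\|\Ad_{U_g}T_t-T_t\|<a$. The straight line from $S$ to $T_t$ is a controlled homotopy with $v=U|_{FF}$ fixed. This yields (a), with multiplication error $0<\delta$ and $v(e)=1$.

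\textbf{Step 3: compact support in $Z$.} Here we follow \Cref{lem:fixed-exact-support}, with $Y$ a point, treating $C_0(Z)$ as central. The finitely many compact defects $D_1=1-T_t^2$ and $D_g=\Ad_{U_g}T_t-T_t$ are approximated by $\phi D\phi$ for some $\phi\in C_c(Z)$. To turn smallness outside $\supp\phi$ into \emph{exact} vanishing, we proceed as follows.

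1. Choose an invariant function $k$ on $Z$ equal to one on $C=G\cdot\supp\phi$. This set is $G$-compact because $Z$ is proper.
2. Apply functional calculus $h(T)$ with $h$ equal to the sign function near $\pm1$. This makes $T^2=1$ where $\|1-T^2\|$ was small.
3. Compress away from $C$ using the isometry $W\xi=(k^{1/2}\xi,(1-k)^{1/2}\xi)$ of that lemma. The summand on $(1-k)$ then becomes degenerate.

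The remaining cycle lives over the ideal $\A_0=\overline{C_0(V)\A}$ for an invariant open $V\supset C$ with $G$-compact closure. Its central space is $Z_0=\overline V$, which is proper and $G$-compact, and all defects vanish outside a compact $K\subset Z_0$ once we cut $Z_0$ further by a cutoff.

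\textbf{Main obstacle.} The hardest point is that $(\Ad_{U_g}B-B)$ need not be small outside $C$ unless we arrange it. Invariance of $k$ is what makes it work: $k$ is central and invariant, so it commutes with each $U_g$. Outside $C$ we then have $B^2=1$, but the equivariance defect there is only small, not zero. To make it exactly zero we plan to replace $B$ on the $(1-k)$-part by the degenerate operator $(1-k)$-compressed sign, which is equivariant because $1-k$ is invariant.

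\textbf{Step 4: part (c).} Each operation above is either a compact perturbation at fixed $v$, a unitary equivalence, or the addition or removal of a degenerate summand. By \Cref{lem:fixed-exact-support}, the extension $i_*$ of the restricted cycle recovers the original class. Thus the chain of controlled homotopies at $(F,\delta)$ witnesses $i_*[b]=\beta$, and every choice depends only on the representative of $\beta$ and on $F,\delta,a$.
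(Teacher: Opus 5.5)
\textbf{There is a genuine gap, located at the point you call the main obstacle.} You keep the genuine action $v=U|_{FF}$ throughout, so your multiplication error is $0$. You then try to make $\Ad_{U_g}B-B$ vanish \emph{exactly} off a compact set by modifying the operator alone. Your proposed fix does not achieve this.
\begin{itemize}
\item Invariance of $k$ only gives $[1-k,U_g]=0$; it says nothing about $B$.
\item On $\E_{1-k}$ you already have $B^2=1$. Any ``sign'' of the compressed operator is therefore $B$ itself there, with the same small but nonzero defect $\Ad_{U_g}B-B$.
\item Consequently the complementary summand $\E_{1-k}$ in the splitting $W\xi=(k^{1/2}\xi,(1-k)^{1/2}\xi)$ is not degenerate, and neither (b) nor (c) follows.
\item You also cannot simply replace $B$ by an exactly $U$-equivariant self-adjoint unitary that differs from it by a compact operator. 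With $X=\{*\}$ every defect must be an honest compact. Averaging with a cutoff on $Z$ does give an exactly equivariant operator, but its difference from $B$ is only locally compact, so the straight-line homotopy leaves the admissible cycles.
\item For the same reason the appeal to \Cref{lem:fixed-exact-support} with ``$Y$ a point'' fails. That lemma needs $Y$ proper, and its first step (cutoff averaging on $Y$) makes the operator exactly invariant from the start. There, only the Fredholm defect has to be localized.
\end{itemize}

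\textbf{The missing idea is to perturb the tested group operators, not the operator.} This uses the freedom that $v$ need only be $\delta$-multiplicative.
\begin{itemize}
\item Once $B=h_0(T)$ satisfies $B^2=1$ off a compact set, write $B=\begin{pmatrix}0&S^*\\S&0\end{pmatrix}$ and put $w_g=(U_g^-)^{-1}SU_g^+S^*$.
\item This $w_g$ is a compact perturbation of $1$. It is unitary off a compact set, because $S$ is fiberwise unitary at source and target there.
\item Cut it off with $\phi_g\in C_c(Z)$ and take the polar part. This gives a unitary $u_g$, a compact perturbation of $1$ arbitrarily close to $1$, equal to $w_g$ off $\supp\phi_g$.
\item Set $v^+(g)=U_g^+$ and $v^-(g)=U_g^-u_g$. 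Then $v^-(g)S=Sv^+(g)$ and $S^*v^-(g)=v^+(g)S^*$ off a compact set, so $\Ad_{v(g)}B=B$ exactly there.
\item The cost is $\|\Ad_{v(g)}B-B\|\le 2\|v(g)-U_g\|+\|\Ad_{U_g}B-B\|$, together with a multiplication error at most $3\max_g\|v(g)-U_g\|<\delta$.
\end{itemize}
This is exactly why (a) allows error $<\delta$ and (c) holds only at the positive-error control $(F,\delta)$. The paths $U_g^-\exp(t\log u_g)$ are the controlled homotopy in (c). Only after this step do your $k$-splitting and the restriction to $\A_0$ work, now with all defects supported in one compact $K$.

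\textbf{A smaller point.} Your Step~2 precedes the functional calculus $h_0(T)$. You therefore need a tolerance in Step~2 smaller than $a$, together with a uniform modulus for continuous functional calculus (one polynomial approximation of $h_0$). This keeps $\max_{g\in FF}\|\Ad_{U_g}B-B\|$ below, say, $a/2$, leaving room for the later change of $v$.
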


\begin{proof}
Represent $\beta$ by an equivariant Kasparov cycle $(\E_\beta,U,B_\beta)$
with a unital scalar representation, and normalize its operator to be an odd
self-adjoint contraction. The equivariant quasicentral approximate-unit
construction of \Cref{prop:FQR-asymptotic-equivariance}, applied at the point, gives a
compact perturbation $T$ of $B_\beta$ for which
\[
 \max_{g\in FF}\|\Ad_{U_g}T-T\|
\]
is as small as prescribed. Briefly, choose an even positive compact
contraction $h$ which is nearly invariant for the tested $U_g$ and nearly
acts as the identity on their compact commutator defects. The operator
$T=(1-h)B_\beta(1-h)$ is a compact perturbation, and
\[
 \|\Ad_{U_g}T-T\|
 \leq2\|B_\beta\|\|\Ad_{U_g}h-h\|
       +\|(1-h)(\Ad_{U_g}B_\beta-B_\beta)(1-h)\|.
\]
The two terms tend to zero along the quasicentral approximate unit. The
straight compact-perturbation homotopy keeps the group action $U$.

Choose once and for all an odd continuous function $h_0:[-1,1]\to[-1,1]$
which equals the sign function for $|t|\geq4/5$. Compactness of $T^2-1$ and
nondegeneracy of the central $C_0(Z)$-action give a compact subset $K_0$ such
that $\|T_z^2-1\|<1/4$ off $K_0$. Put $B=h_0(T)$. Then $B^2=1$ off $K_0$
and $B-T$ is compact. Uniform continuity of continuous functional calculus
on self-adjoint contractions shows that the earlier choice of $T$ can be
made so that
\[
 \max_{g\in FF}\|\Ad_{U_g}B-B\|<a/2.
\]
For clarity, the modulus needed here is uniform over the operator algebra:
approximate $h_0$ by one polynomial, and use the telescoping estimate for
the difference of its evaluations. 

It remains to make the tested equivariance defects compactly supported.
Write
\[
 B=\begin{pmatrix}0&S^*\\S&0\end{pmatrix},\qquad
 U_g=U_g^+\oplus U_g^-.
\]
For $g\in FF\setminus\{e\}$, set
\[
 w_g=(U_g^-)^{-1}S U_g^+S^*.
\]
This is coefficient-linear on $\E_\beta^-$ and $w_g-1$ is compact.
It is unitary outside a compact set: there the relevant source and target
fibers of $S$ are unitary. Choose $\phi_g\in C_c(Z)$, $0\leq\phi_g\leq1$,
equal to one on this bad compact set and with
\[
 \|(1-\phi_g)(w_g-1)\|<\epsilon_0.
\]
For sufficiently small $\epsilon_0$, the operator
\[
 z_g=1+(1-\phi_g)(w_g-1),\qquad
 u_g=z_g(z_g^*z_g)^{-1/2}
\]
is a unitary compact perturbation of $1$, arbitrarily close to $1$, and
agrees with $w_g$ off $\supp\phi_g$. Put
\[
 v^+(g)=U_g^+,\qquad v^-(g)=U_g^-u_g,
\]
and keep $v(e)=1$. Choose $\epsilon_0$ so small that
\[
 \max_{g\in FF}\|v(g)-U_g\|
       <\min\{\delta/3,a/4,1/4\}.
\]
The paths $U_g^-\exp(t\log u_g)$ have the same closeness bound.
Write $v_t$ for the resulting tested action. Since $U$ is a genuine action,
for $g,h\in F$ we have
\[
 \begin{aligned}
 \|v_t(g)v_t(h)-v_t(gh)\|
 &\leq \|v_t(g)-U_g\|+\|v_t(h)-U_h\|\nobreak\\
 &\qquad +\|v_t(gh)-U_{gh}\|<\delta.
 \end{aligned}
\]
Also,
\[
 \|\Ad_{v(g)}B-B\|
 \leq 2\|v(g)-U_g\|+\|\Ad_{U_g}B-B\|<a
 \qquad(g\in FF).
\]
Each change is compact, so equivariance modulo compacts is preserved
throughout.

Outside a finite union of the compact sets just chosen and their required
translates, one has $B^2=1$ and
$v^-(g)=S U_g^+S^*$, with $S$ unitary at source and target. Thus $v(g)$
intertwines $B$ exactly there. Denote a compact set containing all these
defect supports by $K$.

Choose an invariant open neighborhood $V$ of $GK$ with $G$-compact
closure, put $Z_0=\overline V$ and $\A_0=C_0(V)\A$, and restrict the module
to $\E_0=\overline{\E_\beta\A_0}$. The original central structure makes $\A_0$ a $C_0(Z_0)$-algebra
supported on $V$. The norm estimates
are unchanged. The restrictions of the defects belong to $\mathcal K(\E_0)$.
Indeed, they vanish outside $K\subset V$, so they can be approximated by
finite sums of rank-one operators whose two defining vectors belong to
$\E_0$.

Here is the class-level support reduction. Choose an invariant continuous
function $k:Z\to[0,1]$ equal to one on $GK$ and supported in $V$. Let
$\E_{1-k}=\overline{(1-k)\E_\beta}$. The cycle on $\E_{1-k}$ is degenerate.
The isometry
\[
 \xi\longmapsto(k^{1/2}\xi,(1-k)^{1/2}\xi)
       \quad\text{into }\E_0\oplus\E_{1-k}
\]
intertwines the operator and every tested group operator. Its range is
complemented. The complementary cycle is degenerate because multiplication
of each defect by $1-k$ is zero. This proves $i_*[b]=\beta$ at the stated
control, including the permitted stabilizations. Finally, functional
calculus gives the assertion about $d_B$.
\end{proof}

\subsection{A local trivialization of the character bundle}
Fix $b$, $Z_0$, and $K$ from Lemma~\ref{dw:lem:prepare}. All tested sets below
refer to the chosen target $F$. Choose a compact neighborhood containing
\[
 K\ \cup\ \bigcup_{g\in FF}(Kg\cup Kg^{-1}).
\]
Choose a cutoff $\theta\in C_c(Z_0)$ which is positive on this neighborhood.
This is possible by adding a nonnegative compactly supported function
positive there to one meeting every orbit, and normalizing the translate sum.
Define
\begin{equation}\label{dw:eq:j}
 j(z)=\sum_{g\in G}\theta(zg)\delta_{g^{-1}}\in P_{D_0}(G),
\end{equation}
where $D_0$ bounds the lengths of the finite overlap set of $\supp\theta$.
Indeed, if $g,k$ are active, then $g^{-1}k$ belongs to that overlap
set and $\dist(g^{-1},k^{-1})=\ell(g^{-1}k)\leq D_0$.
For the right action fixed in Section~\ref{sec:preliminaries},
reindexing with $k=hg$ gives
\[
 j(zh)=\sum_k\theta(zk)\delta_{k^{-1}h}=j(z)\cdot h.
\]
On $P_{D_0}(G)$ use the barycentric cutoff
$\theta_{D_0}(y)=y(e)$. Then
\begin{equation}\label{dw:eq:cutoff-match}
 \theta_{D_0}(j(z)g)=\theta(zg).
\end{equation}

Let $q$ be a normalized $n$-dimensional quasi-representation, with
multiplication error $\epsilon$ on a sufficiently large finite set. The
set is chosen from $\theta$, $K$, and $F$, not from $q$ or $n$. On
$U=\{z:\theta(z)>0\}$ define
\[
 V_q(z)\xi=\big(\theta(zg)^{1/2}q(g)^{-1}\xi\big)_g,\qquad
 P_q=V_qV_q^*.
\]
The finite overlap set contains all indices used on $U$.
Write $R_g=\lambda_g\otimes1_{\mathbb C^n}$ for the regular
fiber action on $\ell^2(G)\otimes\mathbb C^n$. Extend
$\theta P_q$ by zero, and set
\[
 a_q(z)=\sum_{h\in G}\theta(zh)R_hP_q(zh)R_h^{-1},\qquad
 e_q=1_{(1/2,\infty)}(a_q).
\]
For small enough $\epsilon$, $e_q$ is a globally defined equivariant
rank-$n$ projection. Its range bundle $\mathcal V_q$ represents
$j^*\chi_{D_0}(q)$ by \eqref{dw:eq:cutoff-match}.

\begin{lemma}[a local unitary comparison with a uniform bound]\label{dw:lem:localW}
After a strengthening of the source control independent of $n$, there is an
even unitary on the local bundle chart
\[
 W_q(z):\C^n\longrightarrow(\mathcal V_q)_z,\qquad z\in U,
\]
such that, for $g\in FF$ whenever $z$ or $zg$ belongs to $K$,
\begin{equation}\label{dw:eq:Werror}
 \|R_gW_q(zg)-W_q(z)q(g)\|\leq33\epsilon.
\end{equation}
All points occurring here lie in $U$ by its choice. Moreover $W_q$ acts
only on the finite-dimensional bundle factor and central scalar functions,
so it commutes, after tensoring, with the operator $B$ on $\E_0$.
\end{lemma}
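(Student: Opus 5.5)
The plan is to take $W_q(z)$ to be the polar correction of $e_q(z)V_q(z)$, i.e.\ $W_q(z)=e_qV_q\,(V_q^*e_qV_q)^{-1/2}$ on $U$. Three facts will drive the estimate: $V_q(z)$ is an isometry $\C^n\to\ell^2(G)\otimes\C^n$ with range $P_q(z)$; $e_q$ is close to $P_q$ on $U$; and $V_q$ is approximately equivariant. Since $e_q(z)$ has rank $n$, once $e_qV_q$ is shown to be an almost isometry it is a linear bijection onto $(\mathcal V_q)_z$, and the polar part $W_q(z)$ is then a unitary onto that fiber. Continuity in $z$ follows from continuity of $a_q$, of $e_q$ (via \Cref{lem:almost-idempotent-perturb}), and of $V_q$ on $U$.

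The first step is a uniform column estimate. For $z\in U$ and any $h$ with $\theta(zh)>0$, the computation in \Cref{lemma:approx-invariant} (now with the $q(g)^{-1}$ convention, so $q(g)^{-1}$ replaces $\pi(g)^*$ and normalization $q(e)=1$ is used) gives
\[
\|R_hV_q(zh)-V_q(z)q(h)\|\leq 2\epsilon .
\]
The factor $2$ accounts for replacing $q(h)^{-1}q(k)^{-1}$ by $q(kh)^{-1}$-type terms via inverse-adjoint comparison. Every index involved lies in the overlap set of the compact neighborhood, and we include that set and its products in the source control. This control depends only on $\theta$, $K$, and $F$, never on $n$. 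Squaring to range projections then gives $\|R_hP_q(zh)R_h^{-1}-P_q(z)\|\leq4\epsilon$. Averaging with $\sum_h\theta(zh)=1$ yields $\|a_q(z)-P_q(z)\|\leq4\epsilon$ for $z\in U$; here the extension of $\theta P_q$ by zero is harmless, because only terms with $\theta(zh)>0$ appear. The gap estimate $\|\chi_{(1/2,\infty)}(a)-p\|\leq 2\|a-p\|$ for $\|a-p\|<1/4$ (via Riesz projection or the spectral-gap argument of \Cref{lem:almost-idempotent-perturb}) then gives $\|e_q(z)-P_q(z)\|\leq 8\epsilon$. Consequently $\|e_qV_q-V_q\|\leq8\epsilon$, $\|V_q^*e_qV_q-1\|\leq8\epsilon$, and by \Cref{lem:polar-correction} (or its square-root variant) $\|W_q(z)-V_q(z)\|\leq 8\epsilon+16\epsilon$.

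The equivariance estimate follows by adding and subtracting. We write
\[
R_gW_q(zg)-W_q(z)q(g)=R_g(W_q-V_q)(zg)+\bigl(R_gV_q(zg)-V_q(z)q(g)\bigr)+(V_q-W_q)(z)q(g).
\]
The first and last terms are controlled by the preceding bound at $zg$ and at $z$. Both points lie in $U$, because $\theta$ is positive on the chosen neighborhood of $K\cup KFF\cup K(FF)^{-1}$ and $z$ or $zg$ is in $K$. The middle term is $\leq2\epsilon$ by the column estimate. Careful bookkeeping of the constants ($\epsilon$ or $2\epsilon$ per column, $4\epsilon$ for projections, doubling under spectral cut, and polar error) should land at or below $33\epsilon$. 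The step I expect to require the most care is exactly this constant accounting, together with checking that the column estimate is valid when $z\notin K$ but $zg\in K$; for that case we use symmetry of $FF$ and include both $Kg$ and $Kg^{-1}$ in the neighborhood.

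Finally, $W_q$ is built only from scalar functions of $z$ and operators on $\ell^2(G)\otimes\C^n$. After tensoring with $\E_0$ it therefore acts on the bundle factor only, while $B$ acts on the $\E_0$ factor and is $C_0(Z)$-linear (central functions commute with it). Hence $W_q\otimes1$ commutes with $1\otimes B$. Evenness holds because $W_q$ lives in the even, ungraded bundle factor.
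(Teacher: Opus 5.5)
Your construction is the paper's, and so is the decomposition: the same polar correction $W_q=e_qV_q(V_q^*e_qV_q)^{-1/2}$, the same three-term splitting, the same chain of estimates (column, averaging, spectral cut, polar correction), and the same use of the buffered set $U$ for the case $zg\in K$, $z\notin K$.

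The one real defect is the constant, which is part of the statement. With the numbers you record ($2\epsilon$ for the column, $4\epsilon$ for $\|a_q-P_q\|$, $8\epsilon$ for $\|e_q-P_q\|$, $24\epsilon$ for $\|W_q-V_q\|$), the three terms sum to $24\epsilon+2\epsilon+24\epsilon=50\epsilon$. So your bookkeeping does not give $33\epsilon$.

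The excess comes from the factor $2$ in the column estimate, which is spurious. Since $q$ is unitary-valued, $q(g)^{-1}=q(g)^*$ exactly, and there is no inverse--adjoint comparison to pay for. The $m$-th coordinate of $\bigl(R_gV_q(zg)-V_q(z)q(g)\bigr)\xi$ is
\[
 \theta(zm)^{1/2}\,q(m)^{-1}\bigl(q(m)-q(g)q(g^{-1}m)\bigr)q(g^{-1}m)^{-1}\xi,
\]
so the column bound is $\epsilon$, exactly as in \Cref{lemma:approx-invariant}. The corrected chain is then:
\begin{itemize}
\item $\|a_q-P_q\|\le2\epsilon$;
\item the Riesz integral over $|\lambda-1|=1/2$ gives $\|e_q-P_q\|\le 4\epsilon/(1-4\epsilon)<16\epsilon/3$ for $\epsilon<1/16$;
\item \Cref{lem:polar-correction} gives $\|W_q-V_q\|\le 3\cdot 16\epsilon/3=16\epsilon$.
\end{itemize}
This is precisely the paper's $16\epsilon+\epsilon+16\epsilon=33\epsilon$.

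A smaller point concerns your gap estimate $\|1_{(1/2,\infty)}(a)-p\|\le 2\|a-p\|$. The Riesz integral does not give it: with $\eta=\|a-p\|$ it yields only $2\eta/(1-2\eta)$. The bound you want is true, since $\eta/(1-\eta)\le2\eta$ for $\eta\le1/2$, but it needs the $\sin\Theta$ (Sylvester equation) argument. Writing $E=1_{(1/2,\infty)}(a)$, one has $(1-E)(a-p)p=(a-1)(1-E)p$, where $a-1$ is invertible on the range of $1-E$ with inverse of norm at most $1/(1-\eta)$. The analogous identity holds for $E(1-p)$, and one combines them with $\|E-p\|=\max\{\|(1-E)p\|,\|E(1-p)\|\}$. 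With that bound and the corrected column estimate, your chain gives $12\epsilon+\epsilon+12\epsilon=25\epsilon$.

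Downstream the constant enters \eqref{dw:eq:budget} only through $\kappa$, so any $n$-independent multiple of $\epsilon$ would serve there. As a proof of the displayed inequality, however, the arithmetic has to be fixed as above.
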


\begin{proof}
For relevant local arrows, reindexing the column gives
\[
 \|R_gV_q(zg)-V_q(z)q(g)\|\leq\epsilon.
\]
The square of the column norm is bounded by $\epsilon^2$ times the sum of
the cutoff weights, which is one. Hence $\|a_q-P_q\|\leq2\epsilon$ on
$U$. For $\epsilon<1/16$, the resolvent estimate gives
\[
 \|e_q-P_q\|\leq\frac{4\epsilon}{1-4\epsilon}<\frac13.
\]
Define
\begin{equation}\label{dw:eq:polarW}
 W_q=e_qV_q(V_q^*e_qV_q)^{-1/2}.
\end{equation}
It is onto $e_q$'s range, and $\|W_q-V_q\|\leq16\epsilon$.
Combining the three bounds gives \eqref{dw:eq:Werror}. Source control is
enlarged to include every pair used in the finite overlap computations.
Matrix norm estimates and functional calculus are uniform in $n$ and in
compact parameters. The last assertion follows from $\A_0$-linearity of
$B$, which makes it commute with all central functions, and from the fact
that the matrix entries of $q$ act on the other tensor factor.
\end{proof}

\subsection{The cancellation homotopy}
The following homotopy uses the compact operator $(1-B^2)^{1/2}$ to
compare two representatives of the same class.

\begin{lemma}[a homotopy to a degenerate cycle]\label{dw:lem:cancellation}
Let $b=(\E_0,v,B)$ satisfy Lemma~\ref{dw:lem:prepare}. Let $q$ be as in
Lemma~\ref{dw:lem:localW}, with multiplication error $\epsilon$, and put
$\kappa=33\epsilon$. Form
\[
 \mathcal P_0=\C^n\whot\E_0,\qquad
 \mathcal P_1=\mathcal V_q\whot_{C_0(Z_0)}\E_0.
\]
Their operators and tested actions are
\[
 B_0=1\whot B,\quad Q_0(g)=q(g)\whot v(g),
 \qquad B_1=1\whot B,\quad Q_1(g)=R_g\whot v(g).
\]
Here $Q_1$ is restricted to the invariant range of $e_q\whot1$.
These are Fredholm cycles with $X=\{*\}$. If
\begin{equation}\label{dw:eq:budget}
 \delta+\epsilon+3\bigl(a+\kappa+\sqrt{2a}\bigr)<\eta,
 \qquad a+\kappa+\sqrt{2a}<1,
\end{equation}
then
\begin{equation}\label{dw:eq:two-products-equal}
 [\mathcal P_0,Q_0,B_0]=[\mathcal P_1,Q_1,B_1]
       \quad\text{in }\FQR(*;F,\eta;\A_0).
\end{equation}
The statement and bounds also hold over one compact parameter algebra.
\end{lemma}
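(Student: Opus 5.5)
The plan is to build one explicit homotopy between the two cycles in \eqref{dw:eq:two-products-equal}. The homotopy should be exact where the Bott operator is already invertible, i.e.\ where $d_B=0$, and should use the compact operator $d_B=(1-B^2)^{1/2}$ to interpolate across the compact set $K$.

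First I check that both triples are cycles. For $\mathcal P_0$ this follows from the product defect estimate \eqref{eq:FQR-product-defect}, together with the fact that $1\whot(\text{compact})$ on $\C^n\whot\E_0$ is compact in finite dimension. For $\mathcal P_1$, the range bundle $\mathcal V_q$ has finite rank. Hence \Cref{lem:bundle-correspondence-compact} shows that the defects $1\whot(1-B^2)$ and $1\whot(\Ad_{v(g)}B-B)$ stay compact after balancing over $C_0(Z_0)$. Here $R_g\whot v(g)$ is an honest action on the bundle factor, so the multiplication error of $Q_1$ is at most $\delta$.

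Next I form the direct sum $\mathcal P_0\oplus\mathcal P_1^{\mathrm{op}}$ and show it is stably homotopic to a degenerate cycle. The local unitary $W_q$ of \Cref{dw:lem:localW} yields an operator $\mathcal W:=(W_q\whot 1)\,(1\whot d_B)$ from $\mathcal P_0$ to $\mathcal P_1$. It is globally defined and compact, because $d_B$ vanishes outside $K\subset U$ (\Cref{dw:lem:prepare}(b)) and $\mathcal V_q$ has finite rank. Away from $K$ we have $B^2=1$ and $v(g)$ intertwines $B$ exactly, so only the region over $K$ contributes. The rotation I intend to use is
\[
 \mathcal B_s=\begin{pmatrix}\cos(\tfrac{\pi s}{2})B_0 & \sin(\tfrac{\pi s}{2})\mathcal W^*\\ \sin(\tfrac{\pi s}{2})\mathcal W & -\cos(\tfrac{\pi s}{2})B_1\end{pmatrix}.
\]
A more natural variant replaces the off-diagonal entries by $d_B$ weighted with $W_q$ and keeps the diagonal $B$ in place. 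Since $W_q$ commutes with $B$ by \Cref{dw:lem:localW}, the matrix $\begin{pmatrix}B_0&\mathcal W^*\\ \mathcal W&-B_1\end{pmatrix}$ squares to $1$ wherever $W_q$ is unitary, because $B^2+d_B^2=1$. So I would use the path from $B_0\oplus(-B_1)$ to that matrix, given by scaling $\mathcal W$ by $s\in[0,1]$. Its Fredholm defects remain compact by \Cref{lem:creation-compactness}. At $s=1$ the operator is a self-adjoint unitary, exactly so off $K$ and up to a compact error on $K$.

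The main obstacle is the equivariance at $s=1$. There $\Ad_{Q_0(g)\oplus Q_1(g)}$ fails to commute with the off-diagonal block by an error of norm at most $\kappa+a+\sqrt{2a}$. The $\kappa$ comes from \eqref{dw:eq:Werror}, the $a$ from the equivariance defect of $B$, and the term $\sqrt{2a}$ from \Cref{lem:square-root-norm} applied to $d_B^2=1-B^2$, which moves by at most $2a$ under $\Ad_{v(g)}$. This error is small but not zero, so the cycle is not yet degenerate. I would therefore modify the tested operators on the second summand by conjugating $Q_0(g)$ through the unitary $\mathcal W$ along a short logarithmic path, as in \Cref{lemma:unitary-implies-homotopy}. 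This forces exact commutation with the now-unitary operator and makes the cycle degenerate. Each such correction costs at most three times the closeness in the multiplication defect, which gives the term $3(a+\kappa+\sqrt{2a})$ in \eqref{dw:eq:budget}. The condition $a+\kappa+\sqrt{2a}<1$ ensures the logarithms exist. Together with the initial errors $\delta+\epsilon$, the whole path stays in control $(F,\eta)$, and this yields \eqref{dw:eq:two-products-equal} via $[z^{\mathrm{op}}]=-[z]$ (\Cref{def:FQR-final}). All estimates are module norms, so the same argument runs over a compact parameter algebra $C(P)$.
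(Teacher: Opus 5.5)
Up to the endpoint of the homotopy, your argument is the paper's. Your $\mathcal W=(W_q\whot 1)(1\whot d_B)$ is its compact operator $C$, and your linear path $s\mapsto\begin{pmatrix}B_0&s\mathcal W^*\\ s\mathcal W&-B_1\end{pmatrix}$ is its $F_t$. Your bound $\gamma=a+\kappa+\sqrt{2a}$ and budget $\delta+\epsilon+3\gamma<\eta$ are also the same. You were right to drop the rotation $\mathcal B_s$: since $\mathcal W$ is compact, $\mathcal B_1^2=\diag(d_0^2,d_1^2)$ is compact rather than $1$ modulo compacts.

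There are two problems in the last step. First, a degenerate cycle needs $\widehat F^2=1$ exactly, not ``up to a compact error on $K$'' as you write. It does hold exactly. $W_q$ is unitary on all of $U\supseteq K$, and $d_0,d_1$ vanish off $K$. Hence $\mathcal W^*\mathcal W=d_0^2$, $\mathcal W\mathcal W^*=d_1^2$ and $B_1\mathcal W=\mathcal W B_0$ hold globally, which gives $F_s^2-1=-(1-s^2)\diag(d_0^2,d_1^2)$.

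Second, and this is a genuine gap, the correction you propose cannot produce degeneracy. You cannot ``conjugate $Q_0(g)$ through the unitary $\mathcal W$'': $\mathcal W$ is compact with $\mathcal W^*\mathcal W=d_0^2$, not a unitary, and $W_q$ is only defined over $U$. More fundamentally, no change of the action on $\mathcal P_1^{\mathrm{op}}$ alone can make $\widehat F$ exactly equivariant. The diagonal block $B_0$ of $\widehat F$ on $\mathcal P_0$ keeps its defect $\Ad_{Q_0(g)}B_0-B_0=1\whot(\Ad_{v(g)}B-B)$, which is in general nonzero; it is only known to have norm less than $a$.

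The missing idea is to let $\widehat F$ itself supply the unitary. With respect to the total grading of $\mathcal P_0\oplus\mathcal P_1^{\mathrm{op}}$, write $\widehat F=\begin{pmatrix}0&J^*\\ J&0\end{pmatrix}$ and $Q(g)=Q^+(g)\oplus Q^-(g)$. Then $J$ is a coefficient-linear unitary from the total even part onto the total odd part. Put $u_g=Q^-(g)^{-1}JQ^+(g)J^*$. The corresponding block of $\Ad_{Q(g)}\widehat F-\widehat F$ gives both $\|u_g-1\|\le\gamma<1$ and compactness of $u_g-1$.

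Now keep $Q^+(g)$ fixed and move $Q^-(g)$ along $Q^-(g)\exp(t\log u_g)$. This path stays in the control $(F,\eta)$ and satisfies the Fredholm conditions throughout. At $t=1$ one has $Q^-(g)=JQ^+(g)J^*$, so $\widehat F$ intertwines all tested operators exactly and the cycle is degenerate. The corrected action mixes the two original summands, which is why it cannot be described as a change on $\mathcal P_1$ alone.
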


\begin{proof}
We first check ordinary Hilbert-module compactness. The correspondence
\[
 \mathcal M_q=\mathcal V_q\whot_{C_0(Z_0)}\A_0
\]
has compact left $\A_0$-action. Indeed, after restriction to a compact subset of $Z_0$, it is the module associated with a finite-rank vector bundle; finite local frames
and approximation of an element of $\A_0$ by its central cutoffs prove
$\pi_q(\A_0)\subseteq\K(\mathcal M_q)$. Thus tensoring a compact on
$\E_0$ with $1_{\mathcal M_q}$ preserves compactness. For example, the operator $T_\xi\eta=\xi\otimes\eta$ satisfies
$T_\xi^*T_\xi=\pi_q(\langle\xi,\xi\rangle)\in\K$, so it is compact,
and $\theta_{\xi,\zeta}\whot1=T_\xi T_\zeta^*$ is compact. Finite-rank
approximation proves the assertion for all compacts. More explicitly, for the second-factor operator zero,
$B_1T_\xi=T_{B\xi}$ is compact, so the connection condition holds.
The positivity condition is $[B\widehat\otimes1,B_1]_+=2B_1^2\geq0$.
Thus $B_1$ is the actual Kasparov product operator for multiplication by
$\mathcal V_q$; see \Cref{lem:bundle-correspondence-compact} and
\cite[Definition~10 and Theorem~12]{skandalis_1984}. The first module has the same compactness property because
$n$ is finite. The equivariance errors of both operators have norm less than
$a$. The actions have multiplication errors at most $\delta+\epsilon$
and $\delta$, respectively.

Put
\[
 d_i=(1-B_i^2)^{1/2}\quad(i=0,1).
\]
The local operator $W_q\whot1$ intertwines $B_0$ and $B_1$ exactly. Choose
$\chi\in C_c(U)$ equal to one on $K$ and its required tested translates.
The map $(W_q\chi)\whot1$ is a globally defined adjointable map from
$\mathcal P_0$ to $\mathcal P_1$. Define
\begin{equation}\label{dw:eq:Cdef}
 C=((W_q\chi)\whot1)d_0.
\end{equation}
Since $d_0$ is an ordinary compact and vanishes off $K$, $C$ is compact,
is independent of $\chi$ with the stated property, and has the local formula
$C=W_qd_0$. Fiberwise calculation, or central restriction, gives
\begin{equation}\label{dw:eq:Cidentities}
 C^*C=d_0^2,\qquad CC^*=d_1^2,\qquad B_1C=CB_0.
\end{equation}
In particular these equalities hold globally, although $W_q$ is only local.

Take the graded difference
$\mathcal P_0\oplus\mathcal P_1^{\mathrm{op}}$. On it, with unchanged
tested action $Q=Q_0\oplus Q_1$, use the path
\begin{equation}\label{dw:eq:Ft}
 F_t=\begin{pmatrix}B_0&tC^*\\tC&-B_1\end{pmatrix},
       \qquad 0\leq t\leq1.
\end{equation}
The off-diagonal entries are odd for this total grading: they are even as
maps between the original graded modules, and the second grading is
reversed. Each $F_t$ is odd and self-adjoint. By \eqref{dw:eq:Cidentities},
\begin{equation}\label{dw:eq:Ft-square}
 F_t^2-1=-(1-t^2)
        \begin{pmatrix}d_0^2&0\\0&d_1^2\end{pmatrix}\in\K.
\end{equation}
Also $F_t-F_0$ is compact, so its equivariance defects remain ordinary
compacts. This is a norm-continuous homotopy of Fredholm cycles with $X=\{*\}$, and
\begin{equation}\label{dw:eq:Fhat-unitary}
 \widehat F:=F_1=\widehat F^*,\qquad \widehat F^2=1.
\end{equation}

We next bound its equivariance error. Functional calculus and the
square-root inequality of \Cref{lem:square-root-norm} give
\begin{equation}\label{dw:eq:d-commutator}
 \|\Ad_{v(g)}d_B-d_B\|
 \leq\|\Ad_{v(g)}(B^2)-B^2\|^{1/2}<\sqrt{2a}.
\end{equation}
On the union of the support of $d_B$ and its tested translate, the local
intertwining estimate \eqref{dw:eq:Werror} applies. Outside that union both
sides of the relevant off-diagonal defect vanish. Therefore
\begin{equation}\label{dw:eq:off-error}
 \|Q_1(g)C Q_0(g)^{-1}-C\|
       \leq\kappa+\sqrt{2a}.
\end{equation}
For example, on the common chart write the difference as the error of the
transported $W_q$, multiplied by the transported $d_B$, plus $W_q$ times
the error of $d_B$. Both $W_q$ and $d_B$ are contractions. The choice of
the buffered chart ensures that no untested value of $q$ enters this formula.
Combining diagonal and off-diagonal bounds yields
\begin{equation}\label{dw:eq:gamma}
 \max_{g\in FF}\|\Ad_{Q(g)}\widehat F-\widehat F\|
       \leq\gamma:=a+\kappa+\sqrt{2a}.
\end{equation}

Use the total grading to write
\[
 \widehat F=\begin{pmatrix}0&J^*\\J&0\end{pmatrix},\qquad
 Q(g)=Q^+(g)\oplus Q^-(g).
\]
The operator $J$ is a coefficient-linear unitary. Put
\[
 u_g=(Q^-(g))^{-1}JQ^+(g)J^*.
\]
Then $u_g$ is coefficient-linear, $u_g-1$ is compact, and
$\|u_g-1\|\leq\gamma<1$. Keep $Q^+(g)$ fixed and replace the negative
part by
\begin{equation}\label{dw:eq:negative-correction}
 Q_t^-(g)=Q^-(g)\exp(t\log u_g).
\end{equation}
This moves each tested group operator by at most $\gamma$. It preserves semilinearity and the equivariance of $\widehat F$ modulo compact operators, because
$\exp(t\log u_g)-1$ is compact. The multiplication error is at most
\[
 \delta+\epsilon+3\gamma<\eta.
\]
At $t=1$ one has $Q_1^-(g)=JQ^+(g)J^*$, so $\widehat F$ intertwines all
of the tested group operators exactly. Since $\widehat F=\widehat F^*$
and $\widehat F^2=1$, all Fredholm and equivariance defects vanish.
The endpoint cycle is therefore degenerate.
Concatenating \eqref{dw:eq:Ft} and \eqref{dw:eq:negative-correction} proves
\eqref{dw:eq:two-products-equal}.

All compact operators and paths can be taken in the parameter-module norm.
For a parameterized family of finite-dimensional quasi-representations,
the matrix size is finite,
and the spectral and logarithmic gaps above are uniform. No spatial tail
is discarded, no non-invariant piece is treated as a separate $G$-space,
and no global extension of $W_q$ as a unitary is used.
\end{proof}

\subsection{Recovering quasi-representation classes}
Fix a target $c=(F,\eta)$, with $F$ finite symmetric and $e\in F$.
Choose $\delta,a>0$ first, and then a multiplication tolerance $\epsilon>0$, so that
\eqref{dw:eq:budget} holds. For instance, put $\eta_0=\min\{\eta,1\}$ and take
\[
 \delta<\eta_0/16,\qquad a<\eta_0^2/2048,\qquad
 33\epsilon<\eta_0/32,
\]
with $\epsilon$ decreased for the spectral gap and identity normalization
when required. These choices are independent of matrix size.
Prepare $b_c$, $\A_c$, $Z_c$, and $K_c$ by Lemma~\ref{dw:lem:prepare}, and
choose $j_c:Z_c\to P_{D(c)}(G)$ as in \eqref{dw:eq:j}.

For $z\in\mathsf K_{D(c)}$, apply \Cref{prop:central-K-action} to
$j_c^*z$ and the $C_0(Z_c)$-action on $\A_c$. This gives
\[
 a_z\in KK^G(\A_c,\A_c).
\]
With $i_c:\A_c\hookrightarrow\A$, define
\begin{equation}\label{dw:eq:nu}
 \nu_c(z)=\alpha_{*,c}(i_c)_{*,c}(a_z)_{*,c}(b_c).
\end{equation}
The Kasparov product construction for Fredholm quasi-representations
shows that this is a homomorphism. Its values are represented by cycles
whose Fredholm and equivariance defects belong to
$\mathcal K(\mathcal E)$ on the resulting Hilbert module $\mathcal E$.
The variable $X$ remains a point.

\begin{proposition}[the identity $\nu_c\chi=\iota_c r^Q$]\label{dw:prop:FDidentity}
For a sufficiently strong source control $s\succeq c$, admissible for
$D(c)$, one has
\begin{equation}\label{dw:eq:FDidentity}
 \nu_c\chi_{D(c),s}=\iota_c r^Q_{s,c}
       \quad\text{on all of }\mathsf Q_s.
\end{equation}
The source control is chosen before the quasi-representation and its dimension.
\end{proposition}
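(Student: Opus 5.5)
Since both sides of \eqref{dw:eq:FDidentity} are homomorphisms out of the Grothendieck group $\mathsf Q_s$, it suffices to check the identity on the class of a single $(F_s,\epsilon)$-quasi-representation $q$ of arbitrary dimension $n$. After normalizing at the identity by \Cref{lem:qrep-normalize-identity} (a controlled homotopy, absorbed by halving $\epsilon$), we may assume $q(e)=1$. The source control $s$ is chosen as the union of three requirements, all fixed before $q$: it must be admissible for $\chi_{D(c)}$; it must contain the finite overlap sets of $\theta$, $K_c$, $F$ used in \Cref{dw:lem:localW}; and its tolerance must be the $\epsilon$ satisfying the budget \eqref{dw:eq:budget} together with the spectral-gap condition $\epsilon<1/16$.

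\textbf{Step 1: identify $(a_z)_{*,c}(b_c)$ with $\mathcal P_1$.} Put $z=\chi_{D(c),s}(q)$. By \eqref{dw:eq:cutoff-match}, $j_c^*z$ is represented by the range bundle $\mathcal V_q$ of $e_q$ on $Z_c$. The class $a_z$ of \Cref{prop:central-K-action} is then the correspondence $\mathcal V_q\whot_{C_0(Z_c)}\A_c$ with operator zero. Its coefficient action on $b_c$, computed as in \Cref{prop:FQR-KK-product}, is the cycle $(\mathcal P_1,Q_1,B_1)$ of \Cref{dw:lem:cancellation}. The proof of that lemma already checks, via \Cref{lem:bundle-correspondence-compact} and Skandalis' connection criterion, that $B_1$ is a Kasparov product operator. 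The tested action is $R_g\whot v(g)$ restricted to the range of $e_q\whot 1$, which is exactly the product action $q_x$.

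\textbf{Step 2: identify $\iota_c(q)$ with $\mathcal P_0$ after applying Bott and Dirac.} The cycle $(\C^n\whot\E_0,q\whot v,1\whot B)$ is the exterior (coefficient) product of $\iota(q)=(\C^n,q,0)$ with $b_c$. By \Cref{dw:lem:prepare}(c) together with naturality in coefficients, $(i_c)_*$ of it equals $\beta_{*,c}(\iota_c q)$ at control $(F,\delta)$, and hence also at $c$. Applying \Cref{lem:alpha-beta-stagewise} gives
\[
 \alpha_{*,c}\,\beta_{*,c}\,\iota_c(q)=\iota_c(q)\quad\text{in }\mathsf F_c .
\]

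\textbf{Step 3: combine.} \Cref{dw:lem:cancellation} gives $[\mathcal P_0]=[\mathcal P_1]$ in $\FQR(*;F,\eta;\A_c)$. Applying $\alpha_{*,c}(i_c)_{*,c}$ then yields
\[
 \nu_c\chi(q)=\alpha_{*,c}(i_c)_{*,c}[\mathcal P_1]=\alpha_{*,c}(i_c)_{*,c}[\mathcal P_0]=\iota_c r^Q_{s,c}(q).
\]

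\textbf{Main obstacle.} The delicate point is Step 2's claim that the tensor product $q\whot v$ represents $\beta_{*,c}(\iota_c q)$ in the controlled group, not merely in $KK$. The operator $1\whot B$ must be a valid Kasparov product operator for $\iota(q)$ and $b_c$. This holds because the first factor has operator zero and finite dimension, so the connection and positivity conditions are immediate, just as for $B_1$. The remaining issue is bookkeeping: one must check that restricting $v$ to the tested elements commutes with the free-group construction used in \Cref{prop:FQR-KK-product}. All estimates are independent of $n$, so $s$ does not depend on $q$.
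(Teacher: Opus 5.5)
Your proposal is essentially the paper's proof, step for step. You identify the product of the character-bundle correspondence $a_{\chi(q)}$ with $b_c$ as $(\mathcal P_1,Q_1,B_1)$, pass to $(\mathcal P_0,Q_0,B_0)$ by \Cref{dw:lem:cancellation}, and tensor the controlled homotopy witnessing $i_*[b_c]=\beta$ with the normalized finite-dimensional $q$ to reach the representative of $\beta_{*,c}\iota_c r^Q_{s,c}[q]$. You then finish with $\alpha_{*,c}\beta_{*,c}=1$ and additivity.

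One small correction to Step 2. The tensored homotopy has multiplication error up to $\delta+\epsilon$, not $\delta$, so the identification holds at $(F,\delta+\epsilon)$ and hence at $c$ only because of the budget \eqref{dw:eq:budget}. It should also be justified by tensoring the witnessing homotopy, unitary equivalence and degenerate summands directly with $q$, as the paper does, rather than by ``naturality in coefficients.'' That naturality does not literally apply, because $b_c$ is only an approximate cycle and not a $KK^G$-class.
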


\begin{proof}
First take a finite-dimensional normalized $q$. The coefficient class
$a_{\chi(q)}$ is represented by the Kasparov cycle $(\mathcal M_q,\pi_q,0)$ associated with the character bundle. Thus its product with $b_c$ has module
$\mathcal P_1$ and operator $B_1$ in Lemma~\ref{dw:lem:cancellation}.
The other cycle in that lemma is the explicit tensor product
$\C^n\whot\E_c$ with action $q\whot v$ and operator $1\whot B$.
The lemma identifies the two cycles at the target control $c$.

Tensor the controlled homotopy $i_*b_c=\beta$ from
Lemma~\ref{dw:lem:prepare} with the finite-dimensional $q$. Its multiplication
error is at most the sum of the two errors, hence less than $\eta$.
Finite dimensionality ensures ordinary compactness throughout. Its endpoint
is exactly the representative for $\beta_{*,c}\iota_c(r^Q_{s,c}[q])$.
Consequently
\[
 (i_c)_{*,c}(a_{\chi(q)})_{*,c}(b_c)
       =\beta_{*,c}\iota_c(r^Q_{s,c}[q]).
\]
Apply $\alpha_{*,c}$ and \eqref{dw:eq:beta-alpha}. This proves
\eqref{dw:eq:FDidentity} on generators. Additivity proves it on the whole
Grothendieck group. Normalization of $q(e)$ is performed at a stronger
source and costs at most the usual factor two in the multiplication error.
\end{proof}

\subsection{Recovering equivariant \texorpdfstring{$K$}{K}-theory classes}
Apply \Cref{lem:fixed-exact-support} to the character of the fixed
representative $b_c$. The resulting supporting set is chosen before the
variable $K$-theory class to which $\nu_c$ is applied.

\begin{proposition}[the identity $\chi^F\nu=r^K$]\label{dw:prop:Kidentity}
If $c$ is admissible for $d$, there is $D\geq d,D(c)$, chosen independently
of $z\in\mathsf K_D$, such that
\[
 \chi^F_{d,c}\nu_c r^K_{D,D(c)}=r^K_{D,d}.
\]
\end{proposition}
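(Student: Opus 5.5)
We unwind $\nu_c$ and push the character through each factor of \eqref{dw:eq:nu}, using the coefficient naturality \eqref{dw:eq:naturality} (that is, \Cref{prop:FQR-character-naturality}). Fix $z\in\mathsf K_D$ and write $z'=r^K_{D,D(c)}z$. Since $\nu_c(z')=\alpha_{*,c}(i_c)_{*,c}(a_{j_c^*z'})_{*,c}(b_c)$, naturality applied three times gives
\[
 \chi^F_{d,c}\nu_c(z')
 =(p_d^*\alpha)_*\,(p_d^*i_c)_*\,(p_d^*a_{j_c^*z'})_*\,
   \chi^F_{d,\A_c,c}(b_c).
\]
The admissible control depends only on $P_d(G)$, not on the coefficient class, so this is legitimate for every $z$ at once. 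The class $\chi^F_{d,\A_c,c}(b_c)\in RKK^G(P_d(G);\C,\A_c)$ is fixed before $z$. By \Cref{lem:fixed-exact-support}, applied with the proper space $Z=Z_c$, it is the extension of a class supported on the coefficient ideal of an invariant open set $V\subseteq P_d(G)\times Z_c$ with $G$-compact closure $\overline V$.

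Next we compare two module actions. On $P_d(G)\times Z_c$ the element $p_d^*a_{j_c^*z'}$ acts through the central $C_0(P_d(G)\times Z_c)$-structure by the bundle $\operatorname{pr}_2^*j_c^*z'$, where $\operatorname{pr}_2$ is the projection to $Z_c$. This is the pullback compatibility in \Cref{prop:central-K-action}. On the other hand, $\operatorname{pr}_1^*(r^K_{D,d}z)$ acts as the exterior $K$-theory action on $RKK^G(P_d(G);\C,-)$. On $\overline V$, both maps $(y,w)\mapsto y$ and $(y,w)\mapsto j_c(w)$ land in $P_D(G)$ once $D$ is large, and they are $G$-homotopic there by the straight-line homotopy in barycentric coordinates.

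To justify this, we need that for $(y,w)\in\overline V$ the supports of $y$ and $j_c(w)$ together have diameter at most $D$. That holds because $\overline V/G$ is compact, so we may take $(y,w)$ in a compact fundamental set, where the supports range over a finite set of group elements; equivariance handles the rest. Choose $D\ge d,D(c)$ accordingly. Homotopy invariance of pulled-back bundle classes then gives $\operatorname{pr}_1^*z|_{\overline V}=\operatorname{pr}_2^*j_c^*z'|_{\overline V}$. The central action on the supported class therefore equals multiplication by $r^K_{D,d}z$ in the $P_d(G)$-variable. This step needs the support localization, because $j_c\circ\operatorname{pr}_2$ and $\operatorname{pr}_1$ are not homotopic globally on $P_d(G)\times Z_c$ inside any fixed Rips complex.

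With the action identified, the right-hand side becomes $(r^K_{D,d}z)\cdot(p_d^*\alpha)_*(p_d^*i_c)_*\chi^F_{d,\A_c,c}(b_c)$, where $K_G^0(P_d(G))$ acts on $RKK^G(P_d(G);\C,\C)$ by the module product; that this product commutes with $(p_d^*\alpha)_*$ is associativity. Now apply naturality in reverse together with \Cref{dw:lem:prepare}(c) and \eqref{dw:eq:character-inputs}. This gives $(p_d^*\alpha)_*(p_d^*i_c)_*\chi^F(b_c)=\chi^F_{d,\C,c}(\alpha_{*,c}\beta[c])$. Since $\beta\otimes\alpha=1$, the right side is $p_d^*1_\C=[1]$, the trivial line bundle. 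Hence the composite is $r^K_{D,d}z\cdot 1=r^K_{D,d}z$.

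The main obstacle is the middle step. We must make rigorous that the coefficient action of $a_{j_c^*z'}$ on the supported class agrees with the $P_d$-variable action of $\operatorname{pr}_1^*z$. This requires that both actions factor through the $C_0(\overline V)$-central structure, so that the homotopy on $\overline V$ alone suffices, and that the equality is one of classes rather than merely of cycles. We would attempt this through the extension-from-ideal description in \Cref{lem:fixed-exact-support}, using homotopy invariance of \Cref{prop:central-K-action} over $C_0(V)\otimes\A_c$.
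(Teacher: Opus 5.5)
Your proposal is correct and follows essentially the same route as the paper. You unwind $\nu_c$ by coefficient naturality, localize the fixed class $\mathcal B_{d,c}=\chi^F_{d,\A_c,c}(b_c)$ to an invariant open $V\subseteq P_d(G)\times Z_c$ with $G$-compact closure, and use compactness of $\overline V/G$ to choose $D$ before $z$, so that the straight-line homotopy between $(y,w)\mapsto y$ and $(y,w)\mapsto j_c(w)$ lands in $P_D(G)$. You then conclude with $(i_c)_*\mathcal B_{d,c}=p_d^*\beta$ and $\beta\otimes_\A\alpha=1_\C$. The step you single out as the main obstacle, namely that the two pulled-back classes act identically on the supported representative, is the same point the paper also settles in a single sentence, so your plan matches the paper's argument at the same level of detail.
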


\begin{proof}
Write
\[
 \mathcal B_{d,c}=\chi^F_{d,\A_c,c}(b_c)
       \in RKK^G(P_d(G);\C,\A_c).
\]
Compatibility with Kasparov products and the character formula for
equivariant Kasparov cycles give
\begin{equation}\label{dw:eq:Bbeta}
 (i_c)_*\mathcal B_{d,c}=p_d^*\beta.
\end{equation}
Apply \Cref{lem:fixed-exact-support} to this fixed class, obtaining a
supporting invariant open $V\subseteq P_d(G)\times Z_c$ with $G$-compact
closure. Compare the two equivariant maps on $\overline V$,
\[
 f_0(y,z)=y,\qquad f_1(y,z)=j_c(z).
\]
A compact subset meeting every orbit of $\overline V$ has compact images
in the two locally finite Rips complexes. Those images use finitely many
vertices. Hence there is $D\geq d,D(c)$ such that the union of the supports
of $y$ and $j_c(z)$ has diameter at most $D$ on all of $\overline V$.
The formula
\[
 (t,y,z)\longmapsto(1-t)y+t j_c(z)
\]
is then an equivariant homotopy into $P_D(G)$. It is chosen before the
variable class $z_D\in\mathsf K_D$.

The two pulled-back classes $f_0^*z_D$ and $f_1^*z_D$ therefore give the
same multiplication on the supporting representative of $\mathcal B_{d,c}$.
Using \eqref{dw:eq:naturality}, \eqref{dw:eq:Bbeta}, and \eqref{dw:eq:beta-alpha},
\begin{align*}
 \chi^F_{d,c}\nu_c r^K_{D,D(c)}(z_D)
 &= (p_d^*\alpha)_*(i_c)_*
       \bigl(f_1^*z_D\cdot\mathcal B_{d,c}\bigr)\\
 &= (p_d^*\alpha)_*(i_c)_*
       \bigl(f_0^*z_D\cdot\mathcal B_{d,c}\bigr)\\
 &= r^K_{D,d}(z_D)\cdot
       p_d^*(\beta\otimes_\A\alpha)
  =r^K_{D,d}(z_D).
\end{align*}
These equalities are on the whole source $\mathsf K_D$. No compact support
for the variable class and no class-dependent choice of $D$ was used.
\end{proof}

\subsection{The comparison for all Fredholm quasi-representations}
\begin{proof}[Proof of Theorem~\ref{dw:thm:main}]
Proposition~\ref{dw:prop:Kidentity} proves \eqref{dw:eq:global-K}. To prove
\eqref{dw:eq:global-F}, fix $c$ and choose a control $t\succeq c$ strong enough
for Proposition~\ref{dw:prop:FDidentity} and all character compatibilities at
$D(c)$. Apply the earlier finite-dimensional comparison at this target $t$.
For $w\in\mathsf F_{a(t)}$, put $x=\rho_t(w)\in\mathsf Q_t$. Then
$\iota_tx=r^F_{a(t),t}w$, and
\begin{align*}
 \nu_c\chi^F_{D(c),a(t)}(w)
 &=\nu_c\chi^F_{D(c),t}(\iota_t x)\\
 &=\nu_c\chi_{D(c),t}(x)\\
 &=\iota_c r^Q_{t,c}(x)
  =r^F_{t,c}\iota_t x
  =r^F_{a(t),c}(w).
\end{align*}
Thus $s=a(t)$ works for every $w$. Crucially, the geometric cancellation
was required only for the finite-dimensional representative $x$, not for
an arbitrary infinite-dimensional local intertwiner.

We spell out eventual coherence. For $c\preccurlyeq c'$, choose $u$
dominating $c'$ and the source controls in \eqref{dw:eq:global-F} for $c,c'$,
and strengthen once for the character compatibilities. Thus
\[
 \nu_c\chi^F_{D(c),u}=r^F_{u,c},\qquad
 \nu_{c'}\chi^F_{D(c'),u}=r^F_{u,c'}.
\]
Apply \eqref{dw:eq:global-K} to $(D(c),u)$ and $(D(c'),u)$, taking one common
large source scale $L$. This gives
\[
 r^F_{u,c}\nu_{u,L}=\nu_{c,L},\qquad
 r^F_{u,c'}\nu_{u,L}=\nu_{c',L},
\]
and hence
\begin{equation}\label{dw:eq:coherence}
 r^F_{c',c}\nu_{c',L}=\nu_{c,L}.
\end{equation}
For a compatible family $y=(y_d)_d$, define
$(\nu y)_c=\nu_c(y_{D(c)})$. Equation~\eqref{dw:eq:coherence} makes it
compatible. The two identities \eqref{dw:eq:global-F}--\eqref{dw:eq:global-K}
show, coordinate by coordinate, that it is the inverse of $\chi^F$.

For the finite-dimensional conclusion define
\begin{equation}\label{dw:eq:psi}
 E(c)=D(a(c)),\qquad
 \psi_c=\rho_c\nu_{a(c)}:\mathsf K_{E(c)}\longrightarrow\mathsf Q_c.
\end{equation}
At a sufficiently strong source $s$, the composite with the character satisfies
\[
 \psi_c\chi_{E(c),s}
 =\rho_c\nu_{a(c)}\chi^F_{E(c),s}\iota_s
 =\rho_c r^F_{s,a(c)}\iota_s
 =r^Q_{s,c}.
\]
For a prescribed Rips scale $d$, choose $c$ sufficiently strong that it and
$a(c)$ are admissible for $d$. Proposition~\ref{dw:prop:Kidentity}, after
increasing $D\geq d,E(c)$, gives
\begin{align*}
 \chi_{d,c}\psi_c r^K_{D,E(c)}
 &=\chi^F_{d,c}\iota_c\rho_c\nu_{a(c),D}\\
 &=\chi^F_{d,c}r^F_{a(c),c}\nu_{a(c),D}
  =\chi^F_{d,a(c)}\nu_{a(c),D}
  =r^K_{D,d}.
\end{align*}
The eventual coherence just proved supplies compatible inverse-limit maps.
The cofinal component formulation is given explicitly below.

Finally fix a target control $c_0$. Set $d_0=E(c_0)$ and choose $s_0$ for
the first identity for $\psi_{c_0}$. For any $d\geq d_0$, choose
$s\succeq s_0$ admissible for $d$ and the restriction compatibility. Equality
of two characters at scale $d$ implies equality at $d_0$; applying
$\psi_{c_0}$ gives equality after restriction to $c_0$. This proves the
quantifier order
\[
 \forall c_0\ \exists d_0\ \forall d\geq d_0\ \exists s\succeq c_0
       \ \forall x,y\in\mathsf Q_s.
\]
For lifting, the second displayed identity factors the bonding map from
$\mathsf K_D$ to $\mathsf K_d$ through the fixed $\mathsf Q_c$. Increase
$D$ once if necessary to arrange $D>d$. Thus
\[
 \forall d\ \exists D>d,c\ \forall z\in\mathsf K_D\ \exists x\in\mathsf Q_c,
 \qquad \chi_{d,c}(x)=r^K_{D,d}(z).
\]
The choices $D,c$ precede $z$. This proves the scalar comparison and its quantitative conclusions.
\end{proof}

\subsection{Compatibility and the quantitative conclusions}

The preceding argument constructs global homomorphisms, not only inverses
on compatible limiting families. We record their control content without
suppressing the source stages.

\begin{proposition}[homomorphisms defining the inverse character]
\label{prop:main-controlled-equivalence}
Under the two hypotheses of \Cref{thm:scalar-character-comparison}, put
$E(c)=D(a(c))$ and $\psi_c=\rho_c\nu_{a(c)}$, as in \eqref{dw:eq:psi}.
For every target control $c$, some source $s\succeq c$, admissible for
$E(c)$, satisfies
\begin{equation}\label{eq:main-inverse-system-A}
 \psi_c\chi_{E(c),s}=r^Q_{s,c}
 \quad\hbox{on all of }\mathsf Q_s.
\end{equation}
For each Rips scale $d$, choose a sufficiently strong admissible $c$. There
is $D\geq d,E(c)$ such that
\begin{equation}\label{eq:main-inverse-system-K}
 \chi_{d,c}\psi_c r^K_{D,E(c)}=r^K_{D,d}
 \quad\hbox{on all of }\mathsf K_D.
\end{equation}
All the source indices precede the variable classes. On countable cofinal
sequences these maps can be chosen as comparison homomorphisms in the
sense of \Cref{def:inverse-system-equivalence}.
\end{proposition}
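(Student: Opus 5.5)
The plan is to assemble \Cref{prop:main-controlled-equivalence} directly from the two composite identities already established in the proof of \Cref{dw:thm:main}. Then I will make the choices coherent on a countable cofinal sequence, so that they satisfy \Cref{def:inverse-system-equivalence}.

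\textbf{Step 1: the first identity.} For \eqref{eq:main-inverse-system-A}, fix $c$. First take $a(c)\succeq c$ and $\rho_c$ from the finite-dimensional comparison \eqref{dw:eq:fd-input}. Next apply \eqref{dw:eq:global-F} with target $a(c)$. This gives a source $s\succeq a(c)$, admissible for $E(c)=D(a(c))$, with $\nu_{a(c)}\chi^F_{E(c),s}=r^F_{s,a(c)}$. Combining this with $\chi^F\iota=\chi$ from \eqref{dw:eq:character-inputs} and with $\rho_c\iota_{a(c)}=r^Q_{a(c),c}$ gives
\[
 \psi_c\chi_{E(c),s}=\rho_c\nu_{a(c)}\chi^F_{E(c),s}\iota_s=\rho_c r^F_{s,a(c)}\iota_s=\rho_c\iota_{a(c)}r^Q_{s,a(c)}=r^Q_{s,c}.
\]
The step $r^F_{s,a(c)}\iota_s=\iota_{a(c)}r^Q_{s,a(c)}$ is naturality of $\iota$. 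The source $s$ was chosen before the class.

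\textbf{Step 2: the second identity.} For \eqref{eq:main-inverse-system-K}, fix $d$ and choose $c$ so strong that both $c$ and $a(c)$ are admissible for $d$. Apply \Cref{dw:prop:Kidentity} to the pair $(d,a(c))$. This gives $D\geq d,E(c)$ with $\chi^F_{d,a(c)}\nu_{a(c)}r^K_{D,E(c)}=r^K_{D,d}$. Now insert $\chi_{d,c}=\chi^F_{d,c}\iota_c$ and $\iota_c\rho_c=r^F_{a(c),c}$, and use that $\chi^F_{d,c}r^F_{a(c),c}=\chi^F_{d,a(c)}$ because the character commutes with restriction of control (\Cref{prop:point-character-compatibility}). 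This yields \eqref{eq:main-inverse-system-K}. Again $D$ depends only on the supporting set of the fixed Bott representative, not on $z$.

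\textbf{Step 3: coherence on cofinal sequences.} Since $G$ is countable, fix an increasing cofinal sequence $c_1\preccurlyeq c_2\preccurlyeq\cdots$ in $\mathcal C_G$ and increasing Rips scales $d_n\to\infty$. Inductively choose scales $E_n$ and sources $s_n$, each at least as large as everything chosen before. Then define $\psi_n$ by restricting $\psi_{c_n}$ to scale $E_n$. The compatibility $r^Q_{c_{n+1},c_n}\psi_{n+1}=\psi_n r^K_{E_{n+1},E_n}$ holds only eventually, and it follows as in \eqref{dw:eq:coherence}. Both sides agree after precomposition with a restriction from a common large scale $L$: Step 2 shows each side equals the composite ``character back to quasi-representations'' of Step 1, applied at a common strong source. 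So by enlarging $E_{n+1}$ to that $L$ at stage $n+1$ one obtains exact commutation with the bonding maps.

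Then set $\sigma(n)=E_n$ and $\tau(m)=s_{m}$, after reindexing so that the two index maps are order preserving and satisfy $\sigma\tau\succeq\mathrm{id}$ and $\tau\sigma\succeq\mathrm{id}$. The identities \eqref{eq:main-inverse-system-A} and \eqref{eq:main-inverse-system-K} become exactly \eqref{eq:inverse-system-equivalence-A} and \eqref{eq:inverse-system-equivalence-B}.

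\textbf{Main obstacle.} The hard part is this bookkeeping: each enlargement made to achieve coherence must preserve the identities already secured. This works because enlarging a source only composes with further bonding maps, and both identities are stable under such precomposition.
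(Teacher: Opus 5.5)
Your Steps 1 and 2 are exactly the two computations the paper cites from the end of the proof of \Cref{thm:scalar-character-comparison}. So \eqref{eq:main-inverse-system-A} and \eqref{eq:main-inverse-system-K} are established just as in the paper.

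The difference is in Step 3. The paper never proves naturality of the components separately. It interleaves the choices: $c_{n+1}$ is taken admissible for $d_n$ and strong enough for the first identity at $c_n$, and then $d_{n+1}$ is taken large enough for the second identity at $(d_n,c_{n+1})$. It sets $f_n=\chi_{d_n,c_{n+1}}$ and $g_n=\psi_{c_n}r^K_{d_n,E(c_n)}$, so the two identities read $g_nf_n=r^Q_{c_{n+1},c_n}$ and $f_ng_{n+1}=r^K_{d_{n+1},d_n}$. Every bonding square then follows by pure algebra, e.g.\ $r^K_{d_{n+1},d_n}f_{n+1}=f_ng_{n+1}f_{n+1}=f_nr^Q_{c_{n+2},c_{n+1}}$, with $\sigma(n)=n+1$ and $\tau(n)=n$.

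You instead prove the $\psi$-squares by an eventual-coherence argument and then enlarge $E_{n+1}$. Spelled out, this inserts $r^K_{L,d}=\chi_{d,s}\psi_sr^K_{L,E(s)}$ at an auxiliary strong source $s$ (Step 2), then applies Step 1 at $c_n$ and $c_{n+1}$, using restriction compatibility of $\chi$ in the Rips scale. That argument is correct. You also need the same restriction compatibility, stated explicitly, for the $\chi$-squares. The zig-zag form makes both the separate coherence step and the extra enlargement unnecessary.

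One point in your last paragraph must be made explicit. The conditions $\sigma\tau\succeq\mathrm{id}$ and $\tau\sigma\succeq\mathrm{id}$ do not by themselves turn \eqref{eq:main-inverse-system-A}--\eqref{eq:main-inverse-system-K} into \eqref{eq:inverse-system-equivalence-A}--\eqref{eq:inverse-system-equivalence-B}. You need two further conditions:
\begin{itemize}
\item the scale $\sigma(\tau(m))$ must dominate the $D$ of \eqref{eq:main-inverse-system-K} for the matched pair $(d_m,\tau(m))$;
\item the source $\tau(\sigma(n))$ must dominate the Step~1 source for $c_n$, and be admissible for the scale $\sigma(n)$.
\end{itemize}
These are precisely the constraints built into the paper's recursion. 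Your phrase ``at least as large as everything chosen before'' works only if it is read as including them. With that reading, your construction goes through.
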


\begin{proof}
The two equations were computed at the end of the proof of
\Cref{thm:scalar-character-comparison}. Here are the cofinal choices
explicitly. Fix exhausting sequences of group controls and Rips scales.
Choose $c_1$ and $d_1\geq E(c_1)$. Once $c_n,d_n$ have been chosen, choose
$c_{n+1}$ above $c_n$ and the next exhausting group control, admissible for
$d_n$, and strong enough that the first equation and character naturality
give
\[
 \psi_{c_n}r^K_{d_n,E(c_n)}\chi_{d_n,c_{n+1}}
       =r^Q_{c_{n+1},c_n}.
\]
Choose $d_{n+1}$ above $d_n,E(c_{n+1})$ and the next exhausting scale,
large enough for the second equation at $(d_n,c_{n+1})$.
Set
\[
 f_n=\chi_{d_n,c_{n+1}},\qquad
 g_n=\psi_{c_n}r^K_{d_n,E(c_n)}.
\]
Then
\[
 g_nf_n=r^Q_{c_{n+1},c_n},\qquad
 f_ng_{n+1}=r^K_{d_{n+1},d_n}.
\]
The bonding squares follow algebraically. For example,
\[
 r^K_{d_{n+1},d_n}f_{n+1}
   =f_ng_{n+1}f_{n+1}
   =f_nr^Q_{c_{n+2},c_{n+1}},
\]
and
\[
 r^Q_{c_{n+1},c_n}g_{n+1}
   =g_nf_ng_{n+1}
   =g_nr^K_{d_{n+1},d_n}.
\]
Thus $\sigma(n)=n+1$ and $\tau(n)=n$ give homomorphisms defining the inverse
satisfying the stated identities on these cofinal systems. Restriction from a cofinal stage defines the
corresponding inverse on every original inverse-limit coordinate. Every source stage is chosen independently of the class to which the
map is applied.
\end{proof}

\begin{lemma}[equality and lifting after restriction]
\label{lem:inverse-system-finite-stage}
Suppose two inverse systems are related by component maps whose composites
are bonding homomorphisms, as in \Cref{def:inverse-system-equivalence}.
For a prescribed target stage on the first side, equality of the two images
at a suitable stage on the second side implies equality after restriction
to that target. For a prescribed target on the second side, restriction
from one sufficiently strong source stage factors through a single stage
on the first side.
\end{lemma}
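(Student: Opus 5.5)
The plan is to unwind \Cref{def:inverse-system-equivalence} and use only the two composite identities \eqref{eq:inverse-system-equivalence-A} and \eqref{eq:inverse-system-equivalence-B}. Write the systems as $A=(A_i)_{i\in I}$ and $B=(B_j)_{j\in J}$. The data are the index maps $\sigma,\tau$ and the component homomorphisms $f_j:A_{\sigma(j)}\to B_j$ and $g_i:B_{\tau(i)}\to A_i$. In the application, $A$ is the quasi-representation system, $B$ is the Rips $K$-theory system, and the components are the $\chi$'s and $\psi$'s from \Cref{prop:main-controlled-equivalence}. No analysis is needed; the argument is a diagram chase in which every stage is chosen before the elements.

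\emph{Equality.} Fix a target $i\in I$ and take the second-side stage $j=\tau(i)$ and the source stage $i'=\sigma(\tau(i))\succeq i$. Suppose $a,a'\in A_{i'}$ satisfy $f_{\tau(i)}(a)=f_{\tau(i)}(a')$. Apply $g_i$ to both sides. By \eqref{eq:inverse-system-equivalence-A},
\[
 r^A_{i',i}(a)=g_if_{\tau(i)}(a)=g_if_{\tau(i)}(a')=r^A_{i',i}(a').
\]
The stage $j$ and the source $i'$ depend only on $i$. I would also record the variant needed for \Cref{thm:main-quantitative}(i), where equality is detected at an arbitrary larger stage $j''\succeq\tau(i)$. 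Choose a source dominating $\sigma(j'')$. Equality at $j''$ restricts to equality at $\tau(i)$ because the $f$'s commute with the bonding maps, and naturality lets us first restrict $a,a'$ to $\sigma(\tau(i))$.

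\emph{Lifting.} Fix a target $j\in J$ and take the source $j'=\tau(\sigma(j))$ and the first-side stage $i=\sigma(j)$. For any $b\in B_{j'}$, put $a=g_{\sigma(j)}(b)\in A_i$. By \eqref{eq:inverse-system-equivalence-B},
\[
 f_j(a)=f_jg_{\sigma(j)}(b)=r^B_{j',j}(b).
\]
So $r^B_{j',j}=f_j\circ g_{\sigma(j)}$ factors through the single group $A_{\sigma(j)}$, and both $j'$ and $i$ precede $b$.

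The main obstacle is bookkeeping rather than mathematics. \Cref{prop:main-controlled-equivalence} supplies these maps only on countable cofinal sequences, with $\sigma(n)=n+1$ and $\tau(n)=n$. A prescribed target on the original directed sets must therefore first be dominated by a cofinal stage. One then transfers equality and lifting back along the original bonding maps, using that these are compatible with the characters (\Cref{prop:character-well-defined}, \Cref{cor:character-well-defined-Rips}). The only care needed is to check that these compatibility strengthenings are also chosen before the elements, which they are.
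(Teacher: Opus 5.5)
Your proposal is correct and follows the paper's proof. Both derive the equality assertion from \eqref{eq:inverse-system-equivalence-A}, $g_if_{\tau(i)}=r^A_{\sigma(\tau(i)),i}$ (the paper applies it to the difference of the two classes, which makes no difference), and read off the lifting factorization from \eqref{eq:inverse-system-equivalence-B}, $f_jg_{\sigma(j)}=r^B_{\tau(\sigma(j)),j}$; your remark about first passing to the cofinal sequences of \Cref{prop:main-controlled-equivalence} and then restricting to an originally prescribed target matches the paper's closing sentence.
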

\begin{proof}
For a target $i$, the identity $g_if_{\tau(i)}=r^A_{\sigma\tau(i),i}$
shows that the bonding homomorphism factors through $B_{\tau(i)}$.
Applying this identity to the difference of two classes gives the equality
assertion. For a target $j$, the identity
$f_jg_{\sigma(j)}=r^B_{\tau\sigma(j),j}$ is precisely the required lifting
factorization. The same argument applies after restriction to cofinal
systems, followed by a bonding map to any originally prescribed target.
\end{proof}

\begin{proof}[Proof of \Cref{cor:amenable-character}]
By \Cref{lem:amenable-Haagerup,thm:HK-proper-algebra}, an amenable
countable discrete group has a proper coefficient algebra and
Dirac--dual-Dirac classes satisfying \eqref{dw:eq:beta-alpha}.
The comparison maps in \eqref{dw:eq:fd-input} are supplied by
\Cref{prop:QR-FQR-amenable-controlled}.
The scalar comparison theorem and compatibility of the two characters give
\[
 \chi=\chi^F\iota:
 \mathrm{QR}(G)\xrightarrow{\cong}
 \varprojlim_d K_G^0(P_d(G)),
 \qquad \chi^{-1}=\iota^{-1}\nu.
\]
The last group is $K_G^0(\underline EG)$ under the convention
\eqref{eq:RK-inverse-limit}. This proves \eqref{eq:main-nonquantitative}.
\end{proof}

\begin{proof}[Proof of \Cref{thm:main-quantitative}]
Fix $c_0=(F_0,\varepsilon_0)$. Put $d_0=E(c_0)$ and choose $s_0$ for
\eqref{eq:main-inverse-system-A}. For any $d\geq d_0$, choose
$s\succeq s_0$ admissible for the character at $d$ and for its restriction
to $d_0$. If two classes at $s$ have the same character at scale $d$,
their characters at $d_0$ agree. Apply $\psi_{c_0}$ and
\eqref{eq:main-inverse-system-A}; their restrictions to $c_0$ are equal.
Thus the order of choice is
\[
 \forall c_0\quad\exists d_0\quad\forall d\geq d_0
 \quad\exists s\succeq c_0\quad\forall x,y\in\mathsf Q_s.
\]
For a prescribed $d$, choose $c,D$ in
\eqref{eq:main-inverse-system-K}. For $z\in\mathsf K_D$ take
\[
 x=\psi_c r^K_{D,E(c)}(z).
\]
Then $\chi_{d,c}(x)=r^K_{D,d}(z)$. The same $c,D$ work for every $z$;
precompose with restriction from a larger scale if necessary to ensure
$D>d$. These are the two asserted quantitative statements. Their
interpretation as stable homotopy is \Cref{prop:QR-two-descriptions}.
\end{proof}

\begin{remark}[scope of the method]\label{rem:main-proof-scope}
The geometric proof uses the comparison with finite-dimensional cycles
only after establishing cancellation for finite-dimensional
quasi-representations. It therefore applies whenever the comparison maps
in \eqref{dw:eq:fd-input} and the proper coefficient algebra and classes
in \eqref{dw:eq:beta-alpha} are available.
\end{remark}

For the qualitative argument below, abbreviate
\[
 Q_c=\mathrm{QR}(F,\varepsilon),\qquad
 \mathcal F_c=\mathrm{FQR}(*;F,\varepsilon;\mathbb C)
 \quad(c=(F,\varepsilon)),
\]
and write
\[
 Q=\varprojlim_c Q_c=\mathrm{QR}(G),\qquad
 \mathcal F=\varprojlim_c\mathcal F_c,\qquad
 \Klim=\varprojlim_d K_G^0(P_d(G)).
\]
The two characters and the finite-dimensional inclusion satisfy
\begin{equation}\label{eq:factor}
 Q\xrightarrow{\ \iota\ }\mathcal F
  \xrightarrow{\ \chi^F\ }\Klim,
 \qquad \chi=\chi^F\iota.
\end{equation}
The global u.c.p. counterparts and the lifting comparison are developed
in Section~\ref{sec:quantitative-KQD}.

\subsection{Lifting and polar correction}

\begin{proposition}[Unitary--u.c.p. comparison]\label{prop:LP}
If $A=C^*_{\max}(G)$ has LP, there are natural isomorphisms making the
following square commute:
\begin{equation}\label{eq:LPdiagram}
\begin{tikzcd}[column sep=large]
 Q\arrow[r,"\iota"]\arrow[d,"L_Q"',"\cong"]&
 \mathcal F\arrow[d,"L_{\mathcal F}","\cong"']\\
 \mathrm{QR}^{\ucp}(G)\arrow[r,"\iota^{\ucp}"']&
 \mathrm{FQR}^{\ucp}(G).
\end{tikzcd}
\end{equation}
\end{proposition}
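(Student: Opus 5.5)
We will construct the vertical maps at each finite control and verify the composite identities in the sense of \Cref{def:inverse-system-equivalence}, so that they induce inverse isomorphisms of the ordinary inverse limits. The groups $\mathrm{QR}^{\ucp}(G)$ and $\mathrm{FQR}^{\ucp}(G)$ are taken to be the global u.c.p. systems: u.c.p. maps defined on all of $A=C^*_{\max}(G)$, approximately multiplicative on the group unitaries of a prescribed finite set. There are two natural candidates for the vertical maps, and we use one in each direction.

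\emph{u.c.p. to unitary.} Polar correction $P$ sends a global u.c.p. almost representation $\varphi$ to $u_g=\varphi(u_g)(\varphi(u_g)^*\varphi(u_g))^{-1/2}$. By \Cref{lem:polar-correction} and the Stinespring estimate \eqref{eq:Stinespring-control}, a defect below $\delta$ on $\{g,g^{-1}\}$ makes $\varphi(u_g)$ a $\delta$-almost unitary. The resulting map is therefore an $(F,\varepsilon)$-quasi-representation once $\delta$ is small relative to $\varepsilon$, and this choice is independent of the dimension. In the Fredholm case the correction happens inside $D_T$, so compact commutators with $T$ are kept. Polar correction commutes with direct sums, convex and interval homotopies, and bonding maps. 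It thus gives natural homomorphisms $P_c\colon \mathrm{QR}^{\ucp}(\sigma c)\to \mathrm{QR}(c)$ and likewise on the Fredholm side, and these commute with $\iota$ and $\iota^{\ucp}$ on the nose.

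\emph{Unitary to u.c.p.} Here LP is used. Let $q$ be an $(F_n,\varepsilon_n)$-quasi-representation. As in the proof of \Cref{lem:finite-system-cb}, a sequence of such maps with $\varepsilon_n\to0$ defines a genuine representation of $G$ into $\prod B(H_n)/\bigoplus B(H_n)$, and hence a $*$-homomorphism of $C^*_{\max}(G)$. LP lifts this to a u.c.p. map into $\prod B(H_n)$, whose coordinates are u.c.p. maps $\varphi_n$ with $\|\varphi_n(u_g)-q_n(g)\|\to0$. To get a statement at finite controls we argue by contradiction, exactly in the style of \Cref{lem:finite-system-cb}. For each target control $c$ there is a source control $s$ such that every $s$-quasi-representation, of any dimension, is $\eta$-close on the tested set to a global u.c.p. map with defect below the tolerance of $c$. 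The same argument on $\mathcal L(\mathcal E)$, with target algebra the compact-commutator algebra $D_T$, or more precisely its image in $\prod D_{T_n}/\bigoplus$, treats Fredholm cycles and interval-parametrized families. The contradiction argument applies verbatim over the $C([0,1])$-module, giving $C(P)$-uniformity.

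\emph{Well-definedness and composite identities.} Two u.c.p. lifts at the same stage are joined by the convex path. By \eqref{eq:fdap-convex} this path has defect at most the two defects plus $\eta^2$. Stage-wise the lift is therefore independent of choices and respects homotopies, since a homotopy is lifted as one parametrized family. This yields homomorphisms $L_c$. Next, $P\circ L$ is $\eta$-close to the original quasi-representation, so the short logarithmic path of \Cref{lemma:unitary-implies-homotopy} gives the restriction to the target stage. Conversely, $L\circ P$ is close to $\varphi$ on the tested elements, and the convex path gives the restriction. Both identities pass through a common strengthening chosen before the class. Commutativity of the square is immediate, because $\iota$ and $\iota^{\ucp}$ merely regard a finite-dimensional object as a cycle with $T=0$, and lifting and polar correction act identically there. \Cref{def:inverse-system-equivalence} then gives the isomorphisms $L_Q$ and $L_{\mathcal F}$ together with naturality.

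\emph{Main obstacle.} The hardest step is the Fredholm lift. LP lifts into $B/I$ for a quotient, but a u.c.p. lift into $\prod B(\mathcal E_n)$ need not commute with $T_n$ modulo compacts. The fix I would try first is to apply LP to the quotient map $\prod_n D_{T_n}\to\prod_n D_{T_n}/\bigoplus_n D_{T_n}$; since $D_{T_n}$ is a unital $C^*$-algebra, LP applies directly. The remaining care is that the Fredholm groups allow arbitrary stabilizations, so the contradiction sequence must range over all modules at once. The product construction handles this because no norm bound depends on the module.
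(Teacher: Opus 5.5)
Your proposal is correct and follows essentially the paper's route. In one direction you use polar correction. In the other you run the contradiction argument with LP applied to $\prod_n\mathcal D_{T_n}\to\prod_n\mathcal D_{T_n}/\bigoplus_n\mathcal D_{T_n}$, with one $C([0,1])$-parametrized lift for homotopies. Convex u.c.p. paths and short logarithmic paths then give independence of the lift and the two composite identities.

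The one point you leave implicit is why a degenerate unitary Fredholm cycle is sent to zero: a lift landing only in $\mathcal D_T$ is not itself degenerate. The paper handles this by running the same lifting argument in the exact commutant $\mathcal D_T^0$, which your argument already covers since it works for any unital target algebra. Your convex-path independence then shows that every admissible lift of a degenerate cycle gives the zero class.
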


This is the inverse-limit form of \Cref{prop:LP-unitary-ucp-comparison,cor:LP-unitary-ucp-limits}.
We recall the mechanism, including why it respects classes.
If increasingly accurate unitary quasi-representations $q_m$ could not
be approximated on a prescribed finite test by global u.c.p. maps,
they would give a genuine representation in
\[
 \prod_mD_m/\bigoplus_mD_m.
\]
The full group-algebra universal property extends it to $A$, and LP
supplies a u.c.p. lift. Its coordinate maps contradict the assumed
failure of approximation. For a Fredholm cycle, take
\[
 D_m=\mathcal D_{T_m}
 =\{b\in B(H_m)_{\mathrm{even}}:[b,T_m]\in\K(H_m)\}.
\]
These are unital $C^*$-algebras. For a degenerate cycle use the exact
commutant instead. A homotopy is handled in one parameter algebra over
$C([0,1])$, so the compactness conditions hold over the whole interval.

Conversely, the elements $\phi(u_g)$ of a sufficiently multiplicative
u.c.p. map are almost unitary on a symmetric test set. Their polar
parts are unitaries. This functional calculus stays in $\mathcal D_T$
and respects direct sums and parameter families. Two choices of a
u.c.p. replacement are joined by the convex u.c.p. homotopy. If
$\phi_t=(1-t)\phi_0+t\phi_1$, then
\begin{align*}
 \phi_t(ab)-\phi_t(a)\phi_t(b)
 ={}&(1-t)\bigl(\phi_0(ab)-\phi_0(a)\phi_0(b)\bigr)\\
 &+t\bigl(\phi_1(ab)-\phi_1(a)\phi_1(b)\bigr)\\
 &+t(1-t)(\phi_0(a)-\phi_1(a))(\phi_0(b)-\phi_1(b)).
\end{align*}
This gives independence, additivity, and the two inverse identities
after relaxing control. \Cref{prop:LP-unitary-ucp-comparison} records the compatible
finite-control maps; passing to inverse limits gives the square.

\subsection{Finite-dimensional Bott cancellation}

\begin{proposition}[Bott cancellation on finite-dimensional classes]
\label{prop:FD}
Suppose that $G$ has the Haagerup property. For every control $c$
there exist a scale $D(c)$, a homomorphism
\[
 \nu_c:K_G^0(P_{D(c)}(G))\longrightarrow\mathcal F_c,
\]
and a stronger source control $s\succeq c$, admissible for the character
at $D(c)$, such that
\begin{equation}\label{eq:FD}
 \nu_c\chi_{D(c),s}=\iota_c r^Q_{s,c}\quad\text{on }Q_s.
\end{equation}
The scale and source control are chosen before the variable class.
No finite-dimensional--Fredholm comparison hypothesis is used.
\end{proposition}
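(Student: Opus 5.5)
The plan is to observe that \Cref{dw:prop:FDidentity} was proved using only the Haagerup input \eqref{dw:eq:beta-alpha}, never the comparison maps \eqref{dw:eq:fd-input}. I would therefore repeat that construction verbatim, with the Haagerup property as the sole hypothesis.

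\textbf{Step 1 (the classes $\alpha,\beta$).} By \Cref{thm:HK-proper-algebra}, the Haagerup property gives a separable proper $G$-$C^*$-algebra $\A=\A(H)$, with a nondegenerate central map $C_0(Z)\to ZM(\A)$ for a proper $G$-space $Z$, together with classes $\beta,\alpha$ satisfying $\beta\otimes_\A\alpha=1_\C$. This is exactly \eqref{dw:eq:beta-alpha}.

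\textbf{Step 2 (defining $D(c)$ and $\nu_c$).} Fix $c=(F,\eta)$. Choose $\delta,a,\epsilon$ satisfying \eqref{dw:eq:budget}; these depend only on $\eta$. Apply \Cref{dw:lem:prepare} to obtain $b_c$, $\A_c$, $Z_c$ and $K_c$. Choose a cutoff $\theta$ positive on a buffered neighbourhood of $K_c$ and its tested translates. Define $j_c:Z_c\to P_{D(c)}(G)$ by \eqref{dw:eq:j}, and define $\nu_c$ by \eqref{dw:eq:nu} using \Cref{prop:central-K-action} and \Cref{prop:FQR-KK-product}. This map is a homomorphism. All of these choices depend only on $c$ and the fixed representative of $\beta$.

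\textbf{Step 3 (the identity on generators).} Choose $s\succeq c$ large enough for four things: the character at $D(c)$ to be defined, the normalization of \Cref{lem:qrep-normalize-identity} to apply, the overlap sets in \Cref{dw:lem:localW} to be covered, and the multiplication error to be below $\epsilon/2$. For a finite-dimensional $q$ at control $s$:
\begin{itemize}
\item First normalize $q(e)=1$ via the controlled path.
\item Identify the bundle $\mathcal V_q$ with $j_c^*\chi_{D(c)}(q)$ using \eqref{dw:eq:cutoff-match}. Then $(a_{\chi(q)})_{*}(b_c)$ is represented by $(\mathcal P_1,Q_1,B_1)$.
\item \Cref{dw:lem:cancellation} equates this cycle with $(\C^n\whot\E_0,q\whot v,1\whot B)$ at the control $(F,\eta)$.
\item Tensor the controlled homotopy $i_*[b_c]=\beta$ of \Cref{dw:lem:prepare}(c) with $q$. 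This identifies the result with $\beta_{*,c}\iota_c r^Q_{s,c}[q]$.
\item Apply $\alpha_{*,c}$ and \Cref{lem:alpha-beta-stagewise} to conclude.
\end{itemize}

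\textbf{Step 4 (extension to $Q_s$).} Since both sides of \eqref{eq:FD} are homomorphisms on the Grothendieck group $Q_s$, the identity extends from generators to all of $Q_s$. Homotopy invariance is automatic here, because $\chi$ is already defined on classes.

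\textbf{Main obstacle.} I expect the main difficulty to be the bookkeeping needed to keep every constant independent of $n$ and of $q$. This concerns in particular the $33\epsilon$ bound of \Cref{dw:lem:localW} and the support argument showing that $C=W_qd_0$ is globally defined and compact. I would verify that the overlap set used for $W_q$ contains every pair $(g,k)$ arising when $z$ or $zg$ lies in $K_c$, and I would fix it before $q$ is chosen. I also need to check that \eqref{dw:eq:budget} still leaves room after the factor-two loss from normalization. This is why $s$ must be chosen with error at most $\epsilon/2$.
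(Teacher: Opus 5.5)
Your proposal is correct and is the paper's own argument. The paper also proves \Cref{prop:FD} by taking $\nu_c$ from \eqref{dw:eq:nu} and citing the identity of \Cref{dw:prop:FDidentity}; that identity comes from \Cref{dw:lem:prepare,dw:lem:cancellation} followed by the product with $\alpha$. It uses only the Higson--Kasparov classes and never the comparison maps \eqref{dw:eq:fd-input}, which are exactly the steps and the observation you make, including the factor-two loss from normalizing $q(e)$.
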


The map is defined in \eqref{dw:eq:nu}, and the identity is
\Cref{dw:prop:FDidentity}. The Higson--Kasparov theorem supplies a
proper coefficient algebra $\mathcal A$ and classes
\[
 \beta\in KK^G(\C,\mathcal A),\qquad
 \alpha\in KK^G(\mathcal A,\C),\qquad
 \beta\otimes_{\mathcal A}\alpha=1.
\]
The construction first prepares a controlled representative of $\beta$ and then
identify its product with the character bundle of a finite-dimensional
quasi-representation. \Cref{dw:lem:prepare,dw:lem:cancellation} give a homotopy to the
tensor-product representative. Finite dimensionality ensures that the
errors remain compact. Taking the product with $\alpha$ gives
\eqref{eq:FD}. This uses the finite-dimensional cancellation identity,
rather than the later comparison on all Fredholm classes.

\subsection{Characters of exact cycles}

\begin{proposition}[Compatibility of characters]\label{prop:exact}
For every countable $G$, exact equivariant Kasparov cycles induce a
homomorphism $\kappa:KK^G(\C,\C)\to\mathcal F$. The characters satisfy
\begin{equation}\label{eq:exact}
 \chi^F\iota=\chi,\qquad
 \chi^F_d(\kappa(\xi))=p_d^*\xi
 \quad(\xi\in KK^G(\C,\C)),
\end{equation}
where $p_d:P_d(G)\to\{*\}$ is the projection and
$\chi_d^F=\operatorname{pr}_d\circ\chi^F$.
\end{proposition}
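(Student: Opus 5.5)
We construct $\kappa$ stagewise and then check the two identities in \eqref{eq:exact} coordinate by coordinate.

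\emph{Defining $\kappa$.} Take a class $\xi\in KK^G(\C,\C)$ and represent it by an equivariant Kasparov cycle $(\mathcal E,U,T)$, where $U$ is a genuine action and $\mathcal E$ is a graded Hilbert space. For each control $c=(F,\varepsilon)$, restrict $U$ to $FF$. The result has multiplication error $0<\varepsilon$, and its Fredholm and equivariance defects are compact, so $(\mathcal E,U|_{FF},T)$ is an $\mathrm{FQR}(*;c;\C)$-cycle; this is the class written $\xi[c]$ in \Cref{prop:point-character-compatibility}.

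We check that the class of $\xi[c]$ depends only on $\xi$. Kasparov homotopies are cycles over $C([0,1])$ and restrict to homotopies of Fredholm quasi-representations. Unitary equivalences and degenerate cycles restrict to the corresponding notions. Additivity is immediate from direct sums.

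Restriction commutes with the bonding maps $r^F_{c',c}$, because restricting from $F'F'$ to $FF$ and relaxing the error is literally what the bonding map does. So the family $(\xi[c])_c$ is compatible and defines $\kappa(\xi)\in\mathcal F$.

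Equivalently, $\kappa(\xi)=\xi_*(\kappa(1_\C))$ through the coefficient functor of \Cref{prop:FQR-KK-product}. This could serve as a cross-check, but it is not needed.

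\emph{The first identity.} The equation $\chi^F\iota=\chi$ is \Cref{lem:character-compatibility} applied at every Rips scale $d$ with a source control admissible for $d$. The character maps on $\mathcal F$ and $Q$ are defined coordinatewise through such admissible controls, and \Cref{prop:point-character-compatibility} supplies compatibility with restriction. So the stagewise equalities assemble into equality of maps into $\Klim$.

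\emph{The second identity.} For $\chi^F_d(\kappa(\xi))=p_d^*\xi$, fix $d$ and pick a control $c$ admissible for $P_d(G)$. By definition, $\operatorname{pr}_d\chi^F(\kappa(\xi))=\chi^F_{d,c}(\xi[c])$. Equation \eqref{eq:point-character-exact-input} with $A=\C$ gives $\chi^F_{d,c}(\xi[c])=p_d^*\xi$ in $RKK^G(P_d(G);\C,\C)=K^0_G(P_d(G))$. The only substantive content sits in that proposition: for a genuine action the averaged projection already equals $P_q$, and the compressed operator differs from $T$ by a locally compact operator.

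\emph{Main obstacle.} The delicate point is bookkeeping: the right-hand sides $p_d^*\xi$ must form a compatible family matching the inverse-limit convention \eqref{eq:RK-inverse-limit}, and the answer must not depend on the admissible control chosen. I would handle both with the same two facts.

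First, $p_d^*\xi$ is compatible under the Rips inclusions $\iota$, since $p_{d'}\circ\iota=p_d$ and pullback is functorial.

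Second, choice-independence follows from \Cref{prop:point-character-compatibility}: different cutoffs, and pullback along the inclusions, agree after a common strengthening of the source control, and $\xi[c']$ restricts to $\xi[c]$.
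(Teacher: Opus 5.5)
Your proposal is correct and follows the paper's own argument. Your $\kappa$ is the restriction of a genuine equivariant representative to the tested elements, which is exactly the class $\xi[c]$ of \Cref{prop:point-character-compatibility}; the first identity is \Cref{lem:character-compatibility}, and the second is \eqref{eq:point-character-exact-input} with $A=\C$. The only detail you leave implicit is that a cycle for $KK^G(\C,\C)$ should first be cut down to its invariant unit-support corner $\varphi(1)\mathcal E$, with the complement degenerate, so that $T-T^*$ and $T^2-1$ are compact on the whole module, as the Fredholm quasi-representation definition requires.
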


The first identity is \Cref{lem:character-compatibility}; the second is
\Cref{prop:point-character-compatibility}, specifically \eqref{eq:point-character-exact-input}. For an exact group
action, the character construction gives the external product cycle
on $P_d(G)$. For a finite-dimensional cycle with operator zero, its
Fredholm character is the same range bundle as the geometric character.
Both identifications respect restrictions and homotopies.

For later use, there is a canonical identification
\begin{equation}\label{eq:group-KK}
 KK(C^*_{\max}(G),\C)\cong KK^G(\C,\C).
\end{equation}
See \cite[Section~4, the discussion preceding equation~(3)]{dadarlat_2021}.
A unital representation of the full group algebra is the same as a
unitary representation of $G$. The conditions
$[T,\pi(a)]\in\K$ for $a\in A$ and
$q(g)Tq(g)^{-1}-T\in\K$ for $g\in G$ correspond, by density of the
group algebra. The Fredholm and grading conditions also agree, as do
homotopies. Nonunital cycles are reduced to their unit-support corners;
the compact discrepancies are retained. We use~\eqref{eq:group-KK}
when applying $\kappa$ to a Cuntz-pair class.

\section{Character isomorphism theorems and QR-quasidiagonality}
\label{sec:quantitative-KQD}

In this section, we prove the character isomorphism theorem for groups
satisfying the Haagerup property and QR-quasidiagonality, together with
a quantitative version of this theorem.

We first state both character theorems and prove them using the Bott
cancellation and exact-cycle compatibility established in
Section~\ref{sec:amenable-proof}. We then prove that quasidiagonality
of the maximal group $C^*$-algebra supplies the quantitative comparison
for global u.c.p. maps, and use the lifting property to pass to
unitary-valued quasi-representations. Finally, we verify these sufficient
conditions for amenable, free, and surface groups and give the explicit
classification of quasi-representations for surface groups using  rank and winding number as invariants.

Throughout, $G$ is countable and discrete, $A=C^*_{\max}(G)$ when a
group algebra is in use, and
\[
 Q_c=\mathrm{QR}(F,\varepsilon),\qquad
 \mathcal F_c=\mathrm{FQR}(*;F,\varepsilon;\mathbb C),
 \qquad c=(F,\varepsilon)\in\mathcal C_G.
\]
We write $Q=\varprojlim_cQ_c$, $\mathcal F=\varprojlim_c\mathcal F_c$,
and $\Klim=\varprojlim_d K_G^0(P_d(G))$, as in
Section~\ref{sec:amenable-proof}. Bonding maps are denoted by $r^Q$
and $r^F$, and $\iota_c:Q_c\to\mathcal F_c$ is the inclusion of
finite-dimensional cycles. The maximal completion is used because
every unitary representation of $G$ extends to it.

\subsection{QR-quasidiagonality and the character theorems}
\label{subsec:QR-character-theorems}

We first introduce the notion of QR-quasidiagonality for groups, which plays a key role in the character isomorphism theorem.

\begin{definition}[QR-quasidiagonality]\label{def:QR-unitary}
\label{def:comparison}
A countable discrete group $G$ is \emph{QR-quasidiagonal} if the
canonical comparison of the unitary-valued theories
\[
 \iota:\mathrm{QR}(G)\longrightarrow\mathrm{FQR}(G;\mathbb C)
\]
is an isomorphism. This includes both the existence of
finite-dimensional representatives of limiting Fredholm classes and
their stable uniqueness.
\end{definition}

The following quantitative QR-quasidiagonality  concept is useful for establishing the quantitative version of the  character isomorphism theorem.

\begin{definition}[Quantitative QR-quasidiagonality]
\label{def:quantitative-QR-unitary}
A countable discrete group $G$ is \emph{quantitatively
QR-quasidiagonal} if there are an order-preserving map
$\sigma:\mathcal C_G\to\mathcal C_G$, with $\sigma(c)\succeq c$,
and homomorphisms $\rho_c:\mathcal F_{\sigma(c)}\to Q_c$ such that
\begin{align}
 \rho_c\iota_{\sigma(c)}
   &=r^Q_{\sigma(c),c},\label{eq:QR-unitary-left}\tag{QR1}\\
 \iota_c\rho_c
   &=r^F_{\sigma(c),c},\label{eq:QR-unitary-right}\tag{QR2}\\
 r^Q_{d,c}\rho_d
   &=\rho_c r^F_{\sigma(d),\sigma(c)}
     \quad(c\preccurlyeq d).\label{eq:QR-unitary-coherence}\tag{QR3}
\end{align}
The source control is chosen before the variable class.
\end{definition}

Quantitative QR-quasidiagonality implies QR-quasidiagonality:
the inverse of $\iota$ is
$(y_c)_c\mapsto(\rho_c(y_{\sigma(c)}))_c$.
The third identity makes this family compatible, and the first two
show that it is a two-sided inverse. The converse is not part of the
definition: an isomorphism of inverse limits does not by itself
provide finite-control homomorphisms.

\begin{theorem}[Qualitative character isomorphism]\label{thm:qual}
Let $G$ be a countable discrete group with the Haagerup property.
If $G$ is QR-quasidiagonal, then the geometric character is an
isomorphism of abelian groups:
\[
 \chi:\mathrm{QR}(G)\xrightarrow{\cong}
       \varprojlim_d K_G^0(P_d(G)).
\]
\end{theorem}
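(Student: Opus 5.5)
The plan is to use the factorization \eqref{eq:factor}, $\chi=\chi^F\iota$, which holds by \Cref{lem:character-compatibility} (equivalently \Cref{prop:exact}). By hypothesis $\iota$ is an isomorphism of inverse limits, so it remains to show that $\chi^F:\mathcal F\to\Klim$ is an isomorphism. The difficulty is that \Cref{dw:thm:main} assumes the finite-control comparison \eqref{dw:eq:fd-input}, and the qualitative hypothesis does not supply it. I will therefore work directly on inverse limits, using only the finite-control identities that require no comparison hypothesis: the Bott cancellation \Cref{prop:FD} (equivalently \Cref{dw:prop:FDidentity}) and the identity $\chi^F\nu=r^K$ of \Cref{dw:prop:Kidentity}. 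Neither of these uses \eqref{dw:eq:fd-input}. The Higson--Kasparov input is available because $G$ has the Haagerup property (\Cref{thm:HK-proper-algebra}).

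\textbf{Step 1 (the map $\nu$).} For each control $c$, fix $D(c)$ and $\nu_c:K_G^0(P_{D(c)}(G))\to\mathcal F_c$ as in \eqref{dw:eq:nu}. For a compatible family $y=(y_d)_d\in\Klim$, define $(\nu y)_c=\nu_c(y_{D(c)})$. Compatibility of $\nu y$ requires $r^F_{c',c}\nu_{c'}(y_{D(c')})=\nu_c(y_{D(c)})$. I will check this by tracking cycles: $\nu_c$ is the product $\alpha_*(i_c)_*(a_z)_*$ applied to the localized Bott representative $b_c$. Restricting the data for $c'$ to $c$ gives another admissible localized representative, and the parameterized form of \Cref{dw:lem:prepare}, together with the homotopy $f_0\simeq f_1$ in a larger Rips complex used in \Cref{dw:prop:Kidentity}, should identify the two outputs after passing to a common large scale. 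This coherence is the step I expect to be the main obstacle. In \Cref{dw:thm:main} it was deduced from \eqref{dw:eq:global-F}, which needs \eqref{dw:eq:fd-input}. If the direct argument is messy, I would fall back on the following: by \Cref{dw:prop:Kidentity}, $\chi^F\circ\nu=\mathrm{id}_{\Klim}$ coordinatewise. Hence, once $\chi^F$ is known to be injective on $\mathcal F$, coherence follows automatically, since $r^F\nu_{c'}$ and $\nu_c$ have equal characters.

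\textbf{Step 2 (surjectivity).} By \Cref{dw:prop:Kidentity}, for every admissible $(d,c)$ there is $D$ with $\chi^F_{d,c}\nu_cr^K_{D,D(c)}=r^K_{D,d}$. Applied coordinatewise to a compatible family, this gives $\chi^F\nu=\mathrm{id}$. Thus $\chi^F$ is surjective, and so is $\chi$, since $\iota$ is onto.

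\textbf{Step 3 (injectivity).} Let $x\in Q$ with $\chi(x)=0$. For every $c$, choose $s\succeq c$ as in \Cref{prop:FD}. Then $\iota_c(x_c)=\iota_cr^Q_{s,c}(x_s)=\nu_c\chi_{D(c),s}(x_s)=\nu_c(\chi(x)_{D(c)})=0$. Hence $\iota(x)=0$ in $\mathcal F$, and injectivity of $\iota$ (the QR-quasidiagonality hypothesis) gives $x=0$. This yields injectivity of $\chi$ without any finite-control comparison; indeed it is here that the hypothesis of \Cref{def:QR-unitary} does its work.

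Combining Steps 2 and 3, $\chi$ is a bijective homomorphism, which proves the theorem. Note that Step 1's coherence is then not even needed for the conclusion. Surjectivity only requires some preimage of each $y$. For that, I would use the surjectivity of $\iota$ together with the equality $\chi^F\nu=\mathrm{id}$ interpreted coordinatewise, or, more carefully, note that by Step 3 $\chi^F$ is injective on $\iota(Q)=\mathcal F$. Then $\nu y$ is coherent by the fallback argument in Step 1, so $\nu y$ is a genuine element of $\mathcal F$ mapping to $y$.
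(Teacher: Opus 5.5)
Your injectivity argument (Step 3) is exactly the paper's \Cref{lem:kernel} and is fine. The gap is in surjectivity. Step 2 needs $\nu y=(\nu_c(y_{D(c)}))_c$ to be a compatible family, i.e.\ an element of $\mathcal F=\varprojlim_c\mathcal F_c$. The identities of \Cref{dw:prop:Kidentity} only say that each component has the right characters. Without compatibility you have merely an element of $\prod_c\mathcal F_c$, and $\iota^{-1}$ cannot be applied to it.

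Your fallback is circular. Step 3 gives injectivity of $\chi^F=\chi\iota^{-1}$ on the inverse limit $\mathcal F$, and that can only be applied to elements already known to lie in $\mathcal F$. Coherence, by contrast, is a finite-stage statement: $r^F_{c',c}\nu_{c'}(y_{D(c')})=\nu_c(y_{D(c)})$ in $\mathcal F_c$, or at least after a further restriction. Knowing that the two sides have the same characters $\chi^F_{d,c}$ yields this only if you have a finite-stage detection statement for Fredholm classes. That statement is exactly \eqref{dw:eq:global-F}, and its proof in \Cref{dw:thm:main} uses the finite-control comparison \eqref{dw:eq:fd-input}, which qualitative QR-quasidiagonality does not supply. \Cref{prop:FD} only detects classes coming from some $Q_s$, and $\nu_{c'}(y_{D(c')})$ is not known to have a finite-dimensional representative at any finite stage.

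The direct cycle-tracking route is not supported either. \Cref{dw:lem:prepare} is a pure existence statement, and nothing in the paper compares the outputs built from the data $\mathcal A_c,Z_c,j_c,b_c$ with those built from $\mathcal A_{c'},Z_{c'},j_{c'},b_{c'}$. Your last paragraph, which says coherence is not needed and then invokes it, does not close the gap.

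The missing idea is to obtain a compatible family from one global object, rather than assembling it from finite-stage choices. The paper does this in \Cref{lem:lift}:
\begin{enumerate}
\item Model $\EG$ by a telescope $\mathcal X$ of Rips complexes.
\item Lift $y\in\Klim$ through the Milnor sequence \eqref{eq:Milnor} to $RK_G^0(\mathcal X)$.
\item Use $\gamma=1$ (Higson--Kasparov) together with Kasparov's dual assembly isomorphism $p^*\colon KK^G(\C,\C)\cong RK_G^0(\mathcal X)$ to get $\xi$ with $p_d^*\xi=y_d$ for every $d$.
\end{enumerate}
The class $\kappa(\xi)$ is obtained by restricting a single exact equivariant Kasparov cycle to the tested elements at each control, so it is automatically coherent. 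By \eqref{eq:exact}, $\chi^F_d(\kappa(\xi))=p_d^*\xi=y_d$, so $x=\iota^{-1}(\kappa(\xi))$ satisfies $\chi(x)=y$. The non-uniqueness of the Milnor lift is harmless, because injectivity of $\chi$ has already been proved. Note that this route does not use $\nu$ for surjectivity at all.
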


The proof in Subsection~\ref{sec:qualitative-proof} first identifies
$\ker\chi=\ker\iota$ by Bott cancellation. The Milnor sequence and
the Higson--Kasparov theorem then provide exact Kasparov classes
realizing the compatible geometric classes.

\begin{theorem}[Quantitative character isomorphism]
\label{thm:QKQD-Haagerup}
Let $G$ be a countable discrete group with the Haagerup property.
Suppose that $G$ is quantitatively QR-quasidiagonal in the sense of
\Cref{def:quantitative-QR-unitary}. Then the character is an isomorphism
\begin{equation}\label{eq:QKQD-Haagerup-character}
 \chi:\mathrm{QR}(G)\xrightarrow{\cong}
 \varprojlim_d K_G^0(P_d(G)).
\end{equation}
In addition, the following two quantified statements hold.
\begin{enumerate}[label=(\roman*)]
\item\label{item:QKQD-Haagerup-equality}
Let $c_0=(F_0,\varepsilon_0)\in\mathcal C_G$.  There is a number $d_0>0$
such that, for every $d\geq d_0$, one can choose a stronger control
$c_1=(F_1,\varepsilon_1)\succeq c_0$ for which the character
\[
 \chi_d:\mathrm{QR}(F_1,\varepsilon_1)
       \longrightarrow K_G^0(P_d(G))
\]
is defined and has the following property: if $q_1$ and $q_2$ are
$(F_1,\varepsilon_1)$-quasi-representations and
\[
 \chi_d([q_1])=\chi_d([q_2])
       \quad\text{in }K_G^0(P_d(G)),
\]
then
\[
 r^{\mathrm{QR}}_{c_1,c_0}([q_1])
 =r^{\mathrm{QR}}_{c_1,c_0}([q_2])
       \quad\text{in }\mathrm{QR}(F_0,\varepsilon_0).
\]
Here $r^{\mathrm{QR}}_{c_1,c_0}$ is the homomorphism obtained by viewing an
$(F_1,\varepsilon_1)$-quasi-representation as an
$(F_0,\varepsilon_0)$-quasi-representation.

\item\label{item:QKQD-Haagerup-lifting}
For every $d>0$, there are $d'>d$ and a control
$c=(F,\varepsilon)\in\mathcal C_G$ for which
\[
 \chi_d:\mathrm{QR}(F,\varepsilon)
       \longrightarrow K_G^0(P_d(G))
\]
is defined and such that, for every
$z\in K_G^0(P_{d'}(G))$, there is
$x\in\mathrm{QR}(F,\varepsilon)$ satisfying
\[
 \chi_d(x)=r^K_{d',d}(z)
       \quad\text{in }K_G^0(P_d(G)).
\]
Here $r^K_{d',d}$ is pullback along the inclusion
$P_d(G)\hookrightarrow P_{d'}(G)$.
\end{enumerate}
\end{theorem}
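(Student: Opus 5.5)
The plan is to reduce \Cref{thm:QKQD-Haagerup} to \Cref{thm:scalar-character-comparison} and \Cref{prop:main-controlled-equivalence}. Those results were proved under two hypotheses only: the finite-dimensional comparison \eqref{dw:eq:fd-input}, and a proper coefficient algebra with Dirac--dual-Dirac classes satisfying \eqref{dw:eq:beta-alpha}. The quantitative conclusions of \Cref{thm:main-quantitative} were then derived from these, as in \Cref{rem:main-proof-scope}.

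The first step is to supply \eqref{dw:eq:beta-alpha}. Since $G$ has the Haagerup property, \Cref{thm:HK-proper-algebra} gives the separable proper $G$-$C^*$-algebra $\mathcal A(H)$ and classes $\beta,\alpha$ with $\beta\otimes_{\mathcal A}\alpha=1_{\mathbb C}$. Its central subalgebra yields the nondegenerate central map $C_0(Z)\to ZM(\mathcal A)$ for a proper $G$-space $Z$, as the remark after that theorem explains. The second step is to supply \eqref{dw:eq:fd-input}. Put $a(c)=\sigma(c)$ and take $\rho_c$ from \Cref{def:quantitative-QR-unitary}. Then \eqref{eq:QR-unitary-left} and \eqref{eq:QR-unitary-right} are exactly the two composite identities required in \eqref{dw:eq:fd-input}. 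The coherence \eqref{eq:QR-unitary-coherence} is not needed for \eqref{dw:eq:fd-input} itself, but it makes the inverse of $\iota$ compatible on inverse limits, so $\iota$ is an isomorphism.

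With both hypotheses in hand, \Cref{thm:scalar-character-comparison} shows that $\chi^F$ is an isomorphism on ordinary inverse limits with inverse $\nu$. Combining this with $\chi=\chi^F\iota$ from \Cref{lem:character-compatibility} gives \eqref{eq:QKQD-Haagerup-character}.

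For the quantified statements, use $\psi_c=\rho_c\nu_{a(c)}$ and $E(c)=D(a(c))$ from \Cref{prop:main-controlled-equivalence}.

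For (i), fix $c_0$, set $d_0=E(c_0)$, and choose $s_0$ satisfying \eqref{eq:main-inverse-system-A}. Given $d\geq d_0$, pick $c_1\succeq s_0$ admissible for scale $d$ and for restriction to $d_0$ (\Cref{cor:character-well-defined-Rips}). Then equal characters at $d$ restrict to equal characters at $d_0$. Applying $\psi_{c_0}$ and using \eqref{eq:main-inverse-system-A} gives equality in $\mathrm{QR}(F_0,\varepsilon_0)$.

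For (ii), fix $d$ and choose an admissible $c$ and $D$ via \eqref{eq:main-inverse-system-K}, enlarging $D$ if needed so that $d'=D>d$. Then $x=\psi_c r^K_{D,E(c)}(z)$ works for every $z$.

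The main obstacle is already settled in the earlier sections: the Bott cancellation of \Cref{dw:prop:FDidentity} and the identity $\chi^F\nu=r^K$ of \Cref{dw:prop:Kidentity}. Neither uses amenability, only \eqref{dw:eq:beta-alpha} and character naturality. The one point to check carefully is that the proofs of \Cref{thm:scalar-character-comparison} and \Cref{prop:main-controlled-equivalence} invoke the comparison maps only through \eqref{dw:eq:fd-input}, with the source controls $a(c)$ chosen before the class. This is guaranteed because $\sigma$ in \Cref{def:quantitative-QR-unitary} is fixed independently of classes. Bookkeeping the order of quantifiers then matches the argument given for \Cref{thm:main-quantitative} verbatim.
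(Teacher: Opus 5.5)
Your proposal is correct and follows the paper's own proof essentially step for step. You take \eqref{dw:eq:fd-input} from \eqref{eq:QR-unitary-left}--\eqref{eq:QR-unitary-right} with $a(c)=\sigma(c)$, take \eqref{dw:eq:beta-alpha} from the Higson--Kasparov theorem, apply \Cref{thm:scalar-character-comparison} together with $\chi=\chi^F\iota$ to get $\chi^{-1}=\iota^{-1}\nu$, and read off the quantifier orders in (i) and (ii) from the component identities of \Cref{prop:main-controlled-equivalence}, enlarging the source Rips scale once to force $d'>d$; your observation that \eqref{eq:QR-unitary-coherence} is what makes $\iota$ invertible on inverse limits is exactly the remark the paper records after \Cref{def:quantitative-QR-unitary}.
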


The two theorems use the unitary-valued comparison. The global u.c.p.
versions are introduced in Subsection~\ref{subsec:ucp-QR-properties};
the lifting property transfers them to the unitary-valued versions by
\Cref{prop:QKQD-unitary-transfer}.

\begin{remark}[Relation with Dadarlat's $K$-quasidiagonality]
\label{rem:Dadarlat-comparison}
Dadarlat calls a separable $C^*$-algebra $A$ \emph{$K$-quasidiagonal}
when every class in $KK(A,\mathbb C)$ admits a quasidiagonal
Cuntz-pair representative \cite[Definition~2.2]{dadarlat_bundles_2014}.
Thus one has representations $\pi_0,\pi_1$ with compact difference and
a common sequence of finite-rank projections converging strongly to
$1$ whose commutators with both representations tend to zero.
For $A=C^*_{\max}(G)$, compression of such pairs concerns
finite-dimensional representatives of \emph{exact} Kasparov classes.
By contrast, QR-quasidiagonality concerns the entire comparison
$\iota:Q\to\mathcal F$ and requires both existence and stable uniqueness;
its quantitative form requires the compatible finite-control maps in
\Cref{def:quantitative-QR-unitary}. These conditions address different comparisons: Dadarlat's
condition is formulated for exact Kasparov classes, whereas the QR
comparison includes stable uniqueness for approximate Fredholm classes.
The character theorems below use the latter comparison directly.
\end{remark}

\subsection{Proofs of the character theorems}\label{sec:qualitative-proof}

\begin{lemma}[Kernel reduction]\label{lem:kernel}
If $G$ has the Haagerup property, then
\[
 \ker\chi=\ker\iota\subset Q.
\]
\end{lemma}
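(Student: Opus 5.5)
The inclusion $\ker\iota\subseteq\ker\chi$ is formal. By \eqref{eq:factor} we have $\chi=\chi^F\iota$, which is the coordinatewise form of \Cref{lem:character-compatibility}. So if $\iota(x)=0$ in $\mathcal F$, then $\chi(x)=\chi^F(0)=0$.

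The real content is the reverse inclusion $\ker\chi\subseteq\ker\iota$. For this I would use the finite-dimensional Bott cancellation of \Cref{prop:FD}, which needs only the Haagerup property and no comparison hypothesis.

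Let $x=(x_c)_c\in Q$ with $\chi(x)=0$, so $\chi_d(x)=0$ in $K_G^0(P_d(G))$ for every $d$. Fix a target control $c$ and take $D(c)$, $\nu_c$ and the source control $s\succeq c$ from \Cref{prop:FD}. By \eqref{eq:FD},
\[
 \iota_c(x_c)=\iota_c r^Q_{s,c}(x_s)=\nu_c\chi_{D(c),s}(x_s).
\]
Here $\chi_{D(c),s}(x_s)$ is the $D(c)$-coordinate of $\chi(x)$. This uses \Cref{prop:character-well-defined} and \Cref{cor:character-well-defined-Rips}: the character at a fixed scale is independent of the admissible source control, after a common strengthening. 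Since $\chi(x)=0$, that coordinate vanishes, so $\iota_c(x_c)=0$. As $c$ was arbitrary, $\iota(x)=(\iota_c(x_c))_c=0$.

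The main obstacle is bookkeeping rather than new analysis. The source $s$ in \Cref{prop:FD} must be admissible for the character at $D(c)$. The coordinate $\chi_{D(c)}(x)$ of the inverse-limit character must also be computable at $s$, i.e. the inverse-limit character $\chi$ has to restrict to $\chi_{D(c),s}$ on $x_s$. I would handle this by first strengthening $s$ so that it dominates the control used in defining $\chi_{D(c)}$, and then invoking compatibility with restriction in the source control (the diagram of \Cref{prop:character-well-defined}). Everything else is a direct consequence of \eqref{eq:FD}, and the substantive input, the cancellation homotopy of \Cref{dw:lem:cancellation}, is already packaged there.
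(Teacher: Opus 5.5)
Your proposal is correct and follows the paper's proof essentially verbatim. You get one inclusion from the factorization $\chi=\chi^F\iota$, and the other by applying $\nu_c$ from \Cref{prop:FD} to the vanishing $D(c)$-coordinate of $\chi(x)$, which gives $\iota_c(x_c)=\nu_c\bigl(\chi_{D(c),s}(x_s)\bigr)=0$ at every control $c$. Your extra remarks on source-control admissibility just make explicit the restriction compatibility the paper uses tacitly.
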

\begin{proof}
One inclusion follows from \eqref{eq:factor}. For the other, let
$x=(x_c)_c\in Q$ satisfy $\chi(x)=0$. Fix an arbitrary target control
$c$ and use Proposition~\ref{prop:FD} to choose $D(c),s$. Compatibility of $x$ gives
$r^Q_{s,c}x_s=x_c$, and the $D(c)$-coordinate of its character is zero.
Consequently
\[
 \iota_c(x_c)=\nu_c\bigl(\chi_{D(c),s}(x_s)\bigr)=0.
\]
This holds at every $c$, so $\iota(x)=0$ in $\mathcal F$.
\end{proof}

\begin{lemma}[Lifting a compatible geometric class]\label{lem:lift}
For a countable Haagerup group, every $z\in\Klim$ is the family
$(p_d^*\xi)_d$ for some $\xi\in KK^G(\C,\C)$.
\end{lemma}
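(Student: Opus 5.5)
The plan is to pass through representable equivariant $KK$-theory and invert the proper pullback by means of the Higson--Kasparov Dirac--dual-Dirac classes. Let $X=\EG$, with $G$-compact exhaustion $(P_d(G))_d$. By \Cref{rem:representable-K-theory}, the Milnor sequence of Emerson--Meyer gives a surjection
\[
 RK_G^0(\EG)=RKK^G(\EG;\C,\C)\longrightarrow\varprojlim_d K_G^0(P_d(G))=\Klim
\]
given by restriction to the Rips stages. The first step is therefore to lift $z=(z_d)_d\in\Klim$ to a class $\widehat z\in RKK^G(\EG;\C,\C)$. The $\varprojlim^1$ term only obstructs injectivity, not surjectivity, so no vanishing hypothesis is needed.

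The second step is to invert $p_X^*$. By \Cref{thm:HK-proper-algebra}, $\C$ is $KK^G$-equivalent to the proper algebra $\mathcal A(H)$. \Cref{thm:proper-coefficient-pullback}, applied with first variable $A=\C$ and $B=\C$, then shows that
\[
 p_X^*:KK^G(\C,\C)\longrightarrow RKK^G(\EG;\C,\C)
\]
is an isomorphism. Put $\xi=(p_X^*)^{-1}(\widehat z)$.

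The third step is the compatibility $z_d=p_d^*\xi$. For the inclusion $i_d:P_d(G)\hookrightarrow\EG$ we have $p_d=p_X\circ i_d$. Naturality of the exterior product with $[\mathrm{id}]$ under restriction along equivariant maps gives $i_d^*p_X^*=p_d^*$. It follows that $p_d^*\xi=i_d^*\widehat z=z_d$. Here $p_d^*$ lands in $RKK^G(P_d(G);\C,\C)=K_G^0(P_d(G))$.

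The expected main obstacle is the first step. Our convention deliberately separates $\Klim$ from the full representable group, so the surjection $RK_G^0\to\varprojlim$ must be quoted carefully from \cite[Equation~(4.6)]{EmersonMeyer}. This requires checking that the Rips complexes are cofinal among the $G$-compact subsets of the model of $\EG$ used there. If that identification proves delicate, the fallback is to avoid $RK$ altogether. In that approach one works stagewise with the maps $\nu_c$ of \eqref{dw:eq:nu} and the identity \Cref{dw:prop:Kidentity}. One then uses $\beta\otimes\alpha=1$ to produce classes $\alpha_*(\ldots)$ in $KK^G(\C,\C)$ at each scale, and shows that these are eventually constant. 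That last claim is exactly where the $\varprojlim^1$ issue reappears.
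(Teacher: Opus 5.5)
Your argument is the paper's: lift $z$ through the Emerson--Meyer Milnor surjection, invert $p^*$ using Higson--Kasparov, and restrict to the Rips stages. Invoking \Cref{thm:proper-coefficient-pullback} with $A=\C$, which is $KK^G$-equivalent to $\mathcal A(H)$ by \Cref{thm:HK-proper-algebra}, is a legitimate substitute for the paper's appeal to Kasparov's theorem that $\ker p^*=(1-\gamma)KK^G(\C,\C)$, combined with $\gamma=1$.

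The point you leave open is where the paper actually does its work, and it is not quite a cofinality check. The group $RKK^G(X;\C,\C)=KK^G(C_0(X),C_0(X))$, and with it \Cref{thm:proper-coefficient-pullback}, requires a locally compact model $X$. The union $\bigcup_dP_d(G)$ is not locally compact for infinite $G$, because every vertex lies in simplices of unbounded dimension. So you cannot literally take $X=\EG$ with exhaustion $(P_d(G))_d$.

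The paper instead fixes cofinal scales $d_1<d_2<\cdots$ and takes $X$ to be the mapping telescope $\mathcal X$ of the inclusions $P_{d_n}(G)\hookrightarrow P_{d_{n+1}}(G)$. This space has three properties you need:
\begin{itemize}
\item It is locally compact, because each $P_{d_n}(G)$ is locally finite.
\item Its finite initial pieces are $G$-compact and equivariantly deformation retract onto $P_{d_n}(G)$. Hence the quotient term of the Milnor sequence is exactly $\Klim$.
\item It is a model for $\EG$. Finite subgroups have contractible fixed sets, since compact families of invariant probability vectors contract linearly at a large enough scale, while infinite subgroups have no fixed points.
\end{itemize}
With this model your three steps go through verbatim.

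Your fallback through the maps $\nu_c$ is unnecessary. As you suspected, eventual constancy of stagewise classes is exactly what the $\varprojlim^1$ ambiguity obstructs. The Milnor lift into $RK_G^0(\mathcal X)$ is what sidesteps it.
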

\begin{proof}
Choose cofinal scales $d_1<d_2<\cdots$ and a locally compact mapping
telescope $\mathcal X$ of the Rips inclusions. It is a model for
$\EG$. Its finite initial telescopes are $G$-compact and equivariantly
deformation retract onto the corresponding Rips complexes.
To see the universal proper-space property, first subdivide to remove
simplex inversions. A finite subgroup has a fixed probability vector,
and any compact family of its fixed probability vectors contracts by
linear interpolation at a sufficiently large Rips scale. An infinite
subgroup has no invariant finitely supported probability vector.
Thus the telescope has the required fixed-point homotopy types.
The representable equivariant Milnor sequence gives
\begin{equation}\label{eq:Milnor}
 0\longrightarrow\varprojlim\nolimits^1_n K_G^{-1}(P_{d_n}(G))
 \longrightarrow RK_G^0(\mathcal X)
 \xrightarrow{\mathrm{res}}\Klim\longrightarrow0.
\end{equation}
See Emerson--Meyer~\cite[Equation~(4.6)]{EmersonMeyer}. The
surjectivity of $\mathrm{res}$ gives a lift
$\widetilde z\in RK_G^0(\mathcal X)$ of $z$; its ambiguity is
precisely the image of the $\varprojlim^1$ term.

The Higson--Kasparov theorem gives $\gamma=1$ for $G$~\cite{higson_kasparov_2001}.
Kasparov's dual assembly theorem therefore makes
\[
 p^*:KK^G(\C,\C)\xrightarrow{\cong}
 RKK^G(\mathcal X;\C,\C)=RK_G^0(\mathcal X)
\]
an isomorphism. For the proper-space version, see
\cite[Theorem~4.1]{dadarlat_2021}, which states the result from
\cite[Theorem~6.5]{kasparov_1988}: in general the kernel is
$(1-\gamma)KK^G(\C,\C)$. Take
$\xi=(p^*)^{-1}\widetilde z$. Restriction gives $p_d^*\xi=z_d$ at
every scale.
\end{proof}

\begin{proof}[Proof of Theorem~\ref{thm:qual}]
QR-quasidiagonality, in Definition~\ref{def:comparison}, makes $\iota$
an isomorphism. Lemma~\ref{lem:kernel} therefore
proves injectivity of $\chi$.

Let $z\in\Klim$. Choose $\xi$ as in Lemma~\ref{lem:lift} and put
\[
 x=\iota^{-1}(\kappa(\xi)).
\]
Equations \eqref{eq:factor} and \eqref{eq:exact} give
\[
 \chi_d(x)=\chi^F_d(\kappa(\xi))=p_d^*\xi=z_d
 \qquad\hbox{for every }d.
\]
Thus $\chi(x)=z$, proving surjectivity.

The chosen Milnor lift need not be unique. Nevertheless, the resulting
$x$ is unique because $\chi$ has already been proved injective.
This also proves additivity of the inverse: two lifted sums have the
same character and hence coincide. Thus the inverse is independent
of the choice of Milnor lifts.
\end{proof}

\begin{proof}[Proof of \Cref{thm:QKQD-Haagerup}]
By \Cref{def:quantitative-QR-unitary}, the unitary-valued comparison has
compatible inverse homomorphisms satisfying
\eqref{dw:eq:fd-input}, with controls chosen independently of the class. The Haagerup
hypothesis and \Cref{thm:HK-proper-algebra} supply the proper coefficient
algebra and classes satisfying \eqref{dw:eq:beta-alpha}.
Apply \Cref{thm:scalar-character-comparison}. Its proof gives both global
inverse identities \eqref{dw:eq:global-F}--\eqref{dw:eq:global-K}, and the
finite-dimensional homomorphisms defining the inverse \eqref{dw:eq:psi}. In particular,
\[
 \chi^{-1}=\iota^{-1}\nu
\]
on ordinary inverse limits, proving \eqref{eq:QKQD-Haagerup-character}.
The first component identity detects equality after restriction to a
prescribed group control, and the second factors the restriction from a
fixed larger Rips scale through a fixed quasi-representation stage.
The argument of \Cref{prop:main-controlled-equivalence} gives exactly the
quantifier orders in (i) and (ii). When necessary enlarge the source Rips
scale once to make $d'>d$.
\end{proof}

\subsection{Global u.c.p. maps and the lifting comparison}\label{subsec:ucp-lifting}

\subsubsection{A lifting lemma for the maximal group algebra}

The u.c.p. route to unitary-valued QR-quasidiagonality uses the
lifting property; in particular, it applies to the surface-group examples.  We isolate the required consequence, including the Fredholm and
parameterized cases.

\begin{lemma}[approximation using the lifting property]
\label{lem:LP-CP-replacement}
Let $G$ be countable and suppose that $C^*_{\max}(G)$ has the lifting
property: for every quotient $B\to B/J$, every u.c.p. map
$C^*_{\max}(G)\to B/J$ has a u.c.p. lift to $B$.  Given a finite set
$E\subseteq G$ and $\eta>0$, there is a control
$c=(F,\varepsilon)\in\mathcal C_G$ with $E\subseteq F$ such that the
following holds for every unital $C^*$-algebra $D$.

If $q$ is a unitary $(F,\varepsilon)$-quasi-representation in $D$ and
$q(e)=1$, then there is a u.c.p. map
\[
 \phi:C^*_{\max}(G)\longrightarrow D
\]
such that
\[
 \max_{g\in E}\|\phi(g)-q(g)\|<\eta.
\]
The same control works when $D$ is either of the algebras
\begin{align*}
 \mathcal D_T&=\{a\in B(\mathcal E)_{\mathrm{even}}:[a,T]\in
 K(\mathcal E)\},\\
 \mathcal D_T^P&=\{a\in
 \mathcal L_{C(P)}(\mathcal E_P)_{\mathrm{even}}:[a,T]\in
 \mathcal K_{C(P)}(\mathcal E_P)\},
\end{align*}
where $P$ is compact metrizable.  Thus in a Fredholm cycle, or in one
$P$-parameterized family of such cycles, the approximating u.c.p. map still
has compact commutator with $T$, and the estimate is uniform over $P$.
\end{lemma}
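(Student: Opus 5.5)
We argue by contradiction, using the product-modulo-sum trick already sketched after \Cref{prop:LP}. The target algebra $D$ is arbitrary, so the control $(F,\varepsilon)$ must not depend on $D$. Fix $E$ and $\eta$, and choose an exhausting increasing sequence $F_1\subseteq F_2\subseteq\cdots$ of finite symmetric sets containing $E\cup\{e\}$, with union $G$. Suppose no control works. Then for each $m$ there are a unital $C^*$-algebra $D_m$ and a unitary $(F_m,1/m)$-quasi-representation $q_m$ in $D_m$ with $q_m(e)=1$ such that every u.c.p.\ map $\phi:C^*_{\max}(G)\to D_m$ satisfies $\max_{g\in E}\|\phi(g)-q_m(g)\|\geq\eta$.

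Form $B=\prod_m D_m$, the ideal $J=\bigoplus_m D_m$, and the quotient map $\pi:B\to B/J$. For each $g\in G$, define $u_g=\pi\bigl((q_m(g))_m\bigr)$, where the coordinates with $g\notin F_mF_m$ are set to $1$. Only finitely many coordinates are affected by this choice, so $u_g$ does not depend on it. Each $u_g$ is unitary, and $u_gu_h=u_{gh}$ because the multiplication defect is less than $1/m$ as soon as $g,h\in F_m$. Hence $g\mapsto u_g$ is a genuine unitary representation of $G$ in $B/J$. By the universal property it extends to a unital $*$-homomorphism $\Phi:C^*_{\max}(G)\to B/J$, which is in particular u.c.p.

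The lifting property gives a u.c.p.\ lift $\widetilde\Phi:C^*_{\max}(G)\to B$. Its coordinates $\phi_m=\mathrm{pr}_m\circ\widetilde\Phi$ are u.c.p.\ maps into $D_m$. Since $\pi\widetilde\Phi(g)=u_g$, we have $\|\phi_m(g)-q_m(g)\|\to0$ for each $g$. Because $E$ is finite, $\max_{g\in E}\|\phi_m(g)-q_m(g)\|<\eta$ for large $m$, which contradicts the choice of $q_m$. So some $(F_m,1/m)$ works for every $D$.

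The Fredholm and parameterized cases need no separate argument. The proof above never used anything about $D$ beyond its being a unital $C^*$-algebra, so it suffices to check that $\mathcal D_T$ and $\mathcal D_T^P$ are unital $C^*$-algebras containing the values of $q$. Closure under products, adjoints and norm limits is the computation in the proof of \Cref{lem:CP-cycle-correction}, which uses that $T-T^*$ is compact. The values of $q$ lie in these algebras because $[q(g),T]=(\Ad_{q(g)}(T)-T)q(g)$ is compact. In the parameterized case, norms in $\mathcal L_{C(P)}(\mathcal E_P)$ are suprema over $P$, which gives the uniformity.

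The only delicate point, and the main obstacle, is keeping the control uniform in $D$. This is exactly why we argue by contradiction over a sequence of possibly different $D_m$ rather than within a fixed algebra, since the lifting property is applied once to the single quotient $B\to B/J$.
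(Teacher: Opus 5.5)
Your proposal is correct and follows essentially the same route as the paper. Both argue by contradiction over a sequence of controls with possibly different targets $D_m$, obtain a genuine representation in $\prod_m D_m/\bigoplus_m D_m$, extend it via the universal property of $C^*_{\max}(G)$, apply the lifting property once, and then observe that $\mathcal D_T$ and $\mathcal D_T^P$ are unital $C^*$-algebras containing the tested values of $q$. The remaining differences are cosmetic: you fix the tolerance $1/m$ and pad undefined coordinates by $1$, whereas the paper extends each $q_m$ arbitrarily.
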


\begin{proof}
Suppose the first assertion is false for some $(E,\eta)$.  Choose a cofinal
sequence $c_n=(F_n,\varepsilon_n)$ with
$F_nF_n\subseteq F_{n+1}$, $\bigcup_nF_n=G$, and
$\varepsilon_n\downarrow0$.  Passing to a subsequence if necessary, there
are unital $C^*$-algebras $D_n$ and normalized unitary
$c_n$-quasi-representations $q_n$ for which no u.c.p. map has the asserted
$E$-estimate.  Extend each $q_n$ arbitrarily to a unitary-valued map on all
of $G$; this does not affect any relation tested at the $n$th stage.

Put
\[
 D_\infty=\prod_{n=1}^\infty D_n\Big/\bigoplus_{n=1}^\infty D_n
\]
and let $[(d_n)]$ denote the class of a bounded sequence.  For $g\in G$ set
\[
 \overline q(g)=[(q_n(g))_n],
\]
where the finitely many coordinates preceding the first occurrence of $g$
in $F_n$ may be chosen arbitrarily.  For fixed $g,h$, the defect
$\|q_n(g)q_n(h)-q_n(gh)\|$ tends to zero.  Hence
$\overline q(g)\overline q(h)=\overline q(gh)$, and
$\overline q(e)=1$.  The universal property of the maximal group algebra
therefore gives a unital $*$-homomorphism
\[
 \overline q:C^*_{\max}(G)\longrightarrow D_\infty.
\]
By the lifting property it has a u.c.p. lift
$\Phi:C^*_{\max}(G)\to\prod_nD_n$.  Write
$\phi_n$ for the composition of $\Phi$ with the $n$th coordinate
projection.  Since $(\phi_n(g)-q_n(g))_n$ represents zero in $D_\infty$,
\[
 \max_{g\in E}\|\phi_n(g)-q_n(g)\|\longrightarrow0.
\]
For sufficiently large $n$ this contradicts the defining property of
$q_n$.  The required control therefore exists.

The algebras $\mathcal D_T$ and $\mathcal D_T^P$ are unital $C^*$-algebras;
the verification for $\mathcal D_T$ was given in the proof of
\Cref{lem:CP-cycle-correction}, and the $C(P)$-module proof is identical.
For a Fredholm quasi-representation,
\[
 [q(g),T]
 =\bigl(\operatorname{Ad}_{q(g)}(T)-T\bigr)q(g)
\]
is compact, so every tested $q(g)$ belongs to the indicated algebra.  Apply
the first part with this algebra as $D$.  In the parameterized case its norm
is the Hilbert $C(P)$-module operator norm, which is already the supremum
norm over the parameter, using a single lift over the parameter algebra.
\end{proof}

\begin{remark}[what lifting hypothesis is actually used]
\label{rem:LLP-versus-LP}
The full lifting property in the lemma is needed only because the conclusion
asks for one u.c.p. map on all of $A$ with values in an arbitrary algebra
$D$, and because the Fredholm application takes
$D=\mathcal D_T$ or $\mathcal D_T^P$.  Two useful weaker statements follow
from the same contradiction argument.
\begin{enumerate}[label=(\roman*)]
\item If the conclusion is required only on the finite-dimensional operator
system spanned by the finitely many $u_g$ that occur in the test, the local
lifting property suffices.  This is enough for the local u.c.p. pictures of
\Cref{def:local-ucp-pictures}.
\item If $D=M_n(\mathbb C)$ and only finite-dimensional cycles are being
considered, the weak matricial local lifting property suffices.  After
lifting to the product of matrix algebras, injectivity of that product
extends the local lift to a u.c.p. map on $A$; this is precisely the argument
of \cite[Proposition~2.7]{willett_conditional_2026}.
\end{enumerate}
Thus the conversion of ordinary finite-dimensional quasi-representations to
u.c.p. ones does not require nuclearity.  Nuclearity in
Section~\ref{sec:qd} is one convenient source of CP-stability and of the
nuclear stable-uniqueness theorem used there.  The nonnuclear stable-uniqueness
theorem used for surface groups instead assumes the approximate
$K$-homology UCT and a separating class of finite-dimensional
representations.  Neither LP nor LLP alone supplies the finite-dimensional
existence and uniqueness statements.
\end{remark}

We now turn the preceding existence assertion into homomorphisms on the
controlled groups.  This step is necessary: a u.c.p. lift is not unique, so
existence of a lift and polar decomposition by themselves do not identify
two groups.

\subsubsection{Global u.c.p. controls and classes}

\begin{definition}[global u.c.p. group controls]
\label{def:global-ucp-controls}
Put $A=C^*_{\max}(G)$, and denote its canonical unitary corresponding to
$g\in G$ by $u_g$.  Let $\mathcal C_G^{\mathrm{sym}}$ be the directed set of
pairs
$a=(E,\delta)$, where $E\subseteq G$ is finite, symmetric, and contains $e$,
and $\delta>0$.  The order is
\[
 (E,\delta)\preccurlyeq(E',\delta')
 \quad\Longleftrightarrow\quad E\subseteq E'
 \text{ and }\delta'\leq\delta.
\]
Thus $\mathcal C_G^{\mathrm{sym}}$ is the cofinal subset of
$\mathcal C_G$ obtained by requiring the finite set to be symmetric and to
contain the identity.  We use this cofinal subset because a global u.c.p. map
automatically respects adjoints, so the tests at $g$ and $g^{-1}$ naturally
belong to the same control.
A \emph{global u.c.p. $a$-quasi-representation} on a Hilbert space $H$ is a
u.c.p. map $\phi:A\to B(H)$ such that
\begin{equation}\label{eq:global-ucp-multiplicativity}
 \|\phi(u_{gh})-\phi(u_g)\phi(u_h)\|<\delta
 \qquad(g,h\in E).
\end{equation}
Let $\QRucpstage{G}{a}$ be the Grothendieck group of the abelian semigroup
of homotopy classes of the finite-dimensional such maps, with addition given
by direct sum, exactly as in \Cref{prop:QR-two-descriptions}.  Here a homotopy from
$\phi_0$ to $\phi_1$ is one u.c.p. map
$A\to C([0,1],M_n)$ whose endpoint evaluations are $\phi_0$ and $\phi_1$
and which satisfies \eqref{eq:global-ucp-multiplicativity} in the supremum
norm.

Let $\FQRucpstage{G}{a}$ be the set of stable-homotopy classes of pairs
$(\phi,T)$ on a graded Hilbert space, where $T$ is odd,
\[
 T-T^*,\ T^2-1\in K(H),
 \qquad
 \phi(A)\subseteq
 \mathcal D_T:=\{b\in B(H)_{\mathrm{even}}:[b,T]\in K(H)\}.
\]
A pair is degenerate when $T=T^*$, $T^2=1$, and
$[\phi(a),T]=0$ for every $a\in A$.  Homotopy, direct sum, and addition of
degenerate cycles are exactly as in \Cref{def:FQR-cycle}, with coefficient
algebra $C([0,1])$.  Reversing the grading gives the inverse, as in the proof
following \Cref{def:FQR-final}, so $\FQRucpstage{G}{a}$ is an abelian group.
Restriction of the tested set and relaxation of the tolerance give bonding
homomorphisms, denoted
\[
 \begin{aligned}
 r^{\mathrm{QR,ucp}}_{a',a}:&\ \QRucpstage{G}{a'}
 \longrightarrow\QRucpstage{G}{a},\\
 r^{\mathrm{FQR,ucp}}_{a',a}:&\ \FQRucpstage{G}{a'}
 \longrightarrow\FQRucpstage{G}{a}.
 \end{aligned}
\]

Sending a finite-dimensional map $\phi:A\to M_n(\mathbb C)$ to the cycle
on $\mathbb C^n\oplus0$ with Fredholm operator zero gives a homomorphism
\begin{equation}\label{eq:ucp-finite-cycle-map}
 \iota^{\mathrm{ucp}}_a:\QRucpstage{G}{a}
       \longrightarrow \FQRucpstage{G}{a}.
\end{equation}
These homomorphisms commute with all bonding maps.  We use the unambiguous
notations
\begin{align}
 \mathrm{QR}^{\mathrm{ucp}}(G)
   &:=\varprojlim_{a\in\mathcal C_G^{\mathrm{sym}}}
      \QRucpstage{G}{a},
   \label{eq:QR-ucp-limit}\\
 \mathrm{FQR}^{\mathrm{ucp}}(G)
   &:=\varprojlim_{a\in\mathcal C_G^{\mathrm{sym}}}
      \FQRucpstage{G}{a}.
   \label{eq:FQR-ucp-limit}
\end{align}
The maps in \eqref{eq:ucp-finite-cycle-map} induce a homomorphism
\begin{equation}\label{eq:ucp-limit-cycle-map}
 \iota^{\mathrm{ucp}}:\mathrm{QR}^{\mathrm{ucp}}(G)
       \longrightarrow\mathrm{FQR}^{\mathrm{ucp}}(G).
\end{equation}

These are the full-domain counterparts of
\Cref{def:local-ucp-pictures}.  The
full-domain condition is used here because both the lifting theorem and the
stable-uniqueness theorem concern maps on $C^*_{\max}(G)$, not merely maps
on an unspecified finite-dimensional operator system.
\end{definition}

\subsubsection{The comparison homomorphisms}

\begin{proposition}[controlled unitary--u.c.p. comparison]
\label{prop:LP-unitary-ucp-comparison}
Suppose that $C^*_{\max}(G)$ has the lifting property.  The unitary systems
$(\mathrm{QR}(c))_{c\in\mathcal C_G}$ and
$(\mathrm{FQR}(*;c;\mathbb C))_{c\in\mathcal C_G}$ are equivalent, in the
precise sense of \Cref{def:inverse-system-equivalence}, to the global u.c.p.
systems
$(\QRucpstage{G}{a})_{a\in\mathcal C_G^{\mathrm{sym}}}$ and
$(\FQRucpstage{G}{a})_{a\in\mathcal C_G^{\mathrm{sym}}}$, respectively.

More explicitly, one can choose order-preserving maps
\[
 \lambda:\mathcal C_G^{\mathrm{sym}}\longrightarrow\mathcal C_G,
 \qquad
 \pi:\mathcal C_G\longrightarrow\mathcal C_G^{\mathrm{sym}}
\]
and homomorphisms, for $a\in\mathcal C_G^{\mathrm{sym}}$ and
$c\in\mathcal C_G$,
\begin{align*}
 L^{\mathrm{QR}}_a &: \mathrm{QR}(\lambda(a))
   \longrightarrow \QRucpstage{G}{a},\\
 P^{\mathrm{QR}}_c &: \QRucpstage{G}{\pi(c)}
   \longrightarrow \mathrm{QR}(c),\\
 L^{\mathrm{FQR}}_a &: \mathrm{FQR}(*;\lambda(a);\mathbb C)
   \longrightarrow \FQRucpstage{G}{a},\\
 P^{\mathrm{FQR}}_c &: \FQRucpstage{G}{\pi(c)}
   \longrightarrow \mathrm{FQR}(*;c;\mathbb C),
\end{align*}
such that $\lambda(\pi(c))\succeq c$,
$\pi(\lambda(a))\succeq a$, and
\begin{align}
 P^{\mathrm{QR}}_cL^{\mathrm{QR}}_{\pi(c)}
   &=r^{\mathrm{QR}}_{\lambda(\pi(c)),c},&
 L^{\mathrm{QR}}_aP^{\mathrm{QR}}_{\lambda(a)}
   &=r^{\mathrm{QR,ucp}}_{\pi(\lambda(a)),a},
 \label{eq:LP-comparison-composites-Q}\\
 P^{\mathrm{FQR}}_cL^{\mathrm{FQR}}_{\pi(c)}
   &=r^{\mathrm{FQR}}_{\lambda(\pi(c)),c},&
 L^{\mathrm{FQR}}_aP^{\mathrm{FQR}}_{\lambda(a)}
   &=r^{\mathrm{FQR,ucp}}_{\pi(\lambda(a)),a}.
 \label{eq:LP-comparison-composites-K}
\end{align}
The component maps commute with all bonding homomorphisms in the two
systems.  They also commute with the inclusions of finite-dimensional cycles:
after applying the indicated bonding maps, the square
\[
\begin{tikzcd}
 \mathrm{QR}(\lambda(a)) \arrow[r,"L^{\mathrm{QR}}_a"]
   \arrow[d,"\iota_{\lambda(a)}"'] &
 \QRucpstage{G}{a} \arrow[d,"\iota^{\mathrm{ucp}}_a"]\\
 \mathrm{FQR}(*;\lambda(a);\mathbb C)
   \arrow[r,"L^{\mathrm{FQR}}_a"'] & \FQRucpstage{G}{a}
\end{tikzcd}
\]
commutes, and the analogous square for polar correction commutes.
\end{proposition}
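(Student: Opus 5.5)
The plan is to build the four families of component maps from two operations. The first is the lifting replacement of \Cref{lem:LP-CP-replacement}, which turns a unitary quasi-representation into a u.c.p. map. The second is polar correction, which turns a sufficiently multiplicative global u.c.p. map back into unitaries. Both composites will then be checked to agree with the bonding maps after relaxing control. For $a=(E,\delta)$, first use \Cref{lem:qrep-normalize-identity} to normalize at the identity. Then apply \Cref{lem:LP-CP-replacement} with the finite set $E\cup E^2$ and tolerance $\eta=\delta/10$. This gives a control $\lambda(a)$, arranged order-preserving by taking maxima over a cofinal sequence, such that every normalized $\lambda(a)$-quasi-representation $q$ admits a global u.c.p. map $\phi$ that is $\eta$-close to $q$ on $E\cup E^2$. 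Consequently $\phi$ satisfies \eqref{eq:global-ucp-multiplicativity} with defect below $\delta$.

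Conversely, for $c=(F,\varepsilon)$, put $\pi(c)=(F'',\delta)$ with $F''$ the symmetrization of $F F\cup\{e\}$ and $\delta$ small. Then each $\phi(u_g)$ is a $\delta$-almost unitary, and \Cref{lem:polar-correction} gives unitaries $u_g=\phi(u_g)(\phi(u_g)^*\phi(u_g))^{-1/2}$ forming an $(F,\varepsilon)$-quasi-representation, as in \Cref{remark:polar-correction}.

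The main point is that these assignments are well defined on classes, even though the lift is not unique. Two u.c.p. replacements $\phi_0,\phi_1$ of the same $q$ are within $2\eta$ on $E\cup E^2$. The convex-path identity in the proof of \Cref{prop:LP} then bounds the defect of $\phi_t$ by $\delta/10+\delta/10+4\eta^2<\delta$, so the two replacements are homotopic. Homotopies of $q$ are handled by applying \Cref{lem:LP-CP-replacement} once over $C([0,1])$, i.e.\ in $\mathcal D^P_T$ with $P=[0,1]$ and $T=0$ in the finite-dimensional case. The endpoint lifts are then joined to the chosen ones by convex paths. Direct sums are respected because the direct sum of lifts is a lift. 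Polar correction is continuous and functorial under direct sums and homotopies, so it needs no choice.

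In the Fredholm case, \Cref{lem:LP-CP-replacement} gives lifts with values in $\mathcal D_T$. Polar correction stays in $\mathcal D_T$, because $\mathcal D_T$ is a unital $C^*$-algebra and is therefore closed under continuous functional calculus. For degenerate cycles, use the exact commutant of $T$ as the target algebra. This preserves degeneracy, so stable-homotopy classes are respected.

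For the composite identities \eqref{eq:LP-comparison-composites-Q} and \eqref{eq:LP-comparison-composites-K}, start with $P\circ L$ applied to $q$. The result is within $\eta+2\delta$ of $q$ on the tested set, and \Cref{lemma:unitary-implies-homotopy}, with identity conjugation, gives a controlled homotopy to $q$ at the relaxed control $c$. For $L\circ P$ applied to $\phi$, the lift of the polar correction is close to $\phi$ on $E$, and the convex u.c.p. path connects them. In the Fredholm case this path stays in $\mathcal D_T$, which is convex. The index relations $\lambda(\pi(c))\succeq c$ and $\pi(\lambda(a))\succeq a$ are arranged by construction, after enlarging once.

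Commutation with the bonding maps follows from the same independence-of-choices argument. The squares with $\iota$ commute on the nose, because both constructions act on the finite-dimensional summand with Fredholm operator zero.

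The step I expect to be the main obstacle is the parameterized well-definedness in the Fredholm setting. There one needs a single lift over $C([0,1])$ whose endpoint evaluations are convexly connected to the separately chosen lifts, with all paths staying inside $\mathcal D^P_T$, and with one control chosen before the cycle. My first attempt is to apply \Cref{lem:LP-CP-replacement} directly to the concatenated parameter family. The endpoint comparison then uses only that each endpoint lift is $\eta$-close to the same unitaries.
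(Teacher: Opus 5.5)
Your proposal is correct and follows essentially the same route as the paper's proof. You normalize at $e$, lift via \Cref{lem:LP-CP-replacement} (once over $C([0,1])$ for homotopies, and into $\mathcal D_T$ or the exact commutant of $T$ for Fredholm or degenerate cycles), and polar-correct in the reverse direction. Independence of choices, the composite identities and the bonding squares then come from convex u.c.p. paths and short logarithmic paths (the latter also stay in $\mathcal D_T$ by functional calculus).

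The one slip is arithmetic, in the independence step. The endpoint defects are only below $2\varepsilon+3\eta$, not $\delta/10$, so the convex-path identity gives $\max\{\operatorname{def}\phi_0,\operatorname{def}\phi_1\}+\eta^2<2\varepsilon+3\eta+\eta^2$. This is still below $\delta$ once $\varepsilon$ is chosen small. Alternatively, as the paper does, note that every $\phi_t$ stays $\eta$-close to $q$ and reapply the three-term estimate.
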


\begin{proof}
We give the construction and the estimates, including the points needed for
well-definedness on stable-homotopy classes.

First fix a u.c.p. target control $a=(E,\delta)$.  Enlarge $E$ to the finite
symmetric set
\[
 E^\sharp=E\cup E^2\cup(E^2)^{-1}.
\]
Choose $0<\eta<\min\{\delta/12,1/20\}$.  Apply
\Cref{lem:LP-CP-replacement} to $(E^\sharp,\eta)$, and denote the resulting choices by
$(F_{\mathrm{LP}},\varepsilon_{\mathrm{LP}})$.  Choose a stronger control
$\lambda(a)=(F,\varepsilon)$ so that $F$ is symmetric, $e\in F$,
$F_{\mathrm{LP}}\subseteq F$, $2\varepsilon\leq
\varepsilon_{\mathrm{LP}}$, and
\begin{equation}\label{eq:LP-source-smallness}
 2\varepsilon+3\eta<\delta.
\end{equation}
For a unitary cycle, first apply \Cref{lem:qrep-normalize-identity}; this
replaces its defect by at most $2\varepsilon$, gives a homotopy with that
defect, and preserves the Fredholm condition.  The normalized map therefore
satisfies the control returned by the lifting lemma, which gives a u.c.p.
map $\phi:A\to D$ with
\begin{equation}\label{eq:LP-replacement-closeness}
 \max_{k\in E^\sharp}\|\phi(u_k)-q(k)\|<\eta,
\end{equation}
where $D=M_n$ in the finite-dimensional case and $D=\mathcal D_T$ in the
Fredholm case.  Since u.c.p. maps are contractive, for $g,h\in E$,
\begin{align}
 \|\phi(u_{gh})-\phi(u_g)\phi(u_h)\|
 &\leq \|\phi(u_{gh})-q(gh)\|
      +\|q(gh)-q(g)q(h)\|\notag\\
 &\quad+\|q(g)-\phi(u_g)\|+\|q(h)-\phi(u_h)\|\notag\\
 &<2\varepsilon+3\eta<\delta.
 \label{eq:LP-replacement-mult-estimate}
\end{align}
Thus $\phi$ represents a class at the $a$-stage.

The class is independent of the lift.  If $\phi_0$ and $\phi_1$ both
satisfy \eqref{eq:LP-replacement-closeness}, then
$\phi_t=(1-t)\phi_0+t\phi_1$ is u.c.p. and is still within $\eta$ of $q$ on
$E^\sharp$.  Estimate \eqref{eq:LP-replacement-mult-estimate} therefore
holds uniformly in $t$.  In the Fredholm case the convex path takes values
in the $C^*$-algebra $\mathcal D_T$, so it is a Fredholm homotopy with $T$
fixed.  This proves independence without selecting a preferred lift.

The same observation proves additivity: the block sum of replacements of
$q_0$ and $q_1$, and any replacement chosen for $q_0\oplus q_1$, are joined
by the preceding convex homotopy.  If a unitary Fredholm cycle is degenerate,
apply the lifting lemma in the unital $C^*$-algebra
\[
 \mathcal D_T^0=\{b\in B(H)_{\mathrm{even}}:[b,T]=0\}.
\]
The resulting u.c.p. cycle is degenerate, so the construction respects the
zero relation.  Finally, if $(q_s,T_s)$ is a homotopy, apply the parameterized
part of \Cref{lem:LP-CP-replacement} once, with $P=[0,1]$.  This produces one
u.c.p. map into the corresponding $C([0,1])$-algebra and one uniform
estimate, not unrelated lifts at the individual parameters.  Convex
homotopies at the two endpoints connect this replacement to any replacements
chosen there.  We have therefore obtained the homomorphisms
$L^{\mathrm{QR}}_a$ and $L^{\mathrm{FQR}}_a$.

We next construct polar correction.  Fix a unitary target control
$c=(F_0,\varepsilon_0)$.  Choose a u.c.p. control
$\pi(c)=(E_0,\delta_0)$ with
\[
 F_0^2\cup(F_0^2)^{-1}\subseteq E_0,
 \qquad 0<\delta_0<\min\{1/10,\varepsilon_0/7\}.
\]
If $\phi$ is a global u.c.p. $\pi(c)$-quasi-representation, then, for
$k\in F_0^2$, complete positivity gives
$\phi(u_{k^{-1}})=\phi(u_k)^*$, and
\begin{align*}
 \|\phi(u_k)^*\phi(u_k)-1\|&<\delta_0,\\
 \|\phi(u_k)\phi(u_k)^*-1\|&<\delta_0.
\end{align*}
Define
\begin{equation}\label{eq:LP-polar-map}
 p_\phi(k)=\phi(u_k)
   \bigl(\phi(u_k)^*\phi(u_k)\bigr)^{-1/2}
 \qquad(k\in F_0^2).
\end{equation}
By \Cref{lem:polar-correction},
$\|p_\phi(k)-\phi(u_k)\|\leq2\delta_0$ and
$p_\phi$ is an $(F_0,7\delta_0)$-quasi-representation, hence an
$(F_0,\varepsilon_0)$-quasi-representation.  Functional calculus commutes
with direct sums and continuous parameter families.  It takes place inside
$\mathcal D_T$ in the Fredholm case; if the u.c.p. cycle is degenerate, it
takes place inside $\mathcal D_T^0$.  Consequently
\eqref{eq:LP-polar-map} induces homomorphisms $P^{\mathrm{QR}}_c$ and
$P^{\mathrm{FQR}}_c$ on the controlled groups.

It remains to verify the two composite identities.  For the first, choose
the source control in the lifting construction still stronger, if necessary,
and choose $\eta$ so small that
\begin{equation}\label{eq:LP-first-composite-margin}
 2\varepsilon+15\eta<\varepsilon_0.
\end{equation}
If $\phi$ is an $\eta$-replacement of the normalized unitary map $q$, then
\[
 \|\phi(u_k)^*\phi(u_k)-1\|,
 \ \|\phi(u_k)\phi(u_k)^*-1\|<2\eta
\]
on the required finite set.  Applying \Cref{lem:polar-correction} with
$2\eta$ in place of $\delta$ shows
\begin{equation}\label{eq:LP-polar-recovery-distance}
 \|p_\phi(k)-q(k)\|<5\eta.
\end{equation}
The short-geodesic path from $q(k)$ to $p_\phi(k)$ used in
\Cref{lemma:unitary-implies-homotopy} has multiplication defect at most
$2\varepsilon+15\eta$, by the same three-term estimate.  In the Fredholm
case the path lies in $\mathcal D_T$.  Equations
\eqref{eq:LP-first-composite-margin}--
\eqref{eq:LP-polar-recovery-distance} prove the left identities in
\eqref{eq:LP-comparison-composites-Q} and
\eqref{eq:LP-comparison-composites-K}.

For the other composite, start with a u.c.p. map $\phi$ at a sufficiently
strong control, put $q=p_\phi$, and choose a u.c.p. replacement $\psi$ of
$q$.  On the finite set required by the target control,
\[
 \|\phi(u_k)-q(k)\|\leq2\delta_0,
 \qquad \|\psi(u_k)-q(k)\|<\eta.
\]
Every member of the convex path
$(1-t)\phi+t\psi$ is therefore within
$\max\{2\delta_0,\eta\}$ of $q$.  Its multiplication defect is at most
\[
 7\delta_0+3\max\{2\delta_0,\eta\}.
\]
Taking $\delta_0$ and $\eta$ small enough compared with the prescribed
target tolerance proves the right identities in
\eqref{eq:LP-comparison-composites-Q} and
\eqref{eq:LP-comparison-composites-K}.  The same convex path lies in
$\mathcal D_T$, and in $\mathcal D_T^0$ for a degenerate cycle.

For completeness, we explain how the choices are made simultaneously and
compatibly.  Because $G$ is countable, both directed sets have countable
cofinal subsets: exhaust $G$ by finite symmetric sets and use positive
rational tolerances tending to zero.  Choose increasing cofinal sequences
and perform the preceding construction recursively.  At the $n$th step take
the union of all finite sets required in the first $n$ steps and the minimum
of all their positive tolerances.  Also impose
\eqref{eq:LP-first-composite-margin} for the first $n$ target controls.
This gives order-preserving $\lambda$ and $\pi$ on the cofinal sequences.
If two routes around a bonding square are applied to one unitary cycle, the resulting
maps are two u.c.p. replacements of the same restriction of that cycle;
the convex homotopy above proves equality at the target stage.  If the two
routes start with a u.c.p. cycle, polar functional calculus commutes exactly
with restriction.  Hence all bonding squares commute.  For an arbitrary
control, use the first member of the chosen cofinal sequence above it and
then apply the relevant bonding homomorphism.  The same argument shows that
this extension is independent of the intermediate member and gives the
displayed identities on the full directed sets.  These are precisely the maps and identities
required by \Cref{def:inverse-system-equivalence}.

All constructions use the same u.c.p. map before and after regarding a
finite-dimensional map as a Fredholm cycle with operator zero.  This proves
the final compatibility assertion.
\end{proof}

\begin{corollary}[the two inverse-limit groups]
\label{cor:LP-unitary-ucp-limits}
Under the hypothesis of \Cref{prop:LP-unitary-ucp-comparison}, the formulas
\begin{align*}
 (x_c)_{c\in\mathcal C_G}
   &\longmapsto
   \bigl(L^{\mathrm{QR}}_a(x_{\lambda(a)})\bigr)_
      {a\in\mathcal C_G^{\mathrm{sym}}},\\
 (y_a)_{a\in\mathcal C_G^{\mathrm{sym}}}
   &\longmapsto
   \bigl(P^{\mathrm{QR}}_c(y_{\pi(c)})\bigr)_{c\in\mathcal C_G}
\end{align*}
are mutually inverse isomorphisms
\[
 \mathrm{QR}(G)\cong
 \varprojlim_{a\in\mathcal C_G^{\mathrm{sym}}}\QRucpstage{G}{a}.
\]
The same formulas with $\mathrm{QR}$ replaced by $\mathrm{FQR}$ give
\[
 \mathrm{FQR}(*;G;\mathbb C)\cong
 \varprojlim_{a\in\mathcal C_G^{\mathrm{sym}}}\FQRucpstage{G}{a}.
\]
These isomorphisms intertwine the homomorphism from finite-dimensional
cycles to Fredholm cycles.  In particular, the unitary-valued and global
u.c.p. definitions give the same inverse-limit groups for groups whose
maximal group algebra has the lifting property, although there is no
canonical choice of a u.c.p. replacement at the level of individual maps.
\end{corollary}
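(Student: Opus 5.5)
The plan is to deduce the corollary formally from \Cref{prop:LP-unitary-ucp-comparison} via the general mechanism recorded in \Cref{def:inverse-system-equivalence}. No new analysis is needed. All estimates (lifting, convex homotopies, polar correction) were already absorbed into the finite-control identities, so the task is bookkeeping on inverse limits.

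\emph{Step 1: the limit maps are well defined.} Take $A_c=\mathrm{QR}(c)$ indexed by $\mathcal C_G$ and $B_a=\QRucpstage{G}{a}$ indexed by $\mathcal C_G^{\mathrm{sym}}$. Use $\sigma=\lambda$, $\tau=\pi$, $f_a=L^{\mathrm{QR}}_a$ and $g_c=P^{\mathrm{QR}}_c$. The proposition supplies the index inequalities $\lambda(\pi(c))\succeq c$ and $\pi(\lambda(a))\succeq a$. It also gives commutation with bonding maps and the two composite identities \eqref{eq:LP-comparison-composites-Q}. Given $x=(x_c)\in\mathrm{QR}(G)$ and $a\preccurlyeq a'$, I would check
\[
r^{\mathrm{QR,ucp}}_{a',a}L^{\mathrm{QR}}_{a'}(x_{\lambda(a')})=L^{\mathrm{QR}}_a r^{\mathrm{QR}}_{\lambda(a'),\lambda(a)}(x_{\lambda(a')})=L^{\mathrm{QR}}_a(x_{\lambda(a)}).
\]
This shows the first displayed family is compatible. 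The same argument, with $P^{\mathrm{QR}}$ and $\pi$, handles the second family.

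\emph{Step 2: the two maps are mutually inverse.} For $x\in\mathrm{QR}(G)$ and any $c$, the $c$-coordinate of $P\circ L(x)$ is
\[
P^{\mathrm{QR}}_c L^{\mathrm{QR}}_{\pi(c)}(x_{\lambda(\pi(c))})=r^{\mathrm{QR}}_{\lambda(\pi(c)),c}(x_{\lambda(\pi(c))})=x_c .
\]
The first equality is the left identity in \eqref{eq:LP-comparison-composites-Q}. The second is compatibility of $x$. The right identity in \eqref{eq:LP-comparison-composites-Q} gives $L\circ P=\mathrm{id}$ in the same way. Additivity is inherited from the component homomorphisms. Repeating Steps 1 and 2 verbatim with the FQR maps and \eqref{eq:LP-comparison-composites-K} gives the Fredholm isomorphism.

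\emph{Step 3: intertwining.} By the final assertion of the proposition, the square relating $L^{\mathrm{QR}}_a$, $L^{\mathrm{FQR}}_a$, $\iota_{\lambda(a)}$ and $\iota^{\mathrm{ucp}}_a$ commutes, and the analogous square for polar correction commutes as well. Applying these squares coordinatewise shows $\iota^{\mathrm{ucp}}\circ L=L\circ\iota$ on inverse limits. The analogous identity for $P$ then follows, or can be deduced from invertibility.

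The main obstacle is Step 1. A lift is only defined at a single stage, so the naturality squares must hold exactly on the chosen stages and not merely up to further restriction; otherwise the displayed families would not be compatible. The proposition asserts that all bonding squares commute on the full directed sets after its cofinal recursive choice. I would invoke that assertion explicitly, and note that the non-canonicity of u.c.p.\ lifts is already resolved there through the convex-homotopy independence.
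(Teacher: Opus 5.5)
Your proposal is correct and follows the paper's proof exactly. Compatibility of the two displayed families comes from the bonding squares of \Cref{prop:LP-unitary-ucp-comparison}, the composite identities \eqref{eq:LP-comparison-composites-Q}--\eqref{eq:LP-comparison-composites-K} give two-sided inverses coordinate by coordinate, and the final compatibility assertion of that proposition gives the intertwining with $\iota$ and $\iota^{\mathrm{ucp}}$ (your derivation of the $P$-side intertwining from invertibility is a harmless detail the paper leaves implicit).
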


\begin{proof}
The bonding-square identities show that the two displayed formulas define
compatible families.  Equations
\eqref{eq:LP-comparison-composites-Q} and
\eqref{eq:LP-comparison-composites-K} show coordinate by coordinate that the
two composites are the identity.  Compatibility with finite-dimensional
cycles is the last assertion of
\Cref{prop:LP-unitary-ucp-comparison}.
\end{proof}

\begin{remark}
The proposition clarifies two facts that are easily confused. Polar
correction is a canonical operation once the u.c.p. map is sufficiently
multiplicative.  The reverse operation uses the lifting property and is not
canonical at the level of maps; it becomes canonical only as a class at a
prescribed weaker control, because any two choices are joined by the convex
u.c.p. homotopy in the proof.
\end{remark}

\subsection{The u.c.p. comparison properties and transfer}\label{subsec:ucp-QR-properties}

Dadarlat's definition of $K$-quasidiagonality for a separable
$C^*$-algebra requires every even $K$-homology class to have a
quasidiagonal representative
\cite[Definition~2.2]{dadarlat_bundles_2014}; see also
\cite[Definition~4.4]{dadarlat_2021}. The u.c.p. formulation below is stated
in terms of the comparison homomorphism
\eqref{eq:ucp-finite-cycle-map}. It includes both the existence of
finite-dimensional representatives and their stable uniqueness.
The quantitative condition additionally specifies the finite sets and
tolerances needed for these comparisons. The u.c.p. conditions are distinguished from the unitary-valued
conditions in \Cref{def:QR-unitary,def:quantitative-QR-unitary}; LP will
relate the two formulations. No lifting hypothesis is part of either
u.c.p. definition.

\begin{definition}[U.c.p. QR-quasidiagonality]
\label{def:KQD}
We say that $G$ is \emph{QR-quasidiagonal in the u.c.p. sense} if the canonical homomorphism
\[
 \iota^{\mathrm{ucp}}:\mathrm{QR}^{\mathrm{ucp}}(G)
       \longrightarrow\mathrm{FQR}^{\mathrm{ucp}}(G)
\]
is an isomorphism.  Surjectivity says that every limiting Fredholm class has
a finite-dimensional u.c.p. representative; injectivity is the stable
uniqueness assertion that two such representatives with the same Fredholm
class already determine the same limiting quasi-representation class.  Both
halves are included because the purpose here is classification, rather than
existence of finite-dimensional approximations alone.
\end{definition}

\begin{definition}[Quantitative u.c.p. QR-quasidiagonality]
\label{def:quantitative-KQD}
A countable discrete group $G$ is \emph{quantitatively
QR-quasidiagonal in the u.c.p. sense} if there exist an order-preserving map
\[
 \sigma:\mathcal C_G^{\mathrm{sym}}\longrightarrow
 \mathcal C_G^{\mathrm{sym}},
 \qquad \sigma(a)\succeq a,
\]
and homomorphisms
\[
 \rho_a:\FQRucpstage{G}{\sigma(a)}\longrightarrow
          \QRucpstage{G}{a}
 \qquad(a\in\mathcal C_G^{\mathrm{sym}})
\]
such that
\begin{align}
 \rho_a\iota^{\mathrm{ucp}}_{\sigma(a)}
   &=r^{\mathrm{QR,ucp}}_{\sigma(a),a},
   \label{eq:QKQD-left}\tag{QK1}\\
 \iota^{\mathrm{ucp}}_a\rho_a
   &=r^{\mathrm{FQR,ucp}}_{\sigma(a),a},
   \label{eq:QKQD-right}\tag{QK2}\\
 r^{\mathrm{QR,ucp}}_{b,a}\rho_b
   &=\rho_a r^{\mathrm{FQR,ucp}}_{\sigma(b),\sigma(a)}
   \qquad(a\preccurlyeq b).
   \label{eq:QKQD-coherence}\tag{QK3}
\end{align}
Thus the natural transformation
$\iota^{\mathrm{ucp}}_\bullet$ has homomorphisms defining the inverse after a specified
change of control.  The definition concerns only u.c.p. maps.  Ordinary
unitary-valued quasi-representations enter later through
\Cref{prop:LP-unitary-ucp-comparison}.
\end{definition}

\begin{proposition}[consequences for classes and their restrictions]
\label{prop:QKQD-consequences}
Quantitative u.c.p. QR-quasidiagonality implies u.c.p.
QR-quasidiagonality.  More
precisely, the inverse of \eqref{eq:ucp-limit-cycle-map} is
\begin{equation}\label{eq:QKQD-limit-inverse}
 (z_a)_{a\in\mathcal C_G^{\mathrm{sym}}}\longmapsto
 \bigl(\rho_a(z_{\sigma(a)})\bigr)_
   {a\in\mathcal C_G^{\mathrm{sym}}}.
\end{equation}
At a fixed target control $a$, equations \eqref{eq:QKQD-left} and
\eqref{eq:QKQD-right} say, respectively,
\begin{enumerate}[label=(\roman*)]
\item if $x\in\QRucpstage{G}{\sigma(a)}$ and
$\iota^{\mathrm{ucp}}_{\sigma(a)}(x)=0$, then
$r^{\mathrm{QR,ucp}}_{\sigma(a),a}(x)=0$;
\item every $z\in\FQRucpstage{G}{\sigma(a)}$ becomes the
finite-dimensional class $\iota^{\mathrm{ucp}}_a(\rho_a(z))$ after
restriction to $a$.
\end{enumerate}
\end{proposition}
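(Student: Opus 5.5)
The proof is formal: it uses only the identities (QK1)--(QK3) of \Cref{def:quantitative-KQD}. It mirrors the argument given after \Cref{def:quantitative-QR-unitary} in the unitary-valued setting. I would first prove (i) and (ii) at a fixed control, and then assemble them into the inverse-limit statement.

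For (i), let $x\in\QRucpstage{G}{\sigma(a)}$ with $\iota^{\mathrm{ucp}}_{\sigma(a)}(x)=0$. Applying $\rho_a$ and using \eqref{eq:QKQD-left} gives
\[
 r^{\mathrm{QR,ucp}}_{\sigma(a),a}(x)=\rho_a\iota^{\mathrm{ucp}}_{\sigma(a)}(x)=\rho_a(0)=0,
\]
since $\rho_a$ is a homomorphism. For (ii), \eqref{eq:QKQD-right} reads $\iota^{\mathrm{ucp}}_a(\rho_a(z))=r^{\mathrm{FQR,ucp}}_{\sigma(a),a}(z)$ for every $z\in\FQRucpstage{G}{\sigma(a)}$, and this is exactly the assertion.

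For the limit statement, let $z=(z_a)_a$ be a compatible family in $\mathrm{FQR}^{\mathrm{ucp}}(G)$ and define $w_a=\rho_a(z_{\sigma(a)})$. I would check compatibility first. If $a\preccurlyeq b$, then $\sigma(a)\preccurlyeq\sigma(b)$ because $\sigma$ is order preserving. Hence \eqref{eq:QKQD-coherence} gives
\[
 r^{\mathrm{QR,ucp}}_{b,a}(w_b)=\rho_a r^{\mathrm{FQR,ucp}}_{\sigma(b),\sigma(a)}(z_{\sigma(b)})=\rho_a(z_{\sigma(a)})=w_a.
\]
So \eqref{eq:QKQD-limit-inverse} defines a homomorphism into $\mathrm{QR}^{\mathrm{ucp}}(G)$.

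Next I would verify both composites coordinatewise. For $x=(x_a)_a$ in $\mathrm{QR}^{\mathrm{ucp}}(G)$, \eqref{eq:QKQD-left} and compatibility of $x$ give
\[
 \rho_a\iota^{\mathrm{ucp}}_{\sigma(a)}(x_{\sigma(a)})=r^{\mathrm{QR,ucp}}_{\sigma(a),a}(x_{\sigma(a)})=x_a.
\]
Likewise \eqref{eq:QKQD-right} gives $\iota^{\mathrm{ucp}}_a(w_a)=r^{\mathrm{FQR,ucp}}_{\sigma(a),a}(z_{\sigma(a)})=z_a$. Both identities use $\sigma(a)\succeq a$ so that the bonding maps are defined.

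There is no real obstacle here. The one point needing care is the compatibility step, which uses both that $\sigma$ is order preserving and the coherence identity (QK3). Without (QK3) the family $(w_a)$ need not be compatible, and this is the reason that identity is included in the definition.
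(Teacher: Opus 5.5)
Your proposal is correct and follows the paper's proof. In both, (QK3) and the order preservation of $\sigma$ make the family $(\rho_a(z_{\sigma(a)}))_a$ compatible, (QK1) and (QK2) give the two composites coordinate by coordinate, and (i) and (ii) are direct restatements of (QK1) and (QK2); you have simply written out the verifications the paper compresses into three sentences.
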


\begin{proof}
Equation \eqref{eq:QKQD-coherence} makes the family on the right-hand side
of \eqref{eq:QKQD-limit-inverse} compatible.  The two composite identities
show coordinate by coordinate that it is a two-sided inverse.  Assertions
(i) and (ii) are obtained by applying \eqref{eq:QKQD-left} and
\eqref{eq:QKQD-right}, respectively.
\end{proof}

\begin{remark}[why the quantitative assertion is stronger]
An isomorphism between the two inverse limits does not by itself provide the
map $\sigma$, the component homomorphisms $\rho_a$, or any of the identities
\eqref{eq:QKQD-left}--\eqref{eq:QKQD-coherence}.  In particular, a direct
computation
\[
 \mathrm{QR}^{\mathrm{ucp}}(G)\cong\mathbb Z^2
 \quad\hbox{and}\quad
 \mathrm{FQR}^{\mathrm{ucp}}(G)\cong\mathbb Z^2
\]
can prove u.c.p. QR-quasidiagonality, once the canonical map is checked, but it
does not automatically prove quantitative u.c.p. QR-quasidiagonality.  The latter requires vanishing after restriction at uniformly chosen
controls, together with compatible choices of finite-dimensional representatives.  We shall first give the direct
limiting computation for surface groups and only then prove its quantitative
strengthening.
\end{remark}

\begin{remark}[relation with quantitative operator $K$-theory]
The terminology follows the finite-control philosophy of quantitative
operator $K$-theory.  Controlled methods of this kind enter Yu's proof of
the Novikov conjecture for groups with finite asymptotic dimension
\cite{yu_finite_asdim_1998}; quantitative operator $K$-theory for filtered
$C^*$-algebras was developed systematically in
\cite{oyono_oyono_yu_2015}.  Definition
\ref{def:quantitative-KQD} concerns the two specific u.c.p. systems
$\bigl(\QRucpstage{G}{a}\bigr)_a$ and
$\bigl(\FQRucpstage{G}{a}\bigr)_a$.  It is not a
renaming of the controlled almost-projection and almost-unitary groups of a
filtered algebra.
\end{remark}

\begin{proposition}[transfer to the unitary-valued theories]
\label{prop:QKQD-unitary-transfer}
Assume that $C^*_{\max}(G)$ has the lifting property.
\begin{enumerate}[label=(\roman*)]
\item If $G$ is QR-quasidiagonal in the u.c.p. sense, then the canonical homomorphism
\[
 \iota:\mathrm{QR}(G)\longrightarrow
          \mathrm{FQR}(*;G;\mathbb C)
\]
is an isomorphism.
\item If $G$ is quantitatively QR-quasidiagonal in the u.c.p. sense, then the natural
transformation $(\iota_c)_c$ between the two unitary-valued systems is an
inverse-system equivalence in the explicit sense of
\Cref{def:inverse-system-equivalence}. After a common strengthening
of controls, these maps give quantitative QR-quasidiagonality in
\Cref{def:quantitative-QR-unitary}.
\end{enumerate}
Thus the first conclusion gives \Cref{def:QR-unitary}, and the second
gives its quantitative version.
\end{proposition}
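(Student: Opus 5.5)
The plan is to conjugate the u.c.p. comparison through the inverse-system equivalences of \Cref{prop:LP-unitary-ucp-comparison}. Those equivalences intertwine the finite-dimensional inclusions, and both conclusions are formal consequences of that intertwining.

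For (i), I use \Cref{cor:LP-unitary-ucp-limits}. It gives isomorphisms $L_Q:\mathrm{QR}(G)\to\mathrm{QR}^{\mathrm{ucp}}(G)$ and $L_{\mathcal F}:\mathrm{FQR}(*;G;\mathbb C)\to\mathrm{FQR}^{\mathrm{ucp}}(G)$, and these satisfy $L_{\mathcal F}\iota=\iota^{\mathrm{ucp}}L_Q$. The last identity comes from the commuting square in \Cref{prop:LP-unitary-ucp-comparison}: the same u.c.p. replacement is used before and after regarding a finite-dimensional map as a cycle with operator zero. Applied coordinatewise, with the bonding-square identities, it gives the identity on inverse limits. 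Then $\iota=L_{\mathcal F}^{-1}\iota^{\mathrm{ucp}}L_Q$ is a composite of isomorphisms.

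For (ii), I define $\sigma^{\mathrm U}(c)=\lambda(\sigma(\pi(c)))$ and
\[
 \rho^{\mathrm U}_c=P^{\mathrm{QR}}_c\,\rho_{\pi(c)}\,L^{\mathrm{FQR}}_{\sigma(\pi(c))}:
 \mathcal F_{\sigma^{\mathrm U}(c)}\longrightarrow Q_c .
\]
I would first check \eqref{eq:QR-unitary-left} by computing $\rho^{\mathrm U}_c\iota_{\sigma^{\mathrm U}(c)}$. The intertwining square moves $\iota$ past $L^{\mathrm{FQR}}$ and produces $\iota^{\mathrm{ucp}}_{\sigma(\pi(c))}L^{\mathrm{QR}}_{\sigma(\pi(c))}$. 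Then (QK1) turns $\rho_{\pi(c)}\iota^{\mathrm{ucp}}$ into a u.c.p. bonding map. Naturality of $L^{\mathrm{QR}}$ moves that bonding map to the unitary side. Finally the composite identity $P^{\mathrm{QR}}_cL^{\mathrm{QR}}_{\pi(c)}=r^{\mathrm{QR}}$ from \eqref{eq:LP-comparison-composites-Q} finishes this identity.

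The second identity \eqref{eq:QR-unitary-right} is symmetric. The square for polar correction gives $\iota_cP^{\mathrm{QR}}_c=P^{\mathrm{FQR}}_c\iota^{\mathrm{ucp}}_{\pi(c)}$. Then (QK2), naturality, and \eqref{eq:LP-comparison-composites-K} reduce the composite to a bonding map. The coherence identity \eqref{eq:QR-unitary-coherence} follows from (QK3) together with compatibility of $L$ and $P$ with bonding maps.

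The main obstacle is bookkeeping. The maps $\lambda,\pi$ are only order preserving with $\lambda\pi(c)\succeq c$, so the intermediate stages do not match exactly, and each identity holds only after a further restriction. I would handle this as in \Cref{lem:inverse-system-equivalence-composition}. I would work on countable cofinal sequences, enlarge $\sigma^{\mathrm U}(c)$ once to a common dominating control, and precompose each $\rho^{\mathrm U}_c$ with the corresponding bonding map. Bonding maps compose and commute with every component map, so all three identities then hold at the prescribed stages. That is the ``common strengthening of controls'' mentioned in the statement, and it also yields the inverse-system equivalence claimed for $(\iota_c)_c$.
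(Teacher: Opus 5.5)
Your proposal is correct and follows the paper's proof. Part (i) conjugates $\iota^{\mathrm{ucp}}$ by the isomorphisms of \Cref{cor:LP-unitary-ucp-limits}. Part (ii) composes the lifting/polar-correction equivalences with the maps of \Cref{def:quantitative-KQD} via \Cref{lem:inverse-system-equivalence-composition}, and the inverse-side output of that lemma is precisely your $\rho^{\mathrm U}_c=P^{\mathrm{QR}}_c\rho_{\pi(c)}L^{\mathrm{FQR}}_{\sigma(\pi(c))}$ with index map $\lambda\sigma\pi$.

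The intertwining squares and naturality identities hold exactly, so the chains you describe already yield \eqref{eq:QR-unitary-left}--\eqref{eq:QR-unitary-coherence} on the nose at the stated stages. The extra restriction you flag as the main obstacle is therefore harmless but unnecessary.
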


\begin{proof}
The two vertical comparisons in the commutative diagram
\[
\begin{tikzcd}[column sep=large]
 \mathrm{QR}(G) \arrow[r,"\iota"] \arrow[d,"\cong"'] &
 \mathrm{FQR}(*;G;\mathbb C) \arrow[d,"\cong"]\\
 \mathrm{QR}^{\mathrm{ucp}}(G)
   \arrow[r,"\iota^{\mathrm{ucp}}"'] &
 \mathrm{FQR}^{\mathrm{ucp}}(G)
\end{tikzcd}
\]
are the isomorphisms of \Cref{cor:LP-unitary-ucp-limits}; commutativity is
the compatibility assertion in
\Cref{prop:LP-unitary-ucp-comparison}.  This proves (i).  For (ii), compose
the two inverse-system equivalences in
\Cref{prop:LP-unitary-ucp-comparison} with the component maps in
\Cref{def:quantitative-KQD}, using
\Cref{lem:inverse-system-equivalence-composition}.  The resulting two
composites are bonding homomorphisms, exactly as required by
\Cref{def:inverse-system-equivalence}.
\end{proof}


\subsection{Quasidiagonal algebras and quantitative comparison}
\label{subsec:algebraic-QR-comparison}

We now give a sufficient condition for the u.c.p. comparison, independent
of LP and the Haagerup property. In this subsection and its proof, $A$
is any separable unital $C^*$-algebra. A finite control is
$a=(X,\varepsilon)$, where $X=X^*\subset A_1$ is finite and contains
$1$, and $\varepsilon>0$. Self-adjointness here means that $X$ is
closed under adjoints, not that each element of $X$ is self-adjoint.
For a global u.c.p. map $\phi$, put
\[
 \defect_X(\phi)=
 \max_{x,y\in X}\|\phi(xy)-\phi(x)\phi(y)\|.
\]
We abbreviate by $\QQ_a$ the Grothendieck group of homotopy classes of
global u.c.p. maps $A\to M_n(\mathbb C)$ with
$\defect_X(\phi)<\varepsilon$. Homotopies are global u.c.p. maps
$A\to C([0,1],M_n)$ with the same strict bound in the supremum norm;
zero-dimensional maps are allowed. The group $\FF_a$ consists of
stable-homotopy classes of even global u.c.p. maps
$\phi:A\to B(\HH^+\oplus\HH^-)$ with the same multiplication bound,
together with an odd operator $T$ satisfying
\[
 T-T^*,\quad T^2-1,\quad[T,\phi(x)]\in\mathcal K(\HH)
 \qquad(x\in A).
\]
Degenerate cycles, unitary equivalence, and Hilbert
$C([0,1])$-module homotopies use the conventions of
\Cref{def:global-ucp-controls}. The map
$\iota_a:\QQ_a\to\FF_a$ places a finite-dimensional map in even
degree with operator zero. The symbols $r^Q,r^F$ denote the bonding
maps for these algebra controls within this proof.

Ordinary quasidiagonality provides a fixed faithful quasidiagonal
representation for the whole homotopy. This is stronger than merely
choosing a quasidiagonal representative for each individual
$KK(A,\mathbb C)$-class. The proof keeps the relative index and the
prescribed finite-dimensional endpoints throughout the compression.

Define the increasing function
\begin{equation}\label{eq:theta}
 \theta(\varepsilon)=10^{-6}\min\{\varepsilon,1\}^{2},
 \qquad \tau(X,\varepsilon)=(X,\theta(\varepsilon)).
\end{equation}
Thus $\tau(a)\succeq a$ and $\tau$ is order preserving.

\begin{theorem}[Quantitative finite-dimensional comparison]
\label{thm:main}
Let $A$ be separable, unital, and quasidiagonal. With
\begin{equation}\label{eq:sigma}
 \sigma=\tau\circ\tau,\qquad
 \sigma(X,\varepsilon)
   =\left(X,10^{-18}\min\{\varepsilon,1\}^{4}\right),
\end{equation}
there are homomorphisms
\[
 \rho_a:\FF_{\sigma(a)}\longrightarrow\QQ_a
\]
such that
\begin{align}
 \rho_a\iota_{\sigma(a)}
    &=r^Q_{\sigma(a),a},\label{eq:left}\\
 \iota_a\rho_a
    &=r^F_{\sigma(a),a},\label{eq:right}\\
 r^Q_{b,a}\rho_b
    &=\rho_a r^F_{\sigma(b),\sigma(a)}
       \qquad(a\preccurlyeq b).\label{eq:coherence}
\end{align}
The test set is unchanged. The input tolerance is uniform over all
cycles and homotopies; the ranks of the finite-dimensional
representatives need not be uniform.
\end{theorem}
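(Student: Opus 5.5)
The plan follows the relative-projection argument of \Cref{thm:FD-Cuntz-approx}, but with the nuclear globalization and the Dadarlat--Eilers absorption steps removed. Here the maps are already global u.c.p. maps on $A$, and only quasidiagonality is assumed. We construct, for each target $a=(X,\varepsilon)$, a map at the intermediate control $\tau(a)$ that sends a Fredholm cycle at $\sigma(a)=\tau(\tau(a))$ to a finite-dimensional class. The first $\tau$ is spent on two reductions. We normalize $T$, and then use the quasicentral cutoff $(1-h)T(1-h)$ so that $\|[B,\phi(x)]\|$ and $\|[d_B,\phi(x)]\|$ are small on $X\cup X^2$. Next we form the index projection $p_0=W(1-e_0)W^*$ exactly as in \eqref{eq:fdap-index-projection}, and Stinespring-dilate $\phi=V^*\rho V$ with $\rho$ a unital representation. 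This yields $\mathcal R(\widehat p,e;\rho)$ with $\widehat p-e$ compact and $\|[\widehat p,\rho(x)]\|\lesssim\sqrt{\theta}$. The square-root loss from \eqref{eq:fdap-stinespring-estimate} is why $\theta$ is quadratic.

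In place of absorption, we add the degenerate relative pair $(e_\sigma,e_\sigma)$ on $\sigma_0\oplus\sigma_0$, where $\sigma_0$ is a fixed faithful unital quasidiagonal representation of infinite multiplicity. Since $A$ is arbitrary, Voiculescu's theorem (no nuclearity is needed for representations into $B(H)$) gives that $\rho\oplus\sigma_0$ is approximately unitarily equivalent to $\sigma_0$ modulo compacts. We then run the convex interpolation estimate \eqref{eq:fdap-convex} to reach $\mathcal R(p,e;\pi)$ with $\pi=\sigma_0\oplus\sigma_0$ fixed once and for all. Finally we compress by $Q_n=q_n\oplus q_n$ as in \eqref{eq:fdap-finite-projection}, cut the off-diagonal corners, and cancel the tail exactly. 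The output is the class $[\varphi_{+,n}]-[\varphi_{-,n}]$ at control $\tau(a)$, and restricting it to $a$ defines $\rho_a$. Each of the roughly $13r$ bounds is a fixed multiple of $\sqrt{\text{input tolerance}}$, and the constant $10^{-6}$ is chosen so that it absorbs all of them.

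Well-definedness and (\ref{eq:left})--(\ref{eq:coherence}) come from the interval version, run with the fixed $\pi$ over $C([0,1])$. Given two representatives, or a homotopy to a degenerate cycle, we carry out the whole construction on one Hilbert $C([0,1])$-module. The quasicentral cutoff is taken relative to the endpoints via the ideal $J$. A single $Q_n$ is chosen containing the finite-dimensional endpoint ranges, as in \eqref{eq:fdap-contain-endpoints}, and the trivial bundle $f_n(t)$ gives a finite-dimensional homotopy. This shows the class is independent of all choices, so $\rho_a$ is a homomorphism. It also gives \eqref{eq:left}, because the endpoint computation recovers the original finite-dimensional map up to convex paths, and \eqref{eq:right}, because the compressed relative cycle is stably homotopic to the input. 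Coherence \eqref{eq:coherence} holds because the construction does not change $X$ and restriction of controls commutes with every step. We take $\sigma=\tau\circ\tau$ because the first $\tau$ is consumed by the existence construction and the second by the vanishing argument, which needs its finite representative at $\tau(a)$ to be sent back to $a$.

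The main obstacle is replacing nuclear absorption with Voiculescu while keeping the relative structure and the homotopy parameter. For the single-cycle case this is Voiculescu's theorem applied separately in each grading. The interval case requires a $C([0,1])$-linear version: a continuous path of unitaries that absorbs a norm-continuous family of representations $\rho_t$ into $\sigma_0\otimes1_{C[0,1]}$. I would first try Kasparov's generalized Voiculescu theorem, using that $\sigma_0\otimes1$ is absorbing for the trivially graded separable coefficient algebra $C([0,1])$. If that does not apply directly, the fallback is to apply Voiculescu pointwise at a fine finite mesh of parameters and join neighbouring unitaries by short paths, paying an extra $O(r)$ that fits inside the $\theta$ budget.
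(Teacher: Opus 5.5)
Your proposal is correct and is essentially the paper's proof. It runs existence from $\sigma(a)$ to $\tau(a)$ via normalization, the quasicentral cutoff on $X\cup X^2$, the index projection, Stinespring dilation, absorption into a fixed infinite-multiplicity quasidiagonal representation and exact cancellation of the compressed tail. It runs vanishing from $\tau(a)$ to $a$ by the same steps on a single $C([0,1])$-homotopy, with the endpoint ideal $J$ and one finite-rank cutoff containing the endpoint ranges. Then $\rho_a(z)=r^Q_{\tau(a),a}(v)$ for any finite-dimensional lift $v$, so \eqref{eq:left}--\eqref{eq:coherence} follow algebraically from independence of the lift.

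Your ``first try'' for the interval case is exactly what the paper does: Kasparov's generalized Voiculescu theorem applies to $\sigma_0\otimes1_{C([0,1])}$ because the coefficient algebra $C([0,1])$ is nuclear, with no nuclearity of $A$ required. So you can drop the mesh fallback. It would not work anyway, since the dilated representations are in general only strictly continuous in $t$, and Voiculescu unitaries chosen at nearby parameters need not be close.
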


\begin{corollary}[Maximal group algebras]\label{cor:group}
Let $G$ be a countable discrete group. If $C^*_{\max}(G)$ is
quasidiagonal, then $G$ is quantitatively QR-quasidiagonal in the
u.c.p. sense of \Cref{def:quantitative-KQD}. For
$a=(E,\varepsilon)$ one may use
\[
 \sigma(a)=\left(E,10^{-18}\min\{\varepsilon,1\}^{4}\right).
\]
\end{corollary}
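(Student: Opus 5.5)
The statement to prove is \Cref{cor:group}. It should follow by specializing \Cref{thm:main} to $A=C^*_{\max}(G)$ and translating the algebra controls $(X,\varepsilon)$ into the group controls $(E,\varepsilon)\in\mathcal C_G^{\mathrm{sym}}$.

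First, $C^*_{\max}(G)$ is separable because $G$ is countable. It is unital, and it is quasidiagonal by hypothesis, so \Cref{thm:main} applies. To a symmetric finite set $E\ni e$ I attach the finite set
\[
 X_E=\{u_g:g\in E\}\subset A_1 .
\]
This set is closed under adjoints because $u_g^*=u_{g^{-1}}$ and $E$ is symmetric. It contains $1=u_e$, and it consists of contractions. The map $(E,\varepsilon)\mapsto (X_E,\varepsilon)$ is order preserving.

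Next I check that the groups coincide. For a global u.c.p. map $\phi$,
\[
 \defect_{X_E}(\phi)=\max_{g,h\in E}\|\phi(u_{gh})-\phi(u_g)\phi(u_h)\|,
\]
which is exactly the left side of \eqref{eq:global-ucp-multiplicativity}. So the strict bound $\defect_{X_E}(\phi)<\varepsilon$ is the same condition as being a global u.c.p. $(E,\varepsilon)$-quasi-representation. The homotopies on both sides are global u.c.p. maps into $C([0,1],M_n)$ with the supremum bound. Both Fredholm sides require $[T,\phi(a)]\in K(H)$ for all $a\in A$, which is the condition $\phi(A)\subseteq\mathcal D_T$. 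Degenerate cycles, direct sums and stable homotopy follow the same conventions of \Cref{def:global-ucp-controls} on both sides. Hence
\[
 \QQ_{(X_E,\varepsilon)}=\QRucpstage{G}{(E,\varepsilon)},\qquad \FF_{(X_E,\varepsilon)}=\FQRucpstage{G}{(E,\varepsilon)}
\]
as groups. Under these identifications $\iota_a$ becomes $\iota^{\mathrm{ucp}}_a$, and the bonding maps $r^Q,r^F$ become $r^{\mathrm{QR,ucp}},r^{\mathrm{FQR,ucp}}$, since all of them are given by restricting the tested set and relaxing the tolerance.

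Now define $\sigma(E,\varepsilon)=(E,10^{-18}\min\{\varepsilon,1\}^4)$. This agrees with the $\sigma$ of \eqref{eq:sigma} applied to $(X_E,\varepsilon)$, because the test set is unchanged. It is order preserving: $E$ is fixed and the tolerance is increasing in $\varepsilon$. It satisfies $\sigma(a)\succeq a$, since $10^{-18}\min\{\varepsilon,1\}^4\le\varepsilon$. For $\rho_a$ I take the homomorphisms from \Cref{thm:main}. Then \eqref{eq:left}, \eqref{eq:right} and \eqref{eq:coherence} become exactly \eqref{eq:QKQD-left}, \eqref{eq:QKQD-right} and \eqref{eq:QKQD-coherence}. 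For the coherence identity I use that $a\preccurlyeq b$ in $\mathcal C_G^{\mathrm{sym}}$ implies $(X_{E_a},\varepsilon_a)\preccurlyeq(X_{E_b},\varepsilon_b)$.

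The only delicate point is making sure the identification is exact, with no loss in constants. Concretely, $\defect_X$ ranges over all pairs $x,y\in X_E$, and each product $xy=u_{gh}$ is a group unitary. So no extra products of group-ring elements are tested beyond those in \eqref{eq:global-ucp-multiplicativity}. Beyond that check, there is no real obstacle.
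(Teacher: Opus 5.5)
Your proposal is correct and is the same argument as the paper's: specialize \Cref{thm:main} to $A=C^*_{\max}(G)$ with $X=\{u_g:g\in E\}$, and observe that the algebra-control groups, comparison maps, bonding maps and the three identities \eqref{eq:left}--\eqref{eq:coherence} are literally the global u.c.p. ones required in \Cref{def:quantitative-KQD}. Your additional checks spell out what the paper asserts in one line: that $X_E=X_E^*$ contains $1$, that $\defect_{X_E}$ is exactly the test \eqref{eq:global-ucp-multiplicativity}, and that $\sigma$ is order preserving with $\sigma(a)\succeq a$.
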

\begin{proof}
Apply Theorem~\ref{thm:main} to $A=C^*_{\max}(G)$, which is separable.
For a finite symmetric $E\subseteq G$ containing the identity, take
$X=\{u_g:g\in E\}\subset C^*_{\max}(G)$, where $u_g$ is the canonical
unitary. The groups and maps above are exactly the global u.c.p.
groups and the comparison in the definition. Equations
\eqref{eq:left}--\eqref{eq:coherence} are its three required identities.
\end{proof}

The two uses of $\tau$ have distinct purposes: the first gives a
finite-dimensional representative at output control $\tau(a)$, and
the second makes its restriction to $a$ independent of the choice.
This explains the composition $\sigma=\tau\circ\tau$.

\begin{corollary}[Character theorem for quasidiagonal maximal algebras]
\label{cor:character}
Let $G$ be a countable discrete group with the Haagerup property. If
$C^*_{\max}(G)$ is quasidiagonal and has the lifting property, then
the geometric character is an isomorphism of abelian groups:
\[
 \chi:\mathrm{QR}(G)\xrightarrow{\cong}
       \varprojlim_d K_G^0(P_d(G)).
\]
The finite-control detection and realization statements of
\Cref{thm:QKQD-Haagerup} also hold.
\end{corollary}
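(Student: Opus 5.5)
This corollary is a chain of results already in place, so the proof is a matter of assembling them in order. First, since $C^*_{\max}(G)$ is quasidiagonal, \Cref{cor:group} shows that $G$ is quantitatively QR-quasidiagonal in the u.c.p. sense of \Cref{def:quantitative-KQD}. For a control $a=(E,\varepsilon)$ we take $\sigma(a)=(E,10^{-18}\min\{\varepsilon,1\}^4)$ and the homomorphisms $\rho_a$ of \Cref{thm:main}; these satisfy the identities (QK1)--(QK3). Second, since $C^*_{\max}(G)$ also has the lifting property, we apply \Cref{prop:QKQD-unitary-transfer}(ii). This composes the u.c.p. inverse-system equivalence with the unitary--u.c.p. comparisons of \Cref{prop:LP-unitary-ucp-comparison}, using \Cref{lem:inverse-system-equivalence-composition}. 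The result is quantitative QR-quasidiagonality in the unitary-valued sense of \Cref{def:quantitative-QR-unitary}.

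Third, the Haagerup property and \Cref{thm:HK-proper-algebra} supply the proper algebra and the Dirac--dual-Dirac classes $\beta,\alpha$. With these, \Cref{thm:QKQD-Haagerup} applies directly. It gives the isomorphism $\chi:\mathrm{QR}(G)\to\varprojlim_d K_G^0(P_d(G))$, together with the detection statement (i) and the realization statement (ii). Alternatively, one could get the qualitative isomorphism alone from \Cref{prop:QKQD-unitary-transfer}(i) and \Cref{thm:qual}. The quantitative route is needed anyway for the finite-control claims.

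The only point needing care is the transfer step, where the index sets have to match. \Cref{def:quantitative-KQD} is indexed by the symmetric cofinal subset $\mathcal C_G^{\mathrm{sym}}$, while \Cref{def:quantitative-QR-unitary} is indexed by all of $\mathcal C_G$. I would handle this by defining $\rho_c$ for an arbitrary control through the first cofinal symmetric control above it, followed by bonding maps, exactly as at the end of the proof of \Cref{prop:LP-unitary-ucp-comparison}. One must then check that (QR3) survives this extension. This follows because all the component maps commute with the bonding homomorphisms.
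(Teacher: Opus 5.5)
Your proposal is correct and follows the paper's proof. \Cref{cor:group} gives quantitative u.c.p.\ QR-quasidiagonality, the lifting-property comparison of \Cref{prop:LP-unitary-ucp-comparison} (packaged in \Cref{prop:QKQD-unitary-transfer}(ii)) transfers it to the unitary-valued setting, and the Haagerup property supplies the Dirac--dual-Dirac classes needed to apply \Cref{thm:QKQD-Haagerup}; the index-set mismatch you flag is already absorbed by the order-preserving maps $\lambda,\pi$ of \Cref{prop:LP-unitary-ucp-comparison}, since setting $\rho_c=P^{\mathrm{QR}}_c\,\rho_{\pi(c)}\,L^{\mathrm{FQR}}_{\sigma(\pi(c))}$ on $\mathcal F_{\lambda(\sigma(\pi(c)))}$ yields (QR1)--(QR3) directly from (QK1)--(QK3) and the naturality and $\iota$-compatibility of $L$ and $P$, with no separate cofinal extension needed.
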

Here $P_d(G)$ uses the proper right-invariant metric fixed in
Section~\ref{sec:preliminaries}. The target is the ordinary inverse
limit of the finite-scale equivariant $K$-theory groups.

\begin{proof}[Proof of Corollary~\ref{cor:character}]
Corollary~\ref{cor:group} supplies the three quantitative u.c.p.
comparison identities. Under LP, the controlled lifting and polar
correction maps of \Cref{prop:LP-unitary-ucp-comparison,cor:LP-unitary-ucp-limits}
transfer this comparison to unitary-valued quasi-representations,
compatibly with the inclusion of finite-dimensional cycles.
The Haagerup hypothesis supplies the proper Dirac--dual-Dirac classes
used by the character comparison. Thus all hypotheses of
\Cref{thm:QKQD-Haagerup} hold, giving the stated character isomorphism
and the finite-control conclusions below.
\end{proof}

\paragraph{Road map.}
We first record the operator estimates and the absorption theorem used
in the proof. Next we turn a Fredholm cycle into a relative projection
in a fixed quasidiagonal representation and compress it. The same
construction over $C([0,1])$, with an endpoint-preserving normalization,
proves a finite-dimensional vanishing assertion. Finally, existence and
vanishing together define the homomorphisms $\rho_a$ without making
canonical choices of compression projections or absorption unitaries.
This use of absorption and stable uniqueness is related to
Dadarlat--Eilers's asymptotic-unitary-equivalence methods
\cite{dadarlat_eilers_2001}. The relative-index construction uses the method of
Section~\ref{sec:qd}; the finite-control and endpoint arguments for the global u.c.p.
comparison are included below. Figure~\ref{fig:cycles} records the successive
models of the same controlled class.

\begin{figure}[htbp]
\centering
\begin{tikzcd}[column sep=5.2em,row sep=3.2em]
 (\Phi,T) \arrow[r,"\text{normalize}"] &
 (\Phi,B) \arrow[r,"\text{relative index}"] &
 \RR(p_0,e_0;\Phi) \arrow[d,"\text{dilate}"] \\
 {[\Phi_+^{\mathrm{fd}}]-[\Phi_-^{\mathrm{fd}}]} &
 \RR(p,e;\Pi) \arrow[l,"\text{compress}"'] &
 \RR(\widehat p,\widehat e;\rho) \arrow[l,"\text{absorb}"']
\end{tikzcd}
\caption{The relative projection retains the index. Compression leaves
 a finite-dimensional representative and an exactly degenerate tail.
 The same construction on an interval retains the endpoint classes.}
\label{fig:cycles}
\end{figure}

\subsection{Proof of the quantitative comparison}\label{subsec:QR-comparison-proof}

\subsubsection{Completely positive interpolation and compression}
For u.c.p. maps $L_0,L_1$ and $L_t=(1-t)L_0+tL_1$, direct multiplication
gives
\begin{align}\label{eq:convex}
 L_t(xy)-L_t(x)L_t(y)
 ={}&(1-t)\bigl(L_0(xy)-L_0(x)L_0(y)\bigr)\notag\\
 &+t\bigl(L_1(xy)-L_1(x)L_1(y)\bigr)\notag\\
 &+t(1-t)(L_0(x)-L_1(x))(L_0(y)-L_1(y)).
\end{align}
In particular, if the maps are within $v$ on $X$, then
\begin{equation}\label{eq:convex-bound}
 \defect_X(L_t)\leq
 \max\{\defect_X(L_0),\defect_X(L_1)\}+v^2/4.
\end{equation}
If instead a u.c.p. map changes by at most $v$ on $X\cup X^2$, its
defect increases by at most $3v$.

For a projection $p$, compression $L_p(x)=pL(x)p|_{p\HH}$ is unital
on its range. The identity
\[
 L_p(xy)-L_p(x)L_p(y)
 =p\bigl(L(xy)-L(x)L(y)\bigr)p+pL(x)(1-p)L(y)p
\]
gives, when $\max_{x\in X}\|[p,L(x)]\|\leq\eta$,
\begin{equation}\label{eq:compression-bound}
 \defect_X(L_p)\leq\defect_X(L)+\eta^2.
\end{equation}
We will sometimes use the weaker bound $\defect_X(L)+\eta$ when
$\eta\leq1$. Every estimate remains valid on Hilbert modules.

\subsubsection{Spectral corrections and compact families}\label{subsec:QR-spectral}
We use the following elementary spectral estimate. If $p$ is a
projection, $a=a^*$, and $\|a-p\|<\eta<1/4$, then
\[
 p_s=\mathbf1_{(1/2,\infty)}((1-s)p+sa),\qquad 0\leq s\leq1,
\]
is a continuous projection path with $\|p_s-p\|\leq4\eta$.
Indeed, apply the resolvent formula on the circle $|z-1|=1/2$;
the two resolvent norms are at most $4$ and $2$. If $a-p$ is compact,
then $p_s-p$ is compact, by the same functional calculus in the
quotient. The argument is uniform in a compact parameter.

Choose a faithful unital quasidiagonal representation
$\pi_0:A\to B(H_0)$. Replacing it by
\[
 \pi=\bigoplus_{k=1}^{\infty}\pi_0
\]
gives a faithful essential representation on $H$, meaning
$\pi(A)\cap\K(H)=0$. It is still quasidiagonal: if $r_n$ are
quasidiagonal finite-rank projections for $\pi_0$, use $r_n$ on each
of the first $n$ copies and zero on the others. Denote the resulting
projections by $q_n$. They satisfy
\begin{equation}\label{eq:QD}
 q_n\longrightarrow1\ \text{strongly},\qquad
 \|[q_n,\pi(x)]\|\longrightarrow0\quad(x\in A).
\end{equation}
Put
\[
 \Pi=\pi\oplus\pi,\qquad
 e=\diag(0,1),\qquad Q_n=q_n\oplus q_n.
\]
If $\mathcal C\subset\K(H\oplus H)$ is norm compact, then
\begin{equation}\label{eq:uniform-tail}
 \sup_{k\in\mathcal C}
  \bigl(\|(1-Q_n)k\|+\|k(1-Q_n)\|\bigr)\longrightarrow0.
\end{equation}
For one compact operator this follows by finite-rank approximation;
a finite norm net proves the assertion for $\mathcal C$.

\begin{lemma}[Including a finite endpoint]\label{lem:cutoff}
Let $R$ be a finite-rank projection commuting with $e$. The projections
$Q_n$ above can be replaced by finite-rank projections $Q'_n$ such that
\[
 R\leq Q'_n,\qquad [Q'_n,e]=0,\qquad \|Q'_n-Q_n\|\longrightarrow0.
\]
They retain \eqref{eq:QD} and \eqref{eq:uniform-tail}.
\end{lemma}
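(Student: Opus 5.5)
The plan is to follow the justification sketched in the proof of \eqref{eq:fdap-contain-endpoints}: first make $Q_n$ almost contain the range of $R$, and then rotate it by a unitary close to $1$ so that the containment becomes exact. Everything is done separately in the two grading summands, so that commutation with $e$ is preserved. Because $R$ commutes with $e=\diag(0,1)$, we can write $R=R_1\oplus R_2$ with finite-rank projections $R_i$ on $H$, and it is enough to treat a single finite-rank projection $R_i$ together with the sequence $q_n$ on $H$. The resulting projections $Q'_n=q'_n\oplus q''_n$ are then block diagonal, and they commute with $e$ automatically.

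Fix $i$ and write $r=R_i$. Since $q_n\to1$ strongly and $r$ has finite rank, we get $\|(1-q_n)r\|\to0$. Set $x_n=q_nr+(1-q_n)(1-r)$. Then $\|x_n-1\|\le\|(1-q_n)r\|+\|q_n(1-r)\|$, but the second term need not be small, so I would use the standard construction for two nearby projections instead. Let $p_n$ be the range projection of $q_nr$, which for large $n$ is a subprojection of $q_n$ close to $r$. From $\|p_n-r\|\to0$, the operator $y_n=p_nr+(1-p_n)(1-r)$ satisfies $\|y_n-1\|\to0$, and its polar part $w_n=y_n|y_n|^{-1}$ (by \Cref{lem:polar-correction}) is a unitary with $w_nrw_n^*=p_n$ and $\|w_n-1\|\to0$. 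Next put $q'_n=w_n^*q_nw_n$. This is a finite-rank projection, and since $p_n\le q_n$ we obtain $r=w_n^*p_nw_n\le q'_n$, together with $\|q'_n-q_n\|\le2\|w_n-1\|\to0$. The main technical point is the estimate $\|p_n-r\|\to0$. To get it I would argue that $rq_nr\to r$ in norm (finite rank), so the restriction of $q_nr$ to $rH$ is almost isometric. Its range projection $p_n$ is then close to $(q_nr)(rq_nr)^{-1}(q_nr)^*$, and that operator converges to $r$ because $\|q_nr-r\|\to0$.

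Finally, the required properties transfer through norm perturbation. For \eqref{eq:QD} we have $\|[q'_n,\pi(x)]\|\le\|[q_n,\pi(x)]\|+2\|q'_n-q_n\|\to0$, and $q'_n\to1$ strongly because $q_n\to1$ strongly and $\|q'_n-q_n\|\to0$. For \eqref{eq:uniform-tail}, each term changes by at most $\|Q'_n-Q_n\|\sup_{k\in\mathcal C}\|k\|$, and this tends to $0$ since $\mathcal C$ is norm compact and hence bounded. No monotonicity of the sequence $Q'_n$ is claimed or needed.
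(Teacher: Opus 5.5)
Your proposal is correct and is essentially the paper's argument. The paper likewise takes the range projection $R_n=v_nv_n^*\leq Q_n$ of $Q_nR$, with $v_n=Q_nR(RQ_nR)^{-1/2}$; this is exactly your $p_n$, and it equals $(q_nr)(rq_nr)^{-1}(q_nr)^*$ rather than merely being close to it. It then rotates $R_n$ onto $R$ by the polar unitary of $RR_n+(1-R)(1-R_n)$, the adjoint of your $y_n$, and conjugates $Q_n$; the only difference is that the paper works on $H\oplus H$ at once and notes that everything commutes with $e$, while you split into the two grading summands, which is equivalent.
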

\begin{proof}
For large $n$, $RQ_nR$ is invertible on $R(H\oplus H)$. The isometry
\[
 v_n=Q_nR(RQ_nR)^{-1/2}:R(H\oplus H)\longrightarrow Q_n(H\oplus H)
\]
converges in norm to the inclusion. Thus $R_n=v_nv_n^*\leq Q_n$
converges to $R$. These projections commute with $e$.
For close projections, the polar unitary of
$RR_n+(1-R)(1-R_n)$ sends $R_n$ to $R$, commutes with $e$, and
converges to $1$. Conjugating $Q_n$ by that unitary proves the claim.
The perturbation tends to zero, which preserves both asserted limits.
Monotonicity of the perturbed cutoffs is not needed.
\end{proof}

\subsubsection{A stable relation as one homotopy}

\begin{lemma}[Single-homotopy form]\label{lem:single-homotopy}
If cycles $z_0,z_1$ at control $a$ are stably homotopic, there are
degenerate cycles $D_0,D_1$ at the same control and one Hilbert
$C([0,1])$-module homotopy from $z_0\oplus D_0$ to $z_1\oplus D_1$.
In particular, a zero class has such a homotopy to a degenerate cycle.
The tolerance need not be relaxed.
\end{lemma}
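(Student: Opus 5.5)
The plan is to unfold the definition of stable homotopy as a finite chain and then concatenate. By definition (the conventions of \Cref{def:FQR-cycle} and \Cref{def:global-ucp-controls}), stable homotopy is the equivalence relation generated by three kinds of elementary moves: Hilbert $C([0,1])$-module homotopy, unitary equivalence, and addition or removal of degenerate cycles. So $z_0$ and $z_1$ are joined by a finite chain $z_0=w_0,w_1,\dots,w_m=z_1$ in which consecutive terms are related by one such move, in one direction or the other. I would prove, by induction on $m$, that there are degenerate $D_0,D_m$ at control $a$ and one interval homotopy from $w_0\oplus D_0$ to $w_m\oplus D_m$. All of this happens at the fixed control $a$, since no step relaxes the tolerance.

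The moves are handled one at a time, by carrying along a growing degenerate summand.
\begin{itemize}
\item A homotopy $w_i\simeq w_{i+1}$ is tensored with the constant degenerate summand already accumulated. The direct sum of a homotopy with a constant cycle is again a homotopy, and its multiplication defect is the maximum of the two, so it stays below the strict bound.
\item A unitary equivalence $U$ is converted into a homotopy after adding a degenerate copy. On $w_i\oplus w_{i+1}^{\mathrm{?}}$ this is the usual rotation trick; more simply, the mapping-cylinder module $\{\xi\in C([0,1],\mathcal E\oplus\mathcal E'):\xi(0)\in\mathcal E\oplus0,\ \xi(1)\in0\oplus\mathcal E'\}$ is glued along $U$ and carries the constant data transported by $U$. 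Its endpoint restrictions identify canonically with $w_i$ and $w_{i+1}$. Since unitary conjugation preserves every norm, the defect is unchanged.
\item Adding a degenerate cycle $D$ simply enlarges the accumulated degenerate summand at the appropriate endpoint. Removing one enlarges it at the other endpoint: the move $w\oplus D\to w$ is read as $w\oplus D$ on one side and $w\oplus0$ on the other, and the remaining $D$ is absorbed into $D_0$ or $D_m$ by re-bracketing the direct sums with the block-permutation unitary.
\end{itemize}

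The consecutive interval homotopies are then concatenated by reparametrizing $[0,1]$ into $m$ subintervals. The resulting object is a single Hilbert $C([0,1])$-module, namely the fibered gluing of the modules over the subintervals along the matching endpoint identifications, with the operators and u.c.p. maps glued continuously. The defect bound holds in supremum norm because it holds on each piece and the bound is strict with finitely many pieces. The final assertion is the special case $z_1=0$ with $D_1$ absorbed, since a cycle with zero class is stably homotopic to a degenerate one.

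The main obstacle is making the unitary-equivalence and concatenation steps rigorous as one $C([0,1])$-module homotopy with compactness preserved. I would first try the clutching (pullback) construction $\mathcal E_1\times_{\mathcal E|_{1}}\mathcal E_2$ over $C([0,1/2])\oplus_{\mathbb C}C([1/2,1])$, checking that the Fredholm conditions, and the compact commutators in the u.c.p. case, still hold. This should follow because compactness is checked fiberwise over each closed subinterval and the two pieces agree at the gluing point.
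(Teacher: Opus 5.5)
Your overall structure matches the paper's proof. Inducting along a finite chain of elementary moves while carrying an accumulated degenerate summand is the same argument as the paper's, which shows that the relation ``one homotopy after adding degenerate cycles at the two ends'' is an equivalence relation containing homotopy, unitary equivalence and addition of degenerate cycles. Your concatenation by gluing along endpoint fibres is also the paper's.

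\emph{The unitary-equivalence step fails as written.} Take the module $M=\{\xi\in C([0,1],\mathcal E\oplus\mathcal E'):\xi(0)\in\mathcal E\oplus0,\ \xi(1)\in0\oplus\mathcal E'\}$ with constant data $(\phi\oplus\phi',T\oplus T')$.
\begin{itemize}
\item Its first summand is $C_0([0,1))\otimes\mathcal E$, whose compact module operators are $C_0([0,1),\mathcal K(\mathcal E))$.
\item The constant defect $T^2-1$, and likewise each $[T,\phi(x)]$, does not tend to $0$ as $t\to1$. So it is not in $\mathcal K(M)$ unless it vanishes, and the same happens for $T'$ as $t\to0$. Hence $M$ is not a cycle.
\item The unitary $U$ enters nowhere in this construction. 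If it did produce a cycle, it would make any two cycles homotopic.
\end{itemize}
The ``rotation trick'' you mention is neither specified nor needed.

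\emph{The repair is easy and is what the paper does.} Either of the following works.
\begin{itemize}
\item Take the constant homotopy on $C([0,1],\mathcal E)$ with data $(\phi,T)$, and use $U$ itself to identify its fibre at $1$ with $z'$. This is exactly the kind of endpoint unitary your gluing step already allows.
\item Identify $\mathcal E'$ with $\mathcal E$ and join the even unitary $U=U^+\oplus U^-$ to $1$ by a norm-continuous path $U_s$. The paper cites Kuiper for this, but connectedness of the unitary group suffices, e.g.\ $U^\pm=e^{iH_\pm}$ by Borel functional calculus. Then conjugate $(\phi,T)$ along $U_s$.
\end{itemize}
In the second option, $s\mapsto U_s(T^2-1)U_s^*$ and $s\mapsto U_s[T,\phi(x)]U_s^*$ are norm-continuous families of compacts. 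They therefore lie in $C([0,1],\mathcal K(\mathcal E))=\mathcal K(C([0,1],\mathcal E))$, and every defect norm is unchanged, so the tolerance is not relaxed.

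A smaller point: compactness of a $C([0,1])$-module operator is not a fibrewise condition. What your gluing step actually needs is that the compacts of the glued module form the pullback of the compacts of the two pieces over the compacts of the common fibre. The paper justifies this by finite-rank approximation on each closed subinterval together with a partition of unity near the joint.
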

\begin{proof}
An even unitary equivalence can be included in a homotopy. After
identifying its source and target graded Hilbert spaces, join the
implementing unitary $U=U^+\oplus U^-$ to $1$ in norm. The unitary
group is connected in finite dimension, and Kuiper's theorem gives
contractibility in each infinite-dimensional summand \cite{Kuiper}.
Conjugating both $\phi$ and $T$ along this path preserves their defect
norms and compactness exactly. One can instead use the intertwining
unitary directly as the identification at a concatenation endpoint.

Two module homotopies with identified endpoint fibres can be
concatenated. After rescaling to $[0,1/2]$ and $[1/2,1]$, take the
Hilbert module of pairs of sections whose values at $1/2$ agree under
the endpoint unitary. Inner products glue to a continuous function,
and the given operators and maps act componentwise. The compact
operator fields glue as well: finite-rank approximation on the two
closed subintervals, followed by a partition of unity near the joint,
gives compact module operators on the glued module. Equivalently,
compact operators form the corresponding pullback over the compact
operators on the common fibre. The module is separable and hence
countably generated. The norm of each glued defect is the maximum
of its two norms and is still less than the prescribed tolerance.

Define a relation by the existence of one homotopy after adding
possibly different degenerate cycles at the two ends. It is reflexive
and symmetric. It is transitive: if the first homotopy ends at
$z_1\oplus D_1$ and the second starts at $z_1\oplus D'_1$, add
$D'_1$ to the first homotopy and $D_1$ to the second, permute the two
degenerate summands, and concatenate. This relation contains unitary
equivalence, homotopy, and addition of a degenerate cycle, so it
contains the stable-homotopy relation. The reverse inclusion follows
from its definition. Taking $z_1=0$ gives the last assertion.
\end{proof}

\subsubsection{Absorption over the homotopy parameter algebra}
The following classical form of absorption is the only representation
theorem used in our finite-dimensional reduction.

\begin{proposition}[Kasparov absorption]\label{prop:absorption}
Let $A$ be separable and unital, let $D$ be a nonzero $\sigma$-unital
nuclear $C^*$-algebra, and let $\pi:A\to B(H)$ be a faithful unital
essential representation of infinite multiplicity. Put
$H_D=H\otimes D$ and $\pi_D=\pi\otimes1_{M(D)}$. Then
\[
 \pi_D(A)\cap\K_D(H_D)=\{0\}.
\]
If $\rho:A\to\LL_D(\mathcal E)$ is a unital representation on a
countably generated Hilbert $D$-module, then, for every finite
$Y\subset A$ and $v>0$, there is a $D$-linear unitary
\[
 U:\mathcal E\oplus H_D\longrightarrow H_D
\]
such that
\begin{equation}\label{eq:absorb}
 \max_{x\in Y}
 \|U(\rho(x)\oplus\pi_D(x))U^*-\pi_D(x)\|<v.
\end{equation}
For a graded representation, applying this assertion in each grading
summand gives an even unitary into $H_D\oplus H_D$.
\end{proposition}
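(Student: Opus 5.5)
The plan is to treat the two assertions separately. The first is a slice-map argument. The second will be reduced to Kasparov's generalized Voiculescu theorem for Hilbert modules, with Kasparov stabilization (\Cref{thm:equivariant-stabilization}, nonequivariant case) used to identify the underlying modules.

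\emph{Essentiality.} Suppose $\pi_D(x)\in\K_D(H_D)=\K(H)\otimes D$. Fix a state $\omega$ of $D$ and some $d\in D$ with $\omega(d)\neq0$; such $d$ exists because $D\neq0$. The operator $\pi_D(x)(1\otimes d)=\pi(x)\otimes d$ lies in $\K(H)\otimes D$. The slice map $\mathrm{id}\otimes\omega:\K(H)\otimes D\to\K(H)$ is completely positive and contractive. It sends $\pi(x)\otimes d$ to $\omega(d)\pi(x)$, so $\pi(x)$ is compact. Since $\pi$ is essential, $\pi(x)=0$, and faithfulness of $\pi$ then gives $x=0$.

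\emph{Absorption.} This is where I would rely on Kasparov's Hilbert-module Voiculescu theorem (Kasparov, \emph{Hilbert $C^*$-modules: theorems of Stinespring and Voiculescu}). The setting is a separable unital $A$, a faithful essential unital representation $\pi$ of infinite multiplicity, and a $\sigma$-unital coefficient algebra $D$, with nuclearity of one of $A$ or $D$. In that setting, for every unital representation $\rho$ on a countably generated module, $\rho\oplus\pi_D$ is approximately unitarily equivalent to $\pi_D$, with the difference compact on each element. Concretely, I would proceed in three steps.
\begin{enumerate}
\item By stabilization, identify $\mathcal E\oplus H_D$ with $H_D$ as Hilbert $D$-modules. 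This reduces the claim to constructing a unitary $U$ in $\LL_D(H_D)$ such that $U(\rho\oplus\pi_D)U^*$ is close to $\pi_D$ on $Y$.
\item Following Kasparov's scheme, approximate the u.c.p. map $\rho$ on the finite set $Y$ by compressions $V_k^*\pi_D(\cdot)V_k$ along isometries $V_k$ with asymptotically orthogonal ranges. Building these isometries is where both the infinite multiplicity and the essentiality of $\pi_D$ are used. Essentiality of $\pi_D$ is exactly the first assertion proved above.
\item Assemble the $V_k$ into a $D$-linear isometry and complete it to the unitary $U$. The Voiculescu-type norm estimates give \eqref{eq:absorb} on $Y$ with error less than $v$.
\end{enumerate}
In the graded case, apply the ungraded statement separately to $\mathcal E^{\pm}$ and take the direct sum of the two unitaries. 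The resulting unitary into $H_D\oplus H_D$ is even by construction.

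The main obstacle is step 2. For a nonnuclear $A$ and a general coefficient algebra $D$, absorption can fail, so the nuclearity of $D$ must enter precisely there. It is needed to approximate the relevant completely positive maps through the algebraic tensor product and to obtain the quasicentral approximate units on $H\otimes D$. I would first check that Kasparov's original hypotheses (``$A$ or $D$ nuclear'') cover our situation verbatim, using that $D=\C$ or $D=C([0,1])$ in all applications. If some hypothesis did not match, my fallback would be to the Dadarlat--Eilers nuclear absorption \cite[Proposition~2.18]{dadarlat_eilers_2002}, using that every map into $\LL_D$ with $D$ commutative is weakly nuclear in their sense.
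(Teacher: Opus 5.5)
Your proposal is correct and follows essentially the paper's route. You prove the compact-intersection property by a slice map; you multiply on one side by $1\otimes d$ and use a state with $\omega(d)\neq0$, where the paper multiplies by $d^*$ and $d$ on the two sides. You then get absorption from Kasparov's generalized Voiculescu theorem, whose nuclearity hypothesis is met through $D$ and the essentiality of $\pi$, combined with stabilization and a summand-wise treatment of the grading. The one step the paper spells out and your Step~1 glosses over is this: applying the standard-module theorem to $W(\rho\oplus\pi_D)W^*$ yields absorption of $\rho\oplus\pi_D\oplus\pi_D$, and it is infinite multiplicity ($\pi_D\oplus\pi_D\cong\pi_D$) that removes the extra copy. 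Once the theorem is cited, your sketch of Kasparov's isometry construction in Steps~2--3 is not needed.
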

\begin{proof}
We use Kasparov's scalar-amplification theorem
\cite[Theorem~6, p.~149]{Kas80}. Its hypotheses are that $A$ is
separable and unital, that $D$ has a countable approximate identity,
and that completely positive maps
$A/(A\cap\K(H))\to M_n(D)$ are nuclear. Here $A\cap\K(H)=0$ by
essentiality, and every completely positive map into $M_n(D)$ is
nuclear because $D$ is nuclear. Thus the theorem applies without
nuclearity of $A$. It gives approximate unitary absorption on the
standard module.

For the stated compact-intersection property, if
$\pi(x)\otimes1_{M(D)}$ is compact, multiplication on the left by $1\otimes d^*$ and on the right by
$1\otimes d$, for a nonzero $d\in D$, gives
$\pi(x)\otimes d^*d\in\K(H)\otimes D$. A slice functional that is
nonzero on $d^*d$ shows that $\pi(x)$ is compact. Essentiality and
faithfulness force $x=0$.

For a general $\mathcal E$, stabilization
\cite[Theorem~2]{Kas80} gives
$W:\mathcal E\oplus H_D\to H_D$. Apply standard-module absorption
to the unital representation
$\widetilde\rho=W(\rho\oplus\pi_D)W^*$, and then compose with
$W\oplus1$. The resulting unitary absorbs
$\rho\oplus\pi_D\oplus\pi_D$ into $\pi_D$. Infinite multiplicity
identifies the two copies of $\pi_D$ with one, giving
\eqref{eq:absorb}. Applying the same construction independently to
the positive and negative summands gives the graded assertion.
\end{proof}

Only $D=\C$ and $D=C([0,1])$ are used below. The fixed representation
in Section~\ref{subsec:QR-spectral} satisfies all the hypotheses. The scalar nature of its
amplification is important; the proposition makes no absorption claim
for an arbitrary representation into $\LL_D(H_D)$.

We also use Stinespring dilation for Hilbert modules
\cite[Theorem~3]{Kas80} and quasicentral approximate units for ideals
\cite{Arv}. We always take the explicit minimal dilation: complete
$A\odot\mathcal E$, after quotienting zero-length vectors, for
\[
 \langle a\otimes\xi,b\otimes\eta\rangle
   =\langle\xi,\Phi(a^*b)\eta\rangle.
\]
Left multiplication gives $\rho$, and $V\xi=1\otimes\xi$ is an
adjointable isometry with $V^*\rho(x)V=\Phi(x)$; the span of
$\rho(A)V\mathcal E$ is dense. For $D=C([0,1])$, evaluation at $t$
commutes with this construction: the map on algebraic tensors sends
$(a\otimes\xi)|_t$ to $a\otimes\xi(t)$ and extends to a unitary onto
the minimal dilation of $\Phi_t$. A block-diagonal endpoint map
therefore has the direct sum of the dilations of its summands.
The original endpoint spaces are retained through their Stinespring
isometries. This observation will identify the projections $R_\pm$
in Section~\ref{subsec:QR-vanishing} and justify their commutator bounds.

\subsubsection{A relative index projection}

We work over $D=\C$ or $D=C([0,1])$. A compact operator in this
section means a compact Hilbert $D$-module operator.

\begin{lemma}[Relative projection cycles]\label{lem:relative}
Let $p,e\in\LL_D(\HH)$ be projections with $p-e\in\K_D(\HH)$.
Let $L:A\to\LL_D(\HH)$ be u.c.p. and commute with $e$. Then
\begin{equation}\label{eq:relative-cycle}
 \RR(p,e;L)=
 \left(p\HH\oplus e\HH,
 \begin{pmatrix}pL(\cdot)p&0\\0&eL(\cdot)e\end{pmatrix},
 \begin{pmatrix}0&pe\\ep&0\end{pmatrix}\right)
\end{equation}
has compact Fredholm and commutator defects. It is a cycle at any
control satisfied by its two compressed maps. The construction
respects norm-continuous relative projection paths and u.c.p.
interpolations commuting with $e$. If $p=e$, it is degenerate.
\end{lemma}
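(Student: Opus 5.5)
The plan is to verify the Kasparov-type conditions directly from the block form in \eqref{eq:relative-cycle}, exactly as in the proof of \Cref{lem:fdap-relative}, but now over the coefficient algebra $D$. Write $F=\begin{pmatrix}0&pe\\ep&0\end{pmatrix}$ on $p\HH\oplus e\HH$, with $p\HH$ even and $e\HH$ odd. Then $F$ is odd and self-adjoint, since $(pe)^*=ep$. This disposes of $F-F^*$.

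Next, compute the square of $F$:
\[
F^2=\diag(pep,\,epe).
\]
The unit of $p\HH\oplus e\HH$ is $\diag(p,e)$. Writing $e=p-(p-e)$ gives
\[
pep-p=-p(p-e)p,\qquad epe-e=e(p-e)e,
\]
and both of these lie in $\K_D(\HH)$ because $p-e$ is compact and compacts form an ideal in $\LL_D(\HH)$. Compressions of module-compact operators to the complemented submodules $p\HH$ and $e\HH$ remain compact, so $F^2-1$ is compact.

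For the commutator with the diagonal map $\diag(pL(x)p,\,eL(x)e)$, it suffices to treat the lower-left corner, namely
\[
ep\,pL(x)p-eL(x)e\,ep.
\]
Using $[e,L(x)]=0$, one gets $eL(x)e\,ep=eL(x)ep$. The corner therefore equals $e(p-e)L(x)p$, which is compact. The upper-right corner follows by taking adjoints and replacing $x$ by $x^*$, since the $L$ are u.c.p. and hence $*$-preserving. The two compressions $pL(\cdot)p$ and $eL(\cdot)e$ are u.c.p. on their ranges. Their multiplication defects are whatever they are, so the triple is a cycle at any control they satisfy; by \eqref{eq:compression-bound} the control can be estimated through $\|[p,L(x)]\|$.

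For the path assertions, I would run the same computation over $C([0,1])\otimes D$. One point needs care: for a norm-continuous family $p_s$, the modules $p_s\HH$ vary with $s$. To handle this, I would realize the family as a single Hilbert module of sections, or, more simply, conjugate by the standard unitary path that intertwines nearby projections. That unitary is a compact perturbation of $1$ whenever $p_s-p_{s'}$ is compact, so the family becomes a cycle on a fixed module, with compactness of all defects holding uniformly in the module norm.

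This identification of the varying ranges is the only nonroutine step, and it is where I expect the main obstacle. The first tool I would try is the polar unitary of $p_sp_{s'}+(1-p_s)(1-p_{s'})$, as in the proof of \Cref{lem:cutoff}. Interpolations of $L$ that commute with $e$ fix both ranges, so they need nothing further.

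Finally, if $p=e$, then $F=\begin{pmatrix}0&e\\e&0\end{pmatrix}$ is a self-adjoint unitary on $e\HH\oplus e\HH$. It commutes exactly with $\diag(eLe,\,eLe)$, so the cycle is degenerate.
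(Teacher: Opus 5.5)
Your proposal is correct and follows the paper's proof. It uses the same computation of $F^2-1$, the same identity $ep\,pL(x)p-eL(x)e\,ep=e(p-e)L(x)p$ for the lower-left commutator (with adjoints for the other corner), and the same check when $p=e$; families are handled, as in the paper, by running the identical computation over the parameter coefficient algebra, so the polar-unitary identification of varying ranges is not needed.

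One small slip: $eL(x)e\,ep=eL(x)ep$ holds simply because $e^2=e$. The commutation $[e,L(x)]=0$ is what is needed in the next step, to rewrite $eL(x)ep$ as $eL(x)p$ and so obtain $e(p-e)L(x)p$.
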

\begin{proof}
The square of the displayed odd operator differs from the identity on
$p\HH\oplus e\HH$ by compact operators. For the lower left commutator,
\[
 ep\,pL(x)p-eL(x)e\,ep=e(p-e)L(x)p\in\K_D(\HH).
\]
The other corner follows by adjoints and replacement of $x$ by $x^*$.
This works for every $x\in A$. Compression is u.c.p. on its own range,
and \eqref{eq:compression-bound} gives the multiplicativity bounds.
The same identities prove the assertions about families and degeneracy.
\end{proof}

The lemma explains why norm-small changes of $L$ will be legitimate:
compactness of the cycle defects follows from the exact relation
$p-e\in\K$ and is not inferred from their being small in norm.

Fix an output control $a=(X,\varepsilon)$ and put
\begin{equation}\label{eq:budget}
 r=10^{-3}\min\{\varepsilon,1\},\qquad
 \delta=r^2=\theta(\varepsilon),\qquad Y=X\cup X^2.
\end{equation}
Take a cycle $(\Phi,T)$ with $\defect_X(\Phi)<\delta$.
The auxiliary set $Y=X\cup X^2$ is used for small commutators and
absorption accuracy, which can be arranged after the cycle is chosen.
The only source multiplicativity tests remain those on $X$. In
particular, the Stinespring estimate uses the pairs $(x^*,x)$ already
tested because $X^*=X$. This is why the control changes leave $X$
unchanged.

\subsubsection{Making commutators small without changing the index}
Replace $T$ first by $(T+T^*)/2$, then apply the clipping function to
$[-1,1]$. Both changes are compact and give Fredholm homotopies.
We have an odd self-adjoint contraction $F$ with $F^2-1$ compact.
These normalizations fix $0$ and self-adjoint unitaries.

Choose an even quasicentral positive compact contraction $h$ and set
\[
 B=(1-h)F(1-h).
\]
The change $B-F$ is compact. For $x\in Y$,
\begin{equation}\label{eq:qca-estimate}
 \|[B,\Phi(x)]\|
 \leq2\|F\|\,\|[h,\Phi(x)]\|
       +\|(1-h)[F,\Phi(x)](1-h)\|.
\end{equation}
Both terms tend to zero along a quasicentral approximate unit.
We may therefore arrange
\begin{equation}\label{eq:smallcomm}
 \|[B,\Phi(x)]\|<r,\qquad
 \|[(1-B^2)^{1/2},\Phi(x)]\|<r\quad(x\in Y).
\end{equation}
For the second inequality, approximate $f(t)=\sqrt{1-t^2}$ uniformly
on $[-1,1]$ by a polynomial $p(t)=\sum c_jt^j$ and use
\[
 \|[f(B),b]\|\leq2\|f-p\|_\infty
        +\Bigl(\sum j|c_j|\Bigr)\|[B,b]\|\quad(\|b\|\leq1).
\]
First choose $p$, then make the first commutator sufficiently small.
No operator-Lipschitz property of the square root is needed.

\subsubsection{The projection and its class}
With respect to the grading, write
\[
 B=\begin{pmatrix}0&S^*\\S&0\end{pmatrix},\quad
 d_+=(1-S^*S)^{1/2},\quad d_-=(1-SS^*)^{1/2},\quad
 e_0=\begin{pmatrix}0&0\\0&1\end{pmatrix}.
\]
Both $d_+$ and $d_-$ are compact. Since $Sd_+=d_-S$, the matrix
\[
 W=\begin{pmatrix}d_+&-S^*\\S&d_-\end{pmatrix}
\]
is unitary. Define
\begin{equation}\label{eq:indexprojection}
 p_0=W(1-e_0)W^*
    =\begin{pmatrix}d_+^2&d_+S^*\\Sd_+&SS^*\end{pmatrix}.
\end{equation}
Then $p_0-e_0$ is compact. Writing $W=(1-B^2)^{1/2}-\Gamma B$,
where $\Gamma$ is the grading, gives
\begin{equation}\label{eq:p0comm}
 \|[p_0,\Phi(x)]\|<4r\quad(x\in Y).
\end{equation}

The relative cycle $\RR(p_0,e_0;\Phi)$ has the same class as
$(\Phi,B)$ at the output control. To check this, the isometry
\[
 V_+:\HH^+\longrightarrow p_0\HH,\qquad
 V_+\xi=(d_+\xi,S\xi)
\]
is onto. Under $V_+$ and $e_0\HH=\HH^-$, the relative Fredholm
operator becomes exactly $B$. Its positive u.c.p. map becomes
\[
 \widetilde\Phi^+(x)
   =d_+\Phi^+(x)d_++S^*\Phi^-(x)S.
\]
The difference from $\Phi^+(x)$ is
\begin{equation}\label{eq:positive-difference}
 d_+[\Phi^+(x),d_+]
   +S^*(\Phi^-(x)S-S\Phi^+(x)).
\end{equation}
It is compact for every $x\in A$ and has norm less than $2r$ on $Y$.
The convex interpolation of the positive maps preserves the compact
commutators with $B$. Its defect is less than $\delta+6r<7r$,
which is below $\varepsilon$.

This construction retains the Fredholm index. For a finite-dimensional
cycle with $B=0$, the projections are $p_0=\diag(1,0)$ and
$e_0=\diag(0,1)$, and their rank difference is
$\dim\HH^+-\dim\HH^-$. For a degenerate cycle, $p_0=e_0$.

\subsubsection{Finite-dimensional existence}

\begin{proposition}[Existence at a prescribed control]\label{prop:existence}
For every control $a$,
\begin{equation}\label{eq:existence}
 r^F_{\tau(a),a}(\FF_{\tau(a)})\subseteq\iota_a(\QQ_a).
\end{equation}
\end{proposition}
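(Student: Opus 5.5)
We follow the chain of models in Figure~\ref{fig:cycles}, reusing the construction of Section~\ref{sec:qd} but now with only quasidiagonality (no nuclearity) and with the explicit budget \eqref{eq:budget}. Start from a cycle $(\Phi,T)$ representing a class in $\FF_{\tau(a)}$, so $\defect_X(\Phi)<\delta=r^2$. By the normalization and quasicentral cutoff just performed, replace $T$ by $B$ satisfying \eqref{eq:smallcomm}. Then pass to the relative cycle $\RR(p_0,e_0;\Phi)$, which by the preceding computation has the same class at control $a$, with defect below $7r$ along the interpolation and with \eqref{eq:p0comm}.

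Next we dilate. Take the minimal Stinespring dilation $\Phi=V^*\rho V$ and set $\widehat p=Vp_0V^*+e(1-VV^*)$, where $e$ is the negative Stinespring summand. The Stinespring identity, applied to the tested pairs $(x^*,x)$ with $X^*=X$, gives $\|(1-P)\rho(x)P\|<r$. The block-diagonal interpolation from $\rho^{\mathrm{bd}}$ to $\rho$ then identifies $\RR(p_0,e_0;\Phi)\oplus(\text{degenerate})$ with $\RR(\widehat p,e;\rho)$, with commutator below $6r$. Lemma~\ref{lem:relative} keeps every compact defect exact, so only the norm bounds need tracking.

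Then we absorb. Apply Proposition~\ref{prop:absorption} with $D=\C$ in each grading summand, after adding the degenerate pair $\RR(e,e;\Pi)$, to move into the fixed representation $\Pi=\pi\oplus\pi$ at accuracy $r$ on $Y$. This works because $\pi$ is faithful, essential and of infinite multiplicity, and we do not need nuclearity of $A$. A convex interpolation of representations then yields $\RR(p,e;\Pi)$ with $p-e$ compact and $\max_{x\in X}\|[p,\Pi(x)]\|<8r$. Finally, compress using \eqref{eq:QD} and \eqref{eq:uniform-tail}. Choose $n$ so that $\|Q_n(p-e)Q_n+e-p\|$ is small and $\|[Q_n,\Pi(x)]\|<r$. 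The spectral estimate of Section~\ref{subsec:QR-spectral} deforms $p$ to $p_n=f_n+e(1-Q_n)$ with $f_n\le Q_n$. Delete the off-diagonal corners of $\Pi$ relative to $Q_n$ by a convex path. The tail on $(1-Q_n)$ is then exactly degenerate, and what remains is $[\Phi^{\mathrm{fd}}_+]-[\Phi^{\mathrm{fd}}_-]$ with $\Phi^{\mathrm{fd}}_+=f_n\Pi f_n$ and $\Phi^{\mathrm{fd}}_-=eQ_n\Pi eQ_n$. These are global u.c.p. maps on all of $A$, so they lie in $\QQ_a$.

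The main obstacle is bookkeeping the budget so that every intermediate defect stays strictly below $\varepsilon$. With $r=10^{-3}\min\{\varepsilon,1\}$ the successive bounds are $7r$, then $\le 9r$, then $\le 13r$, all less than $\varepsilon$. The compression bound \eqref{eq:compression-bound} and the convex bound \eqref{eq:convex-bound} handle each step. A second point is that each u.c.p. map must be global on $A$. This holds automatically, since every map is built as a compression of a representation or of $\Phi$ itself, and is where the global formulation replaces the nuclear globalization lemma of Section~\ref{sec:qd}. The cycle-dependent choices ($h$, $U$, $n$) do not affect the control, which gives \eqref{eq:existence}.
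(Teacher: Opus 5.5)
Your proposal is correct and follows the paper's proof essentially step for step: relative index projection, Stinespring dilation with the block-diagonal interpolation, absorption of the dilated representation into $\Pi=\pi\oplus\pi$ after adding a degenerate pair, and finite-rank compression leaving an exactly degenerate tail. The budget ($7r$, $9r$, $13r<\varepsilon$) and the observation that only the cycle-independent source control $\tau(a)$ is fixed in advance also match the paper. The one step you leave implicit is homotoping the Fredholm operator on the finite-dimensional summand to zero before reading off $[\Phi^{\mathrm{fd}}_+]-[\Phi^{\mathrm{fd}}_-]$; the paper states this step explicitly, and it is immediate.
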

\begin{proof}
Start with a cycle at $\tau(a)$ and apply the relative index
construction using \eqref{eq:budget}. There are three further steps.

\smallskip\noindent\emph{Step 1: dilation.}
Take the even Stinespring dilation $\Phi(x)=V^*\rho(x)V$ on
$\mathcal H$. Let $P=VV^*$, and let $\widehat e$ be the projection
onto the negative Stinespring summand. Then
\[
 [\widehat e,\rho(x)]=0,\qquad
 [\widehat e,P]=0,\qquad \widehat eV=Ve_0.
\]
Define
\begin{equation}\label{eq:liftedprojection}
 \widehat p=Vp_0V^*+\widehat e(1-P).
\end{equation}
This is a projection with
$\widehat p-\widehat e=V(p_0-e_0)V^*$ compact.
For $x\in X$, Stinespring's identity reads
\begin{equation}\label{eq:stine}
 \|(1-P)\rho(x)P\|^2
   =\|\Phi(x^*x)-\Phi(x)^*\Phi(x)\|<\delta=r^2.
\end{equation}
As $X=X^*$, both off-diagonal corners of $\rho(x)$ are bounded by $r$.

The block-diagonal map
\[
 \rho^{\mathrm{bd}}(x)=P\rho(x)P+(1-P)\rho(x)(1-P)
\]
is u.c.p., commutes with $\widehat e$, differs from $\rho$ by less
than $r$ on $X$, and has defect at most $r^2$. Moreover,
\[
 \RR(\widehat p,\widehat e;\rho^{\mathrm{bd}})
  \cong\RR(p_0,e_0;\Phi)\oplus\text{a degenerate cycle}.
\]
In the $P\oplus(1-P)$ decomposition, the diagonal block of
$[\widehat p,\rho(x)]$ is $[p_0,\Phi(x)]$ and zero; its off-diagonal
blocks are the off-diagonal blocks of $\rho(x)$ multiplied by
$p_0-e_0$. Thus
\begin{equation}\label{eq:liftcomm}
 \|[\widehat p,\rho(x)]\|<6r\quad(x\in X).
\end{equation}
Interpolate from $\rho^{\mathrm{bd}}$ to $\rho$. Equations
\eqref{eq:convex-bound} and \eqref{eq:compression-bound} bound the
defect of the relative cycles by $2r^2+6r<7r$.
Lemma~\ref{lem:relative} preserves their compact defects exactly.

\smallskip\noindent\emph{Step 2: absorption.}
Add the relative pair with equal reference projections on two copies
of $\pi$. It is degenerate. Apply Proposition~\ref{prop:absorption}
in each ambient grading, with accuracy $r$ on $Y=X\cup X^2$.
After the identifying unitaries we have $p,e=\diag(0,1)$ on
$H\oplus H$ and a representation $\rho'$ with
\[
 p-e\in\K,\qquad [e,\rho'(x)]=0,\qquad
 \|\rho'(x)-\Pi(x)\|<r\quad(x\in Y).
\]
The original relative cycle, with the added degenerate cycle, is
unitarily equivalent to $\RR(p,e;\rho')$.
Interpolation between $\rho'$ and $\Pi$ has defect at most $r^2/4$.
Also, \eqref{eq:liftcomm} gives
\begin{equation}\label{eq:fixedcomm}
 \|[p,\Pi(x)]\|<8r\quad(x\in X),
\end{equation}
and the same bound holds along this interpolation. Its relative cycles
have defect less than $9r$. We have reached $\RR(p,e;\Pi)$.

\smallskip\noindent\emph{Step 3: finite-rank compression.}
Choose $Q=Q_n$ so large that
\[
 a_Q=e+Q(p-e)Q=QpQ+e(1-Q)
\]
satisfies $\|a_Q-p\|<r/8$ and
$\max_{x\in X}\|[Q,\Pi(x)]\|<r$. Spectral correction gives a path
\[
 p_s=\mathbf1_{(1/2,\infty)}((1-s)p+sa_Q),\qquad
 p_1=f+e(1-Q),\quad f\leq Q,
\]
with $\|p_s-p\|<r$. The cycles $\RR(p_s,e;\Pi)$ have defect less
than $10r$.
Next put
\[
 \Pi^{Q}(x)=Q\Pi(x)Q+(1-Q)\Pi(x)(1-Q).
\]
This u.c.p. map has defect at most $r^2$, commutes with $e$, and is
within $r$ of $\Pi$ on $X$. The interpolation from $\Pi$ to
$\Pi^Q$ has defect at most $2r^2$ and commutator with $p_1$ less
than $12r$. Its relative cycles have defect less than $13r$.

At the endpoint, all operators split over $Q\HH\oplus(1-Q)\HH$.
The second summand has identical projection $e(1-Q)$ in both degrees,
identical compressed maps, and off-diagonal identity. It is exactly
degenerate. The first summand is finite dimensional. Homotoping its
Fredholm operator to zero leaves the class
\begin{equation}\label{eq:finiteoutput}
 \left[x\longmapsto f\Pi(x)f\big|_{f\HH}\right]
 -\left[x\longmapsto eQ\Pi(x)eQ\big|_{eQ\HH}\right]\in\QQ_a.
\end{equation}
Its image under $\iota_a$ is the original cycle restricted to $a$.
All intervening defects were less than $13r<\varepsilon$.
The cutoffs and unitaries were chosen after the cycle, but the source
control $\tau(a)$ was fixed in \eqref{eq:theta}.
\end{proof}

\subsubsection{Relative homotopies and finite-dimensional vanishing}\label{subsec:QR-vanishing}

\begin{proposition}[Vanishing at a prescribed control]\label{prop:vanishing}
For every control $a$ and every $x\in\QQ_{\tau(a)}$,
\begin{equation}\label{eq:vanishing}
 \iota_{\tau(a)}(x)=0\quad\Longrightarrow\quad
 r^Q_{\tau(a),a}(x)=0.
\end{equation}
\end{proposition}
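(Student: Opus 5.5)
The plan is to repeat the interval version of the argument already carried out in Section~\ref{sec:qd} (the proof of \Cref{thm:FD-Cuntz-approx}(ii)), now in the global u.c.p. setting with the explicit budget \eqref{eq:budget}.

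\emph{Setting up one homotopy.} Write $x=[\phi_+]-[\phi_-]$ with $\phi_\pm$ finite-dimensional global u.c.p. maps at $\tau(a)$. Let $v$ be the graded finite-dimensional cycle with $\phi_+$ in even degree, $\phi_-$ in odd degree and operator $0$; then $\iota_{\tau(a)}(x)=[v]$. If $[v]=0$, \Cref{lem:single-homotopy} gives degenerate cycles $D_0,D_1$ and one Hilbert $C([0,1])$-module homotopy $(\Phi,T)$ from $v\oplus D_0$ to $D_1$, still at tolerance $\theta(\varepsilon)$.

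\emph{Endpoint-preserving normalization and relative index.} On this module I would apply the steps of the proof of \Cref{prop:existence} with $D=C([0,1])$. The symmetrization and clipping fix $0$ and self-adjoint unitaries, so the endpoint operators are unchanged. The quasicentral unit $h$ is taken in the ideal $J$ of compacts vanishing at both endpoints. The commutators $[T,\Phi(x)]$ lie in $J$ because they vanish exactly at the endpoints: the endpoint operators are $0$ on $v$ and degenerate on $D_0,D_1$. Hence $B=(1-h)T(1-h)$ satisfies \eqref{eq:smallcomm} uniformly in $t$ and agrees with $T$ at $t=0,1$. Next I form $p_0$ by \eqref{eq:indexprojection}, dilate with the minimal Stinespring construction (which commutes with evaluation), and absorb using \Cref{prop:absorption} with $D=C([0,1])$. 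The result is $p(t)$ on $H\oplus H$ with $p(t)-e\in C([0,1],\K)$, $p(1)=e$, and
\[
p(0)=e+R_+-R_-,\qquad R_+\leq 1-e,\qquad R_-\leq e,
\]
where $R_\pm$ are the transported Stinespring images of the spaces of $\phi_\pm$. The degenerate summands contribute nothing, since for them $p_0=e_0$.

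\emph{Uniform compression.} The family $\{p(t)-e\}$ is norm compact, so \eqref{eq:uniform-tail} gives one $Q_n$ for all $t$. \Cref{lem:cutoff} then lets me assume in addition that $R_++R_-\leq Q_n$. Spectral correction \eqref{eq:fdap-finite-projection} yields a continuous finite-rank family $f(t)\leq Q_n$ with $f(0)=eQ_n+R_+-R_-$ and $f(1)=eQ_n$. It is a vector bundle over $[0,1]$, hence trivial, and this gives a finite-dimensional u.c.p. homotopy of the pairs in \eqref{eq:finiteoutput}, all with defect below $13r$. At $t=1$ the positive and negative maps coincide, so the class is $0$ in $\QQ_a$.

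\emph{Identifying the $t=0$ endpoint.} This is the main obstacle. Put $R=eQ_n-R_-$. The positive endpoint map lives on $R_+\oplus R$ and is block diagonal. The negative map lives on $R_-\oplus R$, and its off-diagonal corner has norm at most $3r$ on $X$: Stinespring's identity for $\phi_-$ gives commutator $<r$, and absorption adds at most $2r$. I delete this corner by a convex path, with defect bound $13r+9r^2/4$ from \eqref{eq:convex}. The two copies on $R$ then cancel in the Grothendieck group. What remains is compared with $\phi_\pm$ by convex paths, since under the identifying unitaries the maps are within $2r$ of $\phi_\pm$ on $X\cup X^2$. The care needed is bookkeeping: every defect must stay well below $\varepsilon=1000r$, and the commutator bounds for $R_\pm$ must genuinely come from the retained Stinespring isometries. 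Granting this, $r^Q_{\tau(a),a}(x)$ equals the class at $t=0$, which is homotopic to $0$, and this proves \eqref{eq:vanishing}.
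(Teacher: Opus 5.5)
Your proposal is correct and follows the paper's proof essentially step for step. You use one $C([0,1])$-module homotopy from \Cref{lem:single-homotopy}, the endpoint-vanishing ideal $J$ for the quasicentral cutoff, the index projection, minimal Stinespring dilation and absorption over $C([0,1])$, and a single cutoff $Q$ containing $R_++R_-$. You then identify the $t=0$ endpoint via $R=eQ-R_-$ and the $3r$ commutator bound for $R_-$ coming from the retained Stinespring isometry. Your constants ($2r$ for the comparison with $\phi_\pm$, and $13r+9r^2/4$ for the splitting homotopy) are looser than the paper's ($r$ and $16r^2+9r^2/4$), but they remain well below $\varepsilon\geq 1000r$.
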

\begin{proof}
Write $x=[\phi_+]-[\phi_-]$ and let $v$ be its finite-dimensional
graded Fredholm cycle, with operator zero. By
Lemma~\ref{lem:single-homotopy}, its zero class is represented by one
homotopy from $v\oplus D_0$ to a degenerate cycle $D_1$, at the
unchanged control $\tau(a)$. Work on its graded Hilbert
$C([0,1])$-module.

\paragraph{Keeping the endpoint operators fixed.}
The self-adjoint contraction normalization fixes both endpoint
operators: they are zero on the finite-dimensional summand and
self-adjoint unitaries on the degenerate summands. For the quasicentral
normalization use the ideal
\begin{equation}\label{eq:endpointideal}
 J=\{k\in\K_{C([0,1])}(\HH):k|_0=k|_1=0\}.
\end{equation}
All $[F,\Phi(y)]$, $y\in A$, belong to $J$: they are compact module
operators and their endpoint values vanish. The ideal is invariant
under all the adjointable operators used here. An even quasicentral
approximate unit of $J$ therefore gives \eqref{eq:smallcomm} while
leaving the endpoint operators unchanged. The polynomial argument for
$(1-B^2)^{1/2}$ works in the module norm as before.

Carry out the index projection, the minimal Stinespring dilation,
and absorption on this one module. At $t=0$, the dilation is the
direct sum of those of $\phi_+$, $\phi_-$, and the degenerate
summands, by the endpoint property of the explicit dilation described above. Proposition~\ref{prop:absorption} applies with
$D=C([0,1])$. After the final interpolation we obtain
\begin{equation}\label{eq:relativepath}
 p(t)-e\in C([0,1],\K(H\oplus H)),\qquad
 \sup_t\max_{z\in X}\|[p(t),\Pi(z)]\|<8r.
\end{equation}
The endpoints have the exact form
\begin{equation}\label{eq:endpoints}
 p(1)=e,\qquad p(0)=e+R_+-R_-,\qquad
 R_+\leq1-e,\quad R_-\leq e,
\end{equation}
where $R_+,R_-$ are the images of the two original finite-dimensional
summands. Indeed, the index projection is equal to its reference
projection on a degenerate cycle; on a finite-dimensional zero-operator
cycle its difference is the positive support minus the negative support.
Dilation and the grading-preserving absorption unitaries retain this
form exactly.

The compactness in \eqref{eq:relativepath} is norm-continuity with values
in the compact operators, not merely pointwise compactness. This follows
from $\K(H_{C([0,1])})=C([0,1],\K(H))$ and is what permits a common
finite-rank cutoff.

\paragraph{Compressing the whole homotopy.}
By \eqref{eq:uniform-tail} and Lemma~\ref{lem:cutoff}, choose one
finite-rank projection $Q$ commuting with $e$, containing $R_++R_-$,
and satisfying
\[
 \sup_t\|Q(p(t)-e)Q-(p(t)-e)\|<r/8,\qquad
 \max_{z\in X}\|[Q,\Pi(z)]\|<r.
\]
Spectral correction gives
\[
 p_Q(t)=\mathbf1_{(1/2,\infty)}\bigl(e+Q(p(t)-e)Q\bigr)
       =f(t)+e(1-Q).
\]
It is norm-continuous, $f(t)\leq Q$, and the exact endpoint identities
are
\begin{equation}\label{eq:finite-endpoints}
 f(0)=eQ+R_+-R_-,\qquad f(1)=eQ.
\end{equation}
Applying Step~3 of Proposition~\ref{prop:existence} uniformly in $t$
produces the finite-dimensional classes
\begin{equation}\label{eq:finitehomotopy}
 [\,z\mapsto f(t)\Pi(z)f(t)|_{f(t)\HH}\,]
 -[\,z\mapsto eQ\Pi(z)eQ|_{eQ\HH}\,]
\end{equation}
with defect less than $13r$.
A continuous finite-rank projection path over the interval defines a
trivial vector bundle, so \eqref{eq:finitehomotopy} is a homotopy of
matrix maps after a choice of trivialization. For completeness, subdivide
the interval until consecutive projections are less than one apart;
the polar unitaries for close projections give local trivializations,
which can be joined successively. Endpoint unitary conjugations do not
change the classes. At $t=1$ the class in \eqref{eq:finitehomotopy} is zero.

\paragraph{Recovering the prescribed finite-dimensional endpoint.}
It remains to identify the class at $t=0$ with $x$, rather than with an
unspecified preimage of its Fredholm class. Set
\[
 R=eQ-R_-.
\]
The two finite-dimensional spaces at $t=0$ are
\[
 (R_+\HH)\oplus(R\HH),\qquad
 (R_-\HH)\oplus(R\HH).
\]
The positive compressed map is already block diagonal in this
splitting because $\Pi$ commutes with $e$. For the negative map,
\begin{equation}\label{eq:Rminuscomm}
 \max_{z\in X}\|[R_-,\Pi(z)]\|<3r.
\end{equation}
To see this, before absorption $R_-$ projects onto the image of the
original finite-dimensional space under the Stinespring isometry for
$\phi_-$, inside its reducing minimal-dilation summand. It is not
the projection onto the whole dilating space. Equation~\eqref{eq:stine} bounds its commutator with the
representation by $r$. Absorption changes the representation by less
than $r$ on $X$, so the commutator increases by less than $2r$.
The same statement holds for $R_+$.

Delete the off-diagonal blocks between $R_-\HH$ and $R\HH$ by a
convex u.c.p. homotopy. Their norm on $X$ is less than $3r$.
Furthermore,
\[
 \|[R,\Pi(z)]\|\leq\|[eQ,\Pi(z)]\|+
                         \|[R_-,\Pi(z)]\|<4r.
\]
The compressed maps on $R_-$ and $R$ have defects at most $9r^2$
and $16r^2$, respectively, by \eqref{eq:compression-bound}.
The negative map before splitting has defect at most $r^2$.
Equation~\eqref{eq:convex-bound} bounds the splitting homotopy by
$16r^2+9r^2/4<19r^2<\varepsilon$.
The two copies of the compressed map on $R\HH$ are now identical
and cancel in $\QQ_a$.

Under their identifying unitaries, the remaining maps on $R_+$ and
$R_-$ are within $r$ of the original $\phi_+$ and $\phi_-$ on
$Y=X\cup X^2$. Before absorption their compressions are exactly the
original finite-dimensional maps; the only change is the absorption
error. Convex homotopies connect them to the original maps. Their
defects are bounded by $10r^2<\varepsilon$, using
\eqref{eq:convex-bound} and \eqref{eq:Rminuscomm}.
Thus the initial class in \eqref{eq:finitehomotopy} is precisely
$r^Q_{\tau(a),a}(x)$. Its terminal class is zero, proving
\eqref{eq:vanishing}.
\end{proof}

\FloatBarrier
\subsubsection{The inverse homomorphisms and their coherence}

We now complete the final, algebraic step to prove Theorem~\ref{thm:main}. 

Fix $a$ and set $t=\tau(a)$ and $s=\tau(t)=\sigma(a)$.
Given $z\in\FF_s$, Proposition~\ref{prop:existence}, with output
control $t$, supplies $v\in\QQ_t$ such that
\[
 \iota_t(v)=r^F_{s,t}(z).
\]
Define
\begin{equation}\label{eq:rho}
 \rho_a(z)=r^Q_{t,a}(v).
\end{equation}
If $v'$ is a second choice, then $\iota_t(v-v')=0$.
Proposition~\ref{prop:vanishing}, with output control $a$, gives
$r^Q_{t,a}(v-v')=0$. Thus \eqref{eq:rho} is well defined.
For $z_1,z_2$, the sum of two choices is a choice for $z_1+z_2$;
therefore $\rho_a$ is a homomorphism.

Applying $\iota_a$ to \eqref{eq:rho} gives
\[
 \iota_a\rho_a(z)=r^F_{s,a}(z),
\]
which is \eqref{eq:right}. For $x\in\QQ_s$ one may choose
$v=r^Q_{s,t}(x)$ as the lift of $z=\iota_s(x)$. Consequently
\[
 \rho_a\iota_s(x)=r^Q_{s,a}(x),
\]
which is \eqref{eq:left}.

Finally let $a\preccurlyeq b$. Since $\tau$ is order preserving,
\[
 \tau(a)\preccurlyeq\tau(b),\qquad
 \sigma(a)\preccurlyeq\sigma(b).
\]
Given $z\in\FF_{\sigma(b)}$, choose a finite-dimensional lift
$v_b\in\QQ_{\tau(b)}$ for the definition of $\rho_b(z)$.
Its restriction $r^Q_{\tau(b),\tau(a)}v_b$ is an admissible lift for
$r^F_{\sigma(b),\sigma(a)}z$ in the definition of $\rho_a$.
Independence of the choice of lift therefore gives
\begin{align*}
 \rho_a r^F_{\sigma(b),\sigma(a)}(z)
  &=r^Q_{\tau(a),a}r^Q_{\tau(b),\tau(a)}(v_b)\\
  &=r^Q_{b,a}r^Q_{\tau(b),b}(v_b)
   =r^Q_{b,a}\rho_b(z).
\end{align*}
This is \eqref{eq:coherence}, for every pair of controls, without a
cofinal-sequence argument. Figure~\ref{fig:identities} summarizes the
three identities. Formula~\eqref{eq:sigma} follows directly
from \eqref{eq:theta}. The theorem is proved.

\begin{figure}[htbp]
\centering
\begin{minipage}{.47\linewidth}\centering
\begin{tikzcd}[column sep=2.0em,row sep=2.3em]
\QQ_{\sigma(a)} \arrow[r,"\iota_{\sigma(a)}"]
 \arrow[dr,"r^Q_{\sigma(a),a}"'] &
\FF_{\sigma(a)} \arrow[d,"\rho_a"]\\
& \QQ_a
\end{tikzcd}
\end{minipage}\hfill
\begin{minipage}{.47\linewidth}\centering
\begin{tikzcd}[column sep=2.0em,row sep=2.3em]
\FF_{\sigma(a)} \arrow[r,"\rho_a"]
 \arrow[dr,"r^F_{\sigma(a),a}"'] &
\QQ_a \arrow[d,"\iota_a"]\\
& \FF_a
\end{tikzcd}
\end{minipage}
\par\medskip
\begin{tikzcd}[column sep=4em,row sep=2.5em]
\FF_{\sigma(b)} \arrow[r,"\rho_b"]
 \arrow[d,"r^F_{\sigma(b),\sigma(a)}"'] &
\QQ_b \arrow[d,"r^Q_{b,a}"]\\
\FF_{\sigma(a)} \arrow[r,"\rho_a"'] & \QQ_a
\end{tikzcd}
\caption{The two inverse triangles and the restriction square commute
 as homomorphisms on their full source groups.}
\label{fig:identities}
\end{figure}
\FloatBarrier

\begin{corollary}[Inverse limits]\label{cor:limits}
For $A$ as in Theorem~\ref{thm:main}, the canonical map
\[
 \varprojlim_a\QQ_a\longrightarrow\varprojlim_a\FF_a
\]
is an isomorphism. If each finite-control group is discrete and the
limits have their inverse-limit topologies, it is a homeomorphism.
Its inverse is
\[
 (z_a)_a\longmapsto\bigl(\rho_a(z_{\sigma(a)})\bigr)_a.
\]
\end{corollary}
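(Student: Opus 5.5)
The plan is to derive the corollary formally from the three identities \eqref{eq:left}, \eqref{eq:right}, \eqref{eq:coherence} of Theorem~\ref{thm:main}, exactly as in \Cref{def:inverse-system-equivalence} and the argument after \Cref{def:quantitative-QR-unitary}. No further analysis is needed. The work is bookkeeping of indices, together with a short topological check.

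First, I will check that the displayed formula defines an element of $\varprojlim_a\QQ_a$. Given a compatible family $(z_a)_a$ in $\varprojlim_a\FF_a$ and controls $a\preccurlyeq b$, identity \eqref{eq:coherence} gives
\[
 r^Q_{b,a}\rho_b(z_{\sigma(b)})=\rho_a r^F_{\sigma(b),\sigma(a)}(z_{\sigma(b)})=\rho_a(z_{\sigma(a)}).
\]
Compatibility of $(z_a)$ is used here, and so is the fact that $\sigma$ is order preserving. Call the resulting map $\rho_*$; it is clearly a homomorphism.

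Next, I will verify the two composites coordinatewise. For $x=(x_a)\in\varprojlim\QQ_a$, \eqref{eq:left} gives $\rho_a(\iota_{\sigma(a)}x_{\sigma(a)})=r^Q_{\sigma(a),a}x_{\sigma(a)}=x_a$. For $z=(z_a)\in\varprojlim\FF_a$, \eqref{eq:right} gives $\iota_a\rho_a(z_{\sigma(a)})=r^F_{\sigma(a),a}z_{\sigma(a)}=z_a$. Hence $\rho_*\iota_*=1$ and $\iota_*\rho_*=1$, so the canonical map is an isomorphism with the stated inverse.

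For the topological claim, each $\QQ_a,\FF_a$ is discrete and the limits carry the subspace topology of the product. The map $\iota_*$ is continuous because each coordinate $x\mapsto\iota_a(x_a)$ depends on a single coordinate and lands in a discrete group. By the same reasoning, each coordinate $z\mapsto\rho_a(z_{\sigma(a)})$ of $\rho_*$ factors through the projection to the discrete group $\FF_{\sigma(a)}$, so $\rho_*$ is continuous. A continuous bijection with continuous inverse is a homeomorphism.

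The only point needing care, which I expect to be the main (mild) obstacle, is the coherence check: \eqref{eq:coherence} holds for every pair $a\preccurlyeq b$ in the full directed set, not just on a cofinal sequence. That is precisely what Theorem~\ref{thm:main} provides, so no choice of cofinal subsystem is required.
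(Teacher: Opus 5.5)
Your proposal is correct and follows the paper's own proof: coherence \eqref{eq:coherence} makes the family $(\rho_a(z_{\sigma(a)}))_a$ compatible, the identities \eqref{eq:left} and \eqref{eq:right} show coordinate by coordinate that it is a two-sided inverse, and both maps are continuous because each output coordinate depends on a single input coordinate through a homomorphism between discrete groups. Your remark that coherence holds for every pair $a\preccurlyeq b$ in the full directed set, not merely on a cofinal sequence, is also what Theorem~\ref{thm:main} provides.
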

\begin{proof}
Coherence makes the displayed family compatible. The two composite
identities prove that it is a two-sided inverse, coordinate by
coordinate. Both maps are continuous for the stated topologies:
each output coordinate depends on one input coordinate through a
homomorphism between discrete groups.
\end{proof}

\subsection{Examples}
\label{sec:applications}

\begin{example}[Amenable and free groups]
If $G$ is countable and amenable, then
$C^*_{\max}(G)=C_r^*(G)$ is separable and nuclear. Its quasidiagonality
is the amenable-group consequence of Tikuisis--White--Winter
\cite[Corollary~C]{tikuisis_white_winter_2017}, and LP follows from the Choi--Effros lifting
theorem \cite{CE}. Countable amenable groups have the Haagerup
property \cite{CCJJV}. Hence both the algebraic comparison and
Corollary~\ref{cor:character} apply.

For a countable free group, the full group algebra is residually
finite-dimensional by Choi's theorem \cite{Choi} and its subgroup
consequence, and has LP by Kirchberg \cite{Kir94}. Choi treats the
free group on two generators; every countable free group embeds in it,
and full subgroup algebras embed isometrically in the full group
algebra. The family of restricted finite-dimensional representations
therefore still separates points. Free groups have the Haagerup
property \cite{CCJJV}.

For clarity, residual finite-dimensionality implies quasidiagonality
here by a direct argument. A separable residually finite-dimensional
algebra has a countable separating family of finite-dimensional
representations $\pi_j$. The faithful representation
$\bigoplus_j\pi_j$ has finite-rank projections onto the first $m$
summands which commute exactly with the representation and converge
strongly to $1$.
\end{example}

\begin{example}[{Closed orientable surface groups}]
Let $\Gamma_g=\pi_1(\Sigma_g)$, where $\Sigma_g$ is a closed
orientable surface of genus $g\geq2$. Lubotzky--Shalom's property FD
implies that $C^*_{\max}(\Gamma_g)$ is residually finite-dimensional
\cite[Theorem~2.8]{lubotzky_shalom_2004}. Thus Corollary~\ref{cor:group} already proves
quantitative QR-quasidiagonality in the u.c.p. sense, without any
lifting hypothesis.
We record an independent route to the LP hypothesis needed for the
unitary-valued character.

Map $a_1$ to $1\in\Z$ and the other standard generators to zero in
\[
 \Gamma_g=\left\langle a_1,b_1,\ldots,a_g,b_g
     \ \middle|\ \prod_{i=1}^g[a_i,b_i]=1\right\rangle .
\]
The resulting epimorphism splits, with section $1\mapsto a_1$.
Its kernel $K$ is a countable free group. One can see this from the
corresponding noncompact surface cover. More explicitly, put $t=a_1$,
$b_k=t^kb_1t^{-k}$, $a_{i,k}=t^ka_it^{-k}$, and
$b_{i,k}=t^kb_it^{-k}$ for $i\geq2$ and $k\in\Z$.
The Reidemeister--Schreier presentation has relations
\[
 b_{k+1}b_k^{-1}\prod_{i=2}^g[a_{i,k},b_{i,k}]=1.
\]
They eliminate all $b_k$ except $b_0$, leaving a free basis consisting
of $b_0$ and the $a_{i,k},b_{i,k}$. Consequently
\[
 \Gamma_g=K\rtimes\Z,\qquad
 C^*_{\max}(\Gamma_g)
   \cong C^*_{\max}(K)\rtimes\Z.
\]
The kernel algebra has LP by the free-group case. LP is preserved by
crossed products with amenable groups; we use the direct proof in
Enders--Shulman \cite[Theorem~8.3 and Corollary~8.4]{ES}.
This proves LP for $C^*_{\max}(\Gamma_g)$.
Finally, surface groups have the Haagerup property
\cite{CCJJV}, so Corollary~\ref{cor:character} applies. The genus-one
case is the amenable group $\Z^2$.

Property FD supplies residual finite-dimensionality, while the
free-by-$\mathbb Z$ argument verifies the full lifting property.  
\end{example}


\subsection{Surface groups: direct classification and finite control}
\label{subsec:surface-classification}

Corollary~\ref{cor:character} and the preceding examples already give
the character isomorphism and its finite-control consequences for
surface groups. The purpose of this subsection is to identify the
two coordinates explicitly---rank and winding number---and to give
a direct finite-control classification in these coordinates. This
also provides a second description of the comparison, using the
index pairing and stable uniqueness.

Let $\Sigma_g$ be a closed connected orientable surface of genus $g\geq2$
and write
\begin{equation}\label{eq:surface-presentation}
 \Gamma_g=\pi_1(\Sigma_g)
 =\left\langle a_1,b_1,\ldots,a_g,b_g\ \middle|\
       \prod_{j=1}^g[a_j,b_j]=e\right\rangle,
 \qquad [u,v]=uvu^{-1}v^{-1}.
\end{equation}
In this subsection $A=C^*_{\max}(\Gamma_g)$.

\begin{lemma}[operator-algebraic and $K$-theoretic properties]
\label{lem:surface-input}
For $g\geq2$, the group $\Gamma_g$ has the following properties.
\begin{enumerate}[label=(\roman*)]
\item $\Gamma_g$ has the Haagerup property.
\item $A$ has the lifting property, and $\Gamma_g$ has property FD: the
finite-dimensional representations that factor through finite quotients are
dense, in the Fell topology, in the unitary dual of $\Gamma_g$.  In
particular, these representations separate the points of $A$ and form the
good class used in the stable-uniqueness theorem.
\item $A$ satisfies the approximate $K$-homology UCT used in
\cite[Definition~3.25 and Theorem~3.28]{willett_conditional_2026}, and
\begin{equation}\label{eq:surface-K-groups}
 K_0(A)\cong\mathbb Z[1]\oplus\mathbb Z\beta,
 \qquad K_1(A)\cong\mathbb Z^{2g}.
\end{equation}
Here $\beta=\mu_{\max}([D_{\mathrm{spin}}])$, with the orientation
chosen so that
\[
 \operatorname{ch}_{\mathrm{hom}}([D_{\mathrm{spin}}])=[\Sigma_g].
\]
Equivalently, the normalized class is
$[D_{\mathrm{spin}}]=[\bar\partial]+(g-1)[\mathrm{pt}]$ in
$K_0(\Sigma_g)$. Thus the trivial representation pairs with $\beta$
as zero, fixing the normalization used in the winding-number formula
\cite{carrion_dadarlat_surface_2013}. Evaluation on $[1]$ and $\beta$
gives an isomorphism
\begin{equation}\label{eq:surface-Khom-pairing}
 KL^0(A)\longrightarrow\mathbb Z^2,\qquad
 z\longmapsto\bigl(\langle[1],z\rangle,
                    \langle\beta,z\rangle\bigr).
\end{equation}
\end{enumerate}
\end{lemma}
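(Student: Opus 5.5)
The proof assembles known results rather than constructing anything new, and I would treat the three items in order.

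For (i), I would cite the Haagerup property of surface groups from \cite{CCJJV}, as in the examples of Subsection~\ref{sec:applications}. Concretely, $\Gamma_g$ acts properly on the hyperbolic plane, and the associated wall space or conditionally negative definite kernel gives a metrically proper affine action on a Hilbert space.

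For (ii), the lifting property has already been verified in the surface-group example. That argument writes $\Gamma_g\cong K\rtimes\mathbb Z$ with $K$ free, via Reidemeister--Schreier, so that $C^*_{\max}(\Gamma_g)\cong C^*_{\max}(K)\rtimes\mathbb Z$. Kirchberg gives LP for $C^*_{\max}(K)$, and Enders--Shulman \cite{ES} show that LP passes to crossed products by $\mathbb Z$. Property FD is Lubotzky--Shalom \cite[Theorem~2.8]{lubotzky_shalom_2004}. Density in the Fell topology of representations factoring through finite quotients means these representations weakly contain every unitary representation. Their direct sum is therefore faithful on $A$, so they separate points. The family is closed under direct sums and finite multiplicities, which is exactly what Willett's good class requires.

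Item (iii) is where the real work is. For the UCT, I would not aim for the classical bootstrap class. Instead I would invoke Willett's criterion for the approximate $K$-homology UCT \cite[Theorem~3.28]{willett_conditional_2026}: it holds when $G$ has the Haagerup property, because Higson--Kasparov gives $\gamma=1$ and makes $C^*_{\max}(G)$ $KK$-equivalent to a crossed product of a proper algebra. Checking that the hypotheses of that theorem match this setting is the step I expect to require the most care.

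For the $K$-groups, the Baum--Connes isomorphism (Higson--Kasparov, using $K$-amenability) gives $K_*(A)\cong K_*(C^*_r\Gamma_g)\cong K_*^{\Gamma_g}(\underline E\Gamma_g)=K_*(\Sigma_g)$. The last equality holds because $\Gamma_g$ is torsion-free and $\Sigma_g$ is a model for $B\Gamma_g$. Then $K^0$-homology gives $\mathbb Z[\mathrm{pt}]\oplus\mathbb Z[\Sigma_g]$ and $K_1$ gives $H_1=\mathbb Z^{2g}$. Under assembly, $[\mathrm{pt}]\mapsto[1]$ and the fundamental class maps to $\beta$. I would fix the orientation of $\beta$ via the homological Chern character. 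The identity $[D_{\mathrm{spin}}]=[\bar\partial]+(g-1)[\mathrm{pt}]$ then follows from Riemann--Roch: $\operatorname{ind}\bar\partial=1-g$, and the trivial representation pairs with the spin Dirac class to give $\operatorname{ind}D_{\mathrm{spin}}=0$.

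Finally, the UCT yields $KL^0(A)\cong\operatorname{Hom}(K_0(A),\mathbb Z)$, because $K_0(A)$ and $K_1(A)$ are free, so the Ext terms and the $KL$ quotient disappear. Evaluating on the basis $[1],\beta$ then gives the isomorphism \eqref{eq:surface-Khom-pairing}.
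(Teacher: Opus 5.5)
Your proposal is correct and follows essentially the paper's proof. Haagerup comes from the hyperbolic-plane action; the paper phrases this as restriction from the lattice embedding in $\mathrm{PSL}(2,\mathbb R)$, which is the same fact. The approximate $K$-homology UCT comes from Higson--Kasparov and $K$-amenability; the paper obtains it from these same inputs via \cite[Theorem~3.26 and Remark~5.6]{willett_conditional_2026}, which is the reference you should cite for that step. The $K$-groups come from maximal assembly to $K_*(\Sigma_g)$, and $KL^0(A)$ is identified through the vanishing of $\operatorname{Ext}(K_1(A),\mathbb Z)$.

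There are only two differences. For the lifting property you use the free-by-$\mathbb Z$ decomposition with Kirchberg and Enders--Shulman, the route the paper records in its surface-group example. The proof of this lemma instead cites \cite[Theorem~1.2]{fournier_facio_willett_2026} for both LP and property FD. Your Riemann--Roch argument for $[D_{\mathrm{spin}}]=[\bar\partial]+(g-1)[\mathrm{pt}]$ also supplies a normalization that the paper states without proof.
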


\begin{proof}
The group $\Gamma_g$ is a cocompact lattice in
$\mathrm{PSL}(2,\mathbb R)$, which has the Haagerup property.  Restriction
of a proper affine isometric action to a closed discrete subgroup remains
proper, proving (i).

Fournier-Facio and Willett prove that the maximal group $C^*$-algebra of the
fundamental group of every connected manifold of dimension at most three has
the lifting property and that the group has property FD
\cite[Theorem~1.2]{fournier_facio_willett_2026}.  Property FD for surface
groups was established earlier by Lubotzky and Shalom
\cite[Theorem~2.8]{lubotzky_shalom_2004}.  This proves (ii).

The Higson--Kasparov theorem, $K$-amenability, and the UCT consequence used
in \cite[Theorem~3.26 and Remark~5.6]{willett_conditional_2026} give the
approximate $K$-homology UCT for $A$.  Maximal assembly identifies the
$K$-theory of $A$ with the $K$-homology of $\Sigma_g$.  The
Atiyah--Hirzebruch spectral sequence collapses for the two-dimensional
complex $\Sigma_g$, giving
\[
 K_0(\Sigma_g)\cong H_0(\Sigma_g;\mathbb Z)
      \oplus H_2(\Sigma_g;\mathbb Z)\cong\mathbb Z^2,
 \qquad
 K_1(\Sigma_g)\cong H_1(\Sigma_g;\mathbb Z)
      \cong\mathbb Z^{2g}.
\]
This proves \eqref{eq:surface-K-groups}.  Both groups are free, so the
$\operatorname{Ext}$ term in the UCT vanishes.  Since
$K_0(\mathbb C)=\mathbb Z$ and $K_1(\mathbb C)=0$, the resulting pairing
identifies $KL^0(A)$ with
$\operatorname{Hom}(K_0(A),\mathbb Z)\cong\mathbb Z^2$, proving (iii).
\end{proof}

\subsubsection{The two surface coordinates}

We first recall the winding number in a form that applies uniformly at
finite control.  Enlarge a finite group set so that it contains $e$, the
generators and their inverses in \eqref{eq:surface-presentation}, and every
prefix of the relator.  For a normalized unitary
$(F,\varepsilon)$-quasi-representation $q$, repeated use of the
multiplication estimate gives
\begin{equation}\label{eq:surface-relator-estimate}
 \left\|\prod_{j=1}^g[q(a_j),q(b_j)]-1\right\|
 <(6g-1)\varepsilon.
\end{equation}
Indeed, multiplying the $4g$ letters costs at most
$(4g-1)\varepsilon$, and replacing the $2g$ matrices assigned to inverse
letters by the adjoints of the corresponding generator matrices costs at
most $2g\varepsilon$.  When the right-hand side is less than one, define
\begin{equation}\label{eq:surface-winding}
 R(q)=\prod_{j=1}^g[q(a_j),q(b_j)],
 \qquad
 w(q)=\frac{1}{2\pi i}\operatorname{Tr}\log R(q),
\end{equation}
where $\log$ is the principal logarithm.  Since $R(q)$ is a product of
commutators, $\det R(q)=1$.  Hence
$\exp(\operatorname{Tr}\log R(q))=1$ and $w(q)\in\mathbb Z$.

For a sufficiently multiplicative u.c.p. map
$\phi:A\to M_n(\mathbb C)$, the matrices $\phi(u_s)$ associated to the
generators and their inverses are almost unitary.  We write
\begin{equation}\label{eq:surface-ucp-polar}
 p_\phi(s)=\phi(u_s)
   \bigl(\phi(u_s)^*\phi(u_s)\bigr)^{-1/2}.
\end{equation}
After strengthening the control, \Cref{lem:polar-correction} shows that
$p_\phi$ is a unitary quasi-representation for which
\eqref{eq:surface-winding} is defined.  Put
\begin{equation}\label{eq:surface-ucp-winding}
 w(\phi):=w(p_\phi).
\end{equation}
Functional calculus and the spectral gap show that this integer is constant
on every u.c.p. controlled homotopy, and it is additive under direct sum.
Consequently, on a cofinal set of u.c.p. controls $a$, there is a
homomorphism
\begin{equation}\label{eq:surface-Q-coordinates}
 \begin{aligned}
 \Theta^{\mathrm{QR,ucp}}_a&:\QRucpstage{\Gamma_g}{a}
     \longrightarrow\mathbb Z^2,\\
 \Theta^{\mathrm{QR,ucp}}_a([\phi_+]-[\phi_-])
   &=\bigl(\dim\phi_+-\dim\phi_-,
           w(\phi_+)-w(\phi_-)\bigr).
 \end{aligned}
\end{equation}
The coordinate maps commute with the bonding homomorphisms whenever both
sides are defined.

\begin{lemma}[the generalized Exel--Loring formula in the u.c.p. picture]
\label{lem:surface-ucp-EL}
There is a u.c.p. control $a_{\mathrm{EL}}$ such that, at every stronger
control, the partial $K$-theory map induced by a finite-dimensional u.c.p.
quasi-representation $\phi:A\to M_n(\mathbb C)$ is defined on $[1]$ and
$\beta$, and
\begin{equation}\label{eq:surface-EL-formula}
 \phi_*([1])=n,
 \qquad
 \phi_*(\beta)=w(\phi).
\end{equation}
Equivalently, these are the pairings of the finite-dimensional Fredholm
cycle $\iota^{\mathrm{ucp}}[\phi]$ with $[1]$ and $\beta$.
\end{lemma}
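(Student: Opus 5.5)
The plan is to reduce the statement to the Carrión--Dadarlat winding-number formula \cite{carrion_dadarlat_surface_2013}. That formula treats unitary quasi-representations; the paper's remaining task is to pass to the u.c.p.\ picture and to fix the control uniformly.

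\emph{Step 1: choose the control.} Fix once and for all a projection $e_\beta$ (or a formal difference of projections) over some $M_k(\mathbb C\Gamma_g)$ representing $\beta$. Take $a_{\mathrm{EL}}=(E,\delta)$ with $E$ containing $e$, the generators, their inverses, all prefixes of the relator, and all group elements in the support of the entries of $e_\beta$. Take $\delta$ so small that three things hold. First, $(6g-1)\cdot 7\delta<1$, so that $p_\phi$ from \eqref{eq:surface-ucp-polar} is defined and $w(\phi)$ makes sense. Second, the matrix $\phi^{(k)}(e_\beta)$ is within $1/4$ of its square. This follows from the multiplication defect on $E$, with a bound depending only on $k$ and the $\ell^1$-norm of the entries, not on $n$. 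Third, \Cref{lem:polar-correction} puts $p_\phi$ within the tolerance of Carrión--Dadarlat's theorem.

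Define $\phi_*(\beta)$ as the class of $\kappa(\phi^{(k)}(e_\beta))$ minus the rank of its reference projection. The identity $\phi_*([1])=n$ is immediate, because $\phi(1)=1_n$ by unitality.

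\emph{Step 2: transfer to the unitary picture.} Since $e_\beta$ lies in the group ring supported in $E$, the matrix $\phi^{(k)}(e_\beta)$ and the matrix obtained by substituting $p_\phi(s)$ for $\phi(u_s)$ differ by $O(\delta)$, uniformly in $n$. Their spectral projections therefore have the same rank. By \Cref{lem:LP-CP-replacement}, or simply by this direct substitution, $\phi_*(\beta)$ equals the partial $K$-theory pushforward of $\beta$ under the unitary quasi-representation $p_\phi$. The main obstacle is now to invoke the generalized Exel--Loring formula of Carrión--Dadarlat, which states that for sufficiently multiplicative unitary quasi-representations of $\Gamma_g$ this pushforward equals $\frac{1}{2\pi i}\operatorname{Tr}\log R(p_\phi)$. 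Here the sign normalization matters: it is fixed by the choice $\operatorname{ch}_{\mathrm{hom}}[D_{\mathrm{spin}}]=[\Sigma_g]$ in \Cref{lem:surface-input}, under which the trivial representation gives $0$. I would check that normalization on the trivial representation and on a standard almost-flat line bundle of degree one. The threshold in their theorem is uniform in $n$, so it can be absorbed into $\delta$.

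\emph{Step 3: the equivalent formulation.} The pairing of the Fredholm cycle $\iota^{\mathrm{ucp}}[\phi]=(\mathbb C^n\oplus 0,\phi,0)$ with a projection class is computed by compressing $\phi^{(k)}$ to that projection, followed by spectral correction. With operator zero on a finite-dimensional space, the relative index is exactly the rank computed in Step 1. This yields the stated equivalence, and every estimate is uniform at all stronger controls.
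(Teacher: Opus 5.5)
Your route is the paper's: polar correction to $p_\phi$, a perturbation showing that the almost projections produced by $\phi$ and by $p_\phi$ have spectral projections of the same rank, and then Carri\'on--Dadarlat's Theorem~2.6. One step fails as written, however.

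In Step~1 you fix a projection $e_\beta$, or a formal difference of projections, over $M_k(\mathbb C\Gamma_g)$ representing $\beta$. Your $\ell^1$ estimate and the substitution in Step~2 both depend on the entries lying in the group ring. No such representative exists. A projection in $M_k(\mathbb C\Gamma_g)$ is an idempotent over the group ring, so its class lies in the image of the algebraic group $K_0(\mathbb C\Gamma_g)$ in $K_0(A)$. That algebraic group is $\mathbb Z$, generated by $[1]$: finitely generated projective $\mathbb C\Gamma_g$-modules are stably free, for instance by the $K$-theoretic Farrell--Jones conjecture, which holds for hyperbolic groups. By \eqref{eq:surface-K-groups}, $\beta\notin\mathbb Z[1]$. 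For $\mathbb Z^2$ this is the familiar fact that the Bott projection cannot be written with Laurent polynomials in the two unitaries.

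The repair is standard, and it is the estimate the paper takes from Willett's Lemmas~3.19 and~3.24. Take a genuine projection $e_\beta\in M_k(A)$; the global u.c.p.\ map $\phi$ applies to it directly. Choose a self-adjoint $h=\sum_g h_g\otimes u_g\in M_k(\mathbb C\Gamma_g)$ with $\|h-e_\beta\|<\epsilon$, and include $\operatorname{supp}h$ in the tested set. Complete contractivity gives $\|\phi^{(k)}(e_\beta)-\phi^{(k)}(h)\|<\epsilon$. It also gives
\[
 \|\phi^{(k)}(h)^2-\phi^{(k)}(h)\|
 \leq\Bigl(\sum_g\|h_g\|\Bigr)^2\delta+\|h^2-h\|,
 \qquad \|h^2-h\|\leq3\epsilon+\epsilon^2 .
\]
This yields a spectral gap at $1/2$ that is uniform in $n$. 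The comparison with $p_\phi$ must be made on $h$, because $p_\phi$ is only defined on finitely many group elements. The matrix $\sum_g h_g\otimes p_\phi(g)$ is self-adjoint, since $p_\phi(g^{-1})=p_\phi(g)^*$ exactly. With this change your Steps~2 and~3 go through. \Cref{lem:LP-CP-replacement} goes from unitary to u.c.p.\ maps, the opposite direction, and is not needed here.
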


\begin{proof}
Choose projection representatives for the two classes and strengthen the
control so that applying any u.c.p. map at that control produces almost
projections whose spectra avoid $1/2$.  These projection representatives and estimates are supplied by
\cite[Lemmas~3.19 and~3.24]{willett_conditional_2026}.
The class $[1]$ is sent to the identity of $M_n$, giving the first formula.

For the second, make the multiplication tolerance small enough that
\eqref{eq:surface-ucp-polar} is defined and that
$\|p_\phi(s)-\phi(u_s)\|$ is smaller than the spectral-gap tolerance of the
chosen projection representing $\beta$.  Applying $\phi$ and applying the
unitary map $p_\phi$ therefore produce almost projections connected by the
straight-line path without crossing $1/2$; their spectral projections have
the same $K_0$-class.  The generalized Exel--Loring formula of Carri\'on and
Dadarlat \cite[Theorem~2.6]{carrion_dadarlat_surface_2013} identifies the
class obtained from $p_\phi$ with
$w(p_\phi)=w(\phi)$.  This proves the second formula and also its
compatibility with the inclusion into the group of Fredholm quasi-representations.
\end{proof}

For a surface group, the classes with finite coefficients introduce no
additional values that need to be specified. To make the notation precise,
we use a finite collection $P$ of projections representing classes in
$K_0(A)$ and in $K_0(A;\mathbb Z/m)$ for finitely many integers $m\geq2$,
as in \cite[Definition~3.23]{willett_conditional_2026}. The notation
$\langle P\rangle$ denotes the submodule of total $K$-theory generated by
their classes under the coefficient operations specified there. Saying
that $P$ contains a class means that the projections needed to represent
that class, as a difference when necessary, have been included.

\begin{lemma}[the unit and the surface class determine the pairings]
\label{lem:surface-K-data-reduction}
Let $P$ be a finite collection of projection representatives as just
described, containing representatives of $[1]$ and $\beta$.  If two partially defined
homomorphisms
\[
 \alpha_0,\alpha_1:\langle P\rangle
   \longrightarrow\underline K_0(\mathbb C)
\]
commute with the coefficient and Bockstein operations and have the same
values on $[1]$ and $\beta$, then they agree on all of $\langle P\rangle$.
\end{lemma}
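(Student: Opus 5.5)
The plan is to reduce to the case where the difference $\delta=\alpha_0-\alpha_1$ vanishes on $[1]$ and $\beta$, and to show that $\delta$ then vanishes on all of $\langle P\rangle$. Because both maps commute with the coefficient operations, so does $\delta$: it is additive and compatible with the reduction maps $\rho_m:K_0(A)\to K_0(A;\mathbb Z/m)$, the Bocksteins $\partial_m:K_0(A;\mathbb Z/m)\to K_1(A)$, and the change-of-coefficient maps between different moduli, all restricted to $\langle P\rangle$. The target $\underline K_0(\mathbb C)$ has $K_0(\mathbb C)=\mathbb Z$, $K_0(\mathbb C;\mathbb Z/m)=\mathbb Z/m$ and $K_1(\mathbb C)=0$. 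So it suffices to check that every element of $\langle P\rangle$ is built, by these operations, from elements whose images are forced to vanish.

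First I would take the integral part. By \eqref{eq:surface-K-groups}, $K_0(A)$ is free on $[1]$ and $\beta$. Every integral class in $\langle P\rangle$ is therefore an integer combination of these two classes, and additivity gives $\delta=0$ there. Since the coefficient submodule is generated under additive operations, this step only requires that the integral elements of $\langle P\rangle$ lie in the span of $[1]$ and $\beta$. They do: $P$ contains representatives of both classes, and $K_0(A)$ has no other generators.

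Next I would treat the finite-coefficient part. The Bockstein sequence gives
\[
 0\to K_0(A)\otimes\mathbb Z/m\xrightarrow{\rho_m}K_0(A;\mathbb Z/m)\xrightarrow{\partial_m}\operatorname{Tor}(K_1(A),\mathbb Z)[m]\to0 .
\]
Since $K_1(A)\cong\mathbb Z^{2g}$ is torsion-free, the torsion term vanishes, so $\rho_m$ is an isomorphism. Hence every $x\in\langle P\rangle\cap K_0(A;\mathbb Z/m)$ has the form $\rho_m(y)$ with $y=k[1]+l\beta$. Compatibility then gives $\delta(x)=\delta(\rho_m y)=\rho_m^{\mathbb C}(\delta y)=0$.

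Finally, the odd components of $\underline K_0(\mathbb C)$ vanish, so $\delta$ is automatically zero on any odd-degree elements that the coefficient operations produce. This is what the statement means by saying that finite coefficients introduce no additional values.

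The main obstacle is a partial-definition subtlety. The lift $y$ of $x$ need not lie in $\langle P\rangle$, and $\delta$ is only defined on $\langle P\rangle$, so $\delta(\rho_m y)=\rho_m(\delta y)$ makes sense only once $y$ is in the domain. My first approach would be to note that $\langle P\rangle$ already contains $[1]$ and $\beta$, hence every combination $k[1]+l\beta$, and so $y$ does lie in $\langle P\rangle$. If the submodule convention of \cite[Definition~3.23]{willett_conditional_2026} does not close under reductions in this way, I would instead check directly on the generators in $P$: for each mod-$m$ projection, exhibit the reduction relation within $\langle P\rangle$ after enlarging $P$ by the mod-$m$ reductions of $[1]$ and $\beta$. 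Doing so does not change the hypotheses on $\alpha_0$ and $\alpha_1$ restricted to the original $P$.
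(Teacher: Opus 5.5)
Your proposal is correct and follows the paper's argument. The coefficient exact sequence, together with freeness of $K_1(A)\cong\mathbb Z^{2g}$, shows that each $K_0(A;\mathbb Z/m)$ is generated by reductions of $[1]$ and $\beta$, so compatibility with reduction forces $\alpha_0$ and $\alpha_1$ to agree everywhere, and the vanishing of $K_1(\mathbb C)$ and $K_1(\mathbb C;\mathbb Z/m)$ disposes of the Bockstein terms. (The torsion term in your displayed sequence should read $\operatorname{Tor}(K_1(A),\mathbb Z/m)$, the $m$-torsion of $K_1(A)$; your extra care about whether the lifts $y$ lie in $\langle P\rangle$ goes slightly beyond what the paper writes but is consistent with it.)
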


\begin{proof}
For every $m\geq2$, the coefficient exact sequence contains
\[
 0\longrightarrow K_0(A)\otimes\mathbb Z/m
 \longrightarrow K_0(A;\mathbb Z/m)
 \longrightarrow\operatorname{Tor}(K_1(A),\mathbb Z/m)
 \longrightarrow0.
\]
By \eqref{eq:surface-K-groups}, $K_1(A)$ is free, so the last term is zero.
Thus $K_0(A;\mathbb Z/m)$ is generated by the reductions of $[1]$ and
$\beta$.  On the target side
$K_0(\mathbb C;\mathbb Z/m)=\mathbb Z/m$ and
$K_1(\mathbb C;\mathbb Z/m)=0$.  Compatibility with coefficient reduction
therefore determines every coefficient component from the two integral
values.  Compatibility with the Bockstein maps adds no further choice.
Hence $\alpha_0$ and $\alpha_1$ agree on the submodule generated by $P$.
\end{proof}

\subsubsection{Stable uniqueness for matrix-valued maps}

We shall use the following quantitative homotopy observation.  It is simpler
for u.c.p. maps than for unitary-valued maps because convex combinations
remain u.c.p.

\begin{lemma}[close u.c.p. maps give a controlled homotopy]
\label{lem:close-ucp-homotopy}
Let $E\subseteq G$ be finite.  Suppose
$\phi_0,\phi_1:A\to M_n(\mathbb C)$ are u.c.p. maps with
\[
 \|\phi_i(u_{gh})-\phi_i(u_g)\phi_i(u_h)\|<\delta
 \qquad(g,h\in E,\ i=0,1)
\]
and
\[
 \max_{g\in E}\|\phi_0(u_g)-\phi_1(u_g)\|<\alpha.
\]
Then the convex path $\phi_t=(1-t)\phi_0+t\phi_1$ consists of u.c.p. maps
whose multiplication defect on $E$ is less than
$\delta+\alpha^2/4$.

More generally, let $\theta:A\to M_k(\mathbb C)$ be a representation and
suppose a unitary $v\in U(n+k)$ satisfies
\[
 \max_{g\in E}
 \|v(\phi_0(u_g)\oplus\theta(u_g))v^*
       -(\phi_1(u_g)\oplus\theta(u_g))\|<\alpha.
\]
If $\delta+\alpha^2/4<\varepsilon$, then $\phi_0$ and $\phi_1$ determine
the same element of $\QRucpstage{\Gamma_g}{(E,\varepsilon)}$ after adding the
common summand $\theta$.
\end{lemma}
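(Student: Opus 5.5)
The first assertion follows directly from the convex-interpolation identity \eqref{eq:convex}, applied at matrix level. Each $\phi_t$ is u.c.p., since u.c.p. maps form a convex set. For $g,h\in E$, that identity expresses $\phi_t(u_{gh})-\phi_t(u_g)\phi_t(u_h)$ as $(1-t)$ times the defect of $\phi_0$, plus $t$ times the defect of $\phi_1$, plus $t(1-t)\,(\phi_0(u_g)-\phi_1(u_g))(\phi_0(u_h)-\phi_1(u_h))$. The first two terms together have norm less than $\delta$. The last term has norm less than $\tfrac14\alpha^2$, because $t(1-t)\le 1/4$. Since $E$ is finite, the strict inequality $<\delta+\alpha^2/4$ holds uniformly in $t$. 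Moreover $t\mapsto\phi_t$ is a single u.c.p. map $A\to C([0,1],M_n)$, which is exactly a homotopy in the sense of \Cref{def:global-ucp-controls}.

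For the second assertion, I set $\psi_0=\phi_0\oplus\theta$ and $\psi_1=\phi_1\oplus\theta$. Both are u.c.p. maps into $M_{n+k}$. Since $\theta$ is a genuine representation, it has defect zero, so each $\psi_i$ has defect less than $\delta$ on $E$.

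Next I replace $\psi_0$ by $\psi_0'=\operatorname{Ad}_v\circ\psi_0$. Conjugation by a unitary preserves complete positivity, unitality and the defect norm. The unitary group $U(n+k)$ is path connected, so a path $v_s$ from $1$ to $v$ gives the homotopy $\operatorname{Ad}_{v_s}\circ\psi_0$, still with defect less than $\delta<\varepsilon$. This joins $\psi_0$ to $\psi_0'$.

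The hypothesis says $\|\psi_0'(u_g)-\psi_1(u_g)\|<\alpha$ for $g\in E$. The first part therefore applies to the pair $(\psi_0',\psi_1)$ and gives a convex homotopy with defect less than $\delta+\alpha^2/4<\varepsilon$. Concatenating the two homotopies joins $\phi_0\oplus\theta$ to $\phi_1\oplus\theta$ inside the $(E,\varepsilon)$-stage. Hence $[\phi_0]+[\theta]=[\phi_1]+[\theta]$ in the Grothendieck group $\QRucpstage{\Gamma_g}{(E,\varepsilon)}$, and cancelling $[\theta]$ gives $[\phi_0]=[\phi_1]$.

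One point needs care. The estimate on $\|\psi_0'(u_g)-\psi_1(u_g)\|$ is only assumed on $E$, while the defect is tested on products $gh$ with $g,h\in E$. So I must check that the interpolation estimate uses closeness only at $g$ and $h$, not at $gh$. The identity \eqref{eq:convex} confirms this: the cross term involves only $\phi_0(u_g)-\phi_1(u_g)$ and $\phi_0(u_h)-\phi_1(u_h)$, so no closeness at $gh$ is needed. This is the main thing to verify; everything else is routine.
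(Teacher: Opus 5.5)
Your proposal is correct and follows the paper's proof essentially step by step. The first part is the convex-interpolation expansion with $t(1-t)\le 1/4$. In the second part you conjugate $\phi_0\oplus\theta$ along a unitary path from $1$ to $v$ in $U(n+k)$, apply the first part to that endpoint and $\phi_1\oplus\theta$, and concatenate. Your additional remarks are accurate details the paper leaves implicit: finiteness of $E$ gives uniformity in $t$, and closeness is needed only at $g$ and $h$, not at $gh$.
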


\begin{proof}
Write $d_i(g,h)=\phi_i(u_{gh})-\phi_i(u_g)\phi_i(u_h)$.  Direct expansion
gives
\begin{align*}
 \phi_t(u_{gh})-\phi_t(u_g)\phi_t(u_h)
 &=(1-t)d_0(g,h)+t d_1(g,h)\\
 &\quad+t(1-t)
   (\phi_0(u_g)-\phi_1(u_g))
   (\phi_0(u_h)-\phi_1(u_h)).
\end{align*}
Since $t(1-t)\leq1/4$, the required bound follows.  For the second
assertion, choose a path $v_s$ in $U(n+k)$ from $1$ to $v$.  Conjugating
$\phi_0\oplus\theta$ by $v_s$ gives a u.c.p. homotopy with defect at most
$\delta$.  Apply the first part to the endpoint of this path and
$\phi_1\oplus\theta$.  Concatenating the two paths proves stable homotopy
at the stated control.
\end{proof}

\begin{proposition}[stable uniqueness for matrix-valued maps]
\label{prop:surface-finite-detection}
For every target u.c.p. control
$a_0\in\mathcal C_{\Gamma_g}^{\mathrm{sym}}$ there is a
stronger control $a_Q\succeq a_0$ such that the coordinate map
$\Theta^{\mathrm{QR,ucp}}_a$ is defined for every $a\succeq a_Q$ and
\begin{equation}\label{eq:surface-Q-detection}
 \Theta^{\mathrm{QR,ucp}}_a(x)=0
 \quad\Longrightarrow\quad
 r^{\mathrm{QR,ucp}}_{a,a_0}(x)=0
 \qquad(x\in\QRucpstage{\Gamma_g}{a}).
\end{equation}
Equivalently, two classes at a sufficiently strong common control that have
the same rank and winding number become equal at the prescribed target
control.
\end{proposition}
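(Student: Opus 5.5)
The plan is to reduce the statement to Willett's stable-uniqueness theorem for matrix-valued u.c.p. maps \cite[Theorem~3.28]{willett_conditional_2026}, whose hypotheses are supplied by \Cref{lem:surface-input}. Namely, $A$ satisfies the approximate $K$-homology UCT, and the finite-dimensional representations factoring through finite quotients form a separating good class. In its quantitative form, that theorem says the following. For the prescribed finite set $E_0$ and tolerance $\alpha$ there are a stronger control, a finite collection $P$ of projection representatives in total $K$-theory, and an integer bound, with this property: whenever two sufficiently multiplicative u.c.p. maps $\phi_0,\phi_1:A\to M_n(\mathbb C)$ induce the same partial homomorphism on $\langle P\rangle$, there are a representation $\theta:A\to M_k(\mathbb C)$ from the good class and a unitary $v\in U(n+k)$ with
\[
 \max_{g\in E_0}\|v(\phi_0(u_g)\oplus\theta(u_g))v^*-(\phi_1(u_g)\oplus\theta(u_g))\|<\alpha .
\]
I would fix $a_0=(E_0,\varepsilon_0)$ and choose $\alpha$ and the intermediate tolerance $\delta$ so that $\delta+\alpha^2/4<\varepsilon_0$. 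I would then enlarge $P$ to contain representatives of $[1]$ and $\beta$. Finally, I would let $a_Q$ dominate Willett's control, the control $a_{\mathrm{EL}}$ of \Cref{lem:surface-ucp-EL}, and the control needed for $\Theta^{\mathrm{QR,ucp}}_a$ to be defined.

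Next, reduce an arbitrary class to a comparison of two honest maps. Write $x=[\phi_+]-[\phi_-]$ at some $a\succeq a_Q$. From $\Theta(x)=0$ we get $\dim\phi_+=\dim\phi_-=n$ and $w(\phi_+)=w(\phi_-)$. By \Cref{lem:surface-ucp-EL}, the partial $K$-theory maps of $\phi_\pm$ therefore agree on $[1]$ and $\beta$. By \Cref{lem:surface-K-data-reduction}, they then agree on all of $\langle P\rangle$. The point here is that the partial maps induced by sufficiently multiplicative u.c.p. maps commute with the coefficient and Bockstein operations wherever defined, which is part of the setup of \cite[Definition~3.23]{willett_conditional_2026}. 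Stable uniqueness now supplies $\theta$ and $v$ as above. The second half of \Cref{lem:close-ucp-homotopy} then shows that $\phi_+\oplus\theta$ and $\phi_-\oplus\theta$ are homotopic at control $(E_0,\varepsilon_0)$. Since $\theta$ is a genuine representation, it is an admissible common summand. Hence $r^{\mathrm{QR,ucp}}_{a,a_0}(x)=0$.

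The main obstacle is the quantifier order. The control $a_Q$ must be independent of the matrix size $n$ and of the particular maps, and Willett's theorem must be applied to global u.c.p. maps on $A$ at a finite control, rather than to asymptotic sequences. If the theorem is stated only sequentially, I would first try to recover the uniform control by contradiction. Suppose no control works. Then there are increasingly multiplicative pairs $(\phi_0^{(m)},\phi_1^{(m)})$ with matching rank and winding number that are not stably homotopic at $a_0$. Assembling these into the sequence algebras $\prod_m M_{n_m}/\bigoplus_m M_{n_m}$ would give a contradiction with the sequential theorem. The step to check carefully is that ``same partial $K$-theory map on $\langle P\rangle$'' persists along such a sequence. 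Compatibility of $\Theta$ with bonding maps is already noted in the text, so the equivalence with equality of rank and winding number at the target control is immediate.
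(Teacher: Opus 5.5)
Your proposal is correct and matches the paper's proof step for step. The steps are: Willett's stable-uniqueness theorem with target error $\alpha$; enlarging $P$ to contain $[1]$ and $\beta$; \Cref{lem:surface-ucp-EL} and \Cref{lem:surface-K-data-reduction} to pass from equal rank and winding number to equal partial maps on $\langle P\rangle$; and \Cref{lem:close-ucp-homotopy} with the margin $\delta+\alpha^2/4<\varepsilon_0$. As in the paper's choice $\delta_Q<\min\{\delta_1,\mu\}$, the tolerance of $a_Q$ must be taken below your $\delta$.

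The remaining differences are cosmetic. The paper cites the already quantitative \cite[Theorem~5.3]{willett_conditional_2026}; it uses Theorem~3.28 only for the UCT, so your sequential fallback is not needed. The paper's preliminary enlargement of $E_0$ to $\bar E_0$ is inessential.
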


\begin{proof}
Write $a_0=(E_0,\varepsilon_0)$.  Choose a finite symmetric set
\(\bar E_0\supseteq E_0\) containing the surface generators, their
inverses, and the products needed in \eqref{eq:surface-winding}, and put
\(\bar a_0=(\bar E_0,\varepsilon_0)\).  It is enough to obtain equality at
the stronger target \(\bar a_0\), because the bonding homomorphism from
\(\bar a_0\) to \(a_0\) may then be applied.  Choose numbers
$\alpha,\mu>0$ such that
\begin{equation}\label{eq:surface-ucp-homotopy-margin}
 \mu+\alpha^2/4<\varepsilon_0.
\end{equation}

Apply the stable-uniqueness theorem
\cite[Theorem~5.3]{willett_conditional_2026} with target set $\bar E_0$ and
target error $\alpha$.  The hypotheses are supplied by
\Cref{lem:surface-input}: the approximate $K$-homology UCT gives the
$K$-theoretic uniqueness theorem, and the finite-quotient representations
form a good class by property FD.  We obtain a finite collection $P$ of such projection representatives, a
finite set $E_1\subseteq\Gamma_g$, and $\delta_1>0$ with the following
property.  If two same-dimensional u.c.p. maps
$\phi_0,\phi_1:A\to M_n(\mathbb C)$ are
$(E_1,\delta_1)$-multiplicative and induce the same map on $P$, then there
are a finite-quotient representation
$\theta:A\to M_k(\mathbb C)$ and a unitary $v\in U(n+k)$ such that
\begin{equation}\label{eq:surface-ucp-stable-uniqueness}
 \max_{g\in \bar E_0}
 \|v(\phi_0(u_g)\oplus\theta(u_g))v^*
       -(\phi_1(u_g)\oplus\theta(u_g))\|<\alpha.
\end{equation}
Enlarge $P$ to contain $[1]$ and $\beta$.  The implication just stated
remains valid, because equality of the induced maps on the enlarged collection implies
equality on the original collection supplied by stable uniqueness.
Choose $a_Q=(E_Q,\delta_Q)$ so that
\[
 \bar E_0\cup E_1\subseteq E_Q,
 \qquad
 0<\delta_Q<\min\{\delta_1,\mu\},
\]
and so that the induced $K$-theory maps on $\langle P\rangle$, as well as
the winding number in
\Cref{lem:surface-ucp-EL}, are defined.  The same statements remain true at
every stronger control.

Let $a\succeq a_Q$ and let
$x\in\QRucpstage{\Gamma_g}{a}$ have zero coordinates.  Write
$x=[\phi_+]-[\phi_-]$.  The rank coordinate is zero, so the two maps have
the same matrix size.  The winding coordinates agree, and
\Cref{lem:surface-ucp-EL} says that the induced maps agree on $\beta$; they
also agree on $[1]$ because their dimensions agree.
\Cref{lem:surface-K-data-reduction} now shows that the maps induced by
$\phi_+$ and $\phi_-$ agree on all of $P$.  Therefore
\eqref{eq:surface-ucp-stable-uniqueness} applies.

After restriction to $\bar E_0$, the multiplication defects of both stabilized
maps are smaller than $\mu$; the representation $\theta$ contributes no
defect.  The margin in \eqref{eq:surface-ucp-homotopy-margin}, together with
\Cref{lem:close-ucp-homotopy}, gives a homotopy through u.c.p.
$(\bar E_0,\varepsilon_0)$-quasi-representations.  Thus their difference
vanishes in $\QRucpstage{\Gamma_g}{\bar a_0}$, and hence also in
$\QRucpstage{\Gamma_g}{a_0}$.  This is \eqref{eq:surface-Q-detection}.
\end{proof}

\subsubsection{Fredholm classes and index pairings}

For Fredholm quasi-representations, the vanishing criterion comes from
the inverse to the controlled index-pairing map. Its proof does not use
stable uniqueness of finite-dimensional maps.

\begin{lemma}[a vanishing criterion from the approximate $K$-homology UCT]
\label{lem:surface-controlled-KH-detection}
Fix a balanced absorbing representation $\pi$ of $A$.  For a controlled
$K$-homology control $b=(\Omega,\eta)$, put
\[
 \mathrm{KH}^{\pi}(A;b):=KK^\pi_\eta(\Omega;\mathbb C).
\]
For every target control
$b_0$ there exist a stronger control $b_1\succeq b_0$ and a finite collection $P$ of projection representatives containing
$[1]$ and $\beta$ such that the controlled index map
\[
 \operatorname{Ind}_{P,b_1}:\mathrm{KH}^{\pi}(A;b_1)\longrightarrow
 \operatorname{Hom}_{\Lambda_0^{(N)}}
    (\langle P\rangle,\underline K_0(\mathbb C))
\]
satisfies
\begin{equation}\label{eq:surface-H-detection}
 \operatorname{Ind}_{P,b_1}(y)=0
 \quad\Longrightarrow\quad
 r^{\mathrm{KH}}_{b_1,b_0}(y)=0.
\end{equation}
\end{lemma}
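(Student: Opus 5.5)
The plan is to deduce the statement from the quantitative approximate $K$-homology UCT of Willett--Yu and Willett, specialized to $A=C^*_{\max}(\Gamma_g)$ by \Cref{lem:surface-input}, and then to shrink the finite collection of projections using \Cref{lem:surface-K-data-reduction}. The approximate UCT in the form of \cite[Definition~3.25 and Theorem~3.28]{willett_conditional_2026} (see also \cite[Sections~3--5]{willett_yu_uct_2024}) should give the following. For every target control $b_0=(\Omega_0,\eta_0)$ there are a finite collection $P_0$ of projection representatives in total $K$-theory, a bound $N$ on the coefficient moduli, and a stronger control $b_1\succeq b_0$ such that any class $y\in\mathrm{KH}^{\pi}(A;b_1)$ whose controlled index map vanishes on $\langle P_0\rangle$ restricts to zero at $b_0$. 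This is exactly the ``injectivity after restriction'' half of the approximate UCT; the $\operatorname{Ext}$ half is not needed.

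The first step is to check the hypotheses. \Cref{lem:surface-input}(iii) gives the approximate $K$-homology UCT for $A$, together with the freeness of $K_0(A)$ and $K_1(A)$. The fixed balanced absorbing $\pi$ is the one used in \Cref{rem:FQRucp-controlled-K-homology}, so the groups $KK^\pi_\eta(\Omega;\mathbb C)$ are those of that theorem.

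The second step enlarges $P_0$ to $P$ by adjoining projection representatives of $[1]$ and $\beta$. Strengthening $b_1$ if necessary keeps $\operatorname{Ind}_{P,b_1}$ defined, because almost-projection spectral gaps persist at stronger controls. If $\operatorname{Ind}_{P,b_1}(y)=0$, then in particular its restriction to $\langle P_0\rangle$ vanishes, since $\langle P_0\rangle\subseteq\langle P\rangle$ and restriction of a $\Lambda$-linear map to a submodule is compatible with the index construction. The UCT conclusion then gives $r^{\mathrm{KH}}_{b_1,b_0}(y)=0$, which is \eqref{eq:surface-H-detection}.

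The point I expect to need the most care is matching the precise quantifiers and form of the cited approximate UCT. That result might be phrased as a statement about $KL$ or as an inverse-system isomorphism, rather than as an explicit vanishing at a chosen weaker control. If it is phrased as an inverse-system equivalence, in the sense of \Cref{def:inverse-system-equivalence}, between controlled $K$-homology and the system of partial $\Lambda$-homomorphisms on growing collections $P$, I would instead argue as in \Cref{lem:inverse-system-finite-stage}. There the composite identity $g\circ\operatorname{Ind}=r^{\mathrm{KH}}$ shows directly that vanishing of the index forces vanishing after restriction.

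\Cref{lem:surface-K-data-reduction} is not logically needed for the implication itself. It explains why, for surface groups, the hypothesis reduces in practice to the two integers given by pairing with $[1]$ and with $\beta$. I would note this in passing for the subsequent use.
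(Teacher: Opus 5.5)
Your proposal is correct and is essentially the paper's argument: take the finite-stage detection statement that Willett's approximate $K$-homology UCT machinery supplies for the prescribed $b_0$, then enlarge $P$ to contain $[1]$ and $\beta$ and strengthen $b_1$ so the enlarged pairings are defined. Vanishing on $\langle P\rangle$ then passes to the original collection by restriction. The only difference is the citation: the paper takes the finite-stage statement from the proof of Theorem~4.4 of \cite{willett_conditional_2026}, with component maps from its Remark~3.30, rather than from Theorem~3.28; this agrees with your caution that the implication must come from a genuine finite-stage statement (or an inverse-system equivalence), not merely from an isomorphism of inverse limits.
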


\begin{proof}
We use the finite-stage detection statement in the proof of
\cite[Theorem~4.4]{willett_conditional_2026}, with the component maps
specified in \cite[Remark~3.30]{willett_conditional_2026}.
Under the approximate $K$-homology UCT verified in
\Cref{lem:surface-input}, it gives, for the prescribed $b_0$, a
finite collection $P$ and a stronger control $b_1$ such that
\[
 \operatorname{Ind}_{P,b_1}(y)=\operatorname{Ind}_{P,b_1}(y')
 \quad\Longrightarrow\quad
 r^{\mathrm{KH}}_{b_1,b_0}(y)=r^{\mathrm{KH}}_{b_1,b_0}(y')
\]
for all classes $y,y'$ at $b_1$. In the notation of that proof,
the component index map is
\[
 \operatorname{Ind}_{P,b_1}
   =\kappa^{(Y,2\sqrt\delta)}_P\circ\Phi^{(Y,2\sqrt\delta)},
 \qquad b_1=(Y,2\sqrt\delta),
\]
after a cofinal change of control. Its kernel is killed by the
forgetful map to the prescribed target control, as stated there.
Consequently the well-defined component factorization is
\[
 \begin{aligned}
 s_{P,b_1,b_0}:\operatorname{im}(\operatorname{Ind}_{P,b_1})
   &\longrightarrow\mathrm{KH}^{\pi}(A;b_0),\\
 s_{P,b_1,b_0}(\operatorname{Ind}_{P,b_1}(y))
   &=r^{\mathrm{KH}}_{b_1,b_0}(y).
 \end{aligned}
\]
This is a homomorphism because both the index map and the bonding
map are homomorphisms. Enlarge $P$ to include $[1]$ and $\beta$,
and strengthen $b_1$ so that the enlarged pairings are defined.
Taking $y'=0$ gives \eqref{eq:surface-H-detection}. This uses the
cited finite-stage statement, rather than a factorization inferred
solely from an isomorphism of inverse limits.
\end{proof}

\begin{proposition}[vanishing of Fredholm classes after restriction]
\label{prop:surface-Fredholm-detection}
For every target u.c.p. control
$a_0\in\mathcal C_{\Gamma_g}^{\mathrm{sym}}$ there is a
stronger control $a_K\succeq a_0$ and homomorphisms
\[
 \Theta^{\mathrm{FQR,ucp}}_a:\FQRucpstage{\Gamma_g}{a}
 \longrightarrow\mathbb Z^2
 \qquad(a\succeq a_K)
\]
which commute with bonding maps and satisfy
\begin{equation}\label{eq:surface-K-detection}
 \Theta^{\mathrm{FQR,ucp}}_a(z)=0
 \quad\Longrightarrow\quad
 r^{\mathrm{FQR,ucp}}_{a,a_0}(z)=0.
\end{equation}
\end{proposition}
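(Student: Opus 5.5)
The plan is to transport the problem to controlled $K$-homology, where \Cref{lem:surface-controlled-KH-detection} already gives the needed vanishing, and to define $\Theta^{\mathrm{FQR,ucp}}_a$ as the two index pairings with $[1]$ and $\beta$. By \Cref{rem:FQRucp-controlled-K-homology}, the full-domain system $(\FQRucpstage{\Gamma_g}{a})_a$ and the controlled $K$-homology system $(\mathrm{KH}^{\pi}(A;b))_b$ are related by an inverse-system equivalence in the sense of \Cref{def:inverse-system-equivalence}. Concretely, there are forward maps
\[
 \Psi_a:\FQRucpstage{\Gamma_g}{\sigma'(a)}\to\mathrm{KH}^\pi(A;b(a)),
\]
and backward maps
\[
 \Xi_a:\mathrm{KH}^\pi(A;\tau'(a))\to\FQRucpstage{\Gamma_g}{a},
\]
whose composites are bonding maps. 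Given $a_0$, the steps are as follows.

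First, apply $\Xi_{a_0}$ and obtain its source control $b_0=\tau'(a_0)$. Second, apply \Cref{lem:surface-controlled-KH-detection} to $b_0$. This yields $b_1\succeq b_0$ and a collection $P\ni[1],\beta$ for which $\operatorname{Ind}_{P,b_1}(y)=0$ implies $r^{\mathrm{KH}}_{b_1,b_0}(y)=0$. Third, choose $a_K\succeq a_0$ so large that the forward map out of $a_K$ lands at a control dominating $b_1$. After enlarging further, arrange that the composite
\[
 \Xi_{a_0}\circ r^{\mathrm{KH}}\circ\Psi
\]
equals $r^{\mathrm{FQR,ucp}}_{a_K,a_0}$. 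This is possible because $\sigma',\tau'$ are order preserving and the composites are bonding maps.

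For $a\succeq a_K$, set
\[
 \Theta^{\mathrm{FQR,ucp}}_a(z)
 =\bigl(\langle[1],\Psi(z)\rangle,\ \langle\beta,\Psi(z)\rangle\bigr),
\]
that is, the two coordinates of $\operatorname{Ind}_{P}\Psi(r z)$ at $[1]$ and $\beta$. This is a homomorphism. It commutes with bonding maps because both $\Psi$ and the controlled pairings are natural under restriction.

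For the implication, suppose $\Theta(z)=0$. Then the index homomorphism of $\Psi(z)$ vanishes on $[1]$ and $\beta$. The coefficient maps in $\operatorname{Hom}_{\Lambda}(\langle P\rangle,\underline K_0(\mathbb C))$ commute with Bockstein and reduction operations, so \Cref{lem:surface-K-data-reduction}, applied to this index map and $0$, shows that the whole index map vanishes. Hence the restriction of $\Psi(z)$ to $b_0$ is zero. Applying $\Xi_{a_0}$ and the composite identity gives
\[
 r^{\mathrm{FQR,ucp}}_{a,a_0}(z)=\Xi_{a_0}(0)=0.
\]

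The main obstacle is bookkeeping. One must check that the pairings computed from the Fredholm cycle agree with the controlled $K$-homology index map after the cycle conversion of \Cref{rem:FQRucp-controlled-K-homology}, which only holds once the pairings are defined at a common control. One must also check that the chain of controls can be chosen independently of $z$. I would handle this by fixing all controls, namely $b_0$, then $b_1$ and $P$, then $a_K$, before any class is chosen. The equality of pairings would be verified on the relative projection model $\RR(p,e;\Pi)$, where the pairing with a projection $P_0$ is the index of the compression of $\Pi(P_0)$.
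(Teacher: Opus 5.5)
Your proposal is correct and follows essentially the same route as the paper: fix the backward comparison $\mathrm{KH}^\pi(A;b_0)\to\FQRucpstage{\Gamma_g}{a_0}$, apply \Cref{lem:surface-controlled-KH-detection} to get $b_1$ and $P$, choose $a_K$ with a forward component map into $\mathrm{KH}^\pi(A;b_1)$ satisfying the composite bonding identity, define $\Theta$ by the pairings with $[1]$ and $\beta$ (extended to $a\succeq a_K$ by precomposing with restriction), and conclude via \Cref{lem:surface-K-data-reduction}. The pairing-compatibility check you flag at the end is not needed for this statement, because $\Theta$ is defined directly on the $K$-homology side; the paper treats that comparison separately in \Cref{lem:surface-coordinate-compatibility}.
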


\begin{proof}
Use the component maps in
\Cref{rem:FQRucp-controlled-K-homology} for the global u.c.p. Fredholm
system.  For the prescribed target $\FQRucpstage{\Gamma_g}{a_0}$, choose a
controlled $K$-homology stage $b_0$ and a comparison homomorphism
\begin{equation}\label{eq:surface-H-to-K-recovery}
 \varrho_0:\mathrm{KH}^{\pi}(A;b_0)
 \longrightarrow \FQRucpstage{\Gamma_g}{a_0}.
\end{equation}
Apply \Cref{lem:surface-controlled-KH-detection} to obtain $b_1$ and $P$.
Choose a sufficiently strong u.c.p. group control $a_K$ and a component
homomorphism
\begin{equation}\label{eq:surface-K-to-H-component}
 \jmath_1:\FQRucpstage{\Gamma_g}{a_K}
 \longrightarrow \mathrm{KH}^{\pi}(A;b_1)
\end{equation}
so that the two component constructions satisfy the exact composite identity
\begin{equation}\label{eq:surface-K-H-composite}
 \varrho_0r^{\mathrm{KH}}_{b_1,b_0}\jmath_1
   =r^{\mathrm{FQR,ucp}}_{a_K,a_0}.
\end{equation}
This is one of the two bonding-map identities in
\Cref{def:inverse-system-equivalence}; no limiting class is being chosen.

By \Cref{lem:surface-K-data-reduction}, the restriction of the controlled
index map to $\langle P\rangle$ is determined by its values on $[1]$ and
$\beta$.  Define
\begin{equation}\label{eq:surface-DK-defined}
 \Theta^{\mathrm{FQR,ucp}}_{a_K}(z)=
 \bigl(\langle[1],\jmath_1z\rangle,
       \langle\beta,\jmath_1z\rangle\bigr).
\end{equation}
If this pair is zero, the index map of $\jmath_1z$ vanishes on all of $\langle P\rangle$.
Equations \eqref{eq:surface-H-detection} and
\eqref{eq:surface-K-H-composite} then give
$r^{\mathrm{FQR,ucp}}_{a_K,a_0}(z)=0$.

For $a\succeq a_K$, define
$\Theta^{\mathrm{FQR,ucp}}_a=
\Theta^{\mathrm{FQR,ucp}}_{a_K}
r^{\mathrm{FQR,ucp}}_{a,a_K}$.  This gives compatibility with bonding maps,
and the same argument followed by restriction to $a_0$ proves
\eqref{eq:surface-K-detection} at every stronger stage.
\end{proof}

\begin{lemma}[compatibility of index pairings with rank and winding number]
\label{lem:surface-coordinate-compatibility}
After one common strengthening of the controls in
\Cref{prop:surface-finite-detection,prop:surface-Fredholm-detection},
\begin{equation}\label{eq:surface-coordinate-compatibility}
 \Theta^{\mathrm{FQR,ucp}}_a\iota^{\mathrm{ucp}}_a
   =\Theta^{\mathrm{QR,ucp}}_a
\end{equation}
for every stronger control $a$.
\end{lemma}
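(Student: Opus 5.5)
Both sides of \eqref{eq:surface-coordinate-compatibility} are additive in $x$, and $\QRucpstage{\Gamma_g}{a}$ is generated by classes $[\phi]$ of single finite-dimensional u.c.p. maps $\phi\colon A\to M_n(\mathbb C)$. It therefore suffices to treat one such $\phi$ at a control strong enough for everything below to be defined. By definition, $\Theta^{\mathrm{QR,ucp}}_a[\phi]=(n,w(\phi))$. We must show that $\Theta^{\mathrm{FQR,ucp}}_a$ returns the same pair on the cycle $\iota^{\mathrm{ucp}}_a[\phi]=(\mathbb C^n\oplus 0,\phi,0)$. By \eqref{eq:surface-DK-defined} and the definition at stronger stages through restriction to $a_K$, the FQR coordinates are the pairings of $\jmath_1 r^{\mathrm{FQR,ucp}}_{a,a_K}\iota^{\mathrm{ucp}}_a[\phi]$ with $[1]$ and $\beta$. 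Since $\iota^{\mathrm{ucp}}$ commutes with bonding maps, we may replace $\phi$ by its restriction to $a_K$.

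The key step is to compute $\jmath_1$ on a finite-dimensional cycle with operator zero. Following the cycle conversion in \Cref{rem:FQRucp-controlled-K-homology}, the operator $T=0$ needs no quasicentral modification. Take the Stinespring dilation $\phi=P\rho P$, and add the degenerate summand that makes $\rho$ balanced and absorbing. The relative index projection from the proof of \Cref{thm:FD-Cuntz-approx} reduces, for $B=0$, to $p_0=\operatorname{diag}(1,0)$ and $e_0=\operatorname{diag}(0,1)$, as noted there. So the controlled $K$-homology class is represented by the Stinespring projection $P$, of rank $n$, relative to the zero negative part. By the Willett--Yu definition of the controlled index pairing, its pairing with a projection representative $\mathbf p\in P$ is the class of the spectral projection of $(P\otimes 1)\rho(\mathbf p)(P\otimes 1)=\phi(\mathbf p)$. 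This is the partial $K$-theory map $\phi_*$ on $\langle P\rangle$ from \Cref{lem:surface-ucp-EL}. The pairings with $[1]$ and $\beta$ are therefore $\phi_*([1])$ and $\phi_*(\beta)$, which \eqref{eq:surface-EL-formula} identifies with $n$ and $w(\phi)$.

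The main obstacle is that $\jmath_1$ is not literally this construction. It is a component map chosen up to stable homotopy, with stabilizations by degenerate cycles, absorption unitaries and small norm perturbations, all at a control determined by $b_1$. I would handle this in two ways. First, choose the common strengthening so that the $\langle P\rangle$ pairings are defined and are locally constant along homotopies at that control; this is the spectral-gap argument already used in \Cref{lem:surface-ucp-EL}. Second, use that degenerate summands and the balancing summand pair to zero, because their compressions give equal positive and negative projections. The index pairing then factors through the stable-homotopy class, and the representative above may be used.

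A second issue is that the pairings are computed from $\phi$, whereas the winding number $w(\phi)$ is defined through the polar correction $p_\phi$. This is exactly what \Cref{lem:surface-ucp-EL} addresses. It requires strengthening the control once more so that the straight-line path between $\phi(\mathbf p)$ and its image under $p_\phi$ does not cross $1/2$. Taking the maximum of all these strengthenings with $a_Q$ and $a_K$ gives the common control. Compatibility with bonding maps then shows that \eqref{eq:surface-coordinate-compatibility} holds at every stronger $a$.
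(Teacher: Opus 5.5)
Your proposal is correct and follows the same route as the paper. You reduce to a single finite-dimensional $\phi$ by additivity and naturality, observe that the index pairing of the operator-zero cycle $\iota^{\mathrm{ucp}}[\phi]$ is the partial $K$-theory map $\phi_*$, and identify its values on $[1]$ and $\beta$ with $n$ and $w(\phi)$ by \Cref{lem:surface-ucp-EL}. Beyond the paper, you spell out why the component map $\jmath_1$ preserves the pairing, using the Stinespring/relative-projection representative and the invariance of the pairing under degenerate summands and controlled homotopies; the paper simply asserts this.
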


\begin{proof}
The component maps between u.c.p. Fredholm cycles and controlled
$K$-homology preserve the index pairing.  For a finite-dimensional cycle,
the pairing is exactly the partial $K$-theory map induced by its u.c.p. map.
The two values are therefore its dimension and its pairing with $\beta$.
By \Cref{lem:surface-ucp-EL}, the latter is its winding number.  This proves
the displayed identity on generators, hence on the Grothendieck group.
Naturality of the component maps gives the identity at every stronger
control.
\end{proof}

\subsubsection{Direct computation of the two limiting groups}

We next construct the two generators geometrically.

\begin{lemma}[u.c.p. realization of the surface coordinate]
\label{lem:surface-clock-shift}
At every sufficiently strong u.c.p. control $a$ there is a homomorphism
\[
 \gamma_a:\mathbb Z^2\longrightarrow\QRucpstage{\Gamma_g}{a}
 \quad\text{such that}\quad
 \Theta^{\mathrm{QR,ucp}}_a\gamma_a=1_{\mathbb Z^2}.
\]
\end{lemma}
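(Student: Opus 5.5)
Since $\mathbb Z^2$ is free, it suffices to produce, at every sufficiently strong u.c.p. control $a=(E,\varepsilon)$, two classes $x_1,x_2\in\QRucpstage{\Gamma_g}{a}$ with $\Theta^{\mathrm{QR,ucp}}_a(x_1)=(1,0)$ and $\Theta^{\mathrm{QR,ucp}}_a(x_2)=(0,1)$. Then $\gamma_a(m,k)=mx_1+kx_2$ is a homomorphism with $\Theta^{\mathrm{QR,ucp}}_a\gamma_a=1_{\mathbb Z^2}$. For $x_1$ I take the trivial one-dimensional representation $\mathbf 1:A\to\mathbb C$. It is a genuine $*$-homomorphism, hence u.c.p. with zero defect. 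Its rank is $1$, and $R(\mathbf 1)=1$, so its winding number is $0$. This holds at every control.

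For $x_2$ I would use a clock--shift construction. Fix $N$, let $\omega=e^{2\pi i/N}$, and let $U,V\in U(N)$ be the clock and shift matrices. Then $UV=\omega VU$, so $[U,V]=\omega$ scalar. Put $q_N(a_1)=U$, $q_N(b_1)=V$, and $q_N(a_j)=q_N(b_j)=1$ for $j\geq2$. This is an exact representation of the free group on the generators. Its relator value is $R=[U,V]=\omega\cdot1_N$, with $\operatorname{Tr}\log R=N\cdot 2\pi i/N=2\pi i$, so the winding number is $1$. However, this is not a quasi-representation, because $\|R-1\|$ is not small.

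The usual fix is to spread the commutator over a smooth family: replace $U,V$ by almost commuting unitaries with $\|[U,V]-1\|=O(1/N)$ and winding number $1$ (Voiculescu's pair). One then has to extend the values on generators to an $(E,\varepsilon)$-multiplicative map on all of $\Gamma_g$. This extension is the main obstacle: approximate multiplicativity on the generators plus an approximate relator does not by itself control products over arbitrary words in $E$.

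I would therefore avoid ad hoc extension and go through the Fredholm side. Take the flat-almost bundle, or equivalently a Cuntz pair, realizing $\beta$: the class $z_\beta\in KK(A,\mathbb C)\cong KK^{\Gamma_g}(\mathbb C,\mathbb C)$ with pairings $\langle[1],z_\beta\rangle=0$ and $\langle\beta,z_\beta\rangle=1$. It exists by \eqref{eq:surface-Khom-pairing}. By \Cref{prop:exact}, restricting an equivariant representative to the tested elements gives a class $\kappa(z_\beta)$ in $\FQRucpstage{\Gamma_g}{\sigma(a)}$ at every control. By \Cref{cor:group}, which applies since $A$ is residually finite-dimensional, the homomorphism $\rho_a$ gives $x_2:=\rho_a(\kappa(z_\beta))\in\QRucpstage{\Gamma_g}{a}$ with $\iota^{\mathrm{ucp}}_ax_2=r^{\mathrm{FQR,ucp}}\kappa(z_\beta)$.

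Then \Cref{lem:surface-coordinate-compatibility} gives
\[
\Theta^{\mathrm{QR,ucp}}_a(x_2)=\Theta^{\mathrm{FQR,ucp}}_a\iota^{\mathrm{ucp}}_a(x_2).
\]
The right side equals the index pairings of $\kappa(z_\beta)$ with $[1]$ and $\beta$, which are $(0,1)$, because the component maps to controlled $K$-homology preserve pairings. The remaining check is exactly this preservation of pairings for the exact class, which I expect to follow from \Cref{rem:FQRucp-controlled-K-homology}.

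If a genuinely geometric generator is preferred, one can replace this argument with Carri\'on--Dadarlat's almost-flat bundle of degree one on $\Sigma_g$, whose holonomy gives the quasi-representation, combined with \Cref{lem:surface-ucp-EL}. In that route the main work is again the uniform multiplicativity on all of $E$.
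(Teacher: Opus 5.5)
Your final argument is correct, but it takes a different route from the paper's. You were steered to it by a misdiagnosis. The clock--shift pair already is Voiculescu's almost-commuting pair: $R=[U_N,V_N]=\omega_N1_N$ gives $\|R-1\|=|\omega_N-1|\leq 2\pi/N\to0$, so nothing needs to be spread out.

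The paper keeps exactly this construction and handles the extension problem you raise by passing to the central extension $\widetilde\Gamma_g$, in which the relator equals the central element $z$.
\begin{itemize}
\item The assignment $a_1\mapsto U_N$, $b_1\mapsto V_N$, other generators $\mapsto 1_N$, $z\mapsto\omega_N1_N$ is a genuine representation $\pi_N$ of $\widetilde\Gamma_g$.
\item For a section $s$ with integer cocycle $s(x)s(y)=z^{\kappa(x,y)}s(xy)$, the map $q_N=\pi_N\circ s$ is defined on all of $\Gamma_g$. Its defect is $|\omega_N^{\kappa(x,y)}-1|\leq2\pi|\kappa(x,y)|/N$, which is uniformly small on any finite set, and $w(q_N)=1$.
\item \Cref{lem:LP-CP-replacement} then gives a global u.c.p.\ map $\phi_N$ with $w(\phi_N)=1$, and the paper sets $\gamma_a(r,k)=r[\mathbf 1]+k([\phi_N]-[\mathbf 1_N])$.
\end{itemize}

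You instead obtain the winding-one class abstractly. You compress an exact cycle for $z_\beta$ using \Cref{cor:group} and read off its coordinates with \Cref{lem:surface-coordinate-compatibility}.

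This avoids the lifting property and any explicit construction, but it has two costs.
\begin{itemize}
\item It makes the ``direct'' surface computation depend on \Cref{thm:main}. That theorem already yields u.c.p.\ QR-quasidiagonality of $\Gamma_g$ on its own, since $C^*_{\max}(\Gamma_g)$ is residually finite-dimensional. So the independent second description of the comparison in that subsection is lost.
\item Its load-bearing step is the one you leave as expected: the controlled pairings defining $\Theta^{\mathrm{FQR,ucp}}$ must agree with the Kasparov pairings on the restriction of an exact cycle. This is true, and it is of the same kind as the pairing preservation used in \Cref{lem:surface-coordinate-compatibility}, but it should be stated explicitly.
\end{itemize}

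One small correction: the homomorphism $KK^G(\mathbb C,\mathbb C)\to\mathcal F$ of \Cref{prop:exact} lands in the unitary-valued Fredholm group, not in $\FQRucpstage{\Gamma_g}{\sigma(a)}$. Instead, take a unital exact representative of $z_\beta$, obtained by compressing to the unit-support corner of $\pi(1)$. It is a global u.c.p.\ Fredholm cycle of defect zero at every control.
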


\begin{proof}
Consider the central extension
\begin{equation}\label{eq:surface-central-extension}
 \widetilde\Gamma_g=
 \left\langle a_1,b_1,\ldots,a_g,b_g,z\ \middle|\
 z\text{ central},\ \prod_{j=1}^g[a_j,b_j]=z\right\rangle.
\end{equation}
Choose a section $s:\Gamma_g\to\widetilde\Gamma_g$ which takes the surface
generators to the displayed lifts.  There is an integer-valued cocycle
$\kappa$ such that
\begin{equation}\label{eq:surface-cocycle}
 s(x)s(y)=z^{\kappa(x,y)}s(xy).
\end{equation}
For $N\geq3$, let $\omega_N=e^{2\pi i/N}$ and define clock and shift
unitaries by
\[
 U_Ne_k=\omega_N^ke_k,
 \qquad V_Ne_k=e_{k+1\pmod N}.
\]
Then $[U_N,V_N]=\omega_N1_N$.  The assignments
\[
 a_1\mapsto U_N,\quad b_1\mapsto V_N,\quad
 a_j,b_j\mapsto1_N\ (j\geq2),\quad z\mapsto\omega_N1_N
\]
define a representation $\pi_N:\widetilde\Gamma_g\to U(N)$.  Put
$q_N(x)=\pi_N(s(x))$.  Equation \eqref{eq:surface-cocycle} gives
\begin{equation}\label{eq:surface-qN-defect}
 \|q_N(x)q_N(y)-q_N(xy)\|
 =|\omega_N^{\kappa(x,y)}-1|
 \leq\frac{2\pi}{N}|\kappa(x,y)|.
\end{equation}
Thus $q_N$ satisfies any prescribed unitary control when $N$ is large.
Moreover $w(q_N)=1$.

Fix a u.c.p. control $a$.  Apply
\Cref{lem:LP-CP-replacement} with approximation tolerance small enough that
polar correction preserves the spectral gap defining the winding number.
For all sufficiently large $N$, it supplies a global u.c.p. map
$\phi_N:A\to M_N(\mathbb C)$ representing a class at the $a$-stage and
satisfying $w(\phi_N)=w(q_N)=1$ by
\Cref{lem:surface-ucp-EL}.  If $\mathbf1_N$ denotes the $N$-dimensional
trivial representation, then
\[
 b_a=[\phi_N]-[\mathbf1_N],
 \qquad \Theta^{\mathrm{QR,ucp}}_a(b_a)=(0,1).
\]
Let $u_a$ be the class of the one-dimensional trivial representation, so
$\Theta^{\mathrm{QR,ucp}}_a(u_a)=(1,0)$.  Define
\[
 \gamma_a(r,k)=r u_a+k b_a.
\]
Negative coefficients are interpreted in the Grothendieck group.  The
displayed coordinate formulas give
$\Theta^{\mathrm{QR,ucp}}_a\gamma_a(r,k)=(r,k)$.
\end{proof}

The following elementary lemma gives the inverse-limit argument needed
for the direct computation.

\begin{lemma}[an inverse-limit isomorphism criterion]
\label{lem:eventual-detection-limit}
Let $(B_a,r_{b,a})$ be an inverse system of abelian groups over a directed
set, and suppose that on a cofinal set of controls there are compatible
homomorphisms $\Theta_a:B_a\to L$ to one abelian group $L$.  Assume:
\begin{enumerate}[label=(\roman*)]
\item for every target $a_0$ there is $a_1\succeq a_0$ such that
$\Theta_a(x)=0$ implies $r_{a,a_0}(x)=0$ for every $a\succeq a_1$;
\item for every sufficiently strong $a$, the map $\Theta_a$ is surjective.
\end{enumerate}
Then the induced homomorphism
$\Theta:\varprojlim_aB_a\to L$ is an isomorphism.
\end{lemma}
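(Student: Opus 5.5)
The plan is a direct element-chasing argument on the inverse limit, proving injectivity and surjectivity of $\Theta$ separately. Throughout, compatibility of the $\Theta_a$ means $\Theta_a r_{b,a}=\Theta_b$ on the cofinal set where they are defined. Accordingly, $\Theta$ is defined on a family $x=(x_a)_a\in\varprojlim_aB_a$ by $\Theta(x)=\Theta_a(x_a)$ for any sufficiently strong $a$. This value is independent of $a$: for two such controls, compare both with a common stronger one $b$ and use $x_a=r_{b,a}x_b$. The induced map is clearly a homomorphism.

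\emph{Injectivity.} Suppose $\Theta(x)=0$ and fix an arbitrary target $a_0$. Let $a_1$ be the control from (i), and choose $a\succeq a_1$ in the cofinal set. Then $\Theta_a(x_a)=\Theta(x)=0$, so (i) gives $x_{a_0}=r_{a,a_0}(x_a)=0$. Since $a_0$ was arbitrary, $x=0$.

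\emph{Surjectivity.} This is the only step needing care, because a limiting family must be built from noncanonical lifts. Fix $\ell\in L$. For each target $a_0$, choose $a_1(a_0)$ as in (i), and choose $a\succeq a_1(a_0)$ strong enough for (ii). Pick $y\in B_a$ with $\Theta_a(y)=\ell$ and set $x_{a_0}=r_{a,a_0}(y)$.

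\emph{Independence of choices.} Suppose $(a,y)$ and $(a',y')$ are two such choices. Take $b\succeq a,a'$ and set $d=r_{b,a}$-compatible difference as follows. By (ii) choose some $y''\in B_b$ with $\Theta_b(y'')=\ell$. Then $y-r_{b,a}y''$ has $\Theta_a=0$ with $a\succeq a_1(a_0)$, so (i) gives $r_{a,a_0}(y)=r_{b,a_0}(y'')$. Symmetrically $r_{a',a_0}(y')=r_{b,a_0}(y'')$. Hence $x_{a_0}$ is well defined.

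\emph{Compatibility.} For $a_0\preccurlyeq a_0'$, compute $x_{a_0'}$ using a lift $y$ at some $a\succeq a_1(a_0),a_1(a_0')$. The same $y$ is then an admissible lift for $a_0$, so $r_{a_0',a_0}x_{a_0'}=r_{a,a_0}(y)=x_{a_0}$. Thus $x\in\varprojlim B_a$.

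Finally, $\Theta(x)=\ell$. For a sufficiently strong $a_0$ in the cofinal set, write $x_{a_0}=r_{a,a_0}(y)$ as above. Compatibility of the $\Theta$'s gives $\Theta_{a_0}(x_{a_0})=\Theta_a(y)=\ell$. Hence $\Theta$ is bijective, and therefore an isomorphism.
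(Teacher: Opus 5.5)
Your proof is correct and follows the paper's argument essentially step for step: injectivity comes directly from (i), and for surjectivity you restrict lifts of $\ell$ from a stage beyond the detection control, proving independence of choices by comparing two lifts with a third lift at a common stronger stage, exactly as the paper does. The stray phrase ``set $d=r_{b,a}$-compatible difference as follows'' should be deleted; the argument that follows it is complete without it.
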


\begin{proof}
If $\Theta((x_a)_a)=0$, choose $a_1$ as in (i).  Then
$x_{a_0}=r_{a_1,a_0}(x_{a_1})=0$.  Since $a_0$ was arbitrary, $\Theta$ is
injective.

Let $\ell\in L$.  For a target $a_0$, choose $a_1$ as in (i), sufficiently
strong that $\Theta_{a_1}$ is onto, and choose $y_{a_1}\in B_{a_1}$ with
$\Theta_{a_1}(y_{a_1})=\ell$.  Set
$x_{a_0}=r_{a_1,a_0}(y_{a_1})$.  This does not depend on the choices.  Indeed,
given a second choice $(a_1',y_{a_1'})$, choose $a_2\succeq a_1,a_1'$ and
$y_{a_2}$ with $\Theta_{a_2}(y_{a_2})=\ell$.  Then
\[
 \Theta_{a_1}(y_{a_1}-r_{a_2,a_1}y_{a_2})=0,
\]
so (i) shows that the two elements have the same image in $B_{a_0}$; the
same argument applies to $a_1'$.  If $a_0\preccurlyeq a_0'$, a choice valid
for $a_0'$ is also valid for $a_0$ after further strengthening.  Hence
$r_{a_0',a_0}(x_{a_0'})=x_{a_0}$.  The family $(x_{a_0})$ lies in the
inverse limit and maps to $\ell$, proving surjectivity.
\end{proof}

\begin{theorem}[direct $\mathbb Z^2$ classification for surface groups]
\label{thm:surface-direct-Z2}
The rank and winding-number maps give canonical isomorphisms
\begin{equation}\label{eq:surface-two-Z2-isomorphisms}
 \mathrm{QR}^{\mathrm{ucp}}(\Gamma_g)
   \xrightarrow{\cong}\mathbb Z^2,
 \qquad
 \mathrm{FQR}^{\mathrm{ucp}}(\Gamma_g)
   \xrightarrow{\cong}\mathbb Z^2.
\end{equation}
Under these identifications the homomorphism
$\iota^{\mathrm{ucp}}$ is the identity of $\mathbb Z^2$.  In particular,
$\Gamma_g$ is QR-quasidiagonal in the u.c.p. sense of \Cref{def:KQD}.

Using the lifting-property comparison, the same coordinates give
\begin{equation}\label{eq:surface-unitary-two-Z2}
 \mathrm{QR}(\Gamma_g)\cong\mathbb Z^2,
 \qquad
 \mathrm{FQR}(*;\Gamma_g;\mathbb C)\cong\mathbb Z^2.
\end{equation}
The canonical inclusion is the identity in these coordinates, so
$\Gamma_g$ also satisfies \Cref{def:QR-unitary}.
\end{theorem}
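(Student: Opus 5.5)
The plan is to apply \Cref{lem:eventual-detection-limit} twice, once to each u.c.p. inverse system, with $L=\mathbb Z^2$.

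\emph{The finite-dimensional system.} Take $B_a=\QRucpstage{\Gamma_g}{a}$ and $\Theta_a=\Theta^{\mathrm{QR,ucp}}_a$, which is defined and compatible with bonding maps on a cofinal set of controls. Hypothesis (i) of \Cref{lem:eventual-detection-limit} is exactly \Cref{prop:surface-finite-detection}. Hypothesis (ii) follows from \Cref{lem:surface-clock-shift}, since $\Theta_a\gamma_a=1$. So the induced map $\mathrm{QR}^{\mathrm{ucp}}(\Gamma_g)\to\mathbb Z^2$ is an isomorphism.

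\emph{The Fredholm system.} Take $B_a=\FQRucpstage{\Gamma_g}{a}$ and the coordinates $\Theta^{\mathrm{FQR,ucp}}_a$ of \Cref{prop:surface-Fredholm-detection}. These are only constructed relative to a prescribed target $a_0$, so the first step is to check they define one compatible family. Each is a pair of controlled index pairings with $[1]$ and $\beta$, and the component maps to controlled $K$-homology commute with bonding maps. Hence after a common strengthening the pairings computed from different targets agree. I expect this to be the fiddliest point: the choices of $b_1$ and $\jmath_1$ depend on $a_0$, and I must argue that the pairing values do not.

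Hypothesis (i) for this system is \Cref{prop:surface-Fredholm-detection}. For (ii), \Cref{lem:surface-coordinate-compatibility} gives $\Theta^{\mathrm{FQR,ucp}}_a\iota^{\mathrm{ucp}}_a\gamma_a=\Theta^{\mathrm{QR,ucp}}_a\gamma_a=1$, so $\Theta^{\mathrm{FQR,ucp}}_a$ is surjective. A second application of \Cref{lem:eventual-detection-limit} yields $\mathrm{FQR}^{\mathrm{ucp}}(\Gamma_g)\cong\mathbb Z^2$.

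Passing \eqref{eq:surface-coordinate-compatibility} to inverse limits shows that $\iota^{\mathrm{ucp}}$ becomes the identity of $\mathbb Z^2$ in these coordinates. Therefore $\iota^{\mathrm{ucp}}$ is an isomorphism, which is u.c.p. QR-quasidiagonality in the sense of \Cref{def:KQD}.

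\emph{The unitary-valued statements.} By \Cref{lem:surface-input}(ii), $A$ has the lifting property. \Cref{cor:LP-unitary-ucp-limits} then gives isomorphisms $\mathrm{QR}(\Gamma_g)\cong\mathrm{QR}^{\mathrm{ucp}}(\Gamma_g)$ and $\mathrm{FQR}(*;\Gamma_g;\mathbb C)\cong\mathrm{FQR}^{\mathrm{ucp}}(\Gamma_g)$, and these intertwine $\iota$ with $\iota^{\mathrm{ucp}}$. This gives \eqref{eq:surface-unitary-two-Z2}, and it shows $\iota$ is the identity in these coordinates, which is \Cref{def:QR-unitary}.

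To say the coordinates are ``rank and winding'' also on the unitary side, I must check that the transported rank and winding are computed directly. The rank is clearly preserved. For the winding, I compute $w(p_\phi)$ versus $w(q)$ for a lift $\phi$ of $q$. Since $p_\phi$ is $5\eta$-close to $q$ by \eqref{eq:LP-polar-recovery-distance}, the spectral gap keeps the integer $w$ unchanged.
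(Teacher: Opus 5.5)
Your argument is correct. On the finite-dimensional side and in the lifting-property transfer it coincides with the paper's proof. It differs on the Fredholm side.

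\emph{The paper's route.} The paper does not apply \Cref{lem:eventual-detection-limit} to the Fredholm system. It uses \Cref{rem:FQRucp-controlled-K-homology} to identify $\mathrm{FQR}^{\mathrm{ucp}}(\Gamma_g)$ with $KL^0(A)$ at the level of inverse limits. It then uses the UCT pairing isomorphism \eqref{eq:surface-Khom-pairing} to identify $KL^0(A)$ with $\mathbb Z^2$. Surjectivity of the Fredholm coordinates therefore comes from the UCT, since every homomorphism $K_0(A)\to\mathbb Z$ is realized by a $K$-homology class. No finite-dimensional construction is needed.

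\emph{Your route.} You use only the finite-stage vanishing statement, \Cref{prop:surface-Fredholm-detection}. You get surjectivity by pushing the clock--shift classes forward along $\iota^{\mathrm{ucp}}_a$ and applying \eqref{eq:surface-coordinate-compatibility}.

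\emph{What each buys.} Your version treats the two systems symmetrically and shows directly that every limiting Fredholm class is represented by finite-dimensional ones. It is also the structure the paper reuses in the proof of \Cref{thm:surface-QKQD}. The paper's version is more conceptual: it explains the $\mathbb Z^2$ as $\operatorname{Hom}(K_0(A),\mathbb Z)$.

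\emph{The compatibility point you flag.} It resolves as you suggest. The maps $\Theta^{\mathrm{FQR,ucp}}$ obtained for different targets are controlled index pairings of images under components of one inverse-system equivalence. Those components commute with bonding maps and preserve the index pairings. Hence the values agree once both are computed at a common stronger stage. This is not an extra cost of your route: the paper needs the same naturality when it uses \eqref{eq:surface-coordinate-compatibility} to identify $\iota^{\mathrm{ucp}}$ with the identity in its $KL^0$ coordinates.

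\emph{Your final check.} You verify that the transported unitary-side coordinates are literally $\dim q$ and the trace--log winding number of $q$, using the short-geodesic path from $q$ to $p_\phi$, along which the integer winding number is constant. The paper leaves this detail implicit.
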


\begin{proof}
By \Cref{rem:FQRucp-controlled-K-homology}, the second inverse limit in
\eqref{eq:surface-two-Z2-isomorphisms} is canonically $KL^0(A)$.  The
isomorphism \eqref{eq:surface-Khom-pairing} identifies it with
$\mathbb Z^2$ by the pairings with $[1]$ and $\beta$.

For the first inverse limit, the coordinate maps
\eqref{eq:surface-Q-coordinates} are compatible on a cofinal family of
controls.  Condition (i) of
\Cref{lem:eventual-detection-limit} is
\Cref{prop:surface-finite-detection}, and condition (ii) is
\Cref{lem:surface-clock-shift}.  That lemma therefore gives
$\mathrm{QR}^{\mathrm{ucp}}(\Gamma_g)\cong\mathbb Z^2$.
Compatibility \eqref{eq:surface-coordinate-compatibility} shows that
$\iota^{\mathrm{ucp}}$ becomes the identity under these identifications with $\mathbb Z^2$.
Finally, \Cref{cor:LP-unitary-ucp-limits} transfers the two isomorphisms to
the unitary-valued theories, proving \eqref{eq:surface-unitary-two-Z2}.
\end{proof}

\begin{corollary}[the character classification for a surface group]
\label{cor:surface-character-classification}
The character is an isomorphism
\begin{equation}\label{eq:surface-character-classification}
 \chi:\mathrm{QR}(\Gamma_g)\xrightarrow{\cong}
 RK^0_{\Gamma_g}(E\Gamma_g)
 \cong K^0(\Sigma_g)\cong\mathbb Z^2.
\end{equation}
For a sufficiently controlled unitary quasi-representation $q$, the two
coordinates are
\begin{equation}\label{eq:surface-final-coordinates}
 \left(\dim q,
 \frac{1}{2\pi i}\operatorname{Tr}\log
       \prod_{j=1}^g[q(a_j),q(b_j)]\right),
\end{equation}
with subtraction for formal differences.
\end{corollary}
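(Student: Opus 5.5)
The plan is to read off \eqref{eq:surface-character-classification} from results already in hand, and then identify the two coordinates. First, \Cref{cor:character} applies to $\Gamma_g$. The surface-group example verifies its three hypotheses: the Haagerup property, quasidiagonality of $C^*_{\max}(\Gamma_g)$ (residual finite-dimensionality via property FD), and LP (the free-by-$\mathbb Z$ argument, or \Cref{lem:surface-input}(ii)). So $\chi:\mathrm{QR}(\Gamma_g)\to\varprojlim_dK^0_{\Gamma_g}(P_d(\Gamma_g))$ is an isomorphism.

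Next I compute the target. Since $\Gamma_g$ is torsion-free, its actions on the Rips complexes are free, so $K^0_{\Gamma_g}(P_d)\cong K^0(P_d/\Gamma_g)$. The Rips complexes are cofinal in a model of $E\Gamma_g$, and the hyperbolic plane is a cocompact model with quotient $\Sigma_g$. Hence for large $d$ the restriction maps between scales are isomorphisms, each canonically identified with $K^0(\Sigma_g)$ via the $\Gamma_g$-homotopy equivalences. In particular the $\varprojlim^1$ term in \eqref{eq:Milnor} vanishes, and the limit equals $RK^0_{\Gamma_g}(E\Gamma_g)\cong K^0(\Sigma_g)$. Finally, $K^0(\Sigma_g)\cong H^0\oplus H^2\cong\mathbb Z^2$ by the collapsing Atiyah--Hirzebruch sequence, with coordinates given by rank and first Chern number.

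It remains to show that these coordinates match \eqref{eq:surface-final-coordinates}. Rank is immediate: the character bundle of an $n$-dimensional $q$ is a rank-$n$ subbundle by \Cref{prop:average-vector-bundle}. For the second coordinate, both $c_1(\chi(q))$ and $w(q)$ are homomorphisms $\mathrm{QR}(\Gamma_g)\to\mathbb Z$. By \Cref{thm:surface-direct-Z2}, $\mathrm{QR}(\Gamma_g)\cong\mathbb Z^2$ via $(\dim,w)$, with generators the trivial class and $b=[q_N]-[\mathbf 1_N]$ from \Cref{lem:surface-clock-shift}. So it suffices to check agreement on these two generators, up to the sign fixed by the orientation convention in \Cref{lem:surface-input}(iii).

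The trivial representation gives the trivial line bundle, with $c_1=0$ and $w=0$. The main obstacle is the clock--shift generator: we need $c_1(\chi(q_N))=\pm1$. The plan is to use that $q_N=\pi_N\circ s$ is a genuine projective representation with central cocycle $\omega_N^\kappa$. Its averaged character bundle should be isomorphic to the bundle associated to $\pi_N$ on the central extension $\widetilde\Gamma_g$, which is a flat $PU(N)$-bundle twisted by the line bundle of Euler class one. Its determinant therefore has $c_1=1$ (this is the Exel--Loring/Carri\'on--Dadarlat computation). Failing a direct geometric check, I would argue through pairings: compose $\chi$ with the index pairing against $[D_{\mathrm{spin}}]$, use \Cref{prop:FQR-character-naturality} to move that pairing onto the Fredholm side, and then invoke \Cref{lem:surface-ucp-EL} together with \Cref{lem:surface-coordinate-compatibility} to identify it with $w$. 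This route yields the formula with the normalization fixed there.
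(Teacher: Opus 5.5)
Your argument is correct in outline, but it runs in the opposite direction from the paper's.

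\textbf{The paper's route.} The paper does not invoke \Cref{cor:character} here. It takes the two $\mathbb Z^2$ computations of \Cref{thm:surface-direct-Z2} and shows that $\chi$ is the identity from the coordinates $(\dim,w)$ to the coordinates (rank, first Chern number) on $K^0(\Sigma_g)$. Bijectivity is deduced from that. The essential input is the index-theoretic identity $c_1\circ\chi=w$, obtained from \Cref{lem:surface-ucp-EL}, \Cref{lem:character-compatibility} and the oriented index pairing. This route avoids the Bott-cancellation and quasidiagonal-comparison machinery entirely.

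\textbf{Your route.} You get bijectivity for free from \Cref{cor:character}. The coordinate identification is then more automatic than you realize. Note that $\chi$ preserves rank, it sends the trivial class to $(1,0)$, and both $\chi$ and $(\dim,w)$ are isomorphisms onto $\mathbb Z^2$. Hence the matrix of $(\mathrm{rk},c_1)\circ\chi\circ(\dim,w)^{-1}$ is $\operatorname{diag}(1,m)$ with $m=c_1(\chi(b))$, and invertibility over $\mathbb Z$ forces $m=\pm1$. So the step you call the main obstacle is already settled by your first step. Only the sign is left, and that is exactly the orientation normalization in \Cref{lem:surface-input}(iii).

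\textbf{Your two plans for the clock--shift step.} This matters because neither plan works as written.
\begin{enumerate}
\item In Plan A, flat projective data pins down the first Chern class of a $U(N)$-lift only modulo $N$. Moreover $\det(\omega_N1_N)=1$, so $\det\circ\pi_N$ is trivial on $z$. It is therefore a genuine character of $\Gamma_g$, whose line bundle is flat with $c_1=0$. The assertion ``its determinant has $c_1=1$'' is unsupported.
\item In Plan B, \Cref{prop:FQR-character-naturality} concerns change of coefficient algebra. It does not relate $\langle\chi^F(z),[D_{\mathrm{spin}}]\rangle$, which is a pairing in the space variable, to $\langle\beta,z\rangle$. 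What is needed is an adjunction between the character and assembly. One option is $\chi^F\kappa(\xi)=p^*\xi$ from \Cref{prop:exact}, combined with $\langle p^*\xi,x\rangle=\langle\mu(x),\xi\rangle$ and exactness of the surface-group Fredholm classes. Another is Dadarlat's index theorem for quasi-representations. This is precisely the identification the paper's own route depends on.
\end{enumerate}

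\textbf{A smaller point.} Cofinality of the Rips complexes plus the cocompact model $\mathbb H^2$ gives only a pro-isomorphism between $(K^0_{\Gamma_g}(P_d(\Gamma_g)))_d$ and the constant system $K^0(\Sigma_g)$. It does not give eventual bijectivity of the restriction maps; for that, use Rips' contractibility theorem for hyperbolic groups. The pro-isomorphism already suffices to make the inverse limit $K^0(\Sigma_g)$ with vanishing $\varprojlim^1$.
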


\begin{proof}
\Cref{thm:surface-direct-Z2} computes the two limiting groups and identifies
the map from finite-dimensional quasi-representations with the identity on rank and winding
number. The character bundle descends to $\Sigma_g$ because the action is
free. Its rank is the dimension of the quasi-representation, while
\Cref{lem:surface-ucp-EL}, the character compatibility in
\Cref{lem:character-compatibility}, and the oriented surface index pairing
identify its first Chern number with the winding coordinate in
\eqref{eq:surface-final-coordinates}. Rank and first Chern number identify
$K^0(\Sigma_g)$ with $\mathbb Z^2$, by the two-dimensional
Atiyah--Hirzebruch calculation. Thus the character is the identity on these
two coordinates and is an isomorphism. This argument uses the direct
coordinate computation, not the quantitative QR-quasidiagonality theorem
proved in the next subsection.
\end{proof}

\subsubsection{The quantitative strengthening}

The direct computation above answers the qualitative classification
question.  We now keep track of the finite sets and tolerances to obtain the
stronger property of \Cref{def:quantitative-KQD}.

\begin{theorem}[surface groups are quantitatively
QR-quasidiagonal]
\label{thm:surface-QKQD}
For every $g\geq2$, the surface group $\Gamma_g$ is quantitatively
QR-quasidiagonal in the u.c.p. sense of
\Cref{def:quantitative-KQD}.  Consequently the character is not only the
isomorphism of \Cref{cor:surface-character-classification}; the following
quantified statements also hold.
\begin{enumerate}[label=(\roman*)]
\item\label{item:surface-character-equality}
Let $c_0=(F_0,\varepsilon_0)\in\mathcal C_{\Gamma_g}$.  There is a number
$d_0>0$ such that, for every $d\geq d_0$, one can choose a stronger control
$c_1=(F_1,\varepsilon_1)\succeq c_0$ for which
\[
 \chi_d:\mathrm{QR}(F_1,\varepsilon_1)
       \longrightarrow K_{\Gamma_g}^0(P_d(\Gamma_g))
\]
is defined.  If $q_1$ and $q_2$ are
$(F_1,\varepsilon_1)$-quasi-representations and
\[
 \chi_d([q_1])=\chi_d([q_2])
       \quad\text{in }K_{\Gamma_g}^0(P_d(\Gamma_g)),
\]
then
\[
 r^{\mathrm{QR}}_{c_1,c_0}([q_1])
 =r^{\mathrm{QR}}_{c_1,c_0}([q_2])
       \quad\text{in }\mathrm{QR}(F_0,\varepsilon_0).
\]
Here $r^{\mathrm{QR}}_{c_1,c_0}$ is the homomorphism obtained by viewing an
$(F_1,\varepsilon_1)$-quasi-representation as an
$(F_0,\varepsilon_0)$-quasi-representation.

\item\label{item:surface-character-lifting}
For every $d>0$, there are $d'>d$ and a control
$c=(F,\varepsilon)\in\mathcal C_{\Gamma_g}$ for which
\[
 \chi_d:\mathrm{QR}(F,\varepsilon)
       \longrightarrow K_{\Gamma_g}^0(P_d(\Gamma_g))
\]
is defined and such that, for every
$z\in K_{\Gamma_g}^0(P_{d'}(\Gamma_g))$, there is
$x\in\mathrm{QR}(F,\varepsilon)$ satisfying
\[
 \chi_d(x)=r^K_{d',d}(z)
       \quad\text{in }K_{\Gamma_g}^0(P_d(\Gamma_g)).
\]
Here $r^K_{d',d}$ is pullback along
$P_d(\Gamma_g)\hookrightarrow P_{d'}(\Gamma_g)$.
\end{enumerate}
\end{theorem}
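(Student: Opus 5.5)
The quickest route to quantitative QR-quasidiagonality is to apply \Cref{cor:group} directly. By \Cref{lem:surface-input}(ii), property FD (Lubotzky--Shalom, or Fournier-Facio--Willett) makes the finite-quotient representations separate the points of $A=C^*_{\max}(\Gamma_g)$. The example following \Cref{cor:limits} shows that a separable residually finite-dimensional algebra is quasidiagonal: the projections onto the first $m$ summands of $\bigoplus_j\pi_j$ commute exactly with the representation. So \Cref{cor:group} applies and gives $\sigma$ and the homomorphisms $\rho_a$ satisfying \eqref{eq:QKQD-left}--\eqref{eq:QKQD-coherence}, with the explicit control $\sigma(E,\varepsilon)=(E,10^{-18}\min\{\varepsilon,1\}^4)$. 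This proves the first sentence without using the surface-specific machinery. As a consistency check, one can also build the $\rho_a$ by hand from \Cref{prop:surface-finite-detection,prop:surface-Fredholm-detection,lem:surface-coordinate-compatibility,lem:surface-clock-shift}. Set $\rho_a(z)=r(\gamma_{a'}(\Theta^{\mathrm{FQR,ucp}}(z)))$ and verify the three identities using eventual detection on both sides. I will mention this only as a remark, since the general theorem already supplies the coherent choice.

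For (i) and (ii), transfer the comparison to the unitary-valued theories and then invoke \Cref{thm:QKQD-Haagerup}. The lifting property of $A$ holds by \Cref{lem:surface-input}(ii), or by the free-by-$\mathbb Z$ argument with Enders--Shulman. So \Cref{prop:QKQD-unitary-transfer}(ii) turns the u.c.p. quantitative comparison into quantitative QR-quasidiagonality in the sense of \Cref{def:quantitative-QR-unitary}. This is done by composing the inverse-system equivalences of \Cref{prop:LP-unitary-ucp-comparison} with \Cref{lem:inverse-system-equivalence-composition}. The Haagerup property (\Cref{lem:surface-input}(i)) supplies the Higson--Kasparov classes \eqref{dw:eq:beta-alpha}. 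Then \Cref{thm:QKQD-Haagerup}\ref{item:QKQD-Haagerup-equality} and \ref{item:QKQD-Haagerup-lifting} read verbatim as (i) and (ii) with $G=\Gamma_g$. Equivalently, this is \Cref{cor:character} applied to $\Gamma_g$.

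The main obstacle is bookkeeping: checking that the coherence identity survives the transfer. \Cref{def:quantitative-QR-unitary} requires a single order-preserving $\sigma$ on all of $\mathcal C_G$ with (QR3). The composite produced by \Cref{lem:inverse-system-equivalence-composition} is naturally indexed through $\lambda$ and $\pi$, and is first defined only on countable cofinal sequences. I would handle this as the paper's proof of \Cref{prop:QR-FQR-amenable-controlled} does. Fix increasing cofinal sequences, define $\rho_c$ at the first member above $c$, and follow it with bonding maps. Then check (QR1)--(QR3) from the composite identities, using the fact that the $L$-maps are well defined on classes because of the convex-homotopy independence of lifts. Once this is done, the quantifier order in (i) and (ii) follows from \Cref{lem:inverse-system-finite-stage} exactly as in the proof of \Cref{thm:main-quantitative}.
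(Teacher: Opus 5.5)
Your proposal is correct, but its proof of the first sentence takes a different route from the paper's.

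\textbf{Your route.} You get quantitative u.c.p.\ QR-quasidiagonality abstractly. Property FD makes $C^*_{\max}(\Gamma_g)$ residually finite-dimensional, hence quasidiagonal. Then \Cref{cor:group}, i.e.\ the relative-projection compression of \Cref{thm:main}, supplies $\sigma$ and the $\rho_a$.

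\textbf{The paper's route.} The paper notes your possibility in its surface-group example, but its actual proof is the one you relegate to a remark. For a target $a$ it picks $b$ at which four things hold:
\begin{itemize}
\item the finite-dimensional vanishing criterion (Willett's stable uniqueness, via property FD and the approximate $K$-homology UCT);
\item the Fredholm vanishing criterion (controlled $K$-homology);
\item the Carri\'on--Dadarlat coordinate compatibility;
\item the clock--shift section $\gamma_b$.
\end{itemize}
It then sets $\rho_{b,a}=r^{\mathrm{QR,ucp}}_{b,a}\gamma_b\Theta^{\mathrm{FQR,ucp}}_b$ and enforces (QK3) by a recursion along a cofinal sequence.

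\textbf{Comparison.} Your route needs far less external input. It does not use LP for the u.c.p.\ statement, whereas the paper's direct proof already uses LP in \Cref{lem:surface-clock-shift}. It also gives sharper quantitative information: the explicit control $\sigma(E,\varepsilon)=(E,10^{-18}\min\{\varepsilon,1\}^{4})$ with unchanged test set, and (QK3) for every pair $a\preccurlyeq b$ without recursion. The paper's route instead gives concrete inverse maps. At finite control a Fredholm class goes to the finite-dimensional class with the same rank and winding number. This is an explicit finite-control classification in those coordinates, rather than bare existence of $\rho_a$.

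\textbf{The transfer step.} Your LP transfer and the appeal to \Cref{thm:QKQD-Haagerup} coincide with the paper's. The bookkeeping you flag as the main obstacle is lighter than you expect. \Cref{prop:LP-unitary-ucp-comparison} already gives order-preserving $\lambda,\pi$ on the full directed sets. So you can put $\sigma^{\mathrm u}=\lambda\sigma\pi$ and $\rho^{\mathrm u}_c=P^{\mathrm{QR}}_c\rho_{\pi(c)}L^{\mathrm{FQR}}_{\sigma(\pi(c))}$. Then (QR1)--(QR3) follow directly from (QK1)--(QK3), the naturality of $L$ and $P$, and their two compatibility squares with $\iota$ and $\iota^{\mathrm{ucp}}$. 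No further cofinal-sequence argument is needed.
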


\begin{proof}
Fix a target u.c.p. control $a$.  Choose a common stronger control $b$ at
which the following four conclusions hold simultaneously:
\begin{enumerate}[label=(\roman*)]
\item the vanishing criterion for finite-dimensional classes from $b$ to $a$, as in
\eqref{eq:surface-Q-detection};
\item the vanishing criterion for Fredholm classes from $b$ to $a$, as in
\eqref{eq:surface-K-detection};
\item coordinate compatibility
$\Theta^{\mathrm{FQR,ucp}}_b\iota^{\mathrm{ucp}}_b
=\Theta^{\mathrm{QR,ucp}}_b$;
\item a homomorphism
$\gamma_b:\mathbb Z^2\to\QRucpstage{\Gamma_g}{b}$ with
$\Theta^{\mathrm{QR,ucp}}_b\gamma_b=1$.
\end{enumerate}
Define
\begin{equation}\label{eq:surface-component-rho}
 \rho_{b,a}:\FQRucpstage{\Gamma_g}{b}
 \longrightarrow\QRucpstage{\Gamma_g}{a},
 \qquad
 \rho_{b,a}=r^{\mathrm{QR,ucp}}_{b,a}
             \gamma_b\Theta^{\mathrm{FQR,ucp}}_b.
\end{equation}
For $z\in\FQRucpstage{\Gamma_g}{b}$, the element
\[
 z-\iota^{\mathrm{ucp}}_b\gamma_b
   \Theta^{\mathrm{FQR,ucp}}_b(z)
\]
has zero Fredholm coordinate.  The vanishing criterion for Fredholm classes gives
\begin{equation}\label{eq:surface-component-right}
 \iota^{\mathrm{ucp}}_a\rho_{b,a}(z)
   =r^{\mathrm{FQR,ucp}}_{b,a}(z).
\end{equation}
For $x\in\QRucpstage{\Gamma_g}{b}$, the element
$x-\gamma_b\Theta^{\mathrm{QR,ucp}}_b(x)$ has zero finite-dimensional
coordinate.
The vanishing criterion for finite-dimensional classes gives
\begin{equation}\label{eq:surface-component-left}
 \rho_{b,a}\iota^{\mathrm{ucp}}_b(x)
   =r^{\mathrm{QR,ucp}}_{b,a}(x).
\end{equation}
Thus both inverse identities hold at the prescribed target control.

It remains to choose these maps coherently.  We give the details because
coherence is additional information not contained in
\eqref{eq:surface-two-Z2-isomorphisms}.  Choose an increasing cofinal
sequence
\[
 a_1\preccurlyeq a_2\preccurlyeq\cdots
\]
in $\mathcal C_{\Gamma_g}^{\mathrm{sym}}$.  Recursively choose increasing
controls $b_n\succeq a_n$ and homomorphisms
\[
 \lambda_n:\FQRucpstage{\Gamma_g}{b_n}
 \longrightarrow\QRucpstage{\Gamma_g}{a_n}
\]
of the form
\eqref{eq:surface-component-rho}, satisfying
\eqref{eq:surface-component-left}--
\eqref{eq:surface-component-right}.

Suppose $b_n$ and $\lambda_n$ have been chosen.  Choose
$b_{n+1}\succeq b_n,a_{n+1}$ so that the two vanishing criteria with
target $a_{n+1}$, coordinate compatibility, and
$\Theta_{b_{n+1}}^{\mathrm{QR,ucp}}\gamma_{b_{n+1}}=1$ all hold at the
$b_{n+1}$-stage.  Define
\[
 \lambda_{n+1}
   =r^{\mathrm{QR,ucp}}_{b_{n+1},a_{n+1}}
      \gamma_{b_{n+1}}\Theta^{\mathrm{FQR,ucp}}_{b_{n+1}}.
\]
This map has the two componentwise inverse properties at $a_{n+1}$ by the
first part of the proof.

We verify compatibility with $\lambda_n$.  Fix
$z\in\FQRucpstage{\Gamma_g}{b_{n+1}}$.  In
$\QRucpstage{\Gamma_g}{b_n}$, compare the two elements
\[
 r^{\mathrm{QR,ucp}}_{b_{n+1},b_n}
     \gamma_{b_{n+1}}\Theta^{\mathrm{FQR,ucp}}_{b_{n+1}}(z)
 \quad\hbox{and}\quad
 \gamma_{b_n}\Theta^{\mathrm{FQR,ucp}}_{b_n}
     r^{\mathrm{FQR,ucp}}_{b_{n+1},b_n}(z).
\]
Their $\Theta^{\mathrm{QR,ucp}}_{b_n}$-coordinates are equal, by compatibility
of the $\Theta$ maps and the identities
$\Theta^{\mathrm{QR,ucp}}\gamma=1$.  Our choice of
$b_n$ included the vanishing criterion for finite-dimensional classes from $b_n$ to $a_n$; hence
their images in $\QRucpstage{\Gamma_g}{a_n}$ are equal.  Since
$\lambda_n=r^{\mathrm{QR,ucp}}_{b_n,a_n}
\gamma_{b_n}\Theta^{\mathrm{FQR,ucp}}_{b_n}$, this equality is precisely
\begin{equation}\label{eq:surface-rho-coherence-sequence}
 r^{\mathrm{QR,ucp}}_{a_{n+1},a_n}\lambda_{n+1}
 =\lambda_n r^{\mathrm{FQR,ucp}}_{b_{n+1},b_n}.
\end{equation}
Thus the recursion continues with exact equality of homomorphisms, not only
with equality on inverse-limit elements.

For an arbitrary control $a$, let $n(a)$ be the least integer with
$a\preccurlyeq a_{n(a)}$, and set
\begin{equation}\label{eq:surface-all-controls}
 \sigma(a)=b_{n(a)},
 \qquad
 \rho_a=r^{\mathrm{QR,ucp}}_{a_{n(a)},a}\lambda_{n(a)}.
\end{equation}
Then $\sigma$ is order preserving and $\sigma(a)\succeq a$.
Equation \eqref{eq:surface-rho-coherence-sequence} gives
\eqref{eq:QKQD-coherence}; the two identities
\eqref{eq:surface-component-left} and
\eqref{eq:surface-component-right}, followed by restriction to $a$, give
\eqref{eq:QKQD-left} and \eqref{eq:QKQD-right}.  This proves quantitative
QR-quasidiagonality.  By \Cref{lem:surface-input}, the group $\Gamma_g$ has
the Haagerup property and $C^*_{\max}(\Gamma_g)$ has the lifting property.
By \Cref{prop:QKQD-unitary-transfer}, it is quantitatively
QR-quasidiagonal in the unitary-valued sense as well. Thus
\Cref{thm:QKQD-Haagerup} applies.  Its assertion
\ref{item:QKQD-Haagerup-equality}, with $G=\Gamma_g$, is exactly
\ref{item:surface-character-equality}, and its assertion
\ref{item:QKQD-Haagerup-lifting} is exactly
\ref{item:surface-character-lifting}.
\end{proof}


\bibliographystyle{alpha}

\end{document}